\newcommand{\bbm}{\begin{boldmath}}
\newcommand{\ebm}{\end{boldmath}}
\newcommand{\lt}{<}
\newcommand{\gt}{>}
\newtheorem{theo}{Theorem}[section]
\newtheorem{cor}{Corollary}[section]
\newtheorem{lem}{Lemma}[section]
\newtheorem{prop}{Proposition}[section]
\newtheorem{defy}{Definition}[section]
\newtheorem{mex}{Example}[section]
\newtheorem{conjecture}{Conjecture}[section]
\newtheorem{remk}{{\bf Remark:}}[section]
   {\begin{remk}\begin{normalshape}\mbox{}}%
   {\hfill  \end{normalshape}\end{remk}}%
\tikzstyle{level 1}=[sibling distance=40mm]
\tikzstyle{level 2}=[sibling distance=30mm]
\tikzstyle{level 3}=[sibling distance=20mm]
\tikzstyle{level 4}=[sibling distance=10mm]
\newcommand{\smallskipneg}{\vspace*{-6mm}}
\newcommand{\vind}{\hspace*{7mm}}
\begin{document}

\begin{center}
{\Large {\bf Extending du Bois-Reymond's Infinitesimal and Infinitary Calculus Theory} } \\
\vspace{0.5cm}
{\bf Chelton D. Evans and William K. Pattinson}
\end{center}

\begin{abstract}
 The discovery of the infinite integer leads to
 a partition between finite and infinite numbers.
 Construction of an infinitesimal and infinitary number system,
 the Gossamer numbers.
 Du Bois-Reymond's much-greater-than relations and little-o/big-O defined with the Gossamer number system, and the relations algebra is explored.
 A comparison of function algebra is developed.
 A transfer principle more general than Non-Standard-Analysis is developed,
 hence a two-tiered system of calculus is described.
 Non-reversible arithmetic is proved, and found
 to be the key to this calculus and other theory.
 Finally sequences are partitioned between finite and infinite intervals.
\end{abstract}

\section*{Introduction}
 The papers are an interdisciplinary collaboration.
 The coauthor is a retired high school and tertiary Physics/Maths 
 teacher  with over 33 years teaching experience.
 I produced the mathematics, considered by myself for over 20 years,
 and collaborated with William,
 over the last four years.
 With the goal to make calculus more accessible.

 The theory is developed to support applications
 with comparison algebra and infinitesimals.
 However, 
 it soon emerged that much larger issues needed
 to be considered, as the real numbers do not contain
 infinities.  What we found was a need to
 fundamentally consider what numbers really are, 
 and to find and then
 use algebra to describe the `missing' mathematics.

 While constructing a paper trail
 to validate and prove the comparison theory (Part 3),
 which is used in applications \cite[Convergence sums ...]{cebp2}
 which we developed before the theory,
 we came across du Bois-Reymond's work
 via Hardy in `Orders of infinity' \cite{ordersofinfinity}.
 This had extensive symbolism and theory
 of the magnitude relations $\succ$ and $\succeq$
 which correspond respectively to little-o and big-O relations.

 It made sense to use this, and 
 explore another kind of calculus. 
 We had the classic problem, does the chicken
 come before or after the egg? The idea of
 magnitude arguments to solve limits
 is a natural consideration from the limit problem. 
 We believe the symbol relations
 greatly benefit calculus arguments, because
 they describe mathematics that is otherwise difficult
 to consider. 

 Indeed, the issues of calculus, not just its discovery,
 the very existence of infinitesimals and infinities
 has caused the greatest mathematical argument,
 arguably spanning millennia.
 The Greek's rejection of the infinitesimal, and their
 other mistakes, we have repeated.

 It is our opinion that each of the six parts are connected,
 but can also stand alone.
 The only dependencies in the Parts are that Part 1 and Part 2 use
 comparison algebra in their proofs.
 Hence, a forward reference for Parts 1 and 2 is used.
 In defense of this, what better way of justifying a theory
 than to use it, thereby demonstrating its utility.

 We are producing a mathematical
 foundation that will allow
 the `integration' of separate ideas in a consistent notation.

 This paper develops 
 the theory (the egg) where later
 papers develop our applications
 (the chicken).

 We strongly believe that infinitesimals and infinities must be
 included at every stage
 with infinitary calculus,
 because like real numbers,
 they can no longer be ignored.

\ref{S01}. Gossamer numbers \\
\vind \ref{S0101}. Introduction \\
\vind \ref{S0102}. The infinite integer \\
\vind \ref{S0103}. Preliminaries \\
\vind \ref{S0104}. Infinitesimals and infinities \\
\vind \ref{S0105}. Properties \\
\vind \ref{S0106}. Field Properties \\
\vind \ref{S0107}. Conclusion 

\ref{S02}. The much greater than relations \\
\vind \ref{S0201}. Introduction \\
\vind \ref{S0202}. Evaluation at a point \\
\vind \ref{S0203}. Infinitary calculus definitions \\
\vind \ref{S0204}. Scales of infinity \\
\vind \ref{S0206}. Little-o and big-O notation 

\ref{S03}. Comparing functions \\
\vind \ref{S0301}. Introduction \\
\vind \ref{S0302}. Solving for a relation \\
\vind \ref{S0303}. M-functions an extension of L-functions

\ref{S04}. The transfer principle \\
\vind \ref{S0401}. Introduction \\
\vind \ref{S0402}. Transference \\ 
\vind \ref{S0403}. Overview and prelude 

\ref{S05}. Non-reversible arithmetic and limits \\
\vind \ref{S0501}. Introduction \\
\vind \ref{S0502}. Non-reversible arithmetic \\
\vind \ref{S0503}. Logarithmic change \\
\vind \ref{S0504}. Limits at infinity

\ref{S06}. Sequences and calculus in $*G$ \\
\vind \ref{S0601}. Introduction \\
\vind \ref{S0602}. Sequences and functions \\
\vind \ref{S0603}. Convergence \\
\vind \ref{S0604}. Limits and continuity \\
\vind \ref{S0606}. Epsilon-delta proof \\
\vind \ref{S0605}. A two-tiered calculus \\
\vind \ref{S0607}. A variable reaching infinity before another 
\section{Gossamer numbers} \label{S01}
 The discovery of what we call the gossamer number system $*G$,
 as an extension of the real numbers includes an infinitesimal and
 infinitary number system;
 by using \mbox{`infinite integers'}, an isomorphic construction
 to the reals by solving algebraic
 equations is given.
 We believe this is a total ordered field.
 This could be an
 equivalent construction of the hyperreals. 
 The continuum is partitioned:
 $0 \lt \Phi^{+} \lt \mathbb{R}^{+} \! + \Phi \lt +\Phi^{-1} \lt \infty$.
 A one-one correspondence over an infinity of infinite intervals
 is interpreted differently resulting
 in the infinite-integers having
 a higher cardinality than the countable integers,
 and a likely consequence that $*G$ 
 and the hyperreals have a higher cardinality than the real numbers.
\subsection{Introduction} \label{S0101}
 An infinite integer is an integer at infinity, larger than any finite integer.
 The set of integers $\mathbb{J}$ 
 is a countable infinity. However,
 by representing this countability as an
 infinite integer $n|_{n=\infty}$,
 in Non-Standard-Analysis (NSA) $\omega$;
 using evaluation at a point notation Definition \ref{DEF074};
 if we take the reciprocal $\frac{1}{n}|_{n=\infty}$
 we have an infinite
 number that is positive and not rational, or even
 a real number, but a positive infinitesimal.

 This number system is of interest to
 anyone working with calculus.
 Infinities and infinitesimals in mathematics are universal.

 Despite this, 
 infinitesimals and infinities largely are not recognised 
 as actual numbers.
 That is, they are not declared, like integers or reals.
 Limits are used all the time, and these ``are" infinitesimals,
 that what we daily use as shorthand is not acknowledged.
 Infinitesimals are so successful
 that they are accepted
 as known and are applied so often.
 (Since we use them, why is there any need to study them)

 We have gone back in time to a mathematician who
 systematically considered infinitesimals and infinities.
 Paul du Bois-Reymond, like everyone else,
 was at first uncomfortable with infinity.
 By comparing functions (Example \ref{MEX019}),
 he investigated
 the continuum 
 and realized
 the space which we would call Non-Standard-Analysis today.
 Our subsequent sections in this series consider this in some detail.

 The purpose of this paper is to construct a number system that
 both
 contains infinitesimals and infinities, and a more natural
 extension to the real numbers than other alternatives. 

 A fact of significant importance is that 
 the real numbers are composed of integers.
 Rational numbers are built from the ratio of two integers,
 algebraic numbers from solving an equation of rational coefficients,
 and so other numbers with infinite processes are constructed.

 With the discovery of infinite integers
 \cite[p.1]{victors}
 and A. Robinson's enlargement of the integers $\mathbb{J}$: *J 
 \cite[p.97]{abraham}),
 by a similar process to the construction
 of the real numbers from integers we can
 construct
 infinite and infinitesimal numbers.
 We introduce
 a number system which is comparable to, and an
 alternative to hyperreal and surreal numbers.

By having `infinity' itself as a number,
 like $i$ for complex numbers,
 we can follow a standard construction analogous
 to building the real numbers from integers.

 We believe that no such simple construction
 has been given that provides the usability
 without the complexity. 
 (i.e. avoids the complicated mathematics of
 logic and set theory)

 With time, we would like to see the claimed misuse of infinitesimals
 as `not being rigorous', start to be
 reconsidered.
 A number system which directly supports
 their use in the traditional sense is possible. 
 In Part 4 and Part 5
 we argue that rigour
 should not be the only goal of a number system.

 We take a constructionist's approach
 to the development of the
 infinitesimal and infinitary number system.
 A series
  of papers
 for a larger theory, 
 based on du Bois-Reymond's ideals and
 our extensions to them,
 is developed.

 Working with infinitesimals 
 and infinities,
 we find a need for better representation.
 Indeed, the more we consider calculus,
 the greater is the need for this.

 By adding infinities and infinitesimals
 to the real numbers,
 and declaring these additions 
 to be of these types,
 this primarily is an extension of
 the real number line. Rather than saying
 a number becomes infinitely small, or
 infinitely large, 
 the numbers are able to be declared
 as infinitesimals and infinities
 respectively.
 Theorems with this characterisation naturally
 follow. 

 Why should we be interested
 in infinitesimals or infinities? 
 We are indeed,
 using them
 all the time
 through limits, applied mathematics
 and theory.
 Why should we want to further 
 characterise them?

 In Flatland \cite{flatland}, set in a world of two dimensions,
 where a 3D creature, a sphere, attempting to 
 communicate, describes its world to
 a flatlander inhabitant, the square.
 Proving its existence by
 removing an item from a closed 2D cupboard, 
 and materializing it elsewhere.

If we imagine the infinitesimals and infinities
 as working in a higher dimension, 
 then we project back to the reals
 afterwards. See Part 4 The transfer principle.

 The name we give  to
 the number system
 which we construct 
 is the ``gossamer numbers",
 for, analogous to a spider's web, there
 can be ever finer strands
 between the real numbers. 
 About any real number,
 there are infinitely
 many numbers asymptotic to
 a given real number.
 Similarly about any curve, there
 are infinitely many curves asymptotic
 to it.

 What this view gives is a 
 deeper  
 fabric of space to work with. 
 Propositions in $\mathbb{R}$ 
 may be better explained as propositions
 in the higher dimensional space.
  
 We look at the problem from
 an equation's perspective.
 Just as complex numbers solve 
 the equation
 $x^{2} = -y^{2}$ in terms of $x$, 
 $y = x i$,
 we can then put forward an equation
 at infinity which likewise 
 needs new ideas and new numbers.

 Historically, the development of a number system preceded with an equation or problem 
 which needed to be solved. By looking at ancient mathematics such as Diophantine equations,
 words were used in place of a mathematical language such as our modern algebra.

 Without the language of algebra, it took a genius to solve by today's
 measure routine problems.
 Similarly with Newton's calculus, the English mathematicians lagged behind
 the continent until the adaption of Leibniz's calculus.
 In short, the mathematical language can greatly impact on both the development and use
 of mathematics affecting the culture.
 
 While it can be argued that infinitesimals and infinities are known and
 managed with other mathematics such as asymptotic notation
 and the hyperreal number system,
 by producing two fields of mathematics,
 we argue that there is a need for change to include new mathematical language.  
 Similarly,
 the application of hyperreal numbers justified their existence.
\subsection{The infinite integer} \label{S0102}
 We believe that
 what an atom is to a physicist,
 an infinite integer
 is to a mathematician, well
 that is how it should be!
 With this belief,
 we go on to derive infinitesimals
 and infinities, building
 on the existence of the
 infinite integer $\mathbb{J}_{\infty}$
 (Definition \ref{DEF052}).

 An infinite integer's existence does not contradict Euclid's proof that there is no largest integer
 because the existence of infinite integers
 is not a single largest integer, but a class of
 integers.

 Certain properties become apparent.
 We define a partition between finite
 and infinite numbers,
 but there is no lowest or highest infinite number,
 as there is no lowest or highest infinite integer.
\[ (\ldots, n-2, n-1, n, n+1, n+2, \ldots)|_{n \in \mathbb{J}_{\infty}} \]
 In the construction of the extended numbers, what distinguishes these new numbers
 is the inclusion of an infinity within the number. While the real numbers are constructed
 created by infinite processes,
  \textit{the real numbers do not contain an infinity}. 
 We can imagine such a number with
 the inclusion of an infinity as ever changing, and not static. 
 We need such a construction to explain limits. 

 That we are continually being asked to explain the existence of real
 numbers, let alone numbers with infinities is a challenge. However,
 these numbers provide immeasurable insight into the mechanics of calculus.
 Would we deny the existence of quantum physics because
 we felt only the need for classical physics?
 This is similar to the denial of 
 the infinitesimal and infinitary numbers because we already have the real numbers.

 We are accustomed to the use of complex 
 numbers which have a separate component,
 the imaginary part.
 Similarly with the extension of the
 real number, an infinitesimal or infinitary  
 component is added.
 However, this will generally be seen as part of the
 number, and not represented
 as separate components. Having
 said that, in this paper we do represent
 the components separately,
 to help prove the basic properties
 of the number system.

 Once the infinite integer is accepted,
 an infinity not in $\mathbb{R}$,
 the infinitesimal follows as its reciprocal
 $\frac{1}{n} \not \in \mathbb{R}$.
 
 While the set of integers is a countable infinity,
 we need to distinguish between infinite and finite
 numbers.
\bigskip
\begin{defy}
 We say $\mathbb{J}_{\lt}$ to mean a finite integer,
 $k \in \mathbb{J}$ and 
 $k$ is finite.  $\mathbb{J}_{\lt} \subset \mathbb{J}$
 We similarly define
 $\mathbb{N}_{\lt} \subset \mathbb{N}$ the finite natural numbers,
 $\mathbb{Q}_{\lt} \subset \mathbb{Q}$ the finite rational numbers,
 $\mathbb{A}_{\lt} \subset \mathbb{A}$ the finite algebraic numbers.
\end{defy}
\bigskip
\begin{defy}\label{DEF004}
 We say $\mathbb{J}_{\lt} \lt \mathbb{J}_{\infty}$ to mean
 than for all finite integers $i \in \mathbb{J}$,
 and all `infinite integers' $j \in \mathbb{J}_{\infty}$
 then 
 $i \lt j$.
\end{defy}

 We note that the set of finite numbers is infinite, but any given
 finite number is not infinite.
 We can consider any
 finite integer as large as we please, but once we do it is in existence
 it is without infinity.  
\subsection{Preliminaries} \label{S0103}
 This paper is one of six in
 this series, which we believe is establishing a new field of mathematics.
 The proofs and notation contain mathematics in 
 Part 2 and Part 3;
 however if $f \succ g$ then $\frac{f}{g} \in \Phi$
  is equivalent to $f = o(g)$, where $\Phi$ is
 an infinitesimal (Definition \ref{DEF036}).
 The proofs can solve for a relation,
 which is studied in Part 3.

 We introduce
 a notation at infinity
 which can express both a limit
 and the realization of being
 at infinity.
 We later develop and justify
 this choice more
 fully 
 Part 2, Part 4, Part 5.
\bigskip
\begin{defy}\label{DEF074}
 $f(x)|_{x=\infty}$ 
 represents the notion of the ``order" of
 $f(x)$ as a function ``at infinity", or a limit at infinity.
 See Evaluation at-a-point Definition \ref{DEF001}.
\end{defy}
\bigskip
\begin{remk}
 We will later see that 
 formal objects can be formed with a number system
 that reflects the requirements of
 our notion of infinitesimals and infinities.
\end{remk}

 Now we give an example of an infinitary
 equation, which can
 only be solved by
 an infinitary number.

 Solve $(x^{2}+1)y|_{x=\infty} = x^{3}|_{x=\infty}$.
 $y = \frac{x^{3}}{x^{2}+1}|_{x=\infty}$ is a solution,
 another is $y=x|_{x=\infty}$.

 We already use infinitesimals
 and infinities with little-o and 
 big-O notation. However, unlike real
 numbers, we often do not declare them.
 And this may be a problem, because,
 unlike real numbers, they can
 have radically different properties.

So here is the contradiction,
 you use these numbers ubiquitously,
 for example, calculating limits,
 but you do not declare them as such.

Now we are not entirely adverse to
 the utility of this approach,
 things need to be done and calculations
 performed.

 However, not even relatively modern
 mathematicians
 such as Hardy are immune.
 While having extensive powers of
 computation, he never defined
 an infinitesimal as a number.

 Examples of infinitesimal and infinitary numbers are everywhere.

 Zeno's paradoxes \cite{historyzeno} assume a
 truth and argue to a paradox. Hence disproving 
 the assumed truth. 
 In assuming continuity as an infinity of divisibility
 for physical questions of motion, logical
 contradictions follow.
 Achilles, when chasing the tortoise, never catches up,
 as, when Achilles travels towards the tortoise,
 there is always
 some remaining distance to reach it \cite{historyzenoachilles}.

 The distance left over is of course a positive infinitesimal,
 a number smaller than any
 positive real numbers \cite[Section 2: What does a sum at infinity mean?]{cebp2}

 From \cite{history},
 to counter Zeno's arguments, Atomism, while ascribed to
 physical theory, the indivisibility of matter and presumably
 time. However, Aristotle believed in the continuity of magnitude.
\begin{quote}
Aristotle identifies continuity and discreteness as attributes
 applying to the category of Quantity.
 As examples of continuous quantities, or continua, he offers lines,
 planes, solids (i.e. solid bodies), extensions, movement,
 time and space; among discrete quantities he includes number
 and speech
 \cite{history}
\end{quote}
 
 Infinities and infinitesimals are crucial 
 in the development of calculus,
 before their ostracism (19th century) from mainstream calculus.
 We believe this is a case of
 `throwing the baby out with the bath water'.  

Hence, the banishment of infinitesimals
 was a rejection 
 which sent them underground.
 The Greeks, with their geometric
 methods, had followed a similar story.
 Where they could not describe numbers,
 as this had to wait for
 the discovery of zero and the
 positional number system,
 reasoning was replaced by other arguments
 (Egyptian fractions for calculations
 and Roman numerals for date calculations remained).

In modern times,
 the first 
 person to systematically study the infinitesimals and 
 infinities 
 was
 du Bois-Reymond, whose writing
 around 1870 onwards, on
 the scales of infinities \cite[pp.9--21]{ordersofinfinity},
 for example 
 $(\ldots, x^{-2}, x^{-1}, 1, x, x^{2}, x^{3}, \ldots)|_{x=\infty}$,
 and the ratios of infinities,
 which are instances of comparing functions.

\begin{quote}
 ... certain problems have "familiarized mathematicians with
the use of scales of comparison 
 other than those of powers of a variable
.. This
extension goes back above all to the works of P. du Bois-Reymond who was the
first to approach systematically the problems of the comparison of functions in
the neighborhood of a point, ...
 \cite[Bourbaki p.157]{fisher}
\end{quote}

 Cantor had a competing
 theory 
 which represented the continuum with sets and he believed
 infinitesimals did not fit in.
 So our development may have been
 skewed  by the rise of
 set theory, which became dominant
 and is heavily present in Abraham's NSA. 

 From the work of Abraham Robinson,
 infinitesimals have in recent years been made more rigorous;
 however they have not been made accessible, that is, easy to use.
 To address the inaccessibility, 
 reformations of NSA have been constructed.
 However, NSA by its nature is 
 very technical and used for high-end mathematics.

 Note that the gossamer number system is also technical.
 However, the gossamer number system we present is 
 more accessible. It has been built to be used
 with functions.  We would have no idea how to
 implement many of the applications that later follow
 such as the rearrangement theorems with NSA, 
 or  
 the theory which is later developed.
 This is not a trivial distinction: the tools that
 a worker uses to do a job matter. We may not wish to
 deal with low level set theory logic while working
 with functions, particularly if it is unnecessary.

 Our concerns are NSA's debasement of meaning.
 That is, the meaning and use of infinitesimals
 and infinities is lost or ``muddied" during applications
 and proofs. 

This is not to invalidate NSA. Indeed,
 later we do use NSA which as a reference is
 invaluable. Just as we have axiomatic geometry, 
 does not mean that we have to reason with it.
 NSA is specialized, and requires more knowledge
 to use it, if you want to use it in the first place.

 Instead, we will look where possible
 for an alternative.
 From the premise of this and later papers,  
 it follows that Robinson's NSA is not the only
 way. 

  The paper has two primary tasks;
 describe the construction of the number system, 
 and then work towards proving $*G$ is an ordered
 field.
 We also believe others can forward this
 work with
 the benefits being that the number system
 can be used in many ways.

 These are separate goals, the justification of introducing
 additional complexities as we have separated the
 components.  There is likely other ways to go about this.

 This is not the same as using this mathematics. 
 The real numbers themselves are used in so many different
 ways,
 that we expect a generalisation of them to be even more diverse.
\subsection{Infinitesimals and infinities} \label{S0104}
A more general way of considering infinity is
 the realization of reaching infinity.

Infinitesimals and infinities
 naturally occur in any description
 at infinity. E.g. an infinitesimal, $\frac{1}{n}|_{n=\infty}$. 
 The inverse
 is an infinity. $1/(1/n) = n|_{n=\infty} = \infty$.
 It would be fair to say that
 calculus (for the continuous variable) without infinitesimals and
 infinities would not exist
 (for example, a limit or a derivative could not exist).

 If $x$ is infinite 
 and we realize $x$ to infinity,
  then at infinity $x$ becomes
 an extended real. 
 If $x$ is an \textit{infinite integer}, then it
 is an integer at infinity.
 Considering Robinson's NSA,
 it becomes clear that
 with infinite integer $\omega$,
 we can have an infinity of
 infinite integers $( \omega, \omega+1, \omega+2, \ldots)$. 
 In this context, infinity is its own 
 number system,
 where we have arrived at infinity, 
 and it is a very large space.
 A lower case $n$ at infinity
 will be understood as an infinite integer,
 the same as $\omega$. (Though with the 
 notation any variable can be used.)
 With infinitesimals the situation is similar, since given
 an infinity we can always construct an infinitesimal by dividing
 $1$ by the infinity.

 We can similarly construct \textit{infinite rational numbers}.
 The numbers themselves need not all be
 infinite, but a composition of infinities and reals.
 E.g. $\frac{n^{2}+1}{n^{3}-2}|_{n=\infty}$.
 Similarly, there are \textit{infinite surds}. E.g. $\sqrt{2}n|_{n=\infty}$. 
 What about \textit{infinite reals}? That is, a number which is 
 as dense as a real, but at infinity.
 The infinite numbers will in many respects behave similarly to 
 their finite counterparts.

Meaning can be attributed
 to expressions at infinity.
 We may consider $n|_{n=\infty}$
 as the process of repeatedly adding $1$.
 Similarly $\mathrm{ln}\,n|_{n=\infty}$
 corresponds with summing the harmonic series.
  If divergent series, which are ubiquitous, are
 asymptotic to divergent functions, then the functions can
 have a geometric meaning.
 Another example, $\mathrm{sin}\,x|_{x=\infty}$ continually generates
 a sine curve at infinity.

The next jump is that from the reals
 to du Bois-Reymond's infinitesimals and 
 infinities at infinity,
 realizing the space at infinity.
 To do this we separate 
 finite and infinite numbers, therefore
 separating finite and infinite space. 
 For example, integers $\mathbb{J}_{\lt}$ and
 infinite integers $\mathbb{J}_{\infty}$ at infinity.
 $\mathbb{J}_{\lt} \lt \mathbb{J}_{\infty}$ Definition \ref{DEF004}.

Infinities and infinitesimals form
 their own number system.
 As a number, extending the reals with infinities, the
 infinities are a supremum for the reals, with an extended Dedekind cut, as an infinity is larger than any finite number.
\bigskip
\begin{defy}\label{DEF051}
Define a positive infinite number to be ``larger" than any number in $\mathbb{R}$
\end{defy}
\bigskip
\begin{defy}\label{DEF060}
Define a negative infinite number to be ``smaller" than any number in $\mathbb{R}$
\end{defy}
\bigskip
\begin{defy}\label{DEF059}
Define an infinite number,
 `an infinity' as
 a positive infinity or negative infinity,
 the set of all these being denoted $\Phi^{-1}$.
 However exclude 
 $\pm\infty \notin \Phi^{-1}$ for reversible multiplication.
\end{defy}
\bigskip
\begin{defy}\label{DEF036} 
We say 
 a number is an ``infinitesimal" $\Phi$ 
 if the reciprocal of the number is an infinity.
 If $\frac{1}{x} \in \Phi^{-1}$ then $x \in \Phi$.
\end{defy}
\bigskip
\begin{defy}\label{DEF078}
 If $x$ is a positive infinitesimal
 then
 $x \in \!+\Phi$ or $x \in \Phi^{+}$. 
\end{defy}
\bigskip
\begin{defy}
 If $x$ is a negative infinitesimal
 then
 $x \in \!-\Phi$ or $x \in \Phi^{-}$. 
\end{defy}
\bigskip
\begin{defy}
 If $x$ is a positive infinity or negative infinity then
 $x \in +\Phi^{-1}$ or $-\Phi^{-1}$ respectively.
\end{defy}
\bigskip
\begin{defy}\label{DEF072}
Let $\frac{1}{0} = \infty$, $0$ and $\infty$ are mutual inverses. 
\end{defy}
\bigskip
\begin{cor}\label{P043}
$0$ is not an infinitesimal.
\end{cor}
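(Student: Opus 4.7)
The plan is to prove the corollary by direct contradiction, chaining together the three definitions immediately preceding the statement. The argument should be short, but the subtle point worth highlighting is the deliberate exclusion of $\pm\infty$ from $\Phi^{-1}$ in Definition \ref{DEF059}, without which the claim would actually fail.

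First I would suppose, for contradiction, that $0 \in \Phi$. By Definition \ref{DEF036}, being an infinitesimal means that the reciprocal lies in $\Phi^{-1}$, so we would need $\frac{1}{0} \in \Phi^{-1}$. Next, I invoke Definition \ref{DEF072} to rewrite $\frac{1}{0} = \infty$, so the supposition forces $\infty \in \Phi^{-1}$. Finally, Definition \ref{DEF059} explicitly stipulates $\pm\infty \notin \Phi^{-1}$ (exactly to preserve reversible multiplication), which is the sought contradiction. Hence $0 \notin \Phi$.

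The only potential obstacle is not a computational one but an interpretive one: making sure the reader sees that Definition \ref{DEF036} is a biconditional of the form ``$x \in \Phi \iff \frac{1}{x} \in \Phi^{-1}$'' applied to $x = 0$, and that the reciprocal $\frac{1}{0}$ is meaningful here only because Definition \ref{DEF072} assigns it the value $\infty$. I would write the proof as a one-line chain of implications, $0 \in \Phi \;\Rightarrow\; \tfrac{1}{0} \in \Phi^{-1} \;\Rightarrow\; \infty \in \Phi^{-1}$, contradicting Definition \ref{DEF059}. No auxiliary lemmas, cases, or limit arguments are needed, and the result is best left as the terse corollary it is.
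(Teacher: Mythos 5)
Your proposal is correct and uses exactly the same chain of definitions as the paper's own proof (Definition \ref{DEF072} to get $\tfrac{1}{0}=\infty$, Definition \ref{DEF059} to exclude $\infty$ from $\Phi^{-1}$, and Definition \ref{DEF036} to conclude); the paper merely states it directly rather than as a contradiction. No substantive difference.
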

\begin{proof}
$\frac{1}{0} = \infty \notin \Phi^{-1}$.
 $\infty$ is `infinity', not `an' infinity (see Definition \ref{DEF059}).
\end{proof}
 $0$ is finite, but not an infinitesimal.
 $0$ is a special number, which could be called a super infinitesimal.
 The reason for not including $0$ as an infinitesimal
 is to make reasoning clearer by avoiding division by zero,
 having $R\backslash\{0\} \cup R_{\infty}$ (see Definition \ref{DEF058}) with respect to multiplication
 form an abelian group.
\bigskip
\begin{prop}\label{P018}
 A infinitesimal is less than
 any finite positive number.
\end{prop}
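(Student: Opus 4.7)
The plan is to unpack the definitions and reduce the claim to the defining inequality for an infinity, then invert. Let $x$ be an infinitesimal and let $r$ be any finite positive number; I want to show $x < r$.

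First I would split on the sign of $x$. By Definition \ref{DEF036}, $x \in \Phi$ means $1/x \in \Phi^{-1}$, so either $x \in \Phi^{+}$ or $x \in \Phi^{-}$ (Definitions \ref{DEF078} and the negative analogue), and by Corollary \ref{P043} we do not have $x = 0$. In the negative case there is nothing to do: $x < 0 < r$ by Definition \ref{DEF060} and the positivity of $r$. So the substantive case is $x \in \Phi^{+}$, i.e. $1/x \in +\Phi^{-1}$ is a positive infinity.

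Now I would apply Definition \ref{DEF051}: a positive infinity is larger than every real number. Since $r$ is a finite positive real, the reciprocal $1/r$ is also a finite positive real, and therefore
\[
\frac{1}{x} > \frac{1}{r}.
\]
Both sides are positive (the left because it is a positive infinity, the right because $r>0$), so inverting is order-reversing on positives, giving $x < r$, which is the claim.

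The only delicate step is the inversion at the end, because it implicitly uses that the ordering of $*G$ restricted to positive elements satisfies the usual rule $a > b > 0 \Rightarrow 1/a < 1/b$. Since that rule is part of what it means for $*G$ to be a totally ordered field (the stated goal of the section), the cleanest way to handle it in context is to note that $1/x > 1/r > 0$ forces $(1/x)(1/r) > 0$, and then multiplying the inequality $1/x > 1/r$ by $r \cdot x > 0$ yields $r > x$ directly without appealing to a separate lemma. This side-step is what I expect to be the main (minor) obstacle, since the paper has not yet formally established all field/order axioms for $*G$; fortunately only the elementary sign rules for multiplication in $\mathbb{R}$ extended by a positive infinity are needed, which follow from the definitions already given.
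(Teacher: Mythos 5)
Your proof is correct and is essentially the paper's own argument: both dispose of the negative case trivially and then reduce the positive case, via Definition \ref{DEF051}, to the comparison of the positive infinity $1/x$ with a positive real, recovering $x < r$ by inverting. The only difference is presentational --- the paper runs the comparison forward by ``solving for the relation $z$'' in $\delta \; z \; x$ (implicitly using its order-preservation propositions for positive multiplication), whereas you start from the known inequality $1/x > 1/r$ and invert, being somewhat more explicit about why that inversion is legitimate.
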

\begin{proof}
 Solving for a comparison relation Part 3.
 A negative number is less than a positive number, then
 only the positive infinitesimal case remains.
 $x \in \mathbb{R}^{+}$; $\delta \in +\Phi$;
 then $\frac{1}{\delta} \in +\Phi^{-1}$,
 compare the infinitesimal and real number,
 $\delta \; z \; x$, 
 $1 \; z \; \frac{x}{\delta}$,
 $\frac{1}{x} \; z \; \frac{1}{\delta}$,
 $\mathbb{R}^{+} \; z \; +\!\Phi^{-1}$, 
 by Definition \ref{DEF051} $z = \; \lt$ then $\delta \lt x$,
 $\Phi \lt \mathbb{R}^{+}$.
\end{proof}
\bigskip
\begin{defy}
 We say 
$\mathbb{R}_{\infty}$ is an `infinireal' number
 if the number is an infinitesimal or an infinity.
If $x \in \mathbb{R}_{\infty}$ then either $x \lt \mathbb{R}^{-}$
 or $x \gt \mathbb{R}^{+}$.
\[ x \in \mathbb{R}_{\infty} \text{ then } x \in \Phi \text{ or } x \in \Phi^{-1} \text{,  } \mathbb{R}_{\infty} = \Phi \cup \Phi^{-1} \]
\end{defy}
\begin{defy}
 $\overline{\mathrm{R}}_{\infty} = \{-\infty, 0, \infty \}$ $\;\;$ (A realization of $\mathbb{R}_{\infty}$)
\end{defy}
\bigskip
\begin{defy}
If $a \not\in b$ then no element in $a$ is in set $b$. Equivalent to $\{ b \}\backslash \{ a \}$ or
 $\{ b \} - \{ a \}$.
\end{defy}

 For example, 
 $\Phi \notin f$ means the variable or function contains
 no infinitesimals. Similarly $\Phi^{-1} \notin f$ means
 $f$ contains no infinities. 
\[ \not \in \mathbb{R}_{\infty} \equiv \mathbb{R} + \Phi [ \mathbb{R} \neq 0] \]
\begin{defy}\label{DEF057}
 Define the numbers $0$ and $\infty$: $0 \lt |\Phi|$ and $|\Phi^{-1}| \lt \infty$.
\end{defy}
\bigskip
\begin{theo}
 Partitioning the number line, $0 \lt \Phi^{+} \lt \mathbb{R}^{+} \cup \Phi \lt +\Phi^{-1} \lt \infty$.
\end{theo}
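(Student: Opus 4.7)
The plan is to establish the four strict inequalities
\[
0 \lt \Phi^{+},\quad \Phi^{+} \lt \mathbb{R}^{+}\cup\Phi,\quad \mathbb{R}^{+}\cup\Phi \lt +\Phi^{-1},\quad +\Phi^{-1} \lt \infty
\]
separately, reading $\mathbb{R}^{+}\cup\Phi$ as in the Section \ref{S01} display, namely the pointwise sum $\{r+\epsilon : r\in\mathbb{R}^{+},\;\epsilon\in\Phi\cup\{0\}\}$. Each link will reduce either to a single definition or to Proposition \ref{P018}.

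First I would dispose of the two outer links, which are essentially by fiat. The inequality $0\lt\Phi^{+}$ is immediate from Definition \ref{DEF057} ($0\lt|\Phi|$) combined with the sign convention of Definition \ref{DEF078}. Dually, $+\Phi^{-1}\lt\infty$ combines the other half of Definition \ref{DEF057} ($|\Phi^{-1}|\lt\infty$) with Definition \ref{DEF059}'s explicit exclusion of $\pm\infty$ from $\Phi^{-1}$.

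Next I would treat $\mathbb{R}^{+}\cup\Phi \lt +\Phi^{-1}$. By Definition \ref{DEF051}, any positive infinity strictly exceeds every real; since any $y = r+\epsilon$ with $r\in\mathbb{R}^{+}$ and $\epsilon\in\Phi$ satisfies $y\lt s$ for any real $s\gt r$ (because $s-r\in\mathbb{R}^{+}$ dominates $\epsilon$ via Proposition \ref{P018}), the inequality follows by transitivity through $s$. For the remaining link $\Phi^{+}\lt\mathbb{R}^{+}\cup\Phi$, I would pick arbitrary $\delta\in\Phi^{+}$ and $y=r+\epsilon$ and examine $y-\delta = r+(\epsilon-\delta)$; since $\epsilon-\delta$ is itself infinitesimal (or zero), its absolute value is dominated by the positive real $r$ by Proposition \ref{P018}, so $y-\delta\gt 0$.

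The one genuine obstacle is the tacit use of closure of $\Phi\cup\{0\}$ under subtraction invoked in the middle step; this properly belongs to the field-properties subsection that follows. To keep the argument self-contained at this point, I would alternatively mimic the reciprocal-comparison technique of Proposition \ref{P018}: compare $\delta$ and $y$ by passing to reciprocals, reducing $\delta \; z \; y$ to $1/y \; z \; 1/\delta$, where $1/\delta\in +\Phi^{-1}$ while $1/y$ is finite. The link $\mathbb{R}^{+}\cup\Phi \lt +\Phi^{-1}$ just settled then forces $z$ to be $\lt$, and the chain closes.
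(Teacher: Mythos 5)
Your proposal is correct and its substantive core coincides with the paper's own proof: the paper likewise establishes only the link $\Phi^{+} \lt \mathbb{R}^{+}+\Phi$ by moving the infinitesimal across the relation ($\delta_{1}\; z\; x+\delta_{2}$ becomes $\delta_{1}-\delta_{2}\; z\; x$), invoking closure of $(\Phi,+)$ and then Proposition \ref{P018}, exactly as in your $y-\delta = r+(\epsilon-\delta)$ step. The only difference is one of completeness: the paper leaves the outer links $0\lt\Phi^{+}$, $\mathbb{R}^{+}\cup\Phi\lt+\Phi^{-1}$ and $+\Phi^{-1}\lt\infty$ to Definitions \ref{DEF057} and \ref{DEF051} without comment, whereas you verify them explicitly; and your worry about the forward reference to closure applies equally to the paper, which cites $(\Phi,+)$ closure in the same tacit way.
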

\begin{proof}
$\delta_{1}, \delta_{2}, \delta_{3} \in \Phi$;
 $x \in \mathbb{R}^{+}$;
 $z \in \mathbb{B}$ a binary relation.
 Consider
  $\delta_{1} \; z \; x + \delta_{2}$, 
 $\delta_{1} - \delta_{2} \; z \; x$.
 Since $(\Phi,+)$ is closed, let $\delta_{3} = \delta_{1}-\delta_{2}$.
 $\delta_{3} \; z \; x$.
 By $\Phi \lt \mathbb{R}^{+}$ (Proposition \ref{P018})
 $z = \; \lt$, since adding and subtracting on both sides
 does not change the inequality.
\end{proof}

 Given the existence of the infinite
 integers,
 a constructive definition $*G$ Definition \ref{DEF058} of
 an extended real number system follows.
 The number system at infinity is
 isomorphic with the real number system, defining integers,
 rational numbers, algebraic numbers,
 irrational numbers and transcendental numbers all at infinity.
\bigskip
\begin{defy}\label{DEF061}
Let $\mathbb{A}_{\lt}$ be the symbol for
 the finite algebraic numbers.
\end{defy}
\bigskip
\begin{defy}
Let $\mathbb{A}_{\lt}'$ be the symbol for
 the finite transcendental numbers.
\end{defy}
\bigskip
\begin{defy}\label{DEF052}
 Define an ``infinite integer" $\mathbb{J}_{\infty}$ at infinity, larger in magnitude than any finite integer. 
\end{defy}
\bigskip
\begin{defy}\label{DEF071}
 Define an ``infinite natural number" $\mathbb{N}_{\infty}$ as a positive infinite integer.
\end{defy}
\bigskip
\begin{defy}\label{DEF053}
 Define an ``infinite rational number" $\mathbb{Q}_{\infty}$ 
 as a ratio of finite integers and  
 ``infinite integers". 
\end{defy}
\smallskipneg
\smallskipneg
\[ \{ \frac{ \mathbb{J}_{\infty}}{ \mathbb{J}_{\infty}}, \frac{\mathbb{J}_{\lt}}{\mathbb{J}_{\infty} }, \frac{ \mathbb{J}_{\infty} }{ \mathbb{J}_{\lt} } \} \in \mathbb{Q}_{\infty} \] 
\begin{defy}\label{DEF054}
 Define an ``infinite algebraic number" $\mathbb{A}_{\infty}$ 
 which is the root of a non-zero finite polynomial  
 in one variable with at least one infinite rational number coefficient.
\end{defy}
\bigskip
\begin{defy}\label{DEF055}
 Define an ``infinite irrational number" $\mathbb{Q'}_{\infty}$
 as an infinity that is not an infinite rational number.
\end{defy}
\bigskip
\begin{defy}\label{DEF056}
 Define an ``infinite transcendental number" $\mathbb{A'}_{\infty}$ as an infinity
 which is not an infinite algebraic number.
\end{defy}
\bigskip
\begin{defy}\label{DEF058}
 Define the gossamer numbers, $*G$
 as numbers that comprise
 of Definitions \ref{DEF052}--\ref{DEF056}. 
\end{defy}
\bigskip
\begin{mex}
 Given a fraction of the form $\frac{\mathbb{J}_{\infty}}{\mathbb{J}_{\infty}}$,
 $\frac{2(n+1)}{n+1}|_{n=\infty}=\frac{2}{1}$
 cancelling like $\mathbb{J}_{\infty}$ terms
 leaves a fraction of the form $\frac{\mathbb{J}_{\lt}}{\mathbb{J}_{\lt}} \in \mathbb{J}_{\lt}$. 
\end{mex}

 As a byproduct of the $*G$ construction,
 $\mathbb{R}$ is embedded 
 within Definition \ref{DEF058}, as
 there exist 
 $\frac{\mathbb{J}_{\infty}}{\mathbb{J}_{\infty}} \in \mathbb{J}_{\lt}$.
 We could choose to define $\mathbb{Q}_{\infty}$
 to exclude $\mathbb{Q}_{\lt}$. Then all 
 $\{ \mathbb{J}_{\infty},
 \mathbb{Q}_{\infty},
 \mathbb{Q}_{\infty}',
 \mathbb{A}_{\infty},
 \mathbb{A}_{\infty}'
 \}$ would contain an explicit infinity within
 the number.
 By definition
 all infinireals $\mathbb{R}_{\infty}$ contain
 an infinity as an element within
 the number. 
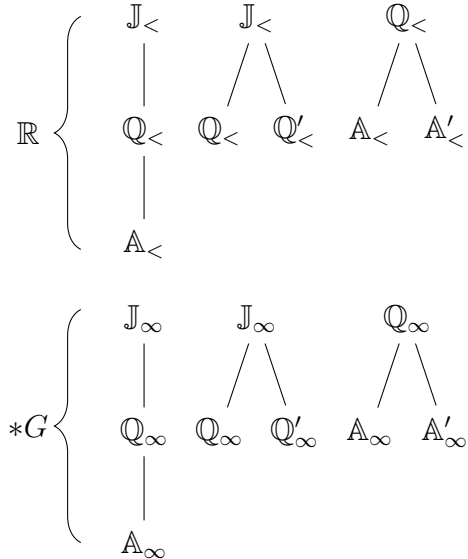
\begin{figure}[H]
\centering
\begin{tikzpicture}[
level 1/.style={sibling distance=10mm}, 
level 2/.style={sibling distance=10mm} 
]
\begin{scope}[ xshift=15mm, yshift=40mm]
\node [] (a11){$\mathbb{J}_{\lt}$}
  child 
  {
    node [] (a00) {$\mathbb{Q}_{\lt}$ } 
  }
  child 
  {
    node [] (a01) {$\mathbb{Q}_{\lt}'$ } 
  }
;
\end{scope}
\begin{scope}[xshift=35mm, yshift=40mm]
\node [] (a15){$\mathbb{Q}_{\lt}$}
  child 
  {
    node [] (a06) {$\mathbb{A}_{\lt}$ } 
  }
  child 
  {
    node [] (a07) {$\mathbb{A}_{\lt}'$ } 
  }
;
\end{scope}
\begin{scope}[xshift=0mm, yshift=40mm]
\node [] (a08){$\mathbb{J}_{\lt}$}
  child 
  {
    node [] (a09) {$\mathbb{Q}_{\lt}$ } 
    child 
    {
      node [] (a10) {$\mathbb{A}_{\lt}$ } 
    }
  }
;
\end{scope}
\begin{scope}[xshift=15mm]
\node [] (a02){$\mathbb{J}_{\infty}$}
  child 
  {
    node [] (a03) {$\mathbb{Q}_{\infty}$ } 
  }
  child 
  {
    node [] (a04) {$\mathbb{Q}_{\infty}'$ } 
  }
;
\end{scope}
\begin{scope}[xshift=35mm, yshift=0mm]
\node [] (a15){$\mathbb{Q}_{\infty}$}
  child 
  {
    node [] (a06) {$\mathbb{A}_{\infty}$ } 
  }
  child 
  {
    node [] (a07) {$\mathbb{A}_{\infty}'$ } 
  }
;
\end{scope}
\begin{scope}[xshift=0mm, yshift=0mm]
\node [] (a12){$\mathbb{J_{\infty}}$}
  child 
  {
    node [] (a13) {$\mathbb{Q}_{\infty}$ } 
    child 
    {
      node [] (a14) {$\mathbb{A}_{\infty}$ } 
    }
  }
;
\end{scope}
\draw [decorate,decoration={brace,amplitude=10pt},xshift=-4pt,yshift=15pt] (-0.7,0.4) -- (-0.7,3.5) node [black,midway,xshift=-0.7cm] {$\mathbb{R}$};
\draw [decorate,decoration={brace,amplitude=10pt},xshift=-4pt,yshift=15pt] (-0.7,-3.5) -- (-0.7,-0.4) node [black,midway,xshift=-0.7cm] {$*G$};
\end{tikzpicture}
\caption{real and gossamer number composition} \label{fig:F07}
\end{figure}
\bigskip
A hierarchy diagram for the number
 systems
 shows
 the parent number system which is used to build
 the child number system (see Figure \ref{fig:F07}). In this way, all reals
 are composed of integers
 and gossamer numbers are composed of integers and infinite integers. 
 The reals are embedded within the gossamer numbers.
\bigskip
\begin{remk}
 We have constructed the real numbers not with $\mathbb{J}$
 but $\mathbb{J}_{\lt}$ as the real numbers cannot contain infinities. 
 In this way, we have also provided a construction
 of the real numbers. Restating with an explicit construction:
 Define a rational number $\mathbb{Q}_{\lt}$
 as a ratio of $\mathbb{J}_{\lt}$ 
 without division by zero.
 Define an irrational number $\mathbb{Q}_{\lt}'$
 which is not a rational number and finite.
 Define the finite algebraic number $\mathbb{A}_{\lt}$ 
 as a root of a non-zero polynomial in one variable with
 at least one rational coefficient $\mathbb{Q}_{\lt}$ and finite.
 Define a transcendental number
 $\mathbb{A}_{\lt}'$ which is not algebraic $\mathbb{A}_{\lt}$
 and finite.
\end{remk}
\bigskip
\begin{remk}
 While a finite number is not an infinity,
 the collection of finite numbers is an infinity,
 as there is no greatest finite number.
 Hence, the countability of the finite numbers and
 the finite numbers is separated.
 This apparent paradox is explained by any instance of a finite
 number begin less than the whole.
 (infinity is non-unique and has other possibilities)
\end{remk}
\bigskip
\begin{defy}\label{DEF048} 
We define
 extended gossamer numbers $*\overline{G}$,
 where $*\overline{G} = *G \cup \pm\infty$ 
\end{defy}

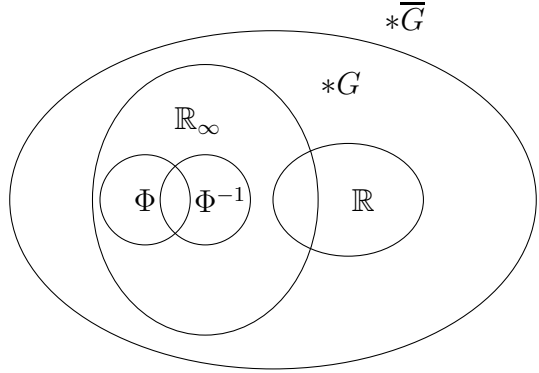
\begin{figure}[H]
\centering
\begin{tikzpicture}
  \draw (1,0) ellipse (1.0cm and 0.75cm);
  \draw (-0.9,0) ellipse (1.5cm and 1.80cm);
  \draw (0,0) ellipse (3.5cm and 2.25cm);
  \node at (-1.0, 1.05) (n001() {$\mathbb{R}_{\infty}$};
  \node at (0.9, 1.55) (n002() {$*G$};
  \node at (1.75, 2.45) (n003() {$*\overline{G}$};

\begin{scope}[xshift=-1cm];
  \draw (-0.7,0) circle (6mm);
  \node at (-0.7, 0.0) (n006() {$\Phi$};
\end{scope}

\begin{scope}[xshift=-0.2cm];
  \draw (-0.7,0) circle (6mm);
  \node at (-0.5, 0.0) (n007() {$\Phi^{-1}$};
\end{scope}
  \node at (1.2, 0.0) (n004() {$\mathbb{R}$};
\end{tikzpicture}
\caption{gossamer numbers and infinireals composition (Not a Venn diagram)} \label{fig:F11}
\end{figure}

The infinitary component generally is not unique.
 Where the intersection in Figure \ref{fig:F11}
 represents adding different components,
 between the numbers there exist overlaps.
 Infinity dominates, with any combination of non-infinity numbers with
 infinity resulting in an infinity.
\bigskip
\begin{defy}\label{DEF073}
Let $\Phi_{*}^{-1}$  be a number type, the infinireal infinity without reals or infinitesimals, under addition in the base.
 If $x \in \Phi^{-1} \backslash\{ \Phi, \mathbb{R} \}$
 then 
 $x \in \Phi_{*}^{-1}$.
\[ \Phi_{*}^{-1} = \Phi^{-1}-\mathbb{R} - \Phi \]
\end{defy}

 The following sets are disjoint and partition $\Phi^{-1}$.
\[ \{\Phi_{*}^{-1}\! + \Phi \} \cup \{ \Phi_{*}^{-1} \! + \mathbb{R}\backslash\{ 0 \} \} \cup \{ \Phi_{*}^{-1} \! + \mathbb{R}\backslash\{0\} + \Phi \} \cup \{ \Phi_{*}^{-1} \} = \Phi^{-1} \]
\bigskip
\begin{mex}
Example of numbers and their relation to
 Figure \ref{fig:F11}.
 $n + \frac{1}{n}|_{n=\infty} \in \Phi^{-1}\!+\Phi$ has an 
 infinitary and infinitesimal component;
 $\sqrt{2}+\frac{1}{n} \in \mathbb{R} + \Phi$ has both a real
 and an infinitesimal component.
\end{mex}

If we compare $\mathbb{R}_{\infty}$
 and $\mathbb{R}$,
 $\mathbb{R}_{\infty}$ have an infinity
 within the number itself, an infinite variable,
 and the reals do not, hence the reals are constant. 
 We could then conceive of the numbers
 $\mathbb{R}_{\infty}$ and $\mathbb{R}+\Phi$
 as ever changing and containing an infinity, 
 something of which the reals are not allowed.

 Having gossamer numbers,
 it would logically follow to define 
 complex gossamer numbers ($a, b \in*G;\, a+bi$).
 Given how useful complex numbers are, this would enable
 complex numbers with infinitesimals and infinities.

 Converting
 a given element of $*G$
 into an algebraic form can help
 identify what type the number is.
 If this is not possible,
 and the number is not of the other simpler types,
 then the number is generally an infinite transcendental number.
\bigskip
\begin{mex}
 Let $k \in \mathbb{J}_{\lt}$, $k = \frac{k}{1} \in \mathbb{Q}_{\lt}$.
 Let $n \in \mathbb{J}_{\infty}$, 
 $n = \frac{n}{1} \in \mathbb{Q}_{\infty}$.
\end{mex}
\bigskip
\begin{mex}\label{MEX050} 
 $n|_{n=\infty}$ is an integer at infinity in $\Phi^{-1}$.
 $n \in \mathbb{J}_{\infty}$. 
 We can compare infinite integers with each other
 in the same way we can compare integers with other integers.
 $n \lt n+1 \lt n+2 \lt \ldots|_{n=\infty}$
\end{mex}
\bigskip
\begin{mex}
$n^{2} + \frac{1}{n}|_{n=\infty} \in \mathbb{Q}_{\infty}$
 because $n^{2} + \frac{1}{n} = \frac{n^{3}+1}{n}|_{n=\infty}$ of form $\frac{\mathbb{J}_{\infty}}{\mathbb{J}_{\infty}} \in \mathbb{Q}_{\infty}$.
\end{mex}
\bigskip
\begin{mex}\label{MEX051} 
 $y^{2} + 2y + 1 = 2n^{2}|_{n=\infty}$
 is of form $\mathbb{J}_{\lt}\,y^{2} + \mathbb{J}_{\lt}\,y + \mathbb{J}_{\lt} = \mathbb{J}_{\infty}$,
 $\mathbb{Q}_{\lt}\,y^{2} + \mathbb{Q}_{\lt}\,y + \mathbb{Q}_{\lt} = \mathbb{Q}_{\infty}$  is in algebraic form, 
 $y \in \mathbb{A}_{\infty}$.
\end{mex}
\bigskip
\begin{mex}\label{MEX055}
Show $\sqrt{2}n|_{n=\infty} \in \mathbb{A}_{\infty}$.
 Let $y = 2^{\frac{1}{2}}n$,
 $y^{2} = 2n^{2}$, $2n^{2} = \frac{2n^{2}}{1} \in \mathbb{Q}_{\infty}$,
 $y^{2} + 0 y^{1} -  \frac{2n^{2}}{1} y^{0}=0$,
 $\mathbb{Q}_{\lt}\, y^{2} + \mathbb{Q}_{\lt}\,y^{1} + \mathbb{Q}_{\infty}y^{0}=0$,
 $y \in \mathbb{A}_{\infty}$.
\end{mex}

 If we consider a number in $*G$
 as composed of three components,
 infinitesimals, reals, and infinities
 while the number is unique, this composition is not. 
 $n+2|_{n=\infty} \in \Phi^{-1}$
 is an infinity, but is composed
 of both an infinity and an integer.
 This is because $\mathbb{R}^{+} \lt n+2|_{n=\infty}$,
 and partitions the reals, 
 hence the number is an infinity, but
 $2 \in \mathbb{J}_{\lt}$, $2+n \in \mathbb{J}_{\lt}+\mathbb{J}_{\infty} \in +\Phi^{-1}$.

As a consequence of the infinitesimals $\Phi$
  and infinities $\Phi^{-1}$ definitions,
 they are not symmetrically defined.
 $\frac{1}{n}+3|_{n=\infty} \not\in \Phi$, but $n^{2}+1|_{n=\infty} \in \Phi^{-1}$.
 An infinity can have an arbitrary real or
 infinitesimal part, but an infinitesimal
 cannot have either a real or an infinity part. 
\bigskip 
\begin{prop}
The gossamer number system comprises of reals and infinirreal numbers.
\[ *G = \Phi + \mathbb{R} + \Phi^{-1} \]
\end{prop}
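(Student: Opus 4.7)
The plan is to prove the stated set identity by showing both inclusions, interpreting the Minkowski-style sum $\Phi + \mathbb{R} + \Phi^{-1}$ as the set of numbers $\delta + r + N$ where each slot is allowed to be zero (otherwise the right-hand side would exclude, e.g., pure reals, since $0 \notin \Phi$ and $0 \notin \Phi^{-1}$).

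For the forward inclusion $\Phi + \mathbb{R} + \Phi^{-1} \subseteq *G$, I would simply note that each of $\Phi$, $\mathbb{R}$, and $\Phi^{-1}$ sits inside $*G$ by the constructions already made: $\mathbb{R}$ is embedded in $*G$ via the ratios $\frac{\mathbb{J}_\infty}{\mathbb{J}_\infty}$ noted after Definition \ref{DEF058}, $\Phi^{-1}$ is the union of the infinite integer/rational/algebraic/irrational/transcendental families of Definitions \ref{DEF052}--\ref{DEF056}, and $\Phi$ arises as the reciprocal set via Definition \ref{DEF036}. Since $*G$ is closed under addition (it is constructed as a ratio-and-root closure of $\mathbb{J}_\lt \cup \mathbb{J}_\infty$), any sum $\delta + r + N$ stays inside $*G$.

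For the reverse inclusion $*G \subseteq \Phi + \mathbb{R} + \Phi^{-1}$, I would proceed by case analysis driven by the partition theorem $0 < \Phi^+ < \mathbb{R}^+ \cup \Phi < +\Phi^{-1} < \infty$ (and its negative mirror). If $x \in *G$ is finite, then $x$ is either zero (take all three components zero), a real (one nonzero slot, $r=x$), an infinitesimal (take $\delta = x$), or a real plus an infinitesimal (which is exactly $\delta + r$). If $x$ is infinite, i.e.\ $x \in \Phi^{-1}$, invoke the disjoint four-way decomposition displayed just after Definition \ref{DEF073}:
\[
\Phi^{-1} \;=\; \Phi_*^{-1} \,\cup\, (\Phi_*^{-1} + \Phi) \,\cup\, (\Phi_*^{-1} + \mathbb{R}\backslash\{0\}) \,\cup\, (\Phi_*^{-1} + \mathbb{R}\backslash\{0\} + \Phi).
\]
In every branch, $x$ is already presented as a sum of (at most) one pure-infinity term in $\Phi_*^{-1} \subseteq \Phi^{-1}$, one real, and one infinitesimal, which is precisely the form required.

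The main obstacle I expect is the subtle bookkeeping around the convention that $0$ is excluded from both $\Phi$ and $\Phi^{-1}$ (Corollary \ref{P043} and Definition \ref{DEF059}), which forces the displayed sum to be read with implicit zero-padding in any empty slot. Once this is fixed, and once one appeals to the two partition results (Theorem on $0 < \Phi^+ < \mathbb{R}^+ \cup \Phi < +\Phi^{-1} < \infty$ together with the $\Phi_*^{-1}$ decomposition), no further calculation is required; the argument is essentially a verification that the earlier partitions cover $*G$ and that each cell of the partition fits the template $\Phi + \mathbb{R} + \Phi^{-1}$.
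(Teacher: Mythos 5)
The paper offers no proof of this proposition at all --- it is stated bare --- so your proposal is an attempt to supply an argument the authors omitted rather than a variant of theirs. Your overall strategy (read the right-hand side as a Minkowski sum with zero-padding in empty slots, prove two inclusions, and settle the reverse inclusion by cases) is sensible and consistent with the ``seven forms'' list the paper gives immediately after this proposition.

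There are, however, two genuine gaps. First, in the reverse inclusion your finite case silently assumes that every bounded element of $*G$ which is neither a real nor an infinitesimal must have the form $r+\delta$ with $r\in\mathbb{R}$ and $\delta\in\Phi$. The partition theorem you cite only orders magnitudes ($0<\Phi^{+}<\mathbb{R}^{+}\cup\Phi<+\Phi^{-1}<\infty$); it does not show that a bounded, non-infinitesimal gossamer number is infinitesimally close to some real, i.e.\ it does not establish the existence of a standard part. That existence claim is the substantive content of the proposition for finite elements, and it would have to be extracted from the construction in Definitions \ref{DEF052}--\ref{DEF056} (e.g.\ by showing that a bounded ratio of infinite integers splits as a real plus an infinitesimal), which neither you nor the paper does. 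The infinite case fares no better: the four-way decomposition of $\Phi^{-1}$ displayed after Definition \ref{DEF073} is itself asserted without proof in the paper, so appealing to it relocates the burden rather than discharging it. Second, a circularity warning on the forward inclusion: additive closure of $*G$ (Proposition \ref{P087}) is proved in the paper from the three-component representation (Definitions \ref{DEF075} and \ref{DEF076}), which presupposes exactly the decomposition you are trying to prove. If you invoke closure of $(*G,+)$ here you must first establish it directly from Definitions \ref{DEF052}--\ref{DEF056}, and that is not automatic: the catch-all classes $\mathbb{Q}_{\infty}'$ and $\mathbb{A}_{\infty}'$ are defined only to sweep up \emph{infinities}, so it is not even immediate from the definitions that every infinitesimal, let alone every sum $\delta+r+N$, lies in $*G$.
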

However, this does not mean that $*G$ as a number cannot be 
 uniquely represented.
 A unique representation 
 may aid with theorems and proofs.
 We define the following
 infinity to exclude infinitesimals and real
 numbers in an addition definition.
 Using $\Phi_{*}^{-1}$, a gossamer number can be
 uniquely represented as
 three components, $\Phi$, $\mathbb{R}$, $\Phi_{*}^{-1}$.
 As a vector of three independent components.
\bigskip
\begin{defy}\label{DEF075}
 Uniquely represent $*G$ by three independent components.
\[ a \in *G \text{ then } a = (\Phi, \mathbb{R}, \Phi_{*}^{-1} ) \]
\end{defy}

 Any number in $*G$ is in one of the following seven forms:
 $\Phi$, 
 $\Phi +\mathbb{R}$,
 $\Phi + \Phi_{*}^{-1}$,
 $\Phi + \mathbb{R} + \Phi_{*}^{-1}$,
 $\mathbb{R}$,
 $\mathbb{R}+\Phi_{*}^{-1}$,
 $\Phi_{*}^{-1}$

 For proofs,
 with the component representation,
 we can
 define addition and multiplication between
 two unique numbers in $*G$,
 for the components which we know and do not know.
\bigskip
\begin{defy}\label{DEF076}
Addition  $a + b$; $ a, b \in *G$;
\[ a+b = (a_{1},a_{2},a_{3})+(b_{1},b_{2},b_{3}) = (a_{1}+a_{2}, b_{1}+b_{2}, a_{3}+b_{3}) \]
\end{defy}

 We know
 $\mathbb{R}\backslash\{0\} \cdot \Phi \in \Phi$ (Proposition \ref{P025}),
 $\mathbb{R}\backslash\{0\} \cdot \Phi^{-1} \in \Phi^{-1}$ (Proposition \ref{P064}),
 $\mathbb{R} \cdot \mathbb{R} \in \mathbb{R}$,
 $\Phi \cdot \Phi \in \Phi$ (Proposition \ref{P090}),
 and $\Phi^{-1} \cdot \Phi^{-1} \in \Phi^{-1}$ (Proposition \ref{P111}),
 hence for these multiplications we 
 can determine which components they belong to.
\bigskip
\begin{defy}\label{DEF077}
Multiplication $a \cdot b$; $a, b \in *G$;
\[ a\cdot b = (a_{1},a_{2},a_{3})\cdot (b_{1}, b_{2}, b_{3}) \]
\[ a \cdot b = (a_{1}b_{2} + a_{2}b_{1} + a_{1}b_{1}, a_{2}b_{2}, a_{3}b_{3} + a_{3}b_{2} + a_{2}b_{3}) + a_{1}b_{3} + a_{3}b_{1} \]
\end{defy}

\bigskip
\begin{remk}
 We do not know the product type of $a_{1}b_{3}$
 and $a_{3}b_{1}$
 as the numbers could be in any or multiple categories.
 This is the familiar indeterminate case $0 \cdot \infty$.
 $\frac{5}{n+1} \cdot n|_{n=\infty}$
 $= 5 - \frac{5}{n+1}$
 $\in \mathbb{R}+\Phi$
\end{remk}

 We have constructed a number system, and 
 from this construction
 believe the following properties,
 while not proved, are true. This would require a proof 
 that $*G$ is a total order field Proposition \ref{P073},
 in perhaps a similar way to a proof of $\mathbb{R}$
 as a total order field.
\subsection{Properties} \label{S0105}
 We believe the $*G$ construction is valid is actually a field,
 but we are less certain about the cardinality.
 This could be a consequence of the representations and
 definitions, or possible errors, or that by doing the mathematics
 in another way, some insight may have been gained.

 The following may be of interest to the set theoretical mathematicians.
 We do not consider one-one correspondence over an infinity of infinite intervals
 to be correct, but over a finite number of infinite intervals to be true.
 A finite number of infinite sets is not an infinite number
 of infinite sets.

 To explain why the one-one correspondence is different,
 we believe that by including infinities within
 the set we are effectively increasing the cardinality.
\bigskip
\begin{defy}
 For determining cardinality,
 a one-one correspondence over an infinite
 interval is a one-to-one correspondence between
 a finite number of infinite intervals. 
\end{defy}

\bigskip
\begin{theo}\label{P077}
 The cardinality of $\mathbb{J}_{\infty}$
 is larger than the cardinality of
 $\mathbb{J}$
\end{theo}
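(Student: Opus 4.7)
The plan is to exploit the authors' modified cardinality convention given immediately before the theorem: a one-one correspondence over an infinite interval is admissible only when it matches a \emph{finite} number of infinite intervals. Under this restriction it suffices to show that $\mathbb{J}$ essentially consists of a single infinite interval, while $\mathbb{J}_{\infty}$ must be decomposed into infinitely many pairwise disjoint infinite intervals, so no admissible bijection between the two sets can exist.

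First I would note that $\mathbb{J}$, displayed throughout the paper as a single two-way sequence $(\ldots,-2,-1,0,1,2,\ldots)$, is itself a single infinite interval in the authors' sense. Next I would construct infinitely many pairwise disjoint ``galaxies'' inside $\mathbb{J}_{\infty}$. Pick any $n_{0}\in\mathbb{J}_{\infty}$ and set $G_{k}=\{(k+1)n_{0}+j : j\in\mathbb{J}_{\lt}\}$ for each $k\in\mathbb{N}_{\lt}$. Each $G_{k}$ has exactly the form of the two-way sequence displayed just after Definition \ref{DEF052}, so it qualifies as an infinite interval. Moreover $G_{k}\cap G_{\ell}=\emptyset$ whenever $k\neq\ell$: any identification $(k+1)n_{0}+j_{1}=(\ell+1)n_{0}+j_{2}$ with $j_{1},j_{2}\in\mathbb{J}_{\lt}$ forces $|k-\ell|\,n_{0}=j_{2}-j_{1}\in\mathbb{J}_{\lt}$, contradicting $n_{0}\in\mathbb{J}_{\infty}$. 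Hence $\bigcup_{k}G_{k}\subseteq\mathbb{J}_{\infty}$ realises infinitely many disjoint infinite intervals.

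With this decomposition in hand I would apply the admissibility criterion directly. Any candidate one-one correspondence between $\mathbb{J}$ and $\mathbb{J}_{\infty}$ can pair only finitely many infinite intervals, so it can match the single interval of $\mathbb{J}$ against at most finitely many $G_{k}$, leaving infinitely many $G_{k}$ uncovered; thus no valid bijection exists. On the other hand, any single galaxy, say $G_{0}$, is in admissible bijection with $\mathbb{J}$ via the translation $j\mapsto n_{0}+j$, so $\mathbb{J}$ does inject into $\mathbb{J}_{\infty}$. Combining these two facts gives the strict inequality $|\mathbb{J}_{\infty}|>|\mathbb{J}|$.

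The hard part will be making the galaxy construction airtight inside the authors' still-informal framework. One must verify that $(k+1)n_{0}$ remains in $\mathbb{J}_{\infty}$ for every finite $k$, that each $G_{k}$ really qualifies as an ``infinite interval'' under their definition (which is not spelled out rigorously), and that indexing over $k\in\mathbb{N}_{\lt}$ legitimately produces infinitely many disjoint intervals in a way that interacts with the admissibility criterion as intended. A subtler worry is a possible circularity: the argument produces infinitely many disjoint infinite intervals indexed by $\mathbb{N}_{\lt}$, and one must ensure that this indexing is not itself a one-one correspondence of exactly the kind the authors' definition forbids.
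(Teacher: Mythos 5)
Your proposal is correct within the authors' informal framework and takes essentially the same route as the paper's own proof: both decompose $\mathbb{J}_{\infty}$ into infinitely many disjoint copies of $\mathbb{J}$ (galaxies) --- the paper via the representatives $(n, n^{2}, n^{3}, \ldots)|_{n=\infty}$ and the translated sequence $(\ldots,(-1,k-1),(0,k),(1,k+1),\ldots)$, you via the sets $G_{k}$ centred at $(k+1)n_{0}$ --- and then invoke the convention that an admissible one-one correspondence may match only finitely many infinite intervals. Your version is the more carefully executed of the two, since you explicitly verify pairwise disjointness of the galaxies and exhibit the injection of $\mathbb{J}$ into a single galaxy, both of which the paper leaves implicit.
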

\begin{proof}
$1 \ldots n$ cannot
 be put in one-one correspondence
 with $\mathbb{J}_{\infty}$,
 as one element of $\mathbb{J}_{\infty}$
 alone can be put into one-one
 correspondence with $\mathbb{J}$.
 Let $k \in \mathbb{J}_{\infty}$,
 placing $k$ in one-one correspondence with $\mathbb{J}$,
 $(\ldots, (-2,k-2), (-1,k-1), (0,k), (1,k+1), \ldots)$.
 Since there is an infinity
 of such $k$ elements, unlike the
 diagonalization argument with $2$
 infinite axes
 (which proves rational numbers
 have the same cardinality as
 the integers),
 this case has no one-one correspondence.
 E.g. Consider $(k)=(n,n^{2}, n^{3}, n^{4}, \ldots)|_{n=\infty}$.
 If this sequence were finite it could be put into a one-one correspondence, but it is not.

 A simpler way, countable infinities
 describe, $1 \ldots n$, $1 \ldots n^{2}$,
 \ldots, $1 \ldots n^{w}|_{n=\infty}$
 for finite $w$, but not
 infinite $w$.  
\end{proof}
\bigskip

 This may have consequences for the continuum hypothesis:
 that there is no cardinality between the
 integers and the real numbers.
 However,
 if the continuum hypothesis is true,
 the \mbox{infinite-integers} would have greater than or
 equal to cardinality than the real numbers.

 Theorem \ref{P077} is a different one-one correspondence from
 Cantor. With non-uniqueness at infinity, there may be other 
 mathematics here, so the truth or falsehood is relative
 to the mathematical system chosen, if one is chosen at all.

 This also has a consequence for the cardinality of the hyperreals
 which by the 
 current theory has the same cardinality as the real numbers \cite{hypercard}.
 We believe that this
 is incorrect, and hence the current theory is not modelling the situation.
 In contrast, our theory would result in the reals having 
 a lower cardinality than that of the hyperreals.
 
 We really believe that the hyperreals and $*G$ have the 
 same cardinality.

 Intuitively,
 we do not agree that $*G$ has the same 
 cardinality as $\mathbb{R}$, particularly as
 we have defined $\mathbb{R}_{\lt}$ as the
 real numbers, which are devoid of any infinity $n$ terms.  

The nature of the gossamer numbers gives
 reason to believe infinitesimals and infinities
 are much more dense than reals, for about
 any real number there is an infinity of infinitesimal numbers,
 which map back to a unique real number.
 (Part 4) 

\begin{quote}
The hyperreal point of view is that the geometric line is capable of sustaining a much richer and more intricate set than the real line. \cite[p.14]{gold}
\end{quote}
 The preceding quote does seem to contradict the current position of
 the hyperreals having the same cardinality as the reals.
 Why would this property not be reflected in the cardinality?
\bigskip
\begin{conjecture}\label{P078}
The cardinality of the gossamer numbers
 is larger than the cardinality of
 the reals.
\end{conjecture}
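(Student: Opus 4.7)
The plan is to mirror the proof strategy of Theorem \ref{P077} but one level up: replace the pair $(\mathbb{J},\mathbb{J}_\infty)$ with the pair $(\mathbb{R},*G)$ and argue that no one-one correspondence (in the restricted sense given by the paper, i.e. spanning only a \emph{finite} number of infinite intervals) can exhaust $*G$. First I would record that $\mathbb{R}\hookrightarrow *G$, so $|\mathbb{R}|\le|*G|$ is immediate; the task is therefore to rule out equality. The natural target of the non-correspondence argument is the subfamily of $*G$ indexed by infinite integers.

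The key steps, in order, would be: (i) Fix an enumeration attempt $\varphi:\mathbb{R}\to *G$; by the construction of $*G$ as $\Phi+\mathbb{R}+\Phi^{-1}$, every element of $*G$ decomposes into an infinitesimal, real, and infinitary component. (ii) For each $k\in \mathbb{J}_\infty$ consider the shifted copy $k+\mathbb{R}\subset *G$, which is an infinite interval in the sense of Section \ref{S0101}. Following Theorem \ref{P077}, a single such copy can already be put into one-one correspondence with $\mathbb{R}$ via $r\mapsto k+r$. (iii) Since there is an infinity of such infinite integers $k$ (as in the sequence $(n,n^2,n^3,\ldots)|_{n=\infty}$ used in Theorem \ref{P077}), the collection $\{\,k+\mathbb{R}\,\}_{k\in\mathbb{J}_\infty}$ is an infinity of disjoint (once reduced modulo $\mathbb{R}$) infinite intervals, and the paper's restricted notion of correspondence only licences matching a finite number of such intervals against $\mathbb{R}$. (iv) Conclude, exactly as in Theorem \ref{P077}, that no such $\varphi$ can be a bijection, hence $|*G|>|\mathbb{R}|$.

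A complementary route, which I would include as a second justification, uses the halo picture emphasised in Section \ref{S0104}: around each real $r$ there lies $r+\Phi$, and by Proposition \ref{P018} the infinitesimals form a positive infinitary scale below every finite positive number. Thus $\mathbb{R}+\Phi$ already fibres $\mathbb{R}$ over an infinity of infinitesimal strata; the same diagonal-style argument from Theorem \ref{P077} then shows that one cannot fold the stratification back into a single copy of $\mathbb{R}$ by any correspondence spanning only finitely many infinite layers. Combining the infinitesimal halo below and the infinite-integer shifts above gives the conjecture with room to spare.

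The main obstacle is not combinatorial but foundational: the whole argument rests on the author's non-standard notion of one-one correspondence (restricting to a finite number of infinite intervals) introduced just before Theorem \ref{P077}. Making this precise for $*G$ requires a clean definition of \emph{what counts as an infinite interval in $*G$}, since $*G$ itself contains infinitesimal, real, and infinitary scales that interact. I would therefore need to spend the bulk of the proof formalising ``infinite interval'' at the level of $*G$ (plausibly as a translate of $\mathbb{R}$ by some $\alpha\in\Phi_{*}^{-1}$ or by some $\delta\in\Phi$) and verifying that the cardinal comparison in Theorem \ref{P077} lifts functorially under this definition. Under the classical Cantor convention the conjecture is simply false for the hyperreals (cf.\ \cite{hypercard}), so the proposal has to live or die by the author's alternative convention, which is also acknowledged in the paper as a point of departure from standard set theory.
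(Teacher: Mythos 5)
The paper leaves \ref{P078} as a conjecture and does not prove it; what it offers in support are two heuristics, and both differ from your route. The paper's first argument is functorial: the gossamer numbers are built from $\mathbb{J}_{\infty}$ by exactly the same construction that builds $\mathbb{R}$ from $\mathbb{J}_{\lt}$, so since Theorem \ref{P077} gives $|\mathbb{J}_{\infty}|>|\mathbb{J}|$, an input of larger cardinality is asserted to yield an output of larger cardinality. Its second argument counts solution spaces: polynomials over $\mathbb{Q}$ indexed by $\mathbb{N}_{\infty}$ have solution space of size $\aleph_0^{|\mathbb{N}_{\infty}|}$, while those over $\mathbb{Q}_{\infty}$ have size $|\mathbb{N}_{\infty}|^{|\mathbb{N}_{\infty}|}$, and comparison algebra gives $|\mathbb{N}_{\infty}|^{|\mathbb{N}_{\infty}|}\succ\aleph_0^{|\mathbb{N}_{\infty}|}$. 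Your proposal instead re-runs the non-correspondence argument of Theorem \ref{P077} one level up, exhibiting inside $*G$ an infinity of pairwise distinct translates $k+\mathbb{R}$ (for $k$ ranging over, say, $(n,n^2,n^3,\ldots)|_{n=\infty}$, so that the translates do not coincide modulo $\mathbb{R}$), each already in bijection with $\mathbb{R}$, and then invoking the paper's restriction of correspondences to finitely many infinite intervals; you supplement this with the infinitesimal halos $r+\Phi$ below each real. What your approach buys is concreteness: it actually exhibits the witnessing family of intervals in $*G$ and makes visible exactly where the author's non-Cantorian convention is being used, which the paper's ``same construction, bigger input'' heuristic hides. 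What the paper's approach buys is that it needs no new definition of ``infinite interval in $*G$'' (the obstacle you correctly identify as the bulk of the work), since it reduces everything to Theorem \ref{P077} or to comparison of exponent towers. Both your argument and the paper's stand or fall on the same foundational choice --- the redefined one-one correspondence --- and you are right that under the classical convention the analogous claim for the hyperreals fails; the paper concedes as much in its discussion of \cite{hypercard}. Given that the paper itself only conjectures the statement, your proposal is an acceptable alternative justification at the same level of rigour, provided the formalisation of ``infinite interval'' in $*G$ that you flag is actually carried out rather than assumed.
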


 Since the construction of the number system
 of reals and gossamer numbers is identical except
 with different number types,
 and the cardinality
 of the infinite integers $\mathbb{J}_{\infty}$
 is larger than the cardinality
 of the finite integers $\mathbb{J}_{\lt}$,
 then 
 the input having different cardinality
 results in an output with 
 different cardinality.

 Another approach is to consider the solution space,
 by comparing the transcendental solution space
 with the infinite transcendental solution space.
 Let $|\mathbb{N}_{\infty}|$ describe the infinite integers
 cardinality.

 $a_{k} \in \mathbb{Q}$;
 $\sum_{\mathbb{N}_{\infty}} a_{k}x^{k} = 0$ has 
 $\aleph_0^{|\mathbb{N}_{\infty}|}$ solution space. \\
 $b_{k} \in \mathbb{Q}_{\infty}$;
 $\sum_{\mathbb{N}_{\infty}} b_{k}x^{k} = 0$ has 
 ${|\mathbb{N}_{\infty}|}^{|\mathbb{N}_{\infty}|}$ solution space. \\
 Solving with comparison algebra shows ${|\mathbb{N}_{\infty}|}^{|\mathbb{N}_{\infty}|} \succ \aleph_0^{|\mathbb{N}_{\infty}|}$ and the
 gossamer numbers have a larger cardinality.

After scales of infinities
 (also see Part 2),
 another geometric example at infinity 
 involves the visualization of infinitesimally close curves.
 We can have infinitely many curves, infinitesimally 
 close to a single curve, separated by infinitely
 small distances, $f(x)-g(x)\in \Phi$ (see Example \ref{MEX019}). 

 Infinitesimals are required to describe 
 such a space, in the same way that
 real numbers are needed to extend rational numbers,
 and rational numbers to extend integers.

 Infinitesimal and infinity exclusion 
 can be compared with removing
 other classes of numbers,
 and is staggering.
 Not even Apostol \cite{apostol} discusses infinitesimals
 or infinities 
 as their own number system, 
 but implicitly uses infinitesimals
 and infinities when convenient.
 Given the banishment of infinitesimals from calculus in general,
 for example, their exclusion as a number from Apostol and generally
 every modern calculus text (we cited Apostle as this is highly regarded,
 but does not describe infinitesimals as numbers).
 Such a choice may be understandable, but it is not complete.

What follows is a construction of an extended
 real number system, we call $*G$, which
 includes infinitesimals and infinities,
 which is similar to the hyperreals,
 but different.
 
 We differentiate between infinitesimals
 and zero. Similarly we differentiate between
 an infinity such as $n^{2}|_{n=\infty}$ and the 
 ``number" $\infty$.
\subsection{Field properties} \label{S0106}

\# Unproven propositions. Other propositions 
 assume their truth.

\textbf{Proposition \ref{P042}}  $(+\Phi^{-1},+)$ is closed \\
\textbf{Proposition \ref{P113}}  $(+\Phi_{*}^{-1},+)$ is closed \\
\textbf{Proposition \ref{P060}} $(+\Phi,+)$ is closed \\
\textbf{Proposition \ref{P081}} $(\Phi \cup \{0\},+)$ is closed \\
\textbf{Proposition \ref{P082}} $(\Phi \cup \{ 0 \},+)$ is commutative \# \\ 
\textbf{Proposition \ref{P083}} $(\Phi \cup \{ 0 \},+)$ is associative \# \\
\textbf{Proposition \ref{P084}} $(\Phi_{*}^{-1} \cup \{0\},+)$ is closed \\
\textbf{Proposition \ref{P085}} $(\Phi_{*}^{-1} \cup \{ 0 \},+)$ is commutative \# \\
\textbf{Proposition \ref{P086}} $(\Phi_{*}^{-1} \cup \{ 0 \},+)$ is associative \# \\
\textbf{Proposition \ref{P087}} $(*G,+)$ is closed \\
\textbf{Proposition \ref{P088}} $(*G,+)$ is commutative \\
\textbf{Proposition \ref{P089}} $(*G,+)$ is associative \\
\textbf{Proposition \ref{P114}} $(+\Phi,\cdot)$ is closed \\
\textbf{Proposition \ref{P090}} $(\Phi,\cdot)$ is closed \\
\textbf{Proposition \ref{P111}} $(\Phi^{-1},\cdot)$ is closed \\
\textbf{Proposition \ref{P115}} $(+\Phi_{*}^{-1},\cdot)$ is closed \\
\textbf{Proposition \ref{P091}} $(\Phi_{*}^{-1},\cdot)$ is closed \\
\textbf{Proposition \ref{P092}} $\Phi \cdot \Phi_{*}^{-1} \in *G \backslash \{ 0 \}$ \# \\
\textbf{Proposition \ref{P093}} $\Phi_{*}^{-1} \cdot \Phi \in *G \backslash \{ 0 \}$ \# \\
\textbf{Proposition \ref{P112}} $(\mathbb{R}_{\infty},\cdot) \in *G \backslash \{ 0 \}$ \\
\textbf{Proposition \ref{P094}} $(*G,\cdot)$ is closed  \\
\textbf{Proposition \ref{P121}} $(\Phi^{-1},\cdot)$ is commutative \# \\
\textbf{Proposition \ref{P116}} $(\Phi_{*}^{-1},\cdot)$ is commutative \\
\textbf{Proposition \ref{P117}} $(\Phi,\cdot)$ is commutative \\
\textbf{Proposition \ref{P118}} $(\Phi, \Phi_{*}^{-1}, \cdot)$ is commutative \# \\
\textbf{Proposition \ref{P095}} $(*G \backslash \{ 0 \},\cdot)$ is commutative \\
\textbf{Proposition \ref{P096}} $(*G \backslash \{ 0 \},\cdot)$ is associative \\
\textbf{Proposition \ref{P097}} $(*G,+,\cdot)$ is distributive,
 $a \cdot (b+c) = (a \cdot b) + (a \cdot c)$ \\
\textbf{Proposition \ref{P098}} $(\Phi \cup \{ 0 \},+)$ has identity $0$ \\
\textbf{Proposition \ref{P099}} $(\Phi_{*}^{-1} \cup \{ 0 \},+)$ has identity $0$ \\
\textbf{Proposition \ref{P100}} $(*G,+)$ has identity $(0,0,0)$ \\
\textbf{Proposition \ref{P119}} $(\Phi^{-1} \cup \{ 0 \},+)$ has an inverse \\
\textbf{Proposition \ref{P102}} $(\Phi_{*}^{-1} \cup \{ 0 \},+)$ has an inverse \\
\textbf{Proposition \ref{P101}} $(\Phi \cup \{ 0 \},+)$ has an inverse \\
\textbf{Proposition \ref{P103}} $(*G,+)$ has an inverse \\
\textbf{Proposition \ref{P104}} $(*G,+)$ is an abelian group \\
\textbf{Proposition \ref{P105}} $(*G \backslash \{ 0 \}, \cdot)$ has an identity \\
\textbf{Proposition \ref{P106}} $(*G \backslash \{ 0 \}, \cdot)$ has inverse \\
\textbf{Proposition \ref{P107}} $(*G \backslash \{ 0 \}, \cdot)$ is a group \\
\textbf{Proposition \ref{P108}} $*G$ is a field \\
\textbf{Proposition \ref{P073}} $*G$ is a total order field.
 $a, b, c \in *G$;
 If $a \leq b$ then $a+c \leq b+c$.
 If $0 \leq a$ and $0 \leq b$ then $0 \leq a \times b$. \# \\
\textbf{Proposition \ref{P025}} $\mathbb{R}\backslash\{0\} \cdot \Phi \in \Phi$ \# \\
\textbf{Proposition \ref{P064}} $\mathbb{R}\backslash\{0\} \cdot \Phi^{-1} \in \Phi^{-1}$ \# \\
\textbf{Proposition \ref{P021}} $\mathbb{R}\backslash\{0\} \cdot \mathbb{R}_{\infty} \in \mathbb{R}_{\infty}$ \\
\textbf{Proposition \ref{P053}}  
Let $\theta \in *G \backslash\{0\}$,
 $\theta \gt 0$, 
 $z \in \{ \lt, \leq, ==, \gt, \geq \}$. 
 $f \; (z) \; g \Leftrightarrow \theta \cdot f \; (z) \; \theta \cdot g$ \# \\
\textbf{Proposition \ref{P054}} Let $z \in \{ \lt, \leq, ==, \gt, \geq \}$. 
 The relations $z$ invert when multiplied by a negative number in $*G$.
 For the equality case the left and right sides of the relation are interchanged. \\
\bigskip
\begin{prop}\label{P042}
 $(+\Phi^{-1},+)$ is closed 
\end{prop}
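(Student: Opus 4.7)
The plan is to fix $a, b \in +\Phi^{-1}$ and verify that $a+b$ satisfies the two conditions packed into the definition of $+\Phi^{-1}$: by Definition \ref{DEF051} it should exceed every real number, and by Definition \ref{DEF059} it should not be the ideal point $\infty$.

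For the first condition, I would observe that since $0 \in \mathbb{R}$ and $b \in +\Phi^{-1}$, Definition \ref{DEF051} gives $b > 0$. Then for an arbitrary $r \in \mathbb{R}$, the same definition applied to $a$ gives $a > r$, and preserving this inequality under addition by the positive quantity $b$ yields $a+b > a + 0 = a > r$. Hence $a+b$ is strictly greater than every real, i.e., a positive infinity in the sense of Definition \ref{DEF051}; in particular $a+b$ is neither $0$, nor an infinitesimal, nor finite.

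For the second condition, I would argue that $a+b$ lies inside $*G$ and therefore cannot be the external symbol $\infty$. Both $a$ and $b$ are built constructively from infinite integers via the hierarchy of Definitions \ref{DEF052}--\ref{DEF056}, so each lies in $*G$; closure of that hierarchy under finite addition places $a+b \in *G$ as well. Since Definition \ref{DEF048} only adjoins $\pm\infty$ when forming $*\overline{G}$, we have $\infty \notin *G$ and hence $a+b \neq \infty$. Combining the two conditions, $a+b \in +\Phi^{-1}$.

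The main obstacle is precisely this second step: the claim that a sum of two constructively built positive infinities remains inside $*G$ rather than escaping to the symbolic $\infty$ is really a statement about closure of the constructive number hierarchy under addition, which is used more than proved in the excerpt. If one wishes to avoid leaning on that implicit closure, an alternative is to invoke the component representation of Definition \ref{DEF075} together with the addition rule of Definition \ref{DEF076}: since both $a$ and $b$ have a positive $\Phi_{*}^{-1}$-component and the components are of the same sign, no cancellation in the $\Phi_{*}^{-1}$-slot can occur, so $a+b$ retains a proper infinitary part in $\Phi^{-1}$ and is distinct from the external $\infty$.
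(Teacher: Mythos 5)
Your first step --- $b > 0$ forces $a+b > a > r$ for every real $r$ --- is exactly the paper's own one-line proof, so the core of your argument matches the paper's approach. The additional verification that $a+b$ is not the adjoined symbol $\infty$ (and your honest flagging that closure of the constructive hierarchy under addition is used more than proved) goes beyond what the paper bothers to check, and is a fair observation about the paper rather than a gap in your argument.
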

\begin{proof}
$a, b \in +\Phi^{-1}$;
 since $b \gt 0$,
 $a+b \gt a$,
 $a+b \in +\Phi^{-1}$
\end{proof}
\begin{prop}\label{P113}
 $(+\Phi_{*}^{-1},+)$ is closed 
\end{prop}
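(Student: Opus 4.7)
The plan is to mimic the proof of Proposition \ref{P042} and then upgrade it using the unique three-component representation of Definition \ref{DEF075} together with the coordinate-wise addition rule of Definition \ref{DEF076}. The key point is that membership in $\Phi_{*}^{-1}$ is stronger than mere membership in $\Phi^{-1}$: one must verify not only that the sum remains a positive infinity, but also that no real or infinitesimal component gets created during the addition.

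First I would take arbitrary $a, b \in +\Phi_{*}^{-1}$ and write them in the canonical form provided by Definition \ref{DEF075}. Since $a, b \in \Phi_{*}^{-1} = \Phi^{-1} - \mathbb{R} - \Phi$ (Definition \ref{DEF073}), their first two components vanish, giving $a = (0,0,a_{3})$ and $b = (0,0,b_{3})$ with $a_{3}, b_{3} > 0$ pure infinities. Applying Definition \ref{DEF076} coordinate-wise yields
\[
a + b = (0+0,\; 0+0,\; a_{3}+b_{3}) = (0,0,a_{3}+b_{3}),
\]
so the infinitesimal and real components of $a+b$ are identically zero, eliminating the risk of slipping out of $\Phi_{*}^{-1}$.

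Next I would establish that the third coordinate $a_{3}+b_{3}$ lies in $+\Phi_{*}^{-1}$. Positivity follows from $a_{3}, b_{3} > 0$. To see that $a_{3}+b_{3}$ is an infinity, I invoke Proposition \ref{P042}: since $a, b \in +\Phi^{-1}$, we have $a+b \in +\Phi^{-1}$, and hence its unique pure-infinity part $a_{3}+b_{3}$ must itself be a (positive) infinity, because otherwise the decomposition would contradict Proposition \ref{P042}. Combined with the vanishing of the first two components, this places $a+b$ in $+\Phi_{*}^{-1}$.

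The main obstacle, and the only substantive content beyond Proposition \ref{P042}, is justifying the step where adding two pure infinities cannot produce a real or infinitesimal residue. This is really an appeal to the well-definedness and uniqueness of the decomposition in Definition \ref{DEF075}; without that uniqueness, one might worry about cancellation phenomena (e.g.\ an $n^{2}+n$ and an $n^{2}-n$ style interaction) that inject lower-order terms. Under the coordinate-wise convention of Definition \ref{DEF076}, however, such interactions are by fiat absorbed into the $\Phi_{*}^{-1}$ coordinate, so the closure follows cleanly.
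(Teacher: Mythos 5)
Your proof follows essentially the same route as the paper's: both write $a=(0,0,a_{3})$, $b=(0,0,b_{3})$ in the component representation of Definition \ref{DEF075}, add coordinate-wise via Definition \ref{DEF076}, and conclude from the form $(0,0,a_{3}+b_{3})$. The only difference is that you additionally invoke Proposition \ref{P042} to confirm the third coordinate remains a positive infinity, where the paper merely notes that the sum of two positive numbers is positive; your version is slightly more careful but not a different argument.
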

\begin{proof}
 Since the components are
 separate, by definition;
 $a, b \in +\Phi_{*}^{-1}$;
 $a+b$
 $=(0,0,a_{3}) + (0,0,b_{3})$
 $=(0,0,a_{3}+b_{3}) \in +\Phi_{*}^{-1}$
 since adding two positive numbers are also
 positive. 
\end{proof}

\begin{prop}\label{P060}
 $(+\Phi,+)$ is closed
\end{prop}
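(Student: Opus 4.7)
The plan is to mirror the very short style used for Proposition~\ref{P042} and Proposition~\ref{P113}, working directly from the definition of a positive infinitesimal together with Proposition~\ref{P018}. Let $a, b \in +\Phi$. Since both summands are positive, certainly $a+b > 0$, so I only need to verify that $a+b$ is actually an infinitesimal (i.e.\ smaller than every positive real), rather than a positive real or an element of $+\Phi^{-1}$.

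The key step uses Proposition~\ref{P018}: every positive infinitesimal is strictly less than every positive real. Given an arbitrary $x \in \mathbb{R}^+$, I would apply this to $x/2 \in \mathbb{R}^+$ to obtain $a < x/2$ and $b < x/2$, and add to get $a+b < x$. Since $x$ was arbitrary, $a+b$ is dominated by every positive real, so by Proposition~\ref{P018} (read in the contrapositive direction together with the partition $0 < \Phi^+ < \mathbb{R}^+ \cup \Phi < +\Phi^{-1} < \infty$), $a+b$ must lie in $+\Phi$.

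As an alternative, I could argue via reciprocals: since $a+b < x$ for every $x \in \mathbb{R}^+$, we get $\tfrac{1}{a+b} > \tfrac{1}{x}$ for every $x \in \mathbb{R}^+$, which places $\tfrac{1}{a+b}$ in $+\Phi^{-1}$ by Definition~\ref{DEF051}, and then $a+b \in +\Phi$ by Definition~\ref{DEF036}. Either route is only a couple of lines.

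The only real obstacle is bookkeeping at the boundary: I must rule out the possibility that $a+b$ lands in $\mathbb{R}^+$ or somehow fails the infinitesimal definition. The halving trick $x \mapsto x/2$ handles this cleanly, since it is exactly what prevents the bound $a+b < x$ from degrading to $a+b \le 2 \cdot (\text{something real})$. No appeal to the tuple representation of Definition~\ref{DEF075} is needed here, because the positivity of each summand already pins the real and $\Phi_{*}^{-1}$ components of $a$ and $b$ to zero.
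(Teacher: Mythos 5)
Your proof is correct, but it is not the route the paper takes. The paper's proof writes $a,b\in+\Phi$ as reciprocals $\tfrac1A,\tfrac1B$ of positive infinities $A,B\in+\Phi^{-1}$, computes $\tfrac1A+\tfrac1B=\tfrac{A+B}{AB}$, and then invokes the magnitude claim $AB\succ A+B$ to conclude the sum is an infinitesimal. Note that $AB\succ A+B$ means precisely $\tfrac{A+B}{AB}\in\Phi$, i.e.\ $\tfrac1A+\tfrac1B\in\Phi$, so as written the paper's key step is essentially the statement being proved and is left unjustified (it can be repaired, e.g.\ taking $A\le B$ gives $\tfrac{AB}{A+B}\ge\tfrac{A}{2}$, which is a positive infinity). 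Your argument instead stays on the infinitesimal side: the halving trick with Proposition \ref{P018} gives $a+b<x$ for every $x\in\mathbb{R}^{+}$, and your reciprocal variant then places $\tfrac{1}{a+b}$ in $+\Phi^{-1}$ by Definition \ref{DEF051} and hence $a+b\in+\Phi$ by Definition \ref{DEF036}. Keep that second variant rather than the first: it verifies the paper's actual definition of an infinitesimal directly, whereas the ``contrapositive of Proposition \ref{P018} plus the partition'' phrasing leans on the trichotomy of the partition theorem rather than on Definition \ref{DEF036}, and is the one place your write-up is loose. Net effect: your route is more elementary and more self-contained than the paper's, at the cost of a couple of extra lines.
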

\begin{proof}
 $a, b \in +\Phi^{-1}$;
 $\frac{1}{a}, \frac{1}{b} \in \Phi$;
 $\frac{1}{a} + \frac{1}{b} = \frac{b+a}{ab}$,
 since $ab \succ a +b$ then $\frac{a+b}{ab}\in \Phi$.
\end{proof}

\begin{prop}\label{P081}
$(\Phi \cup \{0\},+)$ is closed
\end{prop}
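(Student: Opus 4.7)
The plan is to reduce to the simpler subcases already handled. Let $a, b \in \Phi \cup \{0\}$; I would split on whether either summand is $0$ and, when both lie in $\Phi$, on the signs.

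First, the trivial cases: if $a = 0$ or $b = 0$, then $a + b$ equals the other summand, which already lies in $\Phi \cup \{0\}$. So I may assume $a, b \in \Phi$. Next, if $a, b$ are both positive infinitesimals, closure follows immediately from Proposition \ref{P060}; if both are negative, write $a = -a'$, $b = -b'$ with $a', b' \in +\Phi$, apply \ref{P060} to get $a' + b' \in +\Phi$, and then negate to conclude $a + b \in -\Phi \subset \Phi$.

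The real content is the mixed-sign case, say $a \in +\Phi$ and $b \in -\Phi$. If $a = -b$, then $a + b = 0 \in \Phi \cup \{0\}$ and we are done. Otherwise set $s = a + b \neq 0$. Without loss of generality assume $s > 0$ (the $s < 0$ case is symmetric). Then $0 < s < a$, because $b < 0$. Since $a \in +\Phi$, its reciprocal $\tfrac{1}{a} \in +\Phi^{-1}$, so by Definition \ref{DEF051} $\tfrac{1}{a}$ exceeds every real; from $0 < s < a$ we get $\tfrac{1}{s} > \tfrac{1}{a}$, hence $\tfrac{1}{s}$ also exceeds every real and is therefore a positive infinity. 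By Definition \ref{DEF036}, $s \in \Phi$, so $a + b \in \Phi \cup \{0\}$.

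The main obstacle I would watch for is the mixed-sign step: one has to be sure that bounding $s$ strictly between $0$ and an infinitesimal really does force $s$ itself to be infinitesimal, which requires invoking the infinity/infinitesimal duality (Definitions \ref{DEF051} and \ref{DEF036}) rather than trying to manipulate $a + b$ algebraically as in the same-sign case. Everything else is bookkeeping on cases, and the argument consolidates the already-proved positive case (\ref{P060}) with a symmetry/reciprocal comparison.
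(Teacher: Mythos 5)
Your proof is correct, but it takes a genuinely different route from the paper's. The paper works entirely on the reciprocal side: it sets $a,b\in+\Phi^{-1}$, writes the mixed-sign sum as a single fraction $\frac{1}{a}-\frac{1}{b}=\frac{b-a}{ab}$, and concludes by the magnitude comparison $b-a\prec ab$ (asserted, not derived) that the quotient is infinitesimal. You instead use an order-theoretic squeeze: after disposing of the zero and same-sign cases via Proposition \ref{P060} and sign-factoring, you trap the nonzero mixed-sign sum $s$ strictly between $0$ and one of the infinitesimals, pass to reciprocals so that $\frac{1}{s}$ dominates the positive infinity $\frac{1}{a}$ and hence exceeds every real (Definition \ref{DEF051}), and conclude $s\in\Phi$ by Definition \ref{DEF036}. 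What your version buys is that it avoids having to justify the comparison $b-a\prec ab$ (which, if unpacked, essentially reduces to the same "below a positive infinitesimal implies infinitesimal" fact you prove directly), and it makes explicit the $a=0$ or $b=0$ and both-negative cases that the paper only gestures at. What the paper's version buys is brevity and consistency with the comparison-algebra style used throughout the series. The one step you should be aware of is the implicit converse of Definition \ref{DEF051}: you use "exceeds every real" as a sufficient condition for being a positive infinity, not merely a necessary one; the paper itself reasons this way elsewhere (e.g.\ in classifying $n+2|_{n=\infty}$), so this is consistent with the framework, but it is worth flagging as the load-bearing step.
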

\begin{proof}
 $a, b \in +\Phi^{-1}$;
 For both positive number and negative
 numbers are closed by Proposition \ref{P060}.
 Consider case adding infinitesimals
 of opposite sign,
 $\frac{1}{a}-\frac{1}{b}$
 $= \frac{b-a}{ab}$.
If $a=b$ then  $\frac{1}{a}-\frac{1}{b}$
 $= \frac{0}{ab}=0$ is closed.
 If $a \neq b$ then
 $b-a \prec ab$ and $\frac{b-a}{ab} \in \Phi$ is closed.
\end{proof}

\begin{prop}\label{P082}
$(\Phi \cup \{ 0 \},+)$ is commutative
\end{prop}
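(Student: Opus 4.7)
The plan is to reduce commutativity of $(\Phi\cup\{0\},+)$ to commutativity of addition on the infinite integers, which is itself inherited from the ordinary integers via the isomorphic-construction viewpoint used throughout Section \ref{S0104}. Concretely, I would proceed in the same style as the closure proof (Proposition \ref{P081}): take arbitrary $a,b\in\Phi\cup\{0\}$, handle the trivial cases $a=0$ or $b=0$ first (where the identity property of $0$ in ordinary addition gives $a+b=a=b+a$ immediately), and then treat the generic case $a,b\in\Phi$.

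For the generic case, I would write $a=\tfrac{1}{A}$ and $b=\tfrac{1}{B}$ with $A,B\in\Phi^{-1}$, using Definition \ref{DEF036}. Then
\[
a+b \;=\; \frac{1}{A}+\frac{1}{B} \;=\; \frac{B+A}{AB}, \qquad b+a \;=\; \frac{1}{B}+\frac{1}{A} \;=\; \frac{A+B}{BA}.
\]
Equality of these two expressions reduces to commutativity of addition and of multiplication on $\Phi^{-1}$, i.e. to $A+B=B+A$ and $AB=BA$ in the infinity layer. Since every element of $\Phi^{-1}$ is built out of finite and infinite integers/rationals by the construction in Definitions \ref{DEF052}--\ref{DEF056}, and the component arithmetic is componentwise (Definition \ref{DEF076}), commutativity transfers directly from the componentwise integer/rational operations.

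Alternatively, and perhaps more cleanly given the unique-representation machinery, I would use Definition \ref{DEF075} to write $a=(a_1,0,0)$ and $b=(b_1,0,0)$, then apply Definition \ref{DEF076} to get
\[
a+b=(a_1+b_1,\,0,\,0), \qquad b+a=(b_1+a_1,\,0,\,0),
\]
and invoke commutativity of the underlying infinitesimal-component addition.

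The main obstacle, and the reason this proposition is flagged with \# as assumed, is that the paper has not yet rigorously established commutativity of $+$ on the infinitary layer from first principles; the proof ultimately leans on the informal claim that the $*G$-construction is isomorphic in structure to the usual construction of $\mathbb{R}$ from $\mathbb{J}$. A fully rigorous proof would need either an explicit commutativity axiom at the level of $\mathbb{J}_\infty$ (transferred from $\mathbb{J}$ via the evaluation-at-a-point notation of Definition \ref{DEF074}) or an appeal to the transfer principle developed later in Part 4. In the current context, I would settle for the reduction above and note explicitly that commutativity of infinitesimal addition is inherited through the reciprocal map from commutativity of infinity addition, which in turn is inherited componentwise from integer addition.
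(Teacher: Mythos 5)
The paper offers no proof of Proposition \ref{P082} to compare against: it is one of the propositions flagged with \# in Section \ref{S0106}, meaning the authors explicitly leave it unproven and let later results assume it. Your main (reciprocal-based) reduction is in the same spirit as what the paper does for the multiplicative analogue --- Proposition \ref{P117} reduces commutativity of $(\Phi,\cdot)$ to commutativity of $(\Phi^{-1},\cdot)$ via $\frac{1}{a}\cdot\frac{1}{b}=\frac{1}{ab}$ --- and your honest accounting of where the argument bottoms out is accurate. But be aware of how little it bottoms out \emph{on}: commutativity of multiplication on $\Phi^{-1}$ is Proposition \ref{P121}, itself flagged \#, and commutativity of \emph{addition} on $\Phi^{-1}$ is not even stated anywhere in the paper. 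So the reduction trades one unproven statement for two others of the same status; it is a legitimate reorganization of the gap, not a closing of it. (You also need $\frac{1}{A}+\frac{1}{B}=\frac{B+A}{AB}$ as a prior identity, which already presupposes several field axioms in $\Phi^{-1}$ that the paper has not secured.)

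The ``cleaner'' alternative via Definitions \ref{DEF075} and \ref{DEF076} is circular and should be dropped. The first component of the triple $(\Phi,\mathbb{R},\Phi_{*}^{-1})$ \emph{is} the infinitesimal part, so writing $a+b=(a_{1}+b_{1},0,0)$ and $b+a=(b_{1}+a_{1},0,0)$ and then ``invoking commutativity of the underlying infinitesimal-component addition'' is invoking exactly the proposition you are trying to prove. This is in fact why the paper's structure forces Propositions \ref{P082}, \ref{P083}, \ref{P085}, \ref{P086} to be assumptions: the componentwise machinery of Definition \ref{DEF076} (which, incidentally, contains a typo --- it reads $(a_{1}+a_{2},\,b_{1}+b_{2},\,a_{3}+b_{3})$ where $(a_{1}+b_{1},\,a_{2}+b_{2},\,a_{3}+b_{3})$ is clearly intended, a correction you make silently) can only \emph{propagate} commutativity from the components to $*G$ (as in Proposition \ref{P088}); it cannot supply it at the component level. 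A genuine proof would have to go back to the construction of $\mathbb{J}_{\infty}$ and $\mathbb{Q}_{\infty}$ in Definitions \ref{DEF052}--\ref{DEF056} and verify commutativity there from first principles, which neither you nor the paper does.
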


\begin{prop}\label{P083}
$(\Phi \cup \{ 0 \},+)$ is associative
\end{prop}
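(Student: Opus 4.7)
The plan is to inherit associativity from the underlying arithmetic on $*G$, while using the closure result just established in Proposition \ref{P081} to guarantee we never leave $\Phi \cup \{0\}$. Specifically, I would take three elements $a,b,c \in \Phi \cup \{0\}$ and form both bracketings $(a+b)+c$ and $a+(b+c)$. By Proposition \ref{P081}, each partial sum $a+b$, $b+c$, and the two full sums, remain in $\Phi \cup \{0\}$, so both sides are legitimate expressions in the set under consideration.

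Next, I would reduce to the non-zero case: whenever any of $a,b,c$ is $0$, associativity is immediate from $0+x=x$ and the fact that the remaining two-term sum is well-defined by closure. For the generic case, I would use the representation of each infinitesimal as a reciprocal of an infinity (Definition \ref{DEF036}): write $a=1/A$, $b=1/B$, $c=1/C$ with $A,B,C \in \Phi^{-1}$. Then both bracketings combine, via the standard common-denominator identity, into the single expression $(BC+AC+AB)/(ABC)$, the equality of which is merely the associativity of addition and multiplication in the underlying algebraic expressions (inherited ultimately from the ring/field structure on rational-like combinations of infinite integers built in Section \ref{S0104}).

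Alternatively, and perhaps more cleanly given the unique component representation introduced in Definition \ref{DEF075}, I would note that if $a,b,c \in \Phi \cup \{0\}$ then their unique component vectors are of the form $(a_1,0,0)$, $(b_1,0,0)$, $(c_1,0,0)$. Applying Definition \ref{DEF076} componentwise, both $(a+b)+c$ and $a+(b+c)$ collapse to $(a_1+b_1+c_1,0,0)$, where the single non-trivial coordinate is added in the underlying scalar ring. Associativity of that coordinate-level addition is inherited from the associativity of addition among the real-valued/rational expressions at infinity used to construct these components.

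The main obstacle is essentially presentational rather than mathematical: one has to settle which representation of infinitesimals to use (reciprocals of infinities, or the three-component vector of Definition \ref{DEF075}) and then justify that the chosen addition operation really is computed by that rule in the manipulations. Once that bookkeeping is fixed, associativity is not new content — it is a direct transport from associativity in the ambient field of expressions at infinity, which is itself a consequence of the isomorphic-to-$\mathbb{R}$ structure asserted for the number system at infinity in Section \ref{S0104}. No genuinely new infinitesimal-specific reasoning should be required.
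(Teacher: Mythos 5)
The paper itself offers no proof of this proposition: it is one of the statements flagged with \# in Section \ref{S0106} as ``Unproven propositions. Other propositions assume their truth,'' and the environment for Proposition \ref{P083} is left empty. So there is nothing to compare against; your proposal is an attempt to fill a gap the authors deliberately left open. Your first route --- reduce to the non-zero case, write $a=1/A$, $b=1/B$, $c=1/C$ with $A,B,C\in\Phi^{-1}$, and check that both bracketings collapse to $(BC+AC+AB)/(ABC)$ --- is sound in spirit and matches exactly the style the paper uses for the neighbouring closure and inverse results (Propositions \ref{P060}, \ref{P081}, \ref{P101} all run the same $\frac{1}{a}+\frac{1}{b}=\frac{a+b}{ab}$ manipulation). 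Its only real exposure is that it rests on commutativity, associativity and distributivity of the arithmetic of the infinities $A,B,C$ themselves, which the paper also never establishes; you at least localise that dependence to the construction from infinite integers in Section \ref{S0104}, which is the honest place to put it.

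Your second route, however, is circular and should be dropped. For $a\in\Phi\cup\{0\}$ the component vector of Definition \ref{DEF075} is $(a_1,0,0)$ where $a_1$ \emph{is} the infinitesimal part, i.e.\ an element of $\Phi\cup\{0\}$; the ``coordinate-level addition'' in the first slot is precisely the operation $(\Phi\cup\{0\},+)$ whose associativity you are trying to prove. There is no separate ``underlying scalar ring'' for that coordinate the way there is $\mathbb{R}$ for the second coordinate, so applying Definition \ref{DEF076} componentwise reduces the claim to itself. Keep the reciprocal/common-denominator argument, state explicitly which properties of $(\Phi^{-1},+,\cdot)$ it consumes, and note that closure (Proposition \ref{P081}) is needed so that the intermediate sums $a+b$ and $b+c$ are again legitimate inputs.
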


\begin{prop}\label{P084}
$(\Phi_{*}^{-1} \cup \{0\},+)$ is closed
\end{prop}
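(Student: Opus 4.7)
The plan is to mirror the structure used for the positive case in Proposition \ref{P113}, but to accommodate the possibility of sign cancellation. By the unique representation (Definition \ref{DEF075}), any $a \in \Phi_{*}^{-1}\cup\{0\}$ can be written as $a=(0,0,a_{3})$ with $a_{3}\in\Phi_{*}^{-1}\cup\{0\}$, and similarly $b=(0,0,b_{3})$. Applying componentwise addition (Definition \ref{DEF076}) gives $a+b=(0,0,a_{3}+b_{3})$, so the first and second slots are automatically $0$. The entire content of the proof is therefore to show that the third slot $a_{3}+b_{3}$ is itself either $0$ or an element of $\Phi_{*}^{-1}$.

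Next I would split on sign. If $a_{3},b_{3}$ are both positive, Proposition \ref{P113} already places the sum in $+\Phi_{*}^{-1}$; the symmetric argument (obtained by negating both sides and using Proposition \ref{P103}-style inversion) handles the both-negative case. The only nontrivial situation is opposite signs: write, without loss of generality, $a_{3}\in+\Phi_{*}^{-1}$ and $b_{3}=-c$ with $c\in+\Phi_{*}^{-1}$, so $a_{3}+b_{3}=a_{3}-c$. If $a_{3}=c$ the sum is $0$ and we land in $\{0\}$. Otherwise one of $a_{3}\succ c$ or $c\succ a_{3}$ holds (appealing forward to the comparison algebra of Part 3), and the dominant term determines the sign of the nonzero sum; the smaller term is absorbed, and what remains still has no real or infinitesimal piece.

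The main obstacle is precisely this last point: ruling out that $a_{3}-c$ could, despite both summands being pure infinities, develop a real or infinitesimal component. I would discharge it by uniqueness of the decomposition in Definition \ref{DEF075}. Suppose for contradiction that $a_{3}-c$ had a nonzero real part $r$ or nonzero infinitesimal part $\delta$; then rearranging, $c = a_{3} - (a_{3}-c)$ would, under componentwise subtraction, inherit the components $-r$ and $-\delta$ in its first two slots, contradicting $c\in+\Phi_{*}^{-1}$. Together with the (assumed) closure $(\Phi^{-1}\cup\{0\},+)$ needed to guarantee that a nonzero sum of infinities is again an infinity rather than collapsing to something finite, this closes the argument and gives $a+b\in\Phi_{*}^{-1}\cup\{0\}$.
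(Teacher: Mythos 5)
Your proposal is correct and rests on the same skeleton as the paper's proof -- the unique component representation of Definition \ref{DEF075} and the componentwise addition of Definition \ref{DEF076}, which immediately force the sum into the form $(0,0,a_{3}+b_{3})$. The difference is in what happens after that. The paper's proof stops there and simply asserts that $(0,0,a_{3}+b_{3})$ lies in $\Phi_{*}^{-1}$ ``or $(0,0,0)$,'' with no argument; the entire content of the claim -- that a sum of two pure infinities of opposite sign cannot acquire a real or infinitesimal part, nor collapse to a nonzero finite number -- is left unaddressed. You identify exactly this as the nontrivial point and discharge it: the same-sign cases reduce to Proposition \ref{P113} and its negation, and in the opposite-sign case your contradiction argument (if $a_{3}-c$ had a nonzero real or infinitesimal component, then $c=a_{3}-(a_{3}-c)$ would inherit the negated components in its first two slots, contradicting $c\in+\Phi_{*}^{-1}$) is a legitimate use of the uniqueness of the decomposition and in fact subsumes your separate appeal to closure of $(\Phi^{-1}\cup\{0\},+)$: once the first two slots are forced to vanish, the seven canonical forms leave only $\Phi_{*}^{-1}$ or $0$. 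In short, your proof buys a justification the paper omits, at the cost of a case split the paper avoids by fiat; it is the more complete of the two.
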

\begin{proof}
 Since the components are
 separate, by definition;
 $a, b \in \Phi_{*}^{-1}$;
 $a+b$
 $=(0,0,a_{3}) + (0,0,b_{3})$
 $=(0,0,a_{3}+b_{3}) \in \Phi_{*}^{-1}$
 or $(0,0,0)$.
\end{proof}

\begin{prop}\label{P085}
$(\Phi_{*}^{-1} \cup \{ 0 \},+)$ is commutative
\end{prop}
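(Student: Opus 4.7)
The plan is to reduce commutativity of addition on $\Phi_{*}^{-1} \cup \{0\}$ to the componentwise addition rule (Definition \ref{DEF076}) together with the commutativity of addition on the pure infinitary component. Since every element of $\Phi_{*}^{-1}$ admits the unique three-component representation from Definition \ref{DEF075} with first two entries zero, the proof is almost forced by the representation.

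First I would fix $a, b \in \Phi_{*}^{-1} \cup \{0\}$ and write them in their canonical form $a = (0,0,a_3)$ and $b = (0,0,b_3)$, allowing $a_3$ or $b_3$ to be $0$ (covering the case where either summand is the zero of $*G$). Then I would apply Definition \ref{DEF076} twice, once to $a+b$ and once to $b+a$, obtaining $a+b = (0,0,a_3+b_3)$ and $b+a = (0,0,b_3+a_3)$. The identity $a_3 + b_3 = b_3 + a_3$ then reduces to commutativity of addition on the pure infinitary component, which we inherit from addition of infinite integers / infinite rationals built from $\mathbb{J}_{\infty}$, where the underlying integer arithmetic is commutative. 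Finally, Proposition \ref{P084} (closure) ensures the result lies back in $\Phi_{*}^{-1} \cup \{0\}$, so the equation $a+b = b+a$ holds inside the stated set.

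The main obstacle, such as it is, lies in justifying commutativity on the third component $\Phi_{*}^{-1}$ without circularity. Since $\Phi_{*}^{-1}$ is constructed isomorphically to the reals but at infinity (through infinite integers, infinite rationals, etc.), one must appeal to the fact that addition of expressions like $n|_{n=\infty}$ and $m|_{m=\infty}$ commutes by the commutativity of the underlying finite arithmetic evaluated pointwise. This is essentially a transfer-style observation, and at this stage of the paper (where $*G$ has been constructed rather than axiomatised) it is taken as immediate; this is presumably why the proposition is flagged with \# as unproven. I would therefore present the proof as a routine componentwise verification and defer the deeper justification of commutativity on $\Phi_{*}^{-1}$ to the discussion in Part 4 on the transfer principle.
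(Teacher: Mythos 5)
The first thing to note is that the paper offers no proof of Proposition \ref{P085} at all: it is one of the propositions flagged with \# in the list at the start of Section \ref{S0106}, meaning the authors explicitly leave it unproven and let later results assume it. So there is no ``paper's approach'' to compare against; your proposal is an attempt to fill a gap the authors acknowledge. Your general strategy --- canonical form $(0,0,a_{3})$ via Definition \ref{DEF075}, componentwise addition via Definition \ref{DEF076}, closure via Proposition \ref{P084} --- is exactly the style the paper uses for the neighbouring closure results (Propositions \ref{P084} and \ref{P113}), so it is the natural thing to write down.

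The difficulty is that for this particular proposition the componentwise reduction is circular in a way it is not for Proposition \ref{P088}. An element $a \in \Phi_{*}^{-1} \cup \{0\}$ \emph{is} its third component: $a = (0,0,a_{3})$ with $a_{3} = a$. So the statement ``$a_{3}+b_{3} = b_{3}+a_{3}$ by commutativity of addition on the pure infinitary component'' is literally the proposition being proved, restated. The decomposition buys something when proving commutativity of all of $*G$ (there it reduces the claim to three component claims, of which this is one), but it buys nothing here. To your credit you see this and name it as the main obstacle, and your proposed resolution --- ground commutativity of $a+b$ for pure infinities in the construction of $\Phi_{*}^{-1}$ from infinite integers and infinite rationals, where $n+m = m+n$ holds termwise by commutativity of the underlying finite integer arithmetic at each index --- is the right place to look and is presumably what the authors had in mind when they declined to write it out. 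But as submitted, the proposal is a correct framing of the problem plus a deferral, not a proof; the one substantive step is still a sketch. A minor additional caution: Definition \ref{DEF076} as printed reads $(a_{1}+a_{2},\, b_{1}+b_{2},\, a_{3}+b_{3})$, which is surely a typo for $(a_{1}+b_{1},\, a_{2}+b_{2},\, a_{3}+b_{3})$; your computation is unaffected because the first two components vanish here, but you should not cite that definition for general elements without flagging the discrepancy.
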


\begin{prop}\label{P086}
$(\Phi_{*}^{-1} \cup \{ 0 \},+)$ is associative 
\end{prop}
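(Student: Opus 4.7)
The plan is to reduce associativity to a statement about the third coordinate of the unique three--component representation (Definition \ref{DEF075}), and then inherit associativity from the underlying addition of pure infinities.

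First I would observe that every element of $\Phi_{*}^{-1}\cup\{0\}$ has the form $(0,0,x_{3})$, where $x_{3}\in\Phi_{*}^{-1}$ or $x_{3}=0$. By Definition \ref{DEF076}, the sum of two such elements is obtained by adding coordinatewise, so for $a,b\in\Phi_{*}^{-1}\cup\{0\}$,
\[
a+b=(0,0,a_{3})+(0,0,b_{3})=(0,0,a_{3}+b_{3}),
\]
and Proposition \ref{P084} guarantees that this sum still lies in $\Phi_{*}^{-1}\cup\{0\}$. Thus the binary operation on $\Phi_{*}^{-1}\cup\{0\}$ is exactly the restriction of the ``third coordinate'' operation, and associativity of the whole triple collapses to associativity at the single coordinate: I need $(a_{3}+b_{3})+c_{3}=a_{3}+(b_{3}+c_{3})$ in $\Phi_{*}^{-1}\cup\{0\}$.

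For the third-coordinate step, I would appeal to the construction of $\Phi_{*}^{-1}$ from the infinite integers and infinite rationals (Definitions \ref{DEF052}--\ref{DEF056}): a pure infinity is built from finite and infinite integers by the same algebraic operations used to construct $\mathbb{Q}$ from $\mathbb{J}$. Since addition of integers (finite or infinite) is associative at the level of the constructive expressions $a_{3}=\alpha|_{n=\infty}$, $b_{3}=\beta|_{n=\infty}$, $c_{3}=\gamma|_{n=\infty}$, associativity follows from the associativity of $+$ on the base expressions $\alpha,\beta,\gamma$ evaluated at the same infinite integer. Closure (Proposition \ref{P084}) ensures that each of the intermediate sums lies in $\Phi_{*}^{-1}\cup\{0\}$, so no stage of the computation leaves the set.

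The main obstacle, and the reason I flag this as relying on the inherited structure, is the cancellation phenomenon that forces us to adjoin $\{0\}$ in the first place: two pure infinities can sum to a real or even to zero (e.g. $n+(-n)|_{n=\infty}$), so it is \emph{not} a priori obvious that the reassociated intermediate sum $(a_{3}+b_{3})$ lives in the same coordinate slot as $a_{3}$ alone. Resolving this requires that Definition \ref{DEF076} is genuinely coordinatewise and that the three-component representation of Definition \ref{DEF075} is unique, so that cancellations in the pure-infinity slot do not ``leak'' into the real or infinitesimal coordinates. Once those points are taken for granted — and they are built into the earlier definitions — associativity is immediate from associativity of $+$ on the underlying infinite-integer expressions.
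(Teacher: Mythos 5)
There is no paper proof to compare against here: Proposition \ref{P086} is one of the propositions flagged with \# in Section \ref{S0106}, i.e.\ explicitly left unproven, with other results (notably Proposition \ref{P089}) assuming its truth. Your argument therefore supplies something the paper omits rather than paralleling it. As an argument it is essentially sound, with two remarks. First, the reduction to the third coordinate is vacuous: an element of $\Phi_{*}^{-1}\cup\{0\}$ \emph{is} a triple of the form $(0,0,x_{3})$, so the identity $(a+b)+c=a+(b+c)$ for triples is literally the same statement as $(a_{3}+b_{3})+c_{3}=a_{3}+(b_{3}+c_{3})$ for third coordinates; that step neither gains nor loses anything. The real content is your final step, inheriting associativity from associativity of $+$ on the underlying expressions in finite and infinite integers (Definitions \ref{DEF052}--\ref{DEF056}), which is the natural and, given the paper's constructive setup, essentially the only available argument. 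Second, the cancellation worry you raise bears on \emph{well-definedness} of the operation on the set (closure, Proposition \ref{P084}) rather than on the associativity identity itself: pointwise addition of the representing expressions satisfies $(f+g)+h=f+(g+h)$ regardless of which component slot the intermediate sum $f+g$ happens to land in, so associativity does not actually depend on ruling out leakage, only the claim that the operation stays inside $\Phi_{*}^{-1}\cup\{0\}$ does. Your sketch is acceptable at the level of rigour of the surrounding material; its remaining dependence is on the uniqueness of the representation in Definition \ref{DEF075} and on Proposition \ref{P084}, both of which the paper itself only asserts.
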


\begin{prop}\label{P087} 
$(*G,+)$ is closed
\end{prop}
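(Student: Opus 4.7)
The plan is to reduce closure for $(*G,+)$ to componentwise closure, using the unique three-component representation of Definition \ref{DEF075} together with the componentwise addition rule of Definition \ref{DEF076}. So first I would take arbitrary $a, b \in *G$ and write them in their unique forms $a = (a_1, a_2, a_3)$ and $b = (b_1, b_2, b_3)$ with $a_1, b_1 \in \Phi \cup \{0\}$, $a_2, b_2 \in \mathbb{R}$, and $a_3, b_3 \in \Phi_*^{-1} \cup \{0\}$.

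Next I would apply Definition \ref{DEF076} to get $a+b = (a_1+b_1,\, a_2+b_2,\, a_3+b_3)$ and handle each slot separately. The first slot lies in $\Phi \cup \{0\}$ by Proposition \ref{P081}; the middle slot lies in $\mathbb{R}$ since the reals are closed under addition; and the third slot lies in $\Phi_*^{-1} \cup \{0\}$ by Proposition \ref{P084}. Since each coordinate lands in the required component set, the resulting triple is itself a valid gossamer number in the sense of Definition \ref{DEF075}, so $a+b \in *G$.

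The main obstacle, really the only subtlety, is that the unique-representation definition places each coordinate in $\Phi \cup \{0\}$, $\mathbb{R}$, or $\Phi_*^{-1} \cup \{0\}$ rather than in $\Phi$, $\mathbb{R}\setminus\{0\}$, or $\Phi_*^{-1}$ proper, so I have to be careful that the cited closure propositions are the versions adjoining $0$ (which they are: Propositions \ref{P081} and \ref{P084}). One might worry that cancellation across slots could force a move between categories — for example an infinitesimal part cancelling — but because the three components are independent under the unique representation and addition acts coordinatewise, no interaction between slots occurs and no such issue arises. Hence the closure follows immediately from the three component closures, and no further case analysis is needed.
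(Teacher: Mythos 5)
Your proof is correct and follows essentially the same route as the paper: decompose into the three independent components via Definitions \ref{DEF075} and \ref{DEF076}, then invoke closure of each component under addition. You are in fact slightly more careful than the paper's own proof in citing the $\Phi \cup \{0\}$ and $\Phi_{*}^{-1} \cup \{0\}$ versions (Propositions \ref{P081} and \ref{P084}), which is the right thing to do since a component sum can vanish.
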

\begin{proof}
Since $(\Phi,+)$ is closed,
 $(\mathbb{R},+)$ is closed,
 and $(\Phi_{*}^{-1},+)$ is closed,
 then, as an addition in $*G$ is the
 sum of three independent components
 are all closed. 
\end{proof}

\begin{prop}\label{P088} 
$(*G,+)$ is commutative
\end{prop}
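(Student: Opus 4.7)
The plan is to reduce commutativity of addition in $*G$ to commutativity within each of the three independent components that make up a gossamer number under the representation of Definition \ref{DEF075}. Since every element of $*G$ is uniquely written as a triple $(\Phi, \mathbb{R}, \Phi_*^{-1})$ and Definition \ref{DEF076} declares addition to act componentwise, the argument should be a direct unpacking followed by three small commutativity citations, one per slot.

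First I would take arbitrary $a, b \in *G$ and use Definition \ref{DEF075} to write $a = (a_1, a_2, a_3)$ and $b = (b_1, b_2, b_3)$ with $a_1, b_1 \in \Phi \cup \{0\}$, $a_2, b_2 \in \mathbb{R}$, and $a_3, b_3 \in \Phi_*^{-1} \cup \{0\}$. Applying Definition \ref{DEF076} to both $a+b$ and $b+a$ reduces the claim to verifying $a_i + b_i = b_i + a_i$ in each coordinate. For the first coordinate this is exactly Proposition \ref{P082}, for the second it is the standard commutativity of $(\mathbb{R},+)$, and for the third it is Proposition \ref{P085}. Assembling these three equalities back into a triple gives $a+b = b+a$, which is the desired conclusion.

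The only real obstacle is a bookkeeping one: the paper's Definition \ref{DEF076} contains what appears to be a typographical slip in the stated formula, so I would first confirm (or silently correct to) the intended componentwise rule $(a_1,a_2,a_3)+(b_1,b_2,b_3) = (a_1+b_1,\, a_2+b_2,\, a_3+b_3)$. Once that reading is fixed, the triple of component-level commutativities gives the result with no further work, and no case analysis on which of the seven forms listed after Definition \ref{DEF075} the numbers $a$ and $b$ happen to inhabit is needed.
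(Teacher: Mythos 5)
Your proposal is correct and follows essentially the same route as the paper: the paper's proof likewise reduces commutativity of $(*G,+)$ to the componentwise commutativity given by Propositions \ref{P082}, \ref{P085} and $(\mathbb{R},+)$, relying on the independence of the three components. Your added observation that the formula in Definition \ref{DEF076} contains a typographical slip (it should read $(a_{1}+b_{1},\, a_{2}+b_{2},\, a_{3}+b_{3})$) is a fair correction and does not alter the argument.
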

\begin{proof}
 Since the components (Propositions \ref{P082}, \ref{P085} and $(\mathbb{R},+)$) are independent,
 and commutative.
\end{proof}

\begin{prop}\label{P089} 
$(*G,+)$ is associative 
\end{prop}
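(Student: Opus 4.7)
The plan is to reduce associativity of $(*G,+)$ to the componentwise associativity that has already been established for the three constituent pieces. By Definition \ref{DEF075} every element of $*G$ is uniquely represented as a triple $(x_1,x_2,x_3)$ with $x_1 \in \Phi \cup \{0\}$, $x_2 \in \mathbb{R}$, and $x_3 \in \Phi_{*}^{-1} \cup \{0\}$, and by Definition \ref{DEF076} addition acts independently on each coordinate. So the strategy is simply to fix three arbitrary elements $a,b,c \in *G$, expand $(a+b)+c$ and $a+(b+c)$ using the componentwise addition rule, and then apply associativity separately in each slot.

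First I would write $a=(a_1,a_2,a_3)$, $b=(b_1,b_2,b_3)$, $c=(c_1,c_2,c_3)$, and compute
\[
(a+b)+c = (a_1+b_1,\,a_2+b_2,\,a_3+b_3)+(c_1,c_2,c_3) = \bigl((a_1+b_1)+c_1,\,(a_2+b_2)+c_2,\,(a_3+b_3)+c_3\bigr),
\]
and symmetrically
\[
a+(b+c) = \bigl(a_1+(b_1+c_1),\,a_2+(b_2+c_2),\,a_3+(b_3+c_3)\bigr).
\]
Then I would invoke Proposition \ref{P083} on the first coordinate (since $a_1,b_1,c_1 \in \Phi \cup \{0\}$), the known associativity of $(\mathbb{R},+)$ on the middle coordinate, and Proposition \ref{P086} on the third coordinate (since $a_3,b_3,c_3 \in \Phi_{*}^{-1} \cup \{0\}$). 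Equating the three coordinates yields $(a+b)+c = a+(b+c)$.

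The argument parallels Propositions \ref{P087} and \ref{P088} almost verbatim, so there is no real obstacle at this level; the genuine work was pushed into \ref{P083} and \ref{P086}, and into the justification that the unique representation in Definition \ref{DEF075} is actually well defined and closed under the componentwise operations. If anything needs care in the write-up, it is only to remark explicitly that each coordinate of the sum lands in the same component set (so that the inductive appeal to the component-level associativity propositions is legitimate), which follows from the closure results \ref{P081} and \ref{P084} already quoted above.
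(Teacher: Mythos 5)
Your proposal is correct and follows exactly the paper's own argument: the paper's proof of Proposition \ref{P089} likewise reduces associativity to the component-level results (Propositions \ref{P083}, \ref{P086}, and associativity of $(\mathbb{R},+)$) via the independence of the three coordinates in Definition \ref{DEF076}. Your version merely writes out the coordinate expansion explicitly and adds the (sensible) remark about closure, which the paper leaves implicit.
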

\begin{proof}
 Since the components (Propositions \ref{P083}, \ref{P086} and $(\mathbb{R},+)$) are independent,
 and associative.
\end{proof}

\begin{prop}\label{P114}
$(+\Phi,\cdot)$ is closed
\end{prop}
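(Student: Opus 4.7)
The plan is to verify the two defining conditions for membership in $+\Phi$ directly from Definitions \ref{DEF036} and \ref{DEF078}: positivity of the product, and that its reciprocal lies in $+\Phi^{-1}$. Let $a, b \in +\Phi$. Positivity is immediate, since $a, b > 0$ forces $ab > 0$ (using the order-preservation of multiplication by positive elements, stated as Proposition \ref{P053}). So the work is concentrated on showing $\frac{1}{ab} \in +\Phi^{-1}$.

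For that step, I would write $\frac{1}{ab} = \frac{1}{a} \cdot \frac{1}{b}$ and recall that by Definition \ref{DEF036} we already have $\frac{1}{a}, \frac{1}{b} \in +\Phi^{-1}$. The goal is then to show this product is larger than every positive real, i.e.\ satisfies Definition \ref{DEF051}. Given any $M \in \mathbb{R}^+$, both $M$ and $1$ are finite, so by Definition \ref{DEF051} we have $\frac{1}{a} > M$ and $\frac{1}{b} > 1$. Multiplying these two inequalities of positive quantities (Proposition \ref{P053} again) gives $\frac{1}{a}\cdot\frac{1}{b} > M \cdot 1 = M$. Since $M$ was arbitrary, $\frac{1}{ab}$ exceeds every positive real, hence lies in $+\Phi^{-1}$, and therefore $ab$ is a positive infinitesimal.

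The main obstacle is the legitimacy of multiplying the two strict inequalities of positive $*G$-numbers — the step where Proposition \ref{P053} is being leaned on. In the present excerpt Proposition \ref{P053} is one of the starred (unproven) propositions assumed true, so the argument is formally clean only to the extent that positive-multiplication monotonicity is accepted. An alternative route that partially sidesteps this is Proposition \ref{P018}: for any positive real $r$, one has $a < \sqrt{r}$ and $b < \sqrt{r}$, whence $0 < ab < r$, showing $ab$ is smaller than every positive real; but that still converts to the reciprocal definition of $\Phi$ via the same order/multiplication machinery. Either way, once positive-multiplication monotonicity in $*G$ is granted, the proof collapses to the one-line computation $\frac{1}{ab} = \frac{1}{a}\cdot\frac{1}{b} \in +\Phi^{-1}$.
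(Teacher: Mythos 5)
Your argument is essentially the paper's own: both reduce the claim to the identity $\frac{1}{ab} = \frac{1}{a}\cdot\frac{1}{b}$ and the fact that a product of two positive infinities is again a positive infinity, so that $ab \in +\Phi$ by Definition \ref{DEF036}. The only difference is that you spell out why $\frac{1}{a}\cdot\frac{1}{b}$ exceeds every positive real (via $\frac{1}{a} \gt M$, $\frac{1}{b} \gt 1$), a step the paper simply asserts as ``$ab \in +\Phi^{-1}$'' and proves separately in Proposition \ref{P111}.
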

\begin{proof}
$a, b \in +\Phi^{-1}$;
 $\frac{1}{a}, \frac{1}{b} \in +\Phi$;
 $\frac{1}{a} \cdot \frac{1}{b}
 = \frac{1}{ab} \in +\Phi$ as
 $ab \in +\Phi^{-1}$.
\end{proof}

\begin{prop}\label{P090} 
$(\Phi,\cdot)$ is closed
\end{prop}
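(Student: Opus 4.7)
The plan is to reduce the general case to the positive case already handled in Proposition \ref{P114}. Any nonzero infinitesimal $a \in \Phi$ is, by Definitions \ref{DEF078} and the corresponding negative definition, either in $+\Phi$ or in $-\Phi$, and by Corollary \ref{P043} we do not need to worry about $0$ being in $\Phi$. So given $a,b \in \Phi$ we can write $a = \sigma_a |a|$ and $b = \sigma_b|b|$ with $|a|,|b| \in +\Phi$ and $\sigma_a,\sigma_b \in \{+1,-1\}$.

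First I would invoke Proposition \ref{P114} to conclude that $|a|\cdot|b| \in +\Phi$. Then I would write $ab = \sigma_a\sigma_b \,(|a|\cdot|b|)$, so $ab$ equals either $+(|a|\cdot|b|)$ or $-(|a|\cdot|b|)$. In either case $ab$ is a real scalar ($\pm 1$) multiple of a positive infinitesimal, and hence its reciprocal $1/(ab) = \pm 1/(|a|\cdot|b|)$ is a real scalar multiple of an infinity, which lies in $\Phi^{-1}$; therefore $ab \in \Phi$ by Definition \ref{DEF036}.

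Alternatively, and perhaps more cleanly, I would argue directly through reciprocals: for $a,b \in \Phi$ we have $1/a, 1/b \in \Phi^{-1}$ by Definition \ref{DEF036}, and then by Proposition \ref{P111} ($(\Phi^{-1},\cdot)$ is closed) the product $(1/a)(1/b) = 1/(ab) \in \Phi^{-1}$, so $ab \in \Phi$ again by Definition \ref{DEF036}. This second route is essentially a one-line reduction, provided one is willing to forward-reference \ref{P111}; the sign-splitting route above is self-contained with respect to the already-proved \ref{P114}.

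The only genuine subtlety is the sign bookkeeping and making sure the case $|a|=|b|$ with opposite signs does not accidentally collapse to $0$ (which would fall outside $\Phi$): since multiplication of two nonzero infinitesimals cannot yield zero (the product of a positive infinitesimal with itself is positive, and the $\pm 1$ sign factor cannot annihilate it), this issue does not arise. So I expect no real obstacle — the main work has already been done in Propositions \ref{P114} and \ref{P111}, and this proposition is essentially a sign-closure corollary of them.
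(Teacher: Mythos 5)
Your main argument is exactly the paper's proof: factor out the sign, reduce to the positive case, and invoke Proposition \ref{P114}; your extra care about the $\pm 1$ factor and the impossibility of the product vanishing is sound (and consistent with Proposition \ref{P025}). The alternative route through reciprocals and Proposition \ref{P111} is also valid but unnecessary here, since the paper proves \ref{P114} independently.
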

\begin{proof}
 Since the sign can be factored
 out, consider
 the positive case only,
 by Proposition \ref{P114} this is true.
\end{proof}

\begin{prop}\label{P111}
$(\Phi^{-1},\cdot)$ is closed
\end{prop}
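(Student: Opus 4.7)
The plan is to mirror the strategy already used for Proposition \ref{P114} and Proposition \ref{P090}: reduce the statement about infinities to the already-established closure of the infinitesimals under multiplication, via the reciprocal correspondence from Definition \ref{DEF036}.

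First I would take arbitrary $a, b \in \Phi^{-1}$. By Definition \ref{DEF036}, their reciprocals $\frac{1}{a}$ and $\frac{1}{b}$ lie in $\Phi$. Since Proposition \ref{P090} has just been shown to give closure of $(\Phi,\cdot)$, the product $\frac{1}{a}\cdot\frac{1}{b} = \frac{1}{ab}$ is an infinitesimal. Applying Definition \ref{DEF036} in the other direction, $ab \in \Phi^{-1}$, which is exactly what needs to be shown.

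The one subtlety I expect to be the main obstacle is handling the edge cases built into Definition \ref{DEF059}: the symbol $\pm\infty$ is explicitly excluded from $\Phi^{-1}$ in order to preserve reversible multiplication. So I would separately argue that $ab$ cannot collapse to the formal symbol $\infty$ (nor to $0$). Since $a, b$ are both genuine infinities, neither reciprocal equals $0$ or $\infty$, so $\frac{1}{a}\cdot\frac{1}{b}$ is a bona fide infinitesimal (not $0$, not $\infty$), and therefore $ab$ inherits bona fide infinity status. The sign question is handled by factoring signs out, as was done in Proposition \ref{P090}, reducing to the positive case where $+\Phi\cdot+\Phi \subset +\Phi$ (Proposition \ref{P114}) gives $+\Phi^{-1}\cdot+\Phi^{-1}\subset+\Phi^{-1}$.

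In short, the proof I would write is one line after the reductions are acknowledged: take reciprocals, apply Proposition \ref{P090}, take reciprocals back, and note that the exclusion $\pm\infty \notin \Phi^{-1}$ is respected because the intermediate infinitesimal is neither $0$ nor $\infty$.
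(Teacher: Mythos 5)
Your reduction-to-infinitesimals strategy is genuinely different from the paper's argument, but as it stands it is circular within this paper's logical structure. You justify $\frac{1}{a}\cdot\frac{1}{b}=\frac{1}{ab}\in\Phi$ by citing Proposition \ref{P090}, which in turn rests on Proposition \ref{P114}. But look at how the paper proves Proposition \ref{P114}: it writes $\frac{1}{a}\cdot\frac{1}{b}=\frac{1}{ab}\in+\Phi$ \emph{``as $ab\in+\Phi^{-1}$''} --- that is, the closure of the positive infinitesimals under multiplication is itself derived from the closure of the positive infinities under multiplication, which is exactly the positive case of Proposition \ref{P111} that you are trying to prove. So the chain \ref{P111} $\leftarrow$ \ref{P090} $\leftarrow$ \ref{P114} $\leftarrow$ \ref{P111} closes on itself, and nothing has been established. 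Your instinct that the reciprocal correspondence of Definition \ref{DEF036} lets you move freely between the two closure statements is right, but it only tells you the two statements are equivalent; one of them still needs an independent proof.

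The paper supplies that independent proof directly on the infinity side, and it is short: factor out signs to reduce to $x,y\in+\Phi^{-1}$, assume without loss of generality $x\leq y$, note that $x>1$ (an infinity exceeds every real, in particular $1$, by Definition \ref{DEF051}), hence $xy\geq y\geq x$, so $xy$ exceeds every real number and is therefore a positive infinity by Definition \ref{DEF051}. This order-theoretic argument uses only the defining property of an infinity and no prior closure result, which is what breaks the cycle. Your remarks about the excluded symbols $\pm\infty$ and $0$ are reasonable care, but they do not repair the main issue; to salvage your route you would first have to give a proof of Proposition \ref{P114} that does not presuppose $ab\in+\Phi^{-1}$ --- at which point you may as well give the direct argument for \ref{P111} itself.
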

\begin{proof}
 Since a negative sign can be factored out,
 only need to consider positive infinities.
 $x, y \in +\Phi^{-1}$;
 choose $x \leq y$,
 then $x \leq y \leq xy$, as $x \gt 1$.
 $xy \in \Phi^{-1}$ by Definition \ref{DEF051}. 
\end{proof}

\begin{prop}\label{P115} 
$(+\Phi_{*}^{-1},\cdot)$ is closed
\end{prop}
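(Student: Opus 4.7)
The plan is to imitate the additive argument used for Proposition \ref{P113}, only this time pushing the components through the multiplication rule. First I would invoke the unique three-component representation of Definition \ref{DEF075} to write $a,b \in +\Phi_{*}^{-1}$ as $a=(0,0,a_{3})$ and $b=(0,0,b_{3})$ with $a_{3},b_{3}$ positive; the first two slots are zero by Definition \ref{DEF073}, which is exactly what places a number in $\Phi_{*}^{-1}$ under the additive base.

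Next I would feed these representations into Definition \ref{DEF077}. With $a_{1}=a_{2}=0$ and $b_{1}=b_{2}=0$, every cross term built from a subscript in $\{1,2\}$ vanishes, and the bulky multiplication formula collapses cleanly to
\[
a\cdot b = (0,\,0,\,a_{3}b_{3}).
\]
Since the first and second components of the product are visibly $0$, the result has no additive real or infinitesimal contribution, and therefore sits in $\Phi_{*}^{-1}$ by Definition \ref{DEF073}. For the sign, I would cite Proposition \ref{P111} restricted to positives (equivalently, the elementary bound $xy \geq \max(x,y) > 0$ used inside its proof for $x,y \in +\Phi^{-1}$) to conclude $a_{3}b_{3} > 0$, placing $a\cdot b$ in $+\Phi_{*}^{-1}$.

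The main obstacle, just as in the sibling Propositions \ref{P113} and \ref{P084}, is the subtle interpretive point that the surviving component $a_{3}b_{3}$ is itself genuinely a pure infinity rather than an infinity concealing a real or infinitesimal tail. I would argue this is already baked into the well-definedness of the component representation: Definition \ref{DEF077} is engineered so that any real or infinitesimal contamination arising from a product must appear in the first or second slot of the output, and in our case both of those slots are manifestly zero. If a fully rigorous treatment is demanded, one can supplement the argument by observing that by Proposition \ref{P111} the product lies in $\Phi^{-1}$, so it is neither in $\mathbb{R}$ nor in $\Phi$, and then use the uniqueness clause of Definition \ref{DEF075} to transfer the zero real and infinitesimal components of the representation back to the underlying number.
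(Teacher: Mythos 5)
Your proof is plausible but takes a genuinely different route from the paper's. The paper's own proof never touches the component representation: it takes $a, b \in +\Phi^{-1}$, passes to the reciprocals $\frac{1}{a}, \frac{1}{b} \in +\Phi$, multiplies them to get $\frac{1}{ab} \in +\Phi$, and concludes $ab \in +\Phi^{-1}$ --- in effect reducing the claim to the infinitesimal case (Proposition \ref{P114}), and in fact the text is verbatim the proof of \ref{P114}; it never engages with the ``starred'' (purity) condition distinguishing $\Phi_{*}^{-1}$ from $\Phi^{-1}$ at all. You instead work directly in the three-component representation of Definition \ref{DEF075} and push $(0,0,a_{3})\cdot(0,0,b_{3})$ through Definition \ref{DEF077} to get $(0,0,a_{3}b_{3})$, which is the style the paper itself uses for the additive analogues (Propositions \ref{P113} and \ref{P084}); your route has the advantage of actually addressing the $\Phi_{*}$ aspect that the paper's proof silently skips. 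The one caveat, which you yourself flag, is that placing $a_{3}b_{3}$ in the third slot of Definition \ref{DEF077} already presupposes that a product of pure infinities is a pure infinity, so there is a mild circularity; and your fallback --- that the product lies in $\Phi^{-1}$ and hence is neither real nor infinitesimal --- establishes membership in $\Phi_{*}^{-1}$ only under a literal set-difference reading of Definition \ref{DEF073}, not under the intended additive reading (an infinity such as $n+1$ is neither real nor infinitesimal yet carries a real component). Since the paper leaves this same point unproved, your argument is at least as complete as the original.
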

\begin{proof}
 $a, b \in +\Phi^{-1}$;
 $\frac{1}{a}, \frac{1}{b} \in +\Phi$;
 $\frac{1}{a} \cdot \frac{1}{b}$
 $=\frac{1}{ab}$
 $\in +\Phi$
 as $ab \in +\Phi^{-1}$.
\end{proof}

\begin{prop}\label{P091} 
$(\Phi_{*}^{-1},\cdot)$ is closed
\end{prop}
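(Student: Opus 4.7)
The plan is to reduce the general case to the positive case already handled by Proposition \ref{P115}, using the unique three-component representation from Definition \ref{DEF075} together with the multiplication rule in Definition \ref{DEF077}.

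First I would unpack the hypothesis using Definition \ref{DEF075}: for any $a, b \in \Phi_{*}^{-1}$, we may write $a = (0, 0, a_3)$ and $b = (0, 0, b_3)$ where $a_3, b_3 \in \Phi_{*}^{-1}$ are nonzero. Applying the multiplication rule from Definition \ref{DEF077}, every cross-term $a_i b_j$ with $(i,j) \neq (3,3)$ carries at least one factor of $0$, so the formula collapses to
\[
a \cdot b \;=\; (0,\; 0,\; a_3 \cdot b_3).
\]
Thus closure in $\Phi_{*}^{-1}$ reduces to showing that the product of two pure infinities is again a pure infinity.

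Next I would strip signs. Since negation only flips the sign of the infinitary component in the canonical representation and cannot create real or infinitesimal additive parts, $\Phi_{*}^{-1}$ is closed under negation and splits as $+\Phi_{*}^{-1} \cup -\Phi_{*}^{-1}$. Writing $a_3 = \epsilon_a |a_3|$ and $b_3 = \epsilon_b |b_3|$ with $\epsilon_a, \epsilon_b \in \{+1,-1\}$ and $|a_3|, |b_3| \in +\Phi_{*}^{-1}$, we get $a_3 \cdot b_3 = (\epsilon_a \epsilon_b)(|a_3| \cdot |b_3|)$. By Proposition \ref{P115}, $|a_3| \cdot |b_3| \in +\Phi_{*}^{-1}$, so $a_3 \cdot b_3$ lies in $\pm\Phi_{*}^{-1} \subseteq \Phi_{*}^{-1}$, and therefore $a \cdot b \in \Phi_{*}^{-1}$ as desired.

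The main obstacle is not the sign bookkeeping but the hidden content of Proposition \ref{P115}: one must be sure that multiplying two pure infinities does not spontaneously produce an additive real or infinitesimal component in the canonical form. Contrast this with elements of $\Phi^{-1}$ at large, where $n + 1/n$ already mixes components; here the assumption $a_3, b_3 \in \Phi_{*}^{-1}$ forbids any such mixing in the factors, and what needs to be invoked from Proposition \ref{P111} and Proposition \ref{P115} is that the product expression inherits this purity. That upstream fact is what makes the present proposition a short corollary rather than a genuinely new calculation.
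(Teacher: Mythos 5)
Your proof is correct and follows essentially the same route as the paper: factor out the sign to reduce to the positive case and invoke Proposition \ref{P115}. The only difference is that you make explicit the component-representation step $a\cdot b = (0,0,a_3 b_3)$ via Definitions \ref{DEF075} and \ref{DEF077} (mirroring how the paper handles the additive case in Proposition \ref{P084}), which the paper's one-line proof leaves implicit.
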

\begin{proof}
 Factor out the negative sign,
 since 
 $(+\Phi_{*}^{-1},\cdot)$ Proposition \ref{P115} 
 then closed.
\end{proof}

\begin{prop}\label{P092} 
$\Phi \cdot \Phi_{*}^{-1} \in *G \backslash \{ 0 \}$ 
\end{prop}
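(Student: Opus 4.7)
The plan is to establish two separate claims about $ab$ where $a \in \Phi$ and $b \in \Phi_{*}^{-1}$: first, that $ab$ lies in $*G$; second, that $ab \neq 0$. For setup, I would use that $a$ is an infinitesimal, so by Definition \ref{DEF036} its reciprocal $c := 1/a$ lies in $\Phi^{-1}$, and by Corollary \ref{P043} we have $a \neq 0$. Similarly, $b \in \Phi_{*}^{-1} \subseteq \Phi^{-1}$ is a genuine infinity, so $b \neq 0$ by Definitions \ref{DEF051}--\ref{DEF060}.

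For the membership part, I would invoke the unique representation of Definition \ref{DEF075}: write $a = (a_1,0,0)$ with $a_1 \in \Phi$ and $b = (0,0,b_3)$ with $b_3 \in \Phi_{*}^{-1}$. Feeding these into Definition \ref{DEF077} collapses nearly every term, leaving $ab = a_1 b_3$, which is exactly the residual cross term that the remark after that definition identifies as the $0\cdot\infty$ indeterminate case. Nevertheless, the underlying arithmetic produces a concrete element: $a_1 b_3$ is a ratio/product of finite integers and infinite integers in the sense of Definitions \ref{DEF052}--\ref{DEF056}, and hence it is one of the seven forms listed after Definition \ref{DEF075}, so it belongs to $*G$.

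For non-vanishing, I would rewrite $ab = b/c$ with $c = 1/a \in \Phi^{-1}$, so that $ab$ is a quotient of two concrete nonzero gossamer numbers. In the isomorphic construction of $*G$ out of infinite integers (Definition \ref{DEF058}), dividing a nonzero infinity by another nonzero infinity cannot produce $0$: the numerator $b$ survives into the quotient, and the only way for the resulting gossamer number to be zero would be for the numerator itself to vanish, which contradicts $b \in \Phi_{*}^{-1}$.

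The main obstacle is that this is precisely the $0 \cdot \infty$ indeterminate form, so the result can be an infinitesimal, a real, or an infinity depending on $a$ and $b$ (compare $\tfrac{1}{n^2}\cdot n$, $\tfrac{1}{n}\cdot n$, and $\tfrac{1}{n}\cdot n^2$ at $n=\infty$). One therefore cannot pin $ab$ down to a single partition of $*G$; the argument must rely on the underlying construction by infinite integers rather than on any tidy algebraic identity, and must avoid forward-appealing to Propositions \ref{P094} and \ref{P108}, which depend on this very statement.
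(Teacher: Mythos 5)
You should know at the outset that the paper offers no proof to compare against: Proposition \ref{P092} is one of the statements flagged with ``\#'' in Section \ref{S0106} as an unproven proposition whose truth later propositions assume, and the proposition environment in the source is empty. So you are supplying an argument where the authors supplied none. Your diagnosis of the central obstruction is correct: since $\Phi\cdot\Phi_{*}^{-1}$ is the $0\cdot\infty$ indeterminate form, the product cannot be assigned to a single component of the $(\Phi,\mathbb{R},\Phi_{*}^{-1})$ representation, so no bookkeeping with Definition \ref{DEF077} can settle the question, and only membership in $*G\backslash\{0\}$ can be asserted. Splitting the claim into membership and non-vanishing is also the right shape for an argument.

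The gap lies in how you obtain those two halves without circularity. Writing $a=1/c$ with $c\in\Phi^{-1}$ and $ab=b/c$ reduces the claim to: a quotient of two nonzero infinities lies in $*G$ and is nonzero. But closure of $*G$ under multiplication and the existence of multiplicative inverses for arbitrary nonzero elements are exactly Propositions \ref{P094} and \ref{P106}, which sit downstream of \ref{P092}. Moreover, the step asserting that $a_{1}b_{3}$ is ``a ratio/product of finite integers and infinite integers'' is not available for general elements of $\Phi$ and $\Phi_{*}^{-1}$: an infinite transcendental such as $e^{n}|_{n=\infty}$, or its reciprocal, is by Definition \ref{DEF056} precisely \emph{not} expressible in the integer-built forms of Definitions \ref{DEF052}--\ref{DEF056}, so your reduction to the seven forms after Definition \ref{DEF075} amounts to asserting the conclusion in that case. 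Likewise ``the numerator survives into the quotient'' is an appeal to the absence of zero divisors, i.e.\ to the field structure being established. To close the gap non-circularly you would need either (i) a closure lemma for quotients $\Phi^{-1}/\Phi^{-1}$ proved directly from the construction, handled class by class including the transcendental classes, or (ii) an order argument (without loss of generality $a,b>0$, hence $ab>0$ by compatibility of the order with multiplication), but that compatibility is Proposition \ref{P073}, which the paper also marks as unproven. As it stands your proposal correctly locates the difficulty but does not discharge it.
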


\begin{prop}\label{P093} 
$\Phi_{*}^{-1} \cdot \Phi \in *G \backslash \{ 0 \}$ 
\end{prop}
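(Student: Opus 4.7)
The plan is to reduce the product $a \cdot b$ (for $a \in \Phi_{*}^{-1}$, $b \in \Phi$) to the indeterminate cross-term isolated in Definition \ref{DEF077}, and then argue separately for membership in $*G$ and for nonzero-ness. First I would invoke the unique representation (Definition \ref{DEF075}) together with the definition of $\Phi_{*}^{-1}$ (Definition \ref{DEF073}): writing $a = (0, 0, a_{3})$ with $a_{3}$ a pure infinity, and $b = (b_{1}, 0, 0)$ with $b_{1}$ a nonzero infinitesimal (nonzero by Corollary \ref{P043}), every tracked component in the multiplication formula vanishes because $a_{1} = a_{2} = 0$ and $b_{2} = b_{3} = 0$. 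What remains is precisely the indeterminate term $a_{3} b_{1}$, which the remark after Definition \ref{DEF077} already flags as the familiar $0 \cdot \infty$ case.

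Next I would show $a_{3} b_{1} \in *G$. The construction of $*G$ (Definitions \ref{DEF052}--\ref{DEF058}) is closed under the algebraic operations used to form such a product: a pure infinity multiplied by a nonzero infinitesimal produces an infinitesimal, a real, or an infinity, as illustrated by $n \cdot \frac{1}{n} = 1 \in \mathbb{R}$, $n^{2} \cdot \frac{1}{n} = n \in \Phi_{*}^{-1}$, and $n \cdot \frac{1}{n^{2}} = \frac{1}{n} \in \Phi$. In each sub-case the product is a concrete element of $\mathbb{Q}_{\infty}$, $\mathbb{J}_{\lt}$, or $\Phi$, all of which sit inside $*G$, so the trichotomy covers every possibility without telling us which component slot is occupied.

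Finally I would rule out $a \cdot b = 0$. Because $b \in \Phi$, Corollary \ref{P043} gives $b \neq 0$, so $b$ admits a multiplicative inverse $1/b \in \Phi^{-1}$ by Definition \ref{DEF036}. If $a b = 0$ then multiplying both sides by $1/b$ yields $a = 0$, contradicting Definitions \ref{DEF051} and \ref{DEF060}, which force $|a|$ to exceed every real number. A second, parallel route: once Proposition \ref{P092} is granted, commutativity of the underlying pointwise multiplication gives $a \cdot b = b \cdot a \in *G \backslash \{ 0 \}$ directly; this is essentially what Proposition \ref{P118} (also starred as unproven) packages.

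The main obstacle is circularity: the cancellation step above leans on multiplicative structure of $*G$ that is still being bootstrapped in this very subsection (Propositions \ref{P094}, \ref{P106}, and ultimately \ref{P108} all sit downstream of \ref{P093}). The cleanest way around this, and the route I would actually adopt in the write-up, is to descend one level to the representatives: $a$ is an expression $f(n)|_{n=\infty}$ whose values eventually exceed every real bound, $b$ is an expression $g(n)|_{n=\infty}$ with $g(n) \neq 0$ for $n \in \mathbb{J}_{\infty}$, and the pointwise real product $f(n) g(n)$ is then nonzero by ordinary real arithmetic. That observation licenses the step ``$a_{3} b_{1} \neq 0$'' without invoking the field axioms for $*G$ that we have not yet established, and thereby delivers $a \cdot b \in *G \backslash \{ 0 \}$ as required.
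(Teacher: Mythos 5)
First, note that the paper itself offers no proof of Proposition \ref{P093}: it is one of the propositions marked \# in Section \ref{S0106}, i.e.\ explicitly listed as unproven and merely assumed by later results (notably Propositions \ref{P112} and \ref{P094}). So you are supplying an argument where the paper supplies none. Your reduction is correct as far as it goes: with $a=(0,0,a_{3})$ and $b=(b_{1},0,0)$, Definition \ref{DEF077} kills every tracked component and leaves exactly the untyped cross-term $a_{3}b_{1}$; and your representative-level argument for $a_{3}b_{1}\neq 0$ (pointwise nonvanishing of $f(n)g(n)$ under ordinary real arithmetic, avoiding the not-yet-established cancellation law) is the right way to sidestep the circularity you correctly identify in your first route.

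The gap is in the membership claim $a_{3}b_{1}\in *G$. You assert that a pure infinity times a nonzero infinitesimal ``produces an infinitesimal, a real, or an infinity'' and support this with three examples ($n\cdot\frac{1}{n}$, $n^{2}\cdot\frac{1}{n}$, $n\cdot\frac{1}{n^{2}}$). The examples show the product \emph{can} land in each of the three slots; they do not show it \emph{must} land in one of them, and that is precisely the content of the proposition --- the remark following Definition \ref{DEF077} explicitly flags the type of $a_{3}b_{1}$ as indeterminate, and the decomposition $*G=\Phi+\mathbb{R}+\Phi_{*}^{-1}$ leaves no room for anything outside the trichotomy. A candidate escape: take $b_{1}=\frac{2+\sin n}{n}|_{n=\infty}$, which is a positive infinitesimal under Definition \ref{DEF036} since its reciprocal $\frac{n}{2+\sin n}\geq\frac{n}{3}$ is a positive infinity, and $a_{3}=n|_{n=\infty}\in\Phi_{*}^{-1}$; the product $2+\sin n|_{n=\infty}$ is bounded away from both $0$ and $\infty$ yet is neither a constant real, nor an infinitesimal, nor an infinity, so it does not visibly occupy any of the seven forms listed after Definition \ref{DEF075}. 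To close the argument you would need either a hypothesis excluding such oscillating representatives from $\Phi$ and $\Phi_{*}^{-1}$ (e.g.\ ultimate monotonicity, as the paper imposes elsewhere for comparisons), or an enlargement of $*G$ that accommodates bounded non-convergent quantities; as written, the trichotomy step, and hence the proof, does not go through.
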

\begin{prop}\label{P112}
$(\mathbb{R}_{\infty},\cdot) \in *G \backslash \{ 0 \}$
\end{prop}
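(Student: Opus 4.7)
The plan is to do a case analysis on the type of each factor. Since by definition $\mathbb{R}_{\infty} = \Phi \cup \Phi^{-1}$, any $a, b \in \mathbb{R}_{\infty}$ fall into one of four (unordered) combinations: $(\Phi,\Phi)$, $(\Phi^{-1},\Phi^{-1})$, $(\Phi,\Phi^{-1})$, or $(\Phi^{-1},\Phi)$. In each case I would invoke the corresponding closure/product proposition already stated earlier in the excerpt.

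First, for $a, b \in \Phi$, Proposition \ref{P090} gives $a \cdot b \in \Phi$, and since $0 \notin \Phi$ by Corollary \ref{P043}, we have $a\cdot b \in \Phi \subset *G\setminus\{0\}$. Second, for $a, b \in \Phi^{-1}$, Proposition \ref{P111} gives $a \cdot b \in \Phi^{-1} \subset *G\setminus\{0\}$ (infinities being nonzero is immediate from Definition \ref{DEF051}/\ref{DEF060}). Third and fourth, for the mixed cases $\Phi \cdot \Phi^{-1}$ and $\Phi^{-1} \cdot \Phi$, Propositions \ref{P092} and \ref{P093} state directly that the result lies in $*G\setminus\{0\}$. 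Combining the four cases exhausts $\mathbb{R}_{\infty} \cdot \mathbb{R}_{\infty}$ and yields the conclusion.

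The hard part of this proposition is really the mixed case $\Phi \cdot \Phi^{-1}$, i.e.\ the classical indeterminate form $0\cdot\infty$, but that work has been pushed off into Propositions \ref{P092} and \ref{P093}, so the proof here is essentially a bookkeeping argument that stitches four already-stated facts together. The only subtlety in writing it up is being explicit that the union $\Phi \cup \Phi^{-1}$ partitions cleanly into these four product types (by the sign/type convention of Definitions \ref{DEF059} and \ref{DEF036}), so no element of $\mathbb{R}_{\infty}$ is missed. I would therefore present the proof as a short paragraph listing the four cases and citing the appropriate prior proposition for each, with a single closing sentence noting that $0$ is excluded in every case.
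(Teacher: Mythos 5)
Your proof is correct and takes essentially the same approach as the paper, whose entire proof is the single line ``From Propositions \ref{P092} and \ref{P093}.'' Your version is in fact slightly more complete, since you also dispose of the pure cases $\Phi\cdot\Phi$ and $\Phi^{-1}\cdot\Phi^{-1}$ via Propositions \ref{P090} and \ref{P111}, which the paper leaves implicit.
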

\begin{proof}
From Propositions \ref{P092} and \ref{P093}.
\end{proof}
\begin{prop}\label{P094} 
$(*G,\cdot)$ is closed
\end{prop}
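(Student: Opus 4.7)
The plan is to unpack both operands using the unique three-component representation of Definition \ref{DEF075}, substitute into the product formula of Definition \ref{DEF077}, classify each term by the closure propositions already available, and then invoke additive closure of $(*G,+)$ (Proposition \ref{P087}) to conclude that the sum lies in $*G$.

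First I would write $a = (a_1, a_2, a_3)$ and $b = (b_1, b_2, b_3)$ with $a_1, b_1 \in \Phi \cup \{0\}$, $a_2, b_2 \in \mathbb{R}$, and $a_3, b_3 \in \Phi_{*}^{-1} \cup \{0\}$. By Definition \ref{DEF077} the product expands into nine pairwise products, which I would group by type. The purely infinitesimal piece $a_1 b_1$ lies in $\Phi \cup \{0\}$ by Proposition \ref{P090}; the mixed real-infinitesimal pieces $a_1 b_2$ and $a_2 b_1$ lie in $\Phi \cup \{0\}$ by Proposition \ref{P025}; the real-real piece $a_2 b_2$ lies in $\mathbb{R}$; the mixed real-infinity pieces $a_3 b_2$ and $a_2 b_3$ lie in $\Phi^{-1} \cup \{0\}$ by Proposition \ref{P064}; and the pure infinity piece $a_3 b_3$ lies in $\Phi_{*}^{-1} \cup \{0\}$ by Proposition \ref{P091}. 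Each of these contributions is a legitimate element of $*G$.

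The main obstacle is the cross term $a_1 b_3 + a_3 b_1$, namely the products of an infinitesimal with a pure infinity. As noted in the remark following Definition \ref{DEF077}, this is the indeterminate $0 \cdot \infty$ case and need not fall into a single component type. However, Propositions \ref{P092} and \ref{P093} guarantee $a_1 b_3, a_3 b_1 \in *G \backslash \{0\} \subset *G$, which is all that closure requires. I would treat these two terms as arbitrary gossamer numbers, not attempting to localize them to a single coordinate.

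Having placed every summand inside $*G$, I would finish by appealing to Proposition \ref{P087}: since $(*G, +)$ is closed, the sum of the nine classified terms remains in $*G$, so $a \cdot b \in *G$. The argument is essentially bookkeeping built on previously established closure results, and the only delicate point is recognizing that the indeterminate cross terms need not be reduced further in order for the overall closure claim to go through.
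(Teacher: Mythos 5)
Your proposal is correct and follows essentially the same route as the paper's own proof: both reduce the product to the component formula of Definition \ref{DEF077}, use the component closure results together with Propositions \ref{P092} and \ref{P093} for the indeterminate cross terms $a_{1}b_{3}$ and $a_{3}b_{1}$, and conclude closure. You simply make explicit the final appeal to additive closure (Proposition \ref{P087}) that the paper leaves implicit.
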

\begin{proof}
 Any product with $0$ results in $0$.
 Consider non-zero products,
 since multiplying by the
 components 
 is closed,
 and multiplying by an infinitesimal 
 and infinity is in $*G\backslash\{0\}$ 
 by Propositions \ref{P092} and \ref{P093},
 then by the multiplication of components
 Definition \ref{DEF077},
 the product is closed.
\end{proof}

\begin{prop}\label{P121}
$(\Phi^{-1},\cdot)$ is commutative
\end{prop}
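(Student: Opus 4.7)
The plan is to reduce commutativity of multiplication on $\Phi^{-1}$ to commutativity of the pairwise component products, using the unique three-component decomposition of Definition \ref{DEF075} and the multiplication formula of Definition \ref{DEF077}. First I would take $a, b \in \Phi^{-1}$ and write them as $a = (a_1, a_2, a_3)$ and $b = (b_1, b_2, b_3)$ with $a_1, b_1 \in \Phi \cup \{0\}$, $a_2, b_2 \in \mathbb{R}$, and $a_3, b_3 \in \Phi_*^{-1} \cup \{0\}$; since $a$ and $b$ are themselves infinities, necessarily $a_3, b_3 \neq 0$. Applying Definition \ref{DEF077} to both $a \cdot b$ and $b \cdot a$ and comparing componentwise, equality reduces to verifying $a_i b_j = b_j a_i$ for every pair $i, j \in \{1, 2, 3\}$.

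The real--real case is immediate. The cases $\Phi \cdot \mathbb{R}$ and $\Phi_*^{-1} \cdot \mathbb{R}$ reduce to scalar multiplication of an expression by a real number, which commutes because the associated operations on $f(n)|_{n=\infty}$ (Definition \ref{DEF074}) inherit commutativity from the underlying expression arithmetic (cf. Propositions \ref{P025}, \ref{P064}). The substantive work is the cases $\Phi \cdot \Phi$, $\Phi_*^{-1} \cdot \Phi_*^{-1}$, and especially the mixed $\Phi \cdot \Phi_*^{-1}$ case. My preferred route is to appeal to the constructive origin of $*G$: by Definitions \ref{DEF052}--\ref{DEF056} every element of $\Phi$, $\mathbb{R}$, or $\Phi_*^{-1}$ arises as an algebraic expression $f(n)|_{n=\infty}$ built from finite and infinite integers, and multiplication of such expressions symbolically commutes, so their evaluations at infinity commute as well.

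The main obstacle will be justifying that last claim rigorously, since it is essentially the assertion that evaluation at infinity is compatible with the commutativity of symbolic expression arithmetic, which is in spirit the transfer principle promised in Part 4. An alternative route that avoids the mixed $\Phi \cdot \Phi_*^{-1}$ case is by reciprocation: for $a, b \in \Phi^{-1}$ one has $1/a, 1/b \in \Phi$, and if $(\Phi, \cdot)$ is commutative (Proposition \ref{P117}) then $(1/a)(1/b) = (1/b)(1/a)$; inverting via $(xy)^{-1} = y^{-1} x^{-1}$, which requires the inverse structure on $*G \setminus \{0\}$, then yields $ab = ba$. This is cleaner but trades one obstacle for the companion problem of proving commutativity of $(\Phi, \cdot)$, which the author also defers to expression-level arithmetic, reinforcing my expectation that a complete proof must ultimately rest on a transfer-style principle rather than on structural manipulations within the decomposition alone.
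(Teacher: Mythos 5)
The first thing to know is that the paper offers no proof of Proposition \ref{P121} at all: in the list at the start of Section \ref{S0106} it is marked with \#, the authors' flag for ``unproven propositions'' whose truth other propositions assume. So you are not reconstructing a hidden argument but attempting to fill a gap the authors themselves leave open, and neither of your two routes closes it. In the componentwise route, Definition \ref{DEF077} does reduce $a \cdot b = b \cdot a$ to commutativity of the pairwise products $a_{i} b_{j}$, but the cases that matter --- $\Phi \cdot \Phi$, $\Phi_{*}^{-1} \cdot \Phi_{*}^{-1}$, and the mixed $\Phi \cdot \Phi_{*}^{-1}$ products whose type Definition \ref{DEF077} cannot even assign --- are exactly where you fall back on ``symbolic expression arithmetic commutes, so evaluation at infinity commutes.'' That is not a deduction from the paper's definitions; it is a restatement of the claim as an unargued compatibility between formal expressions and their values in $*G$, and you correctly flag it as the obstacle. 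As written, the proposal assumes what it needs to prove.

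The reciprocal route has a sharper defect: it is circular relative to the paper's own logical structure. The paper's proof of Proposition \ref{P117} ($(\Phi,\cdot)$ is commutative) is precisely your reciprocal argument run in the opposite direction --- it takes $a, b \in \Phi^{-1}$, invokes Proposition \ref{P121} to get $ab = ba$, and concludes $\frac{1}{a} \cdot \frac{1}{b} = \frac{1}{ab} = \frac{1}{ba} = \frac{1}{b} \cdot \frac{1}{a}$. So \ref{P117} is downstream of \ref{P121}, not independent of it, and using \ref{P117} to derive \ref{P121} establishes nothing; your remark that the author ``defers'' \ref{P117} to expression-level arithmetic is not accurate --- the author derives it from the very proposition you are trying to prove. (Proposition \ref{P116} is likewise obtained from \ref{P121} by restriction to a subset.) An honest proof would have to return to the construction of $\Phi^{-1}$ from infinite integers in Definitions \ref{DEF052}--\ref{DEF056} and show directly that products there commute because integer multiplication commutes and the at-a-point evaluation of Definition \ref{DEF074} respects it --- the transfer-style statement you anticipate, but which neither you nor the paper actually supplies.
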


\begin{prop}\label{P116}
$(\Phi_{*}^{-1},\cdot)$ is commutative
\end{prop}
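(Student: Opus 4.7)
The plan is to reduce the statement to pure algebraic manipulation on the third component of the unique decomposition from Definition \ref{DEF075}, then use the inherited commutativity of the underlying integer/rational operations at infinity.

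First, I would take arbitrary $a, b \in \Phi_{*}^{-1}$ and write them in the three-component form of Definition \ref{DEF075}. By the definition of $\Phi_{*}^{-1}$ (Definition \ref{DEF073}), a pure infinity has no real and no infinitesimal part, so $a = (0,0,a_3)$ and $b = (0,0,b_3)$ with $a_3, b_3 \in \Phi_{*}^{-1}$. Applying the multiplication formula of Definition \ref{DEF077} and collapsing all terms containing either $a_1,a_2,b_1$ or $b_2$ (each of which is $0$), we get
\[
a \cdot b = (0, 0, a_3 b_3), \qquad b \cdot a = (0, 0, b_3 a_3).
\]
The closure Proposition \ref{P091} guarantees $a_3 b_3$ and $b_3 a_3$ each land in $\Phi_{*}^{-1}$, so the component form is valid on both sides.

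Second, it remains to verify $a_3 b_3 = b_3 a_3$ as infinities. Here I would appeal to the construction of $\Phi_{*}^{-1}$ via the hierarchy in Figure \ref{fig:F07}: every element of $\Phi_{*}^{-1}$ is built from $\mathbb{J}_{\infty}$ together with $\mathbb{J}_{\lt}$ through the same arithmetic used to construct $\mathbb{Q}_{\infty}$, $\mathbb{A}_{\infty}$, and $\mathbb{A}'_{\infty}$. Since the construction is isomorphic to the construction of $\mathbb{R}$ from $\mathbb{J}_{\lt}$, and since finite-integer multiplication is commutative, commutativity transfers to products of infinite integers and hence to the field generated by them. Thus $a_3 b_3 = b_3 a_3$, and the two component triples coincide.

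The main obstacle is not the computation with components but the justification that multiplication on the underlying pure-infinity layer inherits commutativity from $\mathbb{J}_{\lt}$. In a fully rigorous treatment one would need to spell out that the infinite integers $\mathbb{J}_{\infty}$ behave like integer symbols evaluated at-a-point (Definition \ref{DEF074}), so $n \cdot m = m \cdot n$ at infinity for the same reason it holds for finite $n,m$; then closure of $\Phi_{*}^{-1}$ under the operations used to extend to $\mathbb{Q}_{\infty}$ and $\mathbb{A}_{\infty}$ propagates commutativity. Once that structural point is granted, the proposition follows immediately from the two-line component calculation above.
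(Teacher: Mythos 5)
Your argument is essentially correct at the level of rigour the paper operates at, but it takes a genuinely different route. The paper's own proof is a one-line subset argument: it invokes Proposition \ref{P121} ($(\Phi^{-1},\cdot)$ is commutative) and observes that $\Phi_{*}^{-1} \subseteq \Phi^{-1}$, so commutativity is inherited by the unique representatives $a' = a$, $b' = b$. Note, however, that Proposition \ref{P121} is itself one of the propositions the paper marks with \# as \emph{unproven and assumed}, so the paper's proof ultimately rests on an axiom-like assumption. You avoid \ref{P121} entirely and instead ground commutativity in the construction of the infinities from $\mathbb{J}_{\infty}$, arguing that commutativity of finite-integer multiplication transfers through the isomorphic construction of Definitions \ref{DEF052}--\ref{DEF056}. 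That is arguably \emph{more} justification than the paper supplies, though it is still informal at exactly the point where the paper is informal. One caution about your first step: for $a, b \in \Phi_{*}^{-1}$ the decomposition of Definition \ref{DEF075} gives $a = (0,0,a_{3})$ with $a_{3} = a$ itself, so after applying Definition \ref{DEF077} the residual claim $a_{3}b_{3} = b_{3}a_{3}$ is literally the proposition you set out to prove; the component calculation is a vacuous reduction here, and all of the mathematical content of your proof lives in the second paragraph. That paragraph is where the paper would instead simply cite \ref{P121}, so the two proofs diverge precisely at the step neither the paper nor you can make fully rigorous without a worked-out theory of multiplication on $\mathbb{J}_{\infty}$.
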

\begin{proof}
By Proposition \ref{P121}, a subset is also commutative.
 $a, b \in \Phi$; $a', b' \in \Phi_{*}$;
 If $a \cdot b = b \cdot a$, and let the
 unique expressions  be $a' = a$, $b' = b$, then it follows
 by substitution $a' \cdot b' = b' \cdot a'$.
\end{proof}

\begin{prop}\label{P117}
$(\Phi,\cdot)$ is commutative
\end{prop}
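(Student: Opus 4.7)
The plan is to reduce commutativity on $\Phi$ to the already-established commutativity on $\Phi^{-1}$ (Proposition \ref{P121}) via the reciprocal bijection guaranteed by Definition \ref{DEF036}.

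First, I would take arbitrary $a, b \in \Phi$. By Definition \ref{DEF036}, setting $A = 1/a$ and $B = 1/b$ gives $A, B \in \Phi^{-1}$, and in particular $A, B \neq 0$ (indeed $A, B$ have magnitude exceeding every real, so they are bounded away from $0$). Next, apply Proposition \ref{P121}: $A \cdot B = B \cdot A$. Since this common value lies in $\Phi^{-1}$ by closure (Proposition \ref{P111}), it is non-zero, so both sides have a multiplicative inverse.

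Taking reciprocals of both sides and using the standard identity $(XY)^{-1} = Y^{-1} X^{-1}$ (valid whenever $X, Y$ are non-zero gossamer numbers), I would obtain
\[
a \cdot b \;=\; B^{-1} \cdot A^{-1} \;=\; (AB)^{-1} \;=\; (BA)^{-1} \;=\; A^{-1}\cdot B^{-1} \;=\; b \cdot a.
\]
Alternatively, in the style of Proposition \ref{P116}, one can note that the map $\iota : \Phi \to \Phi^{-1}$, $x \mapsto 1/x$ is a bijection under which multiplication corresponds to multiplication of reciprocals, and transport the commutativity of the codomain directly.

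The only real subtlety, and the one point worth checking carefully, is the use of $(XY)^{-1} = Y^{-1} X^{-1}$ inside $*G$. At this stage in the paper, $*G$ has not yet been certified as a field (that is Proposition \ref{P108}), so one cannot invoke field axioms. However, the identity reduces to the arithmetic fact that $1/(XY) \cdot XY = 1$ together with uniqueness of reciprocals for non-zero elements, both of which are valid because $A, B, AB, BA \in \Phi^{-1}$ are all non-zero by Proposition \ref{P111}, and reciprocals in $*G$ are defined termwise. So the main obstacle is largely bookkeeping: ensuring that the reciprocal manipulations used are justified by results already proved (closure under multiplication for $\Phi^{-1}$ and the Definition \ref{DEF036} correspondence) rather than by the not-yet-established field structure.
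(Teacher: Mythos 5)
Your proposal is correct and follows essentially the same route as the paper: both reduce commutativity on $\Phi$ to Proposition \ref{P121} on $\Phi^{-1}$ via reciprocals, using the identity $\frac{1}{A}\cdot\frac{1}{B}=\frac{1}{AB}$. Your extra care about justifying that identity without invoking the not-yet-proved field structure is a reasonable refinement, but the underlying argument is the paper's own.
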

\begin{proof}
 $a, b \in \Phi^{-1}$;
 By Proposition \ref{P121} the infinities are
 commutative, 
 $\frac{1}{a} \cdot \frac{1}{b}$
 $= \frac{1}{ab}$
 $= \frac{1}{ba}$
 $= \frac{1}{b} \cdot \frac{1}{a}$
\end{proof}

\begin{prop}\label{P118}
$(\Phi, \Phi_{*}^{-1}, \cdot)$ is commutative
\end{prop}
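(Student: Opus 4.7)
The plan is to reduce Proposition \ref{P118} to Proposition \ref{P117} via the reciprocal bijection between $\Phi$ and $\Phi^{-1}$, following the same idiom used in the preceding proofs of Propositions \ref{P117} and \ref{P116}. Suppose $a \in \Phi$ and $b \in \Phi_{*}^{-1}$. Since $b$ is a pure infinity, Definition \ref{DEF036} gives $c := 1/b \in \Phi$, so the claim $a \cdot b = b \cdot a$ becomes $a \cdot (1/c) = (1/c) \cdot a$ for two infinitesimals $a, c$, precisely the situation where Proposition \ref{P117} supplies $a \cdot c = c \cdot a$.

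The key manipulation is to insert the identity $c \cdot (1/c) = 1$ into the product $(1/c) \cdot a$ and then slide inner factors past one another using the commutativity of $\Phi$. Explicitly:
$$(1/c) \cdot a = (1/c) \cdot a \cdot \bigl( c \cdot (1/c) \bigr) = (1/c) \cdot (a \cdot c) \cdot (1/c) = (1/c) \cdot (c \cdot a) \cdot (1/c) = a \cdot (1/c),$$
where the middle equality invokes $a \cdot c = c \cdot a$ from Proposition \ref{P117} and the remaining equalities use associativity together with $(1/c) \cdot c = 1$. Translating $1/c$ back to $b$ yields $b \cdot a = a \cdot b$.

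The main obstacle is that this argument leans implicitly on associativity of multiplication in $*G$ and on $1/c$ acting as a two-sided inverse for $c$, neither of which is formally secured by this point in the field-property list (associativity is Proposition \ref{P096}, and the inverse is Proposition \ref{P106}, both listed after \ref{P118}). To sidestep this potential circularity, I would supplement the argument with a direct verification based on the component representation of Definition \ref{DEF077}: writing $a = (a_1, 0, 0)$ and $b = (0, 0, b_3)$, both products $a \cdot b$ and $b \cdot a$ collapse to the single cross term $a_1 b_3$, respectively $b_3 a_1$, and these are equal because each component is itself a gossamer number in which multiplication reduces to the underlying commutative arithmetic on expressions indexed by $\mathbb{J}_{\infty}$. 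This second route makes the commutativity manifest without presupposing the full field structure.
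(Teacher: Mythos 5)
The paper offers no proof of Proposition \ref{P118}: it is flagged with \# in the list at the head of Section \ref{S0106} as one of the ``unproven propositions'' whose truth the other propositions assume. So there is no paper argument to compare yours against; what follows is an assessment of your proposal on its own terms.

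Your first route has the difficulty you yourself flag, and it is more than a caution: inserting $c \cdot (1/c) = 1$ and regrouping requires associativity (Proposition \ref{P096}) and a two-sided inverse for $c$ (Proposition \ref{P106}), and the commutativity of $(*G \backslash \{0\}, \cdot)$ (Proposition \ref{P095}), on which the later group structure rests, is itself derived \emph{from} Proposition \ref{P118} in the paper's own dependency chain; so the route is at best delicate to order correctly. The decisive problem is in your fallback. The component computation is right: with $a = (a_{1}, 0, 0)$ and $b = (0, 0, b_{3})$, Definition \ref{DEF077} reduces both products to the unclassified cross terms $a_{1}b_{3}$ and $b_{3}a_{1}$. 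But $a_{1} \in \Phi$ and $b_{3} \in \Phi_{*}^{-1}$, so the equality $a_{1}b_{3} = b_{3}a_{1}$ is not a reduction to some simpler commutative arithmetic --- it is verbatim the statement of Proposition \ref{P118} for the pair $(a_{1}, b_{3})$. The component representation has merely unwound to the original claim. The only non-circular way to finish is the one you gesture at in your final clause: descend to the construction of $*G$ from the infinite integers (Definitions \ref{DEF052}--\ref{DEF058}) and argue that products of such formal expressions commute because multiplication of infinite integers and their ratios does. That step carries the entire content of the proposition, it is not developed in your proposal, and the paper's decision to leave \ref{P118} (and likewise \ref{P121}) unproven suggests the authors regarded exactly this step as the missing foundation.
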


\begin{prop}\label{P095} 
$(*G \backslash \{ 0 \},\cdot)$ is commutative 
\end{prop}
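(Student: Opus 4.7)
The plan is to reduce the claim to the component-wise commutativity results already established. Using the unique three-component representation of Definition \ref{DEF075}, write $a=(a_1,a_2,a_3)$ and $b=(b_1,b_2,b_3)$ with $a_i,b_i$ drawn from $\Phi\cup\{0\}$, $\mathbb{R}$, and $\Phi_{*}^{-1}\cup\{0\}$ respectively. Then expand both $a\cdot b$ and $b\cdot a$ using the multiplication formula of Definition \ref{DEF077}.

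First I would collect the cross-products appearing in the two expansions. Each summand in either expansion is of the form $a_i b_j$ or $b_j a_i$ for some pair of component types. For products of like type, commutativity is given by Proposition \ref{P117} on $(\Phi,\cdot)$, commutativity of $(\mathbb{R},\cdot)$, and Proposition \ref{P116} on $(\Phi_{*}^{-1},\cdot)$. For real times infinitesimal and real times infinity, commutativity reduces to the observation that a real factor with a pure $\Phi$ or $\Phi_{*}^{-1}$ element produces an element lying again inside a single component group, in which the underlying scalar multiplication is commutative. For the genuinely mixed pair $\Phi\cdot\Phi_{*}^{-1}$, Proposition \ref{P118} supplies the needed exchange. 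Once every term in the expansion of $a\cdot b$ is paired with its transpose in $b\cdot a$, commutativity of addition in $*G$ (Proposition \ref{P088}) reassembles the two sums into the identity $a\cdot b = b\cdot a$.

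The main obstacle is the two ``uncategorized'' cross-products $a_1 b_3$ and $a_3 b_1$, which by the remark following Definition \ref{DEF077} are the indeterminate $\Phi\cdot\Phi_{*}^{-1}$ forms and may land anywhere in $*G\backslash\{0\}$. Their commutativity is not a consequence of the component bookkeeping alone and must be obtained directly from Proposition \ref{P118}, which gives $a_1 b_3 = b_3 a_1$ and $a_3 b_1 = b_1 a_3$ as elements of $*G\backslash\{0\}$. A secondary subtlety is that one must check that the seven possible forms listed after Definition \ref{DEF075} (e.g.\ a pure real, an infinitesimal plus a real, an infinity alone) are all covered; but in each such form the missing components are zero and the corresponding cross-products vanish, so the pattern of the general argument restricts uniformly to the special cases.

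Finally, the restriction to $*G\backslash\{0\}$ is not substantive for commutativity itself, since $0\cdot b = b\cdot 0 = 0$ is immediate; it is included only to align with the later multiplicative-group propositions. I would therefore conclude by a single display equating the two expansions term by term and noting that each equality either invokes one of Propositions \ref{P116}, \ref{P117}, \ref{P118} or the commutativity of $(\mathbb{R},\cdot)$.
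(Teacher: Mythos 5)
Your proposal is correct and follows essentially the same route as the paper: the paper's (much terser) proof likewise reduces commutativity of $(*G\backslash\{0\},\cdot)$ to commutativity of the individual components together with commutativity of the mixed infinitesimal--infinity products, i.e.\ exactly Propositions \ref{P116}, \ref{P117}, \ref{P118} and commutativity of $(\mathbb{R},\cdot)$ applied term by term to the expansion in Definition \ref{DEF077}. Your explicit handling of the uncategorized cross-terms $a_{1}b_{3}$ and $a_{3}b_{1}$ via Proposition \ref{P118} is the same key step the paper relies on, just spelled out.
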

\begin{proof}
 Since the components are commutative,
 and multiplication between infinitesimals
 and infinities is commutative,
 then general multiplication is commutative.
\end{proof}

\begin{prop}\label{P096} 
$(*G \backslash \{ 0 \},\cdot)$ is associative 
\end{prop}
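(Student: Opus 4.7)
The plan is to mirror the strategy used for commutativity in Proposition \ref{P095}: decompose the problem using the unique three-component representation of Definition \ref{DEF075}, verify associativity on each pure subsystem, and then combine via the component-wise multiplication rule of Definition \ref{DEF077}. The cleanest route is probably not to work directly from Definition \ref{DEF077}, but rather to observe that multiplication of gossamer numbers is inherited from pointwise multiplication of their representing expressions $f(x)|_{x=\infty}$, which is associative at every evaluation because $(\mathbb{R},\cdot)$ is associative; I would however present the component-based argument to match the style of the neighbouring propositions.

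First I would handle the three pure cases. Associativity of $(\mathbb{R},\cdot)$ is the field axiom for the reals. For $(\Phi^{-1},\cdot)$, associativity is analogous to the commutativity claim Proposition \ref{P121}: infinite integers, infinite rationals, and their products obey the same associative law as their finite counterparts, because an infinite integer such as $n|_{n=\infty}$ satisfies $(n_1 n_2) n_3 = n_1 (n_2 n_3)$ at each infinite value just as ordinary integers do. For $(\Phi,\cdot)$ I would lift this through reciprocals: given $\delta_i = 1/x_i$ with $x_i \in \Phi^{-1}$, one has $(\delta_1 \delta_2)\delta_3 = 1/((x_1 x_2)x_3) = 1/(x_1(x_2 x_3)) = \delta_1(\delta_2 \delta_3)$, using $(\Phi^{-1},\cdot)$ associativity and closure (Propositions \ref{P090}, \ref{P111}, \ref{P114}).

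Next I would handle the mixed cases. Using the component expansion of Definition \ref{DEF077}, the product $a\cdot b\cdot c$ for $a,b,c\in *G\setminus\{0\}$ expands into a sum of twenty-seven triple products, each of which is a multiplication among the fixed subsystems $\Phi$, $\mathbb{R}$, $\Phi_{*}^{-1}$, together with the indeterminate cross terms $\Phi\cdot\Phi_{*}^{-1}$ and $\Phi_{*}^{-1}\cdot\Phi$. By Propositions \ref{P117}, \ref{P116}, \ref{P118} and commutativity already established, the order of factors inside each triple can be rearranged, and distributivity (Proposition \ref{P097}) lets me collect matching terms between $(a\cdot b)\cdot c$ and $a\cdot(b\cdot c)$. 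Each triple reduces to a pure-subsystem associativity verified above, so the two expansions coincide.

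The main obstacle will be the indeterminate product $\Phi \cdot \Phi_{*}^{-1}$, since its output can land in any of the seven forms listed after Definition \ref{DEF075}, and therefore the component decomposition of $(a\cdot b)\cdot c$ is not literally component-wise. I would circumvent this by falling back on the underlying representative functions: if $a = f|_\infty$, $b = g|_\infty$, $c = h|_\infty$, then at each infinite $x$ we have $(f(x)g(x))h(x) = f(x)(g(x)h(x))$ in $\mathbb{R}$, so the two gossamer products have identical three-component representations and are equal by the uniqueness part of Definition \ref{DEF075}. This transfer from pointwise reality to $*G$-level associativity is the essential step, and it is legitimate here because associativity, unlike the classification into $\Phi$, $\mathbb{R}$, $\Phi_{*}^{-1}$, is preserved by the indeterminate products.
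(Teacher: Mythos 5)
Your proposal is correct in substance but takes a genuinely different route from the paper. The paper's own proof is a machine verification: it extends the three-component representation to a fourth slot that collects precisely the indeterminate cross products $a_{1}b_{3}+a_{3}b_{1}$ you identify as the main obstacle, implements the multiplication rule of Definition \ref{DEF077} in Maxima with that extra slot carried along formally, and checks that $(ab)c$ and $a(bc)$ expand to the identical polynomial in the component symbols. You instead do the expansion conceptually by hand (subsystem associativity for $\Phi$, $\mathbb{R}$, $\Phi_{*}^{-1}$, with the $\Phi$ case lifted through reciprocals from $\Phi^{-1}$, much as the paper proves Proposition \ref{P117} from \ref{P121}) and then resolve the indeterminate-product difficulty by retreating to pointwise multiplication of representative expressions at infinity. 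What each approach buys: the paper's computation stays entirely inside Definition \ref{DEF077} and never needs to invoke representatives, but it is a black-box check that implicitly treats the components as formal commuting symbols and offers no insight into why the identity holds; your argument explains the mechanism, but its essential step --- that the component formula agrees with pointwise multiplication of representing functions, so that associativity in $\mathbb{R}$ at each evaluation transfers to $*G$ --- is a correspondence the paper asserts informally in its construction but never proves, so within the paper's own framework you would either need to establish that lemma or adopt the paper's device of an explicit fourth component for the unknown terms. Given that the paper's proof rests on comparable unstated assumptions, this is a difference of style rather than a gap.
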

\begin{proof}
 Using Maxima symbolic mathematics package.
 Implement multiplication, with the fourth
 element in an unknown status. \\
f4(a1,a2,a3,a4,b1,b2,b3,b4) := [expand(a1*b2+a2*b1+a1*b1), expand( a2*b2), expand(a3*b3+a3*b2+a2*b3), expand( a1*b3+a3*b1 + a4*(b1+b2+b3) + b4*(a1+a2+a3)+a4*b4)]; \\
f5(a,b) := f4( a[1], a[2], a[3], a[4], b[1], b[2], b[3], b[4] ); 

 The following gave the same output, proving
 the associativity. \\
 f5( f5( [a1,a2,a3,0], [b1,b2,b3,0]), [c1,c2,c3,0] ); \\
 f5( [a1,a2,a3,0], f5( [b1,b2,b3,0], [c1,c2,c3,0] ) ); \\
$[ a1\,b2\,c2+a2\,b1\,c2+a1\,b1\,c2+a2\,b2\,c1+a1\,b2\,c1+a2\,b1\,c1+a1\,b1\,c1,$
 $a2\,b2\,c2,a3\,b3\,c3+a2\,b3\,c3+a3\,b2\,c3+a2\,b2\,c3+a3\,b3\,c2+a2\,b3\,c2+a3\,b2\,c2,$
 $a1\,b3\,c3+a1\,b2\,c3+a3\,b1\,c3+a2\,b1\,c3+a1\,b1\,c3+a1\,b3\,c2+a3\,b1\,c2+a3\,b3\,c1+a2\,b3\,c1+a1\,b3\,c1+a3\,b2\,c1+a3\,b1\,c1]$
\end{proof}

\begin{prop}\label{P109}
$(\Phi_{*}^{-1}\cup\{0\},+,\cdot)$ is distributive
\end{prop}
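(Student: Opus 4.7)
The plan is to reduce the distributive law in $\Phi_{*}^{-1}\cup\{0\}$ to the scalar distributivity of the third component, exploiting the fact that elements of $\Phi_{*}^{-1}\cup\{0\}$ sit in the particularly simple slice of the three-component representation of Definition \ref{DEF075}. First I would take arbitrary $a,b,c\in\Phi_{*}^{-1}\cup\{0\}$ and write them, using Definition \ref{DEF075}, as $a=(0,0,a_{3})$, $b=(0,0,b_{3})$, $c=(0,0,c_{3})$, where each of $a_{3},b_{3},c_{3}\in\Phi_{*}^{-1}\cup\{0\}$. The zero case is then subsumed by allowing the third coordinate to vanish, and I would dispose of it at the end by a one-line remark that multiplication by $(0,0,0)$ annihilates both sides.

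Next I would specialise the addition rule of Definition \ref{DEF076} to obtain
\[
b+c=(0,0,b_{3}+c_{3}),
\]
and specialise the multiplication rule of Definition \ref{DEF077} to this slice. Setting $a_{1}=a_{2}=0$ and likewise for $b$ kills every mixed term in the Definition \ref{DEF077} formula, leaving only the pure third-component product. Consequently
\[
a\cdot(b+c)=(0,0,\,a_{3}(b_{3}+c_{3})), \qquad a\cdot b + a\cdot c = (0,0,\,a_{3}b_{3}+a_{3}c_{3}).
\]

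The remaining task is to show $a_{3}(b_{3}+c_{3})=a_{3}b_{3}+a_{3}c_{3}$. Here I would invoke that the third-component arithmetic is inherited from the isomorphic construction of $*G$ at infinity built out of the infinite integers $\mathbb{J}_{\infty}$ and their rational/algebraic extensions, where the usual distributive law of integer arithmetic is inherited termwise (the ordinary identity $x(y+z)=xy+xz$ holds symbolically in $n$ and then persists under the evaluation $|_{n=\infty}$). Once this scalar identity is in hand, both displayed triples above agree componentwise, so $a\cdot(b+c)=a\cdot b+a\cdot c$, and the proof is complete.

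The main obstacle, such as it is, is not in the calculation but in the justification of that last scalar step: one must be comfortable that distributivity for the raw third-component infinities of $\Phi_{*}^{-1}$ is legitimate without circularly invoking distributivity for $*G$ itself. I would address this by pointing out that $\Phi_{*}^{-1}$ elements are produced by inverting or composing $\mathbb{J}_{\infty},\mathbb{Q}_{\infty},\mathbb{A}_{\infty}$ expressions whose arithmetic is just pointwise arithmetic of functions in the infinite variable $n$, where distributivity is automatic. The rest of the argument is essentially bookkeeping with the three-component representation.
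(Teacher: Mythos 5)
The paper itself supplies no proof of Proposition \ref{P109}: it is stated bare and then invoked (together with Proposition \ref{P110}) inside the proof of Proposition \ref{P097}, so in effect it is one of the assumed, unproven propositions. Your argument is therefore doing work the paper does not do, and the route you take is the natural one given the paper's machinery: restricting Definitions \ref{DEF076} and \ref{DEF077} to the slice $(0,0,\ast)$ does kill every mixed term, and the identities $a\cdot(b+c)=(0,0,a_{3}(b_{3}+c_{3}))$ and $a\cdot b+a\cdot c=(0,0,a_{3}b_{3}+a_{3}c_{3})$ follow by direct substitution; the closure facts needed to keep everything in the third component are exactly Propositions \ref{P084} and \ref{P091}, which you implicitly rely on and should cite. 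The honest assessment is that your component bookkeeping is essentially tautological and the entire content of the proposition lives in your final scalar step $a_{3}(b_{3}+c_{3})=a_{3}b_{3}+a_{3}c_{3}$; your justification --- that elements of $\Phi_{*}^{-1}$ are expressions in the infinite variable $n$ whose arithmetic is termwise, so the symbolic identity $x(y+z)=xy+xz$ persists under evaluation at infinity --- is as rigorous as the paper's own constructionist framework permits, and you are right to flag the circularity worry explicitly rather than silently appealing to distributivity of $*G$. In short: the proposal is sound relative to the paper's standards, and it fills a gap rather than duplicating an existing argument.
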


\begin{prop}\label{P110}
$(\Phi\cup\{0\},+,\cdot)$ is distributive
\end{prop}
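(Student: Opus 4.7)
The plan is to reduce the statement to a single scalar identity for infinitesimals by exploiting the component representation, and then derive that identity from the structure of $\Phi^{-1}$.

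First I would dispose of the degenerate cases. If any one of $a,b,c\in\Phi\cup\{0\}$ equals $0$, then $0$ is the additive identity of $(\Phi\cup\{0\},+)$ (Proposition \ref{P098}) and absorbs in multiplication, so both $a\cdot(b+c)$ and $a\cdot b+a\cdot c$ collapse to the same value immediately. So the substantive case is $a,b,c\in\Phi$.

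Next I would write each of $a,b,c$ in the unique component form of Definition \ref{DEF075}. For $x\in\Phi$, the representation is $(x_1,0,0)$ with $x_1\in\Phi$. Plugging into Definition \ref{DEF076}, $b+c=(b_1+c_1,0,0)$, and plugging into Definition \ref{DEF077}, all terms involving $a_2,a_3,b_2,b_3,c_2,c_3$ drop out, leaving $a\cdot b=(a_1b_1,0,0)$, $a\cdot c=(a_1c_1,0,0)$, and $a\cdot(b+c)=(a_1(b_1+c_1),0,0)$. The proposition therefore reduces to the single-component identity
\[
a_1\cdot(b_1+c_1)=a_1\cdot b_1+a_1\cdot c_1 \qquad (a_1,b_1,c_1\in\Phi).
\]

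Finally I would verify this identity by passing to infinities via Definition \ref{DEF036}. Write $a_1=1/\alpha$, $b_1=1/\beta$, $c_1=1/\gamma$ with $\alpha,\beta,\gamma\in\Phi^{-1}$. Closure of $(\Phi^{-1},\cdot)$ (Proposition \ref{P111}) gives that $\alpha\beta$, $\alpha\gamma$, and $\alpha\beta\gamma$ are infinities, and closure of $(\Phi,+)$ (Proposition \ref{P081}) makes both sides well-defined in $\Phi$. Placing each side over the common infinity $\alpha\beta\gamma$ yields
\[
\frac{1}{\alpha}\!\left(\frac{1}{\beta}+\frac{1}{\gamma}\right)=\frac{\beta+\gamma}{\alpha\beta\gamma},\qquad \frac{1}{\alpha\beta}+\frac{1}{\alpha\gamma}=\frac{\gamma+\beta}{\alpha\beta\gamma},
\]
and the two right-hand sides coincide by commutativity of $(\Phi^{-1},+)$.

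The main obstacle is that the last step tacitly leans on distributivity among the infinities used as denominators, which is exactly the sort of law we are in the process of establishing. I would discharge this either by appealing to the representation of any gossamer number as an evaluation $f(n)|_{n=\infty}$ of a real expression in an infinite integer, so that the identity holds pointwise at every finite $n$ by the distributive law in $\mathbb{R}$ and transfers to the evaluation at infinity, or by expanding the combined fraction explicitly and invoking only the already-proved commutativity/associativity of $(\Phi^{-1},\cdot)$ together with the primitive distributive law on the integer coefficients that arise when clearing denominators. Either route keeps the argument inside the tools already built and avoids circularity with the still-unproved Proposition \ref{P097}.
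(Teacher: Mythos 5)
The paper offers no proof of Proposition \ref{P110} to compare against: it is stated bare, immediately before Proposition \ref{P097}, whose proof then cites it (together with Proposition \ref{P109} and distributivity of $\mathbb{R}$) to handle the component-wise part of distributivity in $*G$. It is not even listed among the propositions the paper flags as unproven. So you are filling a genuine hole rather than reproducing an argument. Your skeleton is sound and consistent with how the paper proves the composite Proposition \ref{P097}: the degenerate cases follow from Proposition \ref{P098} and absorption by $0$; for $a,b,c\in\Phi$ the representations $(a_{1},0,0)$ of Definition \ref{DEF075}, pushed through Definitions \ref{DEF076} and \ref{DEF077} (with Proposition \ref{P090} keeping products of infinitesimals in the first slot, and the indeterminate cross-terms $a_{1}b_{3}+a_{3}b_{1}$ vanishing), do collapse the claim to the scalar identity $a_{1}(b_{1}+c_{1})=a_{1}b_{1}+a_{1}c_{1}$ in $\Phi\cup\{0\}$.

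The gap is the one you name yourself, and it is real: putting $\frac{1}{\beta}+\frac{1}{\gamma}$ over the common denominator $\alpha\beta\gamma$ already uses distributivity among infinities, so the verification as written is circular. Of your two proposed repairs, only the first works. Since the paper constructs gossamer numbers from expressions in infinite integers (Definitions \ref{DEF052}--\ref{DEF056}) by explicit analogy with building $\mathbb{R}$ from $\mathbb{J}_{\lt}$, the identity must be inherited from the fact that it holds for every finite value of the underlying variable, with the operations defined pointwise; but note the paper never actually defines $+$ and $\cdot$ at that ground level (Definitions \ref{DEF076} and \ref{DEF077} presuppose arithmetic on the components), so you would have to make that pointwise semantics explicit before invoking it. The second repair does not escape the circle: clearing denominators for arbitrary $\beta,\gamma\in\Phi^{-1}$ does not produce integer coefficients, it produces the infinities themselves, and expanding the resulting product over a sum is again the distributive law. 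The ``integer coefficients'' only appear once you have already descended to the representation in infinite integers, at which point you are on the first route anyway. State the pointwise argument as the proof and discard the alternative.
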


\begin{prop}\label{P097} 
$(*G,+,\cdot)$ is distributive,
 $a \cdot (b+c) = (a \cdot b) + (a \cdot c)$
\end{prop}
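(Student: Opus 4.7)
The plan is to reduce distributivity in $*G$ to componentwise distributivity, using the unique three-component representation of Definition \ref{DEF075} together with the explicit formulas in Definitions \ref{DEF076} and \ref{DEF077}. Write $a=(a_1,a_2,a_3)$, $b=(b_1,b_2,b_3)$, $c=(c_1,c_2,c_3)$ with $a_i,b_i,c_i$ in the respective sets $\Phi\cup\{0\}$, $\mathbb{R}$, $\Phi_{*}^{-1}\cup\{0\}$. The idea is that addition is a coordinate-wise operation and each of the summands in the multiplication formula is a bilinear product of scalars drawn from these three coordinate spaces; distributivity then reduces to ordinary distributivity in each coordinate and across coordinates.

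First I would compute the left-hand side $a\cdot(b+c)$. By Definition \ref{DEF076}, $b+c=(b_1+c_1,\,b_2+c_2,\,b_3+c_3)$. Applying Definition \ref{DEF077} and expanding each $a_i(b_j+c_j)$ as $a_ib_j+a_ic_j$ yields the first component $a_1b_2+a_1c_2+a_2b_1+a_2c_1+a_1b_1+a_1c_1$, the second component $a_2b_2+a_2c_2$, the third component $a_3b_3+a_3c_3+a_3b_2+a_3c_2+a_2b_3+a_2c_3$, and two trailing terms $a_1b_3+a_1c_3+a_3b_1+a_3c_1$. Next I would compute the right-hand side $(a\cdot b)+(a\cdot c)$: each of $a\cdot b$ and $a\cdot c$ is given directly by Definition \ref{DEF077}, and their sum, taken by Definition \ref{DEF076} componentwise (with the floating $a_1b_3,a_3b_1,a_1c_3,a_3c_1$ terms added via $+$ in $*G$), gives the same list of monomials.

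The matching of terms uses: ordinary distributivity within each coordinate product $a_ib_j+a_ic_j=a_i(b_j+c_j)$, which is legitimate because the within-coordinate multiplications reduce to real or real-plus-infinite arithmetic inherited from $\mathbb{R}$ and from Propositions \ref{P109} and \ref{P110} (distributivity in $\Phi\cup\{0\}$ and in $\Phi_{*}^{-1}\cup\{0\}$); associativity and commutativity of addition in $*G$ (Propositions \ref{P088} and \ref{P089}), which let me regroup the twelve monomials in whichever coordinate they land; and closure of the relevant products, so that each bilinear term $a_ib_j$ is guaranteed to be a well-defined element of $*G$ that fits in the addition.

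The main obstacle I expect is the two indeterminate cross terms $a_1b_3$ and $a_3b_1$, which sit outside the coordinate tuple in Definition \ref{DEF077} because $\Phi\cdot\Phi_{*}^{-1}$ is not confined to a single coordinate. I must be careful to verify that the distribution law $a_1(b_3+c_3)=a_1b_3+a_1c_3$ (and similarly $(b_1+c_1)a_3=b_1a_3+c_1a_3$) still holds even though the product can land in any of the three components or split across them. This reduces, via the formula in Definition \ref{DEF077}, to distributivity of products of the form $(\text{infinitesimal})\cdot(\text{real}+\text{infinitesimal}+\text{infinity})$, which in turn follows by expanding the infinity as its reciprocal infinitesimal and invoking Propositions \ref{P109}, \ref{P110}, and real distributivity one coordinate at a time. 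Once this is checked, lining up the twelve resulting monomials on the two sides finishes the proof.
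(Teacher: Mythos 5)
Your proposal follows essentially the same route as the paper's own proof: expand both sides via Definitions \ref{DEF076} and \ref{DEF077} and match the resulting monomials using componentwise distributivity (Propositions \ref{P109}, \ref{P110}, and $\mathbb{R}$) together with commutativity and associativity of addition. Your extra attention to the floating cross terms $a_{1}b_{3}$ and $a_{3}b_{1}$, which the paper expands without comment, is a sensible refinement of the same argument rather than a different approach.
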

\begin{proof}
$a(b+c)$
$= (a_{1},a_{2},a_{3})\cdot(b_{1}+c_{1}, b_{2}+c_{2},b_{3}+c_{3})$
$= (a_{1}(b_{2}+c_{2})+a_{2}(b_{1}+c_{1})+a_{1}(b_{1}+c_{1}),
 a_{2}(b_{2}+c_{2}), 
 a_{3}(b_{3}+c_{3})+a_{3}(b_{2}+c_{2})+a_{2}(b_{3}+c_{3}))$
 $+a_{1}(b_{3}+c_{3}) + a_{3}(b_{1}+c_{1})$
$= (a_{1}b_{2}+a_{1}c_{2}+a_{2}b_{1}+a_{2}c_{1}+a_{1}b_{1}+a_{1}c_{1},
 a_{2}b_{2}+a_{2}c_{2}, 
 a_{3}b_{3}+a_{3}c_{3}+a_{3}b_{2}+a_{3}c_{2}+a_{2}b_{3}+a_{2}c_{3})$
 $+a_{1}b_{3}+a_{1}c_{3} + a_{3}b_{1}+a_{3}c_{1}$

$ab+ac$
$=(a_{1},a_{2},a_{3})(b_{1},b_{2},b_{3}) + (a_{1},a_{2},a_{3})(c_{1},c_{2},c_{3})$ \\
$=[ (a_{1}b_{2} + a_{2}b_{1} + a_{1}b_{1}, a_{2}b_{2}, a_{3}b_{3} + a_{3}b_{2} + a_{2}b_{3}) + a_{1}b_{3} + a_{3}b_{1} ] + [ (a_{1}c_{2} + a_{2}c_{1} + a_{1}c_{1}, a_{2}c_{2}, a_{3}c_{3} + a_{3}c_{2} + a_{2}c_{3}) + a_{1}c_{3} + a_{3}c_{1} ]$ \\
$= (a_{1}b_{2} + a_{2}b_{1} + a_{1}b_{1}+a_{1}c_{2} + a_{2}c_{1} + a_{1}c_{1},
 a_{2}b_{2}+a_{2}c_{2},
 a_{3}b_{3} + a_{3}b_{2} + a_{2}b_{3}+a_{3}c_{3} + a_{3}c_{2} + a_{2}c_{3}) + a_{1}b_{3} + a_{3}b_{1} + a_{1}c_{3} + a_{3}c_{1}$

Consider that the three components are
 distributive, Propositions \ref{P109}
 and \ref{P110}
 and $\mathbb{R}$.
 They are also
 commutative, Propositions \ref{P085}
 and \ref{P082}
 and $\mathbb{R}$.
 Then the expressions are equal.
\end{proof}

\begin{prop}\label{P098} 
$(\Phi \cup \{ 0 \},+)$ has identity $0$
\end{prop}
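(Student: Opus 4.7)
The plan is to verify the two requirements for an additive identity: that $0$ belongs to the carrier set $\Phi\cup\{0\}$, and that $a+0=0+a=a$ for every $a\in\Phi\cup\{0\}$. Membership is immediate from the notation of the set, which explicitly adjoins $0$; the point of adjoining it at all (rather than working with $\Phi$ alone) is Corollary \ref{P043}, which tells us $0\notin\Phi$, so without this adjunction the set would contain no candidate for an identity.

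For the identity property, I would use the unique component representation of Definition \ref{DEF075}. Any $a\in\Phi$ has the form $a=(a_{1},0,0)$ with $a_{1}\in\Phi$, while $0$ has representation $(0,0,0)$. Applying the componentwise addition of Definition \ref{DEF076} yields $a+0=(a_{1}+0,0+0,0+0)=(a_{1},0,0)=a$, and the analogous computation (or appeal to commutativity, Proposition \ref{P082}) gives $0+a=a$. The remaining case $a=0$ is the trivial identity $0+0=0$.

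Closure of the operation is already secured by Proposition \ref{P081}, so the sums produced above genuinely lie in $\Phi\cup\{0\}$ and we are manipulating the correct restricted operation. There is no real obstacle in this proposition; the only conceptual point worth flagging is the role of $0$ as a boundary object between $\Phi$ and the rest of $*G$, which is what forces the identity to be adjoined from outside $\Phi$ rather than produced from within it.
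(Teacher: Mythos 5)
Your proof is correct in its conclusion but takes a genuinely different route from the paper's, and the difference matters. The paper proves this by representing an arbitrary infinitesimal as $\frac{1}{a}$ with $a \in +\Phi^{-1}$, writing $0 = \frac{0}{a}$, and adding fractions: $\frac{1}{a} + \frac{0}{a} = \frac{1+0}{a} = \frac{1}{a}$. This reduces the identity property to the fact $1+0=1$ in the numerator, i.e.\ to arithmetic that is \emph{not} itself infinitesimal arithmetic. You instead invoke the component representation of Definition \ref{DEF075} and the componentwise addition of Definition \ref{DEF076}, writing $a=(a_{1},0,0)$, $0=(0,0,0)$, and computing $(a_{1}+0,\,0,\,0)$. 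The weakness is that the first slot of that vector holds an element of $\Phi$, so the step $a_{1}+0=a_{1}$ is addition of an infinitesimal and zero --- which is precisely the statement you are trying to prove. The componentwise decomposition is a no-op here: it restates the claim rather than reducing it to something already known (contrast Propositions \ref{P084} and \ref{P113}, where the decomposition genuinely isolates the $\Phi_{*}^{-1}$ slot and the work happens elsewhere). Your framing remarks --- that $0\notin\Phi$ by Corollary \ref{P043} so the identity must be adjoined, and that closure comes from Proposition \ref{P081} --- are correct and worth keeping, but to close the argument you should either adopt the paper's reciprocal-and-fraction manipulation or otherwise supply an independent reason that $\delta+0=\delta$ for $\delta\in\Phi$ that does not presuppose the answer.
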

\begin{proof}
 $a \in +\Phi^{-1}$;
 $\frac{1}{a} + 0$
 $= \frac{1}{a} + \frac{0}{a}$
 $= \frac{1 + 0}{a}$
 $=\frac{1}{a}$.
\end{proof}

\begin{prop}\label{P099}  
$(\Phi_{*}^{-1} \cup \{ 0 \},+)$ has identity $0$
\end{prop}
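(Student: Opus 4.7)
The plan is to mirror the structure of the proof of Proposition \ref{P098}, exploiting the unique three-component representation of gossamer numbers (Definition \ref{DEF075}) together with the componentwise addition rule (Definition \ref{DEF076}). Recall that any $a \in \Phi_{*}^{-1}$, by definition, contains no real or infinitesimal part, so its unique representation is $a = (0, 0, a_{3})$ with $a_{3}$ a pure infinitary component. The identity $0 \in *G$ has representation $(0, 0, 0)$, which already lies in $\Phi_{*}^{-1} \cup \{0\}$.

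First I would invoke Definition \ref{DEF076} to compute $a + 0 = (0,0,a_{3}) + (0,0,0) = (0+0,\, 0+0,\, a_{3}+0) = (0,0,a_{3}) = a$. By commutativity of $(\Phi_{*}^{-1} \cup \{0\}, +)$ (Proposition \ref{P085}) the same conclusion holds for $0 + a$. This establishes that $0$ acts as the additive identity on $\Phi_{*}^{-1} \cup \{0\}$.

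As a sanity check, I would also give a reciprocal-style argument paralleling Proposition \ref{P098}: pick $a \in +\Phi_{*}^{-1}$ and observe $a + 0 = a + \frac{0}{1} = a$, relying on the fact that adding a real zero does not introduce a nontrivial real or infinitesimal component, so the sum remains in $\Phi_{*}^{-1}$. The negative case then follows by sign inversion.

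The main (and only) obstacle is essentially bookkeeping: confirming that $0$ genuinely belongs to the set $\Phi_{*}^{-1} \cup \{0\}$ (it does by construction) and that the identity is respected even though $\Phi_{*}^{-1}$ itself excludes reals, so adjoining $\{0\}$ is precisely what makes an identity available. No deeper analytic content is required, because the component-separation design of $*G$ reduces the claim to a statement about the third coordinate, where it is inherited directly from $(\mathbb{R}, +)$.
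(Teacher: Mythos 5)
Your proof is correct but takes a different route from the paper. The paper disposes of this in one line: it simply cites Proposition \ref{P098} and observes that the reciprocal computation there ($\frac{1}{a}+0=\frac{1+0}{a}=\frac{1}{a}$ for $a\in+\Phi^{-1}$) works for any element of $\Phi^{-1}$, hence for the subset $\Phi_{*}^{-1}$. You instead argue componentwise via Definitions \ref{DEF075} and \ref{DEF076}, writing $a=(0,0,a_{3})$ and $0=(0,0,0)$ and computing $a+0=(0,0,a_{3}+0)=a$; this is stylistically consistent with how the paper itself handles $\Phi_{*}^{-1}$ elsewhere (Propositions \ref{P113} and \ref{P084} use exactly this decomposition), so it is a natural choice. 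One caution: the componentwise step $a_{3}+0=a_{3}$ in the third slot is, on its face, precisely the statement being proved, so the pure component argument is mildly circular on its own — note that the paper's proof of Proposition \ref{P100} goes the other way and \emph{uses} \ref{P099} to control the third coordinate. Your supplementary reciprocal-style argument in the second paragraph is therefore not just a sanity check but the step that actually discharges the claim; I would promote it to the main argument (or cite \ref{P098} as the paper does) and let the component bookkeeping be the corollary. Also, the closing remark that the third coordinate is ``inherited directly from $(\mathbb{R},+)$'' is slightly off, since $a_{3}$ is an infinitary component rather than a real one; the inheritance is from the additive structure of $*G$ restricted to $\Phi_{*}^{-1}\cup\{0\}$.
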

\begin{proof}
 By Proposition \ref{P098} since true for
 any $\Phi^{-1}$.
\end{proof}

\begin{prop}\label{P100}  
$(*G,+)$ has identity $(0,0,0)$
\end{prop}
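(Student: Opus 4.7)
The plan is a direct componentwise verification, leveraging the unique three-component representation of Definition \ref{DEF075} and the componentwise addition rule of Definition \ref{DEF076}, together with the identity propositions already established for each individual component.

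First I would take an arbitrary $a \in *G$ and write it uniquely as $a = (a_1, a_2, a_3)$ with $a_1 \in \Phi \cup \{0\}$, $a_2 \in \mathbb{R}$, and $a_3 \in \Phi_{*}^{-1} \cup \{0\}$. Before doing anything else I would check that the candidate $(0,0,0)$ is itself a well-formed element of $*G$: each slot is permitted to be $0$ by the adjunctions in Propositions \ref{P098} and \ref{P099} together with $0 \in \mathbb{R}$. Then I would apply Definition \ref{DEF076} to expand $a + (0,0,0)$ slot by slot and reduce each coordinate: the first using Proposition \ref{P098}, the middle using the real additive identity, and the third using Proposition \ref{P099}. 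The result is $(a_1, a_2, a_3) = a$. Two-sidedness is then immediate from commutativity of $(*G,+)$ (Proposition \ref{P088}), giving $(0,0,0) + a = a + (0,0,0) = a$.

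The main obstacle here is not mathematical but bookkeeping: one must be careful that the unique decomposition of Definition \ref{DEF075} really does send the scalar $0$ to the triple $(0,0,0)$ consistently across the three component systems, so that there is no ambiguity between "the zero of $\Phi \cup \{0\}$" and "the zero of $\mathbb{R}$" and "the zero of $\Phi_{*}^{-1} \cup \{0\}$" when they are assembled. Once this is granted, uniqueness of the identity follows from applying cancellation within each independent component, since any alternative identity $e = (e_1, e_2, e_3)$ would have to satisfy $e_i + 0 = 0$ in each slot, forcing $e = (0,0,0)$.
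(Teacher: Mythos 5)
Your proposal is correct and follows essentially the same route as the paper: both arguments reduce to the componentwise addition of Definition \ref{DEF076} and invoke Propositions \ref{P098} and \ref{P099} together with the real additive identity for the middle slot. The only cosmetic difference is direction --- the paper solves $a+b=a$ for $b$ (which yields uniqueness in the same stroke), whereas you verify the candidate $(0,0,0)$ directly and append uniqueness afterwards.
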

\begin{proof}
 Solve $a+b=a$ for identity $b$.
 $(a_{1}, a_{2}, a_{3}) + (b_{1}, b_{2}, b_{3}) = (a_{1}, a_{2}, a_{3})$ then
 by the components independence,
 $a_{1}+b_{1}=a_{1}$, $b_{1}=0$ by Proposition \ref{P089}, 
 $a_{2}+b_{2} = a_{2}$, $b_{2}=0$ as a real number,
 $a_{3}+b_{3} = a_{3}$,
 $b_{3} = 0$ by Proposition \ref{P099}.
\end{proof}

\begin{prop}\label{P119} 
$(\Phi^{-1} \cup \{ 0 \},+)$ has an inverse
\end{prop}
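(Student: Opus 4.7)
The plan is to exhibit the additive inverse explicitly as negation and verify it lies in the required set. For the identity element $0$, the inverse is $0$ itself, so the only real work is the case $a \in \Phi^{-1}$.

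First I would split into the two sign cases. If $a \in {+}\Phi^{-1}$, then by Definition \ref{DEF051} we have $a > x$ for every $x \in \mathbb{R}$, so $-a < -x$ for every $x \in \mathbb{R}$, i.e. $-a$ is less than any real number, and by Definition \ref{DEF060} this places $-a \in {-}\Phi^{-1} \subset \Phi^{-1}$. The case $a \in {-}\Phi^{-1}$ is symmetric: $-a \in {+}\Phi^{-1} \subset \Phi^{-1}$. Thus negation is a well-defined map from $\Phi^{-1}$ into $\Phi^{-1}$.

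Next I would confirm $a + (-a) = 0$. Writing $a = 1/\delta$ for $\delta \in \Phi$ (Definition \ref{DEF036}), we have $a + (-a) = 1/\delta - 1/\delta = (1-1)/\delta = 0/\delta = 0$, which lies in $\Phi^{-1} \cup \{0\}$. Alternatively, using the component representation (Definition \ref{DEF075}), an element of $\Phi^{-1}$ has the form $(0,0,a_3)$ with $a_3 \ne 0$ in $\Phi_{*}^{-1}$ (or appropriate variants), and its component-wise negation $(0,0,-a_3)$ yields $a + (-a) = (0,0,0)$, the additive identity of Proposition \ref{P100}.

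The main obstacle I anticipate is subtle rather than computational: one must be sure that $-a$ really does stay inside $\Phi^{-1}$ and does not accidentally pick up a real or infinitesimal component under negation. This is handled by the sign-preserving structure of the definitions of ${+}\Phi^{-1}$ and ${-}\Phi^{-1}$, together with the fact that negation acts independently on each of the three components in Definition \ref{DEF075}. A minor subtlety is that the pure infinity part must remain in $\Phi^{-1}$ and not collapse to $\pm\infty$, which is excluded from $\Phi^{-1}$ by Definition \ref{DEF059}; but since $|{-}a| = |a|$ and $a$ was assumed to be a genuine infinity (not $\pm\infty$), this exclusion is preserved.
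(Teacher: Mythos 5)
Your proposal is correct and follows essentially the same route as the paper: the inverse is exhibited as the negation $-x$, and $x + (-x) = 0$ is verified by a distributivity computation (the paper writes $x(1 + -1) = x \cdot 0 = 0$ where you write $(1-1)/\delta = 0/\delta = 0$). The one place you go further is in explicitly checking that $-a$ remains inside $\Phi^{-1}$ via the order Definitions \ref{DEF051} and \ref{DEF060}; the paper instead folds this into its stated assumption that any number in $*G$ admits an integer coefficient multiplier of $\pm 1$ (in effect Proposition \ref{P064}, which the paper itself lists as unproven), so your version is the more self-contained of the two.
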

\begin{proof}
 By assumption that any number in $*G$
 can have an integer coefficient
 multiplier
 of $\pm 1$; $x \in \Phi^{-1}$;
 consider $x + (-x)$
 $= x(1 + -1)$
 $= x \cdot 0$
 $=0$
\end{proof}

\begin{prop}\label{P102} 
$(\Phi_{*}^{-1} \cup \{ 0 \},+)$ has an inverse
\end{prop}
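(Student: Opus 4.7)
The plan is to mirror the argument of Proposition \ref{P119} almost verbatim, but with the extra bookkeeping needed to confirm that the constructed inverse still lies inside the narrower set $\Phi_{*}^{-1}\cup\{0\}$ rather than merely inside $\Phi^{-1}\cup\{0\}$. Recall that the additive identity here is $0$ (Proposition \ref{P099}), so for each $x$ in the set I need a $y$ in the same set with $x+y=0$.

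First I would dispose of the trivial case $x=0$ by taking $y=0$. For the main case, let $x \in \Phi_{*}^{-1}$ and propose $y = -x$ as the candidate inverse. The equality $x+(-x)=0$ is handled by the same factoring argument used in Proposition \ref{P119}: writing $x+(-x) = x(1+(-1)) = x\cdot 0 = 0$, which is licensed by distributivity (Proposition \ref{P097}) once we know the identity exists.

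The real content of the proof is verifying closure, namely that $-x \in \Phi_{*}^{-1}\cup\{0\}$. By Definition \ref{DEF073}, $\Phi_{*}^{-1} = \Phi^{-1} - \mathbb{R} - \Phi$, i.e.\ those infinities whose unique component representation (Definition \ref{DEF075}) is of the form $(0,0,x_{3})$ with $x_{3}\neq 0$. Negation acts componentwise, so $-x$ has representation $(0,0,-x_{3})$; since $-x_{3}$ is still a nonzero element in the pure-infinity slot, $-x \in \Phi_{*}^{-1}$. I would phrase this as an appeal to the coordinate structure of Definition \ref{DEF075}, so that no real or infinitesimal piece can appear when we negate.

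The main obstacle, such as it is, is precisely this closure check: one has to be sure that passing from $x$ to $-x$ does not accidentally introduce a real or infinitesimal component, which would push the result out of $\Phi_{*}^{-1}$ into the broader $\Phi^{-1}$. Once the componentwise representation is invoked, the argument reduces to the observation that $-0=0$ in each coordinate, so the proof is short. I would conclude by combining the two cases to state that every element of $\Phi_{*}^{-1}\cup\{0\}$ has an additive inverse within $\Phi_{*}^{-1}\cup\{0\}$.
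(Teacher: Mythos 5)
Your proposal is correct and follows essentially the same route as the paper, whose entire proof is the one-line remark that Proposition \ref{P119} applies ``for any $\Phi^{-1}$'' and hence to the subset $\Phi_{*}^{-1}$. The closure check you add --- that negating $(0,0,x_{3})$ componentwise cannot introduce a real or infinitesimal part, so $-x$ stays in $\Phi_{*}^{-1}\cup\{0\}$ --- is exactly the detail the paper leaves implicit, and making it explicit is a genuine improvement rather than a deviation.
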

\begin{proof}
 By Proposition \ref{P119} as true for any $\Phi^{-1}$.
\end{proof}

\begin{prop}\label{P101} 
$(\Phi \cup \{ 0 \},+)$ has an inverse
\end{prop}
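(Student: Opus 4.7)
The plan is to mirror the proof of Proposition \ref{P119} almost verbatim, since the structural situation is the same: we have an additive subset containing $0$, and we want to exhibit $-x$ as the inverse of each element and confirm it lies in the set.

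First I would handle the identity: $0$ is its own inverse, so the only work is for $x \in \Phi$. Given such an $x$, I would propose $-x$ as the candidate inverse. To verify $-x \in \Phi$, I would invoke Definition \ref{DEF036}: since $x \in \Phi$, we have $1/x \in \Phi^{-1}$, and then $1/(-x) = -(1/x)$ is still an infinity (a negative infinity if $1/x$ is positive and vice versa), so $-x$ is again an infinitesimal. This is the step where one must be a little careful, because $\Phi$ is defined via the reciprocal map and we need closure of $\Phi^{-1}$ under negation; but that is immediate from Definitions \ref{DEF051} and \ref{DEF060}, which treat positive and negative infinities symmetrically.

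Then I would close the proof with the same algebraic identity used in Proposition \ref{P119}:
\[
x + (-x) = x(1 + (-1)) = x \cdot 0 = 0,
\]
so $-x$ is indeed the additive inverse of $x$ in $\Phi \cup \{0\}$.

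The only genuine obstacle is philosophical rather than computational: we are implicitly using that scalar multiplication by the integer $-1$ is well-defined on $\Phi$ and behaves as expected (in particular, $x \cdot 0 = 0$ for an infinitesimal $x$). As in Proposition \ref{P119}, this is taken as an assumption on the arithmetic of $*G$, and so the proof reduces to a one-line application of that assumption together with the symmetry of $\Phi^{-1}$ under sign change.
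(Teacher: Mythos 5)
Your proof is correct, but it takes a slightly different route from the paper's. The paper proves this proposition the same way it proves most of its $\Phi$ facts: by passing to reciprocals. It takes $a,b \in +\Phi^{-1}$, represents the infinitesimals as $\frac{1}{a}$ and $\frac{1}{b}$, combines them as $\frac{b+a}{ba}$, and then invokes Proposition \ref{P119} to set $b=-a$ so that the numerator vanishes, concluding that the inverse of $\frac{1}{a}$ is $-\frac{1}{a}$. You instead apply the \emph{argument} of Proposition \ref{P119} directly to $x \in \Phi$, writing $x + (-x) = x(1+(-1)) = x\cdot 0 = 0$, and separately check that $-x \in \Phi$ via the sign symmetry of $\Phi^{-1}$. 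Both routes rest on the same underlying assumption (integer coefficients $\pm 1$ act on $*G$ and $x\cdot 0 = 0$), so neither is more rigorous than the other; your version is more direct and has the small advantage of explicitly verifying closure of $\Phi$ under negation, which the paper leaves implicit, while the paper's version is more consistent with its systematic strategy of deriving infinitesimal statements from the corresponding statements about infinities through Definition \ref{DEF036}.
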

\begin{proof}
 $a, b \in +\Phi^{-1}$;
$\frac{1}{a} + \frac{1}{b} = 0$,
 $\frac{b}{ba} + \frac{a}{ba}$
 $= \frac{b+a}{ba}$
 $=0$ by Proposition \ref{P119} when $b=-a$,
 and inverse of
 $\frac{1}{a}$ is $-\frac{1}{a}$.
\end{proof}

\begin{prop}\label{P103} 
$(*G,+)$ has an inverse
\end{prop}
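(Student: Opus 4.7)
The plan is to reduce the problem to the three component groups, each of which has already been shown to possess inverses. Since by Definition \ref{DEF075} every $a \in *G$ is uniquely represented as $a = (a_{1}, a_{2}, a_{3})$ with $a_{1} \in \Phi$, $a_{2} \in \mathbb{R}$, $a_{3} \in \Phi_{*}^{-1}$ (allowing zero components), and since addition is componentwise by Definition \ref{DEF076}, finding an additive inverse amounts to finding an inverse in each slot separately.

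First I would take arbitrary $a = (a_{1}, a_{2}, a_{3}) \in *G$ and propose the candidate inverse $b = (b_{1}, b_{2}, b_{3})$ with $b_{1}$ the inverse of $a_{1}$ in $\Phi \cup \{0\}$, $b_{2} = -a_{2}$ in $\mathbb{R}$, and $b_{3}$ the inverse of $a_{3}$ in $\Phi_{*}^{-1} \cup \{0\}$. The existence of $b_{1}$ is guaranteed by Proposition \ref{P101}, the existence of $b_{2}$ is standard in $\mathbb{R}$, and the existence of $b_{3}$ is guaranteed by Proposition \ref{P102}. Closure of each component under addition (Propositions \ref{P081}, \ref{P084}) ensures $b \in *G$.

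Next I would verify that $a + b$ equals the additive identity $(0,0,0)$ established in Proposition \ref{P100}. By the componentwise addition rule, $a + b = (a_{1}+b_{1}, a_{2}+b_{2}, a_{3}+b_{3}) = (0,0,0)$, where each coordinate vanishes by construction of $b_{i}$. Because the three components are independent (Definition \ref{DEF075}), the equation $a + b = (0,0,0)$ is equivalent to the conjunction of the three componentwise identities, so no cross-component interaction can obstruct the argument.

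The main obstacle, if any, is not algebraic but interpretive: the proof rests entirely on the unique three-component representation of Definition \ref{DEF075} being well-defined, and on the slightly subtle point that $b_{1}$ must lie in $\Phi \cup \{0\}$ rather than leaking into $\mathbb{R}$ or $\Phi_{*}^{-1}$ (and similarly for $b_{3}$). This is already handled by Propositions \ref{P101} and \ref{P102}, which yield the inverse \emph{within} the correct component set, so the verification reduces to citing them. Hence the proof is essentially a one-line appeal: invoke Propositions \ref{P101}, \ref{P102}, and the inverse property of $(\mathbb{R},+)$, and combine componentwise.
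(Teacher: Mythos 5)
Your proposal is correct and follows essentially the same route as the paper: the paper's proof likewise works componentwise under Definition \ref{DEF076}, cites Propositions \ref{P101} and \ref{P102} (together with the real inverse) for each slot, and writes the inverse as $b = (-a_{1}, -a_{2}, -a_{3})$. Your additional remarks about the identity from Proposition \ref{P100} and the components staying in their respective sets only make explicit what the paper leaves implicit.
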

\begin{proof}
Since each component has an inverse, Propositions \ref{P102} and \ref{P101},
 $a+b$
 $=(a_{1},a_{2},a_{3})+(b_{1},b_{2},b_{3}) = (0,0,0)$, 
 $b = (-a_{1}, -a_{2}, -a_{3})$. 
\end{proof}

\begin{prop}\label{P104} 
$(*G,+)$ is an abelian group
\end{prop}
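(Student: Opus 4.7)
The plan is to observe that this proposition is essentially a bookkeeping corollary: every axiom in the definition of an abelian group has already been established as a separate proposition in this subsection. So my approach is simply to assemble those pieces in order, verify that they collectively cover the four group axioms plus commutativity, and cite each one at the relevant step.

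First I would recall the definition: a structure $(S,+)$ is an abelian group when $+$ is a binary operation on $S$ that is closed, associative, admits a two-sided identity, admits inverses for every element, and is commutative. Then I would go down this checklist against $(*G,+)$. Closure is Proposition \ref{P087}, which reduced to component-wise closure via the three independent slots $(\Phi,\mathbb{R},\Phi_{*}^{-1})$ of Definition \ref{DEF075}. Associativity is Proposition \ref{P089}, again by the component-wise reduction using Propositions \ref{P083} and \ref{P086}. The additive identity $(0,0,0)$ is given by Proposition \ref{P100}, and the existence of an additive inverse for an arbitrary $a=(a_{1},a_{2},a_{3})\in *G$, namely $-a=(-a_{1},-a_{2},-a_{3})$, is Proposition \ref{P103}. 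Finally commutativity, the ``abelian'' requirement, is Proposition \ref{P088}.

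With these five references in place, the proof writes itself: $(*G,+)$ satisfies all the group axioms and in addition is commutative, so it is an abelian group. No new calculation is required.

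The only genuine obstacle is conceptual rather than technical, and it is inherited from the earlier propositions: the whole argument rests on the unique three-component representation of Definition \ref{DEF075} being well defined, so that closure, associativity, identity and inverse for $*G$ really do decompose into the corresponding statements for $\Phi\cup\{0\}$, $\mathbb{R}$ and $\Phi_{*}^{-1}\cup\{0\}$ acting independently. Since those upstream propositions are already taken as established at this point of the paper, the proof of Proposition \ref{P104} itself amounts to two or three lines of citation.
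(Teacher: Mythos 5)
Your proposal is correct and matches the paper's own proof exactly: the paper likewise assembles Propositions \ref{P087} (closure), \ref{P089} (associativity), \ref{P100} (identity), \ref{P103} (inverse) and \ref{P088} (commutativity) and concludes. Nothing further is needed.
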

\begin{proof}
 Group properties: 
 closure Proposition \ref{P087}, 
 associativity Proposition \ref{P089},
 identity Proposition \ref{P100},
 inverse Proposition \ref{P103}.
 Abelian group by Proposition \ref{P088}.
\end{proof}

\begin{prop}\label{P105} 
$(*G \backslash \{ 0 \}, \cdot)$ has an identity $1$
\end{prop}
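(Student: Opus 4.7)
The plan is to show that the ordinary real number $1$, viewed inside $*G$ via the component representation as the triple $(0,1,0)$, acts as the multiplicative identity on every nonzero element. This is natural because $1 \in \mathbb{R} \subset *G$ and $1 \neq 0$, so $1 \in *G\backslash\{0\}$; the question is entirely computational given Definition \ref{DEF077}.

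First I would fix an arbitrary $a = (a_1, a_2, a_3) \in *G \backslash \{0\}$ with $a_1 \in \Phi$, $a_2 \in \mathbb{R}$, $a_3 \in \Phi_{*}^{-1}$, and set $e = (0,1,0)$. Then I would substitute $b_1 = 0$, $b_2 = 1$, $b_3 = 0$ into the formula
\[
a \cdot b = (a_{1}b_{2} + a_{2}b_{1} + a_{1}b_{1},\; a_{2}b_{2},\; a_{3}b_{3} + a_{3}b_{2} + a_{2}b_{3}) + a_{1}b_{3} + a_{3}b_{1}.
\]
All cross terms collapse: the infinitesimal coordinate reduces to $a_1 \cdot 1 = a_1$, the real coordinate to $a_2 \cdot 1 = a_2$, and the infinitary coordinate to $a_3 \cdot 1 = a_3$, while the two leftover scalar terms $a_1 b_3$ and $a_3 b_1$ both vanish. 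Hence $a \cdot e = (a_1, a_2, a_3) = a$. By commutativity of $(*G\backslash\{0\},\cdot)$ (Proposition \ref{P095}) one also obtains $e \cdot a = a$, so $e$ is a two-sided identity.

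Finally I would note uniqueness by the standard one-line argument: if $e'$ is another identity then $e = e \cdot e' = e'$. The only mild subtlety is confirming that $e = (0,1,0)$ really does lie in the required type structure, i.e., that the real component alone, with zero infinitesimal and zero infinitary parts, is a legitimate element of $*G$; this is granted by Definition \ref{DEF075} together with the embedding $\mathbb{R} \hookrightarrow *G$. I do not anticipate a genuine obstacle: everything reduces to substitution into Definition \ref{DEF077}, with the component independence of the triples guaranteeing that the ``$1$'' in the real slot touches each slot separately.
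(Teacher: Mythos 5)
Your proposal is correct and is essentially the paper's own proof: both substitute $b=(0,1,0)$ directly into the multiplication formula of Definition \ref{DEF077} and observe that every component reduces to the corresponding component of $a$ while the leftover terms $a_{1}b_{3}$ and $a_{3}b_{1}$ vanish. The extra remarks on two-sidedness via Proposition \ref{P095} and on uniqueness of the identity are harmless additions the paper leaves implicit.
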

\begin{proof}
$(a_{1}, a_{2}, a_{3}) \cdot (0,1,0)$
 $= ( a_{1} \cdot 1 + a_{2} \cdot 0 + a_{1} \cdot 1, a_{2} \cdot 1,
 a_{3} \cdot 0 + a_{3} \cdot 1 + a_{2} \cdot 0) + a_{1} \cdot 0  + a_{3} \cdot 0$
 $= (a_{1} + 0 + 0, a_{2}, 0 + a_{3} + 0) + 0 + 0$
 $= (a_{1}, a_{2}, a_{3})$
\end{proof}

\begin{prop}\label{P106} 
$(*G \backslash \{ 0 \}, \cdot)$ has inverse 
\end{prop}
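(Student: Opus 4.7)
The plan is to construct an inverse by case analysis on the seven possible forms of a non-zero element $a=(a_1,a_2,a_3)\in *G$ listed after Definition \ref{DEF075}. For each form I exhibit a candidate $b$ and verify $a\cdot b=(0,1,0)$ (the identity of Proposition \ref{P105}) via the multiplication formula of Definition \ref{DEF077}.

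First I would dispatch the three pure cases. For $a=(0,a_2,0)\in\mathbb{R}\setminus\{0\}$ the inverse is $b=(0,1/a_2,0)$ by ordinary real arithmetic. For $a=(a_1,0,0)\in\Phi\setminus\{0\}$, Definition \ref{DEF036} supplies $1/a_1\in\Phi^{-1}$; I would argue that this reciprocal, having no real or infinitesimal addend, sits in $\Phi_*^{-1}$, giving $b=(0,0,1/a_1)$. The loose term $a_1b_3+a_3b_1$ then collapses to $a_1\cdot(1/a_1)=1$, which is exactly the real component of the identity. Symmetrically, for $a=(0,0,a_3)\in\Phi_*^{-1}$ one takes $b=(1/a_3,0,0)$ and the same loose-term mechanism produces the required $1$ in the real slot.

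For the four mixed forms, the idea is to factor out a dominant component and expand as a geometric series. When $a$ is finite with $a_2\neq 0$, write $a=a_2\bigl(1+a_1/a_2\bigr)$ and note $a_1/a_2\in\Phi$ by Proposition \ref{P025}; then $b=a_2^{-1}\sum_{k\ge 0}(-a_1/a_2)^k$ lives in $\mathbb{R}+\Phi$ by closure of $\Phi$ under addition and multiplication (Propositions \ref{P081}, \ref{P090}), and the geometric series telescopes against $a$ to give $1$. When the infinity slot $a_3$ is non-zero, factor $a=a_3\bigl(1+(a_1+a_2)/a_3\bigr)$; since division by an infinity sends the finite parts into $\Phi$, the same expansion yields $b\in\Phi$ with $a\cdot b=1$.

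The main obstacle will be the loose term $a_1b_3+a_3b_1$ in Definition \ref{DEF077}, which is not a priori assigned to any component of the product tuple. This term must be tracked by hand in each case: in the pure infinitesimal and pure infinity cases it delivers the $1$ needed in the real slot, while in the fully mixed form $\Phi+\mathbb{R}+\Phi_*^{-1}$ the geometric-series candidate produces cross terms of both indeterminate and finite-times-finite type that must resum to cancel the spurious infinitesimal and infinity contributions and leave exactly $1$ in the real slot. Verifying this cancellation, which rests on distributivity (Proposition \ref{P097}) and the closure propositions already invoked, will be the technical heart of the proof. Uniqueness of the inverse then follows from commutativity (Proposition \ref{P095}) by the usual cancellation argument.
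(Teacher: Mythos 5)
Your route is genuinely different from the paper's: the paper fixes the target $(a_{1},a_{2},a_{3})\cdot(b_{1},b_{2},b_{3})=(0,1,0)$ and treats it as $3$ linear equations in the unknown components of $b$, splitting into $49$ sub-cases according to the possible types of the indeterminate cross terms $a_{1}b_{3}$ and $a_{3}b_{1}$, and then asserts solvability; you instead do case analysis on the seven forms of $a$ and try to exhibit the inverse explicitly. The constructive idea is attractive, but two of your steps fail as stated. First, the claim that the reciprocal of a pure infinitesimal ``sits in $\Phi_{*}^{-1}$'' is false: take $a_{1}=\frac{1}{n+1}|_{n=\infty}$, whose reciprocal is $n+1$, an infinity with a nonzero real addend, so its unique representation is $(0,1,n)$, not $(0,0,\cdot)$. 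This is exactly the asymmetry the paper points out (an infinitesimal can have no real or infinity part, but an infinity can have both), so the pure-infinity case works the way you describe while the pure-infinitesimal case does not. With the corrected tuple $b=(0,1,n)$ the product is $(\frac{1}{n+1},0,0)+\frac{n}{n+1}$, and the loose term contributes $1-\frac{1}{n+1}$, i.e.\ it must be split across the real and infinitesimal slots so that its infinitesimal part cancels the first-slot contribution; it does not simply ``deliver the $1$ in the real slot.''

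Second, the geometric-series construction $b=a_{2}^{-1}\sum_{k\ge 0}(-a_{1}/a_{2})^{k}$ invokes an infinite sum in $*G$. The field axioms you are allowed to use here (closure, associativity, distributivity) license only finitely many operations, and the paper explicitly cautions in Part 5 that the simplification $a+b=a$ is ``only good for a finite sum of infinitesimals or infinities''; summing infinitely many infinitesimals can change orders of magnitude, and no convergence theory for series in $*G$ has been established at this point. Moreover, identifying $\sum_{k\ge 0}(-\delta)^{k}$ with $\frac{1}{1+\delta}$ presupposes the very inverse you are constructing. To repair the mixed cases you would need either to follow the paper and solve the three component equations directly for $(b_{1},b_{2},b_{3})$, or to fall back on the underlying construction of $*G$ from ratios of (infinite) integers, where the reciprocal of a nonzero element exists by construction, and then verify its component decomposition. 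As it stands, the proposal has genuine gaps in both the pure-infinitesimal and the mixed cases.
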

\begin{proof}
 If we consider the $7$ forms of $a_{1} b_{3}$ and the $7$ forms of $a_{3}b_{1}$ there
 are $49$ combinations, leading to $49$ sets of equations
 of the form $(a_{1}, a_{2}, a_{3}) \cdot (b_{1}, b_{2}, b_{3}) = (0,1,0)$.  
 Since this is $3$ linear equations with $3$ unknowns, providing
 the equations do not contradict, there is always a unique solution.
 
Contradictory solutions are a consequence of the different cases. For example, $a_{2} \neq 0$
 for the following set of equations.
 $a_{1}a_{3} \in \Phi$; $a_{3}b_{1} \in \Phi$;
 $a_{1}b_{2} + a_{2}b_{1} + a_{1}b_{1} + a_{1}b_{3} + a_{3}b_{1} = 0$;
 $a_{2}b_{2}=1$; $a_{3}b_{3} + a_{3}b_{2} + a_{2}b_{3}=0$;
\end{proof}
 
\begin{prop}\label{P107} 
$(*G \backslash \{ 0 \}, \cdot)$ is a group
\end{prop}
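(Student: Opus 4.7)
The plan is to verify the four group axioms by collecting the propositions already established in this subsection, much as Proposition \ref{P104} assembled the abelian-group statement for $(*G,+)$. Specifically, I would invoke: closure from Proposition \ref{P094} (augmented by a short argument that we stay in $*G\backslash\{0\}$), associativity from Proposition \ref{P096}, existence of the identity $1=(0,1,0)$ from Proposition \ref{P105}, and existence of inverses from Proposition \ref{P106}.

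First I would address the one subtle point: Proposition \ref{P094} establishes closure in $*G$, not in $*G\backslash\{0\}$, so I need to show that the product of two non-zero gossamer numbers is non-zero. For a purely real factor this is just the field property of $\mathbb{R}$. For the mixed cases, Propositions \ref{P092} and \ref{P093} give $\Phi\cdot\Phi_*^{-1}\in *G\backslash\{0\}$ and $\Phi_*^{-1}\cdot\Phi\in *G\backslash\{0\}$, while Propositions \ref{P090}, \ref{P111}, \ref{P025} and \ref{P064} rule out zero in the pure cases. Expanding a product $(a_1,a_2,a_3)\cdot(b_1,b_2,b_3)$ via Definition \ref{DEF077} and combining these non-vanishing guarantees on each nonzero component yields closure within $*G\backslash\{0\}$.

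Then the remaining axioms are cited directly: associativity is Proposition \ref{P096}; the identity $1=(0,1,0)$ is Proposition \ref{P105}; the inverse for each non-zero $a$ is Proposition \ref{P106}. Concluding, $(*G\backslash\{0\},\cdot)$ satisfies the four group axioms and is therefore a group.

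The main obstacle, honestly, is the closure-in-$*G\backslash\{0\}$ step, because the product formula in Definition \ref{DEF077} mixes components and the only genuinely indeterminate terms are $a_1 b_3$ and $a_3 b_1$ (the familiar $0\cdot\infty$ form flagged in the remark after Definition \ref{DEF077}). One must argue that even when these indeterminate pieces appear, they cannot conspire to force the whole product to $(0,0,0)$ unless one of the factors was already zero; this is precisely the content of Propositions \ref{P092}--\ref{P093} together with the non-vanishing of the determined-component contributions, and once that is noted the rest of the proof is a short bookkeeping exercise.
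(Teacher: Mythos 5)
Your proposal takes essentially the same approach as the paper, whose proof simply cites closure (Proposition \ref{P094}), associativity (Proposition \ref{P096}), identity (Proposition \ref{P105}), and inverse (Proposition \ref{P106}). Your extra argument that a product of non-zero elements stays non-zero addresses a point the paper's one-line proof glosses over (Proposition \ref{P094} only asserts closure in $*G$, not in $*G\backslash\{0\}$), so it is a refinement of the same route rather than a different one.
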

\begin{proof}
 Properties:
 closure Proposition \ref{P094},
 associativity Proposition \ref{P096},
 identity Proposition \ref{P105},
 inverse Proposition \ref{P106}.
\end{proof}

\begin{prop}\label{P108} 
 $*G$ is a field
\end{prop}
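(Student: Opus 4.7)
The plan is to assemble the field axioms from the propositions already proved in this subsection, so the proof will be almost entirely bookkeeping: I would verify that every defining property of a field has a corresponding labelled proposition and then cite them in one block.

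First, I would argue that $(*G,+)$ is an abelian group by invoking Proposition \ref{P104} directly, which itself packages closure (Proposition \ref{P087}), associativity (Proposition \ref{P089}), existence of the identity $(0,0,0)$ (Proposition \ref{P100}), existence of additive inverses (Proposition \ref{P103}), and commutativity (Proposition \ref{P088}). Next, for the multiplicative structure on $*G\setminus\{0\}$, I would cite Proposition \ref{P107} for it being a group (closure \ref{P094}, associativity \ref{P096}, identity $1=(0,1,0)$ \ref{P105}, inverse \ref{P106}), and then cite Proposition \ref{P095} to upgrade it to an abelian group. Finally, distributivity is given directly by Proposition \ref{P097}.

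The one subtlety worth flagging explicitly, rather than burying in a citation chain, is that the multiplicative identity $1=(0,1,0)$ really does lie in $*G\setminus\{0\}$ (so the multiplicative group is nonempty and distinct from the additive identity $(0,0,0)$); this is immediate from Definition \ref{DEF075} and Proposition \ref{P105}. I would also remark, for cleanliness, that the distributive law stated in Proposition \ref{P097} on one side combined with multiplicative commutativity (Proposition \ref{P095}) yields two-sided distributivity, which is what the field axioms technically require.

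The main obstacle is not in the proof of this proposition itself but in the substructure: Proposition \ref{P108} is only as strong as the propositions it cites, and several of those (notably \ref{P092}, \ref{P093}, and various commutativity claims) are marked in the unproven list at the beginning of Section \ref{S0106}. Thus the honest write-up is a two-line citation proof, with the caveat that the result is conditional on the marked propositions. I would therefore conclude with a short remark acknowledging this dependency, mirroring the authors' convention with the $\#$ symbol, so the reader is not misled into thinking the field property has been established unconditionally.

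\begin{proof}
By Proposition \ref{P104}, $(*G,+)$ is an abelian group with identity $(0,0,0)$. By Proposition \ref{P107}, $(*G\setminus\{0\},\cdot)$ is a group, and by Proposition \ref{P095} it is moreover abelian, with identity $1=(0,1,0)\in *G\setminus\{0\}$. Proposition \ref{P097} gives left distributivity $a\cdot(b+c)=a\cdot b+a\cdot c$; combining this with commutativity of multiplication yields right distributivity as well. All field axioms are therefore satisfied, so $*G$ is a field, conditional on the propositions marked $\#$ in Section \ref{S0106}.
\end{proof}
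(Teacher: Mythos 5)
Your proof is correct and follows essentially the same route as the paper's, which simply cites Proposition \ref{P104} for the additive group, Proposition \ref{P107} for the multiplicative group, and Proposition \ref{P097} for distributivity. Your additional remarks (that the identity $(0,1,0)$ is nonzero, that two-sided distributivity follows from commutativity, and that the result remains conditional on the propositions marked $\#$) are sound refinements but do not change the argument.
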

\begin{proof}
 Properties: 
 Proposition \ref{P104}, $(*G,+)$ is a group.
 Proposition \ref{P107}, $(*G \backslash \{ 0 \}, \cdot)$ is a group.
 Proposition \ref{P097}, $(*G,+,\dot)$ is distributive.
\end{proof}

\begin{prop}\label{P073}
$*G$ is a total order field \cite{orderedfield}.
 $a, b, c \in *G$;
 If $a \leq b$ then $a+c \leq b+c$.
 If $0 \leq a$ and $0 \leq b$ then $0 \leq a \times b$. 
\end{prop}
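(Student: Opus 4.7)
The plan is to work throughout with the unique three-component representation from Definition \ref{DEF075}, writing each $a \in *G$ as $a = (a_1, a_2, a_3)$ with $a_1 \in \Phi$, $a_2 \in \mathbb{R}$, $a_3 \in \Phi_*^{-1}$. The first step is to make precise what $\leq$ means on $*G$: building on the partition $0 < \Phi^{+} < \mathbb{R}^{+}\cup\Phi < +\Phi^{-1} < \infty$ from Section \ref{S0104}, I would declare $a > 0$ iff the highest-order non-zero component of $a$ is positive, i.e.\ $a_3 > 0$, or $a_3 = 0$ and $a_2 > 0$, or $a_3 = 0 = a_2$ and $a_1 > 0$. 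Then set $a \leq b$ iff $b - a \geq 0$, where $-a = (-a_1, -a_2, -a_3)$ comes from the additive inverse (Proposition \ref{P103}). The fact that this yields a \emph{total} order follows from trichotomy applied component-by-component in the prescribed lexicographic priority, together with closure of each $+\Phi$, $+\Phi_*^{-1}$ under addition (Propositions \ref{P060}, \ref{P113}).

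The additive monotonicity is the easy half. Given $a \leq b$, by definition $b - a \geq 0$. Using associativity and commutativity of $(*G,+)$ (Propositions \ref{P088}, \ref{P089}), we compute $(b+c) - (a+c) = b - a$, which is again non-negative, giving $a + c \leq b + c$. No case analysis on $c$ is needed here.

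The multiplicative half is the real work. Given $0 \leq a$ and $0 \leq b$, I would split into cases based on which components of $a$ and $b$ vanish; in each case expand $a \cdot b$ via Definition \ref{DEF077} and check that the result is $\geq 0$ under the ordering just defined. When at least one of $a_3, b_3$ is non-zero and positive, the dominant term of $a \cdot b$ is $a_3 b_3$ (if both infinities are present) or $a_3 b_2$, $a_2 b_3$, or $a_3 b_1$, $a_1 b_3$, and each such product of positives is positive in the appropriate component by Propositions \ref{P115}, \ref{P064}, and the definitions; the lower-order cross-terms cannot flip the sign because they lie in strictly lower strata of the partition. When $a_3 = b_3 = 0$ the product reduces to $(a_1 b_2 + a_2 b_1 + a_1 b_1, a_2 b_2, 0)$, and since each of $a_1, a_2, b_1, b_2 \geq 0$ every summand is non-negative by Propositions \ref{P025} and $\mathbb{R}^{+} \cdot \mathbb{R}^{+} \subseteq \mathbb{R}^{+}$.

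The main obstacle is the indeterminate cross-term $a_1 b_3 + a_3 b_1$, flagged in the remark after Definition \ref{DEF077}, which can land in $\Phi$, $\mathbb{R}$, or $\Phi_*^{-1}$ depending on the relative orders of the infinitesimals and infinities involved. To handle it I would argue that \emph{whichever stratum} it lands in, its sign is non-negative because it is a sum of products of non-negative factors, and then appeal to the lexicographic priority to conclude that any contribution in a lower stratum than the already-positive dominant component is harmless, while a contribution in the same stratum as the dominant term only adds positively. The cleanest packaging is probably to prove a lemma beforehand that $a, b \geq 0 \Rightarrow ab \geq 0$ for the "pure" cases $a, b \in +\Phi$, $a \in +\Phi, b \in \mathbb{R}^+$, $a \in +\Phi, b \in +\Phi_*^{-1}$, etc., and then assemble the general case by bilinearity via the distributive law (Proposition \ref{P097}).
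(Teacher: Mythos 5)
First, note that the paper offers no proof of Proposition \ref{P073} at all: it is listed in Section \ref{S0106} among the propositions marked with \# as ``unproven propositions'' whose truth the other propositions assume, and the statement in the body is followed by no proof. So your attempt is not paralleling a hidden argument of the authors --- you are supplying something they deliberately left open --- and the outline you choose (a lexicographic order on the component representation of Definition \ref{DEF075}, translation invariance from $(b+c)-(a+c)=b-a$, and a case analysis of the product via Definition \ref{DEF077}) is a reasonable way to go about it.

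There is, however, a genuine gap in the multiplicative half. From $a \geq 0$ under your lexicographic order it does \emph{not} follow that the individual components $a_1, a_2, a_3$ are non-negative: $a = n - 5 - \tfrac{1}{n}|_{n=\infty}$ is a positive infinity, hence $a > 0$, yet $a_1 < 0$ and $a_2 < 0$. Consequently your claim that the indeterminate cross-term $a_1 b_3 + a_3 b_1$ ``is a sum of products of non-negative factors'' is false, and the proposed final packaging --- prove the pure cases and ``assemble the general case by bilinearity'' --- collapses for the same reason, since individual summands $a_i b_j$ can be negative. What actually carries the argument is the other remark you make in passing, and it needs to be promoted to the main engine: if $i^*, j^*$ index the highest non-vanishing components of $a$ and $b$, then $a_{i^*} \geq 0$ and $b_{j^*} \geq 0$ (these \emph{are} forced by $a, b \geq 0$), the leading product $a_{i^*} b_{j^*}$ is non-negative, and every other term satisfies $a_i b_j \prec a_{i^*} b_{j^*}$ because $\frac{a_i b_j}{a_{i^*} b_{j^*}} = \frac{a_i}{a_{i^*}} \cdot \frac{b_j}{b_{j^*}} \in \Phi$. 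To finish you then need the lemma that $u > 0$ and $u \succ v$ imply $u + v > 0$, which is nowhere in the paper's proven inventory and must itself be derived from your definition of the order; and you still owe an argument for totality (trichotomy, antisymmetry, transitivity), which the sketch asserts ``follows from trichotomy component-by-component'' without establishing that every element of $\Phi_{*}^{-1}$ has a well-defined sign. Until those pieces are supplied this remains a plan rather than a proof.
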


Scalar multiplication by a real number except $0$ is closed.
\bigskip
\begin{prop}\label{P025} 
  $\mathbb{R}\backslash\{0\} \cdot \Phi \in \Phi$
\end{prop}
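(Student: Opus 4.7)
The strategy is to unpack the definition of infinitesimal via its reciprocal and then use the fact that a positive real multiple of an infinity is still bigger than every positive real.

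\textbf{Plan.} Let $r \in \mathbb{R}\backslash\{0\}$ and $\delta \in \Phi$. I want to show $r\delta \in \Phi$. By Definition \ref{DEF036}, it suffices to show that $\frac{1}{r\delta} \in \Phi^{-1}$, i.e., that $\left|\frac{1}{r\delta}\right|$ is larger than every real number (Definitions \ref{DEF051}, \ref{DEF060}), while of course excluding $\pm\infty$ (Definition \ref{DEF059}).

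First I would dispose of the sign: write $r = \varepsilon_1 |r|$ and $\delta = \varepsilon_2 |\delta|$ with $\varepsilon_1,\varepsilon_2 \in \{+1,-1\}$, so that $r\delta = \varepsilon_1\varepsilon_2\, |r||\delta|$. Since belonging to $\Phi$ is closed under negation (multiplication by $\pm 1$ changes only the sign component), it suffices to prove the positive case: if $r > 0$ in $\mathbb{R}$ and $\delta \in {+\Phi}$, then $r\delta \in {+\Phi}$.

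Next, the core step. Since $\delta \in {+\Phi}$, by definition $\tfrac{1}{\delta} \in {+\Phi^{-1}}$, and so by Definition \ref{DEF051}, $\tfrac{1}{\delta} > x$ for every $x \in \mathbb{R}^+$. Given an arbitrary $M \in \mathbb{R}^+$, choose $x = rM \in \mathbb{R}^+$; then $\tfrac{1}{\delta} > rM$, which rearranges (using $r>0$ and $\delta>0$, where multiplication by a positive real preserves the inequality, essentially Proposition \ref{P053} applied in $\mathbb{R}$) to $\tfrac{1}{r\delta} > M$. Since $M$ was arbitrary in $\mathbb{R}^+$, $\tfrac{1}{r\delta}$ exceeds every positive real, so $\tfrac{1}{r\delta} \in {+\Phi^{-1}}$, giving $r\delta \in {+\Phi}$ as desired. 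It also needs to be checked that $r\delta \ne 0$ and that $\tfrac{1}{r\delta}$ is not the formal symbol $\infty$; both follow because $r \ne 0$ and $\delta \ne 0$ (an infinitesimal is not zero by Corollary \ref{P043}), so the product is a genuine infinitesimal and its reciprocal a genuine infinity.

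\textbf{Main obstacle.} The only delicate point is that the manipulations rely on positivity-preserving multiplication and on the fact that a positive real multiple of a positive real is positive — standard in $\mathbb{R}$, but here I am combining a real factor with a gossamer factor, so I must quote that scaling a one-sided inequality by a positive real does not flip it (Proposition \ref{P053}) and that the components of $r\delta$ do not accidentally introduce a real or infinity part. Once the sign reduction is made, however, the argument collapses to the single implication ``$\tfrac{1}{\delta}$ beats every real $\Rightarrow$ $\tfrac{1}{r\delta}$ beats every real,'' which is immediate.
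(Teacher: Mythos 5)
The paper never actually proves Proposition \ref{P025}: it appears in the Field Properties list flagged with \# (``Unproven propositions. Other propositions assume their truth.''), and the statement is given later with no proof environment at all. So there is no argument of the paper's to compare yours against — you have filled a gap the authors left open. Your argument is sound within the paper's framework and is in the same spirit as the proof the paper does give for Proposition \ref{P018}: reduce to the positive case, pass to reciprocals so that Definition \ref{DEF036} turns the claim into one about infinities, and then use Definition \ref{DEF051} (an infinity exceeds every real) against an arbitrary real witness $rM$. The checks that $r\delta \neq 0$ and that $\tfrac{1}{r\delta}$ is a genuine element of $\Phi^{-1}$ rather than the excluded symbol $\infty$ of Definition \ref{DEF059} are exactly the right fine print, and the sign reduction is harmless since negation clearly preserves membership in $\Phi^{-1}$ under Definitions \ref{DEF051} and \ref{DEF060}.

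The one caveat worth flagging: your rearrangement $\tfrac{1}{\delta} > rM \Rightarrow \tfrac{1}{r\delta} > M$ is an inequality in $*G$ (the left side is an infinity, not a real), and you justify it by Proposition \ref{P053} — which is itself on the paper's unproven (\#) list. Strictly speaking you have reduced one assumed proposition to another, which the paper's conventions permit but which weakens the claim that you have proved \ref{P025} from first principles. You can avoid the dependence entirely: rather than scaling the inequality, simply note that for every $M \in \mathbb{R}^{+}$ the real number $rM$ is exceeded by $\tfrac{1}{\delta}$, and then argue that $\tfrac{1}{r\delta}$ exceeds $M$ by comparing $\tfrac{1}{r\delta}$ directly with $M$ via the substitution $x = rM$ inside Definition \ref{DEF051}; this keeps all the quantification over reals, where order-preservation under positive scaling is standard, and uses the gossamer order only once.
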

\begin{prop}\label{P064} 
  $\mathbb{R}\backslash\{0\} \cdot \Phi^{-1} \in \Phi^{-1}$
\end{prop}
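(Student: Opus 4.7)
The plan is to reduce this statement to its already-stated infinitesimal analogue, Proposition \ref{P025}, by passing through reciprocals. Given $r\in\mathbb{R}\backslash\{0\}$ and $x\in\Phi^{-1}$, the goal is to show $rx\in\Phi^{-1}$, which by Definition \ref{DEF036} is equivalent to showing $\frac{1}{rx}\in\Phi$.

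First I would factor $\frac{1}{rx}=\frac{1}{r}\cdot\frac{1}{x}$. The factor $\frac{1}{r}$ is an ordinary nonzero real, and by Definition \ref{DEF036} the factor $\frac{1}{x}$ is an infinitesimal. One application of Proposition \ref{P025} then places the product in $\Phi$, and reading Definition \ref{DEF036} back the other way yields $rx\in\Phi^{-1}$. This is essentially a one-line argument once the infinitesimal case has been done.

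A more pedestrian alternative would argue directly from the order characterisations (Definitions \ref{DEF051} and \ref{DEF060}): if $|x|$ exceeds every positive real and $r\neq 0$, then for any target real $R$ one has $|rx|=|r|\cdot|x|>|r|\cdot(|R|/|r|)=|R|$, so $|rx|$ also dominates every real, and the sign of $rx$ is handled by splitting the four cases for the signs of $r$ and $x$. I would prefer the reciprocal route since it leverages prior machinery and avoids case analysis.

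The one delicate point I anticipate is that the reciprocal rewriting $\frac{1}{rx}=\frac{1}{r}\cdot\frac{1}{x}$ must be available here without invoking the full field structure of $*G$ (Proposition \ref{P108}), which is proved later in the paper. This is not a genuine obstacle: the reciprocal of the nonzero real $r$ is the usual one, the reciprocal of $x\in\Phi^{-1}$ is prescribed by Definition \ref{DEF036}, and the identity only requires associativity and commutativity of multiplication for these particular components, which the component-wise arithmetic of Definition \ref{DEF077} already supports independently of the subsequent field-theoretic propositions.
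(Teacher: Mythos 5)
Your argument is correct, but there is nothing in the paper to compare it against: Proposition \ref{P064} is one of the propositions flagged with \# in the list at the start of Section \ref{S0106}, meaning the authors leave it unproven and simply assume its truth (it is then cited, together with Proposition \ref{P025}, to justify Proposition \ref{P021} and the multiplication table preceding Definition \ref{DEF077}). So your proposal supplies a proof where the paper supplies none. One caveat on your preferred route: Proposition \ref{P025}, to which you reduce, carries the same \# flag and is also unproven in the paper, so the reciprocal argument does not fully discharge the claim within the paper's own logical structure --- it only shows the two unproven statements are equivalent under the involution $x \mapsto \frac{1}{x}$ of Definition \ref{DEF036}. Your ``pedestrian alternative'' is therefore the more valuable of the two: arguing directly from Definitions \ref{DEF051}, \ref{DEF060} and \ref{DEF059} that $|rx| = |r|\,|x| > |r|\cdot(R/|r|) = R$ for every positive real $R$ is self-contained, matches the style of the paper's own order-based proofs (compare Proposition \ref{P018} and Proposition \ref{P111}), and proves \ref{P025} by the same token once you invert. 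I would promote that version to the main argument and relegate the reciprocal reduction to a remark.
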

\begin{prop}\label{P021}
  $\mathbb{R}\backslash\{0\} \cdot \mathbb{R}_{\infty} \in \mathbb{R}_{\infty}$
\end{prop}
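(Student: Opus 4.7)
The plan is straightforward: reduce to the two preceding scalar-multiplication propositions via a case split on which component of $\mathbb{R}_{\infty}$ the operand lies in. By the definition immediately following Corollary \ref{P043}, we have $\mathbb{R}_{\infty} = \Phi \cup \Phi^{-1}$, so any $x \in \mathbb{R}_{\infty}$ is either an infinitesimal or an infinity, and these two cases are exhaustive.

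First I would fix $r \in \mathbb{R}\backslash\{0\}$ and $x \in \mathbb{R}_{\infty}$ and invoke the partition $\mathbb{R}_{\infty} = \Phi \cup \Phi^{-1}$. In the first case, $x \in \Phi$, Proposition \ref{P025} gives $r \cdot x \in \Phi$, which lies in $\mathbb{R}_{\infty}$ by the union. In the second case, $x \in \Phi^{-1}$, Proposition \ref{P064} gives $r \cdot x \in \Phi^{-1} \subseteq \mathbb{R}_{\infty}$. In either case the product lands inside $\mathbb{R}_{\infty}$, as required.

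There is no real obstacle here, since both building-block propositions are already stated (and marked unproven with the \textbf{\#} convention, but permitted for downstream use). The only thing to be careful about is not accidentally allowing $r=0$, since $0 \cdot x = 0$ is not in $\mathbb{R}_{\infty}$ (neither an infinitesimal by Corollary \ref{P043}, nor an infinity); the hypothesis $r \in \mathbb{R}\backslash\{0\}$ is exactly what rules this out, and this is why the preceding two scalar propositions also exclude zero. The proof is therefore a two-line case analysis, and I expect no technical difficulty beyond citing the correct references.
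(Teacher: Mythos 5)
Your proposal is correct and matches the paper's own proof, which likewise disposes of the statement by splitting $\mathbb{R}_{\infty} = \Phi \cup \Phi^{-1}$ into the infinitesimal and infinity cases and citing Propositions \ref{P025} and \ref{P064} respectively. Your additional remark about why $r = 0$ must be excluded is a sensible elaboration but does not change the argument.
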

\begin{proof} $\mathbb{R}_{\infty}$ is the above infinitesimal and infinity cases,
 Propositions \ref{P025} and \ref{P064}.
\end{proof}

\begin{prop}\label{P053}
 $\theta \in *G \backslash\{0\}$; if $\theta$ is positive, $z \in \{ \lt, \leq, ==, \gt, \geq \}$.
 \[ f \; (z) \; g \Leftrightarrow \theta \cdot f \; (z) \; \theta \cdot g \]
\end{prop}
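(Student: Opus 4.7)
The plan is to reduce all five cases of the relation $z$ to the weak inequality $\leq$ and then exploit the total order field structure from Proposition \ref{P073} together with the existence of multiplicative inverses from Proposition \ref{P106}. First I would dispose of the equality case: $f = g \Rightarrow \theta f = \theta g$ is substitution, and the converse follows by multiplying both sides by $\theta^{-1}$. Since $\geq$ is symmetric to $\leq$, and each strict inequality is the conjunction of the corresponding weak form with the negation of equality, everything reduces to showing $f \leq g \Leftrightarrow \theta f \leq \theta g$ whenever $\theta > 0$.

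For the forward direction I would chain the ordered-field axioms of Proposition \ref{P073}: from $f \leq g$ translate by $-f$ to get $0 \leq g - f$; apply the product axiom with $0 \leq \theta$ to obtain $0 \leq \theta(g - f)$; distribute using Proposition \ref{P097} to get $0 \leq \theta g - \theta f$; and translate by $\theta f$ to recover $\theta f \leq \theta g$.

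For the reverse direction I would first establish that $\theta^{-1}$, which exists by Proposition \ref{P106}, is itself positive. If $\theta^{-1} = 0$ then $1 = \theta \theta^{-1} = 0$, absurd; if $\theta^{-1} < 0$ then $-\theta^{-1} > 0$, and the product axiom applied to $\theta > 0$ and $-\theta^{-1} > 0$ yields $-1 \geq 0$, again absurd. With $\theta^{-1} > 0$ in hand, reapplying the forward direction with multiplier $\theta^{-1}$ to the hypothesis $\theta f \leq \theta g$ gives $\theta^{-1}(\theta f) \leq \theta^{-1}(\theta g)$, which by associativity (Proposition \ref{P096}) collapses to $f \leq g$. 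The strict versions follow because the same argument, applied to a hypothesized equality $\theta f = \theta g$, would return $f = g$, so strict inequality on one side forces strict inequality on the other.

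The main obstacle I anticipate is not any single algebraic step but the delicate matter of circularity. Proposition \ref{P073} is flagged as unproven in this section, so the derivation must not tacitly invoke cancellation-of-inequalities results whose own validation would require P053. In particular, the positivity of $\theta^{-1}$ must be extracted purely from the ordered-field axioms of P073 and the field axioms of P108, since any appeal to ``sign preservation under multiplication'' is precisely what P053 is asserting. Once the argument is confined to the two raw axioms (translation invariance and non-negativity of products of non-negatives) plus distributivity, associativity, and the existence of inverses, the bi-implication follows cleanly for each choice of $z$.
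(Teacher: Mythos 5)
The paper does not actually prove Proposition \ref{P053}: it is one of the propositions flagged with \# in Section \ref{S0106}, which the authors explicitly list as unproven and assumed (``Other propositions assume their truth''); the proof environment that follows it in the source belongs to Proposition \ref{P054}. So there is no paper proof to compare against, and your argument is a genuine addition rather than an alternative route. On its own merits the derivation is the standard ordered-field one and is essentially sound: reduce every case to $\leq$, push $f \leq g$ through translation, the product axiom, and distributivity to obtain $\theta f \leq \theta g$, and recover the converse by multiplying by $\theta^{-1}$. Your circularity worry is the right one to raise, and your derivation in fact shows that \ref{P053} is not an independent assumption but a consequence of \ref{P073} together with the field structure (\ref{P097}, \ref{P096}, \ref{P106}) --- a strictly tighter state of affairs than the paper's, which assumes both \ref{P053} and \ref{P073} separately.

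Two small gaps should be patched. First, in ruling out $\theta^{-1} < 0$ you derive $-1 \geq 0$ and call it absurd, but absurdity requires knowing $-1 < 0$, i.e.\ $0 < 1$; that needs its own short argument from the axioms ($1 \neq 0$ in a field, and if $1 \leq 0$ then $-1 \geq 0$, so $1 = (-1)(-1) \geq 0$ by the product axiom, forcing $1 = 0$, a contradiction). Second, the trichotomy you apply to the sign of $\theta^{-1}$, and the identity ``strict inequality equals weak inequality plus non-equality,'' both rest on totality and antisymmetry of the order, which are carried only by the words ``total order field'' in \ref{P073} and not by the two displayed axioms; it is worth invoking them explicitly. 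Neither point threatens the proof.
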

\begin{prop}\label{P054}
 $z \in \{ \lt, \leq, ==, \gt, \geq \}$; 
 The relations $z$ invert when multiplied by a negative number in $*G$.
 For the equality case the left and right sides of the relation are interchanged. 
\end{prop}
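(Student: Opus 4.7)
The plan is to reduce the claim to the already-established positive-scalar case (Proposition \ref{P053}) by factoring out the sign, and then to verify that negation itself reverses each of the five relations using the ordered-field structure of $*G$ (Proposition \ref{P073}).

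First I would write an arbitrary negative $\theta \in *G$ as $\theta = -\theta'$ with $\theta' > 0$ in $*G \backslash \{0\}$. Existence of the additive inverse comes from Proposition \ref{P103}, and positivity of $\theta'$ follows from the total order. Then $\theta \cdot f = -(\theta' \cdot f)$ and $\theta \cdot g = -(\theta' \cdot g)$, using distributivity (Proposition \ref{P097}) and the group identity $(-1)\cdot x = -x$. So it suffices to prove two things: (i) Proposition \ref{P053} transports the relation across multiplication by $\theta'$; (ii) negation $x \mapsto -x$ inverts each relation in $\{<,\leq,==,>,\geq\}$ (with the equality case swapping the sides).

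Next I would handle step (ii) directly from the order-field axioms. Suppose $a \leq b$. Adding $-a-b$ to both sides (justified by the order-preserving property of addition in Proposition \ref{P073}) yields $-b \leq -a$. The strict case $a < b$ follows by noting that $a = b$ would force $-a = -b$, ruling out equality, so $-b < -a$. The $\geq$ and $>$ cases are obtained by symmetry. For the equality relation $a == b$, applying the additive inverse (Proposition \ref{P103}) gives $-a == -b$, which, read as a two-sided equality, is the same statement with left and right interchanged. Combining (i) and (ii) gives
\[
f \; (z) \; g \;\Longleftrightarrow\; \theta' f \; (z) \; \theta' g \;\Longleftrightarrow\; -\theta' g \; (\tilde z) \; -\theta' f \;\Longleftrightarrow\; \theta g \; (\tilde z) \; \theta f,
\]
where $\tilde z$ is the inverted relation ($<$ becomes $>$, $\leq$ becomes $\geq$, $==$ remains $==$ with sides swapped).

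The main obstacle is not any individual deduction but the logical dependence on Proposition \ref{P073}, which is itself stated without proof in this section. Given that assumption, every step is a short ordered-field manipulation; without it, one would have to prove the sign-reversal law case-by-case on the three component types ($\Phi$, $\mathbb{R}$, $\Phi^{-1}$) using the partition theorem and Proposition \ref{P018}, which is tedious but routine. I would therefore present the proof in the clean form above, explicitly flagging that it inherits the total-order hypothesis from Proposition \ref{P073}.
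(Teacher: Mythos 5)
Your argument is sound in substance and is essentially a more explicit version of the paper's own proof. The paper works only with multiplication by $-1$: it labels the post-negation relation $(-z)$, then adds $b$ and $a$ to both sides to arrive at $b \; (-z) \; a$, concluding that $(-z)$ is $z$ with its sides interchanged, i.e.\ the inverted relation; the reduction of a general negative multiplier to $-1$ times a positive one (your step via Proposition \ref{P053}) is left implicit there. Your version makes that factorization explicit and, usefully, isolates the dependence on the unproven order axioms of Proposition \ref{P073}, which the paper's proof invokes silently every time it adds a term to both sides. What the paper's route buys is brevity; what yours buys is a complete treatment of arbitrary negative $\theta$ and an honest accounting of the hypotheses.

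One slip to fix before this is airtight: in your final displayed chain the middle equivalence reads $\theta' f \; (z) \; \theta' g \Leftrightarrow -\theta' g \; (\tilde z) \; -\theta' f$, which both swaps the sides \emph{and} inverts the symbol --- a double inversion. For $z$ equal to $<$ this asserts $-\theta' g > -\theta' f$, which is the opposite of what your own step (ii) derives (from $\theta' f \leq \theta' g$ you correctly get $-\theta' g \leq -\theta' f$, and likewise in the strict case). The conclusion should be written either as $\theta g \; (z) \; \theta f$ (sides swapped, symbol kept) or as $\theta f \; (\tilde z) \; \theta g$ (sides kept, symbol inverted); these are the two equivalent readings of ``the relation inverts.'' Since the prose of step (ii) is correct, this is a bookkeeping error in assembling the chain rather than a gap in the idea, but as displayed the final equivalence is false and should be corrected.
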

\begin{proof}
 $a, b \in *G$; 
 $a \; z \; b$, 
 $-a \; (-z) \; -b$,
 $-a+b \; (-z) \; 0$,
 $b \; (-z) \; a$.
\end{proof}
\subsection{Conclusion} \label{S0107}
 It is with disbelief that so many practising mathematicians
 have little explicit application of infinitesimals;
 that the online forums have so little discussion and
 that only the specialized few use NSA.
 This must change.
 Without explicitly partitioning the finite and infinite,
 there is a world of analysis that will not see the light of day.

 The
 gossamer
 number system structure
 is the simplest explanation (Occam's razor) for infinitesimals
 and infinities. 
 The construction is identical
 with the implicit equation
 construction of real numbers
 except 
 where the 
 integers, the building blocks of the real number system,
 are replaced with `infinite integers'.

 Since $*G$ (we believe) is a field,
 the theory is general.
 For example,
 you could perhaps plug $*G$
 into Fourier analysis
 theory, and extend the theory.

 Du Bois-Reymond expressed the numbers like we do today
 as functions. Largely to avoid accounts of the numbers
 being fiction and devoid of real world meaning.

  However, had he realised the number system (which we
 believe needed the discovery of the infinite integers),
 he could have expressed his theory with two number systems
 $\mathbb{R}$ and $*G$ or $*R$.
 Many others, including
 Newton, Leibniz, Cauchy, Euler and Robinson
 have considered these questions. 
 A two-tiered number system $\mathbb{R}$ and $*G$ has evolved, not as an option,
 but as a necessity in explaining
 mathematics.

 We have attempted to find
 a rigorous formulation
 to ``traditional non-rigorous extensions",
 which we believe has not been considered. 
 By construction, the zero divisors problem relevant to
 the hyperreals is avoided, as in $*G$ there is only
 one zero.

 Acknowledgment, help from a reviewer: thank you for reviewing
 the original paper in earlier stages,
 which was subsequently split into
 six parts (due to the scope of the investigation),
 and then this paper. Thank you.

 In response, better communication with numerous suggestions
 and extensions were subsequently applied,
 and the construction
 of the gossamer number system was found.

\section{The much greater than relations} \label{S02}
 An infinitesimal and infinitary number system the 
 Gossamer numbers is fitted to
 du Bois-Reymond's 
 infinitary calculus,
 redefining the magnitude relations.
 We connect 
 the past symbol relations much-less-than $\prec$ and
 much-less-than or equal to $\preceq$
 with the present little-o and big-O notation, which have identical definitions.
 As these definitions are extended,
 hence we also extend little-o and big-O,
 which are 
 defined in Gossamer numbers.
 Notation for a reformed infinitary calculus,
 calculation at a point is developed.
 We proceed with
 the introduction of an extended infinitary calculus.
\subsection{Introduction} \label{S0201}
\begin{sloppy}
\begin{quote}
 While the majority of mathematicians readily accepted
 the emancipation of \mbox{analysis} from geometry there were,
 nonetheless, powerful voices raised against the
 arithmetization programs.
 One of the sharpest critics was Paul du Bois-Reymond (1831-1889)
 who saw the arithmetization as a contentless attempt
 to destroy the necessary union between number and magnitude.
 \cite[p.92]{pointsett}
\end{quote}
\end{sloppy}

The separation between geometry and number by
 the arithmetization of analysis has led
 to the dominance of set theory.
 However, just
 as we have different languages, problems
 can be described with functions or set theory and
 other ideas with infinity.
 Language for theories is both
 an evolution and also can be more of a choice,
 but `does' have an effect on how we see the mathematics.

 We believe that arguments of magnitude are essential to understand
 real and gossamer numbers. Without a theory from this viewpoint 
 many things are left without explanations.
 Without the relations described, the symbolism and language
 of algebra that they describe is harder to encapsulate.

 Contradictory to du Bois-Reymond, we find arithmetization 
 in his relations that lead to a transfer principle (Part 4)
 and non-reversible arithmetic Part 5. So we claim that
 they are important.

 Before becoming
 aware of du Bois-Reymond's work, we 
 defined $\gg$ equivalently to $\succ$,
 as during a mathematical modelling
 subject a lecturer had symbolically used the symbol
 to describe (without definition) large
 differences in magnitude.

 The notion of the `order' or the `rate of increase'
 of a function is essentially a relative one \cite[p.2]{ordersofinfinity}.
 Consider functions $f(x)$ and $\phi(x)$, we could 
 have functions satisfying relation $f \gt \phi$.	
 However, what about their ratio?
 Knowing only $\gt$ or $\geq$ does not give
 a size difference of the numbers involved.

 Consider monotonic functions which
 over time settle down, and have
 properties such as 
 their ratio is monotonic too.
 In examining these well behaved functions,
 families of ratios, 
 scales of infinities (Section \ref{S0204}) are considered. 
 From these investigations,
 the characterisation
 of an infinity in size difference
 was discovered, and defined as a relation $\succ$ (Definition \ref{DEF006})
 and $\succeq$ (Definition \ref{DEF005}). Here, it is not the sign of the number,
 but the size of the number which determines the relation.
\begin{quote}
 With the particular system of notation that he invented, it is, no doubt, 
 quite possible to dispense; 
 but it can hardly be denied that the notation is exceedingly useful,
 being clear, concise, and expressive in a very high degree. \cite[p. (v)]{ordersofinfinity}
\end{quote}
 However, the notation was quickly superseded by little-o and big-O, primarily
 because the magnitude relation,
 instead of being expressed separately, could
 be packaged as a variable. E.g. $\mathrm{sin}\,x = x + O(x^{3})$
 instead of $x^{3} \succeq -\frac{x^{3}}{3!}+\frac{x^{5}}{5!}-\ldots|_{x=0}$

 We believe du Bois-Reymond's relations do have
 a critical place, where we develop an algebra
 for comparing functions Part 3. 
 For many reasons, we introduce the at-a-point notation,
 which is used throughout our series of papers
 towards the development of
 an alternative to non-standard analysis
 which we refer to as infinitary calculus.

 By fitting an infinitesimal number system to du Bois-Reymond's
 infinitary
 calculus definitions (Section \ref{S0203}), 
 the theory is better explained. Instead of defining a limit in $\mathbb{R}$,
 the limit is defined in $*G$ the extended number system.

 The benefits continued with
 the later development of a transfer principle Part 4 between $*G$
 and $\mathbb{R}$,
 which
 explains mathematics that would not make sense without infinitesimals
 and infinities. General limit calculations do not make
 sense in $\mathbb{R}$ because the number system has no
 infinity elements.
 
 This theory is then used to derive a new field
 of mathematics `convergence sums' \cite{cebp2},
 with applications to convergence or divergence
 of positive series. 
 Where du Bois-Reymond's theory
 of comparing functions has 
 been forgotten,
 we now believe 
 we have found useful applications. 

 We believe this 
 shows value and general applicability
 of the mathematics. The Gossamer number system's 
 utility is demonstrated. 
 So, we see this as a building paper.

 The relation $\succ$,
 is defined as equivalent to little-o,
 and 
 is 
 general because it exists
 in a number system which includes infinities and infinitesimals,
 and not a modified or implicitly defined $\mathbb{R}$. 
 The current practise implicitly uses infinitesimals and infinities,
 without declaring them as their own number type
 Part 1.
 
 In this sense we have extended du-Bois Reymond and
 Hardy's work.  By explicitly having a number system,
 we can better compare functions.
 In a later paper on the transfer principle,
 we will argue that this is not just an option,
 but a fundamental part of calculus.

 Having said the above,
 the objective of this paper is
 to introduce definitions and notations,
 re-state du Bois-Reymond's infinitary calculus,
 and connect the past with the present
 little-o and big-O notation.
\subsection{Evaluation at a point} \label{S0202}
Motivation: Approximating functions by truncation in calculation
 is common practice. We use infinitesimals all the time.
\bigskip
\begin{mex}\label{MEX047} 
 A quick numerical check will provide evidence by approximation,
 where successive powers significantly reduce in magnitude,
 $x=0.1$, $x^{2} = 0.01$, $x^3=0.001$. 

 $f(x) = x + x^2 +x^3 + \ldots|_{x=0}$ at $x=0$
 we mean $x \in \Phi$
 (see Definition \ref{DEF041}).
 This may be represented by
 $f(x)=x$, $f(x) = x + x^{2}$,  $f(x)=x+x^{2}+x^{3}$ or any other
 number of first terms at $x=0$. 
 As $x^2$ is much smaller than $x$,
  $x^{3}$ is much smaller than $x^{2}$, ...
\end{mex}

When we send $x \to 0$, which functions that go to zero faster
 matters, as these may be truncated, and we
 can start using infinitesimals.
 It is this sort of reasoning and calculation that 
 leads to the definition of the magnitude relation
 (see Definitions \ref{DEF005} and \ref{DEF006}),
 and then to little-o and big-O notation which we use today. 

Some people, who find negative numbers difficult
 to accept, will happily add and subtract
 positive numbers, but are 
 unable to do so using negative numbers.
 In a similar way, there may occasionally
 be a problem where people can reason with infinity
 but not zero, or the other way round.  Logically
 $0$ and $\infty$ as numbers are very similar.
\bigskip 
\begin{mex}\label{MEX045} 
 Similar reasoning can be done with infinities. 
 Let $x=1/n$ and 
 assume a solution. Consider the series first three terms.
 $y= \frac{1}{n} + \frac{1}{n^{2}} + \frac{1}{n^3}$, 
 $y n^{3} = n^{2} + n + 1$ when $n=\infty$. 
 As $n$ is much greater than $1$, 
 assume $n+1=n$,
 $y n^{3} = n^{2} + n$, reversing,
 $y=\frac{1}{n}+\frac{1}{n^{2}}$ and
 $\frac{1}{n^{3}}$ was truncated. 
\end{mex}

Truncation non-uniqueness in calculation:
 Let $f(x)$ be a function of infinite terms used in function $h(x)=g(f(x))$.
 Since a truncated $f(x)$ can solve $h(x)$ for infinitely many truncations,
 we say $f(x)$ is not unique, as an infinite number of solutions may
 give a satisfactory result.
 When evaluating $h(x)$, $f(x)$ is not unique,
 as an infinite number of truncated evaluations can occur.
 It is often desirable to use the minimum number of first terms of $f(x)$ to evaluate $h(x)$. 
 In this way, asymptotic expansions as given by $f(x)$ are said to be non-unique. 

Calculation
 is a major part
 of analysis,
 and one of the most common
 evaluations is the limit of a function.
 This evaluation can be thought of more
 generally by
 considering the behaviour at a point, with the inclusion
 of infinity as a number and as a point. 

When the ideas of a point
 are extended to include such
 properties as continuity,
 infinity, existence and divergence at a point, then it becomes clear
 that a point,
 whatever it may be, is
 both what we interpret 
 and how we calculate. 

 With a view to realising something more
 general than a limit,
 the following definition at
 a point is given. 

\textit{ By virtue of reaching a point,
 we have to pass through or approach the point.
 The
 definition of evaluation at
 a point will also encompass 
 approaches to the point.} 

This interpretation of a point accepts non-uniqueness
 - two parallel lines could meet at infinity
 or they may never meet at infinity.
 In particular, asymptotic expansions
 are not unique, but subject to
 orders of magnitude. 
\bigskip
\begin{defy} \label{DEF001}
Let $f(x)$ be an expression.
 Then evaluation
 at-a-point $f(x)|_{x=a}$ is the evaluation of $f(x)$
 at $x=a$. (Optionally omit variable assignment, $f(x)|_{a}$) \\
Case 1. All possibilities or \\ 
Case 2. Context dependent evaluation
\end{defy}

Infinity can be considered as a point. 

\textit{Case 1} concerns itself with all the
 different ways a point could be 
 interpreted and calculated, and
 is a conceptual tool. 

Given a problem either theoretical or practical,
 there are often different views or interpretations
 which may help. (For example from a programming perspective, an object
 orientated approach to problem modelling.)

 Let $C^{j}$ describe a curve continuous in the first $j$
 derivatives, then let $C^{0}$ describe a continuous curve.
 When building a curve that is
 a function, except at a point,
 the following possibilities may occur (see Figure \ref{FIG07}). 
 The curve may be 
 discontinuous at the point, or its vector equations are continuous but
 the function has infinitely many values at the point,
 or $C^{0}$ but not $C^{1}$ continuous,
 or the curve is a function and also an s-curve between an interval.
 With a point at infinity
 the possibilities are endless.

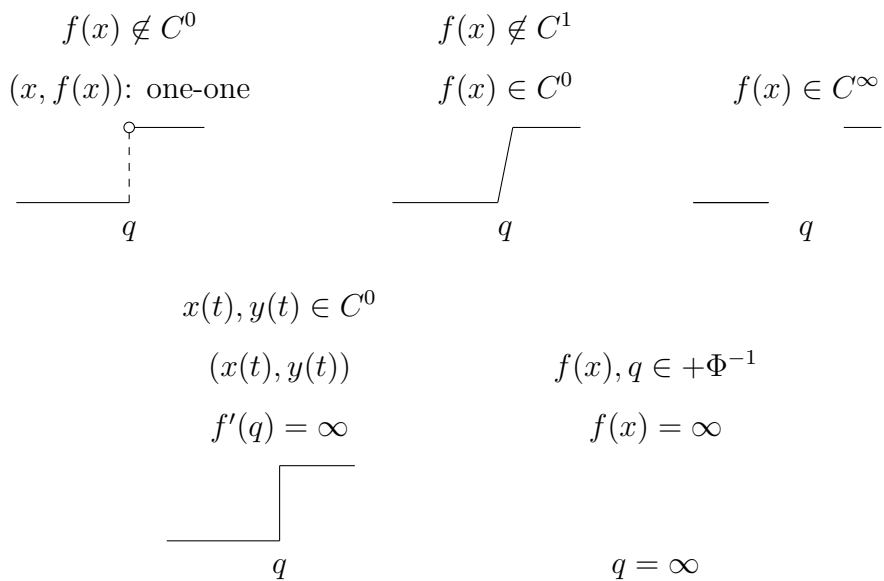
\begin{figure}[H]
\centering
\begin{tikzpicture}

\begin{scope}[xshift=0mm, yshift=45mm]
  \draw (0,1) to (1.5,1);
  \draw (1.5,1) [dashed] to (1.5,2);
  \draw (1.5,2) to (2.5,2);
  \draw[fill=white] (1.5,2.0) circle(0.07cm);
  \node [label={[shift={(15mm,2mm)}]$q$}] {};
  \node [label={[shift={(15mm,20mm)}]$(x,f(x))\text{: one-one}$}] {};
  \node [label={[shift={(15mm,28mm)}]$f(x) \not\in C^{0}$}] {};
\end{scope}

\begin{scope}[xshift=20mm, yshift=0mm];
  \draw (0,1) to (1.5,1);
  \draw (1.5,1) to (1.5,2);
  \draw (1.5,2) to (2.5,2);
  \node [label={[shift={(15mm,36mm)}]$x(t),y(t) \in C^{0}$}] {};
  \node [label={[shift={(15mm,28mm)}]$(x(t),y(t))$}] {};
  \node [label={[shift={(15mm,2mm)}]$q$}] {};
  \node [label={[shift={(15mm,20mm)}]$f'(q)=\infty$}] {};
\end{scope}
\begin{scope}[xshift=50mm, yshift=45mm];
  \draw (0,1) to (1.4,1);
  \draw (1.4,1) to (1.6,2);
  \draw (1.6,2) to (2.5,2);
  \node [label={[shift={(15mm,2mm)}]$q$}] {};
  \node [label={[shift={(15mm,20mm)}]$f(x) \in C^{0}$}] {};
  \node [label={[shift={(15mm,28mm)}]$f(x) \not\in C^{1}$}] {};
\end{scope}
\begin{scope}[xshift=90mm, yshift=45mm];
  \draw (0.0,1.0) -- (1.0,1.0);
  \node [label={[shift={(15mm,2mm)}]$q$}] {};
  \draw (0.0,1.0) -- (1.0,1.0);
  \draw plot[samples=600,domain=1.0:2.0] function {1.0*1/(1+exp(16.0*(-x+1.5)))+1};
  \draw (2.0,2.0) -- (2.5,2.0);
  \node [label={[shift={(1.5cm,2.0cm)}]$f(x) \in C^{\infty}$}] {};
\end{scope}
\begin{scope}[xshift=70mm];
  \draw plot[samples=600,domain=1.0:2.0] function {0.2*x*x*x+0.8};
  \node [label={[shift={(15mm,2mm)}]$q=\infty$}] {};
  \node [label={[shift={(15mm,20mm)}]$f(x)=\infty$}] {};
  \node [label={[shift={(15mm,28mm)}]$f(x), q \in +\Phi^{-1}$}] {};
\end{scope}
\end{tikzpicture}
\caption{Examples of interpretations at a point} \label{FIG07}
\end{figure}

\textit{Case 2} is the practical aspect of
 calculating, where a choice
 of interpretation has
 been made,
 on proceeding with
 the ``actual calculation". 
 The context calculation separates responsibility for
 the justification from the theory
 to the point of use.
 This decoupling is
 important. If there is another
 way of calculating or using
 another branch of mathematics, the evaluation
 at-a-point is simply interpreted then.
 The trade is that less
 can be said, in that the definitions
 and theory are less exacting,
 but this is 
 mitigated by the calculation being context specific, 
 and more adaptable to our problem solving.

 The consequences of decoupling can be non-trivial.
 For example, we do
 not believe 
 in 
 necessarily
 using a field when 
 extending the reals.
 A trade-off for a different kind of generality
 may be a different number system,
 or chosen differently, not depending on what you want to do.

Evaluating a function
 at-a-point can often result
 in the 
 evaluation of the limit.
 Indeed, this limit at a point,
 is a subset of the possibilities.
In the context
 of a calculation,
 $\lim\limits_{x \to a} f(x)$ can be represented by 
 $f(x)|_{x=a}$. 

As mathematics is a language,
 a further purpose of Definition \ref{DEF001}
 is to communicate
 to the reader 
 that other ways of calculation
 might be employed.  
 For instance, where
 these could be incompatible with rigorous argument,
 one way of distinguishing
 differences could be through  
 the above notation. 

The notation can also be used to make
 existing arguments more explicit.
 For example $f(x) = O(g(x))|_{x=\infty}$
 says that the function is being considered at infinity, not $0$ or 
 any other finite value. 

Motivation for a separate notation is used so that different mathematics can work side by side with standard mathematics, and
 in a sense be contained.
 The limit concept is so `ingrained' that doing operations 
 that use a different paradigm, without clear communication to the reader, would be unsatisfactory. 

A consequence of such flexibility is that mathematical inconsistencies can and are invariably introduced into the calculations.
 Where this is viable, the benefits brought to the calculation can outweigh
 any adverse presumptions.
 Designing methods to protect against inconsistencies
 other than narrowed definition and practice
 can actually make the calculus more accessible and interesting. 
\bigskip
\begin{defy}\label{DEF002} 
Let the ``at-a-point" definition, appearing on
 the right-hand side, 
 apply to all functions within the expression,
 unless overridden by another at-a-point definition. 
\end{defy}
\bigskip
\begin{mex}\label{MEX002} 
 $f(x) \; z \; g(x)|_{x=a}$
 means
 $f(x)|_{x=a} \; z \; g(x)|_{x=a}$
 where 
$z$ is any relation.
 Optionally we can include round brackets around the
 expression,
 $(f(x) \; z \; g(x))|_{x=a}$.
\end{mex}
\bigskip
\begin{defy} \label{DEF003} 
We say  
 $f(x) \; z \; g(x)|_{x=a}$ can have a context
 meaning, where $f(x)$ and $g(x)$ are dependent, and in some way
 governed by operator or relation $z$. 
\end{defy}

When forming conditions for infinitely
 small or infinitely large, we employ a bound,
 which itself is going to zero or infinity;
 for instance, when forming definitions.
\bigskip
\begin{defy}\label{DEF031}
In context, a variable $x$ can be described at infinity
 $|_{x=\infty}$,   
 then $\exists x_{0}, \forall x: x \gt x_{0}$
\end{defy}
\bigskip
\begin{defy}\label{DEF041}
In context, a variable $x$ can be described at zero 
 $|_{x=0}$  
 then $\exists x_{0}, \forall x: |x| \lt x_{0}$
\end{defy}
 With the transfer principle Part 4, Definitions \ref{DEF031} and \ref{DEF041}
 which describe the neighborhood can be better expressed
 with infinitesimals Definition \ref{DEF016}
 and infinities Definition \ref{DEF014};
 are defined with
 sequences more generally in Part 6.
\bigskip
\begin{defy}\label{DEF014}
In context, a variable $x$ can be described at infinity
 $|_{x=\infty}$,   
 then $x \in +\Phi^{-1}$ an infinity.
\end{defy}
\bigskip
\begin{defy}\label{DEF016}
In context, a variable $x$ can be described at zero 
 $|_{x=0}$  
 then $x \in \Phi$ an infinitesimal.
\end{defy}
\bigskip
\begin{defy}\label{DEF047}
In context, we say $f(x)|_{x=\infty}$ then $\lim\limits_{x \to \infty} f(x)$.
\end{defy}
Generally the equality ``$=$" with respect to assignment is defined with
 a left-to-right ordering see Definition \ref{DEF028}.
 Essentially as reasoning, such that the right follows from the left.
 This can be exact, as in one form is converted to another,
 or as a generalization, or rather implication. Therefore the context
 needs to be understood.
\bigskip
\begin{defy}\label{DEF028}
 In context,
 assignment has a left-to-right ordering.
\[ \text{In context, instance } = \text{ generalization} \]
\end{defy}
\bigskip
\begin{mex}\label{MEX057}
 How to use the notation is open, 
 some examples follow.
\\ 1. Limit calculations $(1+\frac{1}{n})^{n}|_{n=\infty}=e$ \\ 
 2. Divergent sums $\sum_{k=1}^{n} \frac{1}{k} = \mathrm{ln}\,n|_{n=\infty}$ \\ 
 3. A conversion between series and integrals,
 read from left-to-right. \\
 $\hspace*{15mm} \sum_{1}^{n} a_{n} = \int_{1}^{n} a(n) \,dn+c|_{n=\infty}$  \\
 4. A comparison relation. $n! \gt n^{2}|_{n=\infty}$ \\
 5. An infinitary calculus relation $f_{n} \prec g_{n}|_{n=\infty}$
 (as described in Definition \ref{DEF006}). \\
 6. Asymptotic results
\end{mex}

Provided there are no contradictions,
 the expressions
 at infinity can be handled algebraically in the usual ways.
\bigskip
\begin{mex}\label{MEX003} 
$n! = (2 \pi)^{\frac{1}{2}} n^{n+\frac{1}{2}} e^{-n}|_{n=\infty}$ 
 then times by $e^{n}|_{n=\infty}$ gives
 $n!e^{n} = c n^{n+\frac{1}{2}} |_{n=\infty}$.
\end{mex}
\bigskip
\begin{mex}\label{MEX001} 
$\mathrm{ln}(n!)|_{n=\infty}$
$= \sum_{k=1}^{n} \mathrm{ln}\,k|_{n=\infty}$
$= \int_{1}^{n} \mathrm{ln}\,k\,dk + \gamma|_{n=\infty}$
$=[ k \, \mathrm{ln}\,k\ - k ]_{1}^{n} + \gamma|_{n=\infty}$
$= n \, \mathrm{ln}\,n\ - n |_{n=\infty}$
\end{mex}
\bigskip
\begin{defy}\label{DEF200} 
Generalize the at-a-point Definition \ref{DEF002} to
 include condition $c(x)$ in relation
 $f(x)|_{c(x)}$. Where $c(x)$ can describe an interval.
\end{defy}
\bigskip
\begin{mex}
 $f(x) \;\; z \;\; g(x) |_{x =(0,1]}$ describes
 the relation $z$ (see Part 3) over the interval $(0,1]$. 
\end{mex}
\bigskip
\begin{mex}\label{MEX038}
The vertical bar notation is more general 
 when
 working across different situations. Such as when
  little-o and big-O notation may be cumbersome
 $O(x^{2}) + O(x^{3}) = O(x^{2})$ becomes 
 $x^{2} + x^{3} = x^{2}|_{x=0}$,
 as an alternative to the approximation
 symbol
 so $a \approx b = c$ becomes
 $a=b=c|_{n=\infty}$,
 $x^{n} e^{-n} \approx n^{n} e^{-n} e^{-\xi^{2}/2}$
 becomes $x^{n} e^{-n} = n^{n} e^{-n} e^{-\xi^{2}/2}|_{n=\infty}$ 
, 
$f \sim g$ becomes $f=g|_{n=\infty}$,
 so assignment becomes consistent.
\end{mex}

 Since non-uniqueness is accepted with
 the notation; $\mathrm{sin}\,x = x- x^{3}/3!|_{x=0}$ can be understood
 to mean truncation - an exact happening
 at $x=0$:
 an approximation in $*G$ where $\Phi \mapsto 0$ see
 Part 4.
 The notation gives you the choice. 
 If you want to be more `exact' or explicit,
 then use other or further relations.
 $\mathrm{sin}\,x = x - x^{3}/3! + O(x^{5})|_{x=0}$.

 If say, after $k$ or more terms, the calculation
 is invariant with non-reversible arithmetic,
 we define this as an `exact happening'.
 Increasing the number of terms does not change
 the calculation. After a transfer, the calculations
 produce the same result. Such situations
 are common, where to few terms in the approximations
 give incorrect results.

 The notation is also built with comparing
 functions in mind, where
 non-reversible arithmetic (see Part 5) is applied.
 An alternative to the limit notation, as it applies
 across relations
 and as an aid to Landau notation,
 can be replacing $x \to \infty$ with $|_{x=\infty}$.
 The concept of approach is logically equivalent 
 to being at the value, and the notation can say this.

 A notation for chaining arguments using commas as an implication and context is used.
 The last proposition uses the first expression.
 As a free algebra, this does place responsibility on both reading expressions
 and writing expressions.
 The notation is concise. When there are errors in evaluation or proofs,
 the chaining arguments can be rewritten one expression per line
 and edited. (Later, for similar reasons but a narrower purpose,
 in context we have defined $=$, with a left-to-right ordering.)
\bigskip
\begin{mex}
 $x, y \in \mathbb{R}; \, x \gt 0$, $y \gt 0$, $x+y \gt 0$
\end{mex}
\bigskip
\begin{defy}
 Mathematical arguments can be chained with context by
 commas (`,')
 and semicolons (`;'),
 from left-to-right order.
 The next statement optionally has a left-to-right implication.
 The semicolons have a lower precedence.
\end{defy}

In evaluating a function
 at-a-point we can shift any point $x=a$ to the 
 origin or infinity,
 stating
 the definition at $\infty$, is as general
 as
 stating the definition at any other point.  
\bigskip
\begin{prop}\label{P057}
If; $f, x \in *G$; then 
 $f(x)|_{x=a}  = f(x+a)|_{x=0} = f(\frac{1}{x}+a)|_{x=\infty}$ 
\end{prop}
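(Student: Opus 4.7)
The plan is to prove both equalities by a change of variable, treating the at-a-point notation as recording where the free variable is located. Intuitively, $f(x)|_{x=a}$ tells us we are looking at the value (or possibilities) of $f$ when its argument sits at $a$, and so any reparametrization whose image is at $a$ when the new parameter is at its designated point should yield the same object.

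For the first equality, I would introduce the substitution $y = x+a$. Under this change, $y = a$ corresponds precisely to $x = 0$, so by Definition \ref{DEF001} evaluating $f(y)$ at $y=a$ is the same as evaluating the composition $f(x+a)$ at $x=0$. Concretely, $f(x)|_{x=a} = f(y)|_{y=a} = f(x+a)|_{x=0}$, where the first step is just relabeling the dummy variable and the second is the substitution. I would note that this works in both cases of Definition \ref{DEF001}: in Case 1 (all possibilities), the map $x \mapsto x+a$ is a bijection between any neighbourhood of $0$ and the corresponding neighbourhood of $a$, so the set of possible interpretations is preserved; in Case 2, the context is inherited through the substitution. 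This is also compatible with Definition \ref{DEF041}, since $|x| < x_0$ translates to $|y-a| < x_0$.

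For the second equality, I would use $y = \frac{1}{x} + a$, equivalently $x = \frac{1}{y-a}$. Here $y = a$ corresponds to $\frac{1}{x} = 0$, i.e.\ $x = \infty$ in the extended sense used throughout Part 1 (Definition \ref{DEF072}). So $f(y)|_{y=a} = f(\frac{1}{x}+a)|_{x=\infty}$. The key check is that the correspondence $x \in +\Phi^{-1} \leftrightarrow \frac{1}{x}+a \in a + \Phi$ (by Definition \ref{DEF036}) does indeed match the ``at $a$'' neighbourhood prescribed by Definition \ref{DEF041} (or its refined version Definition \ref{DEF016}). This uses exactly the reciprocal relationship between $\Phi$ and $\Phi^{-1}$ established earlier.

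The main obstacle is not algebraic but notational: Definition \ref{DEF001} offers two readings (all possibilities versus context dependent), and one has to verify that both substitutions respect them. For Case 1, I would appeal to the bijection of neighbourhoods; for Case 2, the substitution must be interpreted as carrying the same ``context'' across, which is transparent for $y=x+a$ but slightly subtler for $y = \frac{1}{x}+a$ because the point at infinity is approached from a single side unless signed infinitesimals are permitted. Addressing one-sided versus two-sided approach at $a$ (and the corresponding $\pm\Phi^{-1}$ choice) is the one delicate point I expect to have to spell out, and it is handled cleanly by allowing $x$ to range over $\Phi^{-1}$ rather than just $+\Phi^{-1}$ when $a$ is approached from both sides.
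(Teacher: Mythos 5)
The paper states Proposition \ref{P057} without any proof --- it is immediately followed by the remark ``Hence the investigation at infinity is similar to the corresponding theory at zero'' --- so there is no argument of the authors' to compare yours against. Your substitution argument is the natural way to justify the claim and is sound given the informal character of Definition \ref{DEF001}: $y=x+a$ carries the point $y=a$ to $x=0$, and $y=\tfrac{1}{x}+a$ carries it to $x=\infty$ via Definition \ref{DEF072}, with the neighbourhood correspondence $x\in\Phi^{-1}\leftrightarrow \tfrac{1}{x}\in\Phi$ supplied by Definition \ref{DEF036}. The one delicate point you flag is genuine and worth making explicit: under the paper's own Definition \ref{DEF014} the notation $|_{x=\infty}$ means $x\in+\Phi^{-1}$, so $\tfrac{1}{x}+a$ ranges only over $a+\Phi^{+}$ and approaches $a$ from one side, whereas $|_{x=0}$ under Definition \ref{DEF016} means $x\in\Phi$, a two-sided neighbourhood. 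As stated, the three expressions therefore do not range over identical sets of evaluations unless one either reads $|_{x=\infty}$ as $x\in\Phi^{-1}$ (your proposed fix) or restricts the whole chain to one-sided approach; the proposition as printed glosses over this, and your proof is more careful than the paper on exactly this point.
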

\bigskip
Hence the investigation at infinity
 is similar to the corresponding theory at 
 zero.
 It is here that infinitesimals (near zero)
 and infinities (near infinity)
 of infinitary calculus operate.
\subsection{Infinitary calculus definitions} \label{S0203}
The following is a summary 
 and extension, by means of the
 gossamer numbers Part 1, 
 of infinitary calculus definitions
 and a comparison with the derived
 work of
 little-o and big-O relations. 
 This is a calculus of magnitudes.

 Occasionally 
 $\ll$ and $\gg$
 are used 
 to indicate much smaller or larger 
 numbers.
 Correspondingly $\prec$ and $\succ$
 implement the idea of much smaller than and much greater than
 numbers by defining 
 infinitely smaller and infinitely larger relations.

While a finite number is not infinity, a very
 large number treated as infinity would 
 model the situation and allow reasoning. 
 These relations, at zero or infinity provide
 an implementation of this.  

The idea of much 
 larger numbers is extended to infinity,
 where it becomes obvious that there
 are much larger numbers than others;
 hence du Bois-Reymond's development of
 the Definitions \ref{DEF006} and \ref{DEF005},
 which are equivalent to little-o and big-O 
  respectively.

 We demonstrate the connection
 between modern relations and du Bois-Reymond's relations
 and restate some definitions
 of du Bois-Reymond, 
 referred to 
 by G. Hardy in \textit{Orders of Infinity}
 \cite[pp 2--4]{ordersofinfinity}
 with 
 their representation
 in Landau notation.
\bigskip
\begin{defy}\label{DEF005}  
We say $f(x) \preceq g(x)|_{x=\infty}$
 if there exists $M \in \mathbb{R}^{+}$: $|f(x)| \leq M|g(x)||_{x=\infty}$. 
\[ f(x) \preceq g(x) \text{ is the same as } f(x) = O(g(x)) \] 
\end{defy}
\bigskip
\begin{defy} \label{DEF006} 
We say $f(x) \prec g(x)|_{x=\infty}$ 
 then $\frac{f(x)}{g(x)}|_{x=\infty} \in \Phi$
\[f(x) \prec g(x) \text{ is the same as } f(x) = o(g(x)) \]
\end{defy}

 Definition \ref{DEF006} is equivalently
 defined $|f(x) \leq M_{2} | g(x)|$,
 $M_{2} \in \Phi^{+}$.
 We see that this is almost the same
 as Definition \ref{DEF005} except $M \in \mathbb{R}^{+}$.
 One is bounded in $\mathbb{R}$, the other
 in $*G$.
\bigskip
\begin{prop}\label{P020}
 If $f(x) \prec g(x)|_{x=\infty}$ then
 $\frac{f(x)}{g(x)}|_{x=\infty} = 0$ in $\mathbb{R}$. 
 As a definition see \cite[p.2]{ordersofinfinity}. 
\end{prop}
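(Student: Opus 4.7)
The plan is to chain the hypothesis through Definition \ref{DEF006}, then use the size characterization of infinitesimals to conclude that the only real number consistent with the result is $0$. First I would apply Definition \ref{DEF006} directly to the hypothesis $f(x) \prec g(x)|_{x=\infty}$, obtaining $\frac{f(x)}{g(x)}|_{x=\infty} \in \Phi$. So the quotient is some infinitesimal in $*G$; call it $\delta$.

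Second, I would invoke Proposition \ref{P018}, which states that a (positive) infinitesimal is strictly less than any positive real number; by applying it to $|\delta|$ (or equivalently splitting into the cases $\delta \in \Phi^{+}$, $\delta \in \Phi^{-}$, and $\delta = 0$ via Corollary \ref{P043}), we get $|\delta| < x$ for every $x \in \mathbb{R}^{+}$. Thus in absolute value $\delta$ lies below every positive real.

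Third, to read the equality ``in $\mathbb{R}$,'' I would use Definition \ref{DEF028} (left-to-right assignment as generalization) together with the forward-referenced transfer from $*G$ to $\mathbb{R}$ of Part 4: since $0$ is the unique real number whose absolute value is bounded above by every positive real, any realization of $\delta$ back into $\mathbb{R}$ must be $0$. Equivalently, $\mathbb{R} \cap \{y : |y| < x \text{ for all } x \in \mathbb{R}^{+}\} = \{0\}$, and the transfer sends $\delta$ into this set. Hence $\frac{f(x)}{g(x)}|_{x=\infty} = 0$ when interpreted in $\mathbb{R}$.

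The main obstacle, and the reason the statement is flagged with a reference to Hardy's \emph{definition}, is the interpretive step at the end: the equation ``$= 0$ in $\mathbb{R}$'' is not an equality inside $*G$ (where $\delta$ is generally a nonzero infinitesimal), but an assertion about the image under the projection to $\mathbb{R}$ developed fully in Part 4. The at-a-point notation of Definition \ref{DEF001} (Case 1 / Case 2) is what licenses this reading, so the substance of the proof is really the compatibility between the gossamer definition of $\prec$ and Hardy's classical $f/g \to 0$ formulation, and reduces to Proposition \ref{P018} plus the Archimedean characterization of $0$ in $\mathbb{R}$.
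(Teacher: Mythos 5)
Your proposal is correct and follows essentially the same route as the paper: the paper's proof is the one-line ``apply a transfer $\Phi \mapsto 0$ (see Part 4) to Definition \ref{DEF006}'', which is exactly your first and third steps. Your middle step, justifying via Proposition \ref{P018} and the Archimedean characterization of $0$ why the transfer must send infinitesimals to $0$, is a harmless (and arguably welcome) unpacking of what the paper delegates to Proposition \ref{P029} in Part 4.
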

\begin{proof}
 Apply a transfer $\Phi \mapsto 0$ (see Part 4) to 
 Definition \ref{DEF006}.
\end{proof}
\bigskip
\begin{prop}\label{P203}
If $f(x) \prec g(x)|_{x=\infty}$ then $\frac{g(x)}{f(x)}|_{x=\infty}=\infty$.
\end{prop}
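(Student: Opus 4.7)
The plan is to reduce the statement to two facts already available in the paper: the defining property of $\prec$ (Definition \ref{DEF006}) and the reciprocal relationship between infinitesimals and infinities (Definition \ref{DEF036}), and then to finish with a transfer step analogous to the one used in Proposition \ref{P020}.

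First I would unpack the hypothesis. By Definition \ref{DEF006}, the assumption $f(x) \prec g(x)|_{x=\infty}$ is exactly the statement that $\frac{f(x)}{g(x)}\Big|_{x=\infty} \in \Phi$. In particular this ratio is a genuine infinitesimal, not zero: by Corollary \ref{P043}, $0 \notin \Phi$, so $\frac{f(x)}{g(x)}$ is a nonzero element of $\Phi$ at infinity. This guarantees that the reciprocal is well defined, which is the only subtle point in the whole argument.

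Next I would invert. Definition \ref{DEF036} says precisely that a number is an infinitesimal if and only if its reciprocal is an infinity, i.e. $x \in \Phi \Leftrightarrow \frac{1}{x} \in \Phi^{-1}$. Applying this with $x = \frac{f(x)}{g(x)}\big|_{x=\infty}$ gives
\[
\frac{g(x)}{f(x)}\bigg|_{x=\infty} \;=\; \frac{1}{\,f(x)/g(x)\,}\bigg|_{x=\infty} \;\in\; \Phi^{-1}.
\]
Thus $\frac{g(x)}{f(x)}$ evaluated at infinity is an infinity in $*G$.

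Finally, to pass from ``is an element of $\Phi^{-1}$'' to the stated conclusion ``$=\infty$'', I would apply the transfer $\Phi^{-1} \mapsto \infty$ (the inverse-direction analog of the $\Phi \mapsto 0$ transfer used in Proposition \ref{P020}, deferred to Part 4). This mirrors the pattern of Proposition \ref{P020} almost line for line, with the roles of $\Phi$ and $\Phi^{-1}$ interchanged. The main obstacle, such as it is, is really only the bookkeeping concern about nonzero ratios flagged above; once that is secured, the proof is essentially a two-line reciprocal-plus-transfer argument.
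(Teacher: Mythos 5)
Your proposal is correct and follows essentially the same route as the paper's own proof, which sets $\frac{f}{g}|_{x=\infty}=\delta\in\Phi$ via Definition \ref{DEF006} and concludes $\frac{g}{f}|_{x=\infty}\in\Phi^{-1}=\infty$ by the same reciprocal step and the left-to-right generalization $\Phi^{-1}=\infty$. Your added remarks on the nonzero ratio (Corollary \ref{P043}) and on making the final $\Phi^{-1}\mapsto\infty$ step explicit are harmless refinements of what the paper leaves implicit.
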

\begin{proof}
 By Definition \ref{DEF006}
 let $\delta \in \Phi$,
 $\frac{f}{g}|_{x=\infty}=\delta$,
 $\frac{g}{f}|_{x=\infty} \in \Phi^{-1} = \infty$.
\end{proof}
\bigskip
\begin{prop}\label{P026}
$\delta \in \Phi^{+}$; if $\frac{f(x)}{g(x)} \leq \delta$ then $f(x) \prec g(x)$.
\end{prop}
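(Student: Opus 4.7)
The plan is to unwind the hypothesis via Definition \ref{DEF006}, which asks us to show that $\frac{f(x)}{g(x)}|_{x=\infty} \in \Phi$. We are handed that $\frac{f(x)}{g(x)} \leq \delta$ for some $\delta \in \Phi^{+}$, so the heart of the matter is to promote an upper bound by a positive infinitesimal to actual membership in $\Phi$.

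First I would take reciprocals. Since $\delta \in \Phi^{+}$, by Definition \ref{DEF036} we have $\frac{1}{\delta} \in +\Phi^{-1}$, i.e.\ $\frac{1}{\delta}$ is a positive infinity. Assuming the ratio is positive (which is the working case; one handles the zero or negative cases separately by either noting $0 \not\in \Phi$ is excluded by the strict inequality context, or by applying Proposition \ref{P054} to flip signs), the inequality $\frac{f}{g} \leq \delta$ inverts to $\frac{g}{f} \geq \frac{1}{\delta}$.

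Next I would invoke Definition \ref{DEF051}: any positive infinity is larger than every real number, so $\frac{g}{f} \geq \frac{1}{\delta}$ forces $\frac{g}{f}$ to exceed every finite real, placing $\frac{g}{f} \in +\Phi^{-1}$. Then by Definition \ref{DEF036} applied in the reverse direction, its reciprocal $\frac{f}{g}$ lies in $\Phi$. Concluding via Definition \ref{DEF006} gives $f(x) \prec g(x)|_{x=\infty}$.

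The main obstacle I anticipate is the sign/zero bookkeeping: Definition \ref{DEF006} demands the quotient be an honest infinitesimal, but Corollary \ref{P043} explicitly excludes $0$ from $\Phi$, and a mere upper bound $\leq \delta$ does not by itself rule out $\frac{f}{g} = 0$ or $\frac{f}{g}$ being a large negative number. I would resolve this by interpreting the inequality in the ``comparison algebra'' spirit used throughout Section \ref{S0204}, where such statements are understood for positive $f,g$ at infinity; negative cases reduce by Proposition \ref{P054}, and the vacuous case $f \equiv 0$ can be dismissed or folded into $\Phi$ by convention. Aside from this, the argument is essentially one application of reciprocation together with Definitions \ref{DEF051} and \ref{DEF036}.
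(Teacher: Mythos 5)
Your proposal is correct and is essentially the paper's argument: both reduce to the observation that a positive quantity bounded above by a positive infinitesimal is itself an infinitesimal, the paper asserting this directly for the interval $(0,\delta]$ while you justify it by reciprocating to $\frac{g}{f} \geq \frac{1}{\delta} \in +\Phi^{-1}$ and invoking Definitions \ref{DEF051} and \ref{DEF036}. Your sign/zero caveat matches the paper's implicit restriction to positive $f$ and $g$, so no substantive difference remains.
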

\begin{proof}
Since $f$ and $g$ are positive, an infinitesimal is their upper bound.
 Since choosing
 any infinitesimal in $(0,\delta]$ satisfies the much-less-than relation,
 $f \prec g$.
\end{proof}

[ When applying Proposition \ref{P011}, 
 we will need to avoid division by zero via a transfer 
 $0 \mapsto \Phi$ and $\infty \mapsto \Phi^{-1}$,
 thereby treating $0$ and $\infty$ separately.
 See Part 4 ]
\bigskip
\begin{prop}\label{P011}
 $0 \prec \Phi$ and $\Phi^{-1} \prec \infty$ 
\end{prop}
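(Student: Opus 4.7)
The plan is to apply the transfer $0 \mapsto \Phi$ and $\infty \mapsto \Phi^{-1}$ flagged in the bracketed remark immediately preceding the statement, because a direct appeal to Definition \ref{DEF006} breaks down on both sides: the candidate ratio $0/\delta$ equals $0$, which is not an infinitesimal by Corollary \ref{P043}, and $M/\infty$ involves the symbol $\infty$ which is explicitly excluded from $\Phi^{-1}$ by Definition \ref{DEF059}. Under the transfer, $0$ and $\infty$ are promoted to representative elements of $\Phi$ and $\Phi^{-1}$ respectively, after which the ratio test of Definition \ref{DEF006} becomes available.

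For the first claim $0 \prec \Phi$, I would fix an arbitrary $\delta \in \Phi^{+}$ and, via the transfer $0 \mapsto \Phi$, replace $0$ by a witness infinitesimal that is strictly dominated by $\delta$; the natural choice is $\delta^{2}$, which lies in $\Phi$ by closure of $(\Phi,\cdot)$ (Proposition \ref{P090}). Then $\delta^{2}/\delta = \delta \in \Phi$, so $\delta^{2} \prec \delta$ by Definition \ref{DEF006}; transferring back yields $0 \prec \delta$. Since $\delta$ was arbitrary (and the negative case follows by factoring out a sign), $0 \prec \Phi$.

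The second claim $\Phi^{-1} \prec \infty$ is handled symmetrically. Fix $M \in +\Phi^{-1}$ and, via the transfer $\infty \mapsto \Phi^{-1}$, replace $\infty$ by the infinity $M^{2} \in \Phi^{-1}$ (closure of $(\Phi^{-1},\cdot)$, Proposition \ref{P111}). Then $M/M^{2} = 1/M$ is an infinitesimal by Definition \ref{DEF036}, so $M \prec M^{2}$; transferring back gives $M \prec \infty$, and since $M$ was arbitrary, $\Phi^{-1} \prec \infty$.

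The main obstacle, and what prevents this proposition from being a pure triviality, is the transfer step itself: its rigorous formulation is reserved for Part 4, and its legitimacy here rests on accepting that $0$ and $\infty$ can be consistently identified with suitably small or large representatives within $*G$. Once that is granted, the calculation reduces to a one-line manipulation using the field properties of Section \ref{S0106}; conceptually, the proposition simply records that $0$ and $\infty$ sit at the extreme ends of the magnitude chain $0 \lt \Phi^{+} \lt \mathbb{R}^{+}\!+\Phi \lt +\Phi^{-1} \lt \infty$ established earlier, strengthened from $\lt$ to $\prec$.
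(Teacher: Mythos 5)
Your proposal is correct and follows essentially the same route as the paper's own proof: both arguments interpolate a witness strictly dominated in magnitude (your $\delta^{2} \prec \delta$ and $M \prec M^{2}$ concretely realize the paper's ``choose $\delta_{2} \prec \delta_{1}$ as there is no smallest number'') and then pass from the witness to the extreme element $0$ or $\infty$. The only cosmetic difference is that you package that final passage as the transfer $0 \mapsto \Phi$, $\infty \mapsto \Phi^{-1}$ flagged in the bracketed remark, whereas the paper states it as a direct ordering claim ($0 \lt \delta_{2} \prec \delta_{1}$ hence $0 \prec \delta_{1}$, and $\frac{1}{\delta_{1}} \prec \frac{1}{\delta_{2}} \lt \infty$ hence $\frac{1}{\delta_{1}} \prec \infty$).
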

\begin{proof}
 Since a magnitude relation, we need only consider the positive case.

 $0 \prec \Phi^{+}$:
 $\delta_{1}, \delta_{2} \in \Phi^{+}$
 Consider $0 \; z \; \delta_{1}$.
 Choose $\delta_{2} \prec \delta_{1}$ as there is no smallest number.
 $0 \lt \delta_{2} \lt \delta_{1}$.
 Since $0$ is smaller than $\delta_{2}$ then $0 \prec \delta_{1}$.

 $+\Phi^{-1} \prec \infty$: 
 By inverting the relation we obtain the infinite case.
 Choose $\delta_{2} \prec \delta_{1}$,
 $\frac{1}{\delta_{2}} \succ \frac{1}{\delta_{1}}$,
 $\frac{1}{\delta_{1}} \prec \frac{1}{\delta_{2}}$,
 however $\frac{1}{\delta_{2}} \lt \infty$ then
 $\frac{1}{\delta_{1}} \prec \infty$.
\end{proof}
\bigskip
\begin{defy} \label{DEF045}
\[ \text{When } f(x) \prec g(x) \text{ we say }
 f(x) \text{ is much-less-than } g(x)\]
\[ \text{When } f(x) \succ g(x) \text{ we say }
 f(x) \text{ is much-greater-than } g(x) \]
\end{defy}
\bigskip
\begin{defy}\label{DEF007}  
\[ g(x) \prec f(x) \text{ is the same as } f(x) \succ g(x) \] 
\[ g(x) \preceq f(x) \text{ is the same as } f(x) \succeq g(x) \]
\end{defy}

\begin{defy} \label{DEF008} 
\[ f(x) \succ g(x) \text{ is the same as } f(x) = \omega(g(x)) \]
\[ f(x) \succeq g(x) \text{ is the same as } f(x) = \Omega(g(x)) \]
\end{defy}

Because little-o and big-O are
 defined on a right-hand side order
 for $\prec$ and $\preceq$, 
 additional symbols are needed for $\succ$ and $\succeq$.  Here, infinitary calculus has
 a notational advantage. 
\bigskip
\begin{defy} \label{DEF010}
 We say $f(x) \asymp g(x)|_{x=\infty}$ if $f(x) \succeq g(x)|_{x=\infty}$ and $f(x) \preceq g(x)|_{x=\infty}$.
 (See Definition \ref{DEF202} and Proposition \ref{P074}.)
\[ f(n) \asymp g(n) \text{ is the same as } f(n) = \Theta(g(n)) \]
\end{defy}
\bigskip
\begin{defy}\label{DEF201}
We say $a \simeq b$ then $a$ and $b$ are infinitesimally close,
 $a-b \in \Phi \cup \{0\}$ \cite[p.57]{abraham}
\end{defy}
\bigskip
\begin{defy} \label{DEF009} 
We say $f(x) \sim g(x)|_{x=\infty}$
 then
 $\frac{f(x)}{g(x)}|_{x=\infty} \simeq 1$
\end{defy}

 We may consider the asymptotic relation $\sim$
 as an equality with respect to the product,
 and the infinitesimally close relation $\simeq$
 as an equality with respect to addition.
\bigskip
\begin{defy}\label{DEF011}  
We say $f(x) \propto g(x)|_{x=\infty}$
 if $f(x) / g(x)|_{x=\infty} \simeq c$.
  This uses a different relation symbol from
 Hardy's in  
\cite[pp 2--4]{ordersofinfinity}.
\end{defy}

The functions $f(x) / g(x)$ may not necessarily be compared,
 particularly
 if oscillating between categories at infinity occurs.
 \cite[p.4]{ordersofinfinity}
 $f \succ g$ and
 $f \preceq g$ are not each other's logical
 negations in general. 
\bigskip
\begin{mex}\label{MEX023} 
 A counter example demonstrating logical
 negation does not imply a much less than or equal to
 relation. 
\begin{align*}
 (f_{n}/g_{n})|_{n=\infty} = (0, \infty, 0, \infty, \ldots) \tag{Sequence at infinity} \\
\text{Assume } f_{n} \nsucc g_{n} \text{ implies } f_{n} \preceq g_{n} \tag{Dividing by $g_{n}$}|_{n=\infty} \\
 ( f_{n}/g_{n} ) \preceq ( g_{n}/g_{n} ) \tag{Component-wise comparison} \\
 ( f_{n}/g_{n} ) \preceq (1, 1, 1, \ldots ) \\
(0, \infty, 0, \infty, \ldots) \preceq (1, 1, 1, \ldots ) \tag{A component-wise contradiction} 
\end{align*}
\end{mex}
\bigskip
\begin{prop}\label{P033}
 If the relation between 
 $f$ and $g$ are
 $\{ f \prec g, f \asymp g, f \succ g  \}$, 
 then
 the negation of
 one of these
 relations 
 would imply 
 one of the other two relations.
\end{prop}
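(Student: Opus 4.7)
The plan is to read this proposition as a trichotomy statement. The hypothesis already grants that one of $f \prec g$, $f \asymp g$, $f \succ g$ holds (this is precisely the exhaustiveness condition that, per Example \ref{MEX023}, can fail in general for oscillating ratios). So the real content to verify is mutual exclusivity: once I show that these three relations are pairwise incompatible, negating one immediately leaves the disjunction of the other two, which is what the statement asks for.

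For mutual exclusivity I would examine the ratio $f/g|_{x=\infty}$ and use the partition $0 < \Phi^{+} < \mathbb{R}^{+} \cup \Phi < +\Phi^{-1} < \infty$ established earlier. Concretely: by Definition \ref{DEF006}, $f \prec g$ forces $f/g \in \Phi$; symmetrically, $f \succ g$ means $g/f \in \Phi$, so $f/g \in \Phi^{-1}$ by Definition \ref{DEF036}; and $f \asymp g$ unpacks via Definition \ref{DEF010} (with Definition \ref{DEF005}) to give constants $M_1, M_2 \in \mathbb{R}^{+}$ with $|f| \le M_1|g|$ and $|g| \le M_2|f|$, so $|f/g| \in [1/M_2, M_1] \subseteq \mathbb{R}^{+}$. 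Since $\Phi$, $\mathbb{R}^{+}$ (non-infinitesimal, non-infinite), and $\Phi^{-1}$ are pairwise disjoint by the partition theorem, at most one of the three relations can hold.

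Combining exclusivity with the exhaustiveness hypothesis gives the conclusion: if the relation between $f$ and $g$ lies in $\{f \prec g,\, f \asymp g,\, f \succ g\}$ and, say, $\neg(f \prec g)$, then the hypothesis rules out $f \prec g$ as the witness and the remaining two are the only options; likewise for negating $f \asymp g$ or $f \succ g$.

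The only real obstacle is conceptual rather than technical: one must be careful that the proposition is \emph{not} saying the three relations are always exhaustive (Example \ref{MEX023} already shows they are not). The hypothesis must be read as ``assuming we are in the well-behaved regime where one of the three holds,'' and the proof then reduces to the partition argument above. I would flag this explicitly at the start of the proof so the reader does not mistake the result for a universal trichotomy, then proceed with the short disjointness check on $f/g$.
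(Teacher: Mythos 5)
Your proposal is correct and follows essentially the same route as the paper's proof: both arguments rest on the three relations being mutually exclusive, so that the hypothesis (one of the three holds) together with the negation of one of them leaves the disjunction of the other two. The only difference is one of completeness: where the paper simply asserts that $f \succ g$ and $f \prec g$ are disjoint and that $\asymp$ covers the remaining case, you make the disjointness concrete by locating $|f/g|$ in exactly one of $\Phi$, a finite positive bound, or $\Phi^{-1}$ via Definitions \ref{DEF006} and \ref{DEF010} (cf. Proposition \ref{P074}) and the partition of the number line, and you correctly flag that exhaustiveness is a hypothesis rather than a theorem, as Example \ref{MEX023} shows.
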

\begin{proof}
$f \succ g$ and $f \prec g$ are disjoint.
Since $\asymp$: $f \succeq g$ or $f \preceq g$ covers
 the remaining cases.
 Since this is given as disjoint,
 only one of the three cases can occur. 
\end{proof}
 
 Further theorems follow:
 if $f \succ g$, $g \succeq h$, then $f \succ h$.
 This is interesting from
 an application perspective as the ratio
 $f/g$ has settled down into one
 of the three relations.

In comparing relations $\{ o(), O(), \omega(), \Omega() \}$
 with $\{ \prec, \preceq, \succ, \succeq \}$,
 while the variable relation and symbols are equivalent,
 the symbols can be easier to manage and understand in comparison.

However the relation variables of Landau's notation
 have a major
 advantage over infinitary calculus
 relation symbols
 in that the relation 
 is packaged as a variable in
 the equation.
 $\frac{1}{1-x} = 1 + x + x^{2} + O(x^{3})$.

Consequently the definitions of 
 infinitary calculus symbols and  Landau notation
 can be viewed
 as complementary.

We add further definitions to infinitary calculus that 
 extend its use as an
 infinitesimal calculus analysis.
\bigskip
\begin{defy}\label{DEF025}
We say $c(x) \prec \infty$ when $c(x) \neq \pm\infty$ and that
 $c(x)$ is bounded.
\end{defy}

$a \prec \infty$ is not
 the same as $a \in \mathbb{R}$
 as the bound includes infinitesimals.
 While the function $c(x)$ has finite bounds,
 $c_{0} \lt c(x) \lt c_{1}$,
 these functions do not need to converge.
 E.g. $f(x)=\mathrm{sin}\,x|_{x=\infty} \prec \infty$, $f(x)|_{x=\infty} \prec \infty$.  
 At times such functions behave similarly to constants.
 However an infinitesimal when
 realized is $0$ and not positive,
 hence the need to exclude infinitesimals from a finite positive bound definition.
\bigskip
\begin{defy}\label{DEF202}
A variable has a ``finite positive bound''
 if $a \lt x \lt b$, $\{a, b \} \in \mathbb{R}^{+}$.
\end{defy}
\bigskip
\begin{mex}\label{MEX061}
 $(\frac{1}{n}, 1)|_{n=\infty}$ is not
 a finite positive bound
 as the infinitesimal $\frac{1}{n}|_{n=\infty} \notin \mathbb{R}^{+}$, $\frac{1}{n}|_{n=\infty}$ is not finite.
 Further when realizing the infinitesimal,
 the interval is not all positive, as it includes $0$.
 $(\frac{1}{n}, 1)|_{n=\infty} = [0,1)$ 
 Similarly $\mathrm{sin}\frac{1}{n} = \frac{1}{n}|_{n=\infty}=0$
 does not have finite positive bound.
\end{mex}
\bigskip
\begin{prop}\label{P074}
If $f \asymp g$ then $\frac{f}{g}$ has a finite positive bound. 
\end{prop}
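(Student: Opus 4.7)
The plan is to unpack the definition of $\asymp$ into its two constituent magnitude relations and read off the two sides of the bound on $f/g$ from them.

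First I would invoke Definition \ref{DEF010} to rewrite $f \asymp g$ as the conjunction $f \preceq g$ and $f \succeq g$. Applying Definition \ref{DEF005} to $f \preceq g$ directly yields an $M_1 \in \mathbb{R}^+$ with $|f| \leq M_1 |g|$, which after dividing gives $|f/g| \leq M_1$, and $M_1$ is a finite positive real by construction, so this supplies the upper bound required by Definition \ref{DEF202}.

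Next, I would translate $f \succeq g$ into $g \preceq f$ using Definition \ref{DEF007}, and apply Definition \ref{DEF005} again to obtain $M_2 \in \mathbb{R}^+$ with $|g| \leq M_2 |f|$. Rearranging gives $|f/g| \geq 1/M_2$, and since $M_2$ is a positive real, so is $1/M_2$. Combining the two estimates yields $1/M_2 \leq |f/g| \leq M_1$ with both endpoints in $\mathbb{R}^+$, matching Definition \ref{DEF202}.

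The main subtlety, rather than a genuine obstacle, is that Definition \ref{DEF005} bounds $|f|$ in terms of $|g|$, so the natural output is a bound on $|f/g|$ rather than on $f/g$ itself; I would read the proposition as asserting that the magnitude $|f/g|$ has a finite positive bound (this is the only sensible reading given that Definition \ref{DEF202} requires strictly positive endpoints, and it is forced on us because $\asymp$ is a magnitude relation which is sign-blind). A small bookkeeping point to check is that one must rule out $M_2 = 0$, but this follows because $M_2 \in \mathbb{R}^+$ excludes zero, so the reciprocal $1/M_2$ is a well-defined finite positive real and not an infinitesimal, which is precisely what Example \ref{MEX061} warns against.
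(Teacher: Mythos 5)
Your proposal is correct and follows essentially the same route as the paper's own proof: decompose $f \asymp g$ via Definition \ref{DEF010} into $f \preceq g$ and $f \succeq g$, extract a constant from each via Definition \ref{DEF005}, and sandwich the ratio between $1/M_2$ and $M_1$. Your treatment is in fact slightly more careful than the paper's, which drops the absolute values (implicitly assuming positive functions) and miscites Definition \ref{DEF202} where Definition \ref{DEF005} is meant; your remark that the claim should be read as a bound on $|f/g|$ addresses exactly that gap.
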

\begin{proof}
 From Definition \ref{DEF010} if $f \asymp g$ then $f \preceq g$ and $f \succeq g$.
 $M, M_{2} \in \mathbb{R}^{+}$;
 from Definition \ref{DEF202}
 if $f \succeq g$ then $M f \geq g$.
 $f \preceq g$ then $f \leq M_{2}g$,
 $\frac{f}{M_{2}} \leq g \leq M f$,
 $\frac{1}{M_{2}} \leq \frac{g}{f} \leq M$.
 Inverting,
 $M_{2} \geq \frac{f}{g} \geq \frac{1}{M}$.
\end{proof}
Hardy in \textit{Orders of Infinity} \cite[p.4]{ordersofinfinity}
 states several theorems with the much greater than relations
 and their transitivity. The infinitesimal numbers
 developed earlier can be used as a tool to prove these theorems. 
 When proving,
 without loss of generality, consider positive functions.

While Hardy states the reader will be able to
 prove the theorems without difficulty,
 here a new number system is used for that purpose. 
\bigskip
\begin{prop}\label{P032}  
$f \succ \phi$, $\phi \succeq \psi$,
 then $f \succ \psi$
\end{prop}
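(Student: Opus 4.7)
The plan is to chain the bound from $\phi \succeq \psi$ with the infinitesimal ratio from $f \succ \phi$, landing in the hypotheses of Proposition \ref{P026}. Working throughout with positive functions (without loss of generality, since these are magnitude relations), I would first translate each hypothesis into its working form: from $f \succ \phi$, i.e.\ $\phi \prec f$, Definition \ref{DEF006} gives $\phi/f \in \Phi$; from $\phi \succeq \psi$, i.e.\ $\psi \preceq \phi$, Definition \ref{DEF005} yields an $M \in \mathbb{R}^+$ with $\psi \leq M\phi$ at infinity.

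Next I would divide the big-O bound through by $f > 0$ to obtain $\psi/f \leq M \cdot (\phi/f)$ at infinity. Since $\phi/f \in \Phi$ and scalar multiplication by a nonzero real keeps us inside $\Phi$ (Proposition \ref{P025}), the right-hand side $M\cdot(\phi/f)$ is a positive infinitesimal $\delta \in \Phi^+$. Thus $\psi/f \leq \delta$ with $\delta \in \Phi^+$, which is exactly the hypothesis of Proposition \ref{P026}, yielding $\psi \prec f$, equivalently $f \succ \psi$ by Definition \ref{DEF007}.

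The only mild obstacle is the bookkeeping of signs/absolute values, since $\preceq$ is stated with $|\cdot|$ while $\prec$ here uses a ratio. This is handled by the standard reduction to positive functions permissible for magnitude relations (the sign factors out of the ratio), after which the chain $\psi/f \leq M(\phi/f) \in \Phi$ is clean. No step requires more than the already-established closure of $\Phi$ under scalar multiplication (Proposition \ref{P025}) and the squeeze-type criterion (Proposition \ref{P026}), so the argument is essentially a two-line composition once the definitions are unpacked.
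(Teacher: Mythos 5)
Your proposal is correct and follows essentially the same route as the paper's own proof: translate $f \succ \phi$ into an infinitesimal ratio, use the bound $\psi \leq M\phi$ from $\phi \succeq \psi$, absorb the real constant $M$ into the infinitesimal (the paper does this informally where you cite Proposition \ref{P025}), and conclude via Proposition \ref{P026}. The only cosmetic difference is that the paper substitutes $\phi = \delta f$ while you divide through by $f$; these are the same manipulation.
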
 
\begin{proof} 
  $\delta \in \Phi^{+}$; 
  \begin{align*}
    f \succ \phi \text{ then } \phi = \delta f \tag{\text{Definition } \ref{DEF006}} \\
    \phi \succeq \psi \text{ then } M \phi \geq \psi  \tag{\text{Definition } \ref{DEF005}} \\
    M \delta f \geq \psi \tag{\text{Redefine $\delta$ to absorb $M$}} \\
    \delta f \geq \psi  \\
    \delta \geq \frac{\psi}{f} \tag{\text{Proposition } \ref{P026}} \\
    f \succ \psi 
  \end{align*} 
\end{proof}
\bigskip
\begin{defy}
Let $\neg$ be the negation operation, and $z$ the binary 
 relation. $\neg \neg (f \; z \; g) = (f \; z \; g)$.
\[ \neg (f \; z \; g) \; = \; ( f \; (\neg z) \; g) \]
\end{defy}
\begin{mex} Examples of negation in $\mathbb{R}$ or $*G$. 
 $(\neg \lt) \; = \;\; \geq$.
 $(\neg ==) \; = \;\; \neq$.
\end{mex}
\bigskip
\begin{theo}\label{P030}
$a, b, c \in *G\backslash\{0\}$; 
 If $a b \neq c$ then $a \neq c b^{-1}$
\end{theo}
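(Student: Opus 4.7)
The plan is to prove the contrapositive: assume $a = c b^{-1}$ and deduce $a b = c$, which contradicts the hypothesis $a b \neq c$. Since the statement lives in $*G \backslash \{0\}$, which has been established as a group under multiplication (Proposition \ref{P107}), essentially all of the required infrastructure is already in place; the job reduces to invoking the group axioms in the right order.

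First I would note that because $b \in *G \backslash \{0\}$, the multiplicative inverse $b^{-1}$ exists by Proposition \ref{P106}, and the expression $c b^{-1}$ is a well-defined element of $*G \backslash \{0\}$ by closure (Proposition \ref{P094}). Next, starting from the assumption $a = c b^{-1}$, I would multiply both sides on the right by $b$, which is a legitimate operation since $b$ is a fixed nonzero element of $*G$. The left side becomes $a b$. For the right side, I would apply associativity (Proposition \ref{P096}) to rewrite $(c b^{-1}) b = c (b^{-1} b)$, then use the defining property of the inverse together with the multiplicative identity (Proposition \ref{P105}) to simplify $b^{-1} b = 1$ and then $c \cdot 1 = c$. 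This yields $a b = c$, contradicting the hypothesis, which completes the contrapositive.

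The main obstacle, such as it is, is conceptual rather than computational: one must be careful that the paper's emphasis on non-reversible arithmetic (Part 5) does not apply here. Non-reversibility shows up when one crosses between infinitesimal, real, and infinitary components in a way that destroys information; but Theorem \ref{P030} is stated purely inside the group $(*G \backslash \{0\}, \cdot)$, where each nonzero element has a genuine two-sided inverse and multiplication is reversible by construction. Thus the proof should explicitly cite the group structure to make clear why the usual cancellation argument is valid, even though elsewhere in the paper cancellation can fail.

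Finally, I would observe that the same argument gives the converse direction as well (if $a \neq c b^{-1}$ then $a b \neq c$), so the theorem could equivalently be stated as the equivalence $a b = c \Leftrightarrow a = c b^{-1}$ in $*G \backslash \{0\}$, with the displayed contrapositive form being the one best suited to later applications in solving relations (Part 3).
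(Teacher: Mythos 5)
Your proposal is correct and is essentially the paper's own argument: the paper's proof is a short negation chain $(ab \neq c) = \neg(ab=c) = \neg(a = cb^{-1}) = (a \neq cb^{-1})$, whose middle step is exactly the group-theoretic equivalence $ab = c \Leftrightarrow a = cb^{-1}$ that you establish explicitly via associativity, inverses, and the identity in $(*G\backslash\{0\},\cdot)$. Your version is just a more fully justified rendering of the same idea (and your remark that the statement is really an equivalence matches what the paper's negation chain implicitly asserts).
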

\begin{proof}
$(ab \neq c)$
 $= \neg(ab = c)$
 $= \neg(a = c b^{-1})$
 $= (b \neq c b^{-1})$
\end{proof}
\bigskip
\begin{prop}\label{P027}
$f \succeq \phi$ implies the negation of $f \prec \phi$, $\neg(f \prec \phi)$.
 However $\neg(f \prec \phi)$ does not imply $f \succeq \phi$.
\end{prop}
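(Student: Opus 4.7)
The proposition has two independent halves: the forward implication and a counterexample showing the converse fails. I would treat them separately.

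For the forward direction, the plan is to argue by contradiction. Assume $f \succeq \phi$ and also $f \prec \phi$. By Definition \ref{DEF005} (read via Definition \ref{DEF007}) the first assumption gives some $M \in \mathbb{R}^{+}$ with $|\phi| \leq M|f|$, equivalently $|\phi/f| \leq M$. By Definition \ref{DEF006} the second assumption gives $f/\phi \in \Phi$, so taking reciprocals places $\phi/f \in \Phi^{-1}$, i.e.\ $\phi/f$ is an infinity. But by Definition \ref{DEF051} (and Proposition \ref{P018} applied to the reciprocal) every infinity exceeds every positive real, contradicting $|\phi/f| \leq M$. Hence $f \succeq \phi$ forces $\neg(f \prec \phi)$.

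For the converse, I would exhibit a concrete counterexample modelled on Example \ref{MEX023}: choose $f,\phi$ at infinity so that $f/\phi$ takes the values $(0,\infty,0,\infty,\ldots)$ along some indexing. Then $f/\phi$ is not an infinitesimal (at the odd-indexed terms it is an infinity, in particular exceeds every positive infinitesimal bound required by Definition \ref{DEF006}), so $\neg(f \prec \phi)$ holds. Simultaneously $\phi/f$ takes values $(\infty,0,\infty,0,\ldots)$, so no real $M$ can satisfy $|\phi/f| \leq M$ throughout, and hence $f \nsucceq \phi$. This yields an instance of $\neg(f \prec \phi)$ with $f \succeq \phi$ failing, as required.

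The routine part is the contradiction in the forward direction, which is essentially the observation ``bounded cannot coincide with infinitesimal in the reciprocal.'' The main obstacle is the counterexample: I need an $f/\phi$ that is neither uniformly an infinitesimal nor uniformly bounded, which is exactly what oscillation between $0$ and $\infty$ achieves. The justification that such oscillating gossamer values are legitimate here rests on the same component-wise reasoning already used in Example \ref{MEX023}, so I would explicitly invoke that precedent rather than re-derive it.
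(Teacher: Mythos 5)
Your argument is correct. The paper proves both halves at once by characterising each hypothesis as a range of values of the ratio: $f \succeq \phi$ pins $\phi/f$ into the bounded set $\mathbb{R}^{+}+\Phi$, while $\neg(f\prec\phi)$ only excludes $f/\phi$ from $\Phi$, leaving the larger range $[\mathbb{R}^{+}+\Phi, +\Phi^{-1}]$; the one-way containment of these two sets is then read off as giving the implication and the failure of its converse. Your forward direction carries the same content (boundedness of $\phi/f$ versus $\phi/f$ being an infinity), merely repackaged as a contradiction, which is fine. Where you genuinely diverge is the converse: the paper stops at ``the second interval is not contained in the first,'' whereas you exhibit a witness. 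Your version is arguably the more careful one, because for a ratio that settles into a single category the converse does not actually fail --- if $f \succ \phi$ then $\phi/f \in \Phi$ lies below every positive real by Proposition \ref{P018}, so $f \succeq \phi$ holds after all --- and hence the only genuine counterexamples are the oscillating ratios of Example \ref{MEX023}, exactly the ones you invoke. The paper's interval argument leaves that point implicit; your proof surfaces it explicitly, at the cost of leaning on the component-wise comparison of oscillating ratios, a device the paper itself already sanctions in that example.
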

\begin{proof}
 Without loss of generality, consider positive $f$ and $\phi$.
 Since $f \succeq \phi$, 
 $\exists M: M \in \mathbb{R}^{+}$ then $M f \geq \phi$, $M \geq \frac{\phi}{f}$.
 Since $\frac{\phi}{f}$ is positive and bounded above, 
 and $\frac{\phi}{f} \in \mathbb{R}^{+}+\Phi$.

 If we consider the negation relation,
 $\delta \in \Phi$,
 $\neg (f \prec \phi)$,
 $\neg ( \frac{f}{\phi} = \delta)$
 $\frac{f}{\phi} \neq \delta$, which excludes
 infinitesimals, but not infinities.
 Since $\frac{f}{\phi}$ is positive, then
 expressed as an interval $\frac{\phi}{f} \in [ \mathbb{R}^{+}+\Phi, +\Phi^{-1}]$.
 
 We can see the first interval is a subinterval in the second, hence implication
 is confirmed, but that the second interval is not contained in the first,
 then the `not implied' confirmed.
\end{proof}
\subsection{Scales of infinity} \label{S0204}
In music a scale ordered by increasing pitch is an ascending scale,
 while descending scales are ordered by decreasing pitch. 
 Indeed everyone has heard musicians going through
 the scales in rehearsal before a performance.

In an analogous way mathematics has its scales where
 families of functions ascend and descend. 
 Because of a property of the numbers zero and infinity,
 the scales are defined at these points, giving
 a number system at zero and at infinity.

 Since these scales are intimately involved with
 the evaluation of a function at a point(extended sense), the
 scales apply to any function evaluation.
 A simple example is that when $a \neq 0$, $x^{2}|_{x=a}$
 $= (x+a)^{2}|_{x=0}$
 $= x^{2} + 2ax + a^{2}|_{x=0}$, we 
 also see $x^{2} \prec 2ax \prec a^{2}|_{x=0}$, 
 correlates to the scale $x^{2} \prec x \prec 1|_{x=0}$

Hardy discusses in detail
 the rates of growth
 of functions, and compares
 two functions
 where
 different functions 
 could be ordered. 
 Hence I believe the title
 of his book is fittingly ``\textit{Orders of Infinity}".
 A new function can always be inserted
 between 
 two ordered functions.
Different
 families of functions possess
 different orderings.
 This 
 is similar to
 the real number system
 where we can always find
 a number between two other numbers.

The notion of numbers
 being  much greater than ($\succ$)
 or much smaller than ($\prec$)
 other numbers
 makes sense for numbers
 that are infinitely large
 or infinitely small.
 
Consider the family of functions $x^{k}$ as $x \to \infty$.
Moving away from the origin,
 each function with increasing 
 exponent $k$,
 becomes steeper.

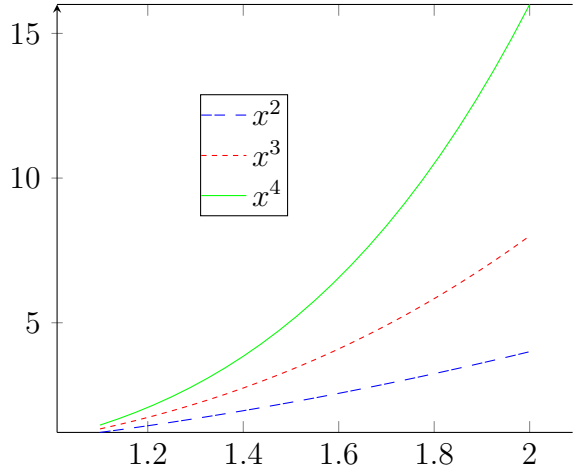
\begin{figure}[h]
\centering
\begin{tikzpicture}[domain=1.1:2.0]
\begin{axis}
  [
    axis y line=center,
    legend style={inner xsep=0pt, inner ysep=0pt, at={(2.5cm,4.5cm)}},
  ]
  \addplot[blue, dash pattern=on 4pt off 1pt on 4pt off 4pt]{ x^2 };
  \addlegendentry{$x^{2}$}
  \addplot[red, dash pattern=on 2pt off 2pt on 2pt off 2pt]{ x^3 };
  \addlegendentry{$x^{3}$}
  \addplot[green]{ x^4 };
  \addlegendentry{$x^{4}$}
\end{axis}
\end{tikzpicture}
\caption{powers at infinity} \label{FIG01}
\end{figure}

 From the considerations of infinitely large functions, relational scales can  be developed.
 The progression of these functions form a scale
 of higher infinities.

Let $x \to \infty$
 then $x^{2}/x \to \infty, x^{3}/x^{2} \to \infty, \ldots$

By defining a measure of the magnitude
 as the absolute value of
 the ratio between two functions
 at infinity, the scales of infinity are more easily expressed. 
 See the $\succ$ relation (Definition \ref{DEF006}).

$(\ldots \succ x^{3} \succ x^{2} \succ x^{1} \succ x^{0} \succ x^{-1} \succ \ldots)|_{x=\infty}$

This relation is conveniently
 symmetrical such that by swapping
 the function's sides, the arrow
 reverses in direction in the same
 way $3 \lt 5$ becomes
 $5 \gt 3$. \\ 
 $(x \prec x^{2} \prec x^{3} \prec \ldots)|_{x=\infty}$

Other examples:  
$(e^{x} \prec e^{e^{x}} \prec e^{e^{e^{x}}} \prec \ldots)|_{x=\infty}$  
 and importantly the logarithmic 
 scale  
$(n \succ \mathrm{ln}\,n \succ \mathrm{ln}\,\mathrm{ln}\,n \succ \mathrm{ln} \,\mathrm{ln}\,\mathrm{ln}\,n \succ \ldots)|_{n=\infty}$
\bigskip
\begin{defy}\label{DEF038}
Let $k$-powers of $e$ 
 be represented by $e_{k}(x)$:
 $e_{0}(x)=x$, 
 $e_{k}(x) = e^{e_{k-1}(x)}$
\end{defy}
\bigskip
\begin{defy}\label{DEF012}
Let $k$-nested natural logarithms
 be represented by $\mathrm{ln}_{k}(x)$:  
 $\mathrm{ln}_{-1}\,x = 1$,  
 $\mathrm{ln}_{0}\,x = x$,  
 $\mathrm{ln}_{k}\,x = \mathrm{ln}( \mathrm{ln}_{k-1}\,x)$
\end{defy}
\bigskip
\begin{defy}\label{DEF039}
As a convention, when $\mathrm{ln}_{k}$ has no argument,
 we define $\mathrm{ln}_{k} = \mathrm{ln}_{k}\,n$.
\end{defy}

Consider $\mathrm{ln}_{k}|_{n=\infty}$. 
If $n$ reaches infinity before the $k$-nested log functions,  
 $\mathrm{ln}_{k} = \infty$ is guaranteed. Looking
 at this another way, let $k$ be finite. 
 This avoids the possibility of the logarithm becoming negative
 or complex.
 Each of these infinities belongs to a family scale.
 $( \mathrm{ln}\,x \succ \mathrm{ln}_{2}\,x \succ \mathrm{ln}_{3}\,x \succ \ldots )|_{x=\infty}$
\bigskip
\begin{conjecture}\label{C001}
Given $f(x)|_{x=\infty}=\infty$, $k=\infty$,  $\mathrm{ln}_{k} f(x) = \infty$ when $x$ reaches
 infinity before $k$.
\end{conjecture}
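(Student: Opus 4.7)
The plan is to formalize the phrase ``$x$ reaches infinity before $k$'' as the condition that $f(x)$ strictly dominates every member of the $e_k$-scale when the $e_k$ is applied to a finite argument; that is, for every $M \in \mathbb{R}^{+}$ we have $f(x) > e_{k}(M)$ at infinity. This reading is consistent with the scales-of-infinity discussion in Section \ref{S0204} and with Conjecture \ref{C001}'s informal gloss that $x$ outruns the $k$-level tower while $k$ only processes finite inputs.

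First I would fix an arbitrary $M \in \mathbb{R}^{+}$ and invoke the hypothesis to obtain $f(x) > e_{k}(M)$ in $*G$. Next, since $\mathrm{ln}$ is strictly monotonic on positive arguments and each $e_{j}(M)$ is positive (indeed in $+\Phi^{-1}$ for $j \geq 1$), I would iterate: applying $\mathrm{ln}$ once gives $\mathrm{ln}\, f(x) > \mathrm{ln}\, e_{k}(M) = e_{k-1}(M)$; repeating $k$ times collapses the right-hand side to $M$ via Definition \ref{DEF038}, yielding $\mathrm{ln}_{k} f(x) > M$. Since $M$ was arbitrary, $\mathrm{ln}_{k} f(x)$ exceeds every real, so by Definition \ref{DEF051} it lies in $+\Phi^{-1}$, i.e.\ $\mathrm{ln}_{k} f(x) = \infty$ in the sense required.

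The main obstacle is the step that iterates monotonic $\mathrm{ln}$ an infinite number of times. Ordinary induction gives the inequality only for every finite $j$, but here $k \in \mathbb{J}_{\infty}$, so we must justify the conclusion $\mathrm{ln}_{k} f(x) > M$ for an infinite index. The hypothesis ``$x$ reaches infinity before $k$'' is precisely what is needed to license this: it asserts that the tower $e_{k}(M)$ still sits below $f(x)$ after $k$ stages, so the corresponding $k$-fold inverse operation $\mathrm{ln}_{k}$ is defined and the chain of inequalities transfers through. Making this transfer rigorous will likely require the transfer principle of Part 4 (applied to the finite-$k$ statement ``for every finite $j$, $f(x) > e_{j}(M)$ implies $\mathrm{ln}_{j} f(x) > M$'') and a careful use of the partition $\mathbb{J}_{<} \lt \mathbb{J}_{\infty}$ from Definition \ref{DEF004} to ensure $k$ stays on the allowed side of the scale $e_{k}(\mathbb{R}^{+})$ dominated by $f(x)$.

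A secondary concern is well-definedness: each $\mathrm{ln}_{j} f(x)$ must remain positive so the next logarithm can be taken. This follows from the same hypothesis, since $\mathrm{ln}_{j} f(x) > e_{k-j}(M) > 0$ at every intermediate stage $j < k$. Once this uniform positivity is secured, the conclusion is immediate from the monotonicity argument above, so the genuine content of the conjecture is the legitimacy of the infinite iteration rather than any individual inequality.
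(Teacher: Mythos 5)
The paper offers no proof of this statement: it is presented as a conjecture and explicitly left open (``the possibility of an ordering of variables at infinity should be expected, and this may provide much further investigation''), so there is nothing to compare your argument against except the surrounding informal discussion. Judged on its own terms, your argument has a genuine gap, and it is the one you yourself flag. The recursions in Definitions \ref{DEF038} and \ref{DEF012} define $e_{j}$ and $\mathrm{ln}_{j}$ only for finite $j$, since they ground out at the base case after finitely many steps. For $k \in \mathbb{J}_{\infty}$ neither $e_{k}(M)$ nor the identity $\mathrm{ln}_{k}\,e_{k}(M)=M$ is supplied by anything in the paper, so your formalization of the hypothesis (``for every $M \in \mathbb{R}^{+}$, $f(x) > e_{k}(M)$'') already presupposes an object the framework does not construct, and the step ``repeating $k$ times collapses the right-hand side to $M$'' is an induction of infinite length. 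Ordinary induction gives $\mathrm{ln}_{j} f(x) > M$ for every \emph{finite} $j$; transferring that to an infinite index $k$ is precisely the content of the conjecture, not a lemma you may borrow. Invoking the transfer principle of Part 4 does not close this, because Definition \ref{DEF040} moves statements between $\mathbb{R}$ and $*G$, not from ``all finite $j$'' to ``some infinite $k$'' --- the latter would require an internality or overspill-type principle that the paper nowhere establishes for $*G$.

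A secondary problem is that your reading of ``$x$ reaches infinity before $k$'' is stronger than, and different from, the paper's. The gloss in Sections \ref{S0204} and \ref{S0607} is that from $x$'s perspective $k$ behaves as a constant, i.e.\ the nesting depth is to be treated as effectively finite relative to $x$; under that reading the conclusion is closer to a convention than a theorem, which is presumably why the authors label it a conjecture rather than prove it. If you want a provable statement, the honest version is the finite-depth one --- for every finite $j$, $f(x)|_{x=\infty}=\infty$ implies $\mathrm{ln}_{j}f(x)|_{x=\infty}=\infty$, by induction on $j$ using the fact that the logarithm of a quantity exceeding every real itself exceeds every real --- with the passage to infinite $j$ isolated as an explicit additional axiom or as a consequence of a to-be-proved overspill principle. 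Your proposal correctly identifies where the difficulty lives, but it does not remove it.
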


While Conjecture \ref{C001} is usually
 expressed as a definition,
 the possibility of an ordering
 of variables at infinity should 
 be expected, and 
 this may provide much further investigation.
 A variable reaching infinity before another variable
 could better explain
 partial differential equations, 
 where other variables are held constant,
 and the target variable differentiated.
\bigskip
\begin{defy}\label{DEF013}
Given initial relation $\phi_{1} \succ \phi_{2}$, and function $\phi: \phi_{n+1} = \phi( \phi_{n})$ with the property $\phi_{n} \succ \phi_{n+1}$,
 the relations
$( \phi_{1} \succ \phi_{2} \succ \ldots \succ \phi_{n} \succ \ldots) |_{n=\infty}$,
 are referred to
 as `scales of infinity' \cite[p.9]{ordersofinfinity}.
 Similarly with the much-less-than relation $\prec$.
\end{defy}
\bigskip
With the definition of much less than and much greater than,
 multiplying the scale by constants has no effect.
\bigskip
\begin{prop}\label{P017}
$f, g \in *G$;
 $\,\alpha_{1}, \alpha_{2} \in \mathbb{R} \backslash \{0\}$;
 then  $f \succ g \Leftrightarrow \alpha_{1} f \succ \alpha_{2} g$
\end{prop}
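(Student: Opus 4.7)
The plan is to reduce the symbolic relation $\succ$ to its defining ratio condition via Definition \ref{DEF006} and then exploit the closure of the infinitesimals under nonzero real scalar multiplication (Proposition \ref{P025}). First I would unfold $f \succ g$ as $g \prec f$, which by Definition \ref{DEF006} is the statement that $\frac{g}{f}\big|_{x=\infty} \in \Phi$. The goal then becomes the ratio-level equivalence
\[
  \frac{g}{f}\Big|_{x=\infty} \in \Phi \;\;\Longleftrightarrow\;\; \frac{\alpha_2 g}{\alpha_1 f}\Big|_{x=\infty} \in \Phi.
\]

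Second, I would factor the scalar: $\frac{\alpha_2 g}{\alpha_1 f} = \frac{\alpha_2}{\alpha_1}\cdot \frac{g}{f}$, where $c := \frac{\alpha_2}{\alpha_1}$ lies in $\mathbb{R}\backslash\{0\}$ since $\alpha_1,\alpha_2$ are nonzero reals. For the forward direction, assuming $\frac{g}{f} \in \Phi$, Proposition \ref{P025} ($\mathbb{R}\backslash\{0\} \cdot \Phi \in \Phi$) gives $c \cdot \frac{g}{f} \in \Phi$, i.e. $\alpha_2 g \prec \alpha_1 f$, which is $\alpha_1 f \succ \alpha_2 g$ by Definition \ref{DEF007}. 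For the converse, apply the same proposition with $c^{-1} \in \mathbb{R}\backslash\{0\}$ to recover $\frac{g}{f} \in \Phi$, hence $f \succ g$.

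The only mild subtlety is bookkeeping: because $f \succ g$ is defined via $g/f$ being infinitesimal (not $f/g$), one has to track which constant ends up in the numerator after inversion, and check that $\frac{\alpha_2}{\alpha_1}$ being a nonzero real is the correct scalar hypothesis needed to invoke Proposition \ref{P025}. No genuine obstacle arises beyond this, since all of closure, invertibility of nonzero reals, and the Definition \ref{DEF006} translation between $\succ$ and $\Phi$ are already in place. If one wanted extra rigour, the sign of $c$ is irrelevant because Definition \ref{DEF006} only cares about membership in $\Phi$, which is closed under negation.
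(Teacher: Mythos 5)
Your proof is correct and follows essentially the same route as the paper: both reduce $f \succ g$ to the ratio $\frac{g}{f}$ being infinitesimal and then invoke closure of $\Phi$ under multiplication by nonzero reals (Proposition \ref{P025}), the only cosmetic difference being that the paper scales by $\alpha_{2}$ and $\frac{1}{\alpha_{1}}$ in two steps while you combine them into the single scalar $\frac{\alpha_{2}}{\alpha_{1}}$. Your explicit treatment of the converse direction is slightly more careful than the paper, which presents only the forward chain and leaves its reversibility implicit.
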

\begin{proof}
$f \succ g$ then $\frac{g}{f} = \Phi$,
 $\frac{\alpha_{2} g}{f} = \alpha_{2} \Phi = \Phi$,
 $\frac{\alpha_{2} g}{\alpha_{1}f} = \frac{1}{\alpha_{1}} \Phi = \Phi$, 
 then $\alpha_{1} f \succ \alpha_{2} g$
\end{proof}
\bigskip
\begin{cor}
\[ \text{Given } (\phi_{1} \succ \phi_{2} \succ \ldots)|_{n=\infty} \]
\[ \text{If } a_{n} \in \mathbb{R}\backslash \{0\} \text{, then } (a_{1} \phi_{1} \succ a_{2} \phi_{2} \succ a_{3} \phi_{3} \ldots)|_{n=\infty} \]
\end{cor}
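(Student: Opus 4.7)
The plan is to reduce the corollary to a termwise application of Proposition \ref{P017}. A scale of infinity, by Definition \ref{DEF013}, is just the conjunction of the pairwise much-greater-than relations $\phi_k \succ \phi_{k+1}$ for each $k$. So it suffices to establish $a_k \phi_k \succ a_{k+1}\phi_{k+1}$ for every $k$, since then the scaled sequence satisfies the defining property of a scale of infinity.

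First I would fix an arbitrary index $k$ and invoke the hypothesis $\phi_k \succ \phi_{k+1}$. Then I would apply Proposition \ref{P017} with $f = \phi_k$, $g = \phi_{k+1}$, $\alpha_1 = a_k$, and $\alpha_2 = a_{k+1}$; the hypothesis $a_n \in \mathbb{R}\setminus\{0\}$ guarantees that both scaling constants are nonzero reals, which is exactly what Proposition \ref{P017} requires. The proposition then yields $a_k \phi_k \succ a_{k+1}\phi_{k+1}$. Since $k$ was arbitrary, collecting these relations gives the scale $(a_1\phi_1 \succ a_2\phi_2 \succ a_3\phi_3 \succ \ldots)|_{n=\infty}$.

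There is essentially no obstacle: Proposition \ref{P017} does all the work, and the corollary is simply its iterated form over a scale. The only thing worth being careful about is that the definition of a scale of infinity in Definition \ref{DEF013} is pairwise, so no transitivity argument or uniform bound on the scaling constants is needed; the conclusion follows term by term, and the fact that the $a_n$ may vary with $n$ is harmless because Proposition \ref{P017} permits independent choices of $\alpha_1$ and $\alpha_2$ on each side of the relation.
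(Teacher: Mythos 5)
Your proposal is correct and matches the paper's own proof, which simply says ``Apply Proposition \ref{P017} to each relation''; you have spelled out the same termwise application of Proposition \ref{P017} with $\alpha_{1}=a_{k}$ and $\alpha_{2}=a_{k+1}$ for each pairwise relation in the scale. No further comment is needed.
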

\begin{proof}
Apply Proposition \ref{P017} to each relation.
\end{proof}

Infinitely small magnitude scales
 can likewise be considered.
 For the powers of $x^{k}$ this has the effect of reversing
 the relation when evaluating $x$ at $0$. \\
$(\ldots \prec x^{4} \prec x^{3} \prec x^{2} \prec \ldots )|_{x=0}$

\begin{figure}[H]
\centering
\begin{tikzpicture}[domain=0.001:0.08]
\begin{axis}
  [
    axis y line=center,
    legend style={inner xsep=0pt, inner ysep=0pt, at={(2.5cm,4.5cm)}},
  ]
  \addplot[blue, dash pattern=on 4pt off 1pt on 4pt off 4pt]{ x^2 };
  \addlegendentry{$x^{2}$}
  \addplot[red, dash pattern=on 2pt off 2pt on 2pt off 2pt]{ x^3 };
  \addlegendentry{$x^{3}$}
  \addplot[green]{ x^4 };
  \addlegendentry{$x^{4}$}
\end{axis}
\end{tikzpicture}
\caption{powers at zero} \label{FIG02}
\end{figure}
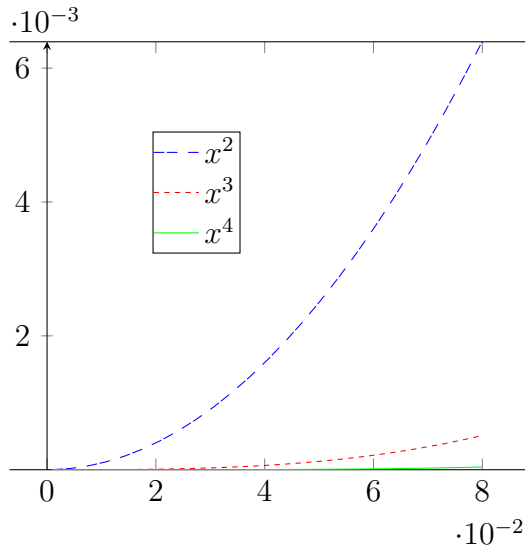

The scales of infinity are often used implicitly in
 calculations.  For example, truncating the Taylor series,
 or with limit calculations by ignoring the infinitesimals
 which
 effectively sets the infinitesimals to zero.

 Since scales of infinity describe infinitesimals, calculus
 can be constructed with these ideas. 
 A most useful scale in algebraic simplification orders
 different families of curves, whereby different types
 of infinitesimals and infinities are compared.

\begin{table}[H]
  \centering
  \begin{tabular}{c}
 $(c \prec \mathrm{ln}(x) \prec x^{p}|_{p \gt 0}  \prec a^{x}|_{a \gt 1} \prec x! \prec x^{x} )|_{x=\infty}$ \\
 $( \ldots \prec x^{-2} \prec x^{-1} \prec 1 \prec x \prec x^{2} \prec \ldots )|_{x=\infty}$ \\
 $( \ldots \succ x^{-2} \succ x^{-1} \succ 1 \succ x \succ x^{2} \succ \ldots )|_{x=0}$ \\
 $( \ldots \prec e^{e^{-x}} \prec e^{-x} \prec 1 \prec e^{x} \prec e^{e^{x}} \prec \ldots )|_{x=\infty}$ \\ 
 $(x \succ \mathrm{ln}\,x \succ \mathrm{ln}_{2}\,x \succ \ldots)|_{x=\infty}$ \\
 $(v \prec \mathrm{ln}\,v \succ \mathrm{ln}_{2} \,v \succ \mathrm{ln}_{3}\, v \succ \ldots)|_{v=0^{+}}$ \\
 $(h f' \succ \frac{h^{2}}{2!} f^{(2)} \succ \frac{h^{3}}{3!} f^{(3)} \succ \ldots )|_{h=0^{+}} \text{ when } f^{(k)} \prec \infty$
  \end{tabular}
  \caption{Summary of scales} \label{FIG03}
\end{table} 

\subsection{Little-o and big-O notation} \label{S0206}
Since infinitary calculus
 has equivalent definitions
 for little-o and big-O notation,
 it can be used
 to do the same things.
 It can describe function growth,
 compare functions, and derive
 theorems.  

Where little-o and big-O notation
 surpasses the
 infinitary calculus notation,
 we see both notations as complementary. 
 In particular, 
 the Landau notation's strength is that it contains the
 relation as an 
 end term to a formula. That is, a relation
 is packaged and managed as a variable.
\[ e^{x} = 1 + x + \frac{1}{2}x^{2} + O(x^{3}) \]
The infinitary calculus symbols are not ``side dependent", 
 $f(x) \succ g(x)$ is the same as $g(x) \prec f(x)$, which
 can give the algebra 
 a sense of freedom. The Landau notation introduced
 $\omega(x)$ and $\Omega(x)$ to express the relations on the
 ``other side", see Definition \ref{DEF008}.

Before proceeding, properties of 
 the magnitude relations $\{ \prec, \preceq, \succ, \succeq \}$
 are derived using $*\overline{G}$, thus demonstrating its usefulness
 in proofs. These properties are then used
 to prove theorems with little-o and big-O, demonstrating an
 equivalence with the magnitude relations.

 To simplify the proofs, 
 from Proposition \ref{P007}, 
 we can make the arguments positive.
 Therefore with
 assumptions regarding $a$ and $b$,
 we can always
 transform the problem to one with sequences
 positive or greater than zero, since these relations are
 not affected by the sign of the elements of the sequence.  

 Since $a$ and $b$ are positive numbers,
 either infinitesimals, infinities, or real numbers except $0$,
 then we can
 multiply and divide $a$ and $b$, before
 realizing the infinitesimal or infinity.
\bigskip
\begin{prop} \label{P005}
 $b \succ a \Leftrightarrow \frac{1}{b} \prec \frac{1}{a}$
\end{prop}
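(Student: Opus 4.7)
The plan is to reduce both sides of the equivalence to the same membership condition in $\Phi$ by unpacking Definition \ref{DEF006} (together with Definition \ref{DEF007} for the $\succ$ direction), and then observing that the two resulting ratios coincide algebraically.

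First, using Proposition \ref{P007} as flagged in the paragraph immediately preceding the statement, I would reduce to the case where $a$ and $b$ are positive, so that reciprocals $\frac{1}{a}$ and $\frac{1}{b}$ exist in $*G$ and the ratios $\frac{a}{b}$, $\frac{1/b}{1/a}$ are well-defined. This avoids the need to chase signs case-by-case and is exactly the motivation Evans and Pattinson give in the preceding paragraphs.

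Next, I would unpack the left-hand side: by Definition \ref{DEF007}, $b \succ a$ is the same as $a \prec b$, and by Definition \ref{DEF006} this means $\frac{a}{b}\bigr|_{x=\infty} \in \Phi$. For the right-hand side, Definition \ref{DEF006} applied to $\frac{1}{b} \prec \frac{1}{a}$ gives $\frac{1/b}{1/a}\bigr|_{x=\infty} \in \Phi$. Since the arguments are nonzero (in fact in $*G\backslash\{0\}$), I can simplify $\frac{1/b}{1/a} = \frac{a}{b}$ using the field properties of $*G$ (Proposition \ref{P108}), so the two conditions are literally the same membership statement.

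The step that is most in need of care — the closest thing to an obstacle — is justifying the reciprocal manipulation inside $\Phi$: I have to make sure that $\frac{1}{a}$ and $\frac{1}{b}$ are legitimate elements of $*G$ (so that the $\prec$ relation on them is defined at all) and that $\frac{1/b}{1/a} = \frac{a}{b}$ as elements of $*G$ rather than merely as a formal rearrangement. Both are handled by the positive-reduction step together with the field axioms already established, so the bulk of the proof is really a one-line identification of the two ratios, followed by a symmetric ``hence both directions'' sentence.
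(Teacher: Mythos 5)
Your proposal is correct and is essentially the paper's own argument: the paper likewise assumes $a,b\neq 0$, sets $\frac{a}{b}=\delta\in\Phi$, and runs the reciprocal chain $\frac{1/b}{1/a}=\delta$ to conclude, with the converse obtained by reversing the same identification. The only cosmetic difference is that you invoke Proposition \ref{P007} explicitly for the sign reduction, which the paper mentions only in the surrounding discussion rather than inside the proof itself.
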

\smallskipneg
\begin{proof}
 $a \neq 0$, $b \neq 0$,
 let $\delta \in \Phi$, $b \succ a$ then
 $\frac{a}{b} = \delta$,
 $\frac{1}{ \frac{a}{b} } = \frac{1}{\delta}$,
 $\frac{ \frac{1}{a}}{ \frac{1}{b} } = \frac{1}{\delta}$,
 $\frac{ \frac{1}{b}}{ \frac{1}{a} } = \delta$,
 $\frac{1}{b} \prec \frac{1}{a}$.
 Similarly if 
 $\frac{1}{b} \prec \frac{1}{a}$ then
 $\frac{ \frac{1}{b}}{ \frac{1}{a} } = \delta$,
 $\frac{a}{b} = \delta$,
 $a \prec b$.
\end{proof}
\bigskip
\begin{prop} \label{P209}
 $b \succ a \Leftrightarrow cb \succ ca$, $c \in *G\backslash\{0\}$
\end{prop}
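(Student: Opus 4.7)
The plan is to reduce both sides of the equivalence to the same condition by cancelling the common factor $c$ in the ratio $\frac{ca}{cb}$. By Definition \ref{DEF006}, $b \succ a$ is exactly the statement that $\frac{a}{b} \in \Phi$, and likewise $cb \succ ca$ is the statement that $\frac{ca}{cb} \in \Phi$. So the proposition reduces to showing that these two ratios are equal as elements of $*G$.

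First I would assume $b \succ a$ and set $\delta = \frac{a}{b} \in \Phi$. Since $c \in *G \backslash \{0\}$, by the field property of $*G$ (Proposition \ref{P108}) $c$ has a multiplicative inverse, and $(*G\backslash\{0\}, \cdot)$ is an abelian group (Propositions \ref{P107}, \ref{P095}, \ref{P096}). Using commutativity and associativity I can write
\[
\frac{ca}{cb} = (ca)(cb)^{-1} = c \cdot c^{-1} \cdot a \cdot b^{-1} = \frac{a}{b} = \delta \in \Phi,
\]
which by Definition \ref{DEF006} is $cb \succ ca$. The converse direction is symmetric: starting from $cb \succ ca$, that is $\frac{ca}{cb} \in \Phi$, the same cancellation yields $\frac{a}{b} \in \Phi$ and hence $b \succ a$.

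The main thing to verify, and the only point requiring any real care, is that every ratio is legitimate. We need $b \neq 0$ for $\frac{a}{b}$ to be defined, which is implicit in $b \succ a$ being a meaningful magnitude comparison (otherwise the quotient cannot live in $\Phi$). We also need $cb \neq 0$ for $\frac{ca}{cb}$ to be defined; this follows from $c \neq 0$, $b \neq 0$, and the absence of zero divisors in the field $*G$. The sign of $c$ plays no role, because $\Phi$ is closed under negation (from Proposition \ref{P101}, infinitesimals have additive inverses in $\Phi \cup \{0\}$), so even if $c$ is negative, an infinitesimal, or an infinity, the cancellation $c \cdot c^{-1} = 1$ is what actually does the work, and multiplication by $c$ never pulls $\delta$ out of $\Phi$ in the simplified expression.
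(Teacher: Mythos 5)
Your proof is correct and takes essentially the same route as the paper: both reduce $cb \succ ca$ to $b \succ a$ by cancelling $c$ in the ratio $\frac{ca}{cb} = \frac{a}{b} \in \Phi$ and arguing symmetrically for the converse. You simply make explicit the field-theoretic justification for the cancellation and the non-vanishing of the denominators, which the paper leaves implicit.
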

\smallskipneg
\begin{proof}
 $a \neq 0$, $b \neq 0$,
 let $\delta \in \Phi$, $b \succ a$ then $\frac{a}{b} = \delta$,
 $\frac{ca}{cb} = \delta$, 
 $cb \succ ca$.
 Similarly if
 $cb \succ ca$ then 
 $\frac{ca}{cb} = \delta$, 
 $\frac{a}{b} = \delta$, 
 $b \succ a$.
\end{proof}
\bigskip
\begin{prop} \label{P007}
 $a \prec b \Leftrightarrow -a \prec b$
\end{prop}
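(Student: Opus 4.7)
The plan is to unpack the much-less-than relation by its definition and reduce the claim to the fact that the set $\Phi$ of infinitesimals is closed under negation. By Definition \ref{DEF006}, $a \prec b$ is by definition the statement $\frac{a}{b} \in \Phi$ (evaluating at the point in question). Hence the biconditional $a \prec b \Leftrightarrow -a \prec b$ is equivalent to $\frac{a}{b} \in \Phi \Leftrightarrow -\frac{a}{b} \in \Phi$, so the entire proposition reduces to showing that $\Phi$ is symmetric about $0$.

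Closure of $\Phi$ under negation follows directly from Definition \ref{DEF036} together with the symmetry of $\Phi^{-1}$: if $x \in \Phi$, then $\frac{1}{x} \in \Phi^{-1}$, and since $\Phi^{-1} = {+}\Phi^{-1} \cup {-}\Phi^{-1}$ (Definition \ref{DEF059}) is closed under taking the additive inverse, we have $-\frac{1}{x} = \frac{1}{-x} \in \Phi^{-1}$, whence $-x \in \Phi$. Alternatively, the identical conclusion is immediate from Proposition \ref{P017} applied with $\alpha_1 = -1$, $\alpha_2 = 1$, after swapping sides via Definition \ref{DEF007}; I would mention this as the cleanest one-line route.

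So the proof would read: assume $a \prec b$, i.e.\ $\frac{a}{b} \in \Phi$; multiply by the real scalar $-1 \in \mathbb{R} \backslash \{0\}$ and invoke closure of $\Phi$ under negation to get $\frac{-a}{b} = -\frac{a}{b} \in \Phi$, which is exactly $-a \prec b$. The converse is symmetric, applying negation once more since $-(-a) = a$.

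There is no real obstacle here; the only subtlety worth flagging is ensuring that $b \neq 0$ so that the ratio $\frac{a}{b}$ is defined, and that one does not need to assume either $a$ or $b$ is positive, because the relation $\prec$ is expressed through a ratio and the closure property used is sign-blind. This is also consistent with the remark preceding the statement that the proofs can be reduced to the positive case via this very proposition, which makes it a foundational lemma rather than a deep result.
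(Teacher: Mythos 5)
Your proposal is correct and follows essentially the same route as the paper: the paper's proof also sets $\frac{a}{b}=\delta\in\Phi$, negates to get $\frac{-a}{b}=-\delta$, and concludes $-a\prec b$, with the symmetric argument for the converse. The only difference is that you explicitly justify the closure of $\Phi$ under negation (via Definition \ref{DEF036} and the symmetry of $\Phi^{-1}$), a step the paper leaves implicit.
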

\smallskipneg
\begin{proof}
 $a \neq 0$, $b \neq 0$,
 let $\delta \in \Phi$,
  $a \prec b$ then $\frac{a}{b}=\delta$, $\frac{-a}{b}=-\delta$, $-a \prec b$.
  Similarly if $-a \prec b$ then $\frac{-a}{b}=\delta$, $\frac{a}{b}=-\delta$, $a \prec b$.
\end{proof}
\bigskip
\begin{prop} \label{P008}
 $a \succ b \Leftrightarrow a +\lambda \succ b + \lambda$
 when $\lambda \prec a$ and $\lambda \prec b$.
\end{prop}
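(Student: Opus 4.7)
The plan is to unpack both sides using the definition that $f \succ g$ iff $g/f \in \Phi$ (Definition \ref{DEF006}), and then reduce everything to algebraic manipulations inside $*G$ whose closure and field properties are already available (Propositions \ref{P090}, \ref{P081}, \ref{P108}). By Proposition \ref{P007} we may assume $a, b > 0$, and in that case the hypothesis $\lambda \prec a$ forces $\lambda/a \in \Phi$, hence $1 + \lambda/a$ differs from $1$ by an infinitesimal; in particular $1 + \lambda/a \notin \Phi \cup \{0\}$, so $a + \lambda \neq 0$ and the quotient $(b+\lambda)/(a+\lambda)$ is well-defined. The same observation applies to $b + \lambda$.

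For the forward direction, I would assume $a \succ b$, so $b/a \in \Phi$, and compute
\[
\frac{b+\lambda}{a+\lambda} \;=\; \frac{b/a + \lambda/a}{1 + \lambda/a}.
\]
The numerator is a sum of two infinitesimals, hence an infinitesimal by Proposition \ref{P081}; the denominator is $1 + \Phi$, whose reciprocal lies in $\mathbb{R}\backslash\{0\} + \Phi$ and so is not an infinity. Multiplying an infinitesimal by a non-infinity gives an infinitesimal (Propositions \ref{P025} and \ref{P090}), so $(b+\lambda)/(a+\lambda) \in \Phi$, i.e.\ $a+\lambda \succ b + \lambda$.

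For the reverse direction, assume $(b+\lambda)/(a+\lambda) = \delta$ for some $\delta \in \Phi$. Then $b + \lambda = \delta(a+\lambda)$, so
\[
\frac{b}{a} \;=\; \delta + \delta\,\frac{\lambda}{a} - \frac{\lambda}{a}.
\]
Each of the three terms on the right is an infinitesimal (the middle one is the product of two infinitesimals, again infinitesimal by Proposition \ref{P090}), and $\Phi$ is closed under addition by Proposition \ref{P081}. Hence $b/a \in \Phi$, which is $a \succ b$.

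The main obstacle is really a bookkeeping one rather than a conceptual one: I must make sure that $a + \lambda$ and $b + \lambda$ are genuinely nonzero before writing the quotients, and that the sign-reduction via Proposition \ref{P007} is legitimate when $\lambda$ itself may be signed. Both follow cleanly from $\lambda \prec a$ (and $\lambda \prec b$), since the factorisation $a + \lambda = a(1 + \lambda/a)$ with $\lambda/a \in \Phi$ keeps the quantity in $*G\backslash\{0\}$. Once this is in place, the rest is straightforward algebra in the field $*G$.
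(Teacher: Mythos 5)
Your proof is correct, but it takes a genuinely different route from the paper's. The paper disposes of the forward direction in one line by invoking non-reversible addition: since $a \succ \lambda$ and $b \succ \lambda$, it writes $\frac{a+\lambda}{b+\lambda} = \frac{a}{b+\lambda} = \frac{a}{b} \in \Phi^{-1}$, i.e.\ it absorbs $\lambda$ outright via the identity $a + \lambda = a$ of Theorem \ref{P058}, and then ``reverses the argument'' for the converse. You instead stay entirely inside the field $*G$: you expand $\frac{b+\lambda}{a+\lambda} = \frac{b/a + \lambda/a}{1+\lambda/a}$ and reduce everything to closure of $\Phi$ under addition and multiplication plus the behaviour of $1+\Phi$ under reciprocals. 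What your approach buys is that it avoids the absorption identity, which in the paper is a transfer-dependent statement (Theorem \ref{P058} explicitly assumes the transfer $\Phi \leftrightarrow 0$), and it makes the reverse implication a genuine separate computation rather than a mirror-image assertion; you also explicitly verify that $a+\lambda$ and $b+\lambda$ are nonzero, which the paper's proof silently assumes. What the paper's approach buys is brevity and a demonstration of the non-reversible arithmetic the series is built around. One small bookkeeping remark: in the forward direction, the reciprocal of $1+\lambda/a$ lies in $\mathbb{R}\backslash\{0\} + \Phi$ rather than in $\mathbb{R}\backslash\{0\}$ or $\Phi$ alone, so neither Proposition \ref{P025} nor \ref{P090} applies directly; you need one extra distributive step, $(1+\epsilon)\mu = \mu + \epsilon\mu$ with $\epsilon\mu \in \Phi$ by Proposition \ref{P090} and the sum in $\Phi$ by Proposition \ref{P081} together with nonvanishing. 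This is cosmetic and does not affect the argument.
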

\smallskipneg
\begin{proof}
 $\frac{a+\lambda}{b+\lambda}$
 $=\frac{a}{b+\lambda}$
 $=\frac{a}{b}$
 $\in \Phi^{-1}$ 
 then
 $a + \lambda \succ b + \lambda$.
 Reversing the argument,
 $\frac{a}{b}$
 $=\frac{a+\lambda}{b}$
 $=\frac{a+\lambda}{b+\lambda}$
 $\in \Phi^{-1}$ then $a + \lambda \succ b + \lambda$.
\end{proof}
\bigskip
\begin{prop} \label{P205}
 $a \succeq b \Leftrightarrow \frac{1}{a} \preceq \frac{1}{b}$
\end{prop}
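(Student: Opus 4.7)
The plan is to mirror the structure of Proposition \ref{P005}, which handled the strict $\succ$/$\prec$ case, and adapt it to the non-strict $\succeq$/$\preceq$ relations by unpacking Definition \ref{DEF005}. The key observation is that $\succeq$ is an existence statement about a real constant $M$, and reciprocation moves $M$ from one side of the inequality to the other in a clean algebraic way.

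First I would reduce to the positive case. Since Definition \ref{DEF005} is stated in terms of $|f|$ and $|g|$, and since Proposition \ref{P007} (together with its analog for $\preceq$) shows the magnitude relations are insensitive to sign, I can assume $a, b > 0$ throughout. I also implicitly take $a, b \neq 0$ so that the reciprocals are defined; in $*G$ this means $a, b \in *G \backslash \{0\}$, in which case $\tfrac{1}{a}, \tfrac{1}{b} \in *G \backslash \{0\}$ as well by Proposition \ref{P106}.

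Next I would prove the forward direction. Assume $a \succeq b$. Unwinding Definition \ref{DEF007} gives $b \preceq a$, and then Definition \ref{DEF005} yields some $M \in \mathbb{R}^{+}$ with $b \leq M a$. Dividing both sides by the positive quantity $ab$ (which is legitimate inside $*G$ as a field by Proposition \ref{P108}, and preserves the inequality by Proposition \ref{P053}) gives $\tfrac{1}{a} \leq M \cdot \tfrac{1}{b}$. This is exactly Definition \ref{DEF005} applied to $\tfrac{1}{a}$ and $\tfrac{1}{b}$ with the same constant $M$, so $\tfrac{1}{a} \preceq \tfrac{1}{b}$.

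The reverse direction is symmetric: assume $\tfrac{1}{a} \preceq \tfrac{1}{b}$, obtain $M' \in \mathbb{R}^{+}$ with $\tfrac{1}{a} \leq M' \cdot \tfrac{1}{b}$, and multiply through by the positive $ab$ to recover $b \leq M' a$, i.e.\ $a \succeq b$. I do not expect any genuine obstacle here; the only subtle point is making sure the witnesses $M$ are positive real numbers (not in $*G$), which is automatic from Definition \ref{DEF005} and is preserved because multiplying the inequality by $ab > 0$ does not touch the constant.
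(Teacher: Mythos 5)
Your proof is correct and follows essentially the same route as the paper: unpack Definition \ref{DEF005} to obtain the witness $M$ with $|b| \leq M|a|$, divide through by the (nonzero) product to get $|1/a| \leq M|1/b|$ with the same constant, and read off $\frac{1}{a} \preceq \frac{1}{b}$. The paper works directly with absolute values and leaves the reverse implication implicit by symmetry, whereas you reduce to the positive case first and state the converse explicitly; these are cosmetic differences only.
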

\smallskipneg
\begin{proof}
 $a \neq 0$, $b \neq 0$,
 $a \succeq b$,
 $\exists \alpha: \alpha |a| \geq |b|$,
 $\alpha \geq \frac{|b|}{|a|}$,
 $\alpha \frac{1}{|b|} \geq \frac{1}{|a|}$,
 $\frac{1}{b} \succeq \frac{1}{a}$,
 $\frac{1}{a} \preceq \frac{1}{b}$.
\end{proof}
\bigskip
\begin{prop} \label{P207}
 $a \succeq b \Leftrightarrow  c a \succeq c b$
\end{prop}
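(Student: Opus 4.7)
The plan is to mirror the style already used for the corresponding much-greater-than result (Proposition \ref{P209}) and for Proposition \ref{P205}, but unfolding the big-$O$ style definition of $\succeq$ from Definition \ref{DEF005} instead of the infinitesimal-ratio characterization of $\succ$. Implicitly I will need to assume $c \in *G \setminus \{0\}$; as in Proposition \ref{P209} this hypothesis should be read into the statement (otherwise $ca = cb = 0$ and the equivalence is trivial on one side but vacuous on the other).

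For the forward direction, suppose $a \succeq b$. By Definition \ref{DEF005} there exists $M \in \mathbb{R}^{+}$ with $M|a| \geq |b|$. Multiplying both sides by $|c|$, which is a positive element of $*G$ since $c \neq 0$, preserves the inequality (Proposition \ref{P053}), giving $M|c||a| \geq |c||b|$, i.e.\ $M|ca| \geq |cb|$. The same real constant $M$ then witnesses $ca \succeq cb$.

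For the reverse direction, suppose $ca \succeq cb$, so there exists $M \in \mathbb{R}^{+}$ with $M|ca| \geq |cb|$, equivalently $M|c||a| \geq |c||b|$. Since $c \neq 0$, $|c| \in *G \setminus \{0\}$ and is positive, so I can divide both sides by $|c|$ (again invoking Proposition \ref{P053}) to obtain $M|a| \geq |b|$, which is exactly the witness for $a \succeq b$.

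The only subtle point is that the constant $M$ must stay real: the forward and reverse manipulations both only rescale the existing positive real $M$ by $1$, so no issue arises. The main potential obstacle is purely bookkeeping — making sure one does not accidentally absorb an infinitesimal or infinitary factor of $c$ into $M$, which would break the requirement $M \in \mathbb{R}^{+}$. Because $|c|$ appears symmetrically on both sides of the inequality and cancels exactly, this never happens, and the same $M$ serves as witness in both directions.
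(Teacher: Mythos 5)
Your proof is correct and follows essentially the same route as the paper's: unfold Definition \ref{DEF005} to get a real witness $M$ with $M|a| \geq |b|$, multiply through by $|c|$ for the forward direction, and reverse (divide by $|c|$) for the converse, with the hypothesis $c \in *G \setminus \{0\}$ read into the statement exactly as the paper does. Your added remark that the same real $M$ survives both directions is a sensible clarification but does not change the argument.
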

\smallskipneg
\begin{proof}
Let $c \in *G\backslash\{0\}$.
 Consider
 $a \succeq b$,
 $\exists \alpha:$
 $\alpha |a| \geq |b|$,
 $\alpha |c| |a| \geq |c| |b|$,
 $\alpha |c a| \geq |c b|$,
 $c a \succeq c b$
 then 
 $a \succeq b \Rightarrow  c a \succeq c b$,
 reversing the argument gives the implication in the other direction.
\end{proof}
\bigskip 
\begin{prop} \label{P200}
 $a \preceq b \Leftrightarrow -a \preceq b$
\end{prop}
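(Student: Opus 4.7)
The plan is to mimic the structure of Proposition \ref{P007}, but working from Definition \ref{DEF005} (the $\preceq$ version using a real positive bound $M$) rather than Definition \ref{DEF006}. The key observation is that the defining inequality for $\preceq$ involves absolute values on both sides, and negation leaves the absolute value invariant.

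First I would unpack the forward implication. Assume $a \preceq b$. By Definition \ref{DEF005} there exists $M \in \mathbb{R}^{+}$ such that $|a| \leq M |b|$. Since $|-a| = |a|$, the same inequality reads $|-a| \leq M|b|$, which by the same definition gives $-a \preceq b$.

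For the converse, assume $-a \preceq b$, so there is some $M \in \mathbb{R}^{+}$ with $|-a| \leq M|b|$. Again using $|-a| = |a|$ we recover $|a| \leq M|b|$, hence $a \preceq b$. Both directions reuse the same constant $M$, so no additional choice of bound is required.

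The argument is entirely routine; there is no real obstacle. If one wanted to parallel the $\prec$ proof even more closely, one could instead write $a \preceq b$ as $\frac{a}{b}$ being bounded in magnitude by a real constant (by Definition \ref{DEF005} together with Proposition \ref{P026}'s style of manipulation), note that $\frac{-a}{b} = -\frac{a}{b}$ has the same magnitude, and conclude. Either way, the only content is the sign-invariance of $|\cdot|$, and the proof will be one or two lines.
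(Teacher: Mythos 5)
Your proof is correct and uses essentially the same idea as the paper's: the defining inequality involves $|a|$, and $|-a| = |a|$, so negation changes nothing. (The paper's own proof happens to write the inequality in the dual $\succeq$ form, $\alpha|a| \geq |b|$, but the single substantive step is identical to yours, and your version working directly from Definition \ref{DEF005} matches the statement more cleanly.)
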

\smallskipneg
\begin{proof}
 Consider
 $a \succeq b$,
 $\exists \alpha:$
 $\alpha |a| \geq |b|$,
 $\alpha |-a| \geq |b|$,
 $-a \succeq b$,
 then 
 $a \succeq b \Rightarrow  -a \succeq b$,
 reversing the argument gives the implication in the other direction.
\end{proof}
\bigskip
\begin{prop} \label{P004}
 $\lambda \prec \infty, \;$
 $a \succeq b \Rightarrow a+\lambda \succeq b+\lambda$
\end{prop}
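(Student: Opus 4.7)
The plan is to work directly from Definition \ref{DEF005}: the hypothesis $a\succeq b$ gives some $M\in\mathbb{R}^+$ with $M|a|\geq |b|$, and the goal is to produce $M'\in\mathbb{R}^+$ with $M'|a+\lambda|\geq |b+\lambda|$. The whole argument should therefore be a controlled comparison of $|a+\lambda|$ with $|a|$ and of $|b+\lambda|$ with $|b|$, using that $\lambda$ carries no infinitary part.

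First I would chain the triangle inequality on the right, $|b+\lambda|\leq |b|+|\lambda|\leq M|a|+|\lambda|$, with the reverse triangle inequality on the left, $|a|\leq |a+\lambda|+|\lambda|$. Substituting gives $|b+\lambda|\leq M|a+\lambda|+(M+1)|\lambda|$, and it then suffices to absorb the $(M+1)|\lambda|$ term into a constant multiple of $|a+\lambda|$. Equivalently, one needs $|\lambda|\preceq |a+\lambda|$, at which point choosing $M'$ to be $M$ plus a bounding constant for $|\lambda|/|a+\lambda|$ closes the proof.

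To establish $|\lambda|\preceq |a+\lambda|$, I would split on whether $a$ is an infinity. If $a\in\Phi^{-1}$, then $\lambda\prec\infty$ (by Definition \ref{DEF025}) forces $\lambda\prec a$, so the infinitary component of $a+\lambda$ is that of $a$, and Proposition \ref{P011} together with Proposition \ref{P005} yield $|\lambda|\prec|a+\lambda|$, which certainly implies $\preceq$. If instead $a$ is bounded, then from $a\succeq b$ it follows that $b$ is bounded too, so $|b+\lambda|$ and $|a+\lambda|$ are both bounded, and the needed real constant $M'$ can be obtained from the original $M$ together with a bound for $|\lambda|$.

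The main obstacle is the bounded subcase: there is nothing in the stated hypotheses that prevents $a+\lambda=0$ while $b+\lambda\neq 0$, in which case no $M'\in\mathbb{R}^+$ can exist. I expect the proof to either lean on the convention, implicit throughout this section, that comparisons are taken between nonzero function-valued quantities at infinity so that generic cancellation does not occur, or to reduce silently to the infinite case where the proposition is actually applied. Handling this cleanly, rather than the triangle-inequality bookkeeping, is the delicate part.
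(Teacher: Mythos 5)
Your argument is correct under the section's standing convention (Proposition \ref{P007} and the remark before it) that the compared quantities are positive and non-degenerate, but it takes a genuinely different route from the paper. The paper's proof avoids your absorption step entirely: it notes that $\alpha$ in $\alpha|a|\geq|b|$ may be increased to $\alpha>1$, so that for $\lambda>0$ one has $\alpha|a|+\alpha\lambda\geq\alpha|a|+\lambda\geq|b|+\lambda$, hence $\alpha(|a|+\lambda)\geq|b|+|\lambda|\geq|b+\lambda|$, and with positive $a$ the left side is just $\alpha|a+\lambda|$ --- the added $\lambda$ is scaled up rather than bounded separately, so no lemma of the form $|\lambda|\preceq|a+\lambda|$ is ever needed. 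The negative case is then dispatched by a sign flip back to the positive one. Your route (triangle inequality both ways, then absorbing $(M+1)|\lambda|$ into a multiple of $|a+\lambda|$, with a case split on whether $a$ is infinite) costs the extra lemma but is more honest about where the statement actually fails: the degenerate possibility you flag --- $a+\lambda$ collapsing to $0$ or to an infinitesimal while $b+\lambda$ does not (e.g.\ $a=1$, $b=2$, $\lambda=-1+\tfrac{1}{n}$) --- is a genuine counterexample to the proposition as literally stated, and it is precisely the paper's cursory ``Case $-\lambda$'' reduction that sweeps it under the rug. In the one place the proposition is used (Example \ref{MEX007}, with $\lambda=1$ and both sides infinitesimally close to $1$) no cancellation occurs, so both proofs are adequate in context; yours makes the needed non-cancellation hypothesis visible, while the paper's buys a shorter argument by leaning silently on positivity of $a$, $b$, and $\lambda$.
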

\smallskipneg
\begin{proof}
 $a \succeq b$,
 $\exists \alpha:$
 $\alpha |a| \geq |b|$,
 $\alpha |a| + \lambda \geq |b| + \lambda$,
 Assume $\alpha \gt 1$ as we can always increase $\alpha$.
 Case $\lambda \gt 0$, 
 $\alpha |a| + \alpha \lambda \geq |b| + \lambda$,
 $\alpha |a + \lambda| \geq |b| + |\lambda| \geq |b+\lambda|$,
 $a+\lambda \succeq b+\lambda$.
 Case $-\lambda$ then
 $\alpha |a| - \lambda \geq |b| - \lambda$,
 $\alpha |a| + \lambda \geq |b| + \lambda$,
 the above positive case.
 Hence 
 $a \succeq b \Rightarrow a+\lambda \succeq b+\lambda$ 
\end{proof}
\bigskip
\begin{mex}\label{MEX007} 
For proving the following big-O theorem 
 we found infinitary calculus
 to be easier to reason with than the solution given in
 \cite[Theorem 2.(8)]{littleobigO}
 . 
\end{mex}
\smallskipneg
\[ \text{If} \; g(x) = o(1) \text{ then }  
  \frac{1}{1+O(g(x))} = 1 + O(g(x)) \]
\smallskipneg
\begin{proof}
  Let $v(x)=O(g(x))$
  \begin{align*}
    1+v(x) \succeq 1
    \\ \frac{1}{1+v(x)} \preceq 1  \tag{from Proposition \ref{P205} } 
    \\ \frac{v(x)}{1+v(x)} \preceq v(x) \tag{from Proposition \ref{P207} }
    \\ \frac{-v(x)}{1+v(x)} \preceq v(x)   \tag{from Proposition \ref{P200} } 
    \\ \frac{-v(x)-1+1}{1+v(x)} \preceq v(x) 
    \\ \frac{1}{1+v(x)} -1 \preceq v(x) 
    \\ \frac{1}{1+v(x)} \preceq 1+v(x)   \tag{from Proposition \ref{P004} } 
    \\ \frac{1}{1+O(g(x))} = 1+O(g(x)) &&\qedhere
  \end{align*}
\end{proof}
\smallskipneg
Verification: rather than building the inequality,
 the inequality can be verified directly.  
$\frac{1}{1+v(x))} \preceq 1+v(x)$,
$\frac{1}{1} \preceq 1+v(x)$ is true,
 since $1+v(x)|_{x=a}=1$ and $v(x) \preceq g(x) \prec 1|_{x=a}$ 
\bigskip
\begin{mex}\label{MEX006} 
Consider the proof of the
 following theorem from
 \cite[Theorem 2.(8)]{littleobigO}
 .
\[ \text{If} \; g(x) = o(1) \text{ then } 
  \frac{1}{1+o(g(x))} = 1 + o(g(x)) \]
Using an inequality in infinitary
 calculus to prove the theorem. 
Let $h(x)=o(g(x))|_{x=a}$,  
 $h(x) \prec g(x)|_{x=a}$,  
$1 + h(x) \succeq 1|_{x=a}$,  
$\frac{1}{1+h(x)} \preceq 1|_{x=a}$, 
$\frac{h(x)}{1+h(x)} \preceq h(x)|_{x=a}$, 
$\frac{-h(x)}{1+h(x)} \preceq h(x)|_{x=a}$, 
$\frac{-h(x)-1+1}{1+h(x)} \preceq h(x)|_{x=a}$, 
$\frac{1}{1+h(x)}-1 \preceq h(x)|_{x=a}$, 
$\frac{1}{1+h(x)}-1 \prec g(x)|_{x=a}$,  
$\frac{1}{1+h(x)}-1 = o(g(x))|_{x=a}$,  
$\frac{1}{1+o(g(x))} = 1 + o(g(x))$ 

From
 \cite{littleobigO}
 the theorem is derived in the standard way
 by taking the limit.  
Applying the little-o definition directly.  
 $\lim\limits_{x \to a} \frac{ \frac{1}{1+h(x)} - 1}{g(x)}$
$= \lim\limits_{x \to a} \frac{ \frac{1 - (1+h(x))}{1+h(x)}}{g(x)}$
$= -\lim\limits_{x \to a} \frac{h(x)}{g(x)} \frac{1}{1+h(x)}$
$= -\lim\limits_{x \to a} \frac{h(x)}{g(x)} \frac{\frac{1}{g(x)}}{\frac{1}{g(x)}+\frac{h(x)}{g(x)}}$
$= -\lim\limits_{x \to a} \frac{h(x)}{g(x)} \frac{1}{1+\frac{h(x)}{g(x)} g(x)}$
$= -\lim\limits_{x \to a} 0 \cdot \frac{1}{1 +0 \cdot g(x)}$
$=0$

The same calculation with infinitary calculus 
 evaluation at the point and applying the definition.  
 $\frac{ \frac{1}{1+h(x)} - 1}{g(x)}|_{x=a}$
$= \frac{ \frac{1 - (1+h(x))}{1+h(x)}}{g(x)}|_{x=a}$
$= \frac{ \frac{-h(x)}{1+h(x)}}{g(x)}|_{x=a}$
$= -\frac{h(x)}{g(x)} \frac{1}{1+h(x)}|_{x=a}$
$= -\frac{h(x)}{g(x)}|_{x=a}$
$=0$ as $1+h(x)|_{x=a}=1$ and $h(x) \prec g(x)|_{x=a}$
\end{mex}

The infinitary calculus expresses scales of infinities with
 more intuitive meaning.
 Writing the scales with big-O notation, using
 the left side to right side definition,
 big-O notation is defined where $f = O(g)$
 is not the same as $O(g)=f$.
 Let $a \gt 0$, $b \gt 0$, $k \gt 0$.
 We can write 
 $O(e^{-ax}) = O(x^{-b}) = O(\mathrm{ln}(x)^{-k})$
 which has a left-to-right definition of $O()$ and 
 express in infinitary calculus as  
 $e^{-ax} \preceq x^{-b} \preceq \mathrm{ln}(x)^{-k}|_{x=\infty}$.

\section{Comparing functions} \label{S03}
 An algebra for comparing functions at infinity
 with infinireals, comprising of infinitesimals
 and infinities, is developed:
 where the unknown relation is solved for.
 Generally, we consider positive monotonic functions $f$ and $g$,
 arbitrarily small or large,
 with relation $z$: $f \; z \; g$. 
 In general we require 
 $f$, $g$, $f-g$ and $\frac{f}{g}$ to be
 ultimately monotonic.

\subsection{Introduction} \label{S0301}
 In extending du Bois-Reymond's theory,
 we have discovered a new number system Part 1,
 and used this to rephrase du Bois-Reymond's much greater
 than relations Part 2. 
 However, at the heart of Reymond's theory
 is the comparison of functions.

 Today this may seem of little interest because
 there are no applications which directly require this.
 Even Hardy, through writing and
 extending 
 du Bois-Reymond's work \cite{ordersofinfinity} 
 thought this.
 Others better incorporated the theory: 
 little-o and big-O notation have the same definitions
 as relational operators $\{ \prec, \preceq \}$
 and similarly 
 other relations Part 2.

 Instead, du Bois-Reymond's work became a catalyst
 for other higher mathematics
 and itself as an operational calculus was
 largely forgotten.
 In this era 
 of immense change and other 
 issues which they faced, this is not surprising.
 For just one example, the theory on divergent sums and
 functions was being established.

 Our aim is to open the field of infinitary calculus
 through the development of another
 infinitesimal and infinitary calculus that
 is 
 derived from du Bois-Reymond's work.
 The method solves for relations between functions.
 Subsequent papers giving  
 applications for comparing functions are found.
 (E.g. sum convergence \cite[Convergence sums]{cebp2})

 Comparing functions is the key discovery in the theory's
 development. 

 With  the method of comparing functions at infinity
 described in this paper,
 we believe there is a significant improvement over
 the method described by Hardy,
 which either computes
 the relation with a limit, 
 which is fine, 
 or uses logarithmico-exponential scales
 \cite[pp.31--33]{ordersofinfinity}. 

 By comparing functions at infinity,
 du-Bois Reymond showed the existence of
 curves infinitely close to each other.
 For example, the construction
 of infinitely many curves infinitesimally
 close to a straight line (see Example \ref{MEX019}).
 Thereby demonstrating  
 the curves
 exist at infinity.

 However, the number system in which the curves
 reside includes infinitesimals and infinities,
 and hence is a non-standard analysis.

 While du Bois-Reymond did not define a number system
 as Abraham Robinson had done with Non-standard analysis (NSA),
 the constructions prove that such a number system
 exists. 

That Abraham made little reference 
 to du Boise-Reymond's work 
 is unexplained. 
 Though he did use similar notation.
 For example in \cite[p.97]{abraham}: if $a \in \mathbb{J}$ and
 $b \in *J$ then $a \prec b$. Elements of $*J$ he called infinite. 

 As this is a reference paper in the sense
 that it
 contains propositions which
 we have collected,  
 while working through problems. 
 Subsequent papers reference this paper.

 We can consider comparison
 in terms of addition or multiplication. Where $z$ is a relation. 

\begin{table}[H]
  \centering
  \begin{tabular}{|c|c|} \hline
  $f \; z \; g$ & \text{Comparison} \\ \hline
  $f-g \; z \; 0$ & \text{additive sense} \\ \hline
  $\frac{f}{g} \; z \; 1$ & \text{multiplicative sense} \\ \hline
  \end{tabular}
  \caption{Binary relation comparison} \label{FIG05}
\end{table}
\subsection{Solving for a relation} \label{S0302}
While we are very familiar with
 solving for variables as values,
 in general we do not solve variables
 for relations.
 However there is no real reason not to
 do so.

In the course of devising an alternative
 way to compare functions at infinity,
 a new way of comparing functions
 has been developed, where the primary
 focus is to solve for the relation.
\bigskip 
\begin{defy}\label{DEF017}
Let $f(x) \; z \; g(x)$ 
 be a comparison of the functions
 $f(x)$ and $g(x)$
 where $z$ is the variable relation.
\end{defy}

When possible, we could then solve for a relation $z$,
 for example
$z \in \{ \lt, \leq \prec, \preceq, \gt, \geq, \succ, \succeq , =, \simeq, \not =, \prec\!\prec, \succ\!\succ \}$.
\bigskip
\begin{defy}\label{DEF018}
Given $f(x) \; z \; g(x)$, with 
 relation $z$, then applying function
 or operator $\phi$ to one or both sides of the variable relation $z$ 
 results in a new relation $(\phi(z))$. 
\end{defy}

The brackets about the relation are
 an aid  to distinguish 
 the relation as a variable.
\[  f(x) \; z \; g(x) \text{, } \;\;
\phi(f(x)) \; (\phi(z)) \; \phi(g(x)) \]
The function is also applied to the middle
 as changing either $f$ or $g$ can change
 the relation.  
For example,
 applying an exponential function to both sides
 and the middle
 gives $e^{f(x)} \; (e^{z}) \; e^{g(x)}$,
 where 
 $(e^{z})$ is the new relation.
 Applying the logarithm 
 function 
 to all parts,
 $\mathrm{ln}\,f(x) \; (\mathrm{ln}\,z) \; \mathrm{ln}\,g(x)$,
 where 
 $(\mathrm{ln}\,z)$ is the new relation.
 Applying differentiation to all parts,
 $D f(x) \; (D z) \; D g(x)$,
 where $(D z)$ is the new relation.
 $D$ is a shorthand
 operator for differentiation $\frac{d}{dx}$.

 Differentiating or integrating
 positive monotonic functions, with
 the condition that their difference is monotonic,
 preserves the $\lt$ and $\leq$ relations: 
 $D f(x) \; (D z) \; D g(x) \Leftrightarrow f(x) \; z \; g(x)$
 (see Part 6), and by notation, 
 relations  
 $(D z) = (z)$, $(\int z) = (z)$.
\bigskip
\begin{defy}\label{DEF033}
\smallskip
We say $z \in \mathbb{B}$ to mean that $z$ is a binary
 relation.
\end{defy}
\bigskip
\begin{defy}\label{DEF032}
When $f \; ( \phi(z) ) \; g$ 
 and  $(\phi(z))  = z_{2}$,
 where 
 ; $z, z_{2}, (\phi(z)) \in \mathbb{B}$;
 we may propose $\phi(z) = z_{2}$, provided
 that there is no contradiction. 
\end{defy}

In practice, when solving for a relation, we
 presume such a relation exists and then proceed
 to solve for it. The method of brackets about
 the relation is a label of applied operations.
 If this is reversible, a solution exists to unravel
 the said operations.
 Definition \ref{DEF032} allows you to proceed with
 the solution process without having to formerly say
 so. 
\[ \text{If } (D^{n}z) = \; \succ \text{ then solve } D^{n}z = \; \succ \]

\begin{mex}
$2 x^{2} \; z \; 5x|_{x=\infty}$, 
 $4x \; (D\,z) \; 5|_{x=\infty}$,  
 $(D\,z) = \; \succ$, removing the brackets
 and solving for $z$, 
 $D\,z = \; \succ$,
 $\int \! D\,z = \; \int \! \succ$,
 $z = \int \! \succ \; = \; \succ$ (see Table \ref{FIG04}). 
\end{mex}
\bigskip
\begin{defy}\label{DEF019}
 Let a ``finite relation" be a relation without consideration of
 infinitesimal or infinitary
 arithmetic.
\end{defy}
\bigskip
\begin{mex}\label{MEX052}
 The relations
 $\forall n \gt n_{0}: n+1 \gt n$,
 for positive $n$,
 then  
 $\frac{1}{n} \gt 0$
 is not finite
 relations, as they include infinite arithmetic.
 No lower bound exists. The infimum (greatest lower bound)
 $0$ exists,
 but is another type of number.
 Positive $n$ has no upper bound either,
 but has an infimum $\infty$. The ``bounds" do exist,
 but involve infinite arithmetic, and in a sense are numbers
 of another dimension.
\end{mex}
\bigskip
\begin{defy}\label{DEF020}
 Let a
 relation that is not
 finite
 have infinitesimal or infinite arithmetic.
\end{defy}

 Any operation on the relation produces
 a new comparison, or new $z_{k}$,
 however such a system allows us to solve
 for the initial $z$, 
 through the myriad of possibilities.
\bigskip
\begin{mex}\label{MEX049} Compare in a multiplicative-sense $n^{2}$ and $n$.
 See Figure \ref{FIG06}.
 In solving for $\succ$,  
 the relations $z_{1}$, $z_{2}$, $z_{3}$ 
 did not change
 after division,
 $(\ldots, n^{2} \succ n, n \succ 1, 1 \succ n^{-1} \ldots)|_{n=\infty}$,
 by $b \succ a \Leftrightarrow cb \succ ca$ Proposition \ref{P209}.
\begin{figure}[H]
\centering
\begin{tikzpicture}
\node [] (a11){$n^{2} \; z_{1} \; n$}
  child 
  {
    node [] (a00) {$n \; z_{2} \; 1$ } 
    child 
    {
      node [] (a01) {$1 \; z_{3} \; \frac{1}{n}$}
      child 
      {  
        node [] (a12) {$1 \succ \frac{1}{n}$}
      } 
      child 
      {
        node [] (a04){$\mathbb{R}:\,1 > 0$}
      }
    }
    child 
    {
      node [] (a05) {$n \succ 1$}
    }
  }
  child 
  {
    node [] (a08) {$n^{2} \succ n$}
  }
;
\node at (-3.0,-2.1) (n009) {\small $\div n$};
\node at (-1.3,-0.6) (n008) {\small $\div n$};
\node at (-4.6,-3.8) (n007) {\small $\frac{1}{n} \in \Phi$ };
\node at (-1.9,-3.8) (n006) {\small $\frac{1}{n}|_{n=\infty}=0$ };
\node at (2.2,-0.6) (n005) {\small $\frac{n^{2}}{n}|_{n=\infty}=\infty$};
\node at (-0.1,-2.2) (n004) {\small $\frac{n}{1}|_{n=\infty}=\infty$};
;
\end{tikzpicture}
\caption{Example calculation of relations connecting $*G$ and $\mathbb{R}$} \label{FIG06}
\end{figure}
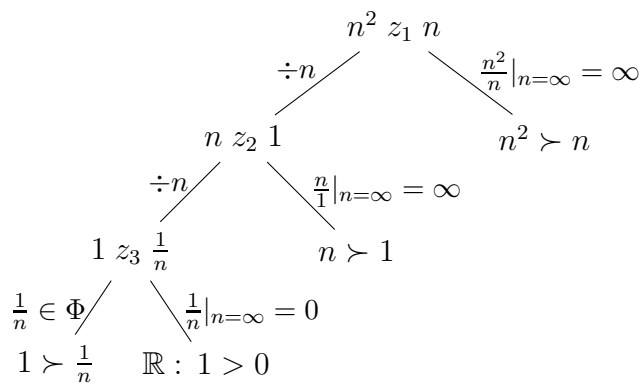
\end{mex}
\bigskip
\begin{defy}\label{DEF021}
 Given relations $z_{1}$ and $z_{2}$, \\
If $f(x) \;\; z_{1} \;\; g(x) \Rightarrow \phi(f(x)) \;\; z_{2} \;\; \phi(g(x))$ then we say $\phi(z_{1}) = z_{2}$. 

If $f(x) \;\; z_{1} \;\; g(x) \Rightarrow e^{f(x)} \;\; z_{2} \;\; e^{g(x)}$ then we say $e^{z_{1}} = z_{2}$. \\
If $f(x) \;\; z_{1} \;\; g(x) \Rightarrow \mathrm{ln}\,f(x) \;\; z_{2} \;\; \mathrm{ln}\,g(x)$ then we say $\mathrm{ln}\,z_{1} = z_{2}$. 
\end{defy}

Use as an aid in calculation
 as a left to right operator when
 solving relations, 
 where all functions in the relations
 are positive,
 $e^{\gt} = \;\, \succ$, 
 $\mathrm{ln} \, \succ \; = \;\, \gt$,  
 $\succ \; = \;\, \gt$.

The following
 compares
 the rate of increase of functions
 in infinitary
 calculus, which we
 find computationally 
 easier than that described
 by Hardy, and at the least
 is an alternate way
 of performing such calculations.

\begin{quote}
The Caterpillar was the first to speak.
 ``What size do you want to be?" it asked.
 ``Oh, I'm not particular as to size," Alice hastily replied; 
 ``only one doesn't like changing so often, you know."
 ``I don't know," said the Caterpillar.
 \cite[pp 72--73]{alice}
\end{quote}

After eating the mushroom in
 her right hand, Alice was shrunk,
 and eating from the left hand she was magnified.

 In an analogous way, by applying powers
 and logarithms we can magnify or shrink
 aspects of the function comparison. Combined
 with non-reversible arithmetic Part 5
 (for example $n^{2}+n = n^{2}|_{n=\infty}$),  we can solve
 for the relation.

 Powers and logarithms are
 mutual inverses.
 While logs of different bases can undo 
 any powers, the natural logarithm $\mathrm{ln}$ and $e$ 
 are the most useful.
 In solving
 relations, it is often convenient to apply these
 functions to both sides of a relation, in a similar
 manner to solving equations. Then
 apply infinitary arithmetic with non-reversible algebra to simplify the relation.

Consider raising both sides of a finite inequality 
 to a power. E.g. 
 $3 \gt 2$, $e^{3} \gt e^{2}$ and the relation symbol did not change.
\bigskip
\begin{mex}\label{MEX008} 
Now consider a relation where both numbers are diverging to
 infinity. 
 For example, $3x \gt 2x|_{x=\infty}$, then $e^{3x} \gt e^{2x}|_{x=\infty}$
 but more importantly $e^{3x} \succ e^{2x}|_{x=\infty}$
 as $e^{3x} / e^{2x} = e^{x}|_{x=\infty}=\infty$.
\end{mex}

If two numbers are positive and one is much larger than
 the other, then the weaker relation that one of the numbers 
  is greater than the other, must be true.
\bigskip
\begin{theo} \label{P009}
 $f=\infty$, $g=\infty$,  
 if $f \succ g$ then $f \gt g$ 
\end{theo}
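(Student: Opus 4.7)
The plan is to unpack the definition of $\succ$ and reduce the claim to comparing an infinitesimal with $1$. Since $f=\infty$ and $g=\infty$ in this statement refer to (positive) infinities, I would first record that $f,g \in +\Phi^{-1}$ and in particular $f,g > 0$.

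Next, I would invoke Definition \ref{DEF006} to rewrite $f \succ g$ as $g/f \in \Phi$. Because $f$ and $g$ are both positive, the quotient $g/f$ is positive, so $g/f \in +\Phi$. Now Proposition \ref{P018} tells us that every positive infinitesimal is strictly less than any finite positive real; applied with the real number $1$, this gives $g/f < 1$.

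The final step is to multiply this inequality through by $f$, which is legitimate and preserves the direction of $<$ because $f > 0$ (this is the positive scalar case of Proposition \ref{P053}, applied with $\theta = f$). This yields $g < f$, that is, $f > g$, which is the required conclusion.

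The only real subtlety — the place where a careless argument could go wrong — is making sure the sign of $g/f$ is controlled before invoking Proposition \ref{P018}, since that proposition is stated for positive infinitesimals. Once we have noted that ``$f = \infty$'' and ``$g = \infty$'' (as opposed to ``$-\infty$'') place both functions in $+\Phi^{-1}$, the positivity of the ratio is automatic and the rest is a short chain of inequalities in $*G$.
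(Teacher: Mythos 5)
Your proposal is correct and follows essentially the same route as the paper: both arguments unpack $f \succ g$ into $g/f = \delta \in \Phi$, compare $\delta$ with $1$ (the paper by dividing the comparison $f \; z \; f\delta$ through by $f$, you by invoking Proposition \ref{P018} directly and then multiplying back by $f$ via Proposition \ref{P053}), and conclude $f \gt g$. Your version is slightly more explicit about positivity and the propositions being used, but it is the same proof.
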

\begin{proof}
 $f \succ g$ then $\frac{g}{f} = \delta$, $\delta \in \Phi$,
 $g = f \delta$.
 Comparing, 
 $f \; z \; g$, 
 $f \; z \; f \delta$,
 $1 \; z \; \delta$,
 $z = \; \gt$.
\end{proof}

Magnifying a less than or greater
 than relationship magnifies the inequality,
 provided their difference is not finite
 (see Theorems $\ref{P010}$ and $\ref{P201}$).
 Demonstrating this, consider
 a condition with an infinitesimal
 difference, so the inequality exists
 in $*G$, but not $\mathbb{R}$.
\bigskip
\begin{mex}\label{MEX043} 
 Show 
: $f \lt g$ does
 not imply $e^{f} \prec e^{g}$. 
 $\delta \in \Phi$; 
 $g = f + \delta$, $f=\infty$, 
 $f \lt g$, $f \lt f + \delta|_{\delta=0}$,
 $e^{f} \; z \; e^{f + \delta}|_{\delta=0}$, 
 $1 \; z \; e^{\delta}|_{\delta=0}$, 
 $1 \; z \; 1 + \delta + \frac{1}{2} \delta^{2} + \ldots|_{\delta=0}$, 
 $0 \; z \; \delta + \frac{1}{2} \delta^{2} + \ldots|_{\delta=0}$, 
 $0 \; z \; \delta |_{\delta=0}$, 
 $z = \; \lt$, but $z$ is 
 not $\prec$.

If we realize the infinitesimals $*G \mapsto \mathbb{R}$ and
 we have equality. $1 \; z \; 1 + \delta|_{\delta=0}$,
 $\delta \mapsto 0$, $1 \; == \; 1$.
\end{mex}
\bigskip
\begin{theo} \label{P010}
 $f=\infty$, $g=\infty$,  
 If $f \lt g$ and $f-g \prec \infty \; \; \Rightarrow \; e^{f} \lt e^{g}$ 
\end{theo}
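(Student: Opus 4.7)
The plan is to reduce the problem to showing that $e^h > 1$ for a suitably bounded positive $h$, mirroring the computation in Example \ref{MEX043} but with the inequality in the correct direction. Since $f \lt g$ and $f-g \prec \infty$, set $h = g - f$, so $h \gt 0$ and $h \prec \infty$, i.e., $h$ is either a positive real, a positive infinitesimal, or a finite positive combination of both, but certainly not an infinity.

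Next I would introduce the unknown relation $z$ and write $e^f \; z \; e^g$, then substitute $g = f + h$ to get $e^f \; z \; e^{f+h} = e^f \cdot e^h$. Because $e^f \gt 0$ is a nonzero positive gossamer number, Proposition \ref{P053} permits division of both sides by $e^f$ without altering the relation, yielding $1 \; z \; e^h$. This is the same reduction step used in Example \ref{MEX043}, and it is the cleanest way to isolate the dependence on the magnitude of $h$.

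To finish, I would expand the exponential as $e^h = 1 + h + \tfrac{1}{2}h^2 + \tfrac{1}{6}h^3 + \cdots$, so that $1 \; z \; 1 + h + \tfrac{1}{2}h^2 + \cdots$, and subtract $1$ from both sides (using the additive version of Proposition \ref{P053}) to obtain $0 \; z \; h + \tfrac{1}{2}h^2 + \cdots$. Because $h \prec \infty$, the tail terms are dominated by $h$ in magnitude, and because $h$ is strictly positive, the entire right side is strictly positive. Hence $z = \; \lt$, which unwinds to $e^f \lt e^g$.

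The main obstacle, such as it is, is ensuring that the series manipulation is legitimate in $\ast G$ when $h$ may be infinitesimal versus finite positive. If $h \in \mathbb{R}^+$, positivity of $e^h - 1$ is immediate. If $h \in \Phi^+$, one must argue that $h + \tfrac{1}{2}h^2 + \cdots$ remains a positive infinitesimal (in particular nonzero); this follows because $h \succ h^2 \succ h^3 \succ \cdots$ at $h = 0$, so the leading term $h$ dominates and fixes the sign. The contrast with Example \ref{MEX043}, where the stronger relation $\prec$ fails precisely because $e^h - 1 \in \Phi$ rather than being real-bounded below, is exactly why the hypothesis $f - g \prec \infty$ gives only $\lt$ and not $\prec$.
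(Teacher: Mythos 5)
Your proposal is correct and follows essentially the same route as the paper: both reduce the problem to showing $e^{g-f} \gt 1$ and then multiply through by $e^{f}$. The only difference is that the paper asserts $e^{0} \lt e^{g-f}$ in one step, whereas you justify that step explicitly via the series expansion of $e^{h}$ (handling the infinitesimal case where $h \succ h^{2} \succ \cdots$), which fills in a detail the paper leaves implicit.
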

\begin{proof}
  $f \lt g$, $0 \lt g-f$, $e^{0} \lt e^{g-f}$,
 $1 \lt e^{g-f}$, $e^{f} \lt e^{g}$
\end{proof}
\bigskip
\begin{theo} \label{P201}
 $f=\infty$, $g=\infty$,  
 if $f \lt g$ and $g-f = \infty \; \; \Rightarrow \; e^{f} \prec e^{g}$ 
\end{theo}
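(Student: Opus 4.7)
The plan is to reduce the claim to showing that a certain ratio of exponentials lies in $\Phi$, then exploit the fact that the difference $g-f$ is itself a positive infinity. By Definition \ref{DEF006}, proving $e^{f} \prec e^{g}$ is equivalent to proving that $\dfrac{e^{f}}{e^{g}} \in \Phi$. Using the basic algebraic identity for exponentials (which is transferable across $\mathbb{R}$ and $*G$), I would rewrite this quotient as $e^{f-g} = e^{-(g-f)}$.

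Setting $h := g-f$, the hypothesis $g-f = \infty$ gives $h \in +\Phi^{-1}$ (a positive infinity). The key intermediate step is to show $e^{h} \in +\Phi^{-1}$. By Definition \ref{DEF051}, it suffices to verify $e^{h}$ exceeds every finite positive real. Given any $M \in \mathbb{R}^{+}$, monotonicity of the real exponential together with $h > \ln M$ (which holds because $h$ dominates every finite real, including $\ln M$) yields $e^{h} > e^{\ln M} = M$. Hence $e^{h}$ is a positive infinity.

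Once $e^{h} \in +\Phi^{-1}$ is established, Definition \ref{DEF036} gives immediately $e^{-h} = \dfrac{1}{e^{h}} \in \Phi^{+}$, and so
\[
\frac{e^{f}}{e^{g}} = e^{-(g-f)} = e^{-h} \in \Phi.
\]
Applying Definition \ref{DEF006} once more concludes $e^{f} \prec e^{g}$.

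The main obstacle is not in the algebra but in justifying that $e^{h}$ really behaves as a positive infinity when $h \in +\Phi^{-1}$. The paper does not (at least in the excerpt) develop the exponential function on $*G$ explicitly, so the argument must rely on the comparison property of Definition \ref{DEF051} and on the monotonicity of $e^{x}$ being preserved under the transfer between $\mathbb{R}$ and $*G$ (Part 4). Compared to Theorem \ref{P010}, the essential strengthening is that here $g-f$ is no longer merely bounded away from $0$ but genuinely infinite, which upgrades $e^{g-f} > 1$ to $e^{g-f} \in +\Phi^{-1}$, and that is exactly what converts $<$ into $\prec$.
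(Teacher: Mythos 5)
Your proposal is correct and follows essentially the same route as the paper: the paper's proof also divides through by $e^{f}$ to reduce the comparison to $1 \; z \; e^{g-f}$ and concludes $z = \; \prec$ from $g-f=\infty$. The only difference is that you explicitly justify the step the paper leaves implicit (that $e^{h} \in +\Phi^{-1}$ when $h \in +\Phi^{-1}$, via Definition \ref{DEF051} and monotonicity), which is a welcome tightening but not a different argument.
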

\begin{proof}
  $e^{f} \; z \; e^{g}$,
  $e^{0} \; z \; e^{g-f}$,
  $1 \; z \; e^{\infty}$,
  $z= \;\prec$.
\end{proof}
\bigskip
\begin{theo}\label{P206}
$f=\infty$, $g=\infty$, if $f \prec g$ then $g-f=\infty$
\end{theo}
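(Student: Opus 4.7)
The plan is to factor $g$ out of the difference and show the remaining factor is bounded away from zero, so that the product with the infinity $g$ remains an infinity.

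First I would invoke Definition \ref{DEF006}: the hypothesis $f \prec g$ gives $\delta := f/g \in \Phi$. Since both $f$ and $g$ are infinite, $g$ is nonzero and this division is legitimate in $*G$. Next I would rewrite the difference multiplicatively as $g - f = g(1 - \delta)$. The factor $1 - \delta$ lies in $\mathbb{R} + \Phi$ with real part $1$; because $\delta$ is infinitesimal, we have $|1 - \delta| \geq \tfrac{1}{2}$ (any fixed positive real strictly less than $1$ works), so $1 - \delta$ is a positive, finite, nonzero gossamer number.

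Then I would argue the product is an infinity. Since $g \in \Phi^{-1}$ and $\tfrac{1}{2} \in \mathbb{R}\setminus\{0\}$, Proposition \ref{P064} gives $\tfrac{g}{2} \in \Phi^{-1}$, hence $\tfrac{g}{2}$ exceeds every real. Because $|g(1-\delta)| \geq |g|/2$, the product $g(1-\delta)$ also exceeds every real, so by Definition \ref{DEF051} it lies in $\Phi^{-1}$. Therefore $g - f = g(1-\delta) \in \Phi^{-1}$, which by the convention of Definition \ref{DEF047} is what we mean by $g - f = \infty$.

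The only delicate step is the second one: justifying that $1-\delta$ is bounded below by a positive real when $\delta$ ranges over infinitesimals of either sign. This is immediate from Proposition \ref{P018} (every infinitesimal is less in magnitude than any positive real), but it is where the reasoning genuinely uses the partition of the continuum in Section \ref{S0104} rather than purely algebraic field manipulations. Everything else is a short computation in $*G$.
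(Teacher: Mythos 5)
Your proposal is correct and follows essentially the same route as the paper: both set $\delta = f/g \in \Phi$, factor the difference as $g-f = g(1-\delta)$, and conclude the product with the near-unit factor is still an infinity. If anything, your version is more careful than the paper's one-line ``$g(1-\delta)\simeq g$'' (which abuses the infinitesimally-close relation, since $g(1-\delta)$ and $g$ differ by $f$, an infinity); your explicit lower bound $|1-\delta|\geq \tfrac{1}{2}$ via Proposition \ref{P018} and the appeal to Proposition \ref{P064} close that gap.
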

\begin{proof}
 $\delta \in \Phi$; 
 since $f \prec g$,
 let $f = \delta g$.
 Consider $g-f$
 $= g - g \delta$
 $= g(1-\delta)$
 $\simeq g$
 $=\infty$.
\end{proof}
\bigskip
\begin{theo} \label{P204}
 $f=\infty$, $g=\infty$, 
 if $f \prec g$ then $e^{f} \prec e^{g}$
\end{theo}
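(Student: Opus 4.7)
The plan is to combine the three immediately preceding results: Theorem \ref{P009} (which converts $\succ$ into $>$ for positive infinities), Theorem \ref{P206} (which converts $f \prec g$ into $g - f = \infty$), and Theorem \ref{P201} (which converts an infinite gap between positive infinities into the $\prec$ relation between their exponentials). The hypothesis $f \prec g$ supplies all the input needed to trigger each of these in turn, so the argument is essentially a two-step chain.

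First I would extract the ordinary inequality $f < g$. Since $f \prec g$ is equivalent to $g \succ f$ and both $f$ and $g$ are infinite (in particular positive in the relevant regime, as we are comparing their magnitudes), Theorem \ref{P009} gives $g > f$, i.e. $f < g$. Next I would apply Theorem \ref{P206} directly to the hypothesis $f \prec g$ to obtain $g - f = \infty$. At this point both premises of Theorem \ref{P201} are in hand: $f$ and $g$ are infinite, $f < g$, and $g - f = \infty$. Invoking that theorem yields $e^f \prec e^g$, which is exactly what we want.

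The only thing worth being careful about is sign: the writeup of Theorem \ref{P009} and the magnitude relations presumes positive functions, and in Theorem \ref{P206} the identity $g - f = g(1 - \delta) \simeq g$ likewise rests on $g$ being a positive infinity. Since earlier in the section (see the discussion preceding Proposition \ref{P005} and Proposition \ref{P007}) we have already reduced such comparisons to the positive case without loss of generality, this is not a genuine obstacle; it simply has to be acknowledged at the start of the proof. With that housekeeping, the argument is a clean citation chain: $f \prec g \Rightarrow (f < g) \wedge (g - f = \infty) \Rightarrow e^f \prec e^g$.
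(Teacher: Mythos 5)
Your argument is correct and in substance identical to the paper's own proof: the paper likewise applies Theorem \ref{P206} to get $g-f=\infty$ and then solves $1 \; z \; e^{g-f}$ with $e^{g-f}=e^{\infty}=\infty$ to force $z=\;\prec$, which is precisely the computation inside Theorem \ref{P201} that you cite. The only cosmetic difference is that the paper inlines that computation instead of routing through Theorems \ref{P009} and \ref{P201} as named lemmas.
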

\begin{proof}
$f \prec g$,
 $g-f=\infty$.
 $e^{f} \; z \; e^{g}$,
 $1 \; z \; e^{g-f}$,
 $1 \; z \; e^{\infty}$,
 $1 \; z \; \infty$,
 $z = \; \prec$
\end{proof}
\bigskip
\begin{prop} \label{P208}
 $f=\infty$, $g=\infty$,
 if $f \succ g$ then
 $\mathrm{ln}\,f-\mathrm{ln}\,g = \infty$
\end{prop}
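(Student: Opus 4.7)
The plan is to reduce the claim about the difference $\ln f - \ln g$ to a statement about the single quantity $\ln(f/g)$, exploit the logarithm identity, and then invoke monotonicity/unboundedness of $\ln$ to promote an infinite argument to an infinite value.

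First I would rewrite $\ln f - \ln g = \ln(f/g)$; this is legitimate because $f$ and $g$ are positive infinities, so the logarithms are defined and the usual identity carries over to $*G$. Next, from the hypothesis $f \succ g$, Proposition \ref{P203} (applied with the roles of $f$ and $g$ swapped, i.e.\ using $g \prec f$) gives $\frac{f}{g}|_{x=\infty} = \infty$, so $f/g \in \Phi^{-1}$. The task is therefore reduced to showing $\ln(h) \in \Phi^{-1}$ whenever $h \in +\Phi^{-1}$.

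For that reduction, I would argue by contradiction using Definition \ref{DEF051}: suppose $\ln(f/g)$ were not an infinity, so it is bounded above by some real $M \in \mathbb{R}^{+}$. Exponentiating (which is monotonic) yields $f/g \leq e^{M} \in \mathbb{R}^{+}$, contradicting the fact that $f/g$ is larger than every finite positive number. Hence $\ln(f/g)$ exceeds every real, i.e.\ $\ln(f/g) \in +\Phi^{-1}$, and so $\ln f - \ln g = \infty$ in the sense meant in the statement (namely, it is an infinity in the partitioning $0 \lt \Phi^{+} \lt \mathbb{R}^{+}\!+\Phi \lt +\Phi^{-1} \lt \infty$).

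The main obstacle, or rather the only subtle point, is justifying that $\ln$ transports positive infinities to infinities; everything else is essentially algebra of $*G$. The argument above handles this via monotonicity of $e^{x}$ and Definition \ref{DEF051}, but one could equally appeal to a scale-of-infinity argument (Section \ref{S0204}): since $\ln x$ diverges as $x \to \infty$ in $\mathbb{R}$, it cannot be bounded on the infinities of $*G$. A minor bookkeeping issue is the sign: because $f \succ g$ with both positive and $f/g$ a positive infinity, $\ln(f/g)$ is a \emph{positive} infinity, so the difference is unambiguously infinite, not merely of infinite magnitude.
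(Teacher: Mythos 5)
Your proof is correct and takes essentially the same route as the paper's: the paper sets $g = \delta f$ with $\delta \in +\Phi$ and reduces the difference to $\ln f - \ln g = \ln \delta^{-1}$, which is precisely your $\ln(f/g)$ with $f/g \in +\Phi^{-1}$. The only difference is that the paper simply asserts $\ln \delta^{-1} = \infty$ as the final step, whereas you actually justify that $\ln$ carries positive infinities to positive infinities (by exponentiating a hypothetical real bound $M$ to get $f/g \leq e^{M}$, contradicting Definition \ref{DEF051}) — a small but worthwhile piece of bookkeeping the paper leaves implicit.
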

\begin{proof}
$f \succ g$ then
 let $f \delta = g$ where 
 $\delta \in +\Phi$;
 $f = \delta^{-1} g$.
 Consider 
 $\mathrm{ln}\,f-\mathrm{ln}\,g$
 $=\mathrm{ln}(\delta^{-1} g)-\mathrm{ln}\,g$ 
 $=\mathrm{ln}\,\delta^{-1} + \mathrm{ln}\,g-\mathrm{ln}\,g$ 
 $=\mathrm{ln}\,\delta^{-1}$ 
 $=\infty$.
\end{proof}
\bigskip
In reducing a large number,
 the log function
 applied to both sides of a relation 
 can decrease the inequality.
\bigskip
\begin{theo} \label{P019}
$f=\infty$, $g=\infty\;\;$ 
  If $f \lt g$ then $\mathrm{ln}\,f \lt \mathrm{ln}\,g$
\end{theo}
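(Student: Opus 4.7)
The plan is to mirror the style used for the exponential direction in Theorems \ref{P010} and \ref{P201}, but now exploiting monotonicity of $\ln$ rather than of $\exp$. The key observation is that, because $f$ and $g$ are both infinities in $+\Phi^{-1}$, they are in particular positive, so ratios and logarithms behave well. Since the claim only asserts the weak symbol $<$ (not $\prec$), no magnitude-amplification is required, and a single monotonicity step ought to suffice.

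First I would set up a variable relation $\ln f \; z \; \ln g$ as in the ``solving for a relation'' framework of Section \ref{S0302}, and reduce it to $0 \; z \; \ln g - \ln f = \ln(g/f)$ using the logarithm law, which is valid because $f,g>0$. Then from $f<g$ and $f>0$ I would divide to obtain $g/f > 1$, and apply the fact that the natural logarithm is strictly increasing on the positive reals (and, more generally, on the positive elements of $*G$, via the usual power-series representation of $\ln(1+u)$ for $u=(g-f)/f$). This yields $\ln(g/f)>0$, whence $z = \;<$ and the theorem follows.

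The main thing to be careful about, which is the genuine obstacle, is the borderline case already flagged in Example \ref{MEX043}: when $g-f$ is only infinitesimal, one might worry whether $\ln(g/f)$ still lies strictly above $0$ in $*G$. Here I would note that $g/f = 1 + (g-f)/f$ with $(g-f)/f \in \Phi^+$, so $\ln(g/f)$ is itself a positive infinitesimal rather than zero; this is enough for the $<$ relation (though, importantly, not for the stronger $\prec$, which is exactly why the theorem states only $<$ and so parallels Theorem \ref{P010} rather than Theorem \ref{P201}). A brief appeal to Proposition \ref{P208} or to the transfer principle referenced in Part 4 would confirm that the strict inequality is preserved after applying $\ln$, closing the proof.
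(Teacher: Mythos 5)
Your proposal is correct and follows essentially the same route as the paper: both set up the variable relation $\ln f \; z \; \ln g$ and reduce it to the positivity of the logarithmic gap, the paper by writing $g = f^{1+\delta}$ with $\delta > 0$ so that the gap is $\delta\,\ln f > 0$ (using $\ln f \in +\Phi^{-1}$), you by writing it as $\ln(g/f)$ with $g/f > 1$. Your explicit handling of the infinitesimally-close case via $\ln(1+u) \in \Phi^{+}$ for $u \in \Phi^{+}$ is a welcome addition the paper omits, though note that the blanket appeal to ``$\ln$ is strictly increasing on the positive elements of $*G$'' is uncomfortably close to restating the theorem itself --- the step that actually carries the weight is the reduction to positivity of $\ln t$ for $t > 1$, which your series argument covers only for $t$ near $1$ and which otherwise needs the standard fact for finite and infinite $t$.
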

\begin{proof}
$f \lt g$, at any point let $g = f^{1+\delta}$, $\delta \gt 0$.
 Compare
 $\mathrm{ln}\,f \; z \; \mathrm{ln}\,g$,
 $\mathrm{ln}\,f \; z \; \mathrm{ln}\,f^{1+\delta}$,
 $\mathrm{ln}\,f \; z \; (1+\delta)\mathrm{ln}\,f$,
 $0 \; z \; \delta \,\mathrm{ln}\,f$,
 $z = \; \lt$
 then 
 $\mathrm{ln}\,f \lt \mathrm{ln}\,g$.
\end{proof}
\bigskip
\begin{theo} \label{P013}
$f=\infty$, $g=\infty\;\;$ 
 If $f \prec g$ then
 $\mathrm{ln}\,f \lt \mathrm{ln}\,g$
\end{theo}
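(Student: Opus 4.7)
The statement is the much-less-than analogue of Theorem \ref{P019}, and the plan is to reduce it to either Theorem \ref{P019} or directly to Proposition \ref{P208}.

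My preferred route is the direct one via Proposition \ref{P208}. Using Definition \ref{DEF007}, $f \prec g$ is the same as $g \succ f$, so Proposition \ref{P208} yields $\ln g - \ln f = \infty$. In particular $\ln g - \ln f > 0$, i.e.\ $\ln f < \ln g$, which is the desired conclusion. A second, essentially equivalent route is to invoke Theorem \ref{P009} (applied to $g \succ f$) to deduce $f < g$, and then apply Theorem \ref{P019} directly to obtain $\ln f < \ln g$.

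A third, from-scratch route mirrors the style used in Theorem \ref{P204} and Proposition \ref{P208}: by Definition \ref{DEF006}, $f \prec g$ means $f/g = \delta$ for some positive $\delta \in \Phi^{+}$, so $f = \delta g$. Taking logarithms, $\ln f = \ln \delta + \ln g$, hence $\ln g - \ln f = -\ln \delta = \ln \delta^{-1}$. Since $\delta^{-1} \in +\Phi^{-1}$ is a positive infinity, $\ln \delta^{-1}$ is positive (in fact infinite), giving $\ln f < \ln g$.

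The only subtlety, and the single place where care is warranted, is the sign/domain convention: taking $\ln f$ and $\ln g$ requires $f$ and $g$ to be positive, which is consistent with the running assumption in this section that the functions compared are positive monotonic. Once that is noted, the result is essentially a one-line corollary of the preceding propositions, so I would not expect any substantive obstacle — the main ``work'' is simply choosing which of the earlier results to cite for the shortest presentation.
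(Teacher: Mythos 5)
Your proposal is correct, and your second route (deduce $f \lt g$ from the infinitesimal ratio as in Theorem \ref{P009}, then apply the log-preservation of Theorem \ref{P019}) is exactly the argument the paper gives; the other two routes via Proposition \ref{P208} are trivial variants within the same framework. No gaps.
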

\begin{proof}
$f \prec g$ then $\frac{f}{g} = \delta$; $\delta \in \Phi^{+}$; $f = \delta g$ Since
 $g$ is not an infinitesimal, multiplying by an infinitesimal decreases the
 number, then $f \lt g$. Alternatively $f \; z \; g$, $\delta g \; z \; g$, $\delta \; z \; 1$, $z = \; \lt$
 as an infinitesimal is smaller than any positive number.
 As $\mathrm{ln}$ preserves the relation,
 $f \lt g \Rightarrow \mathrm{ln}\,f \lt \mathrm{ln}\,g$
\end{proof}
In reducing an infinite number with a log from a much
 less than relation does not imply another much 
 less than relation, nor does it exclude it.
\bigskip
\begin{prop}
 $f=\infty$, $g=\infty$, 
 $f \prec g \nRightarrow \mathrm{ln}\,f \prec \mathrm{ln}\,g$
\end{prop}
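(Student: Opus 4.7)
The statement is a non-implication, so the plan is to produce and verify a single counterexample: functions $f,g$ tending to infinity with $f \prec g$ yet $\mathrm{ln}\,f \nprec \mathrm{ln}\,g$. The natural candidates are polynomial in $n$, since taking logs collapses their rates of growth onto the common scale of $\mathrm{ln}\,n$.

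My proposed witness is $f = n$ and $g = n^{2}$ evaluated at $n = \infty$. First I would check $f \prec g$: by Definition \ref{DEF006} this amounts to showing $f/g \in \Phi$, and $f/g = 1/n \in \Phi^{+}$ at $n=\infty$ directly from the definition of an infinitesimal. So the hypothesis $f \prec g$ holds, and both $f,g$ are infinities in $+\Phi^{-1}$.

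Next I would compute $\mathrm{ln}\,f = \mathrm{ln}\,n$ and $\mathrm{ln}\,g = 2\,\mathrm{ln}\,n$, so that $\mathrm{ln}\,f / \mathrm{ln}\,g = \tfrac{1}{2}$. This ratio is a nonzero real constant, hence by Definition \ref{DEF036} it is not an infinitesimal, so $\mathrm{ln}\,f \nprec \mathrm{ln}\,g$ by Definition \ref{DEF006}. (In fact $\mathrm{ln}\,f \asymp \mathrm{ln}\,g$, since $\tfrac{1}{2}$ has a finite positive bound, invoking Proposition \ref{P074}.) This settles the non-implication.

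There is no real obstacle here; the only thing to be careful about is presenting it as a genuine \emph{counterexample} to an implication rather than trying to prove a universal statement. A brief remark could be added observing \emph{why} the implication fails: the logarithm compresses magnitudes so aggressively that a polynomial gap between $f$ and $g$ is reduced to merely a constant factor between $\mathrm{ln}\,f$ and $\mathrm{ln}\,g$, and one needs a super-polynomial gap (as in Proposition \ref{P208}, where $f \succ g$ already forces $\mathrm{ln}\,f - \mathrm{ln}\,g = \infty$ but not $\mathrm{ln}\,f \succ \mathrm{ln}\,g$ in general) to survive passage through $\mathrm{ln}$.
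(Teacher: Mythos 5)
Your proof is correct and takes essentially the same approach as the paper, which also argues by a single counterexample of identical structure: the paper uses $f=e^{2x}$, $g=e^{3x}$ (Example \ref{MEX009}), so that $f/g=e^{-x}\in\Phi$ but $\mathrm{ln}\,f/\mathrm{ln}\,g=2/3$, exactly paralleling your $n$ versus $n^{2}$ with log-ratio $\tfrac{1}{2}$. Your witness is just the paper's under the substitution $n=e^{x}$, so the verification is interchangeable.
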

\begin{proof} Counter example: Example \ref{MEX009}
\end{proof}
\bigskip
\begin{theo}\label{P016}
 $f=\infty$, $g=\infty$, 
 if $f \succ g$ then $Df \succ Dg$.
\end{theo}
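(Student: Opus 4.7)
The plan is to reduce $f \succ g$ to an infinitesimal ratio $g/f \in \Phi^{+}$, invoke the paper's standing assumption that $f/g$ (hence $g/f$) is ultimately monotonic, and then use the sign of $D(g/f)$ to bound $Dg/Df$ by $g/f$. From there the result follows immediately since an infinitesimal bound forces an infinitesimal ratio.

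More concretely, first I would normalise: by Proposition~\ref{P007} (and monotonicity of $f,g$ to $+\infty$) we may assume $f,g$ are ultimately positive, increasing, and tend to $\infty$, so $Df, Dg \geq 0$ ultimately and $Df > 0$ ultimately. The hypothesis $f \succ g$ then becomes $g/f = \delta$ for some $\delta \in \Phi^{+}$ by Definition~\ref{DEF006}. Next, since $g/f$ is ultimately monotonic and tends to $0$, it is ultimately \emph{decreasing}, so
\[ D\!\left(\tfrac{g}{f}\right) \;=\; \frac{f\,Dg - g\,Df}{f^{2}} \;\leq\; 0 \quad \text{ultimately,} \]
whence $f\,Dg \leq g\,Df$, and dividing by $f\,Df > 0$ gives
\[ 0 \;\leq\; \frac{Dg}{Df} \;\leq\; \frac{g}{f} \;\in\; \Phi^{+}. \]

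The last step is to argue that a non-negative quantity bounded above by a positive infinitesimal is itself in $\Phi \cup \{0\}$: if $0 < Dg/Df \leq \delta$, then $Df/Dg \geq 1/\delta \in {+}\Phi^{-1}$, so $Df/Dg$ is infinite and therefore $Dg/Df \in \Phi^{+}$. By Definition~\ref{DEF006} this is exactly $Dg \prec Df$, i.e.\ $Df \succ Dg$, as required. If $Dg = 0$ on some right-neighbourhood of infinity then $g$ would be eventually constant, contradicting $g = \infty$, so the strict case covers everything we need.

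The main obstacle I anticipate is justifying the ultimate monotonicity of $g/f$ cleanly: the paper only declares the blanket assumption at the start of Section~\ref{S03}, so I want to make sure that the sign $D(g/f) \leq 0$ is being invoked on a neighbourhood of infinity rather than pointwise, and that the inequality $f\,Dg \leq g\,Df$ survives the transfer between $\mathbb{R}$ and $*G$ (Part~4). A secondary subtlety is that $\succ$ is defined in terms of the ratio living in $\Phi$, not merely going to zero, so one must verify the bounding step really produces an element of $\Phi$; Proposition~\ref{P026} essentially packages this implication and can be cited directly once the inequality $Dg/Df \leq g/f$ is in hand.
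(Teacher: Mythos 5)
Your proof is correct, but it takes a genuinely different route from the paper's. The paper's own proof is a three-line relation-solving argument: it writes $Df \; z \; Dg$ as $1 \; z \; \frac{Dg}{Df}$, then asserts the identity $\frac{Dg}{Df} = \frac{g}{f}$ and concludes $1 \; z \; \delta$, $z = \; \succ$. That asserted identity is essentially the comparison form of L'Hopital's rule (Theorem \ref{P015} / Proposition \ref{P217}), which is only developed later in Part 5, so the paper's proof is really a forward reference to L'Hopital for the well-behaved function classes it has in mind. Your argument replaces that equality with an inequality: using the standing assumption that $f/g$ is ultimately monotonic, the fact that $g/f \in \Phi^{+}$ forces $g/f$ to be ultimately non-increasing, the quotient rule gives $f\,Dg \leq g\,Df$, and hence $0 \leq \frac{Dg}{Df} \leq \frac{g}{f} \in \Phi^{+}$, after which Proposition \ref{P026} closes the argument. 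What your version buys is self-containment within Part 3 and weaker hypotheses — you never need $\frac{Dg}{Df}$ to equal, or even be asymptotic to, $\frac{g}{f}$, only to be dominated by it — at the cost of leaning explicitly on the ultimate monotonicity of the ratio (which the paper does declare as a blanket assumption at the start of Section \ref{S03}) and on a small side argument that $Df > 0$ ultimately and that the bound genuinely lands in $\Phi^{+}$ rather than at $0$, both of which you handle. The paper's version is shorter and fits its ``solve for the relation'' style, but is circular-looking unless one grants L'Hopital in advance; yours is the more elementary and more defensible derivation.
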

\begin{proof}
 $Df \; z \; Dg$,
 $1 \; z \; \frac{Dg}{Df}$,
 but $\frac{Dg}{Df} = \frac{g}{f}$ then
 $1 \; z \; \frac{g}{f}$.
 $g = \delta f$; $\delta \in \Phi^{+}$;
 $1 \; z \; \delta$, $z = \; \succ$
\end{proof}
\bigskip
\begin{mex}\label{MEX009} 
 $e^{3x} \succ e^{2x}|_{x=\infty} \nRightarrow 3x \succ 2x|_{x=\infty}$
 as $\frac{3x}{2x}|_{x=\infty}=\frac{3}{2} \neq \infty$. 
\end{mex}

When asking what happens when we reduce an infinity,
 in a 
 similarly way to magnifying
 the relationship, we can consider the two
 complete cases $g-f \prec \infty$ and $g-f=\infty$,
 thereby showing these conditions
 to be necessary and sufficient
 for determining what happens when reducing the relation.
\bigskip 
\begin{prop}\label{P035}
 $f=\infty$, $g=\infty$,
 $f \propto g \Leftrightarrow \mathrm{ln}\,g-\mathrm{ln}\,f \prec \infty$,
\end{prop}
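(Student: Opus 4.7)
The plan is to prove both directions using the interpretation of $\prec \infty$ as ``not an infinity'' (i.e., decomposable as a real plus an infinitesimal in $*G$), combined with the basic identity $\ln(1+\eta)\simeq \eta$ for infinitesimal $\eta$ (equivalently $e^{\eta}\simeq 1$), which follows from truncating the series as in Example \ref{MEX043}.

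For the forward direction, I would assume $f \propto g$, so by Definition \ref{DEF011} we have $f/g \simeq c$ with $c$ real, and since $f,g$ are positive infinities the constant $c$ is positive. Writing $f/g = c + \epsilon$ with $\epsilon \in \Phi \cup \{0\}$, I would then compute
\[
\ln f - \ln g \;=\; \ln(c+\epsilon) \;=\; \ln c + \ln\!\left(1 + \tfrac{\epsilon}{c}\right).
\]
Since $\epsilon/c$ is infinitesimal, the second term is infinitesimal, so $\ln f - \ln g$ lies in $\mathbb{R} + \Phi$. Thus $\ln g - \ln f = -\ln c - \eta$ for some $\eta \in \Phi\cup\{0\}$, which is neither $+\infty$ nor $-\infty$; by Definition \ref{DEF025} this gives $\ln g - \ln f \prec \infty$.

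For the reverse direction, I would start with $\ln g - \ln f \prec \infty$. Interpreting this bound in $*G$, the quantity $\ln g - \ln f$ cannot contain an infinitary component, so it is uniquely representable as $r + \eta$ with $r \in \mathbb{R}$ and $\eta \in \Phi \cup \{0\}$ (using the component decomposition from Definition \ref{DEF075}). Exponentiating,
\[
\frac{g}{f} \;=\; e^{r+\eta} \;=\; e^{r}\, e^{\eta} \;=\; e^{r}(1 + \eta + \tfrac{1}{2}\eta^{2}+\cdots) \;\simeq\; e^{r},
\]
the last step since the bracketed series differs from $1$ only by infinitesimals. Hence $f/g \simeq e^{-r}$, and taking $c = e^{-r} \in \mathbb{R}^{+}$ gives $f \propto g$ by Definition \ref{DEF011}.

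The main obstacle, and the step requiring the most care, is the decomposition of a bounded element of $*G$ as ``real plus infinitesimal'' in the reverse direction: one has to rule out the possibility that $\ln g - \ln f$, while being not a pure infinity, oscillates or otherwise fails to settle to a real part. This is covered by the standing assumption in the introduction to Section \ref{S0301} that the functions under comparison (and in particular $f/g$, and hence $\ln(f/g)$) are ultimately monotonic, which forces the bounded quantity to have a well-defined real part plus infinitesimal remainder. Once that decomposition is in hand, the rest is just applying $e^{\eta}\simeq 1$ and $\ln(1+\epsilon/c)\simeq 0$, both of which are routine in $*G$.
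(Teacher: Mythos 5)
Your proposal is correct, and in fact it does more than the paper does. The paper's own proof is a single line covering only the forward implication: from $f \propto g$ it writes $\frac{f}{g} = \alpha_{n} \prec \infty$, takes logarithms, and concludes $\mathrm{ln}\,f - \mathrm{ln}\,g \prec \infty$; it never addresses the converse even though the proposition is stated as an equivalence. Your forward direction is essentially the paper's argument made explicit: instead of asserting that the log of a bounded ratio is bounded, you decompose $f/g = c + \epsilon$ with $\epsilon \in \Phi \cup \{0\}$ and expand $\ln(c + \epsilon) = \ln c + \ln(1 + \epsilon/c)$, which also surfaces the hidden hypothesis that $c \neq 0$ (needed so that $\ln c$ exists and $\epsilon/c \in \Phi$; the paper's version silently needs $\alpha_{n}$ bounded away from zero for the same reason, since a ratio that is bounded but infinitesimal would have logarithm $-\infty$). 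Your reverse direction is new relative to the paper: you exponentiate a bounded quantity written as $r + \eta$ and use $e^{\eta} \simeq 1$ to recover $f/g \simeq e^{-r}$. The one step there that genuinely needs justification is the decomposition of a merely bounded element of $*G$ into real part plus infinitesimal --- Definition \ref{DEF025} explicitly allows bounded non-convergent behaviour such as $\mathrm{sin}\,x|_{x=\infty}$ --- and you correctly identify that the standing assumption of Section \ref{S0301} (that $f/g$, hence $\mathrm{ln}(f/g)$, is ultimately monotonic) is what forces the bounded quantity to settle to a value in $\mathbb{R} + \Phi$. With that caveat stated, your argument is sound and fills the gap the paper leaves open; the trade-off is that your proof leans on the monotonicity hypothesis and on Definition \ref{DEF075}, neither of which the paper's one-directional proof needs to invoke.
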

\begin{proof}
$f \propto g$ then $\frac{f}{g} = \alpha_{n} \prec \infty$,
 $\mathrm{ln}\frac{f}{g} = \mathrm{ln} \,\alpha_{n}$,
 $\mathrm{ln}\,f - \mathrm{ln}\,g \prec \infty$.
\end{proof}
\bigskip
\begin{prop}\label{P202}
 $f=\infty$, $g=\infty$,
 $f \asymp g \Leftrightarrow \mathrm{ln}\,g-\mathrm{ln}\,f \prec \infty$,
\end{prop}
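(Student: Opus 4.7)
The plan is to reduce the claim, via Definition \ref{DEF010} and Proposition \ref{P074}, to the equivalence between $f/g$ having a finite positive bound and $\ln g - \ln f = \ln(g/f)$ being bounded (which is exactly $\prec \infty$ by Definition \ref{DEF025}). Both directions then follow by applying $\ln$ and $\exp$, which are monotonic on the positive reals.

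For the forward direction, I would first assume $f \asymp g$. By Proposition \ref{P074}, there exist $M_1, M_2 \in \mathbb{R}^+$ with
\[ \tfrac{1}{M_2} \;\leq\; \tfrac{f}{g} \;\leq\; M_1, \]
and inverting gives $\tfrac{1}{M_1} \leq \tfrac{g}{f} \leq M_2$. Since $\ln$ is monotonic on $\mathbb{R}^+$, applying it yields $-\ln M_1 \leq \ln g - \ln f \leq \ln M_2$, which is a finite real bound; hence $\ln g - \ln f \prec \infty$ by Definition \ref{DEF025}.

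For the reverse direction, suppose $\ln g - \ln f \prec \infty$. Then by Definition \ref{DEF025} there exists $M \in \mathbb{R}^+$ with $|\ln g - \ln f| \leq M$, i.e.\ $-M \leq \ln(g/f) \leq M$. Exponentiating gives $e^{-M} \leq g/f \leq e^M$, so the ratio $g/f$ has a finite positive bound in $\mathbb{R}^+$. The upper bound $g \leq e^M f$ witnesses $f \succeq g$ (Definition \ref{DEF005}), and the lower bound rewritten as $f \leq e^M g$ witnesses $f \preceq g$. Combining these gives $f \asymp g$ by Definition \ref{DEF010}.

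The main obstacle, such as it is, is notational rather than conceptual: one has to be careful that $\prec \infty$ in Definition \ref{DEF025} genuinely corresponds to a two-sided finite bound (not merely an upper bound), which is what allows both $\succeq$ and $\preceq$ to be extracted in the reverse direction. A related care is that $f$ and $g$ are positive infinities, so that $\ln$ and $\exp$ preserve inequalities with no sign subtleties; this is already built into the standing hypotheses of the section ("without loss of generality, consider positive functions"). Once these are acknowledged, the argument is essentially the one already used for Proposition \ref{P035}, applied with the weaker "bounded ratio" condition supplied by Proposition \ref{P074} in place of $f/g \simeq c$.
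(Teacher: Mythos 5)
Your proposal is correct, and your forward direction is essentially the paper's own argument: take the finite positive bound $a \lt \frac{f}{g} \lt b$ supplied by $f \asymp g$ (Proposition \ref{P074}), apply $\mathrm{ln}$ to all parts, and conclude $\mathrm{ln}\,g - \mathrm{ln}\,f$ is bounded between $-\mathrm{ln}\,b$ and $-\mathrm{ln}\,a$ (the paper routes the monotonicity of $\mathrm{ln}$ through Theorem \ref{P019} rather than asserting it directly, but the content is identical). Where you genuinely add something is the reverse implication: the statement is a biconditional, yet the paper's proof only establishes $f \asymp g \Rightarrow \mathrm{ln}\,g-\mathrm{ln}\,f \prec \infty$ and stops. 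Your converse --- exponentiate the two-sided bound $-M \leq \mathrm{ln}(g/f) \leq M$ to get $e^{-M} \leq g/f \leq e^{M}$, then read off $f \preceq g$ and $f \succeq g$ from Definition \ref{DEF005} --- is the missing half, and your cautionary remark that $\prec \infty$ in Definition \ref{DEF025} must be read as a two-sided bound (so that both $\preceq$ and $\succeq$ can be extracted) is exactly the point on which that half depends. So your write-up is not merely equivalent to the paper's; it completes it.
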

\begin{proof}
 By definition $f \asymp g$ then $f \preceq g$
 and $f \succeq g$ Part 2.
 $a, b \in \mathbb{R}^{+}$;
 If 
 $f \asymp g$ then $a \lt \frac{f}{g} \lt b$.
 Case $a \lt \frac{f}{g}$,
 $a g \lt f$,
 $\mathrm{ln}\,a + \mathrm{ln}\,g \; (\mathrm{ln}\,\lt) \; \mathrm{ln}\,f$
 $\mathrm{ln}\,a \; (\mathrm{ln}\,\lt) \; \mathrm{ln}\,f - \mathrm{ln}\,g$.
 Similarly,
 $\frac{f}{g} \lt b$,
 $f \lt b g$,
 $\mathrm{ln}\,f \; ( \mathrm{ln}\, \lt) \; \mathrm{ln}\,b+ \mathrm{ln}\,g$,
 $\mathrm{ln}\,f-\mathrm{ln}\,g \; (\mathrm{ln}\,\lt ) \; \mathrm{ln}\,b$.
 Putting the two conditions together,
 $\mathrm{ln}\,a \; (\mathrm{ln}\,\lt) \; \mathrm{ln}\,f-\mathrm{ln}\,g \; (\mathrm{ln}\,\lt) \; \mathrm{ln}\,b$. 
 By Theorem \ref{P019}, $\mathrm{ln}\,\lt \; = \; \lt$.
 $\mathrm{ln}\,a \lt \mathrm{ln}\,f-\mathrm{ln}\,g \lt \mathrm{ln}\,b$. 
 Similarly ; $a', b' \in \mathbb{R}^{+}$; then
 $a' \lt \frac{g}{f} \lt b'$ gives finite bounds.
\end{proof}

The development of solving for the unknown relation as a variable
 comes about through
 comparing functions, which includes the calculation of limits,
 see Part 5.
 After developing the theory,
 while investigating infinitesimals 
 in \textit{Orders of Infinity} \cite{ordersofinfinity}, similar problems were found
 and some of du Bois-Reymond's known theorems were rediscovered.
 At this point the alternate calculation
 was already useful, rather than trying to follow Hardy's calculations.

Using the equality symbol as an operator reading
 from left to right, define $z_{1} = z_{2}$,
 from Definition \ref{DEF021} a table of relation implications,
 the equals symbol is interpreted
  from left to right,
 generally leading
 to the right side being a generalization
 from the left. As an operator 
 analogy; $\text{Mercedes} = \text{car}$;
 $\text{BMW} = \text{car}$.
 The right-hand side is the generalisation
 of the left.

When using equality operator $=$ for generalization,
 place the variable relation being solved
 for, on the left side of the
 equals sign. For example, writing $\mathrm{ln}\,z=\;\succ$
 instead of $\succ \; = \mathrm{ln}\,z$.
 Then the generalisation can be combined with solving
 the variable. 
 $\mathrm{ln}\,z=\;\succ$, $e^{\mathrm{ln}\,z} = e^{\succ} = \; \succ$, $z=\;\succ$.
 Examples of exponential and
 log functions:

In $*G$, given $f=\infty$, $g=\infty$, $f \; z \; g$ and $\phi(f) \; (\phi(z)) \; \phi(g)$, then $\phi(z) = z_{2}$.
 The functions are continuous and monotonic in $*G$.

\begin{table}[H]
  \centering
  \begin{tabular}{|c|c|c|c|} \hline
    $\phi(\{ \lt, \prec \})$ & $\phi( \{ \gt, \succ \})$ & \text{Condition} & \text{Reference} \\ \hline
    $e^{\lt} = \; \lt$ & $e^{\gt} = \; \gt$  & $f-g \prec \infty$ & Th. \ref{P010} \\
    $e^{\lt} = \; \prec$ & $e^{\gt} = \; \succ$  & $f-g = \infty$ & Th. \ref{P201} \\
    $\prec \; = \; \lt$ & $\succ \; = \; \gt$ & & Th. \ref{P009} \\
    $e^{\prec} \; = \; \prec$ &  $e^{\succ} \; = \; \succ$ & & Th. \ref{P204} \\ 
    $\mathrm{ln}\!\lt \; = \; \lt$ & $\mathrm{ln}\!\gt \; = \; \gt$ & & Th. \ref{P019} \\
    $\mathrm{ln}\!\prec \; = \; \lt$ & $\mathrm{ln}\!\succ \; = \; \gt$ & & Th. \ref{P013} \\
    $\mathrm{ln}\!\prec \; = \; \prec$ & & $f \prec\!\prec g$ & Part 5 \\
    $\int\!{\prec} \;\! = \; \prec$ & $\int\!{\succ} \;\! = \; \succ$  & \text{Ignore integration constants} & \text{Part 6} \\
    $D{\prec} \; = \; \prec$ & $D{\succ} \; = \; \succ$ & &  Th. \ref{P016} \\ 
    $\int\!\!{<} \;\! = \; <$ & $\int\!{>} \;\! = \; >$ & & Part 6 \\
    $\int\!\!{\leq} \;\! = \; \leq$ & $\int\!{\leq} \;\! = \; \leq$ & & Part 6 \\
    $D{<} \; = \; \lt$ & $D{\gt} \; = \; \gt$ & $Df-Dg$ \text{ is not constant} & \text{Part 6} \\ 
    $D{\leq} \; = \; \leq$ & $D{\geq} \; = \; \geq$ & $Df-Dg$ \text{ is not constant} & \text{Part 6} \\ 
    \hline
  \end{tabular}
  \caption{Relation simplification for positive divergent functions } \label{FIG04}
\end{table}

\bigskip
\begin{mex}\label{MEX010} 
 Decreasing/reducing the infinities  
\begin{align*}
3x \gt 2x|_{x=\infty} \\ 
\mathrm{ln}(3x) \;\; (\mathrm{ln}\,\gt) \;\; \mathrm{ln}(2x)|_{x=\infty} \\ 
\mathrm{ln}\,3 + \mathrm{ln}\,x \;\; (\mathrm{ln}\,\gt) \;\; \mathrm{ln}\,2 + \mathrm{ln}\,x|_{x=\infty} \\ 
\mathrm{ln}\,3 \;\; (\mathrm{ln}\,\gt) \;\; \mathrm{ln}\,2 \\
\mathrm{ln}\,3 \gt \mathrm{ln}\,2 \text{ then } \mathrm{ln} \gt \; = \; \gt
\end{align*}
\end{mex}
\bigskip
\begin{mex}\label{MEX011} 
 Increasing/magnifying the infinities  
\begin{align*}
3x \gt 2x|_{x=\infty} \\ 
e^{3x} \;\; (e^{\gt}) \;\; e^{2x}|_{x=\infty}  \tag{$\frac{e^{3x}}{e^{2x}}|_{x=\infty}=e^{x}|_{x=\infty} = \infty$} \\
e^{3x} \succ e^{2x}|_{x=\infty} \text{ then } 
 e^{\gt} = \; \succ  
\end{align*}
\end{mex}

To show how all this works, 
 take an example problem from
 \cite[p.8]{ordersofinfinity}.
 Solve for $z$ the following, where $\Delta$ is an
 arbitrarily large but fixed value.
\bigskip
\begin{mex}\label{MEX012} 
\begin{align*}
 e^{x} \;\; z \;\; x^{\Delta}|_{x=\infty} \\ 
 \mathrm{ln}\,e^{x} \;\; (\mathrm{ln}\,z) \;\; \mathrm{ln}\,x^{\Delta}|_{x=\infty} \\ 
x \;\; (\mathrm{ln}\,z) \;\; \Delta \, \mathrm{ln}\,x|_{x=\infty} \\ 
 \mathrm{ln}\,z = \; \succ, 
 z = e^{\succ} = \; \succ \\
e^{x} \succ x^{\Delta}|_{x=\infty}
\end{align*}
\end{mex}

At first, raising a relation to a power may seem silly,
 but it is useful when applied as a notational aid in the solution;
 understood as a magnification it makes sense.
 However, the solution is not always unique,  $z = e^{\succ} = \; \gt$
 is true too, as $\succ \; = \; \gt$ with the left-to-right reading. 

The comparison at infinity
 can ignore
 added 
 constants, that is,
 the comparison is with infinite
 elements. 
\bigskip
\begin{prop}\label{P037}
 If $f=\infty$, $g=\infty$, $f \succ \alpha$, $g \succ \beta$, 
 $z \in \{ \gt, \geq, \succ, \succeq \}$ then
\[ f + \alpha \;\; z \;\; g + \beta \;\;\; \Rightarrow \;\;\; f \;\; z \;\; g \]
\end{prop}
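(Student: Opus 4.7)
The key observation is that $f+\alpha$ and $g+\beta$ differ from $f$ and $g$ only by multiplicative factors infinitesimally close to $1$. First I would rewrite $f+\alpha = f\bigl(1+\alpha/f\bigr) = f\,u_{1}$ and $g+\beta = g\bigl(1+\beta/g\bigr) = g\,u_{2}$. By hypothesis $f \succ \alpha$ and $g \succ \beta$, so by Definition \ref{DEF006} the quotients $\delta_{1} = \alpha/f$ and $\delta_{2} = \beta/g$ lie in $\Phi$, whence $u_{1} = 1+\delta_{1} \simeq 1$ and $u_{2} = 1+\delta_{2} \simeq 1$ (Definition \ref{DEF201}). In particular the ratio $u_{2}/u_{1}$ is positive, infinitesimally close to $1$, and therefore has a finite positive bound (Definition \ref{DEF202}).

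For the multiplicative relations I would then argue directly. If $f+\alpha \succ g+\beta$, Definition \ref{DEF006} gives $(g+\beta)/(f+\alpha) \in \Phi$. But
\[ \frac{g+\beta}{f+\alpha} \;=\; \frac{g}{f}\cdot\frac{u_{2}}{u_{1}}, \]
and since $u_{2}/u_{1}$ is a positive number with finite positive bound, multiplying or dividing by it preserves membership in $\Phi$ (Proposition \ref{P025}). Hence $g/f \in \Phi$, i.e.\ $f \succ g$. The $\succeq$ case is parallel: from Definition \ref{DEF005}, $M|f+\alpha| \geq |g+\beta|$ with $M \in \mathbb{R}^{+}$, and absorbing the bounded factor $|u_{2}|/|u_{1}|$ into a new real constant $M'$ yields $M'|f| \geq |g|$, so $f \succeq g$.

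For the ordinary relations $>$ and $\geq$ I would dispense with the perturbations by invoking the non-reversible arithmetic of Part 5: because $\alpha \prec f$ and $\beta \prec g$, the sums $f+\alpha$ and $g+\beta$ collapse to $f$ and $g$ respectively under the non-reversible simplification that discards subordinate terms. Alternatively, dividing the inequality $f u_{1} \; z \; g u_{2}$ by the positive factor $u_{1}$ gives $f \; z \; g\bigl(u_{2}/u_{1}\bigr)$, and since $u_{2}/u_{1}$ differs from $1$ only by an infinitesimal, the ultimate monotonicity of $f-g$ and $f/g$ assumed at the start of Section \ref{S03} lets the residual infinitesimal perturbation be absorbed, yielding $f \; z \; g$.

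The main obstacle is precisely this last step for the strict classical relation $>$: unlike the magnitude relations $\succ$ and $\succeq$, which swallow bounded positive factors without complaint, a strict real inequality in $*G$ is in principle sensitive to infinitesimal corrections (one could imagine $f=g$ with $\alpha>\beta$ tilting the sum the wrong way). Making the argument airtight therefore requires leaning on the running hypotheses that $f$, $g$, $f-g$ and $f/g$ are ultimately monotonic, so that the conclusion $f \; z \; g$ is stable under infinitesimal perturbations; without this standing assumption the proposition would require an additional hypothesis ruling out the infinitesimally-close case.
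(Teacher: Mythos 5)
The paper's own proof is two lines of non-reversible arithmetic: from $f+\alpha \; z \; g+\beta$ it replaces $f+\alpha$ by $f$ (since $f \succ \alpha$ gives $f+\alpha=f$ by Theorem \ref{P058}) and then $g+\beta$ by $g$, keeping $z$ fixed throughout. Your route is genuinely different for the magnitude relations: the multiplicative decomposition $f+\alpha = f(1+\alpha/f)$ with $1+\alpha/f \simeq 1$, followed by absorbing the bounded positive factor into $\Phi$-membership (for $\succ$) or into the constant $M$ (for $\succeq$), proves those two cases without invoking the realization $\Phi \mapsto 0$ that is hidden inside $a+b=a$. That is a real gain: $\succ$ and $\succeq$ are insensitive to factors with a finite positive bound, so your argument for them is airtight and arguably cleaner than the paper's.

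Your closing worry about the strict classical relation is not excessive caution --- it exposes a genuine gap, and the gap is in the proposition (and the paper's proof), not in your argument. Take $f = g = n$, $\alpha = 2$, $\beta = 1$ at $n=\infty$: then $f \succ \alpha$, $g \succ \beta$, and $f+\alpha = n+2 \gt n+1 = g+\beta$, yet $f \gt g$ is false. The paper's step ``$f+\alpha=f$, relation unchanged'' is exactly the kind of realization that the paper itself concedes can demote a strict inequality to a non-strict one (Theorem \ref{P003}: $(*G,\lt) \mapsto (\mathbb{R}/\overline{\mathbb{R}},\leq)$). So for $z \in \{\gt\}$ the conclusion should either be weakened to $\geq$ or the hypothesis strengthened to exclude $f \sim g$ (equivalently, to require $f-g$ not infinitesimally small relative to $\alpha-\beta$), which is precisely the standing-monotonicity escape hatch you identify. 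You could not have repaired this without adding a hypothesis, so flagging it rather than papering over it is the right call.
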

\begin{proof}
 $f + \alpha \; z \; g + \beta$,
 $f \; z \; g + \beta$ because ($f+\alpha=f$ as $f \succ \alpha$),
 $f \; z \; g$ as $g \succ \beta$
\end{proof}
\bigskip
\begin{mex}\label{MEX046} 
 Demonstrated by example, another problem from
 \cite[p.8]{ordersofinfinity}.
Given
 $P_{m}(x) = \sum_{k=0}^{m} p_{k} x^{k}$, $p_{k}$ is positive and 
$Q_{n}(x) = \sum_{k=0}^{n} q_{k} x^{k}$, $q_{k}$ is positive. 
 Show $\mathrm{ln}\,\mathrm{ln}\,P_{m}(x) \sim \mathrm{ln}\,\mathrm{ln}\,Q_{n}(x)|_{x=\infty}$
  \begin{align*}
\mathrm{ln}\,\mathrm{ln}\, P_{m}(x) \; z \; \mathrm{ln}\,\mathrm{ln}\, Q_{n}(x)|_{x=\infty} \\
\mathrm{ln}\,\mathrm{ln} \sum_{k=0}^{m} p_{k}x^{k} \; z \; \mathrm{ln} \, \mathrm{ln} \sum_{k=0}^{n} q_{k}x^{k} |_{x=\infty} & \tag{Apply $x^{k} \succ x^{k-1}$, $p_{k}x^{k} + p_{k-1}x^{k-1} = p_{k}x^{k}|_{x=\infty}$} \\ 
\mathrm{ln}\,\mathrm{ln} \, p_{m}x^{m} \; z \; \mathrm{ln} \, \mathrm{ln} \, q_{n}x^{n} |_{x=\infty}  \\ 
\mathrm{ln}( \mathrm{ln} \, p_{m} + m \, \mathrm{ln}\,x) \; z \; \mathrm{ln}(  \mathrm{ln} \, q_{n} + n\, \mathrm{ln}\,x) |_{x=\infty}  \\
\mathrm{ln}( m \, \mathrm{ln}\,x) \; z \; \mathrm{ln}( n\, \mathrm{ln}\,x )|_{x=\infty} \\
\mathrm{ln}\, m  + \mathrm{ln}\, \mathrm{ln}\,x \; z \; \mathrm{ln}\, n + \mathrm{ln}\, \mathrm{ln}\,x |_{x=\infty} \\
\mathrm{ln}\, \mathrm{ln}\,x \; z \; \mathrm{ln}\, \mathrm{ln}\,x |_{x=\infty} & \;\; \tag{ from $\mathrm{ln}_{2}\,x \succ \mathrm{ln}\,m, \; \mathrm{ln}_{2}\,x \succ \mathrm{ln}\,n$} \\
z = \; \sim & \\
  \end{align*}
\end{mex}

 Contrast the above with
 an example where the highest
 order diverging terms are simplified
 (subtracting equally infinite quantities);
 the next highest order diverging 
 terms determine the relation.
\bigskip
\begin{mex}\label{MEX014} 
Solve for $z$, for the comparison 
 $n^{n} n \; z \; e^{n} n! |_{n=\infty}$. 
 $n \, \mathrm{ln}\,n + \mathrm{ln}\,n \; (\mathrm{ln}\,z) \; n + \sum_{k=1}^{n} \mathrm{ln}_{k}|_{n=\infty}$.  
 Given $\sum_{k=1}^{n} \mathrm{ln}_{k} = n \,\mathrm{ln}\,n - n|_{n=\infty}$ 
 [$\sum_{k=1}^{n} \mathrm{ln}_{k}\,n = \int_{1}^{n} \mathrm{ln}_{k}\,n \,dn = n \,\mathrm{ln}\,n - n|_{n=\infty}$]
 then, 
 $n \, \mathrm{ln}\,n + \mathrm{ln}\,n \; (\mathrm{ln}\,z) \; n + n \, \mathrm{ln}\,n - n |_{n=\infty}$, 
 $\mathrm{ln}\,n \; (\mathrm{ln}\,z) \; 0|_{n=\infty}$, 
 $\mathrm{ln}\,z = \; \succ$, $z = e^{\succ} = \; \succ$, 
 $n^{n} n \succ e^{n} n! |_{n=\infty}$ 
\end{mex}

The application of the logarithm has simplified
 the problem from products of functions to sums of functions.
\bigskip 
\begin{mex}\label{MEX015} 
 Consider the following theorems from 
 \cite[pp.300--302]{ordersofinfinity}  
Theorem 7.11. If $a \gt 0, \; b \gt 0$ we have
\[ \lim\limits_{x \to \infty} (\mathrm{ln}\,x)^{b} /x^{a}=0, \; \; \lim\limits_{x \to \infty} x^{b}/e^{ax}=0 \] 
\begin{proof}
 $\mathrm{ln}\,x \prec x|_{x=\infty}$, 
 $\mathrm{ln}\,x \; z \; x^{a}|_{x=\infty}$, 
 $\mathrm{ln}_{2}\,x \; (\mathrm{ln}\,z) \; a \, \mathrm{ln}\,x|_{x=\infty}$, 
 $\mathrm{ln}_{2}\,x \prec a \, \mathrm{ln}\,x|_{x=\infty}$, 
 $z = e^{\prec} = \; \prec$,
 $\mathrm{ln}\,x \prec x^{a}|_{x=\infty}$, 
 $(\mathrm{ln}\,x)^{b} \; z_{2} \;  x^{a}|_{x=\infty}$, 
 $b \, \mathrm{ln}_{2}\,x \; (\mathrm{ln}\,z_{2}) \;  a \, \mathrm{ln}\,x|_{x=\infty}$, 
 $b \, \mathrm{ln}_{2}\,x \prec a \, \mathrm{ln}\,x|_{x=\infty}$, 
 $(\mathrm{ln}\,x)^{b} \prec x^{a}|_{x=\infty}$. 
$(\mathrm{ln}\,x)^{b} /x^{a}|_{x=\infty}=0$
\end{proof}

\begin{proof}
 $e^{x} \succ x|_{x=\infty}$, 
 $e^{x} \;\; z \;\; x^{b}|_{x=\infty}$, 
 $x \;\; (\mathrm{ln}\,z) \;\; b \, \mathrm{ln}\,x|_{x=\infty}$, 
 $x \succ b \, \mathrm{ln}\,x|_{x=\infty}$, 
 $z = e^{\succ} = \; \succ$, 
 $e^{x} \succ x^{b}|_{x=\infty}$, 
 $e^{ax} \;\; z_{2} \;\; x^{b}|_{x=\infty}$, 
 $ax \;\; (\mathrm{ln}\, z_{2}) \;\; b \, \mathrm{ln}\,x|_{x=\infty}$, 
 $ax \succ b \, \mathrm{ln}\,x|_{x=\infty}$, 
 $e^{ax} \succ x^{b}|_{x=\infty}$. 
$x^{b}/e^{ax}|_{x=\infty}=0$
\end{proof}
\end{mex}

Applying infinitary calculus to problems 
 can result 
 in choosing whether to
 use a theorem, or solving 
 by
 calculating directly.
\bigskip
\begin{mex}\label{MEX016} 
Find $\lim\limits_{x \to 0^{+}} x^{\alpha}\, \mathrm{ln}\,x$,
 where $\alpha \gt 0$, 
 $x^{\alpha}\,\mathrm{ln}\,x|_{x=0^{+}}$,
 $x^{-\alpha}\,\mathrm{ln}\,x^{-1}|_{x=\infty}$,
 $\mathrm{ln}\,x^{-1} \;\;  z \;\; x^{\alpha}|_{x=\infty}$, 
 $\mathrm{ln}\,x^{-1} \prec x^{\alpha}|_{x=\infty}$, 
 $x^{-\alpha}\,\mathrm{ln}\,x^{-1}|_{x=\infty}=0$,
 $x^{\alpha}\, \mathrm{ln}\,x|_{x=0^{+}}=0$.

\smallskipneg
 Another way.
 Let 
 $y=x^{\alpha}\,\mathrm{ln}\,x|_{x=0^{+}}$
 $= x^{-\alpha}\,\mathrm{ln}\,x^{-1}|_{x=\infty}$,
 $\mathrm{ln}\,y= \mathrm{ln}(x^{-\alpha}\,\mathrm{ln}\,x^{-1})|_{x=\infty}$
 $= \mathrm{ln}\,x^{-\alpha} + \mathrm{ln}_{2}\,x^{-1}|_{x=\infty}$
 $= \mathrm{ln}\,x^{-\alpha}|_{x=\infty}$
 as $\mathrm{ln}\,x \succ \mathrm{ln}_{2}\,x|_{x=\infty}$,
 $y = x^{-\alpha}|_{x=\infty}=0$
\end{mex}
\bigskip
\begin{mex}\label{MEX017} 
 \cite[p.31]{ordersofinfinity} 
 Compare the rate of increase
 of $f = (\mathrm{ln}\,x)^{ ( \mathrm{ln}\,x)^{\mu} }$
 and $\phi = x^{(\mathrm{ln}\,x)^{-v}}$.
\begin{align*}
f \;\; z \;\; \phi|_{x=\infty}  \\
(\mathrm{ln}\,x)^{(\mathrm{ln}\,x)^{\mu} } \;\; z \;\; x^{(\mathrm{ln}\,x)^{-v}}|_{x=\infty}  \\
(\mathrm{ln}\,x)^{\mu} \, \mathrm{ln}_{2}\,x \;\; (\mathrm{ln}\,z) \;\; (\mathrm{ln}\,x)^{-v} \, \mathrm{ln}\,x|_{x=\infty}  \\
\mu \, \mathrm{ln}_{2}\,x + \mathrm{ln}_{3} \,x \;\; (\mathrm{ln}_{2}\,z) \;\; -v \, \mathrm{ln}_{2}\,x + \mathrm{ln}_{2}\,x|_{x=\infty} \\
(\mu + v) \, \mathrm{ln}_{2}\,x + \mathrm{ln}_{3} \,x \;\; (\mathrm{ln}_{2}\,z) \;\; \mathrm{ln}_{2}\,x|_{x=\infty}
\end{align*}
Case $\mu + v=1$, $\mathrm{ln}_{3}\,x \;\; (\mathrm{ln}_{2}\,z) \;\; 0|_{x=\infty}$, $f \succ \phi$.  \\
Case $\mu + v \lt 1$, $(\mu + v) \, \mathrm{ln}_{2}\,x \;\; (\mathrm{ln}_{2}\,z) \;\; \mathrm{ln}_{2}\,x|_{x=\infty}$, 
 $0 \prec (1-\mu-v) \mathrm{ln}_{2}\,x|_{x=\infty}$, $f \prec \phi$. \\
Case $\mu + v \gt 1$, $(\mu + v) \, \mathrm{ln}_{2}\,x \;\; (\mathrm{ln}_{2}\,z) \;\; \mathrm{ln}_{2}\,x|_{x=\infty}$, 
 $(\mu+v-1) \mathrm{ln}_{2}\,x|_{x=\infty} \succ 0$, $f \succ \phi$.
\end{mex}

In solving relations of infinite magnitude,
 another case
 occasionally arises where
 both sides of the relation are infinite,
 but opposite in sign.
 Raise all parts to a power,
 with the effect of pushing the positive infinity
 further to infinity, and the negative infinity to
 zero, effectively pulling the relation further
 apart.
\bigskip
\begin{mex}$n=\infty$
  \begin{align*}
    -n \;\; (\mathrm{ln}\,z) \;\; \mathrm{ln}\,n \\
    e^{-n} \;\; z \;\; e^{\mathrm{ln}\,n} \\
    e^{-(n+1)} \;\; z \;\; e^{\mathrm{ln}(n+1)} \\
    e^{-(n+2)} \;\; z \;\; e^{\mathrm{ln}(n+2)} \\
    0 \leftarrow \;\; z \;\; \rightarrow \infty 
  \end{align*}
\end{mex}
\begin{prop}\label{P038}
\[ \text{If } f=\infty \text{, } g=\infty \text{, } -f \; z \; g \text{ then } e^{-f} \prec e^{g} \] 
\end{prop}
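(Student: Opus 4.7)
The plan is to reduce the statement to the already established fact that $e$ raised to a positive infinity is itself a positive infinity, and then invoke Definition~\ref{DEF036} and Definition~\ref{DEF006}. The relation $z$ in the hypothesis is incidental: since $-f$ is a negative infinity and $g$ is a positive infinity, $z$ must be $\lt$, but the conclusion only depends on the signs and magnitudes of $f$ and $g$, not on $z$ itself.

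First I would form the ratio $\frac{e^{-f}}{e^{g}} = e^{-(f+g)}$. Because $f,g \in {+}\Phi^{-1}$, closure of $({+}\Phi^{-1},+)$ (Proposition~\ref{P042}) gives $f+g \in {+}\Phi^{-1}$, so $f+g$ is again a positive infinity. The scale $(1 \prec a^{x}|_{a>1})|_{x=\infty}$ from Table~\ref{FIG03}, or equivalently the inequality $e^{h} \geq h$ for $h>0$ visible from the exponential series, then forces $e^{f+g} \in {+}\Phi^{-1}$. By Definition~\ref{DEF036} the reciprocal $e^{-(f+g)}$ is a positive infinitesimal, so $\frac{e^{-f}}{e^{g}} \in \Phi$, and Definition~\ref{DEF006} delivers $e^{-f} \prec e^{g}$.

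The only real step to be careful with is the justification of $e^{f+g} \in \Phi^{-1}$: Theorem~\ref{P204} as stated requires two strictly comparable infinite arguments, so rather than trying to invoke it with $1$ on one side I would simply cite the scale in Table~\ref{FIG03} (which asserts that $e^{x}|_{x=\infty}$ lies in the tower of positive infinities), or give the one-line series argument $e^{h}>h$ to upgrade the positive infinity $h=f+g$ to the positive infinity $e^{h}$. Once that step is in place the remainder of the argument is just chasing the definitions of $\Phi$, $\Phi^{-1}$, and $\prec$, so no genuine obstacle remains.
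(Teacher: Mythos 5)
Your proposal is correct, and it reaches the conclusion by a somewhat different route from the paper. The paper's own proof is shorter and works by realization: it writes $e^{-f}\;z\;e^{g}$ as $\frac{1}{e^{f}}\;z\;e^{g}$, realizes $e^{f}=+\infty$ so that $e^{-f}=0$, and then reads off $0 \prec \infty$ to conclude $z=\;\prec$. You instead stay entirely inside $*G$ and verify Definition \ref{DEF006} directly: the ratio $\frac{e^{-f}}{e^{g}}=e^{-(f+g)}$, closure of $(+\Phi^{-1},+)$ (Proposition \ref{P042}) to get $f+g\in{+}\Phi^{-1}$, the elementary bound $e^{h}>h$ to lift $f+g$ to the positive infinity $e^{f+g}$, and Definition \ref{DEF036} to identify the reciprocal as an infinitesimal. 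What your version buys is that it avoids the realization step $e^{-f}\mapsto 0$, $e^{g}\mapsto\infty$ (a transfer that loses information and leans on Proposition \ref{P011} and Definition \ref{DEF057} to make sense of $0\prec\infty$), replacing it with a direct membership check against the ratio definition of $\prec$; the cost is that you need the closure proposition and the auxiliary fact $e^{h}\in{+}\Phi^{-1}$, which you correctly flag as the one step needing explicit justification. Your observation that the hypothesis relation $z$ is incidental and that only the signs of $-f$ and $g$ matter is also consistent with how the paper actually uses the proposition.
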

\begin{proof}
Solving for $z$,  
 $e^{-f} \; z \; e^{g}$, $\frac{1}{e^{f}} \; z \; e^{g}$, $0 \prec \infty$,
 as $e^{f} = +\infty$ then $e^{-f}=0$.
 $z= \; \prec$.
\end{proof}
\bigskip
\begin{mex}\label{MEX018} 
 Solve $n\, \mathrm{ln}\,x \; (\mathrm{ln}\,z) \; \mathrm{ln}\,n|_{n=\infty}$
 when $x=(0,1)$. Within this interval $\mathrm{ln}\,x$ is negative.
  $n\, \mathrm{ln}\,x \succ \mathrm{ln}\,n|_{n=\infty}$, 
  $e^{n\, \mathrm{ln}\,x} \; (e^{\succ}) \; e^{\mathrm{ln}\,n}|_{n=\infty}$, 
  $x^{n} \; (e^{\succ}) \; n|_{n=\infty}$, 
  $x^{n} \prec n|_{n=\infty}$, 
  $z = e^{\succ} = \; \prec$. 
\end{mex}
\bigskip
\begin{defy}\label{DEF023}
Let $==$ mean an equality relation.
\end{defy}
\bigskip
\begin{defy}\label{DEF022}
Let  $z = \; ==$ mean equality is assigned to
 the variable $z$ 
\end{defy}

 We have further introduced
 a use of assignment as a left-to-right generalisation.
 As maths is a language, this decision was made to chain together
 implications.

 Having the context of the problem being solved for is important.

 Multiplying by $-1$ reverses the orders direction,
 the order relations 
 $\{ \lt, \leq, \gt, \geq \}$ are not effected by positive multiplication
 or addition.

 Comparison in an additive sense `is' effected by adding and subtracting terms.
 Not in the sense of the order as described,
 but the magnitudes can be shifted,
 hence the magnitude relations can change direction.
 If $a \succ b$, $a + -a \prec b-a$, $0 \prec b-a$. Part 2

 When adding the same value
 to both sides
 the much-greater-than relation can
 change direction,
 unlike inequalities $\{ \lt, \leq \}$
 which are invariant.
\bigskip
\begin{mex}
\begin{align*}
 x^{2} \succ x|_{x=\infty} \\
 x^{2}-x^{2} \prec -x^{2}+x|_{x=\infty} \tag{direction changes} \\
 0 \prec -x^{2}+x|_{x=\infty}
\end{align*}
\end{mex}

 In solving a relation, say relation $z$, 
 if the aim in solving is to satisfy all the expressions involving $z$,
 then adding to both sides can introduce contradictory 
 solutions.
 If after addition, the highest terms magnitude
 is removed,
 the next highest order determines the magnitude relation. 
\bigskip
\begin{mex}
 Solve $z_{1}$ and $z_{2}$ for the same relation.
\begin{align*}
  n^{2}+3 \; z_{1} \; n^{2}+n|_{n=\infty} \tag{subtract $n^{2}$ } \\
  3 \; z_{2} \; n|_{n=\infty} \\
  z_{1} = \; \lt, \;\; z_{2} = \; \lt 
\end{align*}
\end{mex}

 However, for usability, we really do not wish to be this formal.
\bigskip
\begin{mex} \label{MEX200}
\begin{align*}
 n^{2}+3 \; z \; n^{2}+n|_{n=\infty} \tag{subtract $n^{2}$ } \\
 3 \; z \; n|_{n=\infty} \tag{ solving $z = \; \prec$ contradicts the first expression} \\
 z = \; \lt  \tag{True for both expressions}
\end{align*}
\end{mex}

 This management of variables is part of the mathematics.
 At times, it is only necessary to solve with forward implications.
 Other times, for example in constructing proofs, reversibility, having implications in both
 directions is required. Or more generally, 
 we want to solve not for several binary relation variables, when
 we can do so with one.

Up till now we have explored some of the mechanics
 for solving the relation. Of course,
 the premise was simple, solve for the relation
 as a variable. 

However, infinitary calculus
 concerns itself with functions and curves,
 and continuous families of curves; we
 imagine curves between curves
 as real numbers between other real numbers.
 Just as the relation separated and 
 defined different numbers, relations
 again separate and define different curves.

 In discussing the
 infinity of curves
 near an existing curve,
 G. Fisher
 \cite[pp 109--110]{fisher}
 comments and beautifully quotes du Bois-Reymond
 in developing an infinity of curves close to $y=x$,
 with the following relationship at infinity.
 $x^{\frac{p}{p+1}} \lt x^{\mathrm{ln}_{2}(x) / \mathrm{ln}_{2}(x+1)} \lt x|_{x=\infty}$,
 $p \in \mathbb{N}$.
 That is, there is a function
 between $x^{\frac{p}{p+1}}$ and $x$, and there are
 an infinitely many such functions,
 $x^{\frac{p}{p+1}} \lt \phi(x) \lt x|_{x=\infty}$.  
 Therefore, there are an infinitely many
 functions close to $y=x$ at infinity,
 where the space of real valued functions diverge, $f(x)|_{x=\infty}=\infty$.
\bigskip
\begin{mex}\label{MEX019} 
 There exists an infinity of curves
 infinitesimally close to the straight line
 $y=x$.
 Show 
 $x^{\frac{p}{p+1}} \lt x^{ \frac{\mathrm{ln}_{2}\,x}{\mathrm{ln}_{2}(x+1)} } \lt x^{1}|_{x=\infty}$
.

We will use an indirect inequality approach,
 where we introduce another
 inequality.
 Undoing the base $x$, 
 show 
 $\frac{p}{p+1} \lt \frac{x}{x+1} \lt \frac{\mathrm{ln}_{2}\,x}{\mathrm{ln}_{2}(x+1)}|_{x=\infty} \lt 1$.
\begin{align*}
 \frac{p}{p+1} \; z \; \frac{x}{x+1}|_{x=\infty} \\
 p(x+1) \; z \; (p+1)x|_{x=\infty} \\
 px+p \; z \; px + x|_{x=\infty} \\
 p \; z \; x|_{x=\infty}, \;\; z = \; \lt 
\end{align*}
\smallskipneg
\begin{align*}
  x^{\frac{x}{x+1}}|_{x=\infty} \; z \; x^{\mathrm{ln}_{2}(x) / \mathrm{ln}_{2}(x+1)}|_{x=\infty} \\
  \frac{x}{x+1} \cdot \mathrm{ln}\,x \; (\mathrm{ln}\,z)  \; \mathrm{ln}_{2}(x) / \mathrm{ln}_{2}(x+1) \cdot \mathrm{ln}\,x|_{x=\infty}  \\
  \frac{x}{x+1} \; (\mathrm{ln}\,z) \; \mathrm{ln}_{2}(x) / \mathrm{ln}_{2}(x+1)|_{x=\infty}  \\
  x \, \mathrm{ln}_{2}(x+1) \; (\mathrm{ln}\, z) \; (x+1) \, \mathrm{ln}_{2}\,x|_{x=\infty}  \\
  \mathrm{ln}\,x + \mathrm{ln}_{3}(x+1) \; (\mathrm{ln}_{2}\, z) \; \mathrm{ln}(x+1) + \mathrm{ln}_{3}\,x|_{x=\infty} \\
  \mathrm{ln}\,x \; (\mathrm{ln}_{2}\, z) \; \mathrm{ln}(x+1)|_{x=\infty} \tag{as $\mathrm{ln}\,x \succ \mathrm{ln}_{3}\,x|_{x=\infty}$ }  \\
  x \; (\mathrm{ln}\,z) \; x+1|_{x=\infty}  \\
 \mathrm{ln}\,z = \; \lt, \;\; z = e^{\lt} \; = \; \lt
\end{align*}

 The last relation trivially follows,
 $\frac{\mathrm{ln}_{2}\,x}{\mathrm{ln}_{2}(x+1)} \; z \; 1|_{x=\infty}$,
 $\mathrm{ln}_{2}\,x \; z \; \mathrm{ln}_{2}(x+1)|_{x=\infty}$, $z= \;\lt$.
 Then 
 $x^{\frac{p}{p+1}} \lt x^{ \frac{\mathrm{ln}_{2}\,x}{\mathrm{ln}_{2}(x+1)} } \lt x^{1}|_{x=\infty}$.
\end{mex}
 
 We did not need to introduce the additional
 inequality.  However, like algebra in general,
 we may not get the minimal solution.
 The same calculation, without the added inequality,
 and using an asymptotic approximation could have been
 used, see \cite[Example 2.5]{cebp4}. 

As a rule of thumb, think of infinity
 as being as large as the reals,
 so that you could construct any graph there, and then know that the space is larger still. However what is striking is the existence of the relations themselves, and this property,
 to not just partition, but develop algorithms at infinity.
 It is obvious then that if you can iterate in that space, have relations existing in that space, then you can construct algorithms and mathematical reasoning in that space.
 
 In comparing sequences and functions,
  monotonic 
 sequences and functions,
 that is sequences and functions which
 are either equal to or increasing,
 or equal to and decreasing,
 are of great interest.

 Since a monotonic sequence can be made into a 
 monotonic function,
 and a monotonic function back into a monotonic sequence,
 the theory of functions given is true for
 sequences.

 Roughly, if we can determine
 that two functions are monotonic,
 we can do other things such as compare
 their ratio, and other mathematics.  

 Hence, the interest with
 Hardy's L-functions,
 which are monotonic functions,
 that comprise of
  a 
 finite combination of $\{ +, -, \div, \times, \mathrm{ln}, \mathrm{e} \}$ operations.

 Also, with monotonic 
 functions, the association of a sequence of points
 to a curve would allow sequences and
 functions to be connected, via
 the same
 relations. 

 As sequences are indexed, a connection
 between the discrete and continuous
 can be made.
\bigskip
\begin{prop}\label{P039}
Given a relation $z$,
 and functions $(f,g)$, then
 $(z,(f,g))$ is a relation if
 for all values in the domain,
 there is no contradiction.
 If there is a contradiction,
 $(z,(f,g))$ is not a relation,
 and is disproved as one.
\end{prop}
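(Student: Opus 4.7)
The plan is to unpack what it means for $(z,(f,g))$ to be a relation in the pointwise sense developed in this section, and then observe that the ``no contradiction'' criterion is simply the universal-quantifier version of the at-a-point evaluation. First I would recall that by Definition \ref{DEF002} the notation $f(x) \; z \; g(x)|_{x=a}$ unfolds to $f(x)|_{x=a} \; z \; g(x)|_{x=a}$, so that $f \; z \; g$ is really shorthand for the claim that $f(x) \; z \; g(x)$ holds for every $x$ in the domain (or at every point of evaluation prescribed by the context, such as $x \in \Phi^{-1}$ when writing $|_{x=\infty}$). Writing $z$ as a binary relation in $\mathbb{B}$ (Definition \ref{DEF033}), this reduces $(z,(f,g))$ to a predicate $P(x) := \bigl(f(x)\; z\; g(x)\bigr)$ on the domain.

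For the forward direction I would simply note that ``no contradiction'' means exactly that $P(x)$ holds for all $x$, i.e.\ there is no $x_{0}$ at which $f(x_{0}) \; z \; g(x_{0})$ fails. This is precisely the statement that $f \; z \; g$ holds as a relation between the functions $f$ and $g$. For the converse (contrapositive) direction I would suppose a contradiction occurs at some point $x_{0}$, so that $f(x_{0})\;(\neg z)\;g(x_{0})$; then the universally quantified assertion $f \; z \; g$ fails, disproving the candidate relation. Example \ref{MEX023} supplies the prototypical counter-example: the component-wise oscillation $(0,\infty,0,\infty,\ldots)$ disproves $f_{n}\preceq g_{n}$ via a single contradictory coordinate.

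The main obstacle is less logical than interpretive: because the statement sits at the boundary between definition and theorem, the work consists in verifying that the at-a-point machinery of Section \ref{S0202} (in particular Definitions \ref{DEF001}, \ref{DEF002} and \ref{DEF200}) genuinely forces a pointwise reading of the candidate relation, so that a single failure suffices to disprove it. The subtler issue I would flag is that contradictions must be tested in $*G$ rather than only in $\mathbb{R}$: a gap that is merely infinitesimal disappears under the transfer $\Phi \mapsto 0$ (cf.\ Example \ref{MEX043}), and so could spuriously appear to ``resolve'' a contradiction that is in fact genuine at the level of the gossamer numbers. Once this caveat is in place, the proof is essentially a restatement of the definition combined with the negation rule $\neg(f \; z \; g) = (f \; (\neg z) \; g)$.
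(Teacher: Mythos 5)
Your proposal is correct and is essentially the paper's own argument: the paper simply phrases the pointwise check set-theoretically (the set of pairs $(f(x),g(x))$ over the domain must be an exact subset of the set of pairs satisfying $z$, and a single contradictory point breaks that containment), which is the same as your universally quantified predicate $P(x) := \bigl(f(x)\; z\; g(x)\bigr)$ failing at some $x_{0}$. Your added caveat about testing contradictions in $*G$ rather than after the transfer $\Phi \mapsto 0$ is a sensible refinement but not part of the paper's (very terse) proof.
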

\begin{proof}
Transforming the functions into sets,
 since the set generated by $(z,(f,g))$ is not
 an exact subset of the relation set generated from $z$, by definition $(z,(f,g))$ is not a relation.
\end{proof}
\bigskip
\begin{mex}\label{MEX048}
Show $n^{2} \gt e^{n}|_{n=\infty}$ contradicts.
 By using L'Hopital Part 5,
 $\infty \gt \infty$, differentiate,
 $2n \gt e^{n}|_{n=\infty}$,
 $2 \gt e^{n}|_{n=\infty}$,
 contradicts as $2 \prec e^{n}|_{n=\infty}$.

 Another approach is by solving,
 and showing that the symbols are contradictory.

$n^{2} \; z \; e^{n}|_{n=\infty}$,
 $\mathrm{ln}(n^{2}) \; (\mathrm{ln}\,z) \; \mathrm{ln}(e^{n})|_{n=\infty}$,
 $2 \, \mathrm{ln}\,n  \; (\mathrm{ln}\,z) \; n|_{n=\infty}$,
 $(\mathrm{ln}\,z) = \; \prec$,
 $z = e^{\prec} = \;\prec$.
 Since both arguments are positive, $\prec$ implies $\lt$ which
 contradicts $\gt$.
\end{mex}

 Propositions and theorems for the infinitely small can be similarly constructed.
\bigskip
\begin{prop}\label{P212}
If ; $f, g \in +\Phi$; and $z \in \{ \lt, \leq, =, \gt, \geq \}$,
 \[f \; z \; g \; \Leftrightarrow e^{f} \; z \; e^{g} \]
\end{prop}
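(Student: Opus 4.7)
The plan is to reduce both directions to the monotonicity of $e^{x}$ on the infinitesimal neighborhood of $0$, handled via the sign of the difference $g-f$. Since $f, g \in +\Phi$, we have $g-f \in \Phi \cup \{0\}$, and the five cases for $z$ correspond bijectively to the five possible sign comparisons of $g-f$ against $0$ (Case 1 $g-f>0$, Case 2 $g-f\geq 0$, etc.). So the whole biconditional collapses to the single claim: for $\eta \in \Phi \cup \{0\}$, the sign of $\eta$ and the sign of $e^{\eta}-1$ agree in $*G$.

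First I would prove that auxiliary claim by using the Taylor series expansion $e^{\eta} = 1+\eta+\tfrac{1}{2}\eta^{2}+\tfrac{1}{6}\eta^{3}+\cdots$. For $\eta \in +\Phi$, Proposition \ref{P017} (scaling invariance of $\succ$) and the scales of infinity give $\eta \succ \eta^{k}$ for all integers $k\geq 2$, so the series past the first term is dominated by $\eta$ itself. Hence $e^{\eta}-1 \simeq \eta$ and in particular $e^{\eta}-1 \in +\Phi$, so $e^{\eta}>1$. Symmetrically, for $\eta \in -\Phi$, $e^{\eta}-1 \in -\Phi$, giving $e^{\eta}<1$. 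For $\eta=0$ we have $e^{\eta}=1$ trivially.

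Next, I would derive the forward direction of the proposition. Write the comparison $f \; z \; g$ as $0 \; z \; (g-f)$ (valid for all five relations by closure/invariance of each of $<,\leq,=,\geq,>$ under adding $-f$). By the auxiliary claim, $0 \; z \; (g-f)$ is equivalent to $1 \; z \; e^{g-f}$. Now multiply both sides by the positive quantity $e^{f}$; by Proposition \ref{P053} this preserves each of the five relations in $z$, yielding $e^{f} \; z \; e^{g}$. The reverse direction is the same argument run backwards: from $e^{f} \; z \; e^{g}$, divide by the positive $e^{f}$ to get $1 \; z \; e^{g-f}$, invoke the auxiliary claim to obtain $0 \; z \; (g-f)$, then add $f$ to recover $f \; z \; g$.

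The only delicate step is the auxiliary claim $\mathrm{sgn}(e^{\eta}-1)=\mathrm{sgn}(\eta)$ for infinitesimal $\eta$, because it tacitly uses that the Taylor series of $e^{x}$ sums correctly in $*G$ near $0$ and that the tail $\tfrac{1}{2}\eta^{2}+\tfrac{1}{6}\eta^{3}+\cdots$ is strictly smaller in magnitude than $|\eta|$. The cleanest way to secure this without invoking Part 6 machinery is to factor $e^{\eta}-1=\eta(1+\tfrac{\eta}{2}+\tfrac{\eta^{2}}{6}+\cdots)$ and observe that the bracketed factor differs from $1$ by an infinitesimal (hence is positive in $*G$), so its product with $\eta$ inherits the sign of $\eta$. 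This is the only point where non-reversible arithmetic in $*G$ is essential; everything else is formal manipulation via Proposition \ref{P053}.
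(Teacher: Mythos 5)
Your proof is correct within the paper's framework, but it takes a genuinely different route from the paper's own argument. The paper expands both $e^{f}$ and $e^{g}$ as exponential series and collapses the higher-order terms on each side by non-reversible addition (Theorem \ref{P058}), asserting that each partial-sum relation satisfies $z_{i+1}=z_{i}$ because $\frac{f^{i}}{i!}\succ\frac{f^{i+1}}{(i+1)!}$. You instead reduce the whole biconditional to the single claim $\mathrm{sgn}(e^{\eta}-1)=\mathrm{sgn}(\eta)$ for $\eta=g-f\in\Phi\cup\{0\}$, proved via the factorization $e^{\eta}-1=\eta\left(1+\tfrac{\eta}{2}+\tfrac{\eta^{2}}{6}+\cdots\right)$ with a manifestly positive bracketed factor, and then translate back and forth using additive invariance and multiplication by the positive quantity $e^{f}$ (Proposition \ref{P053}). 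Your version buys something real: the paper's term-by-term collapse is delicate precisely when the gap between the two sides is of lower order than the terms being discarded (e.g.\ when $g-f\prec f^{2}$, discarding $\tfrac{f^{2}}{2}$ and $\tfrac{g^{2}}{2}$ perturbs each side by more than the gap itself), whereas your factorization confines all the infinitesimal bookkeeping to one positive factor and never discards anything comparable to the quantity being compared. What the paper's version buys in exchange is an exercise of the non-reversible-arithmetic machinery the surrounding sections are built on. Both arguments rest on the same unproven substrate — that the exponential series converges and may be manipulated termwise in $*G$ — and you are right to single that out as the only genuinely delicate point.
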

\begin{proof}
 Assuming the partial sums of the
 exponential function with an infinitesimal
 are asymptotic to
 the infinite sum,
  $e^{\delta} = 1 + \delta + \frac{\delta^{2}}{2} + \ldots \sim 1+\delta + \sum_{i=0}^{w} \frac{ f^{(i)}}{i!}$. 

 We have the general partial sum comparisons $z_{i}$,
 $1 + f \; z \; 1 + g$,
 $1 + f + \frac{f^{2}}{2} \; z_{2} \; 1 + g + \frac{g^{2}}{2}$,
 $1 + f + \frac{f^{2}}{2} + \frac{f^{3}}{3!} \; z_{3} \; 1 + g + \frac{g^{2}}{2} + \frac{g^{3}}{3!}$, $\ldots$.

 Case $e^{f} \; z \; e^{g}$ implies $f \; z \; g$:
$e^{f} \; z \; e^{g}$,
 $1 + f + \frac{f^{2}}{2} + \ldots \; z \; 1 + g + \frac{g^{2}}{2} + \ldots$,
 apply non-reversible addition Part 5 Theorem \ref{P058} 
 to $f$ and $g$ partial sums,
 for example
 $\frac{f^{i}}{i!} + \frac{f^{i+1}}{(i+1)!}$
 $= \frac{f^{i}}{i!}$ as $f^{i} \succ f^{i+1}$
 and $z_{i+1} = z_{i}$.
 $1 + f \; z \; 1 + g$, 
 $f \; z \; g$.

 Reversing the process to prove the implication
 in the other direction. Adding the infinitesimal
 does not change the relation as the next term is 
 much less than in magnitude to the previous term.
 $f \; z \; g$,
 $1+f \; z \; 1+g$,
 $1+f + \frac{f^{2}}{2!} \; z \; 1+g + \frac{g^{2}}{2!}$,
 $1 + f + \frac{f^{2}}{2} + \frac{f^{3}}{3!} \ldots \; z \; 1 + g + \frac{g^{2}}{2} + \frac{g^{3}}{3!} + \ldots$,
 recognising the expression as the exponential functions,
 $e^{f} \; z \; e^{g}$.
\end{proof}

 For comparing of functions, we do need well behaved functions,
 hence the monotonic requirements.
 The classes of functions may appear to be restricted,
 but this can be expanded in many ways. 
 Non-reversible arithmetic can be used to remove transient terms.
 For an additive comparison, we may only need
 $f-g$ to be ultimately monotonic, and not the ratio.
 
 So important is the determination
 of these 
 classes of functions 
 that they lead to the following definition
 and conjecture. 

 In accordance with redefining other infinitary
 calculus relations, we redefine
 the L-function in $*G$.
\bigskip
\begin{defy}\label{DEF034}
Define an L-function in $*G$ without implicit
 complex
 numbers
 as a 
 finite combination of $\{ +, -, \div, \times, \mathrm{ln}, \mathrm{e} \}$ operations.
\end{defy}

 The following is given by Hardy
 as a theorem \cite[p.24 Appendix I]{ordersofinfinity},
 but here stated as a conjecture
 because we re-defined the L-functions in
 $*G$ instead of $\mathbb{R}$.
 For example, the limit in $*G$ Part 6 can be a function,
 as it can contain
 infinitesimals and infinities.
 A transfer of the theorem in $*G \mapsto \overline{\mathbb{R}}$
 Part 4 would result in
 the theorem stated by Hardy. 
\bigskip
\begin{conjecture}\label{P028}
Any L-functions at infinity is ultimately continuous and monotonic.
 
 If $f$ and $g$ are L-functions then 
 at $f \; z \; g|_{n=\infty}$,
 $z$ is unique and 
 $z \in \{ \prec, \, \succ, \, \propto \}$
\end{conjecture}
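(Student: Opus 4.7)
The plan is to prove the conjecture by induction on the complexity (nesting depth) of the L-function construction, using the comparison algebra developed earlier in Section \ref{S0302} and the non-reversible arithmetic promised in Part 5. The base case consists of constants and the identity $x$, which are trivially continuous and monotonic at infinity. For the inductive step, I would assume that every L-function built from at most $k$ operations is ultimately continuous and monotonic, and that each such function has a definite behavior at infinity (tending to $0$, to a non-zero real, or to $\pm\Phi^{-1}$). Then, given $f$ of complexity $k+1$, I would split into cases $f = g \pm h$, $f = g \cdot h$, $f = g / h$, $f = \ln g$, and $f = e^g$, where $g, h$ have lower complexity and hence satisfy the induction hypothesis.

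For the arithmetic cases $\pm, \cdot, \div$, ultimate monotonicity of $f$ follows by forming $f'$ via the standard calculus rules and showing that $f'$ is again an L-function whose sign is eventually determined. The key tool is to compute the dominant term of $g$ and $h$ using the comparison algebra, then apply non-reversible cancellation ($\phi + \psi = \phi$ when $\phi \succ \psi$) to identify which summand controls the behavior at infinity. The cases $\ln g$ and $e^g$ preserve monotonicity directly because $\ln$ and $e$ are strictly monotonic on the appropriate domains, and the inductive hypothesis guarantees that $g$ is eventually of constant sign (for $\ln$) and eventually monotonic.

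Once ultimate monotonicity is established, the trichotomy $z \in \{\prec, \succ, \propto\}$ is obtained by applying the closure result to the L-function $f/g$ itself. By induction, $f/g$ is ultimately monotonic, and being a quotient of L-functions it is also an L-function, so it tends to a limit $L \in [0, \Phi^{-1}]$ in $*G$. If $L \in \Phi$, then $f \prec g$ by Definition \ref{DEF006}; if $L \in \Phi^{-1}$, then $g \prec f$ by Proposition \ref{P203}, i.e.\ $f \succ g$; and if $L$ is a finite positive real, then $f \propto g$ by Definition \ref{DEF011}. Uniqueness is immediate since the three cases $\Phi$, $\mathbb{R}^+$, $\Phi^{-1}$ are disjoint by the partition theorem of Section \ref{S0104}.

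The main obstacle will be the subtraction and division cases in the inductive step, since $f = g - h$ and $f = g/h$ can exhibit cancellation that destroys the obvious monotonicity of $g$ and $h$ separately. For instance, if $g$ and $h$ are asymptotic to the same dominant term, $g - h$ is governed by the next-order behavior, which may itself be an L-function requiring a further application of the induction hypothesis at a ``lower'' asymptotic scale. Making this rigorous really amounts to Hardy's original theorem that L-functions form a Hardy field; in our framework one must carefully track the asymptotic expansions inside $*G$ and verify that non-reversible cancellation always terminates after finitely many reductions, which follows from the finite depth of the L-function construction. This termination argument, rather than the trichotomy itself, is the delicate point.
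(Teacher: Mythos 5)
First, note that the paper offers no proof of this statement: it is deliberately left as a conjecture, precisely because the L-functions have been re-defined over $*G$ rather than $\mathbb{R}$, so Hardy's theorem \cite[Appendix I]{ordersofinfinity} cannot simply be cited. Your proposal is therefore not competing with an argument in the paper but attempting to supply one, and it correctly reproduces the outline of Hardy's strategy (structural induction on the build-up of the L-function, with the trichotomy falling out of ultimate monotonicity of $f/g$). However, there are two genuine gaps. The first is the induction measure. You propose to establish ultimate monotonicity of $f$ by showing $f'$ eventually has constant sign, with $f'$ ``again an L-function'' to which the induction hypothesis applies. But differentiation does not decrease complexity in the operation-count ordering: $(\ln g)' = g'/g$ and $(e^{g})' = g' e^{g}$ typically have \emph{more} operations than $f$, so the induction as stated does not close. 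Hardy's actual argument inducts on the logarithmico-exponential \emph{order} (nesting depth of $\ln$ and $e$) together with a separate lemma bounding the number of zeros of an L-function, and that lemma is the real content; asserting that ``non-reversible cancellation always terminates after finitely many reductions, which follows from the finite depth of the construction'' assumes exactly what must be proved, since the asymptotic expansion of even a depth-one L-function such as $x/(x+1)$ has infinitely many terms and the leading term of $g-h$ after cancellation need not be extractable in a number of steps bounded by the syntactic depth.

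The second gap is specific to the $*G$ setting and is the reason the authors demote the statement to a conjecture. Your trichotomy step argues that $f/g$, being ultimately monotonic and positive, ``tends to a limit $L \in [0,\Phi^{-1}]$.'' In $\mathbb{R}$ this is the monotone convergence theorem, i.e.\ order-completeness. But $*G$ is (at best) a non-Archimedean ordered field, and such fields are never order-complete: a bounded ultimately monotonic function valued in $*G$ need not converge to anything in $*G$, and the paper establishes no completeness or supremum property for $*G$ (even the total-order-field claim, Proposition \ref{P073}, is left unproven). So the step from ``ultimately monotonic'' to ``has a limit, hence lands in exactly one of $\Phi$, $\mathbb{R}^{+}+\Phi$, $\Phi^{-1}$'' does not follow from anything available in the paper. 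To repair this you would either need to prove the trichotomy directly for the ratio of the dominant terms (avoiding any appeal to limits), or first transfer $f/g$ back to $\overline{\mathbb{R}}$ via $*G \mapsto \overline{\mathbb{R}}$ and invoke completeness there, and then argue that the resulting classification lifts back to the relations $\prec$, $\succ$, $\propto$ as defined in $*G$. Until both points are addressed, the proposal is a plausible roadmap but not a proof.
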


 Additionally the L-functions have the property
 that if $f$ and $g$ are L-functions, 
 so is their ration $f/g$.
  Truncated infinite series can also be L-functions.
\bigskip
\begin{mex}
$x^{2} = e^{2\,\mathrm{ln}\,x}|_{x=\infty}$
 is an L-function.
\end{mex}

\subsection{M-functions an extension of L-functions} \label{S0303}
 Surprisingly, considering infinity as a point in the comparison theory at infinity is not enough.
 On occasion, it is beneficial to compare between two infinities.

 This idea of comparison between two infinities
 was indirectly taken and adapted from NSA 
 where
 convergence was determined by integrating at
 infinity:  integrated between two infinities to determine convergence or divergence.
 Similarly for comparison, we can compare at infinity
 by comparing at infinity, over an infinite interval.

\bigskip
\begin{conjecture}
 L-functions can be compared over any infinite interval.
 $f,\, g \in \text{L-functions}$; 
 $a,\,b, \in \Phi^{-1}$; $b-a = \pm\infty$; $\{ \prec, \succ, \propto \} \in z$:
 $f \; z  \;g|_{x=[a,b]} \equiv f \; z \; g|_{x=\infty}$ 
\end{conjecture}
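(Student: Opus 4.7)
The plan is to leverage the previous Conjecture \ref{P028}, which asserts that an L-function is ultimately continuous and monotonic and that the comparison $f\; z \; g|_{x=\infty}$ has a unique value in $\{\prec,\succ,\propto\}$. The key observation is that since the ratio $f/g$ of two L-functions is again an L-function (noted after Conjecture \ref{P028}), $f/g$ is ultimately monotonic. Once the ratio has settled into one of the three regimes, monotonicity should force that regime to persist throughout any infinite interval lying beyond the monotonic threshold.

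First, I would reduce the comparison to the multiplicative form, so that $f \; z \; g$ is determined by whether $f/g$ lies in $\Phi$, in $\Phi^{-1}$, or satisfies $f/g \simeq c$ for some $c \in \mathbb{R}^{+}$ (Definitions \ref{DEF006}, \ref{DEF007}, \ref{DEF011}). By Conjecture \ref{P028}, there is a finite $x_{0} \in \mathbb{R}$ such that $f/g$ is continuous and monotonic on $[x_{0}, \infty)$. Now for any $a, b \in \Phi^{-1}$ with $b - a = \pm\infty$, the partition theorem in Section \ref{S0104} and Proposition \ref{P018} give $x_{0} \lt a$ and $x_{0} \lt b$, so the interval $[a,b]$ lies entirely inside the ultimately monotonic regime of $f/g$.

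Next, I would carry out a case split on $z$. If $z = \; \prec$, then $(f/g)|_{x=\infty} \in \Phi$, i.e.\ both endpoint values $(f/g)(a)$ and $(f/g)(b)$ are infinitesimal; by monotonicity on $[a,b]$ the ratio is trapped between these two infinitesimals, hence $(f/g)(x) \in \Phi$ for every $x \in [a,b]$, giving $f \prec g|_{x=[a,b]}$. The case $z = \; \succ$ follows by Proposition \ref{P005} applied to $g/f$. For $z = \; \propto$, the ratio is infinitesimally close to a positive constant $c$ at infinity, so monotonicity confines $f/g$ on $[a,b]$ to an interval around $c$ of infinitesimal width, preserving $f \propto g$ throughout. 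For the reverse direction, if $f \; z \; g|_{x=[a,b]}$ holds with $a,b \in \Phi^{-1}$, then evaluating at either endpoint (which is already at infinity by Definition \ref{DEF014}) yields $f \; z \; g|_{x=\infty}$.

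The hard part will be rigorously justifying the passage from "ultimately monotonic on $[x_{0}, \infty)$ for finite $x_{0}$" to monotonic behaviour on an infinite interval $[a,b] \subset \Phi^{-1}$: this requires a transfer-style argument (Part 4) ensuring that the L-function's monotonicity in $\mathbb{R}$ lifts to monotonicity in $*G$. A related subtlety is the $\propto$ case, where the constant $c$ is only determined up to infinitesimal perturbation; we must ensure that oscillations compatible with $\simeq c$ but incompatible with $\propto$ on the interval are excluded — again this is where the L-function hypothesis, via ultimate monotonicity of $f/g$, does the decisive work. Without the L-function restriction, counterexamples of the type in Example \ref{MEX023} would obstruct the equivalence, so the conjecture genuinely rests on the structural rigidity of L-functions rather than on any general property of $*G$.
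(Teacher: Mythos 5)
The paper offers no proof of this statement: it is explicitly left as a conjecture (and the subsequent M-function definition simply \emph{builds in} the ``same relation on every infinite interval'' property by fiat), so there is no argument of the authors' to compare yours against. Judged on its own, your proposal is a reduction rather than a proof. Both of its load-bearing inputs are themselves unproven in the paper: Conjecture \ref{P028} (ultimate monotonicity of L-functions and the trichotomy $z \in \{\prec,\succ,\propto\}$ at infinity), and the lemma you correctly flag as ``the hard part,'' namely that monotonicity of $f/g$ on $[x_{0},\infty)$ for a finite real $x_{0}$ lifts to monotonicity of $f/g$ as a function $*G \to *G$ on an interval $[a,b] \subset \Phi^{-1}$. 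That lift is not a formality: Definition \ref{DEF034} redefines L-functions in $*G$, but nothing in the paper establishes that the finite-domain order behaviour of such a function transfers to infinite arguments, and without it the trapping argument ($(f/g)(x)$ squeezed between two infinitesimal endpoint values) has no monotonicity to appeal to. So what you have is a conditional statement: Conjecture \ref{P028} plus a monotonicity-transfer lemma imply the present conjecture. That is a genuine and worthwhile reduction, but you should present it as such.

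Two further points. First, your reverse direction is under-argued: knowing $f \; z \; g$ on one infinite interval and ``evaluating at an endpoint'' gives you the relation at a single infinite point, whereas $f \; z \; g|_{x=\infty}$ in the sense of Definitions \ref{DEF006}, \ref{DEF014} and \ref{DEF031} is a statement about all $x$ at infinity; to close that gap you must again invoke the uniqueness/trichotomy part of Conjecture \ref{P028} (the relation at infinity is one of the three, and the value on $[a,b]$ rules out the other two). Make that dependence explicit. Second, your argument never uses the hypothesis $b-a = \pm\infty$: the trapping step works for any subinterval of the ultimately monotonic regime, infinite in length or not. Either the conjecture's length hypothesis is redundant, or it is guarding against something your argument has not accounted for (e.g.\ the $\propto$ case where the comparison must be witnessed over a long enough stretch to pin down the constant $c$); you should say which you believe it is.
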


 We would like to extend Conjecture \ref{P028} to include other functions
 such as $n!|_{n=\infty}$ which do not contain a finite number of 
 multiplication operations, but is monotonic, and either ever increasing
 or ever decreasing.
 The following is an attempt to capture this.

 What is the M-word? Marriage. 
 We define a marriage of properties from
 Conjecture \ref{P028} the monotonic L-functions
 and `infinite term functions'.
  
 By the following definition, assuming the conjecture is true, all 
 L-functions are M-functions.
\bigskip 
\begin{defy}
 M-functions satisfy one of the following comparisons: $\propto$, $\prec$ or $\succ$
 for any infinite interval.
\end{defy}
 If the functions have the same behaviour at infinity, that is $f, g \in \Phi^{-1}$;
 then only consider one infinite interval.
 \[f(n),\, g(n) \in \text{M-functions};\, a,\, b,\, b-a \in \Phi^{-1};\]
 \[ \text{If } f \prec g|_{[a,b]} \text{ then by definition } f \prec g|_{n=\infty} \] 

 Consider the following problem, first
 easily solved with Stirling's formula, then without.  

 With Stirling's formula $e^{n}n! = c n^{n+\frac{1}{2}}|_{n=\infty}$,
 the comparison is obvious. 
\begin{align*}
 n^{n} \; z \; e^{n}n!|_{n=\infty} \\  
 n^{n} \; z \; c n^{n+\frac{1}{2}}|_{n=\infty} \\ 
 1 \; z \; c n^{\frac{1}{2}}|_{n=\infty} \\ 
 z = \; \prec 
\end{align*}
\begin{mex}\label{MEX062}
 Without Stirling's formula, 
 the problem appears more difficult.
 Firstly, re-organize the comparison.
\begin{align*}
 n^{n} \; z \; e^{n}n!|_{n=\infty} \\
 n^{n} \; z \; e^{n}\prod_{k=1}^{n}k|_{n=\infty} \\
 n^{n} \; z \; \prod_{k=1}^{n} ek|_{n=\infty} 
\end{align*}
 Consider the comparison
  between two infinities. For example $(n, 2n]|_{n=\infty}$.
\begin{align*}
 \prod_{k=n+1}^{2n} n \;\; z \; \prod_{k=n+1}^{2n} ek \\
 \prod_{k=n+1-n}^{2n-n} n \;\; z \; \prod_{k=n+1-n}^{2n-n} e(k+n) \\
 n^{n} \;\; z \; \prod_{k=1}^{n} (e (n+k))|_{n=\infty} \\
 n^{n} \;\; z \; \prod_{k=1}^{n} (ne (1+\frac{k}{n}))|_{n=\infty} \\
 n^{n} \;\; z \; n^{n} e^{n} \prod_{k=1}^{n}(1+\frac{k}{n})|_{n=\infty} \\
 1 \; z \; e^{n} \prod_{k=1}^{n}(1+\frac{k}{n})|_{n=\infty} \\
 z = \; \prec 
\end{align*}
\end{mex}
\bigskip
 In Example \ref{MEX062}, it is assumed that $n^{n}, e^{n}n! \in \text{M-functions}$.

 The theory of functions is very important, as if we can guarantee certain properties,
 the analysis can be developed in powerful ways.
 In whatever form that the theory finally takes, identifying and restricting the functions
 will allow the application of comparison algebra to be more consistent and exacting. 
 The development of the applications theory which we believe is a new field of mathematics
 rests on the comparison function theory. It is no longer a question of `will it work', but
 `how does it work'.
 Getting this right potentially means being independent from NSA for solving large classes of problems,
 a goal worth striving for. 
 A language of functions rather than a language of sets for
 solving theory with functions is required.

\section{The transfer principle} \label{S04}
 Between gossamer numbers and the reals,
 an extended transfer principle founded on approximation is 
 described, 
 with transference between different number systems
 in both directions, and within the number systems themselves.
 Therefore an extended transfer principle with non-reversibility is established.
 As a great variety of transfers are possible, hence a mapping
 notation is given.
 In $*G$ we find equivalence with a limit with division and comparison
 to a transfer $*G \mapsto \mathbb{R}$ with comparison.
\subsection{Introduction} \label{S0401}
 The transfer principle in Non-Standard Analysis (NSA)
 generally translates between
 the hyperreals $*\mathbb{R}$ and
 the reals
 $\mathbb{R}$.
 We are similarly interested in
 a transfer principle between  
 the gossamer number system $*G$ Part 1 and real numbers $\mathbb{R}$; including their variants.

 For many reasons, we need to work in the more detailed number
 system. Any such work requires us to interpret or bring back
 the results into $\mathbb{R}$ or otherwise. 
 We note that $*G$ has $\mathbb{R}$ embedded within, making the
 transfer from $\mathbb{R}$ to $*G$ possible.
 However, the nature of a statement in $*G$ may not be able 
 to be expressed in $\mathbb{R}$.
 While the transfer is possible, the meaning may change. (See Example \ref{MEX203})

 We provide another view of the transfer principle
 that is based on
 approximation, a process with indeterminacy.
 This is motivated by the fact that if given
 a number in $*G$ that is not an infinity,
 we can truncate
 successive orders of infinitesimals, and when all the
 infinitesimals are truncated we have only the real component
 remaining.
 Truncating all the infinitesimals 
 is defined as 
  the standard part $\mathrm{st}()$ function,
 which results in a transfer from $*G \mapsto \mathbb{R}$.

 However, taking just one truncation can change an
 inequality. Hence, a more general view
 of a transfer from one state to another is warranted.
 We can also see the algebra of comparing functions Part 3
 as transfers. 

 Then truncating a Taylor polynomial is a transfer;
 $*G \mapsto *G$,
 as truncating a Taylor polynomial may involve infinitesimals,
 which are not in $\mathbb{R}$. 

 More general questions can be asked.
Consider
 the two number systems, one with infinitesimals and
 infinites, the other the reals.
 If $a \gt b$ in $*G$, is this
 true in $\mathbb{R}$? Under what conditions
 is this true?

 That is, can we in one number system,
 transfer to the other number system?
 So, rather than
 working in reals, and extending
 the reals which is implicitly done (for example the
 evaluation of a limit),
 you can deliberately work in one or
 the other number systems, and
 transfer between them.
 
 Surprisingly we
 are applying the transfer principle 
 all the time,
 for example in evaluating limits.
 The limits themselves, 
 having 
 infinitesimals or infinities do
 not belong in $\mathbb{R}$.
 By taking the limit, and truncating the infinitesimals that
 remain, you are effectively taking the standard part of
 the expression.
 That this is not discussed but
 assumed true,
 is part of our culture.
\subsection{Transference} \label{S0402}
 The transfer principle itself is a realization 
 of the `Law of continuity': a heuristic principle 
 developed by Leibniz described in \cite[p.2]{leibnizecont}
\begin{quote} The rules of the finite succeed
 in the infinite and vice versa...
\end{quote} 
 Leibniz rules can be explained
 by $\mathbb{R}$ and $*G$ being fields.
 This law (before Leibniz's characterisation)
 from the beginnings of calculus was used for considering
 infinitary numbers, and their transition.
 It was a consequence of 
 mixing
 infinitesimal quantities with algebra, and then
 getting back tangible results. For example, Fermat 
 integrated a power $\int a^{n}\,dn = \frac{a^{n+1}}{n+1}$.
 To demonstrate the necessity of infinitesimals and infinities
 in the mathematics, from \cite[p.64--65]{fermat} we reconstruct the calculation in $*G$.
\bigskip
\begin{mex}
 Fermat integrating $y=x^{n}$ over $x=[0,a]$, partitioned by $ar^{i}$ where
 $i=\infty \ldots 0$. $t_{i} = a r^{i}$; 
 $\int_{0}^{a} x^{n}\,dx$
 $= \sum_{i=\infty}^{0} t_{i} (t_{i+1}-t_{i})$
 (a Riemann sum of unequal partitions);
 $t_{i+1}-t_{i} = a r^{i} (r-1) \in \Phi$;
 $r \to 1$: $r \in 1+\Phi$; for the Riemann sum to exist, else a divergent sum.
 
 $\sum_{i=\infty}^{0} t_{i} (t_{i+1}-t_{i})$
 $= - \sum_{i=0}^{\infty} a^{n} r^{ni} a r^{i} (r-1)|_{r=1}$
 $= (1-r) a^{n+1} \sum_{i=0}^{\infty} (r^{n+1})^{i}|_{r=1}$
 $= (1-r) a^{n+1} \frac{1}{1-r^{n+1}}|_{r=1}$
 $= (1-r) a^{n+1} \frac{1}{(1-r)(1+r + r^{2} + \ldots + r^{n})}|_{r=1}$
 $=  a^{n+1} \frac{1}{1+r + r^{2} + \ldots + r^{n}}|_{r=1}$
 $= \frac{a^{n+1}}{n+1}$
\end{mex}

 To handle the apparent paradox, the new state is in $*G$ a higher dimensional calculus.
 The above example is done in $*G$ and implicitly brings the result back to real numbers.
 This is exactly what you do when taking a limit. The infinitesimals are
 approximated to $0$ by $\Phi \mapsto 0$ and we are again in the standard real number system.
 Of course there can be no infinities here, so if you intend to increase $n$ to infinity,
 the expression is still in $*G$.

 Hence, the mechanics of calculus require a higher dimensional view.

 That we \text{approximate} in $*G$ produces a transition. 
  But to what purpose? While the `rules' require
 the law, the transfer itself is usually between states.

 For geometric surfaces,
 we can easily visualise the law of continuity applying
 to computer generated meshes.
 As the mesh is refined, a smoother surface appears.
 For a 2D example, at infinity, a polygon of equal sides inside a circle
 becomes the circle.

 That space with infinitesimals existing was predominant in their
 minds.
 Leibniz gives the example of two parallel lines infinitesimally
 close that
 never meet \cite[p.1552]{leibnizecont}.
 Du Bois-Reymond
 constructs an infinity of curves infinitely close, therefore parallel to a straight line 
 Part 3 Example \ref{MEX019}.
 A transfer could be made from these curves to the straight line.

 The following example is perhaps a more complicated transfer, as
 a radical state change occurs, but only at infinity.
 We describe
 a fixed ellipse with one focal point at
 the origin and send the other focal point to infinity.
 The ellipse becomes a parabola, but only after the variable
 of the focal point is sent to infinity before the other
 variables. See \ref{S0607}. A variable reaching infinity before another
\bigskip
\begin{mex}\label{MEX210}
\cite[p.8--9]{leibnizecont} 
With a closed curve the ellipse becomes an open curve, the parabola,
 but only with the focus at infinity.
Send the focus $h$ to infinity.
\begin{align*}
(x^{2} + y^{2})^{\frac{1}{2}} + (x^{2} + (y-h)^{2})^{\frac{1}{2}} = h+2|_{h=\infty} \\
 x^{2} + y^{2} + x^{2} + (y-h)^{2} + 2( (x^{2} + y^{2})(x^{2} + (y-h)^{2}) )^{\frac{1}{2}} = (h+2)^{2}|_{h=\infty} \\
 2x^{2} + 2y^{2} - 2yh + 2( (x^{2} + y^{2})(x^{2} + (y-h)^{2}) )^{\frac{1}{2}} = 4h + 4|_{h=\infty} \tag{Apply non-reversible arithmetic Part 5} \\
 \tag{$2x^{2} + 2y^{2} - 2yh = -2yh|_{h=\infty}$ as $-2yh \succ 2x^{2} + 2y^{2}|_{h=\infty}$, $4h+4 = 4h|_{h=\infty}$ } \\
 ( (x^{2} + y^{2})(x^{2} + (y-h)^{2}) )^{\frac{1}{2}} = yh + 2h|_{h=\infty} \\
  (x^{2} + y^{2})(x^{2} + (y-h)^{2}) = (h(y+ 2))^{2}|_{h=\infty} \\
  (x^{2} + y^{2})h^{2} = h^{2} (y+ 2)^{2}|_{h=\infty} \\
  x^{2} = 4y+4
\end{align*}

 That is, a closed curve is broken open. The ellipse
 is broken to form a parabola at infinity.
 For any finite values
 the curve is always closed, and is an ellipse.
\end{mex}

 The example highlights the directional nature of change.
 After applying non-reversible arithmetic to the equation, a transfer process
 takes place to the new state.
\bigskip
\begin{defy}\label{DEF203}
$\overline{\mathbb{R}} = \mathbb{R} \cup \pm \infty$ the extended real numbers.
\end{defy}
\bigskip
\begin{mex}\label{MEX203}
 From \cite{tprinciple} reformed
 in $*G$. 
 Let $n \in \mathbb{J}_{\infty}$,
 $w \in \mathbb{J}$ be finite then
 $\sum_{k=1}^{w}1 \lt n|_{n =\infty}$
 cannot be transferred to $\mathbb{R}$ because it lacks infinity
 elements in $\mathbb{R}$ then $*G \not \mapsto \mathbb{R}$.
 However, since the extended reals $\overline{\mathbb{R}}$ have 
 infinity the transfer is possible;
 $*G \mapsto \overline{\mathbb{R}}$: 
 $\sum_{k=1}^{w}1 \lt \infty$, which is slightly different
 as the extended reals $\overline{\mathbb{R}}$
 only have two infinity elements, $\pm \infty$.
\end{mex}
\bigskip
\begin{mex}\label{MEX204}
 In $*G$,
 $2 + \frac{1}{n} \gt 2|_{n=\infty}$, but
 $*G \not \mapsto \mathbb{R}$.
 However, if we replace the strict inequality
 to include equality,
 the transfer is possible.
 $2 \geq 2$ in $\mathbb{R}$. (See Theorem \ref{P003})
\end{mex}

\begin{quote}
 A transfer principle states that all statements of some language that are true for some
 structure are true for another structure\cite{tprinciple}.
\end{quote}

\begin{quote}
A sentence in $\varphi$ in $L(V(S))$ is true
 in $V(S)$ if and only if its *-transform $\mathrel{^*\!}\! \varphi$ 
 is true in $V( \mathrel{^*\!}\!S)$ \cite[p.82]{nigelc}
\end{quote}

 From Example \ref{MEX204} we see the transfer
 definitions given above are not adequate.
 While it is
 very important and most useful to take a proposition
 in one number system, and have the proposition true in
 another.
  For example, theorem proving
 where if true in one system implies the truth
 in the other.
 However, the principle as stated is not complete
 because a transfer can change the relation's meaning.

 We put forward a definition of the extended transfer principle,
 which in part, is based on approximation. 
 Where, by realizing infinitesimals, we can truncate
 expressions.
  By seeing the continued truncation of infinitesimals
 as a sequence of smaller operations, we can
 transfer within the same number system.
 
We find such truncation
 can describe non-reversible processes, 
 which lead to non-reversible arithmetic Part 5.

 The second part of the transfer principle generalization
 is its directional nature. Transfers exist in both directions. 

 Possibilities arise from non-uniqueness, for example, 
 a single point of discontinuity in $\mathbb{R}$ can be
 continuous in $*G$;
 transferring from $*G$ to the point discontinuity
 in $\mathbb{R}$ can be done in several ways.
 Perhaps a deeper
 transfer is the promotion of an infinitesimal
 to a small value. 
\bigskip
\begin{defy}\label{DEF040}Transfer principle:  
 Assume an implementation of the ``Law of continuity" between
 $\mathbb{R}$ or $\overline{\mathbb{R}}$ and $*G$ or $*\overline{G}$.
 For each number $x$ in the target
 space, $x \mapsto x'$ in the image space.
 If true over the domain in the target space, then
 it is true in the image space.
\end{defy}
\bigskip
\begin{defy}
 Extended transfer principle:
 Depending
 on context we can transfer in either direction,
 and in any combinations of number systems
 and operations. Further, dependent on the transfer,
 the relations may change.
\end{defy}

 Example \ref{MEX203} is an extended transfer. For further examples
 see Table \ref{FIG20} Mapping examples.

 We differentiate between infinitesimals
 and zero. Similarly we differentiate between
 and an infinity such as $n^{2}|_{n=\infty}$ and the 
 number $\infty$.

We will define an operation to convert from 
 ``an infinitesimal" to zero, and an operation
 to convert from ``an infinity" to infinity. 
 In other words, zero is a generalization of
 infinitesimals and its own unique number.
 Similarly, infinity
 is a generalization of infinities, and its own 
 unique number.

\bigskip
\begin{defy}\label{DEF046}
We say ``realizing an infinitesimal"
 is to set the infinitesimal to $0$, and
 ``realizing an infinity" is
 to set an infinity to $\infty$.
\end{defy}
\bigskip
With these definitions an infinitesimal is not
 $0$, but a realization of it.
 ``An infinity" is not infinity,
 but an instance of it.
 By the `realization' operation we convert
 infinitesimals and infinities respectively to $0$
 or $\infty$.
 The numbers $0$ or $\infty$,
 while mutual inverses, have no specific inverses. After a realization,
 you cannot go back. 
\bigskip
\begin{mex}\label{MEX037}
 $\infty \notin \Phi^{-1}$, but
 $\Phi^{-1}=\infty$ as a left-to-right generalization
 is true. Similarly $0 \notin \Phi$,
 but $\Phi = 0$.
\end{mex}
\bigskip
\begin{mex}\label{MEX039}
 We may have $n^{2}|_{n=\infty}=\infty$. The left side
 is a specific instance of the right side generalization.
 Similarly for zero, $\frac{1}{n}|_{n=\infty}=0$.
\end{mex}
\bigskip
\begin{mex}\label{MEX036} 
$(\frac{1}{n}, \frac{1}{n+1}, \frac{1}{n+2}, \ldots)|_{n=\infty} = (0, 0, 0, \ldots)$ is a null
 sequence, 
 $\frac{1}{n}|_{n=\infty} \in \Phi$
\end{mex}
\bigskip
\begin{mex}
If we consider realizing an infinitesimal
 before dividing,
 $1/\delta = 1/0$, $\delta \in \Phi$,
 then we can interpret
 $1/0=\infty$ 
 as a generalization of a specific infinity
 which we do not know about, nor necessarily care.
\end{mex}
\bigskip
\begin{defy}
 Let 
 $\mathrm{rz}(x)$ realize
 infinitesimals and infinities
 in an expression $x$.
 Lower order of magnitude terms are deleted by
 non-reversible addition:
 if $a \succ b$ then $a+b=a$ Part 5.
\end{defy}
\bigskip
\begin{mex}
 $\mathrm{rz}(n^{2}+n+1)|_{n=\infty} = n^{2}|_{n=\infty}$
 as $n^{2} \succ n+1|_{n=\infty}$.
\end{mex}
\bigskip
\begin{defy}
Let $\mathrm{rz}(z)$ realize
 infinitesimals and infinities in an additive sense
 about the relation $z$.
\end{defy}
\bigskip
\begin{mex}
 Realize relation $n \; z \; n^{2}|_{n=\infty}$, 
 $n \; \mathrm{rz}(z) \; n^{2}|_{n=\infty}$,
 $0 \; \mathrm{z} \; n^{2}|_{n=\infty}$ because $n \; z \; n^{2}$,
 $0 \; z \; n^{2}-n$, $0 \; z \; n^{2}|_{n=\infty}$ as $n^{2}-n = n^{2}|_{n=\infty}$.
\end{mex}

Fisher \cite[p.115]{fisher} comments,
 while following du Bois-Reymond's infinitary calculus: 
\begin{quote}
For the objects of his infinitary calculus, ..., are functions
 which do not form a field under
 the operations he considers(addition and composition),
 whereas his reference to
 ordinary mathematical quantities obeying the same rules that
 hold for finite quantities might be taken to refer to a field.
\end{quote}

 While $*G$ we believe forms a field Part 1,
 this is the first step before applying arithmetic
 that has no inverse.
 Realizing can simplify the expression,
 however such an operation is non-reversible.
 Continued application could lead to $0$ or $\infty$,
 which could then be captured as a theorem.
 However in this case $0$ and $\infty$ have no inverses,
 with respect to their state before realization.
 While we define $\infty$ and $0$ as mutual inverses,
 the realization was likely done before this,
 perhaps through a limit.

 The properties which make infinitary
 calculus not a field are the valuable properties.
 We approximate and use the field.
 The transfer (when realizing) is not independent of the number system,
 but part of it.

 If after realizing a number,
 has the number changed type?
 Has the meaning of the relation changed?
\bigskip
\begin{mex}\label{MEX202}
 $x^{2} + \frac{1}{x}|_{x=\infty}$ realized to $x^{2}$ by discarding the
 infinitesimal, apply a transfer principle to
 bring back to $\mathbb{R}$, $y=x^{2}$. 
\end{mex}

When transferring from
 a higher dimension number system
 to a lower dimension number system,
 in general, we need to consider the 
 law of continuity after
 truncating.

The purpose of working in one number system
 is often to transfer the results to
 another.  For example, we could solve a problem 
 in integers with real numbers,
 and transfer the result back to working in the integer domain. 
\bigskip
\begin{defy}
Let $A$ and $B$ be number systems, we say $A \mapsto B$ to mean
 the number system $A$ is projected onto or maps to the number system $B$. 
\end{defy}

Hence when realizing infinitesimals in
 $*G$ to $\mathbb{R}$ we approximate and
 simplify expressions.
 Let $x = a + \delta$, $a \in \mathbb{R}$, $\delta \in \Phi$.
 Truncating all the infinitesimals (assuming $\Phi^{-1} \notin x$)
 is the same as Robinson's NSA,
 ``taking the standard part" \cite[p.57]{abraham}, $\mathrm{st}(a+\delta)=a$. 
 If we interpret this as converting a number with
 infinitesimals to a real number, 
 we can see $*G$ as a more detailed space,
 where the numbers can ultimately be realized
 as reals. So $0$ in reals may be expressed
 as an infinitesimal in $*\overline{G}$,
 as $0$ is a projection
 of an infinitesimal to the reals.
\bigskip
\begin{prop}\label{P029}
 $\delta \in \Phi$;
  $*G \mapsto \mathbb{R}: \delta \mapsto 0$
\end{prop}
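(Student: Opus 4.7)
The plan is to use Proposition \ref{P018} as the engine: every positive infinitesimal is strictly less than every positive real, and symmetrically (via Proposition \ref{P007}) every negative infinitesimal exceeds every negative real in the signed order. Under the transfer principle of Definition \ref{DEF040}, these order relations must be preserved in the image, modulo the familiar slackening of strict inequalities upon realization illustrated by Example \ref{MEX204}.

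First I would treat the positive case $\delta \in \Phi^+$. From Proposition \ref{P018} we have $\delta < x$ for every $x \in \mathbb{R}^+$, so under $*G \mapsto \mathbb{R}$ the image $\delta'$ satisfies $\delta' \leq x$ for every such $x$; since $\mathbb{R}$ is Archimedean, the only real with this property is $\delta' \leq 0$. Combined with the positivity $\delta > 0$ transferring to $\delta' \geq 0$, this forces $\delta' = 0$. The negative case is symmetric, either by applying the same argument with inequalities reversed, or by transferring $-\delta \in \Phi^+$ and using negation. There is no collision with $0 \in \mathbb{R}$ being an acceptable image, since Corollary \ref{P043} excludes $0$ from $\Phi$, so the source point is genuinely an infinitesimal and the target $0$ is reached only through the transfer.

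The main obstacle is conceptual rather than computational: the transfer principle of Definition \ref{DEF040} is deliberately informal, so the proof must carefully invoke the slackening of $<$ to $\leq$ under realization (consistent with Definition \ref{DEF046}), rather than pretending the strict inequality survives intact. One must also emphasize that this mapping is genuinely many-to-one and non-reversible, since an entire dense layer of distinct infinitesimals collapses onto the single real $0$; the proof is thus an application of the law of continuity together with the Archimedean property of $\mathbb{R}$, not a field-theoretic identity in $*G$.
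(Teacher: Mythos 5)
Your argument is correct in outline but takes a genuinely different route from the paper. The paper's proof is constructive and one line long: it defines the image by ``repeated truncation of the infinitesimals,'' i.e.\ it is essentially the standard-part operation $\mathrm{st}(a+\delta)=a$ specialized to $a=0$, so the map $\delta \mapsto 0$ holds by construction. You instead prove a \emph{uniqueness} statement: assuming a transfer exists that fixes the embedded copy of $\mathbb{R}$ and weakly preserves order, you squeeze the image $\delta'$ between $\delta' \geq 0$ and $\delta' \leq x$ for all $x \in \mathbb{R}^{+}$, using Proposition \ref{P018} and the fact that $\inf \mathbb{R}^{+} = 0$. That buys you something the paper's proof does not --- an explanation of \emph{why} no other real number could serve as the image --- and it correctly isolates the collapse of $<$ to $\leq$ as the one delicate step. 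The one caution is a latent circularity: the slackening you invoke is justified in the paper by Theorem \ref{P003}, whose proof (via Theorem \ref{P001}) itself applies the transfer $\delta \to 0$, i.e.\ the very proposition you are proving. To keep your argument independent you should derive the slackening directly from Definition \ref{DEF040} (truth of the non-strict relation $\delta \leq x$ in $*G$ transfers to $\delta' \leq x$ in $\mathbb{R}$, since $\leq$ is implied by $<$ in the source), rather than leaning on Example \ref{MEX204} or Theorem \ref{P003}. With that repair the proof stands as a clean order-theoretic counterpart to the paper's truncation argument.
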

\begin{proof}
By approximation, repeated
 truncation of the
 infinitesimals leaves $0$.
\end{proof}

In the realization of infinitesimals and infinities,
 information can be lost. $\frac{1}{n} \prec 1|_{n=\infty}$
 becomes $0 \lt 1$. If the relation was reordered differently,
 the much greater than
 relation could remain, but the realization
 would not be to the real number system,
 but the extended reals.
 $\frac{1}{n} \prec 1|_{n=\infty}$,
 $1 \prec n|_{n=\infty}$, $1 \prec \infty$.
\bigskip
\begin{mex}\label{MEX053}  
 Let $\delta \in \Phi$,
 definition $f/g= \delta$ in $*G$ becomes
 $f/g=0$ in $\mathbb{R}$.
\end{mex}

A similar process exists for infinities, where the
 ``infinities" are realized and converted to the "infinity".
\bigskip
\begin{mex}\label{MEX054} 
 $\frac{1}{n+1} \lt \frac{1}{n}|_{n=\infty}$ in $*\overline{G}$ is true,
 but contradicts in $\mathbb{R}$ when we realize the infinitesimals: $0 \lt 0$.
 Similarly rearranging to compare infinities,
 $\frac{1}{n+1} \lt \frac{1}{n}|_{n=\infty}$,
 $\frac{n}{n+1} \lt 1|_{n=\infty}$,
 $n \lt n+1|_{n=\infty}$,
 realizing the infinities contradicts; $\infty \lt \infty$.
\end{mex}
\bigskip
\begin{mex}\label{MEX040} 
$\frac{1}{n} \gt \frac{1}{n^{2}}|_{n=\infty}$,
 $n^{2} \frac{1}{n} \gt n^{2} \frac{1}{n^{2}}|_{n=\infty}$,
 $n \gt 1|_{n=\infty}$, $\infty \gt 1$ is true. 

$\frac{1}{n} \gt \frac{1}{n^{2}}|_{n=\infty}$,
 realizing $*G \mapsto \mathbb{R}$, 
 $0 \ngtr 0$ is false, but
 $\gt \, \Rightarrow \, \geq$ is true. 
\end{mex}

Infinitesimals being smaller than any
 number in $\mathbb{R}$. Within an inequality, they can
 change to equality when removed.
\bigskip
\begin{mex}
 Let $\delta \in \Phi$, 
 $(*G, e^{f} \lt e^{f+\delta}) \mapsto (\mathbb{R}, e^{f} == e^{f})$
\end{mex}
\bigskip
The following theorems may, if unfamiliar, seem trivial.
 If some proposition is true for a range, it is also
 true for its subrange: why would we make this
 into a theorem?
 
 The very reduction of range can
 greatly simplify the complexity of cases involved.
 Hence, why construct theorems for reals and 
 infinities if we only need to handle the infinite case?

Doing so, we believe leads to a
 radically different view of convergence and a new way to
 integrate: \cite[Convergence sums ...]{cebp2}
 and rearrangement theorems with order on the infinite interval
 \cite[Rearrangements of convergence sums at infinity]{cebp8}.

 By partitioning an interval between
 the infinireals and other numbers,
 we can separate arguments on finite numbers and infinireals
 to arguments on infinireals alone,
 and transfer when we need to go back to real or gossamer numbers.

 Particularly important is the implication that
 partitioning by a finite bound,
 and including the infinity cases
 implies the infinity case, be it infinitely
 small or infinitely large.

 We have kept the transfer notation $\mapsto$ as we are losing
 information in the process. 
\bigskip
\begin{defy}
 A `bounded number' is a number that is not an infinireal.
 All reals are bounded numbers, and so are all reals except
 $0$ with infinitesimals. If $x$ is a bounded number
 then $x \in *G - \mathbb{R}_{\infty}$
\end{defy}
\bigskip
\begin{defy}
 We say an `implicit infinite condition' has a domain
 that includes both finite numbers (which can include infinitesimals)
 and infinireals $\mathbb{R}_{\infty}$.
Let $x$ and $x_{0}$ be either real or
 gossamer numbers. \\ 
 1. $\forall x \gt x_{0}$ where $x_{0}$ is finite. \\
 2. $\forall x: |x| \lt x_{0}$ where $x_{0}$ is finite.
\end{defy}
Since the finite numbers  are partitioned by the
 infinite numbers (the infinireals), we remove the finite condition.
\bigskip
\begin{theo}\label{P075}
If an implicit infinite condition at infinity
 determines some proposition $P$
 then we can transfer to the infinitely large domain.
 \[(\forall x: x \gt x_{0}) \mapsto (x \in +\Phi^{-1}) \]
\end{theo}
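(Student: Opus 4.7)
The plan is to argue that the right-hand domain is simply a subdomain of the left, so any proposition that holds on the larger finite-plus-infinite range must in particular hold on its infinite part, and then to identify this restriction with the transfer arrow $\mapsto$.

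First I would unpack the implicit infinite condition: the quantifier $\forall x : x > x_{0}$ with $x_{0}$ finite ranges over $x \in *G$, and by Definition \ref{DEF051} together with Proposition \ref{P018}, every positive infinity satisfies $x > x_{0}$ for any finite $x_{0}$. Hence $+\Phi^{-1} \subseteq \{\,x \in *G : x > x_{0}\,\}$. This containment is the entire geometric content of the theorem.

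Next I would invoke the standard set-theoretic fact that if a proposition $P(x)$ is true for all $x$ in a set $S$, then it is true for all $x$ in any subset $S' \subseteq S$. Applying this with $S = \{x : x > x_{0}\}$ and $S' = +\Phi^{-1}$ gives that $P$ holds on $+\Phi^{-1}$. The role of the partition $0 < \Phi^{+} < \mathbb{R}^{+} + \Phi < +\Phi^{-1} < \infty$ established earlier is to guarantee that the infinite part of the domain is well-defined and disjoint from the bounded numbers, so the restriction is meaningful and not losing any truth content at infinity.

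Finally I would justify writing this restriction as a transfer $(\forall x : x > x_{0}) \mapsto (x \in +\Phi^{-1})$ under Definition \ref{DEF040}: the target domain maps into the image domain by inclusion, every $x \in +\Phi^{-1}$ corresponds to itself in the larger domain, and truth of $P$ over the target transfers (indeed descends) to truth of $P$ over the image. The main subtlety is not in the logic but in the notation: one must be clear that this is a loss-of-information transfer, discarding the finite witnesses of $P$ and keeping only the infinite ones, which is exactly why the arrow $\mapsto$ rather than an equivalence is used. No further calculation is required.
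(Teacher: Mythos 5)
Your proposal is correct and is essentially the paper's own argument: the paper partitions the domain via Iverson brackets into $[x \gt x_{0}][x \lt +\Phi^{-1}]$ and $[x \gt x_{0}][x \in +\Phi^{-1}]$ and observes that the infinite part always satisfies the condition, which is precisely your subset inclusion $+\Phi^{-1} \subseteq \{x : x \gt x_{0}\}$ followed by restriction of the universal statement. The only difference is notational (bracket decomposition versus set containment), so nothing further is needed.
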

\begin{proof}
\[ [x \gt x_{0}] = [x \gt x_{0} ][x \lt +\Phi^{-1}] + [x \gt x_{0}] [x \in +\Phi^{-1}] \]
 Since choosing $x \in +\Phi^{-1}$ in the
 domain always
 satisfies the condition,
 the transfer is always possible.
\end{proof}
\bigskip
\begin{theo}\label{P076}
If an implicit infinite condition at the infinitely small determines some proposition $P$
 then we can transfer to the infinitely small domain.
 \[(\forall x: -x_{0} \lt x \lt x_{0}) \mapsto (x \in \Phi) \]
\end{theo}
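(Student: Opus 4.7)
The plan is to mirror the argument used for Theorem \ref{P075}, but symmetrized about $0$. The hypothesis asserts that the proposition $P$ holds for all $x$ with $|x| < x_0$ where $x_0$ is finite; the goal is to restrict the quantifier to the subdomain $x \in \Phi$ and lose nothing.

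First I would write the indicator-style partition of the domain,
\[ [\, |x| < x_0 \,] \;=\; [\, |x| < x_0 \,][\, x \notin \Phi \,] \;+\; [\, |x| < x_0 \,][\, x \in \Phi \,], \]
which is well-defined because $\Phi$ is an explicit subset of $*G$ (Definition \ref{DEF036}). This splits the hypothesis domain into its non-infinitesimal and infinitesimal parts.

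Next I would verify that every infinitesimal automatically satisfies the bound. By Proposition \ref{P018} a positive infinitesimal is strictly less than any positive real; applying Proposition \ref{P007} to the negative case, a negative infinitesimal lies strictly above any negative real. Since $x_0 \in \mathbb{R}^+$ is finite (this is the force of ``implicit infinite condition'' in Definition \ref{DEF041}), one concludes $\Phi \subset (-x_0, x_0)$, so the second indicator above reduces to $[\, x \in \Phi \,]$. Consequently any statement that holds throughout $(-x_0, x_0)$ a fortiori holds on $\Phi$, which is exactly the transfer $(\forall x : -x_0 < x < x_0) \mapsto (x \in \Phi)$.

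The main subtlety, rather than a real obstacle, is to make sure $x_0$ is genuinely a finite positive real and not itself infinitesimal; otherwise $\Phi \subset (-x_0, x_0)$ can fail and the partition collapses. This is why the hypothesis is stated via a \emph{finite} $x_0$ in Definition \ref{DEF041}. Granting that, the inclusion $\Phi \subset (-x_0,x_0)$ is immediate and the proof is essentially one line, in complete parallel with the proof of Theorem \ref{P075}.
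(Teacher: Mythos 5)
Your proposal is correct and follows essentially the same route as the paper: the same Iverson-bracket partition of the domain $[\,|x| \lt x_{0}\,]$ into infinitesimal and non-infinitesimal parts, followed by the observation that every infinitesimal satisfies the finite bound, so the restriction to $x \in \Phi$ loses nothing. Your explicit appeal to Proposition \ref{P018} (and \ref{P007} for the negative case) to justify $\Phi \subset (-x_{0}, x_{0})$ is a slightly more careful justification of a step the paper simply asserts, but the argument is the same.
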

\begin{proof}
\[ [|x| \lt x_{0}] = [|x| \lt x_{0} ][|x| \in +\Phi] + [x \lt x_{0}] [x \not\in +\Phi] \]
 Since choosing $x \in \Phi$ always
 satisfies $|x| \lt x_{0}$ in the above,
 the infinitely small case is always true and the transfer is always possible.
\end{proof}
\bigskip
\begin{defy}\label{DEF204}
In context, a variable $x$ can be described at infinity
 $|_{x=\infty}$ 
 corresponds with Theorem \ref{P075}
\end{defy}
\bigskip
\begin{defy}\label{DEF205}
In context, a variable $x$ can be described at zero 
 $|_{x=0}$  
 corresponds with Theorem \ref{P076}
\end{defy}

 Theorems \ref{P075} and \ref{P076} are a common reduction 
 within the transfer.
 Because of the variations of mapping involving the transfer from one
 domain to another, with different relations, we have
 developed a loose and not exact notation to communicate the mapping,
 and its context.
\bigskip
\begin{defy}
 Let $(K, \; \lt \! f_{b} \! \gt, \; \lt \! x \! \gt )$ describe the number system and context, where the
 angle brackets indicate optional arguments. $K$ is the number type, $f_{b} \in \mathbb{B}$ a binary relation.
\[ (\text{number type}, \lt \text{relation} \gt ,
 \lt \text{number} \gt ) \]
A mapping between domains can be described by
\[ (K, \; \lt \! f_{b} \! \gt, \; \lt \! x \! \gt ) \mapsto (K', \; \lt \! f_{b}' \! \gt, \; \lt \! x' \! \gt ) \]
\end{defy}

\begin{remk}
A number may 
 be input that is not of the same type
 as its result.
 For example 
 $f/g$ may be in number system $K$ 
 but neither $f$ nor $g$ need necessarily be in $K$.
 Limit calculations  
 happily 
 accept input with infinities and infinitesimals,
 but the limit can be in $\mathbb{R}$.
\end{remk}

Mapping can occur in different contexts: realization, rearrangement of expression, transfer principle. 
 The mapping can be in many different combinations. We summarize
 with a flexible notation; it is not at all strict. 

\begin{table}[H]
  \centering
  \begin{tabular}{|c|c|} \hline
    \text{Mapping} & \text{Comment} \\ \hline
 $(*G,/) \mapsto \mathbb{R}/\overline{\mathbb{R}}$ & \text{Limit $\frac{a_{n}}{b_{n}}|_{n=\infty}$ evaluation} \\ \hline
 $(\mathbb{R},/) \mapsto (*G,/)$ & \text{Undoing an implicit limit} \\ \hline
$(\forall x \gt x_{0}) \mapsto (\Phi^{-1}, |_{x=\infty})$ & \text{Law of continuity from $\mathbb{R}$ to $*G$} \\
$(\forall |x| \lt x_{0}) \mapsto (\Phi, |_{x=0})$ & \text{Theorems \ref{P075}, \ref{P076}} \\
     \hline 
$(*G,\Phi^{-1}) \mapsto (\overline{\mathbb{R}}, \infty)$ & \text{Realize infinities} \\ \hline
$(*G, \Phi) \mapsto (\mathbb{R},0)$ & \text{Realize infinitesimals, apply st() the standard part} \\ \hline
 If $(*G,\not\sim)$ then $(*G, \mathrm{rz}(\lt)) \mapsto (\mathrm{R}/\overline{\mathbb{R}}, \lt)$ & Theorem \ref{P002} \\ \hline
If $(*G, \not \simeq)$ then $(*G\backslash\{\Phi^{-1}\},\lt) \mapsto (\mathbb{R}, \lt)$ & \text{Corollary \ref{P006}} \\ \hline
$(*G,\lt) \mapsto (\mathbb{R}/\overline{\mathbb{R}}, \leq)$ & \text{Loses information, Theorem \ref{P003} } \\ \hline
$(*G, (\Phi \lt \Phi)) \mapsto (\mathbb{R}, (0 \lt 0) )$ & \text{See Example }\ref{MEX054} \\ \hline
$(*\overline{G}, \infty) \not\mapsto (\mathbb{R},\infty)$ & Infinity is not in $\mathbb{R}$ \\ \hline
$(\mathbb{R}, f \not \in C^{0}) \mapsto (*G, f_{2} \in C^{0})$   & Adding information \cite{cebp10} \\ \hline
$(J_{\infty},n) \mapsto (*G,n)$ & \text{Discrete to continuous domain} \\ \hline
  $(\Phi,\delta_{n}) \mapsto (\mathbb{R},\delta_{n})$ & \text{Algorithm example} Part 5 Example \ref{MEX206} \\ \hline
 $\sum a_{n}|_{n=\infty} \mapsto \sum_{k=k_{0}}^{\infty} a_{k}$ & \text{Convergences sums to sums} \cite[Theorem 11.1]{cebp2} \\ \hline
  \end{tabular}
  \caption{Mapping examples} \label{FIG20}
\end{table} 
 Consider a limit. While the image space
 may by $\mathbb{R}$, the solution space is $*G$ as it holds
 infinitesimals and infinities. Hence, given $*G \mapsto \mathbb{R}$,
 we can consider $*G$ and postpone or avoid the transfer.
 The implicit nature of the limit can be undone.
\bigskip
\begin{mex}\label{MEX005}
  $\frac{n^{2}+1}{n^{2}}|_{n=\infty}=1 \in \mathbb{R}$,
 but $n^{2}+1|_{n=\infty} \in *G$ and $n^{2}|_{n=\infty} \in *G$.
 Hence $\frac{n^{2}+1}{n^{2}}|_{n=\infty} \in *G$.
 However, the limit calculation 
 can be described by $*G \mapsto \mathbb{R}$.
\end{mex}

 If we consider the more general question of function
 evaluation, we have numbers that may be transfered
 between the number systems $\mathbb{R}$ and $*G$.
 While a function 
 returns a value,
 by a transfer process 
 it may not actually be calculated in that type.
 A transfer can occur between
 the calculation and the function's returned value.

 Consider now, the function return value location as 
 holding a local variable. 
 If the function type does not match
 the location type, a transfer is made.

 In the evaluation, we can show the implicit transfer.
$(*G, =) \mapsto (\mathbb{R},=)$.
 Then $\frac{a_{n}}{b_{n}}|_{n=\infty}=1$ can be
 multiplied through to $a_{n} = b_{n}|_{n=\infty}$ in $*G$. 

 We introduce a notation to explicitly describe a relation,
 to help describe the transference rather than of practical use.
 With
 transference, the relation argument types are likely to be in the less detailed
 number system, but the evaluation in the more detailed number system $*G$.
\bigskip
\begin{defy}\label{DEF206}
Let two arguments of a binary relation be
 described by their type $T1$, $T2$ where
 $z$ is the binary relation.
\[ ( T1 \; z \; T2 ) \]
\end{defy}
\bigskip
\begin{mex} \label{MEX201}
To undo an infinite operation, we need to 
 promote the numbers to $*G$. The comments
 indicate the left and right types on
 either side of the equality relation.
  \begin{align*}
    \frac{n^{2}+1}{n^{2}}|_{n=\infty}=1 & \tag{ $*G = \mathbb{R}\text{ or}\,*\!G$ } \\
    *G \mapsto \mathbb{R} \\ 
    \frac{n^{2}+1}{n^{2}}|_{n=\infty}=1  & \tag{ $*G = *G$ } \\
    n^{2}+1 = n^{2}|_{n=\infty} & \tag{ $*G = *G$ } \\
  \end{align*}
\end{mex}
\bigskip
\begin{mex} \label{MEX004}
 Promote a limit to a limit in $*G$.
$(\mathbb{R},0) \mapsto (*G,\Phi)$ 
  \begin{align*}
    \frac{\mathrm{sin}\,\frac{1}{n}}{n}|_{n=\infty}=0 & \tag{ $*G = \mathbb{R}$ } \\
\frac{\mathrm{sin}\,\frac{1}{n}}{n}|_{n=\infty}=\delta; \;\; \delta \in \Phi & \tag{ $*G = *G$ } \\
  \end{align*}
\end{mex}

$\mathbb{R} \mapsto *G$ is one-one as $\mathbb{R}$ is embedded in $*G$.
 However $*G \mapsto \mathbb{R}$ is different, as information about
 the infinitesimals is lost.
 Because $*G$ is more dense
 than $\mathbb{R}$,
 the transfer principle
 applied to 
 the strict inequalities $\{ \lt, \gt \}$
 for variables/functions
 which are infinitesimally
 close, fail. 
 Examples \ref{MEX054} and \ref{MEX040}
 implicitly worked in $*G$
 and the relations changed when projected onto $\mathbb{R}$.
\bigskip
\begin{prop}\label{P055}
 $\delta \in \Phi$; 
 $*G \mapsto \mathbb{R}/\overline{\mathbb{R}}$;
 If $h \not \simeq 0$
 in $*G$ then $h \neq 0$ in $\mathbb{R}/\overline{\mathbb{R}}$.
\end{prop}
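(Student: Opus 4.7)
The plan is to unpack what $h \not\simeq 0$ means using Definition \ref{DEF201} and then track what happens to $h$ under the transfer using the unique component decomposition of $*G$ from Definition \ref{DEF075}. By Definition \ref{DEF201}, $h \simeq 0$ is equivalent to $h - 0 = h \in \Phi \cup \{0\}$, so the hypothesis $h \not\simeq 0$ means precisely that $h$ is neither an infinitesimal nor zero.

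First I would write $h$ in its three independent components $h = (h_{1}, h_{2}, h_{3})$ with $h_{1} \in \Phi$, $h_{2} \in \mathbb{R}$, and $h_{3} \in \Phi_{*}^{-1}$. The negated hypothesis forces that we are not in the form $(h_{1}, 0, 0)$ with $h_{1} \in \Phi$, nor in the form $(0,0,0)$. Hence at least one of $h_{2}$ or $h_{3}$ is nonzero, and this is the key dichotomy to drive the proof.

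Next I would split into two exhaustive cases on the transfer target. If $h_{3} \neq 0$, then $h$ contains a nontrivial infinitary component and realizing it sends $h_{3} \mapsto \pm\infty$ by Definition \ref{DEF046}; this dominates the sum (any number plus infinity is an infinity, as noted after Definition \ref{DEF073}), so the transfer $*G \mapsto \overline{\mathbb{R}}$ produces $\pm\infty \neq 0$. If instead $h_{3} = 0$ but $h_{2} \neq 0$, then $h = h_{1} + h_{2}$ with $h_{1} \in \Phi \cup \{0\}$ and $h_{2}$ a nonzero real; the transfer $\Phi \mapsto 0$ (Proposition \ref{P029}) yields $h \mapsto h_{2} \neq 0$ in $\mathbb{R}$. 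In both cases the image is nonzero in $\mathbb{R}/\overline{\mathbb{R}}$, which is the required conclusion.

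The main obstacle is not computational but conceptual: one must justify that the transfer $*G \mapsto \mathbb{R}/\overline{\mathbb{R}}$ acts component-wise in the way just described, in particular that the real component is preserved untouched while only the infinitesimal component is collapsed to $0$ and infinitary components are collapsed to $\pm\infty$. This relies on the established facts that $\mathbb{R}$ is embedded inside $*G$ (so reals transfer to themselves), that infinitesimals are annihilated by the standard part process, and that infinities are realized to $\infty$ per Definition \ref{DEF046}. Once that decomposition of the transfer is accepted, the argument reduces to the simple case analysis above.
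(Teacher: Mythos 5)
Your proposal is correct and follows essentially the same route as the paper: the paper's proof is the one-line observation that $h \not\simeq 0$ forces $h$ to contain a nonzero real or an infinitary component, each of which maps to a nonzero element of $\mathbb{R}/\overline{\mathbb{R}}$. Your version merely spells out that dichotomy explicitly via the component decomposition of Definition \ref{DEF075} and the component-wise action of the transfer, which is a faithful elaboration rather than a different argument.
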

\begin{proof}
Either $\mathbb{R} \in h$ or
 $\Phi^{-1} \in h$, both components map to non-zero elements in $\mathbb{R}/\overline{\mathbb{R}}$.  
\end{proof}
\bigskip
\begin{prop}\label{P056}
$*G \mapsto \mathbb{R}/\overline{\mathbb{R}}$: If $h \gt \Phi$ in $*G$ then
 $h \gt 0$ in $\mathbb{R}/\overline{\mathbb{R}}$.
\end{prop}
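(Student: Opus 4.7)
The plan is to exploit the unique component decomposition of $*G$ from Definition \ref{DEF075} together with the partition structure $0 < \Phi^{+} < \mathbb{R}^{+} < +\Phi^{-1}$ established in Section \ref{S0104}, and then apply the realization rules $\Phi \mapsto 0$ (Proposition \ref{P029}) and $+\Phi^{-1} \mapsto +\infty$ (Definition \ref{DEF046}). The structure mirrors the proof of the neighbouring Proposition \ref{P055}, but with the stricter sign information preserved.

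First I would write $h = (h_{1}, h_{2}, h_{3})$ with $h_{1} \in \Phi \cup \{0\}$, $h_{2} \in \mathbb{R}$, and $h_{3} \in \Phi_{*}^{-1} \cup \{0\}$, and read the hypothesis $h \gt \Phi$ as $h \gt \delta$ for every $\delta \in \Phi$. I would then rule out cases. If $h \leq 0$, then picking any $\delta \in \Phi^{+}$ already violates the hypothesis via Proposition \ref{P018}. If $h$ is a pure positive infinitesimal (so $h_{2} = 0$, $h_{3} = 0$, $h_{1} \in \Phi^{+}$), then $2 h_{1} \in \Phi^{+}$ by closure (Propositions \ref{P060} and \ref{P025}) and $2 h_{1} \gt h$, contradicting $h \gt \Phi$. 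This leaves exactly two surviving configurations: either $(a)$ $h_{3} \in +\Phi_{*}^{-1}$, so that the infinity component dominates and $h$ is a positive infinity regardless of the signs of $h_{1}, h_{2}$; or $(b)$ $h_{3} = 0$ with $h_{2} \gt 0$, so that $h$ is a positive real perturbed by an infinitesimal.

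Next I would perform the transfer. In case $(a)$, realization of the infinity component gives $h \mapsto +\infty \in \overline{\mathbb{R}}$, and $+\infty \gt 0$. In case $(b)$, realization of the infinitesimal via $\Phi \mapsto 0$ yields $h \mapsto h_{2} \in \mathbb{R}$ with $h_{2} \gt 0$. In both branches the image lies strictly above $0$, completing the proof. The main obstacle I expect is the clean exclusion of the pure-infinitesimal case: it relies on the fact that $\Phi^{+}$ has no largest element, so one must invoke closure of $\Phi$ under real scaling to exhibit a strictly larger infinitesimal. A minor secondary subtlety is case $(a)$ with $h_{2} \lt 0$ or $h_{1} \lt 0$, where one must confirm that the positive infinity component genuinely dominates the finite and infinitesimal parts so that $h$ itself is positively infinite in $*G$ and hence maps to $+\infty$; this follows from $\mathbb{R} \cup \Phi \prec \Phi^{-1}$, which is precisely the partition of Section \ref{S0104}.
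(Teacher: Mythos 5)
Your proposal is correct and follows essentially the same route as the paper's proof: the paper likewise splits into the two cases $h \in +\Phi^{-1} \mapsto \infty$ and $h$ having a positive real part $\mathbb{R}^{+} \mapsto \mathbb{R}^{+}$, and observes that neither image is $0$. Your version merely makes explicit what the paper leaves implicit, namely the exhaustive exclusion of the non-positive and pure-infinitesimal configurations via the component decomposition, which is a sound (and arguably welcome) elaboration rather than a different argument.
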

\begin{proof} 
If $h \gt \Phi$ in
 $*G$
 then either $h \in +\Phi^{-1} \mapsto \infty$
 or $h$ has $\mathbb{R}^{+} \mapsto \mathbb{R}^{+}$.
 Neither result is $0$ in $\mathbb{R}$.
\end{proof}
\bigskip
\begin{cor}
If $f \gt 0$ in $*G$ and $\mathbb{R}$ then
 $f \not\simeq 0$.
\end{cor}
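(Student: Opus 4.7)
The plan is to argue by contrapositive, showing that $f \simeq 0$ is incompatible with the conjunction ``$f > 0$ in $*G$ and $f > 0$ in $\mathbb{R}$.'' By Definition \ref{DEF201}, the assumption $f \simeq 0$ amounts to $f \in \Phi \cup \{0\}$, so there are exactly three cases to dispatch: $f = 0$, $f \in \Phi^{-}$, or $f \in \Phi^{+}$. The proof is then a three-line case analysis, each case appealing to one previously established fact about the sign of infinitesimals or to the standard-part-style transfer $*G \mapsto \mathbb{R}$.

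For case (i), $f = 0$ trivially contradicts $f > 0$ in either number system. For case (ii), a negative infinitesimal $f \in \Phi^{-}$ satisfies $f < 0$ in $*G$ by Definition \ref{DEF078} together with the partition established in the Theorem following Definition \ref{DEF057}; this contradicts the hypothesis $f > 0$ in $*G$. For case (iii), $f \in \Phi^{+}$ is consistent with $f > 0$ in $*G$, but the transfer $*G \mapsto \mathbb{R}$ of Proposition \ref{P029} sends $f \mapsto 0$, so the image of $f$ in $\mathbb{R}$ is not strictly positive, contradicting the second half of the hypothesis. Since every case is ruled out, the contrapositive is established.

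The only delicate step is case (iii), where one must be careful that the statement ``$f > 0$ in $\mathbb{R}$'' is interpreted through the transfer rather than as a comparison inside $*G$; once this is made explicit, the corollary is essentially a positive-sign specialisation of Proposition \ref{P055}, reading ``if the image under transfer is strictly positive rather than merely non-zero, then $f$ cannot be infinitesimally close to $0$.'' I do not anticipate a genuine obstacle here; the work is notational rather than structural, and the corollary mostly serves to record, in clean form, the combined content of Propositions \ref{P055} and \ref{P056} specialised to the positivity case.
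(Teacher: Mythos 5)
Your proof is correct and follows essentially the same route as the paper: the paper also argues by contradiction, takes $f = \delta \in \Phi^{+}$ as the only non-trivial case, and derives the contradiction $(\mathbb{R},\; 0 \gt 0)$ from the transfer $\Phi \mapsto 0$. Your explicit dispatch of the $f = 0$ and $f \in \Phi^{-}$ cases is a minor completeness addition the paper leaves implicit, not a different argument.
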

\begin{proof}
 Assume true and show a contradiction. 
 Let $f = \delta$ in $*G$, $\delta \in \Phi^{+}$
 then $f \gt 0$ in $*G$. 
 $(*G, \;\; \delta \gt 0) \mapsto (\mathbb{R}, \;\; 0 \gt 0)$
 is contradictory.
\end{proof}
\bigskip
\begin{mex}
$*G \mapsto \mathbb{R}$,
 $\frac{1}{n+1} \lt \frac{1}{n} \mapsto \frac{1}{n+1} == \frac{1}{n}|_{n=\infty}$
\end{mex}
\bigskip
\begin{theo} \label{P001}
 If $(*G,\sim)$ then $(*G,\lt) \mapsto (\mathrm{R}/\overline{\mathbb{R}}, =)$ 
\end{theo}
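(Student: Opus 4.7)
The plan is to unpack the asymptotic hypothesis $f \sim g$ in $*G$ via Definition \ref{DEF009}, which gives $\frac{f}{g} \simeq 1$, i.e.\ $\frac{f}{g} = 1 + \delta$ for some $\delta \in \Phi \cup \{0\}$. The strict inequality $f \lt g$ then forces $\delta$ to be non-positive (in the appropriate sense, after handling the sign of $g$), but regardless $\delta$ is an infinitesimal (or zero). The transfer $*G \mapsto \mathbb{R}/\overline{\mathbb{R}}$ realizes this infinitesimal as $0$ by Proposition \ref{P029}, so $\frac{f}{g}$ maps to $1$, which is exactly the assertion that the strict inequality collapses to equality under the transfer.

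First I would reduce to the case $g \gt 0$ using the sign symmetry of the $\sim$ and $\lt$ relations (if $g \lt 0$, multiply both sides by $-1$ and swap the direction; if $g = 0$ the ratio defining $\sim$ is not applicable, so that case is excluded by the hypothesis $(*G,\sim)$). Then from $f \lt g$ I divide through by $g$ to obtain $\frac{f}{g} \lt 1$ in $*G$. Combining with $\frac{f}{g} = 1 + \delta$, I get $\delta \lt 0$, but from $\delta \in \Phi \cup \{0\}$ we know $\delta$ is a (negative) infinitesimal. Applying the transfer $\Phi \mapsto 0$ then sends $\frac{f}{g}$ to $1$ in $\mathbb{R}$, so that $f$ and $g$ are identified under the transfer, giving $f = g$.

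The secondary case is when $f$ or $g$ itself lies in $\Phi^{-1}$, so the target space is $\overline{\mathbb{R}}$ rather than $\mathbb{R}$; here the realization $\Phi^{-1} \mapsto \infty$ (Definition \ref{DEF046}) sends both $f$ and $g$ to $\infty$, which again forces equality in $\overline{\mathbb{R}}$ since $f \sim g$ guarantees they diverge at the same order. One can see this directly: if $g \in +\Phi^{-1}$ and $f = g(1+\delta)$ with $\delta$ infinitesimal, then $f \in +\Phi^{-1}$ too, and $f, g \mapsto \infty$.

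The main obstacle is not any single calculation but the careful bookkeeping of what "maps to what" across the two number systems, and in particular making sure the transfer of the \emph{relation} $\lt$ is handled consistently with the transfer of the \emph{values}. This is exactly the phenomenon illustrated in Examples \ref{MEX054} and \ref{MEX040}: the strict inequality in $*G$ between infinitesimally close quantities necessarily weakens under the realization, because $\mathbb{R}$ does not have the resolution to distinguish $f$ from $g$ when their difference is infinitesimal relative to $g$. Once this is articulated cleanly, the proof is essentially a one-line invocation of Proposition \ref{P029}.
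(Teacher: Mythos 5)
Your proposal is correct and follows essentially the same route as the paper's proof: divide $f \lt g$ by $g$, use $f \sim g$ to write the ratio as $1$ plus a (negative) infinitesimal, and realize that infinitesimal to $0$ under the transfer so the ratio becomes $1$ and $f = g$. The extra bookkeeping you supply (the sign of $g$ and the $\Phi^{-1} \mapsto \infty$ case in $\overline{\mathbb{R}}$) is more care than the paper takes, but it does not change the argument.
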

\begin{proof}
 $f \lt g$,
 $\frac{f}{g} \lt 1$,
 since $f \sim g$ then let $\frac{f}{g} = 1- \delta$ to preserve the less than relation, $\delta \in \Phi^{+}$.
 Apply transfer, $\delta \to 0$,
 $\frac{f}{g}|_{n=\infty}=1$,
 $f=g$.
\end{proof}
\bigskip
\begin{mex}
$(*G,n \; \mathrm{rz}(\lt) \; n^{2})|_{n=\infty} \mapsto (\overline{\mathbb{R}}, 0 \lt \infty)$.
 $n \; z \; n^{2}|_{n=\infty}$,
 $0 \; z \; n^{2}|_{n=\infty}$,
 $0 \lt \infty$
\end{mex}
\bigskip
\begin{theo} \label{P002}
 If $(*G,\not\sim)$ then $(*G, \mathrm{rz}(\lt)) \mapsto (\mathrm{R}/\overline{\mathbb{R}}, \lt)$ 
\end{theo}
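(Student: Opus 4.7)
The plan is to mirror the structure of Theorem \ref{P001} but in the complementary case, exploiting the fact that $f \not\sim g$ rules out the degenerate collapse to equality that produced Theorem \ref{P001}. First I would, without loss of generality, reduce to the case where $f$ and $g$ are positive via Proposition \ref{P007}, so that the ratio $f/g$ is available and the sign case analysis is cleaner. Given $f \lt g$ in $*G$, divide both sides by $g$ to obtain $f/g \lt 1$. The hypothesis $f \not\sim g$ gives, by Definition \ref{DEF009}, that $f/g \not\simeq 1$; equivalently, by Definition \ref{DEF201}, $f/g - 1 \notin \Phi \cup \{0\}$.

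Next I would write $f/g = 1 - c$ and examine the type of $c$. Since $f/g \lt 1$ we have $c \gt 0$, and since $f/g - 1 \notin \Phi \cup \{0\}$ we have $c \notin \Phi \cup \{0\}$. By the partition of $*G$ into components (Definition \ref{DEF075}), $c$ therefore has either a strictly positive real component in $\mathbb{R}^{+}$, or a component in $+\Phi^{-1}_{*}$, or both. This is the only place where the non-asymptotic assumption is used, and it is precisely what prevents $c$ from being an infinitesimal that would be annihilated by realization.

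Now apply the realization $\mathrm{rz}$ to the relation $f/g \lt 1$, in the additive sense discussed just before Example \ref{MEX202}: infinitesimal components are sent to $0$ by Proposition \ref{P029}, and infinite components are realized to $\pm\infty$ by Definition \ref{DEF046}. If the real component of $c$ is nonzero, $c$ realizes to a number in $\mathbb{R}^{+}$ (possibly combined with $+\infty$), hence $1 - c$ realizes to something strictly less than $1$ in $\mathbb{R}$; if $c$ has only an infinity component, $1 - c$ realizes to $-\infty$ in $\overline{\mathbb{R}}$. In both cases $(\mathrm{rz}(f/g)) \lt 1$ holds in $\mathbb{R}/\overline{\mathbb{R}}$, which transfers back via Proposition \ref{P053} to $\mathrm{rz}(f) \lt \mathrm{rz}(g)$, establishing the claimed mapping.

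The main obstacle I expect is the bookkeeping for mixed-type $f$ and $g$ where the ratio $f/g$ simultaneously carries real, infinitesimal, and infinitary components: one must verify that the non-reversible addition (Part 5) used implicitly by $\mathrm{rz}$ picks out exactly the dominant piece on each side so that the strict inequality survives, and that the edge case where $g \in \Phi$ (so the division inflates $f/g$ to an infinite quantity) is handled by transferring to $\overline{\mathbb{R}}$ rather than $\mathbb{R}$. Once that case analysis is done cleanly, the argument reduces to Theorems \ref{P075} and \ref{P076} and Proposition \ref{P029} applied componentwise.
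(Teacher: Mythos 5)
Your route is genuinely different from the paper's: you work multiplicatively, writing $f/g = 1-c$ and using $f \not\sim g$ (via Definition \ref{DEF009}) to force $c \notin \Phi \cup \{0\}$, whereas the paper works additively, splitting into the case where one of $f,g$ is an infinity (realized directly to $\pm\infty$) and the bounded case, where it writes $g = \alpha + f$ and argues $\alpha$ must carry a real component so that taking standard parts leaves $0 \lt \alpha'$. Your component analysis of $c$ (positive real part or positive $\Phi_{*}^{-1}$ part, hence $\mathrm{rz}(1-c) \lt 1$ strictly) is sound and is in fact closer to how the theorem is consumed in Theorem \ref{P210}. However, two steps do not go through as written.

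First, the reduction to positive $f,g$ via Proposition \ref{P007} is a misapplication: that proposition states $a \prec b \Leftrightarrow -a \prec b$ for the magnitude relation $\prec$, which is insensitive to sign, but the relation being realized here is the order relation $\lt$, which is not ($-3 \lt 2$ does not give $3 \lt 2$). You either need to treat the sign cases directly, as the paper implicitly does by handling $-\Phi^{-1} \lt 0$ and $0 \lt \Phi^{-1}$ separately, or note that mixed-sign $f \lt g$ with $f \not\sim g$ realizes trivially. Second, and more seriously, the final conversion from $\mathrm{rz}(f/g) \lt 1$ back to $\mathrm{rz}(f) \lt \mathrm{rz}(g)$ via Proposition \ref{P053} is not licensed: \ref{P053} is a statement in $*G$ requiring a multiplier $\theta \in *G\backslash\{0\}$, and you are applying it \emph{after} realization, where the multiplier $\mathrm{rz}(g)$ may be $0$ or $\pm\infty$. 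Concretely, for $f = 1/n^{2}$, $g = 1/n$ one has $f \not\sim g$ and your ratio argument correctly yields $\mathrm{rz}(f/g) = 0 \lt 1$, yet $\mathrm{rz}(f) = \mathrm{rz}(g) = 0$, so the componentwise conclusion you transfer back to is false; the surviving strict inequality lives in the ratio, not in the separately realized sides. The paper sidesteps this entirely by never leaving the additive form: it realizes the difference $g - f$ about the relation rather than realizing each side and multiplying back, which is exactly what the notation $\mathrm{rz}(\lt)$ in the statement is defined to mean.
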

\begin{proof}
Let $f \; z \; g$.
 If either $\{ f, g \} \in \Phi^{-1}$,
 after realization, for the negative infinity case:
 $-\Phi^{-1} \lt 0 \mapsto -\infty \lt 0$,
 positive infinity case: $0 \lt \Phi^{-1} \mapsto 0 \lt \infty$.
 Without infinities, let $g = \alpha+f$,
 $\alpha \gt 0$ to maintain the inequality, 
 $\mathbb{R} \in \alpha$ as $f \not \simeq g$
 then $\alpha \in \mathbb{R}^{+}$. 
 In $*G$,
 $f \lt \alpha+g \mapsto \mathrm{st}(f) \; z_{2} \; \alpha' + \mathrm{st}(f)$ where $\alpha' \in \mathbb{R}^{+}$,
 $0 \; z_{2} \; \alpha'$,
 $z_{2} = \; \lt$.
\end{proof}
\bigskip
\begin{cor}\label{P006}
If $(*G,\not \simeq)$
 then
 $(*G\backslash\{\Phi^{-1}\},\lt) \mapsto (\mathbb{R}, \lt)$
\end{cor}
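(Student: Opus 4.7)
The plan is to deduce Corollary \ref{P006} from Theorem \ref{P002} by checking that the two restrictions in the corollary---excluding infinities via $*G \backslash \{\Phi^{-1}\}$ and strengthening $\not\sim$ to $\not\simeq$---jointly eliminate the cases in the proof of Theorem \ref{P002} that forced us into $\overline{\mathbb{R}}$ and into the realization relation $\mathrm{rz}(\lt)$ rather than $\lt$ itself. So rather than redoing the transfer from scratch, I would position this as a specialization argument.

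First I would note that the proof of Theorem \ref{P002} splits into two cases: (i) at least one of $f,g$ is an infinity in $\Phi^{-1}$, which is exactly what produces the $\overline{\mathbb{R}}$ target (via $\pm \infty$); and (ii) neither $f$ nor $g$ is an infinity, in which case the argument writes $g = \alpha + f$ with $\alpha \in \mathbb{R}^{+}$ and transfers cleanly into $\mathbb{R}$. By hypothesis $f,g \in *G \backslash \{\Phi^{-1}\}$, so only case (ii) can occur; this already pins the target at $\mathbb{R}$ rather than $\overline{\mathbb{R}}$.

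Next I would relate the hypotheses $\not\simeq$ and $\not\sim$. Given $f,g \in *G \backslash \{\Phi^{-1}\}$ with $f \lt g$, infinitesimal closeness $f \simeq g$ (Definition \ref{DEF201}) means $g-f \in \Phi \cup \{0\}$. The assumption $f \not\simeq g$ therefore says $g - f \notin \Phi \cup \{0\}$; since neither operand contains an infinity, the difference $\alpha := g-f$ is a positive element of $*G$ without an infinitary component, and its non-infinitesimal status forces a nonzero real part, i.e.\ $\alpha \in \mathbb{R}^{+} + \Phi$ with $\mathrm{st}(\alpha) \in \mathbb{R}^{+}$. In particular $f \not\sim g$ as well (since $g/f - 1 = \alpha/f$ has nonzero real part), so Theorem \ref{P002} applies. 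The final step is to observe that because $\mathrm{st}(\alpha) \gt 0$ strictly, no realization of infinitesimals can collapse the inequality to equality; thus the mapping $\lt \mapsto \lt$ holds directly, with no need to write $\mathrm{rz}(\lt)$ on the left.

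The only mildly delicate step is the last one: I must make sure that $f \lt g$ in $*G$, after realizing the infinitesimals on both sides, remains strict. This is where $\not\simeq$ (rather than the weaker $\not\sim$) is essential, and where excluding $\Phi^{-1}$ matters too---it guarantees $\mathrm{st}(f)$ and $\mathrm{st}(g)$ are well-defined real numbers and that their difference $\mathrm{st}(g) - \mathrm{st}(f) = \mathrm{st}(\alpha)$ is a strictly positive real. Once that is established, the transfer $(*G \backslash \{\Phi^{-1}\}, \lt) \mapsto (\mathbb{R}, \lt)$ follows immediately.
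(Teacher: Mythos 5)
Your proposal is correct and follows essentially the same route as the paper: the paper's own (one-line) proof likewise derives the corollary from Theorem \ref{P002} by observing that once infinities are excluded the $\not\sim$ hypothesis reduces to the $\not\simeq$ test and the image collapses from $\mathbb{R}/\overline{\mathbb{R}}$ to $\mathbb{R}$. You simply spell out the details (in particular that $f \sim g$ implies $f \simeq g$ for non-infinite arguments, and that $\mathrm{st}(\alpha) \in \mathbb{R}^{+}$ keeps the inequality strict) that the paper leaves implicit.
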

\begin{proof}
 $(*G,\not \sim)$ without infinity
 becomes $(*G, \not \simeq)$ at infinitely close test only.
\end{proof}
\begin{theo}
 $(\mathbb{R}/\overline{\mathbb{R}}, \lt) \mapsto (*G,\not \sim)$ and $(*G, \lt)$
\end{theo}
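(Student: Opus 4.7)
The plan is to verify the two target conclusions separately, exploiting the embedding $\mathbb{R} \hookrightarrow *G$ established in Part 1 and the fact that $*G$ is a total ordered field (Proposition \ref{P073}). The forward direction does not require non-reversible arithmetic, so the transfer is essentially bookkeeping, but care is needed for the $\overline{\mathbb{R}}$ extension and for the degenerate cases where one side is $0$ or $\pm\infty$.

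First, for preservation of the strict inequality: if $a, b \in \mathbb{R}$ with $a < b$, the embedding sends them to the same symbols viewed in $*G$, and the order on $\mathbb{R}$ is the restriction of the order on $*G$, so $a < b$ in $*G$. If $a = -\infty$ or $b = +\infty$, choose any lift $a \mapsto a' \in -\Phi^{-1}$ (respectively $b \mapsto b' \in +\Phi^{-1}$); by Definitions \ref{DEF051} and \ref{DEF060} every negative infinity in $*G$ is below every real and every positive infinity is above, so $a' < b'$ holds for any admissible lift.

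Second, for the non-asymptotic conclusion $a \not\sim b$, unfold Definition \ref{DEF009} together with Definition \ref{DEF201}: $a \sim b$ is equivalent to $a/b - 1 \in \Phi \cup \{0\}$. I would split into sub-cases according to whether $a, b$ are zero, nonzero finite, or infinite. For nonzero real $a, b$ with $a < b$, the quantity $(a-b)/b$ is a nonzero real, and by Proposition \ref{P018} no nonzero real is infinitesimal, so $a \not\sim b$. For $b = 0$ (forcing $a < 0$) invert and test $b/a = 0$, which differs from $1$ by the nonzero real $-1$, so again $a \not\sim b$. For the infinite cases, any lift of $\pm\infty$ against a finite nonzero real has ratio in $\Phi^{-1}$ or $\Phi$, and the ratio $(-\infty)/(+\infty)$ lifts to something infinitesimally close to $-1$; none of these lies in $1 + \Phi \cup \{0\}$, so $a \not\sim b$ in every sub-case.

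The main obstacle is the non-uniqueness of the lift and the behaviour at the boundary values $0$ and $\pm\infty$: one must check that the conclusion $a \not\sim b$ is robust under every admissible choice of lift for $\pm\infty$ in $*\overline{G}$, and that the definition of $\sim$ is interpreted symmetrically (using $b/a$ when $b = 0$, or appealing to Definition \ref{DEF072}) so that the statement is well-posed. Once the cases are enumerated, each reduces to applying Proposition \ref{P018} or the partition $0 < \Phi^{+} < \mathbb{R}^{+} + \Phi < +\Phi^{-1} < \infty$ from Section \ref{S0104}, so no new machinery is required beyond what Part 1 already supplies.
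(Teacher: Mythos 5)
Your proposal is correct, and its skeleton is the paper's own: the embedding of $\mathbb{R}$ in $*G$ gives preservation of $\lt$, and the fact that distinct elements of $\mathbb{R}/\overline{\mathbb{R}}$ are too far apart gives $\not\sim$. The difference is in how the second half is discharged. The paper's entire argument there is one sentence --- the elements ``are separated by an infinity or real number'' --- which is an additive statement, and strictly speaking one must still divide through and note that for \emph{fixed} reals an additive gap of a nonzero real forces a multiplicative gap from $1$ (this is not automatic for general gossamer numbers: $n$ and $n+1$ differ by a nonzero real yet $n \sim n+1$, so the argument leans on the elements being genuine reals or infinities). You instead verify the multiplicative definition of $\sim$ (Definitions \ref{DEF009} and \ref{DEF201}) directly, case by case, and you are the only one of the two to address the boundary values $0$ and $\pm\infty$ and the robustness of the conclusion under the non-unique lifts of $\pm\infty$ into $\pm\Phi^{-1}$; the paper silently ignores both. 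So your route buys rigor where the paper offers a gesture. One slip to repair: the ratio of a lift of $-\infty$ to a lift of $+\infty$ is \emph{not} in general infinitesimally close to $-1$ (take $-n$ and $n^{2}$, whose ratio is a negative infinitesimal, or $-n^{2}$ and $n$, whose ratio is a negative infinity); what is true, and all you need, is that the ratio is negative, so its distance from $1$ exceeds $1$ and it cannot lie in $1 + (\Phi \cup \{0\})$. You also leave the sub-case $a = 0 \lt b$ implicit, though it is immediate since $0/b - 1 = -1$.
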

\begin{proof}
Since $\mathbb{R}$ embedded in $*G$,
 the $\lt$ relation follows.
 Since elements in $\mathbb{R}/\overline{\mathbb{R}}$ are separated by an infinity
 or real number the elements
 cannot be asymptotic in $*G$. 
\end{proof}
\bigskip
\begin{prop}
 $\mathrm{rz}(\sim) = \; \sim$
\end{prop}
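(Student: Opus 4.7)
The plan is to reduce the claim $\mathrm{rz}(\sim) = \;\sim$ to the corresponding fact $\mathrm{rz}(\simeq) = \;\simeq$ by unpacking the definition of the asymptotic relation, and then to verify that the realization operator is the identity on $\simeq$ because $\simeq$ is by construction the relation that tolerates exactly an infinitesimal additive discrepancy.

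First I would expand the left-hand side using Definition \ref{DEF009}. A pair satisfies $f \; \mathrm{rz}(\sim) \; g$ iff the ratio satisfies $\tfrac{f}{g} \; \mathrm{rz}(\simeq) \; 1$, so the whole claim collapses to showing $\mathrm{rz}(\simeq) = \;\simeq$ on arbitrary $a, b \in *G$. This reduction works because $\mathrm{rz}$ acts on the relation rather than the operands, and the ratio $f/g$ is not touched by a realization performed about the relation.

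Next I would verify $\mathrm{rz}(\simeq) = \;\simeq$ directly from Definition \ref{DEF201}. If $a \simeq b$, then $a - b = \delta$ with $\delta \in \Phi \cup \{0\}$. Applying the realization in the additive sense about the relation uses Proposition \ref{P029} to send $\delta \mapsto 0$, giving $a - b = 0$ after realization, and $0 \in \Phi \cup \{0\}$, so the realized pair still satisfies $\simeq$. Conversely, if $a \not\simeq b$ then $a - b$ has a nonzero real or infinitary component, and $\mathrm{rz}$ by definition removes only infinitesimals and collapses infinities but does not erase a nonzero real or generate one; so realization cannot produce $\simeq$ from a non-$\simeq$ pair. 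Hence the relation is preserved in both directions.

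The main obstacle will be the interpretive step: making precise what it means to apply an additively defined realization operator ($\mathrm{rz}$) to a relation $\sim$ whose definition is multiplicative (through a ratio). I would resolve this by observing that Definition \ref{DEF009} already routes $\sim$ through the additive relation $\simeq$ on the ratio, so the additive interpretation of $\mathrm{rz}$ applies canonically at the level of the ratio, and no separate multiplicative realization rule is needed. Once this bridge is in place, the proof is just the observation that $\Phi \cup \{0\}$ is closed under and fixed by $\mathrm{rz}$.
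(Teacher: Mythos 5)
Your proof is correct, but it is considerably more explicit than, and structured differently from, the paper's own argument. The paper disposes of this proposition in one sentence --- ``the limit itself considers magnitudes, usually by dividing the infinities and realizing the infinitesimals'' --- i.e.\ it asserts that $\sim$ is already a ``realized'' relation because its very definition (a ratio tending to $1$) is computed by dividing out infinities and discarding infinitesimals, so a further application of $\mathrm{rz}$ changes nothing. You instead verify invariance from the definitions: unpack $f \sim g$ as $\tfrac{f}{g} \simeq 1$ via Definition \ref{DEF009}, reduce to $\mathrm{rz}(\simeq) = \;\simeq$, and check that the defining condition $\tfrac{f}{g} - 1 \in \Phi \cup \{0\}$ is fixed by the map $\Phi \mapsto 0$ of Proposition \ref{P029} since $0 \in \Phi \cup \{0\}$. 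This buys a checkable argument where the paper offers only an appeal to how limits work, and your converse direction (a non-$\simeq$ discrepancy, having a nonzero real or infinitary component, survives realization) is a genuine addition --- though note that under the paper's left-to-right reading of ``$=$'' (Definitions \ref{DEF028} and \ref{DEF021}) only the forward implication is strictly required. The one place you are right to flag caution is the bridge between the additively defined $\mathrm{rz}(z)$ and the multiplicatively defined $\sim$: your resolution (route everything through $\simeq$ applied to the ratio against $1$) is the natural one and is consistent with the paper's remark that $\sim$ is ``an equality with respect to the product'' while $\simeq$ is ``an equality with respect to addition,'' but it is an interpretive choice the paper never makes explicit, so it is worth stating as an assumption rather than a consequence of the definitions.
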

\begin{proof}
 The limit itself considers magnitudes, usually
 by dividing the infinities and realizing the
 infinitesimals.
\end{proof}
\bigskip
\begin{theo} \label{P003}
 $(*G,\lt) \mapsto (\mathbb{R}/\overline{\mathbb{R}}, \leq)$
\end{theo}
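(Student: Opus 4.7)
The plan is to dispatch this by splitting on whether the two sides of the strict inequality are infinitesimally close, since those are precisely the two regimes in which a transfer can behave differently. Suppose $f \lt g$ in $*G$. Either $f \simeq g$ (i.e.\ $g-f \in \Phi \cup \{0\}$) or $f \not\simeq g$, and these exhaust the possibilities.

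In the first case, where $f \simeq g$, the difference $g-f$ is an infinitesimal, and so under the realization $\Phi \mapsto 0$ the image of $f$ and the image of $g$ coincide. More carefully, I can appeal directly to Theorem \ref{P001}: if $f \sim g$ (which is implied when $f \simeq g$ and both are nonzero reals, with the divergent/infinitary case handled by a parallel argument using $\simeq$ directly), then the strict inequality collapses to equality on the image side. So we get $\mathrm{st}(f) = \mathrm{st}(g)$, which satisfies $\leq$.

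In the second case, where $f \not\simeq g$, Theorem \ref{P002} (or its restriction to the non-infinitary subcase, Corollary \ref{P006}) tells us that the strict inequality $\lt$ is preserved by the transfer into $\mathbb{R}$ or $\overline{\mathbb{R}}$, which of course implies $\leq$.

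Combining the two cases, in every situation the image of $f \lt g$ under $*G \mapsto \mathbb{R}/\overline{\mathbb{R}}$ satisfies $\leq$, which is the desired statement. The main obstacle is a bookkeeping one: the two auxiliary theorems are phrased using $\sim$ versus $\simeq$, and one must make sure the dichotomy "infinitesimally close or not" cleanly covers both additive and multiplicative comparison so that one of the two theorems applies to every pair $(f,g)$. In particular the case where $f$ or $g$ is an infinity needs to be routed through $\overline{\mathbb{R}}$ rather than $\mathbb{R}$, matching the target $\mathbb{R}/\overline{\mathbb{R}}$ in the statement.
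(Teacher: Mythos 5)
Your overall strategy is the paper's: split into two cases, send one through Theorem \ref{P001} (strict inequality collapses to equality in the image) and the other through Theorem \ref{P002} (strict inequality preserved), and observe that the union of the two image relations $=$ and $\lt$ is $\leq$. The gap is that you split on the wrong relation. The hypotheses of Theorems \ref{P001} and \ref{P002} are $\sim$ and $\not\sim$ (a multiplicative, asymptotic condition), whereas your dichotomy is $\simeq$ versus $\not\simeq$ (additive infinitesimal closeness), and these do not coincide. The regime that breaks is $f \not\simeq g$ but $f \sim g$, e.g. $f=n$, $g=n+1$ at $n=\infty$, or $f=n^{2}$, $g=n^{2}+n$: the difference is not infinitesimal, so these pairs land in your second case, where you assert that the strict inequality survives the transfer. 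It does not: both sides realize to the same $\infty$ in $\overline{\mathbb{R}}$, the image relation is equality, and Theorem \ref{P002} is simply not applicable because its hypothesis $\not\sim$ fails. Corollary \ref{P006} is indeed stated with $\not\simeq$, but it explicitly excludes infinities ($*G\backslash\{\Phi^{-1}\}$), so routing the infinitary case through $\overline{\mathbb{R}}$, as you suggest, does not rescue it. You flag the $\sim$ versus $\simeq$ mismatch as bookkeeping; it is in fact the substantive point, and as written your case 2 derives a false intermediate claim for these pairs (the final conclusion $\leq$ happens to survive only because equality is also $\leq$).

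The repair is to adopt the paper's dichotomy directly: split on $f \sim g$ versus $f \not\sim g$, which matches the hypotheses of Theorems \ref{P001} and \ref{P002} exactly, and then the union of the two images is $\leq$ with no leftover cases. A smaller loose end in your case 1: when $f$ and $g$ are both infinitesimal, $f \simeq g$ holds automatically while $f \sim g$ may fail (e.g. $\frac{1}{n}$ and $\frac{2}{n}$), so Theorem \ref{P001} cannot be invoked there; your ``parallel argument using $\simeq$ directly'' (both standard parts coincide, giving $0=0$) does cover it, but it needs to be said rather than gestured at.
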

\begin{proof}
 Since $\sim$ and $\not \sim$ cover all cases,
 combining Theorem \ref{P001} and Theorem \ref{P002} covers
 all cases, the union of the two images. 
\end{proof}

The limit calculation can be described as an evaluation
 in a more detailed number system with infinitesimals
 and infinities which is projected back to the real numbers. 

$\frac{a_{n}}{b_{n}}|_{n=\infty}=1$ can be expressed
 as $(*G = \mathbb{R})$ or $(\mathbb{R},=)$, even though
 the fraction may not be in $\mathbb{R}$. However
 the result of the ratio is in $\mathbb{R}$,
 hence we state it this way. 

 We found the
 limit to be a transfer as it realizes 
 infinitesimals and infinities.
 Hence, we provided a number system $*G$,
 which contains infinireals and better describes
 the limit calculation.

 In what follows, we are able to decouple a fraction
 about $1$, multiplying the numerator
 and denominator out, while still able to
 simplify as a fraction, through
 transfers.
\bigskip
\begin{theo} \label{P210}
$ z_{1}, z_{2} \in \mathbb{B}$;
 $a_{n} \gt 0$, $b_{n} \gt 0$.
\[ \frac{a_{n}}{b_{n}}|_{n=\infty} \; z_{1} \; 1 \;\; \Leftrightarrow \;\; a_{n} \; z_{2} \; b_{n}|_{n=\infty} \]
\begin{table}[H]
  \centering
  \begin{tabular}{|c|c|c|} \hline
  \text{Condition 1} & & \text{Condition 2} \\
  $z_{1}$ & $z_{2}$ & \\
  $\lt$ & $\lt$ & $(*G,\not \sim)$ \text{ then } $(*G,\lt) \mapsto (\mathbb{R}/\overline{\mathbb{R}}, \lt)$ \\ 
  $\gt$ & $\gt$ & $(*G,\not \sim)$ \text{ then } $(*G,\gt) \mapsto (\mathbb{R}/\overline{\mathbb{R}}, \gt)$ \\ 
  $=$ & $\sim$ & \text{not one of the other cases} \\ \hline
  \end{tabular}
\end{table}
\end{theo}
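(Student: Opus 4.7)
The plan is to verify each row of the table separately, using Proposition \ref{P053} to clear the denominator in the equivalent direction, and then invoking the transfer Theorems \ref{P001}, \ref{P002} and the definition of $\sim$ (Definition \ref{DEF009}) to interpret the result in $\mathbb{R}/\overline{\mathbb{R}}$.

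First I would handle the multiplicative step in $*G$. Since $b_n > 0$, Proposition \ref{P053} gives $\frac{a_n}{b_n} \; z \; 1 \Leftrightarrow a_n \; z \; b_n$ in $*G$ for any $z \in \{\lt, \gt, ==\}$, so inside $*G$ the two comparisons always carry the same relation symbol. The content of the theorem therefore lies in interpreting this equivalence after the realization transfer $*G \mapsto \mathbb{R}/\overline{\mathbb{R}}$, which is where $z_1$ and $z_2$ may diverge from the raw $*G$ relation.

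Next I would dispatch the equality row. By Definition \ref{DEF009}, $\frac{a_n}{b_n}|_{n=\infty} = 1$ corresponds to $\frac{a_n}{b_n} \simeq 1$, which is exactly $a_n \sim b_n|_{n=\infty}$; conversely $a_n \sim b_n$ unpacks to $\frac{a_n}{b_n} \simeq 1$, which under the limit transfer lands on $1$ in $\mathbb{R}$. For the strict rows, start from $\frac{a_n}{b_n}|_{n=\infty} \lt 1$ in $*G$. The side condition $(*G, \not\sim)$ says $\frac{a_n}{b_n} \not\sim 1$, so by Theorem \ref{P002} the transfer $*G \mapsto \mathbb{R}/\overline{\mathbb{R}}$ preserves the strict $\lt$, yielding $a_n \lt b_n|_{n=\infty}$ as a statement readable in $\mathbb{R}/\overline{\mathbb{R}}$. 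The $\gt$ row is symmetric, obtained either by the same argument with reversed relation or by Proposition \ref{P005} applied to the reciprocal. For the converse direction I would multiply by $b_n > 0$ (again Proposition \ref{P053}) and observe that asymptotic non-equivalence is preserved under that multiplication, so the strict inequality on $a_n, b_n$ transfers back to a strict inequality on the ratio.

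The main obstacle I expect is the bookkeeping around the $\not\sim$ side condition: it must be the same condition on both sides of the biconditional, and one needs to check that $\frac{a_n}{b_n} \not\sim 1$ is genuinely equivalent to $a_n \not\sim b_n$ (so that the hypothesis in column 3 can be read off either side without circularity). This follows because $\sim$ is an equality with respect to the product (the remark after Definition \ref{DEF009}), so $a_n \sim b_n \Leftrightarrow \frac{a_n}{b_n} \sim 1 \Leftrightarrow \frac{a_n}{b_n} \simeq 1$, but this equivalence should be stated explicitly before the three cases rather than left implicit. A secondary issue is the clause "not one of the other cases" in the equality row: it is essentially the trichotomy provided by Proposition \ref{P033} applied to the pair $\{\prec, \asymp, \succ\}$ restricted to the $\asymp$ situation $\frac{a_n}{b_n} \prec \infty$, ensuring the three rows exhaust the possibilities for positive $a_n, b_n$ whose ratio has a limit.
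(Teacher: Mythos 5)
Your proposal is correct and follows essentially the same route as the paper's proof: clear the denominator multiplicatively in $*G$, observe that the three rows partition $*G$ into disjoint exhaustive cases with the equality row being $\sim$ by Definition \ref{DEF009}, and invoke the transfer theorems to land the strict rows in $\mathbb{R}/\overline{\mathbb{R}}$. Your version is somewhat more explicit (citing Proposition \ref{P053} for the multiplicative step, checking that $\tfrac{a_n}{b_n}\not\sim 1$ matches $a_n\not\sim b_n$, and correctly pointing at Theorem \ref{P002} for the strict inequality rows where the paper's printed proof cites Theorem \ref{P001}), but these are refinements of the same argument rather than a different one.
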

\begin{proof}
 While we can bring the fraction into $*G$ and multiply
 out the denominator,
 to show equivalence we need to show that the transfer
 back from $*G$ to $\mathbb{R}/\overline{\mathbb{R}}$
 is equivalent to the limit.

Condition 2 can map $*G \to \mathbb{R}/\overline{\mathbb{R}}$ as the cases
 are disjoint and cover $*G$.
 If we consider the fractions
 equality case, by definition
 is $\sim$.
 However, this case is considered by excluding the inequality
 cases, partitioning $*G$ into the three disjoint cases.
 For the inequality, condition 2 leads to condition 1 by
 Theorem \ref{P001}.
 All cases of $*G \to \mathbb{R}/\overline{\mathbb{R}}$ have been considered.
\end{proof}
\bigskip
\begin{theo} \label{P211}
 Extend Theorem \ref{P210}:
 $a_{n} \neq 0$, 
 $b_{n} \neq 0$,
\[ \frac{a_{n}}{b_{n}}|_{n=\infty} \;  z_{1} \; 1 \;\; \Leftrightarrow \;\; a_{n} \; (\mathrm{sgn}(b_{n}) z_{2}) \; b_{n}|_{n=\infty} \]
\end{theo}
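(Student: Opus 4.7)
The plan is to reduce Theorem \ref{P211} to the already-established Theorem \ref{P210} by factoring out signs and invoking Proposition \ref{P054} (relations invert under multiplication by a negative). Since the fractional ratio $\frac{a_n}{b_n}$ is sign-insensitive to a simultaneous sign flip of $a_n$ and $b_n$, the natural move is to replace $(a_n, b_n)$ by the positive sequences $(|a_n|, |b_n|)$ and then track how the sign of $b_n$ affects the comparison $a_n \; z \; b_n$ when the denominator is cleared.

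First I would assume, as is standard at infinity with ultimately monotonic sequences (Part 3), that $\mathrm{sgn}(b_n)$ is eventually constant, so we may treat the two cases $b_n > 0$ and $b_n < 0$ separately on the relevant tail. In the case $b_n > 0$, note further that $\mathrm{sgn}(b_n) = +1$ so $\mathrm{sgn}(b_n) z_2 = z_2$; if additionally $a_n > 0$, the conclusion is exactly Theorem \ref{P210}, and if $a_n < 0$, then $\frac{a_n}{b_n} < 0 < 1$ forces $z_1 = \;<$ and likewise $a_n < 0 < b_n$ forces $z_2 = \;<$, which is consistent.

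In the case $b_n < 0$, write $b_n = -|b_n|$ and consider $\frac{a_n}{b_n} = \frac{-a_n}{|b_n|}$. Applying Theorem \ref{P210} with $a_n' = -a_n$ and $b_n' = |b_n|$ (both positive on the relevant tail, using $\mathrm{sgn}(a_n) = \mathrm{sgn}(b_n) = -1$ when $\frac{a_n}{b_n}$ is close to $1$, or handling the opposite-sign subcase by noting $z_1 = \;<$ directly) yields an equivalence $\frac{a_n'}{b_n'}|_{n=\infty} \; z_1 \; 1 \Leftrightarrow a_n' \; z_2 \; b_n'$. Clearing the denominator by multiplying through by $b_n$ (negative) in the original expression requires inverting the relation by Proposition \ref{P054}, which is exactly encoded by the factor $\mathrm{sgn}(b_n) = -1$ acting on $z_2$.

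The main obstacle I expect is bookkeeping around the sign convention for $\mathrm{sgn}(b_n) z_2$ itself: the notation must be unambiguously tied to Proposition \ref{P054}'s inversion rule, and one has to verify that the equality case ($z_1 = \; =$, which corresponds to $z_2 = \; \sim$ via Theorem \ref{P210}) is preserved under the sign flip, since $\sim$ is symmetric and therefore unaffected by multiplication by $-1$. Once this is checked, the two cases glue together and the extended equivalence follows by transferring the $*G$-level inequality back to $\mathbb{R}/\overline{\mathbb{R}}$ via Theorem \ref{P002} exactly as in the proof of Theorem \ref{P210}.
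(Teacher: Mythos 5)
Your proposal is correct and follows essentially the same route as the paper: reduce to Theorem \ref{P210} and absorb the sign of $b_{n}$ into an inversion of the relation (the paper's own proof is the one-line observation that multiplication by a negative number inverts the inequality, which the $\mathrm{sgn}(b_{n})$ factor encodes). Your version merely makes explicit the case analysis and the check that the symmetric relation $\sim$ is unaffected, which the paper leaves implicit.
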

\begin{proof}
 Since multiplying by a negative number inverts the inequality
 which is included within the theorem, the proof is
 identical to the proof given in Theorem \ref{P210}.
\end{proof}

 While Theorem \ref{P210} loses
 information as $*G$ is more dense,
 so is not reversible, 
 we can map back to $*G$.
 Consider the table relations
 $z_{1} \to z_{2}$,
 if we consider  $\mathbb{R}/\overline{\mathbb{R}} \to *G$,
 as $\mathbb{R}$ is embedded,
 the inequalities hold.
\subsection{Overview and prelude} \label{S0403}
 The significance of ratio comparison being expressed in $*G$ is that
 we can undo tests about a value.
 A ratio comparison is often given as a fixed truth set in stone, the
 pinnacle result. For example, the ratio test.
 We 
 need to investigate, undo or analyse tests, to modify or change them perhaps,
 to find out how they may work or build new ones.

 The major aspect to this paper is the idea of
 transference, where we explore the change in relations as a transition.
 In this way, we can understand why a statement with infinitesimals
 may have a strict inequality, but when transfered to reals has
 an equality.  

 Many of the definitions are introduced as a basis for
 later work.
 The partitioning between finite and infinite domains is particularly important,
 as seen in Definitions \ref{DEF204} and \ref{DEF205},
 because they are a problem separation and an extension to classical analysis.
 From Fermat and other examples, these elements always existed, but
 they needed form.

 The mappings Table \ref{FIG20} gives a range of applications and theory, and yet it is
 only a snapshot.  However, we felt a need to describe the extent of
 the theory, and hence in a simpler way start to reveal why $*G$ would
 be useful in numerical calculations as well as theoretical mathematics.

 While $\mathbb{R}$ is embedded in $*G$, this does not mean a translation
 between $\mathbb{R} \mapsto *G$ is one-to-one. For example, we could
 add information by inserting a curve at a singularity.

 That transference explains infinitesimal and infinitary calculations
 is interesting.  The missing mathematics of infinitesimals
 and infinities which have eluded analysis for centuries are the
 same operations that are performed on a day to day basis: primarily, approximation. 

 There is a paradigm shift. In particular, the exploration of
 possibilities is opened up.
 The user of this mathematics will need other tools for the larger workspace,
 but we believe will more likely find the mechanics of the mathematics.

 It is hard to understand how infinitesimals
 fell. For they were the most successful aspect of calculus and are a part of it.
 We feel that the inability to construct a number system that explained
 the infinitesimals and infinities led to their demise.
 Infinitesimals never truly disappeared,
 but instead went underground, incorporated in much of todays mathematics (see \cite{victors}).
 Their resurrection in 1960s through NSA
 was a sustained correction, that even today
 has not been fully achieved. Two hundred years of denial are not so easily fixed.
 
 Our view of
 the calculus is two tiered, with transference explaining the contradictions.
 It also opens worlds of possibilities. This is not the answer, only the beginning of
 the journey.

 We are not taught to deal with contradiction well.
 In freeing the mathematics by acknowledging different
 possibilities, the ability to analyse mathematics is greatly increased.

In relation to quantum mechanics described in the language of statistics theory,
 Einstein said ``God does not play dice".
 We believe this meant that everything
 has a structure, whether we can see it or not.
 It is quite conceivable that statistics in quantum mechanics
 works but may not reveal many hidden structures.
 Any theory, after all, is only one viewpoint.

 Similarly, G\"odel's incompleteness theorems \cite{godel} could be interpreted to
 mean that there is an infinity of mathematics to be discovered.
 How can we explore mathematics if we are unable to easily incorporate different points of view?
 How do we analyse different theories and algorithms, side by side?

 So, the major obstacle to the acceptance and use of infinitary mathematics is the mathematical language
 itself.
 The notation $|_{x=0}$ Definition \ref{DEF205} is defined in context.
 That is, until you use it in a calculation, it is undefined.
 This is different to most definitions, which are invariant and unchanging.
 However, in order to `free' the mathematics,
 we need to manage cases where we cannot foresee the future.
 This is the responsibility of the user of the mathematics.
 We believe this is necessary, because of the complexity of the subject itself.
 This decoupling of responsibility with notation has a purpose. It can produce very
 flexible mathematics, essentially creating dynamic definitions.

 I once asked a researcher if we could compare two algorithms with different
 structure that gave the same result. The answer was an affirmative no.
 However, if we could pull apart the algorithms, and perhaps investigate
 them in the infinite domain, it may just be possible to find answers
 and show linkages which without $*G$ or hyperreals would be impossible to find.

 The goal is not necessarily harmony or unity
 in the context of 
 one set of rules or only one consistently logical mathematical framework,
 but instead connected transitions and freedom
 which can give a more holistic view.

 So we find transference describes the world of mathematics which we see well,
 and hence its development could lead to tools that every numerical scientist
 or mathematician could use.
\section{Non-reversible arithmetic and limits} \label{S05}
 Investigate and define non-reversible arithmetic in $*G$ 
 and the
 real numbers. 
 That approximation of an argument of magnitude, is arithmetic.
 For non-reversible multiplication we 
 define a logarithmic magnitude relation $\succ\!\succ$.
 Apply the much-greater-than relation $\succ$ in the evaluation of limits.
 Consider L'Hopital's rule with infinitesimals and infinities, and in a comparison
 $f \; (z) \; g$ form.

\subsection{Introduction}\label{S0501}

 Two parallel lines may meet at infinity,
 or they may always be apart.
 Infinity is non-unique. We believe different
 number systems can co-exist by the 
 non-unique nature of infinity.

 We focus on the infinite case where the largest magnitude
 dominates, $a+b=a$ where $b \neq 0$.

 For example, arbitrarily truncating a Taylor series.
 $f(x+h)|_{h=0} = f(x) + h f'(x) + \frac{h^{2}}{2}f^{(2)}(x) + \dots|_{h=0}$
 $= f(x) + h f'(x)|_{h=0}$.

 That an infinite sum of the discarded
 lower order terms 
 have no effect on the outcome is explained
 by sum convergence theory. However, if such a sum was for example
 bounded above by an infinitesimal, then the realization
 of the sum (a transfer) to real numbers would discard the infinitesimal
 terms, $\Phi \mapsto 0$.
\begin{quote}
DU BOIS-REYMOND in his journal
articles on the infinitary calculus is not much interested in a theory of sets as
such, or even explicitly in sets on lines. He is interested in the nature of a linear
continuum, but chiefly because he wants to consider a more general "continuum"
of functions. He especially concerns himself with limit processes as they occur in
a linear continuum, since he wants to consider limit processes among functions. \cite[p.110]{fisher}
\end{quote}

 In our previous sections we have identified
 the investigation of functions
 as working in a higher dimensional number system than $\mathbb{R}$,
 and fitted $*G$ to du Bois-Reymond's work,
 as a representation of the continuum that he was investigating. 

 Since we believe $*G$ is a field,
 multiplication and addition by non-zero elements are
 reversible prior to realization. For example truncation
 $*G \mapsto *G$. After this process, information is lost
 and in general you cannot go back.

 What we demonstrate is that arithmetic `is' non-reversible;
 and that this is a major aspect of analysis. Ultimately
 we see this as a way for working with transference,
 with reasoning by magnitude. 
 
 For example, we prove L'Hopital's rule Proposition \ref{P217}
 not with
 an equality but with an argument of magnitude.
 This, after reading the original translation
 of the rule is closer to the discovery than
 the formation with the mean value theorem.
 And this is a problem, that mathematics 
 is being used not in an intuitive way, but
 as a means of proof. Newton did not publish
 the Principia with his calculus,
 but the traditional geometric arguments 
 which are ``extremely" difficult to use.
 Analysis is not complete without a more detailed investigation
 into arguments of magnitude.
\subsection{Non-reversible arithmetic} \label{S0502}
The following is a complicated argument
 that associates finite mathematics
 with reversible processes,
 and
 infinitary calculus with non-reversible processes.
 This results in non-uniqueness in 
 the additive operator, the consequences of which are profound.

To start, consider addition and multiplication.
 Given $x+2$, then $(x+2)-2$ gives back $x$, the $-2$ undoing
 the $+2$ operation. Similarly for multiplication,
 if we start at $5$, $5 \times 2=10$. Reversing,
 $10/2=5$. 
 Similarly with powers, where logarithms and powers are 
 each others inverse.

However, if the operator is not reversible you cannot
 undo the operator previously applied; it is as if
 the operation has disappeared. $n^{2}+\mathrm{ln}\,n =n^{2}|_{n=\infty}$ 
 leaves no evidence of adding $\mathrm{ln}\,n$, an infinity.
 We need a number system with infinities for
 this to occur.
 The much-greater-than relation with realization explains
 this.
\bigskip
\begin{theo}\label{P058}
 $a, b \in *G$; 
 $\,( a , b \neq 0)$
   Assume transfer between $\Phi$ and $0$.
\[ \text{If } a+b=a \Leftrightarrow a \succ b. \] 
\end{theo}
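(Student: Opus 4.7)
The plan is to prove both directions of the biconditional by pivoting on the ratio $\frac{b}{a}$, which is well defined since $a \neq 0$ and $*G$ is a field (Proposition \ref{P108}), and then invoking the transfer $\Phi \mapsto 0$ to reconcile the non-reversible equality with the field axioms.

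For the forward direction, assuming $a + b = a$, I would first divide through by $a$ to reduce to $1 + \frac{b}{a} = 1$. Taken literally in the field $*G$, this forces $\frac{b}{a} = 0$ and hence $b = 0$, contradicting $b \neq 0$; so the equation must be read in the transferred sense, namely that $\frac{b}{a}$ is an element whose image under $\Phi \mapsto 0$ is $0$. By Definition \ref{DEF036} this identifies $\frac{b}{a}$ as an infinitesimal, $\frac{b}{a} \in \Phi$, which by Definition \ref{DEF006} is precisely $b \prec a$, equivalently $a \succ b$ by Definition \ref{DEF007}.

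For the reverse direction, assuming $a \succ b$, apply Definition \ref{DEF006} to write $\frac{b}{a} = \delta$ for some $\delta \in \Phi$, so $b = \delta a$. Then factor: $a + b = a + \delta a = a(1 + \delta)$. Now invoke the transfer $\Phi \mapsto 0$ (Proposition \ref{P029}) to send $1 + \delta \mapsto 1$, yielding $a(1+\delta) = a$ in the transferred sense; multiplying back by $a$ we recover $a + b = a$ as the statement of non-reversible addition.

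The principal obstacle is conceptual rather than computational: since $*G$ is a field, literal cancellation in $a + b = a$ immediately gives $b = 0$, which the hypothesis forbids. The statement therefore cannot be an identity inside $*G$ itself but must be interpreted as holding modulo the transfer $\Phi \mapsto 0$, i.e., as an assertion of non-reversible arithmetic. The care needed is in writing the equivalence so that each direction is handled in the correct regime — the forward direction reading the given equation as already a post-transfer statement and reconstructing the infinitesimal that was discarded, the reverse direction producing that infinitesimal from $a \succ b$ and then discarding it via the transfer. Once this two-tiered reading is fixed, both implications reduce to one line using Definitions \ref{DEF006} and \ref{DEF036} together with Proposition \ref{P029}.
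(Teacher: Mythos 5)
Your proposal is correct and follows essentially the same route as the paper's own proof: both directions divide through by $a$, identify $\frac{b}{a}$ with an infinitesimal $\delta$, and use the transfer $0 \mapsto \Phi$ (forward) and $\delta \mapsto 0$ (reverse) to pass between the literal field identity and the non-reversible equality. Your added remark that the statement cannot hold literally in the field $*G$ and must be read modulo the transfer is a fair clarification of what the paper's hypothesis ``Assume transfer between $\Phi$ and $0$'' is doing, but it does not change the argument.
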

\begin{proof}
 $\delta \in \Phi$;

 If $a + b = a$,
 $\frac{a}{a}+\frac{b}{a} = \frac{a}{a}$,
 $\frac{b}{a} = 0$,
 $0 \mapsto \Phi$,
 $\frac{b}{a} = \delta$,
 $a \succ b$.

If $a \succ b$, $\frac{b}{a} = \delta$.
 Consider 
 $a+b = y$,
 $\frac{a}{a} + \frac{b}{a} = \frac{y}{a}$,
 $1 + \delta = \frac{y}{a}$,
 $\delta \mapsto 0$,
 $1 = \frac{y}{a}$,
 $a = y$,
 $a+b=a$
\end{proof}
\bigskip
\begin{cor}\label{P059}
 $a, b \in *G$; $\,b \neq 0$
\[ \text{If } a+b=a \text{ then after the addition and realization in general, it is impossible to determine }b. \] 
\end{cor}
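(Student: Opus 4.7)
The plan is to leverage Theorem \ref{P058} directly: the hypothesis $a+b = a$ already forces $a \succ b$, so $b$ must be an infinitesimal multiple of $a$. The corollary then becomes a statement about information loss under realization, and the natural route is to exhibit non-uniqueness explicitly.

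First I would unpack the hypothesis. By Theorem \ref{P058}, $a+b = a$ gives $b/a = \delta$ for some $\delta \in \Phi$, so $b = a\delta$. The only information the sum retains about $b$ is the value of $a$ itself together with the fact that some infinitesimal multiple was added. Under the realization transfer $\Phi \mapsto 0$ invoked within the theorem, this multiple collapses to $0$, so after the addition and realization the record of $b$ has been absorbed.

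Next I would establish non-recoverability by giving two distinct choices. Pick any $\delta_1, \delta_2 \in \Phi$ with $\delta_1 \neq \delta_2$ and both nonzero, and set $b_1 = a\delta_1$, $b_2 = a\delta_2$. By Theorem \ref{P058} (applied in reverse) $a + b_1 = a$ and $a + b_2 = a$, yet $b_1 \neq b_2$. Thus given only the post-realization sum $a$, there is no function from that sum back to a unique $b$; an entire family of infinitesimal multiples of $a$ is compatible with the observed outcome. This delivers the ``in general, it is impossible to determine $b$'' claim.

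The main obstacle is really just rhetorical rather than mathematical: the statement uses the informal phrase ``in general, it is impossible to determine,'' so I need to be careful to phrase the conclusion so it is not read as a universal impossibility (in special contexts, side information about $b$ could of course pin it down), but as the assertion that the sum alone does not determine $b$. Making the non-uniqueness witness explicit, as above, is what turns this from a vague remark into a genuine corollary of Theorem \ref{P058}.
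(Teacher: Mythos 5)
Your proposal is correct and rests on the same idea as the paper: the set of admissible $b$ with $a+b=a$ is infinite, so the post-realization sum cannot determine $b$. The paper's entire proof is the one-line citation ``By Definition \ref{DEF015}'' (non-reversible arithmetic, defined via the existence of infinitely many additive identities), so your explicit two-witness construction $b_1 = a\delta_1$, $b_2 = a\delta_2$ obtained from Theorem \ref{P058} is simply a fleshed-out, self-contained version of exactly that non-uniqueness argument.
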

\begin{proof}
 By Definition \ref{DEF015}.
\end{proof}

This is a fundamental statement about our numbers. We use Theorem \ref{P058} 
 whenever we approximate. We approximate whenever we use infinitesimal
 or infinite numbers. 
 By considering an alternative definition
 to the much-greater-than relation
 as an infinitesimal ratio ($a \succ b$ then $\frac{b}{a} \in \Phi)$ is explained.

 If $a \succ b$ is alternatively defined\cite[p.19]{ordersofinfinity}:
 if $a - b = \infty$ then $a \succ b$.
 We cannot apply the transfer $\Phi \to 0$
 then  
 $a+b=a$ is not true in general.
   Consider $a=e^{x+1}$, $b=-e^{x}$, 
 $e^{x+1}-e^{x}=(e-1)e^{x}|_{x=\infty}=\infty$ but $e^{x+1} \neq (e-1)e^{x}|_{x=\infty}$. 
 This lacks the `$a+b=a$' property given in Theorem \ref{P058}.
 Since this paper is concerned about
 non-reversibility we do not want to lose this
 property.
\bigskip
\begin{prop}
If  $a_{n} \sim b_{n}|_{n=\infty}$
 then there exists $c_{n}$:
 $a_{n} +c_{n} = b_{n}|_{n=\infty}$
 where $c_{n} \prec a_{n}$.
\end{prop}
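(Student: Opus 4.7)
The plan is to exhibit the witness $c_n$ directly by setting $c_n := b_n - a_n$, which makes the additive identity $a_n + c_n = b_n|_{n=\infty}$ immediate. All the content of the proposition then lives in verifying the magnitude condition $c_n \prec a_n$ (in the sense of Definition \ref{DEF006}, i.e.\ $c_n/a_n \in \Phi$).

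First I would unfold Definition \ref{DEF009}: $a_n \sim b_n|_{n=\infty}$ means $\frac{a_n}{b_n}|_{n=\infty} \simeq 1$, which by Definition \ref{DEF201} says $\frac{a_n}{b_n} - 1 \in \Phi \cup \{0\}$. Rewriting, $\frac{b_n - a_n}{b_n} = \frac{c_n}{b_n} \in \Phi \cup \{0\}$ at infinity, which (modulo the zero case) is precisely $c_n \prec b_n|_{n=\infty}$ by Definition \ref{DEF006}.

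Next I would upgrade $c_n \prec b_n$ to $c_n \prec a_n$ using the asymptotic equivalence itself. Writing $\frac{c_n}{a_n} = \frac{c_n}{b_n} \cdot \frac{b_n}{a_n}$, the first factor is an infinitesimal while the second is infinitesimally close to $1$ by hypothesis, hence bounded. By the multiplicative closure of $\Phi$ under scaling by non-infinite quantities (Proposition \ref{P025} together with the field structure from Proposition \ref{P108}), the product is again in $\Phi$, giving $c_n \prec a_n|_{n=\infty}$ as required.

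The only genuine obstacle is the degenerate case $c_n = 0$, which arises when $a_n = b_n|_{n=\infty}$ exactly. This does not satisfy $c_n \prec a_n$ under the strict reading of Definition \ref{DEF006} (since $0$ is excluded from $\Phi$ by Corollary \ref{P043}). The cleanest way to handle this is either to allow $c_n \in \Phi \cup \{0\}$ in the conclusion, or to observe that if $a_n = b_n$ at infinity one may pick any $c_n \in \Phi$ and still have $a_n + c_n = b_n|_{n=\infty}$ after realization $\Phi \mapsto 0$, in the same spirit as the transfer arguments of Part 4. I would flag this as a minor adjustment to the statement rather than a substantive difficulty.
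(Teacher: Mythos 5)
Your proof is correct, but it runs in the opposite direction from the paper's. The paper's proof is the one-line relation computation: it considers $a_{n}+c_{n} \; z \; b_{n}$, applies non-reversible addition $a_{n}+c_{n}=a_{n}$ (which presupposes $c_{n} \prec a_{n}$), reduces to $a_{n} \; z \; b_{n}$, and reads off $z = \; \sim$ from the hypothesis. In other words, the paper interprets the displayed equality $a_{n}+c_{n}=b_{n}|_{n=\infty}$ as an asymptotic equality at infinity (in the sense of Example \ref{MEX038}, where $f \sim g$ is written $f=g|_{n=\infty}$), so that \emph{any} $c_{n} \prec a_{n}$ serves as a witness once Theorem \ref{P058} is invoked. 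You instead read the equality as exact, construct the canonical witness $c_{n}=b_{n}-a_{n}$, and then \emph{derive} $c_{n} \prec a_{n}$ from Definitions \ref{DEF009} and \ref{DEF201} via $\frac{c_{n}}{a_{n}} = \frac{c_{n}}{b_{n}}\cdot\frac{b_{n}}{a_{n}}$, using closure of $\Phi$ under multiplication by bounded non-infinitesimal factors. Your route proves the stronger (exact) form of the statement and makes the existence claim constructive, at the cost of having to handle the degenerate case $c_{n}=0$ explicitly; the paper's route is shorter but only establishes the asymptotic reading and leaves the existence of a suitable $c_{n}$ implicit. Your observation that $c_{n}=0$ fails $c_{n}\prec a_{n}$ under the strict Definition \ref{DEF006} (since $0 \notin \Phi$ by Corollary \ref{P043}) is a genuine gap in the statement as written that the paper does not address, and your proposed fix (allowing $c_{n} \in \Phi \cup \{0\}$) is the natural one.
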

\begin{proof}
 $a_{n} + c_{n} \; z \; b_{n}$,
 $a_{n} \; z \; b_{n}$ as $a_{n}+c_{n}=a_{n}$,
 $z = \; \sim$
\end{proof}
\bigskip
\begin{defy}\label{DEF049}
An Archimedean number system has no infinitesimals or infinities.
 A non-Archimedean number system is not an Archimedean number system.
\end{defy}
 That is, a non-Archimedean number system has non-reversible arithmetic, see Definition \ref{DEF015}.
 A consequence of the existence of ratios being an infinitesimal or an infinity. 

A consequence of the Archimedean property are unique inverses,
 both additive and multiplicative,
 which allow unique solutions to equations involving
 addition and multiplication.

 With the infinireals, as long as we do not approximate
 via the ``realization" operation,
 and assuming non-zero numbers, we have unique inverses when
 adding and subtracting and unique inverses when
 multiplying or dividing.

However the non-Archimedean property is necessary
 for Theorem \ref{P058}. We need an infinity of possibilities
 with addition
 before we can say an operation is non-reversible.
 However Archimedean systems can have non-reversible arithmetic.
\bigskip
\begin{theo}\label{P213}
Multiplication by $0$ in $\mathbb{R}$ 
 number systems is non-reversible.
\end{theo}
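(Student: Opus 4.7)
The plan is to mirror the reasoning style of Theorem \ref{P058} and Corollary \ref{P059}, but in the multiplicative setting: I want to show that the map $x \mapsto 0 \cdot x$ on $\mathbb{R}$ collapses all inputs to a single output, and hence that no inverse operation can reconstruct the original operand. First I would pick arbitrary distinct $a, b \in \mathbb{R}$ with $a \neq b$, observe that $a \cdot 0 = 0 = b \cdot 0$, and conclude that the map is constant, hence not injective. From the factor $0$ and the product $0$ alone, nothing pins down the original multiplicand, which is precisely the content of non-reversibility in the sense of Corollary \ref{P059} (appealing to Definition \ref{DEF015}).

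Second, I would connect this to the field-theoretic scaffolding already in place. In the construction of the multiplicative structure on $*G$ (Propositions \ref{P105}--\ref{P107}) and analogously on $\mathbb{R}$, the multiplicative group is taken on $*G \setminus \{0\}$ or $\mathbb{R} \setminus \{0\}$ precisely because $0$ has no multiplicative inverse; it is the absorbing element. So non-reversibility of multiplication by $0$ is built into the ordered field axioms themselves and requires no approximation step to manifest.

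The point worth stressing, and the only piece of real content beyond the trivial algebra, is the contrast with Theorem \ref{P058}. There, additive non-reversibility $a+b=a$ required the non-Archimedean structure of $*G$ and the transfer $\Phi \mapsto 0$; the collapse was generated by realizing an infinitesimal ratio. Here, by contrast, the collapse is already present in the Archimedean $\mathbb{R}$, with no infinitesimal machinery needed, which is exactly why the theorem is stated for $\mathbb{R}$ without reference to $*G$. This also foreshadows the broader claim that non-reversibility is a general feature of arithmetic with absorbing elements, not merely a consequence of infinitary approximation.

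The hard part is not the mathematics but the rhetoric: the statement is elementary, so the challenge is to frame the one-line algebraic fact $a \cdot 0 = b \cdot 0$ so that it integrates cleanly with the Definition \ref{DEF015} framework and with the analogy to Theorem \ref{P058}, rather than reading as a trivial observation about $\mathbb{R}$. I would close the proof with the remark that the non-existence of $0^{-1}$ in the multiplicative group of $\mathbb{R}$ is the structural explanation, so no separate transfer step is required.
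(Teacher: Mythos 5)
Your core argument --- that $a \cdot 0 = 0 = b \cdot 0$ for distinct $a,b$, so the original operand cannot be recovered, hence non-reversibility via Definition \ref{DEF015} --- is exactly the paper's proof, which observes that $x \times 0 = 0$ has infinitely many solutions in $\mathbb{R}$. The additional remarks contrasting this with Theorem \ref{P058} and the absence of $0^{-1}$ are commentary rather than a different route, so the proposal is correct and takes essentially the same approach as the paper.
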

\begin{proof}
 Let $x \in \mathbb{R}$. 
 Consider the equation $x \times 0=0$.
 If you then ask what was the value of $x$,
 there is an infinity of solutions in $\mathbb{R}$.
 By Definition \ref{DEF015} the multiplication is  
 non-reversible.
\end{proof}

The number 
 zero collapses other numbers through
 multiplication. What was the original number
 being collapsed? 
 For this reason 
 $0$ is considered
 separately 
 from other numbers.
 By defining $0^{0}=0$
 we can extend the number system further, see Example \ref{MEX207}.
 
 The reals, with which we are so familiar,
 form an Archimedean number system 
 which excludes infinity.  

 This exclusion of infinity in many respects is illusionary,
 as an arbitrary large number acts as an infinity.
 This is exploited in proofs by retaining the property
 of being a finite number. 
 For the statement
 ``$\forall n \gt n_{0}$ the following is true ..." implicitly defines infinity.

Excluding that infinity
 implicitly exists even when ignored,
 the Archimedean property essentially
 means that there are no infinitesimals or infinities as elements.

There is fascination that both the finite perspective
 and infinite perspective can co-exist.
 Both can
 describe, even when they are contradictory viewpoints,
 that is, they are completely different views of the
 same event.
 1. $a+b$ for finite numbers excluding zero always changes
 the sum. 2. $a+b=a$

 If we add a cent to a million dollars we
 have a million dollars plus a cent.
 Alternatively, if we treat a million dollars as the infinity,
 then adding a cent does `nothing' to the sum,
 and the sum remains unchanged, as its magnitude
 is not changed.
 Hence, the accountant and the businessman
 may have different views on the same transaction.

Looking at infinitary calculus with
 infinity as a point, 
 consider $\mathrm{ln}\,n + n^{2}=n^{2}|_{n=\infty}$.
 The term $\mathrm{ln}\,n|_{n=\infty}=\infty$ is 
 infinite, but acts like a zero when added to
 a much larger infinity, 
 a consequence of $\mathrm{ln}\,n \prec n^{2}|_{n=\infty}$. Replacing $\mathrm{ln}\,n$ with $n$
 gives a similar
 result where $n|_{n=\infty}$ acts as a zero element, 
 when $n + n^{2}=n^{2}|_{n=\infty}$. 
 The additive identity is not unique.
 It should be apparent that there is an infinity
 of additive elements.

The same can also be true with regard
 to multiplication, where
 a multiplicative identity is not
 unique. That is, $1$ is not the only
 element which, when multiplied, does not
 change the number.
\bigskip
\begin{mex}\label{MEX205} 
 The following  
 demonstrates non-unique multiplicative
 identities. Let $f = \infty, g = \infty, h = \infty$.  
 Consider 
$f \cdot g \;\; z \;\; h$,  
$\mathrm{ln}(f \cdot g) \;\; (\mathrm{ln}\,z) \;\; \mathrm{ln}\,h$,
$\mathrm{ln}\,f + \mathrm{ln}\,g \;\; (\mathrm{ln}\,z) \;\; \mathrm{ln}\,h$. 
 Let  $\mathrm{ln}\,f \succ \mathrm{ln}\,g$, then  
$\mathrm{ln}\,f + \mathrm{ln}\,g = \mathrm{ln}\,f$ and 
$\mathrm{ln}\,f \;\; (\mathrm{ln}\,z) \;\; \mathrm{ln}\,h$.  
 When reversing the process is possible,
 $f \;\; z \;\; h$ and   
 $g$ is a multiplicative identity.
\end{mex} 
\bigskip
\begin{defy}\label{DEF015}
We say that an arithmetic is non-reversible if there
 is an infinite number of
 additive identities
 or there is an infinite number
 of multiplicative identities.

Let $g(n)$ be one of the infinitely
 many additive identities.
 Let $h(n)$ be one of the infinitely 
 many multiplicative identities.

If one of the following is true, then
 we say that the arithmetic is non-reversible arithmetic.
\[ f(n) + g(n) = f(n) \text{ or } f(n) h(n) = f(n) \]
\end{defy}
 Consider when lower order terms realized? 

Theorem \ref{P058}
 proves non-reversible addition. 
 From this, 
 Theorem \ref{P215} proves non-reversible multiplication.
 These operations, particularly addition have wide applicability.
 However, we stress that this is only one possibility at
 infinity when via algebra and a transfer, $\Phi \mapsto 0$.
 That at infinity we can have non-uniqueness 
 is further exploration of what constitutes the continuum. 

 While Theorem \ref{P213} multiplication by $0$ in $\mathbb{R}$
 is non-reversible, it is isolated. By excluding the
 element $0$ in $\mathbb{R}$, multiplication and division
 are reversible. $\infty \times c = \infty$ when
 $c \neq 0$ has a similar property, but in $\mathbb{R}$,
 $\infty$ is excluded. 
 $0 \times \infty$ is better considered in $*G$ before
 the infinitesimals or infinities are realized.
 The order in which numbers are realized matters.
 
 Generally addition is easier for reasoning with non-reversible
 arithmetic than multiplication;
 probably a consequence of multiplication being defined as
 repeated addition, a more complex operation.
 Hence, for example, when applicable, $\prod_{k=1}^{n}a_{k} = e^{\sum_{k=1}^{n} \mathrm{ln}\,a_{k}}$ transforms a product to a power with a sum for reasoning. 

 Calculus has
 been extended in many ways to use
 infinity in calculations and theory
 because infinity is so useful.
 Calculus often states the numbers in $\mathbb{R}$
 but then reasons in $*G$. Limits are a typical example.
 We generally agree with this as the utility of calculation
 is paramount.
 
 However, just as we discuss the atom in teaching physics,
 because 
 it describes fundamental properties,
 a teaching of
 the infinitesimal is warranted.  
 Not the exclusion of its existence, which is fair
 to say is the current practice.  

 Infinitary calculus facilitates arguments 
 with magnitude, and has the potential
 to reduce the use of inequalities
 in analysis, 
 beyond what Non-Standard-Analysis or present analysis does. 
 This could make problem-solving more accessible 
 by reducing the technical difficulties associated with
 the use of inequalities.

 For example, one may experience 
 the problem of not knowing or not using the right
 inequality, and become stuck; without
 specific knowledge, no progress is likely to
 be made. This applies to specialized domains
 where networking may be required.
 If, however, the problem could be done without
 ``networking", time would be saved.

 As well as providing alternative arguments
 to problems, infinitary calculus can
 be combined with standard mathematical
 arguments and inequalities,
 leading to them being used in new ways.
 
Well how can this be achieved?  

Simply put, by employing non-reversible arithmetic;
 that is, in number systems with non-Archimedean properties,
 non-reversible arguments can be made.
 This is indeed possible
 with the
 much-less-than $( \prec )$ and
 much-greater-than $(\succ)$
 relations defined by du Bois-Reymond. 

 We have an equation
 that we would like to solve
 hence the need
 for a new number system.
 Considering the scales of infinity,
 there does not exist $c \in \mathbb{R}$
 with such a property to move
 between the infinities because $c$ is
 finite. 
\bigskip
\begin{mex}\label{MEX058}
Let $c \in \mathbb{R}$ then $c \prec \infty$,
$\not\exists c:$ 
$cn^{2} = n^{3}|_{n=\infty}$. 
 Solving in $*G$, $cn^{2} = n^{3}|_{n=\infty}$,
 $c = n|_{n=\infty}$, $c \in \Phi^{-1}$.
 This implies $c \not\in \mathbb{R}$.
 If we restricted $c$ to $\mathbb{R}$
 then the equation would not have been solved,
 as $c$ was infinite.
\end{mex}

 An Archimedean number system cannot solve this.
 Just as we needed $i$ to solve $x^{2}=-1$,
 we need a non-Archimedean number system
 to solve the equation with
 infinities.

The definition of $\prec$ and little-o 
 distinguish between infinities.
 $a \prec b$ if $\frac{a}{b} \in \Phi$.
 If two functions differ infinitely through
 division, then they must also differ infinitely
 through addition to a greater degree
 (See Theorem \ref{P058}).

 Depending on the context, 
 lower-order-magnitude terms may be disregarded.
 $f(x) + g(x) = g(x)|_{x=\infty}$. $f(x)=\infty$, $g(x)=\infty$.
 Here $f(x)|_{x=\infty}$ acts as a zero 
 identity element,
 even though $f(x)$ is not 
 zero.
 However $f(x)$ has its magnitude
 dwarfed by
 the much larger $g(x)$, so 
 $f(x)$ is  
 negligible.   

 To avoid summing infinite
 collections of terms, 
 the general restriction when applying the simplification
 $a_{n}+b_{n}=a_{n}|_{n=\infty}$
 for infinitary or infinitesimal $a_{n}$ and $b_{n}$ is that the rule is only good for a finite sum
 of infinitesimals or infinities. 

 A sum of infinities or infinitesimals to infinity, can
 itself step up in orders. 
 In other words, generally apply the simplification to a finite
 number of times.
 If you were to sum infinities
 or infinitesimals to infinity,
 you would need to integrate instead.
 Or apply truncation when sum convergence is known.

 This is discussed in detail \cite[Convergence sums ...]{cebp2}. The
 assumption of independence of sums
 may by invalid at infinity \cite{cebp8}.
 Briefely, how we view finite mathematics may be completely different
 to how we view mathematics ``at infinity" because it is a 
 much larger space.

The advantages of such simplification 
 can allow classes of functions to be reduced.
\bigskip
\begin{mex}\label{MEX059}
 $\{ \frac{1}{x^{2} + \pi}, \frac{1}{x^{2} -3x}, \ldots \}|_{x=\infty}$
 simplify to considering $\frac{1}{x^{2}}|_{x=\infty}$.
\end{mex}

Applications include taking the limit which applies
 in summing infinitesimals to zero,
 hence truncating a series. 
The 
 arguments
 can apply to diverging
 sums as well. 

Use of extended calculus in $*G$ as a heuristic, 
 for developing algorithms.
\bigskip
\begin{mex}\label{MEX206} 
  Developing an algorithm for
  approximating $\sqrt{2}$. 
  $x, x_{n}, \delta_{n} \in *G$;
  $\,\delta \in \Phi$. 
  \begin{align*}
    (x+\delta)^{2} =2 \\ 
    x^{2} + 2x \delta + \delta^{2} =2|_{\delta=0} \\ 
    x^{2} + 2x \delta  =2|_{\delta=0} 
    & \tag{$2x \delta \succ \delta^{2}|_{\delta=0}$}  \\ 
    x_{n}^{2} + 2x_{n} \delta_{n}  =2|_{\delta=0} & \tag{Developing an iterative scheme.} \\ 
    \delta_{n} = \frac{1}{x_{n}} - \frac{x_{n}}{2} & \tag{Solving for $\delta_{n}$} \\ 
 x_{n+1} = x_{n} + \delta_{n}  &  \tag{Progressive sequence of $x_{n}$ } \\
  \end{align*}
If $\delta_{n} \to 0$ then at infinity $\delta_{n}$ becomes an infinitesimal(see \cite[Example 2.12]{cebp4},
 that is $\delta_{n}|_{n=\infty} \in \Phi$,
 then  $(x+\delta)^{2} \simeq 2$ is solved, and $x_{n+1} \simeq x_{n} + \delta_{n}$ has $x_{n}$ converging.  
 Starting the approximation with $x_{0}=1.5$, $x_{5}$ 
 is correct to $47$ places
 (for a numerical calculation with Maxima see \cite{maxima47}), where the algorithm
 was transferred from $*G$ to $\mathbb{R}$.  
 $(*G, \delta_{n}, x_{n}) \mapsto (\mathbb{R}, \delta_{n}, x_{n})$,
 an infinitesimal was promoted to a real number.
\end{mex}

Turning towards
 the number system, 
 what is common is
 the operation
 of numbers at zero or infinity.
 That is where the numbers
 display non-Archimedean properties.

In fact, all function evaluation
 is at zero or infinity.
 Simply shift the origin.
 Zero and
 infinity form a number system, $\mathbb{R}_{\infty}$.
 The cardinality of $\mathbb{R}_{\infty}$ 
 is infinitely larger than the cardinality of $\mathbb{R}$.
 Then the gossamer number system $*G$ is much larger
 than the real number system(the reals which are embedded within it). 

In this paper, addition simplification
 is applied to solving  
 relations
 by converting a series of relations 
 to a sum, where lower order
 terms are discarded and the relations solved.
\bigskip
\begin{defy} \label{DEF207}
 Using the Iverson bracket
 notation (see \cite[p.24]{graham})
\[
[ f \; z \; g ]
 = \left\{ 
  \begin{array}{rl}
    1 & \; \text{when the relation } f \; z \; g \text{ is true,} \\
    0 & \; \text{when then relation } f \; z \; g \text{ is false.}
  \end{array} \right. 
\]
\end{defy}

In a more radical approach
 to demonstrate addition as a basis for
 building relations,
 we can define $0^{0}=0$ as another extension
 to the real number system,
 which then allows the building of the
 comparison greater than function (see Example \ref{MEX207}).

 Canceling the $0$'s is not allowed, as
 by definition this is now a non-reversible process,
 as we view either multiplication
 by $0$ or multiplication by $\frac{0}{0}$
 as collapsing the number to $0$.
 We also get to test if a number is zero or not.
\bigskip
\begin{mex}\label{MEX207} Non reversible mathematics to build the relations. See Definition \ref{DEF207}
\[ [x \gt 0 ] = \frac{x + |x|}{2x}\]
 When $x=0$, $\frac{0+|0|}{2 \times 0} = \frac{0}{0}=0$. 
 When $x \gt 0$, $\frac{x+|x|}{2x} = \frac{2x}{2x}=1$. 
 When $x \lt 0$, $\frac{x+|x|}{2x} = \frac{0}{2x}=0$.
\[ [x \neq 0 ] = x^{0} \]
 When $x = 0$, $0^{0}=0$. When $x \neq 0$, $x^{0}=1$. 
\end{mex}

\subsection{Logarithmic change} \label{S0503}
We introduce a relation, for better explaining
 magnitudes, and their comparison.
 While $f \succ g$ describes an infinity in
 the ratio of $f$ and $g$, there could be a much larger change
 in the functions themselves.

If we consider the operations of addition,
 multiplication as repeated addition,
 a power as repeated multiplication,
 all these operations accelerate change.
 Conversely, subtraction, division, and logarithms
 of positive numbers to greater degrees
 decelerate change. 

 Consider the log function as
 undoing change, then applying to both sides of a 
 relation and comparing,
 we can determine a much-greater than relationship,
 and `if' one exists, we infer an
 infinity between the functions.

 For example, in solving
 $f \; z \; g$
 if we find a
 $\mathrm{ln}\,f \succ \mathrm{ln}\,g$ relationship.
 The undoing log operation revealed a much-greater-than relationship.
\bigskip
\begin{defy}\label{DEF079}
 We describe a logarithmic magnitude.
 \[ \text{We say } f \succ\!\succ g \text{ when } \mathrm{ln}\,f \succ \mathrm{ln}\,g \]
 \[ \text{We say } f \prec\!\prec g \text{ when } \mathrm{ln}\,f \prec \mathrm{ln}\,g \]
 \[ \text{We say } f \text{ `log dominates' } g \text{ when } f \succ\!\succ g. \]
 \[ \text{We say } f \text{ is `log dominated' by } g \text{ when } f \prec\!\prec g. \]
\end{defy}

 The much-greater-than relation
 $f \succ g$ may have a
 log dominating relation $f \succ\!\succ g$ or
 be log dominated by $g$: $f \prec\!\prec g$ or
 no such relationship.
 That is the relations between
 the magnitude and the logarithmic magnitude 
 are not necessarily in the same direction.  
 An exception is when both positively diverge,
 see Proposition \ref{P214}.
 
 A logarithmic magnitude  
 is like a derivative.
 A derivative's sign
 is not necessarily the same
 as the function's sign.
 The much greater than
 relation is independent 
 in direction to the log dominating relation.

 Logarithmic magnitude can describe
 non-reversible product arithmetic 
 (Definition \ref{DEF015}).
 $0 \cdot \infty$
 indeterminate case arises in the calculation
 of the limit.
 We prove the non-reversible product as a consequence of
 non-reversible addition (see Theorem \ref{P215}).
\bigskip
\begin{mex} Consider $x^{n} \; z \; n|_{n=\infty}$
 where $|x| \lt 1$.
\begin{align*}
 x^{n} \; z \; n|_{n=\infty} \\
 \mathrm{ln}(x^{n}) \; (\mathrm{ln}\,z) \; \mathrm{ln}\,n|_{n=\infty} \\
 n \, \mathrm{ln}\,x \; (\mathrm{ln}\,z) \; \mathrm{ln}\,n|_{n=\infty} \\
 n \, \mathrm{ln}\,x \succ \mathrm{ln}\,n|_{n=\infty} \tag{By Definition \ref{DEF079}} \\
 x^{n} \succ\!\succ n|_{n=\infty} 
\end{align*}
 While $x^{n} \prec n|_{n=\infty}$ as $0 \prec \infty$,
 the logarithmic magnitude of $x^{n}$ is much greater than $n$
 with $x^{n} \succ\!\succ n|_{n=\infty}$.
\end{mex}

 When simplifying products by non-reversible arithmetic,
 for example in the calculation of limits, 
 rather than solve with products, reason
 by exponential and logarithmic
 functions which are each other inverses,
  converting the problem of multiplication to one with addition.
 $f \cdot g = e^{\mathrm{ln}(f \cdot g)}$
 $= e^{\mathrm{ln}\,f + \mathrm{ln}\,g}$.
 If possible, apply non-reversible arithmetic:
 $\mathrm{ln}\,f + \mathrm{ln}\,g = \mathrm{ln}\,f$
 or
 $\mathrm{ln}\,f + \mathrm{ln}\,g = \mathrm{ln}\,g$.
\bigskip
\begin{mex}\label{MEX208}
When $|x| \lt 1$, evaluate $x^{n}\cdot n|_{n=\infty}$.

This is an indeterminate form $0 \cdot \infty$.
$x^{n} \cdot n = x^{n}|_{n=\infty}$ is harder
 to understand than when the problem is reformed
 and when simplifying, non-reversible arithmetic applied 
 on a sum and not a product.
\begin{align*}
 x^{n}\cdot n|_{n=\infty} \\
 = e^{\mathrm{ln}(x^{n}\cdot n)}|_{n=\infty} \\
 = e^{n \, \mathrm{ln}\,x + \mathrm{ln}\,n}|_{n=\infty} \tag{$n \succ \mathrm{ln}\,n$ then apply non-reversible arithmetic  }  \\
 = e^{n \, \mathrm{ln}\,x }|_{n=\infty} \tag{$n \, \mathrm{ln}\,x + \mathrm{ln}\,n = n \, \mathrm{ln}\,x|_{n=\infty}$} \\
 = x^{n}|_{n=\infty}
\end{align*}
\end{mex}
\bigskip
\begin{theo}\label{P215} 
 Non-reversibility in a product. 
 Let $a$ and $b$ be positive. 
\[ \text{If } a \succ\!\succ b \text{ then } a \cdot b =a \]
\end{theo}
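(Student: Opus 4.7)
The plan is to reduce the multiplicative statement to the already-proved additive non-reversibility Theorem~\ref{P058} by passing through the logarithm, exploiting precisely the definition of $\succ\!\succ$ (Definition~\ref{DEF079}).

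First I would take logarithms of both sides of the intended equality $a \cdot b = a$. Since $a$ and $b$ are positive, $\ln a$ and $\ln b$ are well-defined in $*G$, and $\ln(a\cdot b) = \ln a + \ln b$. The target equality $a\cdot b = a$ is therefore equivalent (via exponentiation, which is reversible on positive arguments) to $\ln a + \ln b = \ln a$. So the multiplicative claim is translated to a purely additive claim about $\ln a$ and $\ln b$.

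Next, I would unfold the hypothesis $a \succ\!\succ b$ using Definition~\ref{DEF079}, which gives exactly $\ln a \succ \ln b$. At this point Theorem~\ref{P058} applies directly: if $\ln a \succ \ln b$ (and with $\ln b \neq 0$, which is the case except in the degenerate situation $b = 1$, where the conclusion is trivial anyway), then $\ln a + \ln b = \ln a$. Exponentiating and using $e^{\ln a + \ln b} = e^{\ln a}$ yields $a\cdot b = a$, completing the argument.

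The main obstacle, I expect, is not the algebra but the bookkeeping of degenerate cases required for $\ln a, \ln b \neq 0$ (i.e.\ $a, b \neq 1$), together with confirming that the transfer $\Phi \mapsto 0$ used in the proof of Theorem~\ref{P058} is permissible in the present context; these are exactly the hypotheses under which Theorem~\ref{P058} was stated. The $b = 1$ case gives $a\cdot b = a$ trivially, and the $a = 1$ case is vacuous since then $\ln a = 0$ cannot dominate $\ln b$ in the sense of $\succ$. Beyond these boundary cases, the proof is a two-line reduction: logarithm, apply non-reversible addition, exponentiate.
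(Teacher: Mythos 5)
Your proposal is correct and follows essentially the same route as the paper: the paper's proof writes $a\cdot b = e^{\mathrm{ln}(ab)} = e^{\mathrm{ln}\,a + \mathrm{ln}\,b} = e^{\mathrm{ln}\,a} = a$, invoking $\mathrm{ln}\,a \succ \mathrm{ln}\,b$ from Definition \ref{DEF079} and the non-reversible addition of Theorem \ref{P058}, exactly as you do. Your extra attention to the degenerate cases $a=1$ and $b=1$ (where $\mathrm{ln}$ vanishes and Theorem \ref{P058} would not apply) is a small refinement the paper's proof omits.
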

\begin{proof}
$ab$
 $= e^{\mathrm{ln}(ab)}$
 $= e^{\mathrm{ln}\,a + \mathrm{ln}\,b}$
 $= e^{\mathrm{ln}\,a}$
 $= a$, as $a \succ\!\succ b$ then $\mathrm{ln}\,a \succ \mathrm{ln}\,b$.
\end{proof}
\bigskip
\begin{mex}\label{MEX209} If we know the log magnitude relationship,
 we may directly calculate. \\
$x^{n} \cdot n|_{n=\infty}$
 $= x^{n}|_{n=\infty}$
 as $x^{n} \succ\!\succ n|_{n=\infty}$
\end{mex}
\bigskip
\begin{prop}\label{P214}
 Let $f=\infty$, $g=\infty$.
If $f \succ\!\succ g$ then $f \succ g$.
\end{prop}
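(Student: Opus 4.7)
The plan is to unpack both sides using the definitions just introduced and then exponentiate back. First I would reduce to the positive case: since $f$ and $g$ are infinities and $\mathrm{ln}$ is only well-defined for positive arguments, the statement really concerns $|f|,|g| \in +\Phi^{-1}$, so I will assume $f,g > 0$ without loss of generality (the general case follows by factoring out signs as in Proposition \ref{P007}).

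Next, unfold the hypothesis: by Definition \ref{DEF079}, $f \succ\!\succ g$ means $\mathrm{ln}\,f \succ \mathrm{ln}\,g$, and by Definition \ref{DEF006} this gives $\frac{\mathrm{ln}\,g}{\mathrm{ln}\,f} = \delta$ for some $\delta \in \Phi$, equivalently $\mathrm{ln}\,g = \delta\,\mathrm{ln}\,f$. The goal is to show $\frac{g}{f} \in \Phi$. I would compute
\[
\mathrm{ln}\!\left(\frac{g}{f}\right) = \mathrm{ln}\,g - \mathrm{ln}\,f = (\delta - 1)\,\mathrm{ln}\,f.
\]
Since $\delta - 1 \simeq -1$ (an infinitesimal away from $-1$) and $\mathrm{ln}\,f \in +\Phi^{-1}$ (because $f$ is a positive infinity), the product $(\delta-1)\mathrm{ln}\,f$ lies in $-\Phi^{-1}$; it is a negative infinity.

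Finally, apply the exponential: $\frac{g}{f} = e^{(\delta-1)\mathrm{ln}\,f}$. Since the exponent is a negative infinity, Proposition \ref{P038} (or directly the relation $e^{-\Phi^{-1}} \in \Phi$, which follows from $0 \prec \Phi$ and realization) gives $\frac{g}{f} \in \Phi$. By Definition \ref{DEF006} this is exactly $f \succ g$. The only real obstacle I foresee is the sign bookkeeping in the step $(\delta-1)\mathrm{ln}\,f \in -\Phi^{-1}$: one must be sure that the infinitesimal perturbation $\delta$ cannot flip the sign of $\delta-1$, which is handled by noting $\delta \in \Phi$ so $|\delta| < 1$ in $\mathbb{R}$ (Proposition \ref{P018}), hence $\delta - 1 < 0$, and then the product with the positive infinity $\mathrm{ln}\,f$ is genuinely a negative infinity rather than merely a bounded negative quantity.
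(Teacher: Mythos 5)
Your argument is correct and follows essentially the same route as the paper: both proofs exponentiate the hypothesis $\mathrm{ln}\,f \succ \mathrm{ln}\,g$, the paper by citing Theorem \ref{P204} (via the observation that $\mathrm{ln}\,f$ and $\mathrm{ln}\,g$ are themselves infinities), you by re-deriving that step inline through the computation $\mathrm{ln}(g/f) = (\delta-1)\mathrm{ln}\,f \in -\Phi^{-1}$ and then applying the exponential. Your explicit sign bookkeeping for $\delta - 1$ is a welcome extra precaution but does not change the substance of the argument.
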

\begin{proof}
$f \succ\!\succ g$ then $\mathrm{ln}\,f \succ \mathrm{ln}\,g$.
 Since there is no smallest infinity,
 $f_{2} = \mathrm{ln}\,f = \infty$,
 $g_{2} = \mathrm{ln}\,g = \infty$.
 $f_{2} \succ g_{2}$.
 By the following theorem: $a = \infty$, $b=\infty$,
 if $a \succ b$ then $e^{a} \succ e^{b}$
 (see Table \ref{FIG05}) then
 $f_{2} \succ g_{2}$, 
 $e^{f_{2}} \succ e^{g_{2}}$,
 $f \succ g$.
\end{proof}
\subsection{Limits at infinity}\label{S0504}
\begin{defy}\label{DEF088}
In context, we say $f(x)|_{x=\infty}$ then $\mathrm{sup} \lim\limits_{x \to \infty} f(x)$,
 similarly  $\mathrm{inf} \lim\limits_{x \to \infty} f(x)$
\end{defy}
 When the definition is put into a context such as a relation, 
 since the condition is assumed to be true for all $n$ at infinity
 (else the condition is false and a contradictory statement),
 the exact lower and upper bound 
 language can optionally be excluded. 
\bigskip
\begin{mex}
 Condition $\mathrm{inf} \lim\limits_{n \to \infty} \rho_{n} \gt 1$ becomes $\rho_{n}|_{n=\infty} \gt 1$.

 Similarly condition $\mathrm{sup} \lim\limits_{n \to \infty} \rho_{n} \lt 1$ becomes $\rho_{n}|_{n=\infty} \lt 1$.
\end{mex}

The following demonstrates the application
 of the notation and ideas discussed
 in this paper about limits
 and the more general 
 at-a-point
 evaluation.

In computation of limits, 
 infinity can be as useful in simplifying
 expressions as
 infinitesimals.
 So rather than 
 dividing and forming the infinitesimals,
 instead apply arguments of magnitude.
 Let the user choose.
 The non-reversible arithmetic works
 either way.
\bigskip
\begin{mex}\label{MEX024} 
A simple example will show this.
 $\frac{ 3n + 5}{5n}|_{n=\infty}$
 $=\frac{3n}{5n}|_{n=\infty}=\frac{3}{5}$
 The justification being $3n+5=3n|_{n=\infty}$
\end{mex}
\bigskip
\begin{mex}\label{MEX025} 
 Apostol 
 \cite[3.6.7]{apostol},
$\lim\limits_{x \to 0} \frac{ x^2-a^2}{ x^2 + 2ax + a^2}$, 
 $a \neq 0$,  
 $\frac{ x^2-a^2}{ x^2 + 2ax + a^2}|_{x=0}$,
 $= \frac{ -a^{2}}{a^{2}} = -1$
 as $-a^{2} \succ x^{2}|_{x=0}$ and similarly
 $a^{2} \succ 2ax \succ x^{2}|_{x=0}$
\end{mex}
\bigskip
\begin{mex}\label{MEX026} 
 Apostol 
 \cite[7.17.28]{apostol},
$\lim\limits_{x \to \infty} \{ (x^5+7x^4+2)^c - x \}$,  
 Find $c$ for non-zero limit (
 $c=0$ may collapse to $1-x|_{x=\infty}$).
 Using $x^{5}+7x^{4}+2=x^{5}|_{x=\infty}$
 as $x^{5} \succ x^{4} \succ x^{0}|_{x=\infty}$, 
 $(x^5+7x^4+2)^c - x |_{x=\infty}$ 
 $= x^{5c} - x |_{x=\infty} = b$
 then $c=\frac{1}{5}$
 as the difference reduces the power by one to a finite value.
 I.e. a limit.
\end{mex}
\bigskip
\begin{mex}\label{MEX027} 
$( x^5 (1 + \frac{7}{x} + \frac{2}{x^5}))^{\frac{1}{5}} -x |_{x=\infty}$ 
$= ( x^5 (1 + \frac{7}{x} ))^{\frac{1}{5}} -x |_{x=\infty}$ \\ 
expand with the binomial theorem. 
$(1+x)^w = 1 + w x + w(w-1) \frac{x^2}{2!} + \ldots$,  
$(1 + \frac{7}{x})^{\frac{1}{5}} = 1 + \frac{1}{5} \frac{7}{x} + \frac{1}{5} \frac{-4}{5} \frac{7^2}{x^2} \frac{1}{2} + \ldots$  
 then 
$x(1 + \frac{7}{x})^{\frac{1}{5}} - x|_{x=\infty}$
 $= x + \frac{7}{5}  + \frac{1}{5} \frac{-4}{5} \frac{7^2}{x} \frac{1}{2} + \ldots - x|_{x=\infty}$ 
$= \frac{7}{5}  + \frac{1}{5} \frac{-4}{5} \frac{7^2}{x} \frac{1}{2} + \ldots|_{x=\infty}$ 
$= \frac{7}{5}$ 
\end{mex}

 Arguments of magnitude are commonly used in calculations.
 Apostol 
 \cite[pp 289--290]{apostol}
 discusses
 polynomial approximations
 used in the calculation
 of limits, where
 the relation is within
 the little-o variable.  

Computing the terms
 separately lead to
  the indeterminate form $0/0$;
 by computing the numerator
 and denominator as a coupled problem, 
 leading magnitude terms may be
 subtracted (for example through factorization). 

 When we remove little-o the
 calculation is not cluttered. 
 If you need
 to be exact include it, but if
 not then it may as well be omitted.

Using the identity $\frac{1}{1-x}= 1 + x + x^{2} + \ldots$, 
$\frac{1}{1 - (\frac{x^{2}}{2} - o(x^3))}$ 
 $= 1 + \frac{1}{2}x^{2} - o(x^{3})$ as $x \to 0$
 becomes 
 $\frac{1}{1-\frac{x^{2}}{2}} = 1 + \frac{x^{2}}{2}|_{x=0}$
 Truncation is part of calculations context
 and assumed to be the case.

Applying the at-a-point notation
 to some limits.
Since the series expressions
 have terms forming a scale of infinities,
 often only a fixed number of terms
 with the expansions need be used.
 Taylor series, the binomial expansion,
 trigonometric series and others
 can be viewed as not unique since
 they have an infinity of terms.
\bigskip
\begin{mex}\label{MEX028} 
$\frac{a^{x}-b^{x}}{x}|_{x=0}$
$=(e^{x\,\mathrm{ln}\,a}-e^{x\,\mathrm{ln}\,b} ) \frac{1}{x}|_{x=0}$, expanding
 the exponential series for the first three terms, 
$\frac{a^{x}-b^{x}}{x}|_{x=0}$
$= (1+ x\,\mathrm{ln}\,a + (x\,\mathrm{ln}\,a)^{2}\frac{1}{2} - (1 + x\,\mathrm{ln}\,b + (x\,\mathrm{ln}\,b)^{2} \frac{1}{2} ) \frac{1}{x}|_{x=0}$ 
$= (x\,\mathrm{ln}\,a  - x\,\mathrm{ln}\,b ) \frac{1}{x}|_{x=0}$ 
$= \mathrm{ln}\,a  - \mathrm{ln}\,b$ 
$= \mathrm{ln}\frac{a}{b}$ 
\end{mex}

 With known algebraic identities,
 such $n^{\frac{1}{n}}|_{n=\infty}=1$
 or  
 $e = \frac{n}{(n!)^{\frac{1}{n}}}|_{n=\infty}$  
 can easily be used to
 solve limits.
\bigskip
\begin{mex}\label{MEX029} 
 \cite[p.39, 2.3.23.b]{kaczor}
 , 
$(\frac{(n!)^{3}}{n^{3n}e^{-n}})^{\frac{1}{n}}|_{n=\infty}$
$= e(\frac{(n!)^{3}}{n^{3n}})^{\frac{1}{n}}|_{n=\infty}$
$= e((\frac{n!}{n^{n}})^{\frac{1}{n}})^{3}|_{n=\infty}$
$= e(\frac{(n!)^{\frac{1}{n}}}{n})^{3}|_{n=\infty}$
$= e(e^{-1})^{3}$
$=e^{-2}$
\end{mex}

The typical interchange between zero and infinity is useful.
\[ x|_{x=0^{+}} = \frac{1}{n}|_{n=\infty} \text{ then } \\
 f(x)|_{x=0^{+}} = f(\frac{1}{n})|_{n=\infty} \]
\smallskip
\begin{mex}\label{MEX030} 
 \cite[7.17.18]{apostol} 
$\lim\limits_{x \to 0^{-}} (1-2^x)^{\mathrm{sin} \, x}$ 
$= \lim\limits_{x \to 0^{-}} (1-2^x)^{x}$
 as $\mathrm{sin}(x)$
$= x$ for small $x$. 
 Show $y=1$. 
 Let $y \in *G: y = (1-2^x)^{x}|_{x=0^{-}}$,
$\mathrm{ln}\,y =  x \,\mathrm{ln}(1-2^x)|_{x=0^{-}}$,
 $0 \cdot \infty$ form.

 With a log expansion, the problem can can be solved.
 $\mathrm{ln}(1-2^{x})|_{x=0^{-}}$
 $= 2^{x} + \frac{(2^{x})^{2}}{2} +  \frac{(2^{x})^{3}}{3} + \ldots|_{x=0^{-}}$
 $= 2^{x}|_{x=0^{-}}$.
$\mathrm{ln}\,y =  x \,\mathrm{ln}(1-2^x)|_{x=0^{-}}$
 $= x 2^{x}|_{x=0^{-}}$,
 $y = ( e^{ 2^{x}})^{x}|_{x=0^{-}}$
 $= ( e^{1})^{x}|_{x=0^{-}}$
 $=1$
\end{mex}
\bigskip
\begin{mex}
Solve $\delta ^{\delta} = y$, $\delta \in \Phi$.
 $\mathrm{ln}(\delta ^{\delta}) = \mathrm{ln}\,y$,
 $\delta \, \mathrm{ln}\,\delta = \mathrm{ln}\,y$.
 Noticing $\delta \cdot \mathrm{ln}\,\delta = 0 \cdot -\infty = \frac{1}{\infty} \cdot - \infty$, which 
  can be expressed as $-\infty/\infty$ and differentiated using L'Hopital's rule,
 $\delta \, \mathrm{ln}\,\delta$ 
 $= \frac{ \mathrm{ln}\,\delta}{1/\delta}$
 $= \frac{ 1/\delta}{-1/\delta^{2}}$
 $= -\delta = 0$ by $(*G,\Phi) \mapsto (\mathbb{R},0)$, $0=\mathrm{ln}\,y$ then $y=1$.
\end{mex}

Since the magnitude $\{ \prec, \preceq \}$ and other
 relations 
 are defined in terms of ratios, 
 when comparing two functions
 in a ``multiplicative sense",
 we convert between the fraction and
 the comparison. 
\bigskip
\begin{prop} \label{P014}
 When $\frac{f}{g} \in *G$ and $z$ is defined in a ``multiplicative sense", $g \neq 0$  
 \[ \frac{f}{g}\; z \; 1 \Leftrightarrow f \; z \; g \] 
\end{prop}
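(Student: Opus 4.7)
The plan is to exploit the observation, already implicit throughout Part 3 and the beginning of Section \ref{S0504}, that the relations being considered in the ``multiplicative sense'' are \emph{defined} via the ratio $f/g$. So the equivalence should not require a deep argument; it should reduce to unfolding the definitions and noting that the ratio of $f/g$ to $1$ is again $f/g$.

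First I would enumerate which relations $z$ fall under ``multiplicative sense.'' From Definitions \ref{DEF005}, \ref{DEF006}, \ref{DEF007}, \ref{DEF008}, \ref{DEF009}, \ref{DEF010}, \ref{DEF011}, together with Table \ref{FIG05}, the candidates are $\{\prec,\preceq,\succ,\succeq,\asymp,\propto,\sim\}$, each characterized by a condition on $f/g$. For each such $z$, the proof is a single chain: for instance, $f \prec g$ iff $\tfrac{f}{g} \in \Phi$ (Definition \ref{DEF006}) iff $\tfrac{f/g}{1} \in \Phi$ iff $\tfrac{f}{g} \prec 1$; likewise $f \sim g$ iff $\tfrac{f}{g} \simeq 1$ (Definition \ref{DEF009}) iff $\tfrac{f/g}{1} \simeq 1$ iff $\tfrac{f}{g} \sim 1$. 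The big-$O$-style relation $f \preceq g$ needs the mild observation that $|f| \le M|g|$ is equivalent to $|f/g| \le M$ because $g \neq 0$, and $|f/g| \le M$ is exactly $f/g \preceq 1$. The relations $\succ, \succeq$ follow from $\prec,\preceq$ together with Definition \ref{DEF007}. Finally $\asymp$ and $\propto$ follow by combining the previous cases.

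For the direction $f \; z \; g \Rightarrow \tfrac{f}{g} \; z \; 1$, one simply divides through by $g$, which is legal because $g \neq 0$ and because $*G$ is a field (Proposition \ref{P108}), so $\tfrac{f}{g}$ exists in $*G$, which is exactly the hypothesis $\tfrac{f}{g} \in *G$. Conversely, starting from $\tfrac{f}{g} \; z \; 1$, multiplying both sides by $g$ (again permitted in the field) yields $f \; z \; g$, provided the multiplicative relation is preserved under multiplication by a nonzero element of $*G$; this is precisely the content of Propositions \ref{P209} and \ref{P207} for $\succ$ and $\succeq$, and is immediate from the ratio definitions for $\prec,\preceq,\sim,\asymp,\propto$.

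The main potential obstacle is not algebraic but definitional: the phrase ``defined in a multiplicative sense'' is informal, and so the cleanest way to present the proof is to state the proposition schematically and verify it relation by relation, rather than trying to give a single uniform argument. A secondary subtlety is the sign of $g$: although $g$ may be negative, the defining conditions of the listed multiplicative relations involve either $f/g$ or $|f/g|$, and the former is insensitive to a joint sign flip while the latter uses absolute values, so no inequality reversal issue arises here (in contrast with Proposition \ref{P054} for additive inequalities).
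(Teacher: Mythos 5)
Your proposal is correct and in fact subsumes the paper's own proof, which consists solely of the reversible field operation: from $\frac{f}{g} \; z \; 1$ multiply both sides by $g$ to obtain $\frac{f}{g}\cdot g \; z \; 1 \cdot g$, i.e.\ $f \; z \; g$, with only the remark that ``no information is lost, and the operation is reversible''; the preservation of $z$ under multiplication by a nonzero element is left entirely implicit there. You supply the two things the paper omits: an enumeration of which relations count as ``multiplicative sense'' together with a relation-by-relation check that each is literally a condition on the ratio $f/g$ (so that $\frac{f}{g}\; z\; 1$ and $f \; z \; g$ unfold to the identical statement, since $\frac{f/g}{1}=\frac{f}{g}$), and the explicit citations (Propositions \ref{P209} and \ref{P207}) justifying the multiplication step for $\succ$ and $\succeq$. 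Your definitional-unfolding route buys rigour and makes the informal hypothesis precise at the cost of a case analysis; the paper's route buys a one-line proof by treating $z$ abstractly as a relation preserved by reversible field operations. Your observation about the sign of $g$ being harmless for ratio-defined and absolute-value-defined relations is a genuine point the paper glosses over.
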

\begin{proof}
 Since no information is lost, and the operation is reversible,
 $\frac{f}{g} \cdot g \; z \; 1 \cdot g$,
 $f \; z \; g$.
\end{proof}

 Consider the problem process $\frac{f}{g} \Rightarrow f \; z \; g$.
 $\frac{f}{g}=1$,
 $(*G = 1) \mapsto$
 $(*G = *G)$.
 Though non-uniqueness also has advantages.
 $(*G \; z \; 1) \mapsto$
 $(*G \; z \; *G)$.

 It is common for a problem to be phrased as if in
 $\mathbb{R}$ but in actuality is in $*G$.
 Then after algebraically manipulating in
 $*G$, the information has to transfer back
 to $\mathbb{R}$, or be phrased as such.

In fractional form $\frac{f}{g}$
 then becomes particularly convenient to apply
 what we know about fractions to the comparison. For
 example we may transform the comparison to a point
 where L'Hopital's rule can be applied.
 With the extended number system $*G$,
 the indeterminate forms 
 $0/0$ and $\infty/\infty$
 are expressed as $\Phi/\Phi$ and $\Phi^{-1}/\Phi^{-1}$
 respectively.

 We proceed with another proof of L'Hopital's rule where
 we use infinitary calculus theory and work in $*G$.
 L'Hopital's argument is interpreted in $*G$
 (see Proposition \ref{P217})
 and the algebra is explained with non-reversible arithmetic,
 directly calculating the ratio (see \cite{hopital2}).
\bigskip
\begin{lem}\label{P216}
 A ratio of infinitesimals
 is equivalent to a ratio of infinities.
 $\frac{ \Phi}{\Phi} \equiv \frac{ \Phi^{-1} }{ \Phi^{-1}}$
\end{lem}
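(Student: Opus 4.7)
The plan is to exploit the reciprocal duality between $\Phi$ and $\Phi^{-1}$ established in Definition \ref{DEF036}: every infinitesimal is the reciprocal of an infinity and vice versa. The statement is about equivalence of the two indeterminate shapes, so the proof should exhibit, for an arbitrary ratio of one type, a ratio of the other type that represents the same element of $*G$, and then show the correspondence is invertible.

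First I would take a representative ratio $\frac{\delta_1}{\delta_2}$ with $\delta_1,\delta_2 \in \Phi$ (both nonzero so that division is defined, noting Corollary \ref{P043} that $0 \notin \Phi$). By Definition \ref{DEF036}, there exist $N_1, N_2 \in \Phi^{-1}$ with $\delta_1 = 1/N_1$ and $\delta_2 = 1/N_2$. A direct algebraic manipulation in the field $*G$ (Proposition \ref{P108}) gives
\[
\frac{\delta_1}{\delta_2} \;=\; \frac{1/N_1}{1/N_2} \;=\; \frac{N_2}{N_1},
\]
which is a ratio of infinities, establishing $\frac{\Phi}{\Phi} \subseteq \frac{\Phi^{-1}}{\Phi^{-1}}$.

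Next I would run the argument in reverse, starting with $\frac{N_1}{N_2}$, $N_i \in \Phi^{-1}$, and setting $\delta_i = 1/N_i \in \Phi$ to produce $\frac{N_1}{N_2} = \frac{\delta_2}{\delta_1}$, yielding the opposite inclusion and hence the equivalence $\equiv$. Since the map $x \mapsto 1/x$ interchanges $\Phi$ and $\Phi^{-1}$ bijectively, the two indeterminate forms describe exactly the same subset of $*G$.

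I do not expect any serious obstacle; the only real point to be careful about is interpreting the symbol $\equiv$. If it is read strictly as equality of forms rather than equality of values, one should clarify that the equivalence is between the classes of expressions $\{\delta_1/\delta_2 : \delta_i \in \Phi\}$ and $\{N_1/N_2 : N_i \in \Phi^{-1}\}$, and that the bijection between them is the reciprocation map. Division by $0$ is avoided because infinitesimals and infinities are both nonzero in $*G$, and $\infty \notin \Phi^{-1}$ by Definition \ref{DEF059}, so reciprocation is well-defined on both sides.
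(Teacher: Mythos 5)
Your proof is correct and takes essentially the same approach as the paper's: both directions rest on the reciprocal map exchanging $\Phi$ and $\Phi^{-1}$, the paper writing $\frac{f}{g} = \frac{1/g}{1/f}$ directly where you name the reciprocals $N_1, N_2$ first and simplify. Your additional remarks on excluding $0$ and on reading $\equiv$ as equality of expression classes are just extra care, not a different argument.
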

\begin{proof}
 $f, g \in \Phi^{-1}$;
 $\frac{f}{g} = \frac{ \frac{1}{g} }{ \frac{1}{f} }$, but
 ; $\frac{1}{g}, \frac{1}{f} \in \Phi$;
 then $\frac{f}{g}$ of the form 
 $\frac{ \Phi}{\Phi}$.
 The implication in the other direction
 has a similar argument.
 $a, b \in \Phi$;
 $\,\frac{a}{b} = \frac{ \frac{1}{b} }{ \frac{1}{a} }$,
 but ; $\frac{1}{b}, \frac{1}{a} \in \Phi^{-1}$;
 then $\frac{a}{b}$ of the form 
 $\frac{ \Phi^{-1}}{\Phi^{-1}}$.
\end{proof}
\bigskip
\begin{prop}\label{P217}
 $f, g \in \Phi$;
 If 
$\lim\limits_{x \to a} \frac{ f'(x)}{g'(x)}$ exists then
 $\frac{f(a)}{g(a)} = \frac{f'(a)}{g'(a)}$
\end{prop}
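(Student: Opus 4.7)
The plan is to interpret L'Hopital's rule as a direct magnitude argument in $*G$, following the spirit of the original infinitesimal derivation rather than the modern mean-value-theorem proof. Since $f(a), g(a) \in \Phi$ (both infinitesimal at the point $a$), the ratio $\frac{f(a)}{g(a)}$ lies in the indeterminate class $\Phi/\Phi$, which by Lemma \ref{P216} is interchangeable with $\Phi^{-1}/\Phi^{-1}$. The key step is to expand numerator and denominator by a Taylor expansion around a neighboring point $a_0$ at which $f(a_0) = g(a_0) = 0$ in $\mathbb{R}$, writing $a = a_0 + h$ with $h \in \Phi$.

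First I would shift via Proposition \ref{P057} to reduce to $a_0 = 0$ if convenient, then write
\[
 f(a_0 + h) = h f'(a_0) + \tfrac{h^{2}}{2}f''(a_0) + \tfrac{h^{3}}{3!}f^{(3)}(a_0) + \cdots,
\]
and similarly for $g$. Provided $f'(a_0) \not\in \Phi$, the scale of infinitesimals $(h \succ h^{2} \succ h^{3} \succ \cdots)|_{h=0}$ from Section \ref{S0204} gives $hf'(a_0) \succ \frac{h^{k}}{k!}f^{(k)}(a_0)$ for $k \geq 2$, so non-reversible addition (Theorem \ref{P058}) collapses the series to $f(a_0+h) = hf'(a_0)$ in $*G$. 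The same simplification applies to $g$, and then
\[
\frac{f(a)}{g(a)} \;=\; \frac{h f'(a_0)}{h g'(a_0)} \;=\; \frac{f'(a_0)}{g'(a_0)}.
\]
Finally, since $a \simeq a_0$ and $f', g'$ are continuous, $f'(a) \simeq f'(a_0)$ and $g'(a) \simeq g'(a_0)$, so the common ratio equals $\frac{f'(a)}{g'(a)}$, which is exactly the assumed existing limit.

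The main obstacle I expect is the degenerate case where $f'(a_0) \in \Phi$ or $g'(a_0) \in \Phi$, so that $hf'(a_0)$ is no longer the dominant term and non-reversible addition cannot directly strip the series down to one term. Here one has the familiar $\Phi/\Phi$ reappearing in the derivatives, and the resolution is iterative: reapply Taylor one order higher, which corresponds to repeating L'Hopital, and appeal to the hypothesis that $\lim_{x \to a} f'(x)/g'(x)$ exists to guarantee that only finitely many iterations are needed before a non-infinitesimal leading coefficient appears on at least one side. A secondary subtlety is handling the $\Phi^{-1}/\Phi^{-1}$ presentation uniformly; Lemma \ref{P216} lets us toggle to $\Phi/\Phi$ by inverting, so a single argument in the infinitesimal form covers both indeterminate types without splitting cases.
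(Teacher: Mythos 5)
Your proposal is recognizably in the same spirit as the paper's proof --- both linearize, discard dominated terms by non-reversible addition (Theorem \ref{P058}), and read off the ratio of derivatives --- but the mechanism of dominance is different, and yours has two genuine gaps that the paper's choice of mechanism avoids.

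The paper expands about $a$ itself to first order only, $f(a+h) = f(a) + f'(a)h$, and then \emph{chooses} the perturbation $h \in \Phi$ so that $f'(a)h \succ f(a)$ and $g'(a)h \succ g(a)$: since $f(a)$ and $g(a)$ are merely infinitesimal rather than zero, one may take $h$ to be an infinitesimal large enough relative to them that the linear term swamps the residual function value. That single choice does all the work; no exact zero, no higher derivatives, and no iteration are needed. Your version instead relocates to an auxiliary point $a_0$ with $f(a_0) = g(a_0) = 0$ exactly in $\mathbb{R}$ and obtains dominance from the power scale $h \succ h^{2} \succ \cdots$. Two problems follow. First, the hypothesis is $f, g \in \Phi$, i.e.\ the values at $a$ are infinitesimals, and by Corollary \ref{P043} an infinitesimal is not $0$; nothing in the hypotheses supplies a nearby point where $f$ and $g$ vanish exactly, so the base point of your Taylor expansion is an added assumption. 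Second, your escape route for the degenerate case --- iterate the expansion one order higher until a non-infinitesimal leading coefficient appears, with termination guaranteed by the existence of $\lim_{x \to a} f'(x)/g'(x)$ --- fails: existence of that limit does not bound the order of vanishing. For $f = g = e^{-1/(x-a)^{2}}$ every derivative vanishes at $a$ while $f'/g' \to 1$, so the iteration never terminates; your argument also silently assumes the functions are represented by their Taylor series. The paper's trick of playing $h$ off against the infinitesimals $f(a), g(a)$ rather than against powers of $h$ is precisely what makes these issues moot, so if you keep the Taylor-series route you must add analyticity and a finite order of vanishing to the hypotheses, at which point you have proved a strictly weaker statement.
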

\begin{proof}
 Since $\lim\limits_{x \to a} \frac{ f'(x)}{g'(x)}$ exists, we can vary about
 $x=a$ in $f$ and $g$.
 $\frac{f(a)}{g(a)}$
 $= \frac{f(a+h)}{g(a+h)}|_{h=0}$
 $= \frac{ f(a) + f'(a)h }{g(a) + g'(a)h}|_{h=0}$.
 Choose $h \in \Phi:$ $f'(a)h \succ f(a)$
 and $g'(a)h \succ g(a)$.
 Then
 $f(a) + f'(a)h = f'(a)h|_{h=0}$,
 $g(a) + g'(a)h = g'(a)h|_{h=0}$.
 $\frac{ f(a) + f'(a)h }{g(a) + g'(a)h}|_{h=0}$
 $= \frac{f'(a)h}{g'(a)h}|_{h=0}$
 $= \frac{f'(a)}{g'(a)}$
\end{proof}
\bigskip
\begin{theo}\label{P218}
 L'Hopital's rule (weak) in $*G$.
\end{theo}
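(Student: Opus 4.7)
The plan is to derive the weak form of L'Hopital's rule in $*G$ by leveraging Proposition \ref{P217} as the core computational engine, then extending it to cover both the $0/0$ and $\infty/\infty$ indeterminate forms and to arbitrary points $a \in *\overline{G}$. First, I would state the theorem precisely as: if either $f(a), g(a) \in \Phi$ or $f(a), g(a) \in \Phi^{-1}$, and if $\frac{f'(a)}{g'(a)}$ exists in $*\overline{G}$, then $\frac{f(a)}{g(a)} = \frac{f'(a)}{g'(a)}$. The rule in $\mathbb{R}$ follows afterward by the transfer $*G \mapsto \mathbb{R}/\overline{\mathbb{R}}$ with realization $\Phi \mapsto 0$ and $\Phi^{-1} \mapsto \infty$, so the essential work happens at the $*G$ level.

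Next I would reduce to the infinitesimal case already handled. For $f(a), g(a) \in \Phi$, Proposition \ref{P217} is exactly what we need, with the observation that the condition $f'(a)h \succ f(a)$ is easy to arrange because $f(a)$ is already infinitesimal (choose $h$ a sufficiently large-magnitude infinitesimal relative to $f(a)/f'(a)$). For $f(a), g(a) \in \Phi^{-1}$, I would invoke Lemma \ref{P216}, writing $\frac{f(a)}{g(a)} = \frac{1/g(a)}{1/f(a)}$ and identifying the new numerator and denominator as infinitesimals. Differentiation of $1/f$ and $1/g$ produces $-f'/f^{2}$ and $-g'/g^{2}$ respectively, and after applying Proposition \ref{P217} to the transformed pair together with the algebraic simplification $\frac{-g'/g^{2}}{-f'/f^{2}} = \frac{f^{2} g'}{g^{2} f'}$, rearranging via non-reversible multiplication (Theorem \ref{P215}) and the equivalence $f \asymp g$ that is forced by the hypothesis, I expect to recover $\frac{f'(a)}{g'(a)}$.

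To handle an arbitrary point $a$ rather than just $a=\infty$ or $a=0$, I would use Proposition \ref{P057} to shift the evaluation point to the origin, so the full generality is obtained for free once the origin case is in hand. Finally, to obtain the familiar real-variable statement, I apply the transfer principle Definition \ref{DEF040} together with Theorem \ref{P210}, which guarantees that the multiplicative comparison $\frac{f(a)}{g(a)} = \frac{f'(a)}{g'(a)}$ in $*G$ descends to equality of limits in $\mathbb{R}/\overline{\mathbb{R}}$ whenever $\frac{f'(x)}{g'(x)}$ is not asymptotic to anything singular.

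The main obstacle I anticipate is the reduction step in the $\infty/\infty$ case. In Proposition \ref{P217}, the perturbation $h \in \Phi$ is chosen so that $f'(a)h$ dominates $f(a)$ additively; once $f(a), g(a)$ are themselves infinities this inequality of magnitudes is not automatic, and one must either exploit Lemma \ref{P216} to transpose the problem to infinitesimals (as above), or argue more carefully that, under the existence hypothesis on $f'/g'$, the ratio $(f+f'h)/(g+g'h)$ is insensitive to whether $f'h$ absorbs $f$ or vice versa. I expect the cleanest route is the Lemma \ref{P216} reduction, with non-reversible arithmetic doing the cancellations, but flagging the case $f \not\asymp g$ as a boundary situation that may require an additional logarithmic-magnitude argument via $\succ\!\succ$ (Definition \ref{DEF079}) before the product simplifies.
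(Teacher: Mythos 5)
Your proposal follows essentially the same route as the paper's proof: identify the indeterminate forms with $\Phi/\Phi$ and $\Phi^{-1}/\Phi^{-1}$, reduce the latter to the former via Lemma \ref{P216}, and apply Proposition \ref{P217} to conclude. The paper's version is far terser --- it does not carry out the differentiation of $1/f$ and $1/g$ or the algebraic solving of $L = L^{2}\,g'/f'$ that you correctly flag as the delicate part of the $\Phi^{-1}/\Phi^{-1}$ reduction --- so your elaboration supplies detail the paper leaves implicit rather than diverging from it.
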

\begin{proof}
 The indeterminate form $0/0$
 is represented by $\Phi/\Phi$ in 
 $*G$.
 A transfer $\Phi \mapsto 0$
 confirms this.
 The indeterminate form $\infty/\infty$
 is represented as $\Phi^{-1}/\Phi^{-1}$.
 Similarly a transfer $\Phi^{-1} \mapsto \infty$
 confirms this.

 The indeterminate form
 $\Phi^{-1}/\Phi^{-1}$
 by Lemma \ref{P216} can
 be transformed to the indeterminate
 form $\Phi/\Phi$.

 Apply Proposition \ref{P217} to
 the indeterminate form $\Phi/\Phi$.
\end{proof}
\bigskip
\begin{theo} \label{P015}
 Comparison form of 
 L'Hopital's rule. If
 $f/g$ is in indeterminate form
 $\frac{\Phi}{\Phi}$ or $\frac{\Phi^{-1}}{\Phi^{-1}}$,
 when $f'/g'$ exists 
 then $f \; z \; g \Rightarrow f' \; z \; g'$ 
 where $z \in \{ \prec, \propto, \succ \}$.
\end{theo}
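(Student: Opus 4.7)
The plan is to reduce Theorem \ref{P015} to the equality form of L'Hopital's rule already established as Proposition \ref{P217}, and then use Proposition \ref{P014} to translate between fractional comparisons against $1$ and direct comparisons between $f$ and $g$.

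First, I would handle the two indeterminate forms uniformly. If $f/g$ has the form $\Phi^{-1}/\Phi^{-1}$, then by Lemma \ref{P216} the ratio equivalently takes the form $\Phi/\Phi$, so without loss of generality I may assume $f,g \in \Phi$. This reduction is purely structural and requires no additional algebra.

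Next, since each of $z \in \{\prec, \propto, \succ\}$ is defined in a multiplicative sense (via ratios belonging to $\Phi$, $\mathbb{R}$, or $\Phi^{-1}$ respectively), Proposition \ref{P014} gives the equivalence $f \; z \; g \Leftrightarrow \frac{f}{g} \; z \; 1$. Applying this to both sides of the implication to be proved, the theorem reduces to showing
\begin{equation*}
\frac{f}{g} \; z \; 1 \;\; \Longrightarrow \;\; \frac{f'}{g'} \; z \; 1.
\end{equation*}
But Proposition \ref{P217} furnishes the equality $\frac{f(a)}{g(a)} = \frac{f'(a)}{g'(a)}$ in $*G$ whenever $\lim_{x\to a} \frac{f'(x)}{g'(x)}$ exists, so direct substitution yields the required implication (in fact an equivalence). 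Assembling the chain gives $f \; z \; g \Leftrightarrow \frac{f}{g} \; z \; 1 \Leftrightarrow \frac{f'}{g'} \; z \; 1 \Leftrightarrow f' \; z \; g'$.

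The subtlety I expect to be the main obstacle is the $\propto$ case, since $\propto$ is defined via infinitesimally-close equality $\simeq$ rather than exact equality, and one needs to verify that substitution through the equality $f/g = f'/g'$ respects this coarser equivalence. This should follow because an equality in $*G$ certainly implies infinitesimal closeness, so the finite-ratio characterisation transfers verbatim, but the argument should be stated carefully. A secondary point is to confirm that the hypothesis ``$f'/g'$ exists'' combined with the indeterminate form hypothesis is exactly what licenses the choice of infinitesimal $h$ in the proof of Proposition \ref{P217}; no new analytic content is needed beyond what is already in that proposition.
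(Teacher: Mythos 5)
Your proposal is correct and follows essentially the same route as the paper: the paper's proof simply defers to Theorem \ref{P218}, which is itself established by combining Lemma \ref{P216} (to reduce $\Phi^{-1}/\Phi^{-1}$ to $\Phi/\Phi$) with the equality of Proposition \ref{P217}, exactly as you do. Your only addition is to make explicit the translation $f \; z \; g \Leftrightarrow \frac{f}{g} \; z \; 1$ via Proposition \ref{P014}, which the paper leaves implicit in the phrase ``equivalent to L'Hopital's rule.''
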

\begin{proof}
 Equivalent to L'Hopital's rule.
 See Theorem \ref{P218}.
\end{proof}
\bigskip
\begin{mex}\label{MEX056} 
$f' \succ g' \Rightarrow f \succ g$. $1 \succ \frac{1}{n}|_{n=\infty}$,
 $n \succ \mathrm{ln}\,n|_{n=\infty}$.
\end{mex}
\bigskip
\begin{mex}\label{MEX031} 
Applying
 L'Hopital's rule reaches a $0 \;\; z \;\; \infty$ form.
 Hence a much greater than or much less than relationship. 
 Solve $\mathrm{ln}\,x \;\; z \;\; x^{2}|_{x=\infty}$.
 This is in indeterminate form $\infty \;\; z \;\; \infty$,
 differentiate.  $\frac{1}{x} \;\; (D z) \;\; 2x|_{x=\infty}$,
 $\frac{1}{x} \prec 2x|_{x=\infty}$, $D z = \; \prec$,
  $z = \int\!\!\prec \; = \; \prec$ then $\mathrm{ln}\,x \prec x^{2}|_{x=\infty}$
\end{mex}

If the limit exists then the comparison and limit are solved
 for $f(x) \propto g(x)|_{x=a}$ as a standard application
 of L'Hopitals rule with the comparison notation.
\bigskip
\begin{mex}\label{MEX032} 
Computing an indeterminate form $0/0$. 
 $\lim\limits_{x \to 2} \frac{ 3x^{2}+2x-16 }{ x^{2}-x-2 }$, 
 $3x^{2}+2x-16 \;\; z \;\; x^{2}-x-2|_{x=2}$, 
 $0 \;\; z \;\;0$ form then differentiate. 
 $6x +2 \;\; (Dz) \;\; 2x-1|_{x=2}$,  
 $14 \;\; (Dz) \;\; 3$,  
$\frac{ 3x^{2}+2x-16 }{ x^{2}-x-2 }|_{x=2} = \frac{14}{3}$ 
\end{mex}
\bigskip
\begin{mex}\label{MEX033} 
 Indeterminate form $\infty/\!-\infty$.
 $\frac{v}{\mathrm{ln}\,v}|_{v=0}$
 $= \frac{1}{1/v}|_{v=0} = v|_{v=0} = 0$
 then $v \prec \mathrm{ln}\,v|_{v=0}$.
 Since the relation could occur with
 the relation notation,
 $v \; z \; \mathrm{ln}\,v|_{v=0}$,
 $1 \; Dz \; \frac{1}{v}|_{v=0}$,
 $1 \; Dz \; \infty$,
 $Dz = \;\prec$, $z=\;\prec$.
\end{mex}
\bigskip 
\begin{mex}\label{MEX034} 
 \cite[p.8]{ordersofinfinity}
 .  Show $P_{m} \succ Q_{n}$ when $m \gt n$,
 given
 $P_{m}(x) = \sum_{k=0}^{m} p_{k} x^{k}$
 and 
 $Q_{n}(x) = \sum_{k=0}^{n} q_{k} x^{k}$ for positive coefficients. 
 Let $m = n + a$, $a \gt 0$. 
 $P_{m} \;\; z  \;\; Q_{n}|_{x=\infty}$, $\infty \;\; z \;\; \infty$,  
 $D^{n} P_{m} \;\; (D^{n} z)  \;\; D^{n}  Q_{n}|_{x=\infty}$, 
 $D^{n} P_{m} \;\; (D^{n} z)  \;\; \alpha |_{x=\infty}$, 
 $\beta x^{a} \;\; (D^{n}z) \;\; \alpha|_{x=\infty}$, 
 $\beta x^{a} \succ \alpha|_{x=\infty}$, 
 integrating $n$ times preserves this relation
 and solves for $z$.
\end{mex}

Comparison can be in a ``multiplicative sense",
 or an ``additive sense".  In the additive sense,
 we treat the expression more as a relation with
 addition and we may add and subtract, but
 drawing conclusions with much-larger-than relations
 may be problematic. 
 $2x \; z \; 3x$, 
 $0 \; z \; x$, 
 $0 \prec \infty$,
 may then mistakingly draw the conclusion $2x \prec 3x|_{x=\infty}$.
 In the multiplicative sense, divide by $x$,
 $2x \prec 3x|_{x=\infty}$,
 $2 \prec 3$ is false.
 Both comparisons are beneficial.

\bigskip 
\begin{mex}\label{MEX041} 
Consider $x^{2} \succ x|_{x=\infty}$.
 By L'Hopital, 
 $x^{2} \; z \;x$, $2x \; z \; 1$, $z = \;\succ$. 
\\
 By multiplicative $z$, 
 $x^{2} \; z \; x$, 
 $\frac{x^{2}}{x} \; z \; 1$, $x \; z \; 1$, $z=\;\succ$. 
\\
 However a variation, divide by $x$,
 $x \; z \; 1$, $1 \; z \; \frac{1}{x}$,
 realize the infinitesimal, 
 $1 \; z \; 0$, $z = \; \gt$ does not solve for $\succ$.
 In `realizing' the infinitesimal information
 is lost as in $\mathbb{R}$.
\end{mex}
\bigskip 
\begin{mex}\label{MEX042}  
 \cite[WolframMathworld]{wolfLHopital}
An occasional example where L'Hopital's rule fails.
 Applying the rule swaps the arguments to opposite sides.
 Since the relation is equality, this is indeed true.
 $\frac{u}{(u^{2}+1)^{\frac{1}{2}}}|_{u=\infty}$,
 $u \; z \; (u^{2}+1)^{\frac{1}{2}}|_{u=\infty}$,
 $\frac{d}{du}u \; (Dz) \; \frac{d}{du} (u^{2}+1)^{\frac{1}{2}}|_{u=\infty}$,
 $1 \; z \; \frac{1}{2} (u^{2}+1)^{-\frac{1}{2}} 2u|_{u=\infty}$, 
 $(u^{2}+1)^{\frac{1}{2}} \; z \; u|_{u=\infty}$.
 Applying arguments of magnitude, 
 $\frac{u}{(u^{2}+1)^{\frac{1}{2}}}|_{u=\infty}$
 $=\frac{u}{(u^{2})^{\frac{1}{2}}}|_{u=\infty}$
 $=\frac{u}{u}|_{u=\infty}=1$
\end{mex}

\section{Part 6 Sequences and calculus in $*G$} \label{S06}
 With the partition of positive integers and positive 
 infinite integers,
 it follows naturally that sequences
 are also similarly partitioned,
 as sequences are indexed on integers.
 General convergence of a sequence
 at infinity is investigated.
 Monotonic sequence testing by comparison.
 Promotion of a ratio of infinite integers
 to non-rational numbers is conjectured.
 Primitive calculus definitions with 
 infinitary calculus, epsilon-delta proof 
 involving 
 arguments of magnitude are considered.

\subsection{Introduction}\label{S0601}
The discovery of infinite integers leads to the obvious 
 existence of the infinitesimals Part 1, 
 as dividing $1$ by an infinite integer is not a real
 number.
 However, it also does so
 much more.  For theorems, the separation of
 the finite and infinite is possible, rather
 than having a single theorem which addresses both cases.

As sequences are indexed by integers, we similarly
 find that sequences can be partitioned.
 The following investigates some of the 
 mechanics of sequences, particularly at infinity.
 For example it could be that over time
 sequences supersede sets. 
 Sequences can be viewed as more primitive structures.

\subsection{Sequences and functions}\label{S0602}
 We have extended the sequence notation to include intervals,
 as we deem that `the order' is the most important property.
\bigskip
\begin{defy}\label{DEF208} Join or concatenate two sequences. 
$(a) + (b) = (a,b)$ A sequence can be deconstructed. 
 $(a, b) = (a) + (b)$ The operator $+$ is not commutative,
 $(b) + (a) = (b,a)$,  and in general $(b,a) \neq (a,b)$.
\end{defy}
\bigskip
\begin{defy}\label{DEF209}
Compare sequences component wise on relation $z$.
 $(a_{1}, a_{2}, \ldots ) \; z \; (b_{1}, b_{2}, \ldots)$ then $(a_{1} \; z \; b_{1}, a_{2} \; z \; b_{2}, \ldots )$
\end{defy}

\bigskip
\begin{mex}
While we often use functions to state a comparison of 
 sequences, we may use sequence notation.
 Here, infinite positive integers are implied.
 $(n) \prec (n^{2})|_{n=\infty}$
\end{mex}
\bigskip
\begin{defy}
 If a set has a `less than' relation, the sequence of the set is
 ordered. If $X$ is the set, let $( X )$ be the sequence of $X$ with the
 order relation.
\end{defy}
\bigskip
\begin{mex}
 $( +\Phi)$ is the ordered sequence of positive infinitesimals,
 $(\mathbb{N}_{\infty})$ is the ordered sequence
 of infinite positive integers,
 $(+\Phi^{-1})$ is the ordered sequence of positive infinite numbers,
 $(\mathbb{N})$ is the ordered sequence of natural numbers, $(\mathbb{N}_{\lt})$ ordered
 sequence of finite natural numbers.
\end{mex}

We can iterate over infinity in the following way.
 Consider the infinite sequence $1, 2, 3, \ldots$
 We can express this with a variable $n$.
 i.e. $1, 2, 3, \ldots, n, n+1, n+2, n+3, \ldots|_{n=\infty}$.
\bigskip  
\begin{defy} Any integer sequence can be composed of both
 finite and infinite integers.
\[ (1, 2, 3, \ldots, k)|_{k \lt \infty} + (\ldots, n-1, n, n+1, n+2, n+3, \ldots )|_{n=\infty} \] 
\[ (\mathbb{N}_{\lt}) + (\mathbb{N}_{\infty}) \]
\end{defy}

With the establishment of the existence of 
 the infinite 
 integers, since a sequence is indexed
 by integers we can partition the sequence into
 finite and infinite parts.
 Further all sequences
 with integer indices,
 implicitly 
 or explicitly are of this form.
 A finite sequence is deconstructed
 with no infinite part.
\bigskip
\begin{defy}\label{DEF066}
Define a sequence at
 infinity $(a_{n})|_{n=\infty}$ to iterate over the
 whole infinite interval, 
 \[(\ldots, a_{n-2}, a_{n-1}, a_{n}, a_{n+1}, a_{n+2}, \ldots)|_{n=\infty} \]
 or to count from a point onwards, generally in a positive direction.
 \[ (a_{n}, a_{n+1}, a_{n+2}, \ldots)|_{n=\infty} \]
\end{defy}

 The concept of a sequence at infinity
 is particularly important,
 as we now can separate and partition 
 finite and infinite numbers.
\bigskip
\begin{defy}\label{DEF065}
A sequence can be deconstructed
 into both finite and infinite parts.  
\[ (a_{1}, a_{2}, \ldots ) = (a_{1}, a_{2}, \ldots, a_{k})|_{k \lt \infty} + (\ldots, a_{n}, a_{n+1}, \ldots)|_{n=\infty} \]
\[ (a_{1}, a_{2}, \ldots ) = (a_{k})|_{1 \leq k \lt \infty} + (a_{\mathbb{N}_{\infty}}) \]
\[ (a_{1}, a_{2}, \ldots ) = (a_{\mathbb{N}_{\lt}}) + (a_{\mathbb{N}_{\infty}}) \]
\end{defy}

What is striking is that at infinity there is
 no minimum or maximum elements.
 If $n=\infty$ is an infinity, so is $n-1$, $n-2$, $\ldots$.
 Similarly for the continuous variable. If $x=\infty$,
 so is $x-1$, $x-2$, $\ldots$.
 While infinity has 
 no lower or upper bound,
 we may find it useful to define
 the `ideal' min and max elements, as these can 
 describe an interval.
\bigskip
\begin{defy} Ideal minimum and maximum numbers \\
 Let $\mathrm{min}(\mathbb{N}_{\infty})$ be an ideal 
 minimum of the lowest positive infinite integer. \\
 Let $\mathrm{max}(\mathbb{N}_{\infty})$ be an ideal 
 maximum of the highest positive infinite integer. \\
 Let $\mathrm{min}(+\Phi^{-1})$ be an ideal minimum of the lowest positive infinite number. \\
 Let $\mathrm{max}(+\Phi^{-1})$ be an ideal maximum of the highest positive infinite number.
\end{defy}
\smallskipneg
\[ (\ldots, n-1, n, n+1, \ldots )|_{n=\infty} = (\mathrm{min}(\mathbb{N}_{\infty}), \ldots, \mathrm{max}( \mathbb{N}_{\infty} ) ) = (\mathbb{N}_{\infty}) \]
\[ (\ldots + [x-1, x) + [x, x+1) + [x+1, x+2) + \ldots )|_{x=\infty} = ( \mathrm{min}(+\Phi^{-1}), \mathrm{max}(+\Phi^{-1}) ) = (+\Phi^{-1}) \]

 Considering $1, 2, 3, 4, \dots$, we believe that the infinity was thought of as an open set,
 $(1, 2, \ldots )$, but with infinite integers,
 this may be better expressed with an interval notation $[1, 2, \ldots, \infty]$. 
\bigskip
\begin{defy}
 An interval can be deconstructed into real and infinite real parts
\[  (\alpha, \infty] = (x)|_{\alpha \lt x \lt \infty} + (+\Phi^{-1}) \]
\end{defy}
That there is no infinite integer lower bound
 often does not matter.
 Once we arrive at infinity, we may iterate from a
 chosen point onwards.
\bigskip
\begin{mex}
 In describing the
 function $\frac{1}{n}$
 as a sequence, we often
 say $(\frac{1}{2}, \frac{1}{3}, \ldots)$
 which includes both finite and infinitesimal numbers.
 By considering the infinite sequence,
 $(\frac{1}{n})|_{n=\infty}$ we
 now are describing the infinitesimals
 only.
\end{mex}

We would like to iterate
 over infinity for various reasons.
 On occasion it is necessary
 to iterate not over all the infinities,
 but between two infinities.
 For example,
 like with NSA(Non-Standard Analysis), 
 iterate
 between two infinities
 $\omega$ and $2 \omega$.
 In our notation,
 we can start counting
 at infinity, till 
 we reach the next infinity.
 Construct an auxiliary
 sequence for this purpose.
\[ (a_{n+1}, a_{n+2}, \ldots a_{2n}) = (b_{1}, b_{2}, \ldots b_{n}) \text{ where } b_{k}=a_{n+k}|_{n=\infty} \]

From another perspective, we can use
 the same notation to iterate over all infinity.
 Iterating over infinity at infinity,
 so that the finite part is removed, $(a_{n+k})|_{k=n=\infty}$
 iterates over all the infinite elements.

While there is no lower infinite bound for
 the infinite integers, this is not really a 
 problem as
 we need not consider all infinite elements,
 but elements from a certain point onwards,
 hence Definition \ref{DEF066}. 

We require sequences at infinity when building
 other structures at infinity. 
 The ordering property of sequences is separate
 to sets, which by their definition are unordered.
 
 Sequences can be transformed and  
 or rearranged, from one sequence to another,
 with an infinity of elements, in such a way
 to guarantee a property based on
 the order.  
 Our subsequent papers \cite[Convergence sums ...]{cebp2},
 \cite{cebp4} both
 require sequences in the ideas and proofs.

 Sequences are not restricted to
 discrete variables.
 As we can consider a function as a continuous sequence of points,
 we
 extend the sequence notation to the continuous variable.
 We would then consider the index which is also a continuous variable, the domain.
\bigskip
\begin{mex}
Partition the interval, $[\alpha, \infty]$ 
 $=(x)|_{x= [\alpha, \infty]}$
 $=(x)|_{[\alpha, x \lt \infty)} + (x)|_{+\Phi^{-1}}$
 or $[\alpha, +\mathbb{R}) + (+\Phi^{-1})$.
\end{mex}
\bigskip
\begin{defy} 
 We say a function is ``monotonically increasing" if $f(x+\delta) \geq f(x)$,
 ``monotonically decreasing" if $f(x+\delta) \leq f(x)$, $\delta \in \Phi$.
\end{defy}
\bigskip
\begin{defy}
 We say a sequence is ``monotonically increasing" if $a_{n+1} \geq a_{n}$,
 ``monotonically decreasing" if $a_{n+1} \leq a_{n}$. 
\end{defy}
\bigskip
\begin{defy}\label{DEF063}
We say a sequence or function has ``monotonicity"
 if the sequence or function is monotonic: monotonically increasing
 or monotonically decreasing.
\end{defy}

 Determine if 
 a function is monotonic
 by
 comparing successive terms and solving for the
 relation. 
 For a continuous function we can often
 take the derivative. However, for sequences
 this may not be possible.
\bigskip
\begin{conjecture}
We can determine the monoticity of sequence $a_{n}|_{n=\infty}$ by  
 solving for relation $z$ in $*G$, $a_{n+1} \; z \; a_{n}|_{n=\infty}$,
 or if it exists its continuous version
 $a(n+1) \; z \; a(n)$.
\end{conjecture}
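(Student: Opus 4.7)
My plan is to reduce the monotonicity question to solving for a binary relation via the comparison algebra of Part 3. First I would observe that the defining inequalities $a_{n+1} \geq a_n$ (increasing) and $a_{n+1} \leq a_n$ (decreasing) are literally binary relations between the shifted sequence and the original. By Definition \ref{DEF017} the problem is therefore to solve the variable relation $z$ in $a_{n+1} \; z \; a_n|_{n=\infty}$, where $z \in \{\lt, \leq, ==, \geq, \gt\}$ (with the magnitude refinements $\prec, \succ$ as stronger refinements). Using Theorem \ref{P075}, the quantifier ``for all $n \gt n_0$'' is equivalent to the at-a-point condition at infinity, so any solution obtained in $*G$ transfers back to a statement about the tail of the sequence.

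For the continuous version I would appeal to Proposition \ref{P057}, which shifts evaluation at $n=\infty$ to a local calculation, allowing $a(n+1) - a(n)$ to be treated as a function comparison. When $a(n)$ is an L-function (Definition \ref{DEF034}), Conjecture \ref{P028} gives us that the comparison $a(n+1) \; z \; a(n)$ is unique and lies in $\{\prec, \succ, \propto\}$; combined with the relation-simplification rules of Table \ref{FIG04} and the magnitude theorems of Section \ref{S0302}, this lets us actually compute $z$ algebraically (by applying $\ln$, differentiation, non-reversible arithmetic, etc.). The transfer $(*G, \lt) \mapsto (\mathbb{R}, \leq)$ from Theorem \ref{P003} then gives non-strict monotonicity of the underlying real sequence, while $z \in \{\prec, \succ\}$ yields strict monotonicity via Theorem \ref{P009}.

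The main obstacle will be guaranteeing that the relation solved at infinity actually holds \emph{uniformly} on the infinite tail, rather than oscillating: if $a_{n+1} - a_n$ itself changes sign infinitely often, then no single $z$ describes the comparison, and the procedure fails. This is why the conjecture must implicitly restrict to sequences for which the difference is ultimately of constant sign, i.e.\ M-functions or L-functions in the sense of Section \ref{S0303}. So the proof plan is: (i) restrict to the class where $a_{n+1} - a_n$ is an L-function (or M-function); (ii) solve the comparison algebraically using the tools of Part 3 and non-reversible arithmetic from Part 5; (iii) verify that the resulting $z$ is unique in $*G$; (iv) transfer the conclusion back to $\mathbb{R}$ via Theorems \ref{P003} and \ref{P075} to conclude eventual monotonicity.

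The hard part, and the reason the statement remains a conjecture rather than a theorem, is pinning down exactly the maximal class of sequences for which step (iii) succeeds --- in particular, handling sequences whose difference is not itself an L-function but is still ultimately monotonic (such as sums of oscillatory terms with decaying amplitude). I would expect the cleanest version of the theorem to read: if $a_{n+1} - a_n$ is ultimately of one sign in $*G$, then monotonicity of $a_n$ can be read off from the solution of $a_{n+1} \; z \; a_n|_{n=\infty}$, with the converse direction being trivial.
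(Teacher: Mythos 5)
The paper offers no proof of this statement: it is deliberately left as a conjecture and is supported only by worked examples (Examples \ref{MEX021}, \ref{MEX022} and the cautionary Example \ref{MEX060}). Your plan is broadly faithful to how the paper actually uses the conjecture --- compare successive terms, solve for $z$, read off monotonicity --- and you correctly identify that the sticking point is delimiting the class of sequences for which the procedure is reliable. So there is nothing in the paper to be ``essentially the same as''; what can be assessed is whether your outline closes the gap, and it does not, for two concrete reasons.

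First, a technical slip: you invoke Theorem \ref{P009} ($f \succ g \Rightarrow f \gt g$) to pass from a magnitude relation to strict monotonicity, but that theorem is stated only under the hypothesis $f=\infty$, $g=\infty$. The typical monotonicity test involves infinitesimal sequences (e.g.\ $a_{n}=1/n^{2}$ in Example \ref{MEX021}), where the paper must first invert the comparison to one between infinities before any such implication is available; your step as written does not apply. Second, and more seriously, your step (ii) proposes to compute $z$ using the relation-simplification rules and non-reversible arithmetic of Part 5, but the paper's own Example \ref{MEX060} is a direct counterexample to the soundness of that move: $n^{1/2}+(-1)^{n} = n^{1/2}|_{n=\infty}$ under non-reversible addition, yet the sequence $1/(n^{1/2}+(-1)^{n})$ is not monotonic while the simplified sequence $1/n^{1/2}$ is. Restricting to L-functions or M-functions does not repair this, because the offending perturbation is precisely the kind of term the simplification silently discards; you would need to forbid non-reversible simplification inside the difference $a_{n+1}-a_{n}$, at which point the claim that $z$ can always be ``computed algebraically'' and lands in a unique admissible relation is exactly the unproven content of the conjecture. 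Finally, the conjecture asserts a two-sided determination: failure to solve for a single $z$ (as in Example \ref{MEX022}) must certify non-monotonicity. Your plan argues only the forward direction (a solved $z$ yields monotonicity after transfer via Theorem \ref{P003}); completeness of the procedure --- that an oscillating or contradictory $z$ can only arise from a genuinely non-monotonic tail --- is left entirely open.
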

\bigskip
\begin{mex}\label{MEX021} 
Determine if the sequence $(a_{n})|_{n=\infty}$ is monotonic. $a_{n}=\frac{1}{n^{2}}$,
 compare sequential terms, 
 $a_{n+1} \; z \; a_{n}|_{n=\infty}$, 
 $\frac{1}{(n+1)^{2}} \; z \; \frac{1}{n^{2}}|_{n=\infty}$, 
 $n^{2} \; z \; (n+1)^{2}|_{n=\infty}$, 
 $n^{2} \; z \; n^{2} + 2n +1|_{n=\infty}$, 
 $0 \; z \; 2n +1|_{n=\infty}$, $z = \; \lt$,
 $a_{n+1} \lt a_{n}$ and the sequence is monotonically decreasing.
\end{mex} 
\bigskip
\begin{mex}\label{MEX022} 
Test if the sequence $(a_{n})|_{n=\infty}$ is monotonic, $a_{n} = \frac{1}{n^{\frac{1}{2}}+(-1)^{n}}|_{n=\infty}$.
 Let $j=(-1)^{n}$, 
 $a_{n} \;\; z \;\; a_{n+1}|_{n=\infty}$, 
 $\frac{1}{n^{\frac{1}{2}} + j} \;\; z \;\; \frac{1}{(n+1)^{\frac{1}{2}}-j}|_{n=\infty}$,
 $(n+1)^{\frac{1}{2}}-j \;\; z \;\; n^{\frac{1}{2}} + j|_{n=\infty}$, 
 $(n+1)^{\frac{1}{2}}- n^{\frac{1}{2}} \;\; z \;\; 2j|_{n=\infty}$, 
 using the binomial theorem
 $(n+1)^{\frac{1}{2}}- n^{\frac{1}{2}} = n^{-\frac{1}{2}}|_{n=\infty}=0$,
 $0 \;\; z \;\; 2j|_{n=\infty}$,
 $0 \;\; z \;\; (-1)^{n}|_{n=\infty}$,
 $z = \;\lt, \; \gt, \ldots$  then $(a_{n})|_{n=\infty}$ is not
 monotonic.
\end{mex}
\bigskip
\begin{mex}\label{MEX060}
An example of when not to apply infinitary argument simplification
 $a + b=a$. 

Test if the sequence $a_{n} = \frac{1}{n^{\frac{1}{2}}+(-1)^{n}}|_{n=\infty}$
 is monotonic.
 $n^{\frac{1}{2}} \succ (-1)^{n}|_{n=\infty}$,
 if we say  
 $a_{n} = \frac{1}{n^{\frac{1}{2}}+(-1)^{n}}|_{n=\infty}$
 $= \frac{1}{n^{\frac{1}{2}}}|_{n=\infty}$,
 the sequence $(\frac{1}{n^{\frac{1}{2}}})|_{n=\infty}$ is easily shown
 to be monotonic.
 However example \ref{MEX022} shows the sequence is not monotonic. 
 Even though the magnitude is infinitely small compared with the other function,
 the property of monoticity by adding $(-1)^{n}$ was changed.
\end{mex}
\subsection{Convergence}\label{S0603}
We now move on to a more theoretical 
 use of at-a-point definition. The Cauchy convergence,
 Cauchy sequence and limit,
 can be defined at infinity instead of
 both a finite and infinite perspective definition.
 Given that a number system exists
 at infinity and zero, this is more than justified,
 and since the definitions may be more primitive,
 may subsume the standard definitions. 

The problem with both the limit existence 
 and the Cauchy sequence convergence
 is that they both
 define convergence 
 to the point to include the point of convergence
 in the same space.
 While this is incredibly useful it is a subset
 of a more general convergence.

For example Hille
 \cite[p.17, Theorem 1.3.1]{hille}
 already assumes complex numbers, $z_{k} \in \mathbb{C}$ and 
 defines Cauchy convergence 
\[ | z_{m} - z_{n}| \lt \epsilon \text{ for } m, n \gt M(\epsilon). \]
Then provides the following corollary from
 the definition, expressed as a limit.
 \cite[p.71 (4.1.12)]{hille}
\[ \lim\limits_{m,n \to \infty} || z_{m} - z_{n} ||=0  \] 
However turning this around, 
 the corollary is the more primitive operation,
 that being their difference is zero.
 Make this the definition,
 defining a sequence as converging at infinity (Definition \ref{DEF211})
 and derive
 the Cauchy sequence (Definition \ref{DEF024}). 
\bigskip
\begin{defy}\label{DEF210}
Convergence is the negation of divergence.
\end{defy}
\bigskip
\begin{defy}
 A sequence with singularities diverges.
\end{defy}
\bigskip
\begin{theo}\label{P080}
A sequence without singularities before infinity can only diverge at infinity.
\end{theo}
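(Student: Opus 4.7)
The plan is to exploit the partition of integer-indexed sequences provided by Definition \ref{DEF065}, namely
\[ (a_{1}, a_{2}, \ldots ) = (a_{\mathbb{N}_{\lt}}) + (a_{\mathbb{N}_{\infty}}), \]
and reduce the statement to a clean dichotomy: divergence lives in one of the two summands, and the hypothesis removes one of them. The phrase ``before infinity'' is interpreted as referring to the finite portion $(a_{\mathbb{N}_{\lt}})$, while ``at infinity'' refers to $(a_{\mathbb{N}_{\infty}})$, localised by the at-a-point notation $|_{n=\infty}$ of Definition \ref{DEF066}.

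First I would fix the meaning of ``singularity'' in the sequence context: a term $a_{k}$ is singular if $a_{k} \in \Phi^{-1}$ or $a_{k}$ is undefined, so that the immediately preceding definition (``a sequence with singularities diverges'') gives a direct criterion. The hypothesis is then that $(a_{\mathbb{N}_{\lt}})$ contains no such terms. By Definition \ref{DEF210}, convergence is the negation of divergence, so the finite part, being free of singularities and hence free of the mechanism by which divergence is declared in this framework, cannot itself be divergent.

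Next I would apply the partition locally. If the full sequence diverges, the divergence must be witnessed by some singular index; since no such index lies in $\mathbb{N}_{\lt}$, it must lie in $\mathbb{N}_{\infty}$. Using Definition \ref{DEF066} to describe the infinite tail as $(a_{n}, a_{n+1}, a_{n+2}, \ldots)|_{n=\infty}$, this says precisely that any divergence occurs at infinity, which is the desired conclusion. The forward direction is then essentially a contrapositive packaging: no singularity before infinity $\Rightarrow$ no divergence before infinity $\Rightarrow$ any divergence is at infinity.

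The main obstacle will be making the localisation of divergence rigorous. In classical analysis divergence is a global property of a sequence, not something attached to a specific index, so I need the at-a-point framework (Definitions \ref{DEF066}, \ref{DEF204}) to carry the full weight of saying ``divergent at $n=\infty$'' as opposed to ``divergent on $\mathbb{N}_{\lt}$''. A subtlety is sequences that are finite-valued at every finite index but whose partial behaviour already forecasts a problem (for instance unbounded growth without any individual $a_{k} \in \Phi^{-1}$); I would need to argue that such behaviour is itself only realised at $n \in \mathbb{N}_{\infty}$, so that the singularity-based definition of divergence, combined with Theorem \ref{P075} transferring any ``$\forall n > n_{0}$'' condition to $n \in +\Phi^{-1}$, still forces the singular witness into the infinite tail. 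Once that localisation is accepted, the proof itself is a one-line application of the partition.
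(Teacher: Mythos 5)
Your proposal is correct and is essentially the paper's own argument: the paper's proof is the single sentence that every finite (singularity-free) sequence converges because it has finitely many non-singular terms, so divergence can only be located at infinity, which is exactly the dichotomy you extract from the partition $(a_{\mathbb{N}_{\lt}}) + (a_{\mathbb{N}_{\infty}})$. Your additional discussion of localising divergence and of unbounded-but-everywhere-finite sequences elaborates points the paper leaves implicit, but the underlying route is the same.
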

\begin{proof}
Every finite sequence converges because the number of terms is finite
 and the terms are not singularities.
\end{proof}
\bigskip
\begin{cor}\label{P219}
 For a sequence without finite singularities,
 convergence or divergence is determined at infinity.
\end{cor}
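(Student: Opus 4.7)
The plan is to obtain the corollary as an almost immediate consequence of Theorem \ref{P080} together with Definition \ref{DEF210} and the partition of a sequence into finite and infinite parts from Definition \ref{DEF065}. The content of the corollary is that, for a sequence without finite singularities, we need only inspect the infinite tail to decide convergence or divergence; everything finite is automatically well-behaved.

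First, I would apply Theorem \ref{P080} to the hypothesis: since the sequence has no singularities before infinity, any divergence of the sequence must be located at infinity. Definition \ref{DEF210} then gives the dual statement by negation, namely that convergence is likewise decided at infinity, because convergence is simply the failure of divergence, and the place where divergence can or cannot occur has been pinned down.

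Next, to make the phrase ``determined at infinity'' operational, I would deconstruct the sequence using Definition \ref{DEF065} as
\[ (a_k) \;=\; (a_k)|_{k \in \mathbb{N}_{\lt}} \,+\, (a_n)|_{n=\infty}. \]
The finite part $(a_k)|_{k \in \mathbb{N}_{\lt}}$ has only finitely many terms, none of which is a singularity by hypothesis, so by the reasoning used in the proof of Theorem \ref{P080} this piece trivially converges. Concatenation with a convergent finite prefix cannot change convergence or divergence status, so the convergence or divergence of $(a_k)$ coincides with that of the infinite tail $(a_n)|_{n=\infty}$. That coincidence is exactly the assertion of the corollary.

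The main obstacle is not really analytical but notational: one has to be comfortable that ``convergence'' and ``divergence'' as introduced in Definitions \ref{DEF210} and Theorem \ref{P080} respect the partition of Definition \ref{DEF065}, i.e.\ that attaching a finite, singularity-free prefix is inert. Once that is granted, the argument is a one-line combination of Theorem \ref{P080} and the partition, and no further machinery such as $\varepsilon$-$\delta$ or Cauchy criteria is required at this stage.
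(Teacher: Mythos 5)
Your proposal is correct and follows essentially the same route as the paper: the corollary is obtained by combining Theorem \ref{P080} (divergence can only occur at infinity) with Definition \ref{DEF210} (convergence as the negation of divergence). Your additional step of invoking the partition from Definition \ref{DEF065} to show the finite prefix is inert merely makes explicit what the paper's proof of Theorem \ref{P080} already contains, so it is a harmless elaboration rather than a different argument.
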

\begin{proof}
 Since convergence is defined as the negation of divergence (Definition \ref{DEF210}),
 and divergence can only happen at
 infinity (Theorem \ref{P080}),
 then both convergence and divergence
 can only be completely determined
 at infinity.
 Note: this does not contradict
 finite sums
 converging, as at infinity
 their sum is $0$.
\end{proof}
\bigskip
\begin{defy}\label{DEF211}
A sequence $(a_{n})$ converges at infinity if
 given $\{ m, n \} \in \mathbb{N}_{\infty}$:
\[ a_{m} - a_{n} |_{\forall m,n = \infty} \simeq 0 \] 
\end{defy}

\begin{defy}\label{DEF024}
A Cauchy sequence converges if 
 the sequence $(x_{n})|_{n=\infty}$ converges (Definition \ref{DEF211}) and 
 the finite and infinite numbers are
 the same type of
 number.
 $n \lt \infty \text{ then }x_{n} \in W$ and $x_{m}|_{m=\infty} \in W$
\end{defy}

In Definition \ref{DEF211} of
 sequence convergence,
 the number types can be different
 as
 $\Phi$ is composed of the infinireals.
 Cauchy convergence Definition $\ref{DEF024}$
 derives from defining convergence Definition \ref{DEF211}. 

 Similarly the limit definition changes.
 Define evaluation at a point, then
 define the limit as the evaluation at the 
 point and in the same space.
 Definition \ref{DEF068} the limit derives from Definition
 \ref{DEF215} evaluation at a point.

 The idea of a sequence not being
 convergent because it is not
 `complete' is a narrow view.

Consider the computation of two integer
 sequences $a_{n}$ an $b_{n}$ where their
 ratio for finite values is always rational,
 but what they are approximating is not.
 The Cauchy sequence convergence does
 not explain the differing number types,
 only convergence.

The limit fails to be defined when a ratio between 
 these two sequences is considered.
 This is a simple operation. The best answer
 that can explain the calculation is that
 at infinity the ratio is promoted,
 a rational approximation at infinity can be
 promoted to a transcendental number.
\bigskip
\begin{conjecture}
 There exists ratios of infinite integers 
 of the form
 $\frac{\mathbb{N}_{\infty}}{\mathbb{N}_{\infty}}$
 which can be transfered to real numbers.
\end{conjecture}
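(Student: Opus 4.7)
The plan is to establish existence by explicit construction, exhibiting a single ratio in $\mathbb{N}_{\infty}/\mathbb{N}_{\infty}$ whose transfer $*G \mapsto \mathbb{R}$ lands outside $\mathbb{Q}_{\lt}$. Two classical candidates present themselves: the ratio $\frac{(n+1)^n}{n^n}|_{n=\infty}$, whose transfer is $e$, and a Wallis-type partial product $\frac{\prod_{k=1}^{n}(2k)^2}{\prod_{k=1}^{n}(2k-1)(2k+1)}|_{n=\infty}$, whose transfer is $\pi/2$. I would work with the first because its numerator and denominator are transparently integer powers of integers.

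First I would fix $n \in \mathbb{N}_{\infty}$ and set $a_n := (n+1)^n$, $b_n := n^n$. I would verify that both are infinite integers: they lie in $\mathbb{J}$ because integer powers of integers are integers, and they lie in $\Phi^{-1}$ by Proposition \ref{P111} applied iteratively (products of positive infinities are infinite), so $a_n, b_n \in \mathbb{N}_{\infty}$. Hence $a_n/b_n$ is of the required form $\frac{\mathbb{N}_{\infty}}{\mathbb{N}_{\infty}} \subseteq \mathbb{Q}_{\infty}$. Next, using the binomial expansion in $*G$ I would show $\frac{(n+1)^n}{n^n}|_{n=\infty} = \left(1+\frac{1}{n}\right)^n|_{n=\infty} = \sum_{k=0}^{n}\binom{n}{k}n^{-k}$, and then, after simplifying $\binom{n}{k}n^{-k} = \frac{1}{k!} + \delta_k$ with $\delta_k \in \Phi$ via non-reversible arithmetic of Part 5, conclude that the ratio equals $e + \delta$ for some $\delta \in \Phi$. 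Applying the transfer $\Phi \mapsto 0$ of Proposition \ref{P029} yields $e \in \mathbb{A}_{\lt}' \subset \mathbb{R} \setminus \mathbb{Q}_{\lt}$, establishing the conjecture.

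The main obstacle will be justifying that the finite-$n$ binomial identity propagates honestly to $n \in \mathbb{N}_{\infty}$ and that the tail of the expansion is genuinely infinitesimal rather than an infinite collection of infinitesimals whose sum misbehaves. This is precisely the kind of summing-to-infinity issue flagged in Example \ref{MEX059} and in the discussion following Theorem \ref{P058}; one cannot blindly apply $a+b=a$ to an infinite family of terms, so the bound on the tail must be controlled uniformly by an infinitesimal, not merely term-by-term. I expect this to require an auxiliary lemma estimating $\sum_{k=K}^{n}\binom{n}{k}n^{-k}$ for a finite cut-off $K$ and then letting $K$ grow appropriately within $\mathbb{N}_{\lt}$ before the transfer.

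A secondary, notational obstacle is that the transfer image of $\mathbb{Q}_{\infty}$ is not, a priori, contained in $\mathbb{Q}_{\lt}$: the naming might mislead one into expecting the standard part of an infinite rational to be rational. The proof should therefore include a brief remark distinguishing the syntactic form of the number (a ratio of infinite integers) from the arithmetic nature of its image under $*G \mapsto \mathbb{R}$, so the conjecture is read as the non-trivial statement that $\mathrm{st}(\mathbb{Q}_{\infty}) \not\subseteq \mathbb{Q}_{\lt}$ rather than the tautology that $\mathbb{Q}_{\infty}$ transfers somewhere in $\mathbb{R}$.
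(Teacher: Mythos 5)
The paper does not actually prove this statement: it is left as a conjecture, and the only support offered is the worked example immediately following it, in which $\sqrt{3}$ is produced as $\frac{a_{n}+2a_{n-1}}{a_{n}}|_{n=\infty}$ from the integer recurrence $a_{n+1}=2a_{n}+2a_{n-1}$, $b_{n+1}=a_{n}$. Your proposal follows the same strategy --- existence by exhibiting one explicit ratio of infinite integers whose standard part is irrational --- but with a different witness and different machinery: $(n+1)^{n}/n^{n}$ and the binomial expansion in place of a linear recurrence. Your witness is in one respect better matched to the paper's surrounding text, which speaks of promotion to a \emph{transcendental} number: $e$ is transcendental while $\sqrt{3}$ is only algebraic. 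The trade-off is technical. The recurrence route never sums more than finitely many terms at any step, so it sidesteps the infinite-sum-of-infinitesimals problem entirely (though the paper silently omits the convergence argument for its recurrence); your binomial route meets that problem head-on, and your proposed fix --- a uniform tail estimate for $\sum_{k=K}^{n}\binom{n}{k}n^{-k}$ with a finite cut-off $K$ --- is the right shape for it, consistent with the warnings following Theorem \ref{P058} about applying $a+b=a$ only finitely many times.

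One caution. Your closing paragraph correctly isolates the real content of the conjecture: that the image of $\mathbb{Q}_{\infty}$ under $*G\mapsto\mathbb{R}$ escapes $\mathbb{Q}_{\lt}$, i.e.\ that ``promotion'' occurs. But the paper explicitly concedes, a few lines below the conjecture, that the problem of promotion ``is not explained, but is acknowledged'' --- the framework supplies no definition of the map that sends a formal element of $\frac{\mathbb{J}_{\infty}}{\mathbb{J}_{\infty}}$ to an element of $\mathbb{Q}_{\lt}'$, which is precisely why the statement is a conjecture rather than a theorem. Your construction inherits that gap: the step ``the ratio equals $e+\delta$, apply $\Phi\mapsto 0$, obtain $e$'' presupposes that the transfer of a $\mathbb{Q}_{\infty}$-element is permitted to land on a transcendental number, which is the very thing at issue. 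So the proposal should be read as a second, arguably sharper, witness for the conjecture together with a precise statement of what a proof would still need to define, rather than as a completed proof.
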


Since all irrationals including transcendental numbers
 are calculated by integer sequences, such a restriction
 on the ratio of two integer sequences not converging is
 absurd. Such sequences do converge at infinity.
 
If $\frac{ a_{n}}{b_{n}}|_{n=\infty}$ converges
 at a point
 not in the limit. 

 If $\{ a_{n},\, b_{n} \} \in \mathbb{N}$ are integers,
 $\frac{a_{n}}{b_{n}} \in \mathbb{Q}$,
 but $\frac{a_{n}}{b_{n}} \to \mathbb{Q}'$ then
 $\lim\limits_{n \to \infty} \frac{a_{n}}{b_{{n}}}$ does not exist.
 However $\frac{a_{n}}{b_{n}}|_{n=\infty}$ does not have this restriction.
 $\frac{a_{n}}{b_{n}}|_{n=\infty} \in \frac{\mathbb{J}_{\infty}}{\mathbb{J}_{\infty}} \in \mathbb{Q_{\infty}}$,
 but $\mathbb{Q}_{\infty}$ for any non-rational number approximation
 is promoted to 
 $\mathbb{Q}'$, if the approximation exists in $\mathbb{R}$.

These rational approximations are common.
 All calculations of  
 numbers in $\mathbb{R}$ are reduced to integer
 calculations. However such calculations need to be explained
 in a higher dimension number, at least with $\mathbb{R}_{\infty}$ 
 because infinite integers are involved.
\bigskip
\begin{mex}
Construct an integer sequence to approximate $\sqrt{3}$.
 Hence, we consider $\sqrt{3}$ as the 
 ratio of two infinite integers at infinity.
 $(x-1)^{2} = 3$ has a solution $x=1+\sqrt{3}$.
 $x^{2}-2x+1 = 3$, $x = 2 + \frac{2}{x}$.
 Develop an iterative scheme.
 $x_{n+1} = 2 + \frac{2}{x_{n}}$.
 Assume an integer solution, $x_{n} = \frac{a_{n}}{b_{n}}$.
 $\frac{a_{n+1}}{b_{n+1}} = 2 + \frac{2 b_{n}}{a_{n}}$,
 $\frac{ a_{n+1}}{b_{n+1}} = \frac{2a_{n} + 2 b_{n}}{a_{n}}$.
 Let $b_{n+1} = a_{n}$ then $a_{n+1} = 2 a_{n} + 2 a_{n-1}$.
 For two initial values, $a_{0}=1$, $a_{1}=1$,
 $a_{2} = 2 a_{1} + 2 a_{0} = 2 \cdot 1 + 2 \cdot 1=4$,
 the sequence
 generated is $(1,1,4,10,28,76,208,568,1552, \ldots)$. 
 $\sqrt{3} = \frac{a_{n}}{b_{n}}-1$
 $=\frac{a_{n}+2 a_{n-1}}{a_{n}}|_{n=\infty}$.
\end{mex}

The way around this difficulty    
 by saying it is not important through
 definition is problematic.
 Simply promoting the two numbers
 being divided to the same number system
 (as demonstrated by Hille \cite[p.17, Theorem 1.3.1]{hille}),
 so that by definition and only by definition
 they are the same number type;
 and therefore the ratio converges in the same space,
 is incomplete.

The alternative Definition \ref{DEF211} 
 define the same concept 
 more generally.
 Admittedly the problem of `promotion' is not
 explained, but is 
 acknowledged.
 
While this may be a controversial finding,
 it suggests either that notions
 of convergence have not been entirely settled,
 or that they are incomplete.
 Especially for the most basic operations.

 In summary, there are two problems
 with the Cauchy sequence. It is derived from
 a more general convergence, and 
 is better explained in a space at infinity,
 with infinite integers.
\subsection{Limits and continuity} \label{S0604}
The standard epsilon definition of a limit Definition \ref{DEF212}
 can be improved by explicitly
 defining $\{ \epsilon, \delta \} \in \Phi^{+}$ as, by the
 conditions, these numbers become 
 infinitesimals.
 Therefore this is an implicit infinitesimal definition.
\bigskip
\begin{defy}\label{DEF212}
 The symbol $\lim\limits_{x \to p}f(x) =A$
 means that for every $\epsilon \gt 0$,
 there is a $\delta \gt 0$ such that
\[ |f(x)-A| \lt \epsilon \text{ whenever } 0 \lt |x-p| \lt \delta \text{.} \]
\end{defy}

 If we consider a limit definition \cite[p.129]{apostol} given
 by Apostol,
 we can generalize the definition in $*G$ to include infinitesimals,
 thereby making the definition explicit.
 A statement with infinitesimals, let $\epsilon \in +\Phi$: 
 $|f(x)-A| \lt \epsilon$
 can be equivalently expressed:
 $f(x) - A \in \Phi \cup \{ 0 \}$.
\bigskip
\begin{defy}\label{DEF213}
The symbol $\lim\limits_{x \to p}f(x) =A$ in $*G$
 means that when $x-p \in \Phi$ 
 then
 $f(x)-A \in \Phi \cup \{0\}$.
\end{defy}
\bigskip
\begin{mex}
$\lim\limits_{n \to \infty} \frac{ n^{3}+\frac{1}{n}}{4n^{3}}$
 $= \lim\limits_{n \to \infty} \frac{ 3n^{2} - n^{-2}}{12n^{2}}$
 $= \lim\limits_{n \to \infty} \frac{ 6n + 2 n^{-3}}{24n}$
 $= \lim\limits_{n \to \infty} \frac{ 6 - 6 n^{-4}}{24}$
 $= \frac{1}{4} - \frac{1}{4n}|_{n=\infty}$
\end{mex}

 The limit $\lim\limits_{n \to \infty} \frac{a_{n}}{b_{n}}$
 in $\mathbb{R}$ implicitly applies a transfer $*G \mapsto \mathbb{R}$
 Part 4.
 A limit in $*G$ (Definition \ref{DEF213})
 by default does not do a transfer, but this is easily done.
\bigskip
\begin{prop}\label{P220}
 If a limit exists in $\mathbb{R}$ then
 $*G \mapsto \mathbb{R}$:
 in $*G$, 
  $\mathrm{st} (\lim\limits_{x \to p} f(x)) = \lim\limits_{x \to p} f(x))$ in $\mathbb{R}$.
\end{prop}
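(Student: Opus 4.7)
The plan is to connect the two limit definitions through the transfer $*G \mapsto \mathbb{R}$, specifically $\Phi \mapsto 0$ (Proposition \ref{P029}), together with the transfer of implicit finite bounds to infinitesimals (Theorem \ref{P076}). I would first assume $\lim\limits_{x \to p} f(x) = L$ exists in $\mathbb{R}$, so the epsilon-delta condition of Definition \ref{DEF212} holds: for every $\epsilon \in \mathbb{R}^{+}$ there exists $\delta \in \mathbb{R}^{+}$ such that $|f(x)-L| \lt \epsilon$ whenever $0 \lt |x-p| \lt \delta$.

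Next I would promote the condition into $*G$. Applying Theorem \ref{P076} to the bound $|x-p| \lt \delta$ transfers the implicit infinitesimal condition to $x-p \in \Phi$. Since the conclusion $|f(x)-L| \lt \epsilon$ must hold for every $\epsilon \in \mathbb{R}^{+}$, the difference $f(x)-L$ is smaller in magnitude than any positive real, hence by Proposition \ref{P018} must lie in $\Phi \cup \{0\}$. Therefore $L$ itself satisfies the $*G$ limit Definition \ref{DEF213}, so $L$ is a valid value of $\lim\limits_{x \to p} f(x)$ in $*G$.

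Then I would address the non-uniqueness of the $*G$ limit. If $A \in *G$ is any value satisfying $f(x)-A \in \Phi \cup \{0\}$ whenever $x-p \in \Phi$, then $A-L = (A-f(x)) + (f(x)-L) \in \Phi \cup \{0\}$ by closure of $(\Phi \cup \{0\}, +)$ (Proposition \ref{P081}). Hence $A = L + \delta'$ for some $\delta' \in \Phi \cup \{0\}$, and applying the standard part gives $\mathrm{st}(A) = \mathrm{st}(L+\delta') = L$ by Proposition \ref{P029}, which is the limit in $\mathbb{R}$.

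The main obstacle is the ambiguity in the expression ``$\lim\limits_{x \to p} f(x)$ in $*G$'' since, as shown, it is determined only up to an infinitesimal. The step requiring care is arguing that the standard part is well-defined on the entire equivalence class of $*G$-limits, so the identity $\mathrm{st}(\lim\limits_{x \to p} f(x)) = L$ is independent of the representative chosen. This also highlights the directional nature of the transfer: information about the infinitesimal component of $A$ is irretrievably lost, reflecting the non-reversibility discussed in Part 5.
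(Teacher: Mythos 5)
Your proof is correct, but it takes a substantially more detailed route than the paper, whose entire argument is three sentences: $\mathbb{R}$ is embedded in $*G$, limits are ``actually calculated'' in $*G$, and the transfer maps $\Phi \mapsto 0$ (Proposition \ref{P029}). The paper simply asserts the existence of the transfer from the embedding; it never verifies that the real limit $L$ actually satisfies the $*G$ limit definition, nor that the standard part is independent of which $*G$-representative is chosen. You supply both missing pieces: the forward direction via the $\epsilon$--$\delta$ condition together with Theorem \ref{P076} to pass from the real bound $|x-p| \lt \delta$ to $x - p \in \Phi$, and the uniqueness-up-to-infinitesimal argument via closure of $(\Phi \cup \{0\}, +)$ (Proposition \ref{P081}) so that $\mathrm{st}$ is well-defined on the equivalence class. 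One small citation quibble: in concluding that $f(x)-L \in \Phi \cup \{0\}$ from $|f(x)-L| \lt \epsilon$ for every real $\epsilon \gt 0$, you invoke Proposition \ref{P018}, but that proposition states the forward direction (an infinitesimal is below every positive real); what you need is the converse, which in this paper follows from the partition theorem $0 \lt \Phi^{+} \lt \mathbb{R}^{+}\! + \Phi \lt +\Phi^{-1} \lt \infty$ rather than from \ref{P018} itself. What the paper's approach buys is brevity consistent with its informal style; what yours buys is an actual demonstration that the two notions of limit agree, which is the substantive content of the proposition.
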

\begin{proof}
 $\mathbb{R}$ is a subset of $*G$. 
 Then a transfer must exist,
 since limits are actually calculated
 in $*G$.
 During the transfer, infinitesimals
 are mapped to zero, $\Phi \mapsto 0$.
\end{proof}
 
 Limits and continuity are tied
 together in $\mathbb{R}$,
 however we will see that this is
 often not the case in $*G$.
 For example, 
 we can have
 a discontinuous staircase function
 in $\mathbb{R}$ which is continuous
 in $*G$. 

 However, we can similarly define
 continuity in $*G$ with limits.
 Since this is before the transfer
 principle $*G \mapsto \mathbb{R}$
 is applied then there is no paradox.
 We believe that the definition
 of continuity via limits
 has the advantage of an `at-a-point'
 perspective.

 Consider the `principle of variation', 
 which for a continuous variable
 is the `law of adequality' \cite[p.5]{victors}:
 $d(f(x)) = f(x+\delta)-f(x)$ 
 leads to the derivative, as a ratio
 of infinitesimals.
 $df(x) = f(x + dx)-f(x)$,
 $\frac{df(x)}{dx} = \frac{f(x+dx)-f(x)}{dx}|_{dx=0}$

 However, the principle of variation is also
 applicable to discrete change,
 where $dn = (n+1)-n=1$
 is a change in integers,
 which we interpret to derive
 a derivative of a sequence \cite{cebp4}.

 Continuity can be defined either
 by the principle of variation
 or the limit.
 Continuity can
 be expressed as a variation;  taking two points infinitesimally close, 
 and their difference is an infinitesimal.
\bigskip
\begin{defy}\label{DEF214}
 A function $f: *G \to *G$
 is continuous at $x$; 
 $f(x), x, y  \in *G$;
 $\delta x, \delta y  \in \Phi$;
\[ y = f(x) \]
\[ y + \delta y = f(x + \delta x) \]
\end{defy}
\bigskip
\begin{defy} \label{DEF215}
 A function $f: *G \to *G$
 is continuous
 at $x=a$ and
 has been evaluated to $L$: $f(x)|_{x=a}=L$,  
\[  \text{If }  \forall x: x \simeq a \text{ then }
 f(x) \simeq L. \]
\end{defy}

 $x \simeq a$ is equivalent to
 $x+\delta = a$ when $\delta \in \Phi$
 or $x=a$ Definition \ref{DEF201}.
\bigskip
\begin{lem}
 Definition \ref{DEF214} implies Definition \ref{DEF215}.
\end{lem}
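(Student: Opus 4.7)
The plan is to unpack both definitions at the point $x=a$ and show that the ``variation'' formulation directly yields the infinitesimally-close formulation. Assume Definition \ref{DEF214} holds at $x=a$, and set $L := f(a)$. Take an arbitrary $x \in *G$ with $x \simeq a$. By the remark following Definition \ref{DEF215} (and Definition \ref{DEF201}), the relation $x \simeq a$ is equivalent to $x - a \in \Phi \cup \{0\}$. I would write $\delta x := x - a$ and split into the two subcases $\delta x = 0$ and $\delta x \in \Phi$, since Definition \ref{DEF214} quantifies $\delta x$ over $\Phi$ strictly.

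First I would dispose of the trivial case $\delta x = 0$: then $x = a$, so $f(x) = f(a) = L$ and $f(x) - L = 0 \in \Phi \cup \{0\}$, giving $f(x) \simeq L$ immediately. Next, for $\delta x \in \Phi$, I would apply Definition \ref{DEF214} with the point $a$: there exists $\delta y \in \Phi$ such that $f(a + \delta x) = L + \delta y$. Hence $f(x) - L = \delta y \in \Phi \subset \Phi \cup \{0\}$, which by Definition \ref{DEF201} is exactly $f(x) \simeq L$. Since $x$ was arbitrary subject to $x \simeq a$, Definition \ref{DEF215} is satisfied.

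The only subtlety, and what I would flag as the mild obstacle, is the asymmetry between the two definitions in how they treat the ``zero variation'' case: Definition \ref{DEF214} restricts $\delta x$ to $\Phi$, whereas $\simeq$ in Definition \ref{DEF215} permits equality. This is why the case split is needed, but each branch is a one-line verification, so the lemma reduces to a straightforward unwinding of notation rather than any genuine analytic content. No use is made of field properties of $*G$ beyond addition, and no transfer $*G \mapsto \mathbb{R}$ is invoked — the argument stays entirely inside the gossamer number system, which is consistent with the preceding discussion that continuity in $*G$ need not coincide with continuity after transfer to $\mathbb{R}$.
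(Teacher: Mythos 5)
Your proof is correct and takes essentially the same route as the paper's: both arguments simply unwind Definition \ref{DEF214} at the point $x=a$, set $L=f(a)$, and read off $f(x)\simeq L$ from the fact that $\delta y \in \Phi$. The only difference is your explicit handling of the $\delta x = 0$ case, which the paper's one-line chain glosses over; that is a harmless extra bit of care rather than a different argument.
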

\begin{proof}
 Consider Definition \ref{DEF214}.
 $y + \delta y = f(x + \delta x)|_{x=a}$, 
 $y \simeq f(x+\delta x)|_{x=a}$, 
 let $L = f(x)|_{x=a}$, 
 $L \simeq f(x+\delta x)|_{x=a}$, 
 $L \simeq f(x)|_{x \simeq a}$
\end{proof}

 A definition of a limit, less general than a definition
 of evaluation at a point is given.
\bigskip
\begin{defy}\label{DEF068} 
 A function $f: *G \to *G$
 is a limit at $x=a$ if $f(x)$ is continuous
 at $x=a$ and is the same number type $W$.
\[ \text{If } f(x)|_{x=a}=L \text{ and }  \{f(x), \, L \} \in W \text{ then }  f(x)|_{x=a} \text{ is a limit.} \] 
The symbol $\lim\limits_{x \to a}f(x) =L$ in $*G$
 means that when $x \simeq a$ 
 then
 $f(x) \simeq L$.
\end{defy}

 Considering the larger picture between the two-tiered number systems.
 What is being claimed is that providing a finite and infinite perspective
 definition does not describe well what is happening,
 particularly when ``at infinity" simplifies the explanation.
 These ideas of defining at infinity
 extend into many other definitions.

As there appear to be different kinds of arithmetics
 and convergence as governed by what happens
 at infinity, defining convergence in general at infinity
 makes more sense.
\bigskip
\begin{mex}
 Define $x \gt 1: \; \sum_{k=0}^{\infty} x^{k} = \frac{1}{1-x}$.
 Then we can have an infinite sum with positive
 terms that has a negative solution.  Let $\frac{1}{1-x}=-w$,
 solve for $x$, $x = \frac{1+w}{w}$.
 Case $w=2$, $x = \frac{3}{2}$,
 $\sum_{k=0}^{\infty} (\frac{3}{2})^{k} = -2$.

 Not only has a positive sum of terms become negative,
 we needed an infinity of terms to interpret the sum,
 for the sum to have meaning.
\end{mex}

 This example is relevant because 
 our applications following  
 this series \cite[Convergence sums ...]{cebp2}
 define convergence at infinity,
 as does Robinson's non-standard analysis
 (here infinity defined as not finite is
 interpreted as successive orders of numerical infinities $\omega$).
\subsection{Epsilon-delta proof}\label{S0606}
 We again look at the limit in the
 guise of the epsilon-delta proof.
 \cite{wolfepdel} comments on
 the generalization in multidimensional space
 with the norm and open balls.

For complex proofs, NSA has
 been successful as an
 alternative to  
 Epsilon-Delta management,
 and other proofs solving
 in the higher number system
 and transferring the results
 back into the reals.
 We would expect our calculus
 to also be useful in proving
 propositions and theorems,
 with similar purpose to NSA but
 in another way.

 An epsilon-delta definition proof, 
 if $|x-x_{0}| \lt \delta$
 then $|f(x)-f(x_{0})| \lt \epsilon$, 
 in a minimalistic sense 
 is not
 a finite inequality,
 but an inequality at infinity with infinitesimals,
 as we can derive the finite inequality.
 By the
 transfer principle,
 project the statement from $*G$
 into $\mathbb{R}$.
 E.g.
 $\delta_{1} \in \Phi$;
 $\,\delta_{2} \in \mathbb{R}^{+}$;
 $\,(*G, |x-x_{0}|\delta_{1}) \mapsto (\mathbb{R},|x-x_{0}| \delta_{2})$

 An abstraction, by removing
 the inequality relation, expressing
 the relation as a variable which
 is infinitesimal,
 hence
 a more direct reasoning.
\bigskip
\begin{defy}\label{DEF069} 

The Epsilon-Delta Proof with $\Phi$
\[ \text{If } x-x_{0} = \Phi \text{ then } f(x)-f(x_{0}) = \Phi, \, n=\infty\text{.} \]
\end{defy}
\bigskip
\begin{mex} 
\cite[Epsilon-Delta Proof]{wolfLHopital}
 $f(x) = ax+b$; 
 $a, b \in \mathbb{R}$;
 $a \neq 0$.
 Show $f(x)$ is continuous.
\begin{align*}
 x-x_{0} & = \Phi \\
 f(x)-f(x_{0})
 & = (ax+b) - (a x_{0}+b) \\
 & = a (x-x_{0})  = \Phi \tag{ as $a \Phi = \Phi$ }
\end{align*}
\end{mex}

An Epsilon-Delta definition and proof with a
 similar 
 structure 
  \cite{wolfepdel} could be given where
 the real numbers are replaced by $*G$. For example it 
 may not be enough that the numbers are infinitesimals (Definition \ref{DEF069}),
 but we may require
 the infinitesimals to be continually
 approaching $0$. (See Proposition \ref{P222}) 
\[ \delta_{n} \to 0 \text{ replaced with } \delta_{n} \succ \delta_{n+1} \]
\begin{defy}\label{DEF070}
The Epsilon-Delta Proof in $*G$
\[ \delta_{n} = x-x_{0} \in \Phi; \;\; \epsilon_{n} = f(x)-f(x_{0})\in \Phi ; \;\; n=\infty \]
\[ \text{ If } \delta_{n} \succ \delta_{n+1} \text{ then } \epsilon_{n} \succ \epsilon_{n+1} \] 
\end{defy}
\subsection{A two-tiered calculus}\label{S0605}
 We can work in $*G$
 and project back or transfer to
 $\mathbb{R}$ or $\overline{\mathbb{R}}$, or $*G$.
 The overall reason for doing this was a separation of the
 finite and infinite domains, thereby separating
 and isolating the problem.

 We introduce the following sequence
 definitions as a consequence of
 a two-tiered calculus.
 It is possible
 for a sequence to plateau in $*G$ and
 project back to a convergent sequence 
 in $\mathbb{R}$. 
 Similarly a divergent
 sequence could plateau in $*G$
 and diverge in $\overline{\mathbb{R}}$.
 We need to be able to describe
 arbitrarily converging
 and diverging 
 sequences to guarantee certain properties and avoid the plateau. 
 Hence, additional requirements
 are needed to manage the
 sequences. 
\bigskip
\begin{defy}\label{DEF216}
We say
 $x_{n} \to 0$
 then
 $x_{n} \in \Phi$ and is decreasing
 in magnitude: $|x_{n+1}| \leq |x_{n}|$.
\end{defy}
\bigskip
\begin{defy}\label{DEF217}
We say
 $x_{n} \to \infty$
 then
 $x_{n} \in \Phi^{-1}$ and is increasing 
 in magnitude: $|x_{n+1}| \geq |x_{n}|$.
\end{defy}

 Since a variable may be expressed as a point,
 the sequences described can be extended to 
 the continuous variable. 
 An adaptable notation, given
 that we may need
 different sequences for particular
 problems and theory.

 For example, $x \to \infty$,
 $(x)$
 indefinitely increases
 and is positive monotonic,
 $f(x)|_{x=\infty} = \ldots$

 The other type of sequences
 in general use are 
 a partition. For
 example, for all $x \gt x_{0}$.
\bigskip
\begin{defy}\label{DEF218}
In context, a variable $x \to 0$ can
 be described at zero by
 $x \in \Phi$ or
 Definition \ref{DEF216} or
 Definition \ref{DEF221} or
 other
 as $|_{x=0}$.
\end{defy}
\bigskip
\begin{defy}\label{DEF219}
In context, a variable $x \to \infty$ can
 described at infinity by
 $x \in \Phi^{-1}$ or
 Definition \ref{DEF217} or
 Definition \ref{DEF222} or
 other
 as $|_{x=\infty}$.
\end{defy}
\bigskip
\begin{defy}\label{DEF220}
A `subsequence' is a sequence formed from 
 a given sequence by deleting
 elements without changing the relative
 position of the elements.
\end{defy}
\bigskip
\begin{defy}\label{DEF221}
 We say a sequence is `indefinitely decreasing'
 in magnitude.
 $x_{n} \to 0$; 
 $n$, $n_{2} \in \mathbb{N}_{\infty}$;
 there exists
 $n_{2}: n_{2} \gt n$
 and $x_{n_{2}} \prec x_{n}$.
\end{defy}
\bigskip
\begin{defy}\label{DEF222}
 We say a sequence is `indefinitely increasing'
 in magnitude.
 $x_{n} \to \infty$;
 $n$, $n_{2} \in \mathbb{N_{\infty}}$;
 there exists
 $n_{2}: n_{2} \gt n$
 and $x_{n_{2}} \succ x_{n}$.
\end{defy}
\bigskip
\begin{prop}\label{P221}
If $x_{n} \to 0$ is indefinitely decreasing
 there exists a subsequence $(\nu_{n}): \nu_{n+1} \prec \nu_{n}|_{n=\infty}$ and $\nu_{n} \to 0$
\end{prop}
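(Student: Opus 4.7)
The plan is to build the required subsequence $(\nu_k)$ by iterated application of the indefinitely decreasing property (Definition \ref{DEF221}). First I would fix a base index $n_1 \in \mathbb{N}_\infty$; since $x_n \to 0$ places us in the infinite regime, such a starting index exists. Then by Definition \ref{DEF221} applied to $n_1$, there exists $n_2 \in \mathbb{N}_\infty$ with $n_2 > n_1$ and $x_{n_2} \prec x_{n_1}$. Iterating this selection, for each chosen $n_k$ I would pick $n_{k+1} > n_k$ with $x_{n_{k+1}} \prec x_{n_k}$, and then define $\nu_k = x_{n_k}$.

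With the subsequence in hand, the two required conclusions follow directly. By construction $\nu_{k+1} = x_{n_{k+1}} \prec x_{n_k} = \nu_k$ at infinity, which gives the first claim. For $\nu_k \to 0$, I need to verify the two clauses of Definition \ref{DEF216}: membership in $\Phi$ is inherited from $x_n \in \Phi$ since $(\nu_k)$ is a subsequence of $(x_n)$, and the magnitude monotonicity $|\nu_{k+1}| \leq |\nu_k|$ follows because $\nu_{k+1} \prec \nu_k$ implies $|\nu_{k+1}| < |\nu_k|$ by Proposition \ref{P018} applied to the ratio $\nu_{k+1}/\nu_k \in \Phi$.

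The hard part will be justifying the recursive selection in a way that is consistent with how sequences at infinity are handled in $*G$. Ordinary recursion produces only countably many steps, but the index set $\mathbb{N}_\infty$ is itself an infinite extension (Theorem \ref{P077}), so I would need to argue that a subsequence indexed by the ordinary positive integers suffices to witness the claim, or alternatively extend the construction through the infinite indices of the subsequence using Definition \ref{DEF066}. Appealing to the notation of sequences at infinity (Definition \ref{DEF065}), the subsequence $(\nu_k)$ may be deconstructed into its finite and infinite parts, and the property $\nu_{k+1} \prec \nu_k|_{n=\infty}$ is only required on the infinite part, so the countable recursion is enough to populate indices up to $n = \infty$ and the selection at each stage is non-vacuous by Definition \ref{DEF221}.

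A brief verification step would then confirm that no plateau occurs: the indefinitely decreasing hypothesis rules out the $(\nu_k)$ stabilising in magnitude, so $\nu_k$ genuinely decreases through the scale of infinitesimals rather than hovering at one order. This closes the argument and gives both the strict infinitesimal descent $\nu_{k+1} \prec \nu_k$ and the convergence $\nu_k \to 0$ in the sense of Definition \ref{DEF216}.
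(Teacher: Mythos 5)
Your proposal is correct and takes essentially the same route as the paper: the paper's proof is the one-line observation that ``we can always choose a subsequent much-less-than term,'' which is exactly your iterated application of Definition \ref{DEF221} to select $n_{k+1} > n_k$ with $x_{n_{k+1}} \prec x_{n_k}$. Your additional verification of the two clauses of Definition \ref{DEF216} and your discussion of indexing over $\mathbb{N}_{\infty}$ simply make explicit what the paper leaves implicit.
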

\begin{proof}
 Since $x_{n}$ is decreasing in magnitude,
 we can always choose a subsequent much-less-than
 term.
\end{proof}

 We use infinity
 arguments with order in
 the proof of
 Proposition \ref{P222}.
 Normally we would send
 $h \to 0$ before $\delta \to 0$.
 However, 
 if the solution is
 independent of the infinity,
 consider $\delta \to 0$ before
 $h \to 0$.
 Then we reason that the
 derivative must be an infinitesimal.
\bigskip
\begin{prop}\label{P222}
If $\delta_{n} \to 0$ is indefinitely decreasing and strictly positive monotonic  decreasing then
\[ D \delta_{n}|_{n=\infty} \in -\Phi \]
\end{prop}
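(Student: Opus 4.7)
The plan is to unpack what $D\delta_n$ means in this discrete setting and then read off the conclusion from closure and sign information about $\Phi$. Following the principle of variation for sequences mentioned just before Definition \ref{DEF214} (where $dn=(n+1)-n=1$), I take $D\delta_n = \delta_{n+1}-\delta_n$. The goal then splits cleanly into two assertions: $D\delta_n$ is an infinitesimal, and $D\delta_n$ is strictly negative at infinity.

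First I would establish that $D\delta_n \in \Phi \cup \{0\}$. Since $\delta_n \to 0$, Definition \ref{DEF216} gives $\delta_n \in \Phi$ for all $n \in \mathbb{N}_\infty$, and in particular $\delta_{n+1} \in \Phi$. Closure of $(\Phi\cup\{0\},+)$ under addition and inverses (Propositions \ref{P081} and \ref{P101}) then yields $\delta_{n+1}-\delta_n \in \Phi\cup\{0\}$. Next, strict positive monotonic decrease gives $\delta_{n+1}<\delta_n$ pointwise at infinity, so $\delta_{n+1}-\delta_n<0$, which excludes the zero case of the previous step. Combining the two, $D\delta_n$ lies in $\Phi$ and is negative, i.e.\ $D\delta_n|_{n=\infty}\in -\Phi$, which is the required membership by Definition \ref{DEF078}.

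The remaining role is for the "indefinitely decreasing" hypothesis (Definition \ref{DEF221}), which is what prevents the kind of plateau behaviour flagged in Subsection \ref{S0605}: in $*G$ a strict inequality $\delta_{n+1}<\delta_n$ could still be compatible with $\delta_{n+1}-\delta_n$ being ``washed away'' under a premature transfer $\Phi\mapsto 0$. Invoking Proposition \ref{P221}, I would pass to a subsequence $(\nu_n)$ with $\nu_{n+1}\prec \nu_n$, which guarantees that the drop between consecutive terms is not merely strictly negative but genuinely persists at infinity without collapsing, so that $D\delta_n$ stays in $-\Phi$ rather than degenerating to a realized $0$.

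The main obstacle I anticipate is the interplay between the two hypotheses: strict monotonicity gives the sign, while indefinite decrease is what ensures the order of operations (taking the infinitesimal difference \emph{before} transferring) respects the conclusion. Making this precise in the language of the paper — essentially, justifying that we are entitled to treat $\delta_{n+1}-\delta_n$ as a member of $\Phi$ and not as a quantity that is only an infinitesimal after some further realization — is the delicate step, and it is exactly where the $\delta_n\to 0$ sequence convention of Subsection \ref{S0605} is doing its work, by restricting attention to the class of sequences for which the two-tiered calculus behaves as expected.
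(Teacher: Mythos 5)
There is a genuine gap, and it comes from your identification of the operator $D$. You take $D\delta_{n} = \delta_{n+1}-\delta_{n}$, under which the proposition is almost immediate: $\delta_{n+1}-\delta_{n} \in \Phi\cup\{0\}$ by closure, and strict decrease removes the zero case. But that reading leaves the ``indefinitely decreasing'' hypothesis with nothing to do --- you notice this yourself and try to absorb it into remarks about plateaux and premature transfers, which never actually enter the deduction. The paper's proof works instead with the difference quotient $D\delta_{n} = \frac{\delta_{n+1}-\delta_{n}}{h}\big|_{h=0}$ where $h \in +\Phi$, and there the second half of the claim is a genuine $\Phi/\Phi$ indeterminate form: the numerator $\delta_{n+1}-\delta_{n}$ is an infinitesimal, but so is the denominator $h$, and a ratio of infinitesimals can perfectly well be a nonzero real or even an infinity. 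So membership of $D\delta_{n}$ in $\Phi$ does not follow from closure of $(\Phi\cup\{0\},+)$ at all.

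This is exactly where the hypothesis of indefinite decrease is consumed. The paper argues by an ordering of infinities (in the spirit of Subsection \ref{S0607}): sending $\delta_{n} \to 0$ before $h \to 0$, the indefinite decrease allows $\delta_{n+1}-\delta_{n}$ to be made arbitrarily small relative to $h$, giving $\delta_{n+1}-\delta_{n} \prec h$ and hence $\frac{\delta_{n+1}-\delta_{n}}{h} \in \Phi$. Your first half (negativity, from $\delta_{n+1} \lt \delta_{n}$) survives unchanged, since dividing by the positive $h$ preserves the sign; but the second half requires the comparison $\delta_{n+1}-\delta_{n} \prec h$, not merely $\delta_{n+1}-\delta_{n} \in \Phi$. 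Your appeal to Proposition \ref{P221} points in the right direction --- it supplies consecutive terms with $\nu_{n+1}\prec\nu_{n}$ --- but you never use it to compare the increment against the denominator $h$, which is the step the proof actually turns on.
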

\begin{proof}
 $h \in \!+\Phi;$
 Strictly monotonic decreasing $\delta_{n}$
 then
 $\delta_{n+1} \lt \delta_{n}$,
 $\delta_{n+1} - \delta_{n} \lt 0$,
 $\frac{ \delta_{n+1} - \delta_{n}}{h} \lt 0$,
 $D \delta_{n} = \frac{ \delta_{n+1} - \delta_{n}}{h}|_{h=0}$
 is negative.

 Consider the infinite state where 
 $\delta_{n} \to 0$ before $h \to 0$.
 Since $\delta_{n}$ can be made arbitrarily small,
 then 
 $\delta_{n+1}-\delta_{n} \prec h$,
 $\delta_{n+1}-\delta_{n} \in \Phi$,
 $\frac{ \delta_{n+1} - \delta_{n}}{h}|_{h=0} \in \Phi$,
 $D \delta_{n} \in \Phi$.
\end{proof}
\bigskip
\begin{lem}\label{P223}
If $f(x)$ and $g(x)$ are positive monotonic functions, with relation
 $z: z \in \{ \lt, \leq, \gt, \geq \}$, $f(x) \; z \; g(x)$ for all $x$ in a given
 domain, such a relation can be reformed to a positive inequality:
 $\phi \gt 0$ or $\phi \geq 0$
\end{lem}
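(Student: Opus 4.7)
The plan is to proceed by direct rearrangement on each of the four cases for $z$, using the fact that adding or subtracting the same quantity to both sides of a standard inequality in $\{\lt,\leq,\gt,\geq\}$ preserves the relation. This is a classical property of the order on $*G$ (and in particular on the positive reals where $f$ and $g$ take values), reiterated in Proposition \ref{P053} for positive multipliers and trivially true for additive shifts because $(*G,+)$ is an ordered abelian group (Proposition \ref{P104} together with Proposition \ref{P073}).

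First I would treat the strict cases. If $f(x) \lt g(x)$ on the domain, I would set $\phi(x) = g(x) - f(x)$ and subtract $f(x)$ from both sides to obtain $0 \lt \phi(x)$. Symmetrically, if $f(x) \gt g(x)$, I would set $\phi(x) = f(x) - g(x)$ to again get $\phi(x) \gt 0$. The non-strict cases $\leq$ and $\geq$ are handled in exactly the same way, yielding $\phi(x) \geq 0$ with the same choice of $\phi$ up to sign. In each case the sign convention is forced by which argument we subtract, and positivity/non-negativity of $\phi$ follows immediately from the original relation.

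The monotonicity and positivity hypotheses on $f$ and $g$ are not actually needed to push the inequality across subtraction; they serve only to guarantee that $\phi$ inherits a workable structure (in particular, that $\phi$ itself is well-defined and, by difference of monotonic functions, ultimately monotonic in the sense used throughout Part 3). So the real content of the lemma is notational rather than analytic: it records that any of the four comparisons can be normalized to a comparison against $0$.

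The main obstacle, if there is one, is choosing a sufficiently uniform statement of $\phi$ so that later propositions can invoke the lemma without branching on the direction of $z$. I would state the result so that $\phi = (g-f)\cdot \mathrm{sgn}(z)$, where $\mathrm{sgn}(\lt) = \mathrm{sgn}(\leq) = +1$ and $\mathrm{sgn}(\gt) = \mathrm{sgn}(\geq) = -1$, giving a single formula $\phi \gt 0$ (resp.\ $\phi \geq 0$) valid in all four cases. With this convention the proof reduces to a single line per case, and the lemma can be cited downstream without repeating the case analysis.
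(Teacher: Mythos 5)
Your proof is correct and follows essentially the same route as the paper: both reduce each of the four cases to a comparison with $0$ by subtracting one side from the other, with the paper first normalizing $\lt$ and $\leq$ to $\gt$ and $\geq$ by swapping sides where you instead absorb the direction into the sign of $\phi$. Your added observation that monotonicity and positivity are not needed for the rearrangement itself is accurate; the lemma is purely notational normalization.
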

\begin{proof}
 A less than relation can always be expressed
 as a greater than relation by swapping the arguments sides.
 If $f \lt g$ then $g \gt f$.
 If $f \leq g$ then $g \geq f$.
 If $f(x) \gt g(x)$ then $f(x)-g(x) \gt 0$.
 If $f(x) \geq g(x)$ then $f(x) - g(x) \geq 0$.
\end{proof}
\bigskip
\begin{prop}\label{P224}
 $z \in \{ \gt, \geq \}$;
 If $f(x) \; z \; 0$  and $f(x)$  is monotonically increasing then 
 $D f(x) \; z \; 0$ where $f(x)$ is not constant.
\end{prop}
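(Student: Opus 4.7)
The plan is to reduce the proposition to the principle of variation (Definition \ref{DEF214}) applied to the monotonic function, and then recover the relation on $Df$ by dividing the resulting difference by a positive infinitesimal. First I would invoke Lemma \ref{P223} to note that the relation $f(x)\;z\;0$ is already in positive-inequality form, so that the hypothesis reads either $f(x) > 0$ or $f(x) \geq 0$ throughout the domain. Next, for an arbitrary $\delta \in \!+\Phi$, monotonic increase of $f$ gives $f(x+\delta) - f(x)\;z_{1}\;0$ with $z_{1} \in \{>, \geq\}$; since $\delta > 0$, dividing preserves the relation (Proposition \ref{P053}), producing $\frac{f(x+\delta)-f(x)}{\delta}\;z_{1}\;0$, which is the infinitary difference quotient that defines $Df(x)$.

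Taking the at-a-point reading $|_{\delta=0}$ then yields $Df(x)\;z_{1}\;0$ in $*G$. For the case $z = \; \geq$, the implication is immediate: monotonic increase alone forces the difference quotient to be non-negative, and the derivative inherits this. For the case $z = \; >$, I would combine monotonic increase with the ``not constant'' hypothesis: since $f$ is not constant and non-decreasing, any infinitesimal increment $\delta$ at a point of strict increase gives $f(x+\delta) - f(x) \in \!+\Phi$ (a positive infinitesimal, not identically zero), so dividing by $\delta$ leaves a number which is strictly positive in $*G$ — realizing this back (transfer $*G \mapsto \mathbb{R}$ per Part 4) keeps it on the strict side of zero whenever the difference itself is not infinitesimally close to zero in a canceling way.

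The main obstacle will be formulating ``not constant'' precisely enough to force the strict inequality globally rather than merely at some point. A monotonically increasing function in the sense of Definition \ref{DEF063} (i.e., $f(x+\delta) \geq f(x)$) that is not constant overall can still have flat sub-intervals on which $Df = 0$, which would violate $Df > 0$ pointwise. My reading of the proposition is that ``not constant'' is intended in the local/strict sense — that is, $f$ is not constant in any neighborhood under consideration — so that at each point of evaluation, $f(x+\delta) - f(x) \in \!+\Phi$ strictly, and the strict relation transfers to $Df$. I would state this carefully at the outset to make the argument watertight, then close by observing that the transfer $\Phi \mapsto 0$ collapses the strict infinitesimal increment only after the division, which is precisely why the $>$ relation survives into $Df$.
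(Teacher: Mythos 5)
Your proposal follows essentially the same route as the paper's proof: form the difference quotient with a positive infinitesimal increment, use monotonicity to fix its sign, divide by the increment (relation-preserving since the increment is positive), and evaluate at $h=0$ to obtain $Df(x)\;z\;0$. Your concern about flat sub-intervals in the strict case is legitimate --- the paper's own proof simply asserts $f(x+h)\;z\;f(x)$ without deriving the strict inequality from the ``not constant'' hypothesis --- but this added care does not alter the fact that the two arguments are the same.
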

\begin{proof}
 $h \in \Phi^{+}$, 
 $f(x+h) \; z \; f(x)$, 
 $f(x+h) - f(x) \; z \; 0$,
 $\frac{f(x+h) - f(x)}{h} \; z \; 0$,
 $\frac{f(x+h) - f(x)}{h}|_{h=0} \; z \; 0$,
 $D f(x)  \; z \; 0$
\end{proof}
\bigskip
\begin{prop}\label{P022}
$z \in \{ \gt, \geq \}$;
 If $f(x)$ and $g(x)$ are positive monotonic functions: $f(x) \; z \; g(x)$
 over a positive domain then
 $D f(x) \; z \; D g(x)$, where $f(x) -g(x)$ is monotonically increasing.
\end{prop}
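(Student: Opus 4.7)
The plan is to reduce the proposition directly to the two previous results already in hand, namely Lemma \ref{P223} and Proposition \ref{P224}, by setting $\phi(x) = f(x) - g(x)$ and then arguing about $\phi$ rather than about $f$ and $g$ separately. The key observation is that differentiation is linear, so $D\phi = Df - Dg$, which means that any relation between $D\phi$ and $0$ translates back to the same relation between $Df$ and $Dg$.

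First I would invoke Lemma \ref{P223} on the hypothesis $f(x) \; z \; g(x)$ to reform the relation as $\phi(x) \; z \; 0$, where $\phi(x) := f(x) - g(x)$ and $z \in \{\gt, \geq\}$. Next, the hypothesis that $\phi$ is monotonically increasing is exactly what Proposition \ref{P224} requires: applied to $\phi$ with the same relation $z$, it yields $D\phi(x) \; z \; 0$, under the side condition that $\phi$ is not constant (which is implicit in the strict case $\gt$, since $\phi \gt 0$ together with monotonic increase delivers a genuine positive increment). Finally, using linearity of the derivative operator, $D\phi(x) = Df(x) - Dg(x)$, so $Df(x) - Dg(x) \; z \; 0$, i.e.\ $Df(x) \; z \; Dg(x)$, as required.

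The main obstacle, if any, is purely bookkeeping rather than content: I need to check that the two cases $z = \; \gt$ and $z = \; \geq$ both pass cleanly through Proposition \ref{P224}. The strict case requires that $\phi$ not be identically constant on the interval where the relation is being tested, which is guaranteed because $\phi \gt 0$ and $\phi$ is monotonically increasing forces $\phi(x+h) - \phi(x) \geq 0$ with a strict variant on some subinterval; the non-strict case $\geq$ is immediate with no extra hypothesis. A minor secondary point is to confirm that the difference quotient $\frac{\phi(x+h)-\phi(x)}{h}|_{h=0}$ for $h \in \Phi^{+}$ inherits the relation $z$ to $0$ exactly as in the proof of Proposition \ref{P224}, which only uses that positive division preserves $\gt$ and $\geq$ (a consequence of Proposition \ref{P053}). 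With those two checks, the proof is essentially a one-line reduction, and no further machinery is needed.
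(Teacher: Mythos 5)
Your proposal is correct and follows essentially the same route as the paper, which also proves this by reorganising the relation into a positive one via Lemma \ref{P223} and then applying Proposition \ref{P224} to the difference $f-g$. Your write-up merely makes explicit the linearity step $D(f-g)=Df-Dg$ and the non-constancy side condition that the paper's two-line proof leaves implicit.
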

\begin{proof}
 Reorganise the relation to be positive, Lemma \ref{P223}.
 Apply Proposition \ref{P224}.
\end{proof}
\bigskip
\begin{prop}\label{P024}
 If $f \succ 0$ and $f$ is a positive monotonic
 increasing function then ignoring integration
 constants and integrating in a positive interval,
 $\int f \succ 0$.
\end{prop}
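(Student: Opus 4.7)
The plan is to reduce Proposition \ref{P024} to the integral analogue of Proposition \ref{P022}, namely the entry $\int\!\succ \,=\,\succ$ from Table \ref{FIG04}, with $g \equiv 0$ as the comparand. Since the statement is at infinity with $f$ positive, monotonic increasing and $f \succ 0$ (so $f$ is in fact an infinity, because $0/f \in \Phi$ forces $f \in \Phi^{-1}$), we have the hypotheses needed to apply the rule that integration over a positive interval preserves the much-greater-than relation, provided the difference $f-0 = f$ is monotonic, which is given. Ignoring the integration constant (as instructed in the statement and in Table \ref{FIG04}), the integral of $0$ is taken to be $0$.

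First, I would note that $\int f$, as an antiderivative over a positive interval, is a positive monotonic increasing function of its upper limit because $D(\int f) = f > 0$. Second, to establish $\int f \succ 0$ concretely rather than merely citing the table, I would exhibit a lower bound: for any infinite upper limit $x$ and any finite lower limit $a$, monotonicity of $f$ gives
\[
\int_a^x f(t)\,dt \;\geq\; (x-a)\,f(a),
\]
and since $f(a)$ is positive (and in fact $\succ 0$ at infinity), the right-hand side is much greater than $0$. Third, I would form the ratio
\[
\frac{0}{\int f} = 0 \in \Phi \cup \{0\},
\]
which, under the convention of Proposition \ref{P011} that $0 \prec \Phi$ and the realization $\Phi \mapsto 0$ used throughout the transfer framework of Part 4, yields $\int f \succ 0$ by Definition \ref{DEF006}.

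The main obstacle is the slightly delicate status of the comparand $0$ in the $\succ$ relation, since Corollary \ref{P043} insists $0$ is not an infinitesimal. I would handle this exactly as Proposition \ref{P011} does: interpret $f \succ 0$ at infinity by applying the transfer $0 \mapsto \Phi$, working instead with $f \succ \delta$ for an arbitrary $\delta \in \Phi^{+}$, integrating to obtain $\int f \succ \delta \cdot x$ over an infinite interval, and then transferring back. A secondary (but routine) concern is the ``ignoring integration constants'' clause, which is harmless because any additive constant $C$ satisfies $C \prec \int f$ when $\int f \in \Phi^{-1}$, so by non-reversible addition (Theorem \ref{P058}) the constant is absorbed and does not affect the $\succ$ relation. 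Together these give $\int f \succ 0$ as required.
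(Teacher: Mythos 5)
Your proposal is essentially a filled-in version of the paper's one-line argument, which simply asserts that ``integrating an infinitesimal or infinity does not result in $0$'' and concludes $\int f \succ 0$. What you add that the paper omits is the concrete lower bound $\int_a^x f(t)\,dt \geq (x-a)f(a)$ from monotonicity, and the explicit handling of the comparand $0$ via the transfer $0 \mapsto \Phi$ in the spirit of Proposition \ref{P011}; both are genuine improvements over the paper's proof, which gives no mechanism at all. One caution on your opening move: reducing the statement to the entry $\int\!\succ \,=\,\succ$ of Table \ref{FIG04} is circular, since that table entry points to Part 6, where it is justified precisely by Theorem \ref{P023}, which in turn invokes Proposition \ref{P024}. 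You implicitly recognize this by supplying the direct lower-bound argument, but the reduction should be dropped rather than offered as the ``plan.''

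There is also one incorrect subsidiary claim: you assert that $f \succ 0$ forces $f \in \Phi^{-1}$ ``because $0/f \in \Phi$.'' This fails on both counts. First, $0/f = 0$, and by Corollary \ref{P043} the number $0$ is not an infinitesimal, so Definition \ref{DEF006} cannot be applied literally here; the relation $f \succ 0$ has to be read through Proposition \ref{P011}, as you yourself do later. Second, the conclusion is false in the paper's framework: by Proposition \ref{P011} every positive infinitesimal $\delta$ satisfies $\delta \succ 0$, so $f \succ 0$ is compatible with $f$ taking infinitesimal values, and indeed the paper's own proof explicitly contemplates ``integrating an infinitesimal or infinity.'' Fortunately your main argument never uses $f \in \Phi^{-1}$ --- the lower bound $(x-a)f(a)$ and the final transfer step go through whether $f(a)$ is an infinitesimal, a positive real, or an infinity --- so the error is inessential, but the parenthetical should be deleted.
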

\begin{proof}
 Since integrating an infinitesimal or
 infinity does not result in $0$,
 then integral must have a 
 much-greater-than relationship with 
 $0$.
\end{proof}
\bigskip
\begin{theo}\label{P023}
Let $f$ and $g$ be positive monotonic functions:
 $f \succ g$.
 If integrated over a positive interval ignoring
 integration constants then $\int f \succ \int g$.
\end{theo}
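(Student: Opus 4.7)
The cleanest route is to reduce the conclusion to the hypothesis via the comparison form of L'Hopital's rule (Theorem \ref{P015}), using the fundamental theorem of calculus to identify the derivatives of $\int f$ and $\int g$ with $f$ and $g$. Set $F = \int f$ and $G = \int g$, dropping the constants as allowed by the statement. Since $f$ and $g$ are positive and monotonic, $F$ and $G$ are positive and monotonic increasing, and the comparison is to be taken at the upper endpoint of the positive interval, which is $\infty$ by Definitions \ref{DEF047} and \ref{DEF219}.

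First I would rewrite $f \succ g$ using Definition \ref{DEF006} as $g/f = \delta$ with $\delta \in \Phi$, so that $f/g \in \Phi^{-1}$. In the generic case where $F$ and $G$ both diverge (so $F, G \in \Phi^{-1}$ at infinity, by Proposition \ref{P024} applied to each), the ratio $F/G$ is in the indeterminate form $\Phi^{-1}/\Phi^{-1}$ of Lemma \ref{P216}, and $F'/G' = f/g$ exists. Theorem \ref{P015} (comparison L'Hopital) then transfers the much-greater-than relation from the derivatives back to the primitives: $f \succ g$ gives $F \succ G$, that is, $\int f \succ \int g$. In the case where $F$ and $G$ tend to finite limits $F_\infty$ and $G_\infty$, I would run the same argument on the tails $F_\infty - F(x)$ and $G_\infty - G(x)$, whose derivatives are $-f$ and $-g$ and which are in $\Phi/\Phi$ form; Proposition \ref{P007} absorbs the sign change.

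As a cross-check and fallback, one may argue via Proposition \ref{P024}: writing $f - g = f(1 - \delta)$ with $\delta \in \Phi$, the function $f - g$ is positive, monotonic increasing, and $\succ 0$, so $\int(f-g) = \int f - \int g \succ 0$. To conclude $\int f \succ \int g$ from this one additionally needs that $\int g$ is not of the same magnitude as $\int f - \int g$, which is precisely what the L'Hopital step above provides; without it, an additive relation does not upgrade to a multiplicative one (compare the cautionary Example \ref{MEX200}). The main obstacle is therefore not the algebra but verifying the side hypotheses needed to invoke Theorem \ref{P015}, especially the ultimate monotonicity of $f/g$ that Section \ref{S0301} flags as a standing requirement; once that regularity is granted, together with the case split on divergence versus convergence of the integrals, the conclusion follows immediately.
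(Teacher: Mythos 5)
Your proposal is correct in substance but takes a genuinely different main route from the paper. The paper's entire proof is what you relegate to the ``cross-check and fallback'': it writes $f \succ g$, hence $f-g \succ 0$, and applies Proposition \ref{P024} to get $\int (f-g) \succ 0$, from which $\int f \succ \int g$ is read off. Your primary argument instead runs the comparison form of L'Hopital's rule (Theorem \ref{P015}, via Lemma \ref{P216}) on the primitives $F=\int f$ and $G=\int g$, using $F'/G'=f/g\in\Phi^{-1}$ to conclude $G/F\in\Phi$ directly. That buys something real: it produces the multiplicative relation $\int g/\int f\in\Phi$ that Definition \ref{DEF006} actually demands, whereas the paper's additive route only yields $\int f-\int g\succ 0$, and --- as you correctly point out, and as the paper itself warns in Part 5 in the $2x$ versus $3x$ discussion preceding Example \ref{MEX041} --- an additive much-greater-than relation against $0$ does not upgrade to a multiplicative one: $2x-x\succ 0$ at infinity, yet $2x\not\succ x$. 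So your diagnosis of the gap in the fallback is in fact a diagnosis of a gap in the paper's own proof, and your L'Hopital argument is the repair. Two small caveats on your version: the hypotheses only say ``positive monotonic,'' so $f-g$ need not be monotonic increasing as Proposition \ref{P024} requires (another latent defect of the paper's route that your main route avoids); and besides your two cases (both integrals diverge, both converge) there is the mixed case where $\int f$ diverges while $\int g$ stays bounded, but that case is immediate since a bounded quantity divided by an infinity is an infinitesimal, and the opposite mix is excluded because $f\succ g$ forces $f>g$ for positive arguments (cf.\ Theorem \ref{P009}) and hence $\int f \geq \int g$ by ordinary comparison.
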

\begin{proof}
 $f \succ g$, 
 $f-g \succ 0$,
 apply Proposition \ref{P024}.
\end{proof}

This infinitesimal and infinitary analysis is more suited
 to a functional approach and does not explicitly use sets,
 compared with NSA.
 Hence the complexity of use would likely make this calculus
 more accessible.
 We have found, empirically, different solutions to problems
 and in many cases simpler reasoning than with standard
 calculus, such as found in \cite{kaczor}.
 That is, we have constructed a new calculus
 of sum convergence \cite[Convergence sums ...]{cebp2}. 

The following propositions define
 continuity and calculus in $*G$.
\bigskip
\begin{prop}\label{P225}
 A function $f: *G \mapsto *G$ is uniformly continuous
  if $f(x) \simeq f(y)$ when $x, y \in *G$
 and $x \simeq y$. 
\end{prop}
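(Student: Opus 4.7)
The plan is to translate the infinitesimal-closeness preservation hypothesis into a standard $\epsilon$-$\delta$ formulation and exploit the universal quantifier over all of $*G$ to obtain uniformity, then apply the transfer principle from Part 4 to land in $\mathbb{R}$.

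First I would unpack $\simeq$ via Definition \ref{DEF201}: $x \simeq y$ means $x - y \in \Phi \cup \{0\}$. The hypothesis then reads: for every pair $x, y \in *G$ with $x - y \in \Phi \cup \{0\}$, we have $f(x) - f(y) \in \Phi \cup \{0\}$. Crucially this is a single statement quantified uniformly over \emph{all} pairs in $*G$, not pairs localized at a fixed point; this universality is exactly what will deliver the uniformity, because the $\delta$ I construct cannot depend on where in $*G$ the pair lies.

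Next I would derive the standard $\epsilon$-$\delta$ form. Fix $\epsilon \in \mathbb{R}^+$ and pick any $\delta \in +\Phi$. For arbitrary $x, y \in *G$ with $|x-y| \lt \delta$, Proposition \ref{P018} gives $x-y \in \Phi \cup \{0\}$, so by hypothesis $f(x) - f(y) \in \Phi \cup \{0\}$, and Proposition \ref{P018} again yields $|f(x)-f(y)| \lt \epsilon$. This is the $*G$-level statement $\forall x, y: |x-y| \lt \delta \Rightarrow |f(x)-f(y)| \lt \epsilon$. Applying the transfer of Theorem \ref{P076} and Proposition \ref{P056} in the form $(*G, \Phi) \mapsto (\mathbb{R}, \mathbb{R}^+)$ promotes $\delta$ to a real positive $\delta' \gt 0$, delivering the familiar real-valued uniform continuity statement with $\delta'$ depending only on $\epsilon$.

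The main obstacle is justifying that the transferred $\delta'$ is genuinely independent of $x$. Because the $*G$-level statement has the quantifier block $\forall x \forall y$ with no inner dependence of $\delta$ on $x$, the transferred statement inherits the same quantifier structure, and uniformity is preserved automatically; the argument would fail if I had allowed $\delta$ to be chosen after $x$, which is why the hypothesis must be stated globally rather than pointwise (contrast with the local continuity of Definition \ref{DEF215}). Two secondary technicalities: the degenerate case $x=y$, where the conclusion is trivial, and the possibility that $f(x)-f(y)$ collapses from an infinitesimal to zero under transfer, which only strengthens the strict inequality $|f(x)-f(y)| \lt \epsilon$ rather than violating it, by Theorem \ref{P003}.
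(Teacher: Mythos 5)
The paper gives no proof of Proposition \ref{P225}: it is stated bare, and the sentence immediately preceding it (``The following propositions define continuity and calculus in $*G$'') indicates it is meant as the \emph{definition} of uniform continuity in $*G$, not as a theorem derived from a prior $\epsilon$--$\delta$ notion. So there is no target argument to match, and you are proving a statement the paper does not attempt to prove. Your first two steps are nonetheless sound in the paper's framework: unpacking $\simeq$ via Definition \ref{DEF201}, and showing that for any real $\epsilon \gt 0$ and any $\delta \in +\Phi$ one has $\forall x, y \in *G: |x-y| \lt \delta \Rightarrow |f(x)-f(y)| \lt \epsilon$. (Minor point: the inference ``$|x-y| \lt \delta$ with $\delta \in +\Phi$ implies $x-y \in \Phi \cup \{0\}$'' rests on the partition theorem $0 \lt \Phi^{+} \lt \mathbb{R}^{+} \cup \Phi \lt +\Phi^{-1}$ rather than on Proposition \ref{P018} itself, which only gives $\Phi \lt \mathbb{R}^{+}$.)

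The genuine gap is the final step, where you ``promote'' the infinitesimal $\delta$ to a positive real $\delta'$. That promotion is exactly the content of overspill (underflow) in non-standard analysis: the set of $\delta$ for which the implication holds contains all positive infinitesimals, and one needs an internal-set or saturation argument to conclude it also contains a standard positive real. Neither of your citations supplies this. Theorem \ref{P076} maps a real-bounded condition \emph{into} the infinitesimal domain, i.e.\ the opposite direction; Proposition \ref{P056} concerns a single number $h \gt \Phi$ and says nothing about extracting a real witness from a universally quantified statement whose known witnesses are all infinitesimal. For an arbitrary $f: *G \mapsto *G$ (not the $*$-extension of a standard function, a notion the paper never sets up), the implication ``infinitesimal $\delta$ works $\Rightarrow$ some real $\delta'$ works'' can fail outright, since nothing prevents the set of good $\delta$ from being exactly $+\Phi$. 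Your quantifier-structure remark addresses why $\delta'$ would not depend on $x$ \emph{if} it existed, but not why it exists. To close the gap you would need either to restrict $f$ to extensions of real functions and prove an overspill principle for $*G$, or to accept Proposition \ref{P225} as the paper evidently intends it: a definition.
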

\bigskip
\begin{prop}\label{P226}
A function $f: *G \mapsto *G$ 
 is differentiable at $x\in *G$
 iff there exists $b\in *G$:
\[ \frac{f(x)-f(a)}{x-a} \simeq b \text{ when } x \simeq a \]
\end{prop}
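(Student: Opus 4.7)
The plan is to reduce the statement to a direct application of the $*G$-limit characterization given in Definition \ref{DEF068}, applied to the difference quotient. Concretely, introduce the auxiliary function $g(x) := (f(x)-f(a))/(x-a)$ (well-defined for $x \neq a$, which is automatic whenever $x \simeq a$ with $x-a \in \Phi$ since $0 \notin \Phi$ by Corollary \ref{P043}), and observe that the standard notion of differentiability of $f$ at $a$ is precisely the existence of $\lim_{x \to a} g(x)$ in the $*G$ sense, with the limit value identified as the derivative $b = f'(a)$.

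First I would handle the forward direction. Assume $f$ is differentiable at $a$, so that $f'(a) \in *G$ exists. Setting $b := f'(a)$ and unpacking Definition \ref{DEF068}, the limit $\lim_{x \to a} g(x) = b$ in $*G$ means exactly that $g(x) \simeq b$ whenever $x \simeq a$. This is the right-hand side of the biconditional verbatim. Here I would briefly note that $b$ is allowed to live anywhere in $*G$ (it may be a real, an infinitesimal, or even an infinity), which matches the target statement and is the point of working in the extended system rather than in $\mathbb{R}$.

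For the reverse direction, suppose there exists $b \in *G$ with $g(x) \simeq b$ whenever $x \simeq a$. By Definition \ref{DEF068} this is exactly the statement $\lim_{x \to a} g(x) = b$ in $*G$; since $g$ is the difference quotient, this is precisely differentiability of $f$ at $a$ with derivative $b$. The two implications together give the biconditional.

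The only genuinely delicate step, and the one I would spend most care on, is the handling of the endpoint $x = a$: the relation $x \simeq a$ includes the case $x = a$ by Definition \ref{DEF201}, yet $g$ is undefined there. I would resolve this by restricting the universal quantifier implicit in ``when $x \simeq a$'' to $x - a \in \Phi$ (strictly), so that $x - a$ is invertible in the field $*G$ (Proposition \ref{P108}); the omitted case $x = a$ carries no information about $b$ and can be discarded without altering the $*G$-limit. With that convention in place, the equivalence reduces to matching the quantifiers in Definition \ref{DEF068}, and no further analytic content is needed.
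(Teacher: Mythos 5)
The paper itself offers no proof of Proposition \ref{P226}: it is one of the two unproved statements introduced by the sentence ``The following propositions define continuity and calculus in $*G$,'' so within the paper it functions as a \emph{definition} of differentiability in $*G$ rather than as a theorem. Your proposal therefore proves something the paper never argues for, and the way you do it exposes the underlying circularity: you take ``differentiable at $a$'' to mean ``the $*G$-limit (Definition \ref{DEF068}) of the difference quotient exists,'' and then observe that Definition \ref{DEF068} unpacks to exactly the displayed condition. That is a correct unwinding, but it is not a proof of an equivalence between two independently defined notions --- the paper supplies no prior definition of differentiability in $*G$ against which the stated condition could be checked. As written, your forward and reverse directions are each the same definitional substitution read in opposite directions. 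If you want this to stand as a proposition with content, you must first commit to a definition of differentiability (e.g.\ via the principle of variation in Definition \ref{DEF214}, or via a transfer from the $\mathbb{R}$ definition in Definition \ref{DEF212}/\ref{DEF213}) and then show that definition equivalent to the $\simeq b$ condition; that step is absent from both your proposal and the paper.

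That said, two of your observations are genuinely worthwhile and go beyond what the paper records. First, the invertibility of $x-a$: since $x \simeq a$ with $x \neq a$ forces $x-a \in \Phi$ by Definition \ref{DEF201}, and $0 \notin \Phi$ by Corollary \ref{P043}, the quotient is well defined in the field $*G$ (Proposition \ref{P108}); the paper never addresses this. Second, the endpoint $x=a$, which is included in $x \simeq a$ but at which the difference quotient is undefined, does need to be excised from the quantifier, and your convention for doing so is the right one. Also note a cosmetic defect you inherited from the statement itself: the proposition says ``differentiable at $x$'' but the displayed formula is centred at $a$; your proof silently works at $a$, which is the sensible reading, but the mismatch should be flagged.
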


 Many of the classical results can be proved in
 $*G$.
 Assuming the Taylor series in $*G$ with arbitrary
 truncation, that is a well-behaved function, prove
 Newton's method.
\bigskip
\begin{theo}\label{P227}
 When $f(x_{n+1}) \prec f(x_{n})|_{n=\infty}$
 and $x_{n+1} \simeq x_{n}|_{n=\infty}$ then
$x_{n+1} = x_{n} - \frac{ f(x_{n}) }{ f'(x_{n}) }|_{n=\infty}$ 
\end{theo}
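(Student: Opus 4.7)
The plan is to set $\delta_n = x_{n+1} - x_n$, observe that the hypothesis $x_{n+1} \simeq x_n$ forces $\delta_n \in \Phi$, and then Taylor-expand $f$ about $x_n$ in $*G$, using non-reversible addition to truncate after the linear term. Since the paper assumes Taylor series in $*G$ with arbitrary truncation (as mentioned just before the theorem), the expansion
\[
f(x_n + \delta_n) = f(x_n) + \delta_n f'(x_n) + \tfrac{\delta_n^2}{2} f''(x_n) + \ldots
\]
is available, and because $\delta_n \in \Phi$, each scale $\delta_n^k f^{(k)}(x_n)$ with $k \ge 2$ is much-less-than $\delta_n f'(x_n)$ by the infinitesimal scale $(\ldots \prec \delta_n^3 \prec \delta_n^2 \prec \delta_n)|_{\delta_n = 0}$ from Part 2, provided $f'(x_n) \not\simeq 0$ and the higher derivatives are bounded (an implicit well-behavedness hypothesis). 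Applying non-reversible addition Theorem \ref{P058} yields $f(x_{n+1}) = f(x_n) + \delta_n f'(x_n)|_{n=\infty}$.

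Next I would use the hypothesis $f(x_{n+1}) \prec f(x_n)|_{n=\infty}$, which by Definition \ref{DEF006} means $f(x_{n+1})/f(x_n) \in \Phi$. Dividing the truncated Taylor identity by $f(x_n)$ gives
\[
\frac{f(x_{n+1})}{f(x_n)} = 1 + \frac{\delta_n f'(x_n)}{f(x_n)}\Big|_{n=\infty},
\]
and since the left side is an infinitesimal, the right side satisfies $1 + \frac{\delta_n f'(x_n)}{f(x_n)} \in \Phi$, i.e.\ $\frac{\delta_n f'(x_n)}{f(x_n)} \simeq -1$ using Definition \ref{DEF201}. Solving for $\delta_n$ gives $\delta_n \simeq -\frac{f(x_n)}{f'(x_n)}|_{n=\infty}$, and since $x_{n+1} = x_n + \delta_n$, this rearranges to the desired
\[
x_{n+1} = x_n - \frac{f(x_n)}{f'(x_n)}\Big|_{n=\infty},
\]
read as a left-to-right assignment/generalization in the sense of Definition \ref{DEF028}, where $=$ at infinity absorbs the infinitesimal difference.

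The main obstacle will be justifying the truncation of the Taylor series after the linear term: one needs $\delta_n^2 f''(x_n) \prec \delta_n f'(x_n)|_{n=\infty}$, equivalently $\delta_n f''(x_n)/f'(x_n) \in \Phi$. Since $\delta_n \in \Phi$ and the ratio $f''(x_n)/f'(x_n)$ is assumed bounded (by "well-behaved"), this holds, but the argument tacitly requires $f'(x_n) \not\simeq 0$ so that division by $f(x_n)/f'(x_n)$ stays within $*G\setminus\{0\}$. A secondary subtlety is that the at-a-point equality $=$ in the conclusion is the generalization of the infinitesimal-closeness $\simeq$ obtained in the algebra, so the final step relies on Definitions \ref{DEF028} and \ref{DEF201} to convert $\simeq$ into the stated equality under $|_{n=\infty}$.
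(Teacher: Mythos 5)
Your proposal is correct and follows essentially the same route as the paper: Taylor-expand about $x_{n}$ with infinitesimal increment $\delta_{n}=x_{n+1}-x_{n}$, truncate after the linear term by non-reversible addition (Theorem \ref{P058}), discard $f(x_{n+1})$ as negligible against $f(x_{n})$, and solve for $x_{n+1}$. The only cosmetic difference is that you exploit $f(x_{n+1}) \prec f(x_{n})$ multiplicatively (dividing by $f(x_{n})$ and reading off $\delta_{n} f'(x_{n})/f(x_{n}) \simeq -1$) where the paper uses the additive form $f(x_{n})-f(x_{n+1})=f(x_{n})|_{n=\infty}$, and you make explicit the implicit well-behavedness hypotheses ($f'(x_{n}) \not\simeq 0$, bounded higher derivatives) that the paper leaves inside ``assume continuity''.
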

\begin{proof}$h, f(x_{n}) \in \Phi$;
 $\,x_{n}$, $f^{(w)}(x_{n}) \in *G$
\begin{align*}
 f(x_{n} + h) = f(x_{n}) + h f'(x_{n}) +\frac{h^{2}}{2!} f'^{2}(x_{n})+\ldots|_{n=\infty}|_{h=0} & \tag{Assume continuity} \\
 f(x_{n} + h) = f(x_{n}) + h f'(x_{n})|_{n=\infty}|_{h=0}  & \tag{Choose $h=x_{n+1}-x_{n}$} \\
 f(x_{n+1}) = f(x_{n}) + (x_{n+1}-x_{n}) f'(x_{n})|_{n=\infty}  & \tag{Non-reversible arithmetic} \\
 & \tag{$f(x_{n})-f(x_{n+1}) = f(x_{n})|_{n=\infty}$ as $f(x_{n}) \succ f(x_{n+1})|_{n=\infty}$ } \\
 0 = f(x_{n}) + (x_{n+1}-x_{n}) f'(x_{n})|_{n=\infty}  & \tag{Solve for $x_{n+1}$} \\
 x_{n+1} = x_{n}- \frac{f(x_{n})}{f'(x_{n})}|_{n=\infty} & \tag{Transfer principle $*G \mapsto \mathbb{R}$} 
\end{align*}
\end{proof}
\subsection{A variable reaching infinity before another}\label{S0607}
 With the knowledge of partial derivatives, there should
 be no argument against a variable reaching infinity before
 another. That this is not taught or seen this way,
 once stated should be accepted as fact.
\begin{quote} The partial derivative is equivalent to
 one variable reaching infinity before the other variables.
\end{quote}

 With the finite and infinite separation,
 we will seek further mathematics. 
 The order of one variable reaching infinity before
 another is common as demonstrated by partial differential equations.
 We also have other language to capture the infinite state,
 such as `the characteristic differential equation'.
\begin{quote}
 This raises the possibility of combinations
 of variables reaching infinity in different
 orders. The algebra and space is amazingly complex, yet understandable.
\end{quote}

 Rather than purge mathematics of complexity for certainty, a consideration of 
 orderings we believe has led to new rearrangement theorems and analysis
 (see \cite{cebp2}, \cite{cebp8}). 
 This is a necessary correction to current mathematics 
 which rejects infinitary calculus and Euler in superficial ways.

Uniform convergence, absolute sum
 convergence and other concepts, which
 for example highlight when the problem
 is independent of the order,
 could be investigated with orderings.

 Turning the problem around, if the ordering
 does not matter, we only need to find the
 solution of one ordering to determine
 the whole solution. 
  (for example sum rearrangements at infinity \cite{cebp8})
\bigskip
\begin{mex} See Proposition \ref{P005} \end{mex}
 An example of a variable reaching infinity before
 another explains what others claim is `extraordinary reasoning'.
\bigskip 
\begin{mex}
 In following and explaining Euler's derivation
 of the exponential function
 expansion for $e^{x}$,
 Robert Goldblatt \cite[p.8]{gold} describes Euler's 
 reasoning 
\[ \frac{j(j-1)(j-2)\ldots (j-n+1)}{j^{n}}|_{j=\infty}=1 \]
 as `extraordinary', which, with polite wording,
 is an academics way of describing
 something as humbug. However, the calculation \it{is} explained by one variable
 reaching infinity before the other. 

 Firstly, consider the numerical evidence.
 $\frac{j}{j}|_{j=\infty}=1$;
 $\frac{j(j-1)}{j^{2}}|_{j=\infty}=1$;
 $\frac{j(j-1)(j-2)}{j^{3}}|_{j=\infty}=1$;
 $\ldots$

 If we consider the general $n$ term, we arrive at the expression
 $\frac{j(j-1)(j-2)\ldots (j-n+1)}{j^{n}}|_{j=\infty}$
 If $j=\infty$ before $n=\infty$, then from $j$'s perspective,
 $n$ is a constant. This is no different from the partial derivative
 case. Hence, simplify the constant by non-reversible arithmetic,
 $j(j-1)(j-2)\ldots (j-n+1)|_{j=\infty}$
 $= j^{n}|_{n=\infty}$ and the result follows.

 Consequently, $j \succ n|_{j,n=\infty}$. There is the possibility of developing
 algebra for these situations, to analyse the mathematics.
 
 We should point out that this is not the only possibility at infinity
 (as noted in
 subsequent papers there is algebra where lower order terms prevail at infinity),
 and that non-uniqueness exists at infinity. However,
 it `is' a valid possibility. 
 
 For the given problem, we can state Euler's reasoning in
 $*G$.

 $\omega \in \Phi$; $j = \frac{x}{\omega}$; $x \not \in \mathbb{R}_{\infty}$; $\omega = \frac{x}{j}$
 and $j \in \Phi^{-1}$; consider the Binomial expansion in $*G$, 
 which is most likely acceptable as we believe $*G$ is 
 a field.
\begin{align*}
 (1+k\omega)^{j} = 1 + j \frac{k \omega}{1!} + j(j-1) \frac{ (k \omega)^{2}}{2!} + \ldots \\
 = 1+\sum_{n=1}^{\infty} j(j-1)(j-2)\ldots(j-n+1) \frac{ k^{n} \omega^{n}}{n!}|_{j=\infty} \\
 = 1+\sum_{n=1}^{\infty} \frac{ j(j-1)(j-2)\ldots(j-n+1) }{j^{n}} \frac{ k^{n} x^{n}}{n!}|_{j=\infty} \\
 = 1+\sum_{n=1}^{\infty} \frac{ k^{n} x^{n}}{n!}|_{j=\infty} 
\end{align*}
 Hence, in this case Euler's logic is justified and
 explained in a more rigorous way, in the exact way Euler stated.
 Euler was not in error, but exactingly correct.

 Choosing $k=1$
 \[ (1+\omega)^{\frac{x}{\omega}} = \sum_{k=0}^{\infty} \frac{x^{n}}{n!} \]
 If $x=1$ then $(1+\omega)^{\frac{1}{\omega}}=e$.
 $(1+\omega)^{\frac{x}{\omega}}$
 $= ((1+\omega)^{\frac{1}{\omega}})^{x}$
 $=e^{x}$ then $e^{x} = \sum_{k=0}^{\infty} \frac{x^{n}}{n!}$.
\end{mex}

{\em RMIT University, GPO Box 2467V, Melbourne, Victoria 3001, Australia}\\
{\em chelton.evans@rmit.edu.au}
\end{document}